\numberwithin{equation}{section}
\newcommand{\N}{\mathbb{N}}
\newcommand{\R}{\mathbb{R}}
\newcommand{\Q}{\mathbb{Q}}
\newcommand{\Z}{\mathbb{Z}}
\newcommand{\F}{\mathcal{F}}
\newcommand{\X}{\mathcal{X}}
\newcommand{\Y}{\mathcal{Y}}
\newcommand{\B}{\mathcal{B}}
\newcommand{\D}{\mathcal{D}}
\newcommand{\E}{\mathcal{E}}
\newcommand{\Nor}{\mathcal N}
\newcommand{\CRpin}{C_{\text{pin}}(\R)}
\newcommand{\wt}{\widetilde}
\newcommand{\wh}{\widehat}
\newcommand{\Ss}{\mathcal{S}}
\newcommand{\T}{\mathcal{T}}
\newcommand{\deq}{\overset{d}{=}}
\newcommand{\ve}{\varepsilon}
\newcommand{\sqtwopi}{\sqrt{2\pi}}
\newcommand{\fraclambda}{\frac{\lambda}{\sqrt 2}}
\newcommand{\zfraclambda}{\frac{z -\lambda}{\sqrt 2}}
\newcommand{\negzfraclambda}{-\frac{z + \lambda}{\sqrt 2}}
\newcommand{\f}{\frac}
\newcommand{\tf}{\tfrac}
\newcommand{\mbf}{\mathbf}
\newcommand{\LM}{\operatorname{LeftMax}}
\newcommand{\Pp}{\mathbb P}
\newcommand{\Ee}{\mathbb E}
\newcommand{\NU}{\operatorname{NU}}
\newcommand{\h}{h}
\newcommand{\vv}{v}
\newcommand{\li}{\;{\le}_{\rm inc}\;}
\newcommand{\gi}{\;{\ge}_{\rm inc}\;}
\newcommand{\Xcomp}{\wh \X}
\newcommand{\Ycomp}{\wh \Y}
\newcommand{\Var}{\mathbb V ar}
\newcommand{\1}{\mathbbm 1}
\newcommand{\Se}{\succ}
\newcommand{\SeS}{\succcurlyeq}
\newcommand{\Ne}{\prec}
\newcommand{\NeN}{\preccurlyeq}
\newcommand{\supp}{\operatorname{supp}}
\newcommand{\Compset}{S}
\newcommand{\Exp}{\operatorname{Exp}}
\newcommand{\CI}{\operatorname{CI}}
\newcommand{\Busedc}{\Theta}
\newcommand{\Dq}{D_d}
\newcommand{\DqD}{\D_d}
\newcommand{\Yd}{\Y^d}
\newcommand{\Xd}{\X^d}
\newcommand{\nud}{\nu_d}
\newcommand{\mud}{\mu_d}
\newcommand{\be}{\begin{equation}}
\newcommand{\ee}{\end{equation}}
\def\tspa{\hspace{0.7pt}}
\def\tspb{\hspace{0.9pt}}
\newcommand{\sig}{{\scaleobj{0.8}{\boxempty}}} 
\newcommand{\sigg}{{\scaleobj{0.9}{\boxempty}}} 
\newcommand\abullet{{\raisebox{2pt}{\scaleobj{0.5}{\bullet}}}}  
\newtheorem{theorem}{Theorem}[section]
\newtheorem{corollary}[theorem]{Corollary}
\newtheorem{lemma}[theorem]{Lemma}
\theoremstyle{definition}
\newtheorem{definition}[theorem]{Definition}
\newtheorem{example}[theorem]{Example}
\theoremstyle{remark}
\newtheorem{remark}[theorem]{Remark}
\title[Geodesics and competition interfaces in BLPP]{Global structure of semi-infinite geodesics and competition interfaces in Brownian last-passage percolation}
\author{Timo Sepp{\"a}l{\"a}inen}
\address{Timo Sepp{\"a}l{\"a}inen, University of Wisconsin-Madison, Mathematics Department, Van Vleck Hall, 480
Lincoln Dr., Madison WI 53706-1388, USA.}
\email{seppalai@math.wisc.edu}
\author{Evan Sorensen}
\address{Evan Sorensen, University of Wisconsin-Madison, Mathematics Department, Van Vleck Hall, 480
Lincoln Dr., Madison WI 53706-1388, USA.}
\email{elsorensen@wisc.edu}
\subjclass[2020]{60K30,60K35,60K37}
\keywords{Brownian motion, Busemann function,  last-passage percolation, queues, competition interface, coalescence}
\date{\today}
\begin{document}

\begin{abstract}
    In Brownian last-passage percolation (BLPP), the Busemann functions $\B^{\theta}(\mbf x,\mbf y)$ are indexed by two points $\mbf x,\mbf y \in \Z \times \R$, and a direction parameter $\theta > 0$. We derive the joint distribution of Busemann functions across all directions. The set of directions where the Busemann process is discontinuous, denoted by $\Busedc$, provides detailed information about the uniqueness and coalescence of semi-infinite geodesics.  The uncountable set of initial points in BLPP gives rise to new phenomena not seen in discrete models. For example, in every direction $\theta > 0$, there exists a countably infinite set of initial points $\mbf x$ such that there exist two $\theta$-directed geodesics that split but eventually coalesce. Further, we define the competition interface in BLPP and show that the set of initial points whose competition interface is nontrivial has Hausdorff dimension $\f{1}{2}$. From each of these exceptional points, there exists a random direction $\theta \in \Busedc$ for which there exists two $\theta$-directed semi-infinite geodesics that split immediately and never meet again. Conversely, when $\theta \in \Busedc$,  from every initial point $\mbf x \in \Z \times \R$, there exists two $\theta$-directed semi-infinite geodesics that eventually separate.  Whenever $\theta \notin \Busedc$, all $\theta$-directed semi-infinite geodesics coalesce.
\end{abstract}
\maketitle

\tableofcontents
\section{Introduction}

\subsection{Broad goals of the project}

This work is part of an effort to understand global geometric properties of random growth of the first- and last-passage type. In these stochastic models, growth progresses in space along paths called {\it geodesics} that optimize an energy functional. Of particular interest are the semi-infinite geodesics,  their existence,  uniqueness, multiplicity and coalescence, and the competition interfaces that separate non-unique geodesics in a given direction.  Semi-infinite geodesics are hard to study because they look at the environment all the way to infinity. 

The novelty of the present paper lies in its semi-discrete, or partial continuum, setting.   In contrast with lattice models,   new features arise and new  methods are needed.  The main tool for accessing these geometric properties is the {\it Busemann process}. We establish analytic and probabilistic properties of the Busemann process and then use those to derive properties of the geodesics and the competition interfaces.    

The specific model we work with is the {\it Brownian last-passage percolation model} (BLPP) that lives in the space $\Z\times\R$.  BLPP arose in queuing theory in the 1980s and 1990s, in the work of Harrison and Williams, and Glynn and Whitt \cite{glynn1991,Harrison1985,harrison1990,harrison1992}.  In the 2000s BLPP and its positive temperature counterpart, the semi-discrete Brownian polymer or O'Connell-Yor polymer  \cite{brownian_queues}, have occupied a place among the exactly solvable models in which properties of the Kardar-Parisi-Zhang (KPZ) class can be fruitfully studied. We refer the reader to the introduction of our previous paper \cite{Seppalainen-Sorensen-21a} for more on the history and context, and concentrate here on the new features and connections.  

Beyond the present work, the next natural stages of this project involve  studying geodesics (i) in the semi-discrete setting of the positive temperature Brownian polymer and (ii)  in the full continuum settings  of the stochastic heat equation and the directed landscape. The novel methods developed in this paper are applied to the directed landscape in~\cite{Busani-Seppalainen-Sorensen-2022}, which appeared after the first version of the present paper.

\subsection{The third work of a series}
Our paper is the third in a series on the Busemann functions and semi-infinite geodesics of BLPP. While we rest on the foundation provided by the two earlier works, our introduction and main results  are presented in a  self-contained manner. 

 In the first stage Alberts, Rassoul-Agha, and Simper~\cite{blpp_utah} proved the almost sure existence of a Busemann function  in BLPP from a fixed pair of initial points into a fixed direction. This limit appears in equation \eqref{B120} below.  In~\cite{Seppalainen-Sorensen-21a}, the present  authors extended the individual Busemann functions to a full Busemann process $\B^{\theta\sig}(\mbf x, \mbf y)$ in BLPP,  indexed by all initial points $\mbf x, \mbf y\in\Z\times\R$, directions represented by positive reals $\theta$, and signs $\sig\in\{-,+\}$ that keep track of discontinuities. From this construction, \cite{Seppalainen-Sorensen-21a}  derived the following results on semi-infinite geodesics: 
\begin{enumerate}
    \item On a single event of probability one, every semi-infinite geodesic has an asymptotic direction, and from every initial space-time point $\mbf x$ and in every direction $\theta$, there exists a semi-infinite geodesic.
    \item Given a direction $\theta$,  all $\theta$-directed semi-infinite geodesics coalesce on a $\theta$-dependent full-probability event. 
    \item Similarly, given a  northeast and a  southwest direction, there are almost surely no bi-infinite geodesics in those directions.
\end{enumerate}

The present paper takes 
 \cite{Seppalainen-Sorensen-21a} as a starting point to go deeper into the Busemann process and the  semi-infinite geodesics in BLPP. Next we go over some highlights and relate them to past literature.  The organization of the paper is explained in Section \ref{sec:org}.  
 
 \subsection{A jump process of coupled Brownian motions with drift}   Among our main results and also our main tool for studying geodesics is the joint distribution of the  Busemann process across space and asymptotic directions.
 On a fixed level $\{m\}\times\R$ of the space $\Z\times\R$, the Busemann process $\{ \B^{\theta \sig}((m,0),(m,t)): t\in\R, \,\sigg\in\{-,+\}\}$ appears as a coupled family of two-sided Brownian motions with drift. 
 Here, the real coordinate on the $\R$ component of $\Z \times \R$  plays the role of the time variable $t$ of the Brownian motions.  Figure \ref{fig:Buse_process_all} depicts a simulation restricted to the right half-line $\R_{\ge0}$.  The drift is determined by the   direction parameter $\theta$.    Any two trajectories coincide in a neighborhood of the origin and separate at some point.  As we move away from the origin, the trajectories move  further away from each other.  The separation time  is not memoryless, and hence the coupled processes are not jointly Markovian. 
 
 When the spatial location (time variable of the Brownian motions)  is fixed, marginally 
 in the direction parameter $\theta$,  we see a monotone jump process with stationary but dependent increments.  This corresponds to jumping vertically from trajectory to trajectory in Figure \ref{fig:Buse_process_all}.    Explicit distributions of the increments and expected numbers of jumps are given in Section \ref{section:Busepp}. 
 
 Busani~\cite{Busani-2021} recently constructed what is termed the \textit{stationary horizon}, as the scaling limit of the Busemann process along a horizontal line in the exponential lattice corner growth model (CGM). This object is a random collection of continuous functions $\{G_\alpha\}_{\alpha \in \R}$, where $G_\alpha$ is a two-sided variance $4$ Brownian motion with drift $\alpha$. A precise description is given in Definition~\ref{def:SH}. It is expected that the stationary horizon is a universal object in the KPZ class. Our work in Section~\ref{sec:stat_horiz} gives additional evidence for this claim. In Theorem~\ref{thm:dist_of_SH}, we show that, after a simple reflection, the horizontal Busemann process for BLPP is equal in distribution to the stationary horizon, restricted to nonnegative drifts. Therefore, the distributional calculations of Section~\ref{section:Busepp} give additional quantitative information about the stationary horizon, beyond what is given in~\cite{Busani-2021}. Furthermore, we show in Theorem~\ref{thm:conv_to_SH} that under KPZ scaling, the BLPP Busemann process converges to the stationary horizon, in the sense of finite-dimensional distributions.

 \subsection{Non-uniqueness of semi-infinite geodesics} 

The Busemann process can be used to define a family of semi-infinite geodesics that we then call {\it Busemann geodesics}. This construction from \cite{Seppalainen-Sorensen-21a} is repeated in Section \ref{section:SIG}. Due to planar monotonicity, Busemann geodesics bound arbitrary semi-infinite geodesics.  Hence, uniqueness of Busemann geodesics in a given direction translates into uniqueness of all semi-infinite geodesics in that direction.

BLPP has {\it two} sources of non-uniqueness of Busemann geodesics. A given Busemann function $\B^{\theta\sig}$ produces distinct leftmost and rightmost geodesics from a random countable set of initial points, denoted by $\NU_0^{\theta \sig}$. The leftmost and rightmost geodesics are labeled by $L$ and $R$. Additionally, if a direction $\theta$ is a jump point of the Busemann process, there is non-uniqueness represented by a $\theta\pm$  distinction.  The random  set $\Busedc$ of Busemann function discontinuities is countably infinite and dense in the set of directions.

When $\theta \notin \Busedc$, all $\theta$-directed semi-infinite geodesics are Busemann geodesics, including the cases of $L/R$ non-uniqueness. It is an open question whether directions $\theta \in \Busedc$ have semi-infinite geodesics that are not Busemann geodesics. Only in   the exponential lattice CGM is it presently known that the Busemann geodesics account for all semi-infinite geodesics
\cite[Section 3.2]{Janjigian-Rassoul-Seppalainen-19}.

The $L/R$ distinction is a continuum feature that is not present in the exponential lattice CGM, while the $\theta\pm$  distinction is a similar phenomenon as on the lattice. The $L/R$ distinction  occurs only at a random countable set of initial points $\mbf x$.  It turns out that the $L/R$ distinction is {\it local} in the sense that it disappears after a while.  This is illustrated in Figure \ref{fig:multiple_sig} where multiple geodesics emanate from $\mbf x$, but by location $\mbf z$ they have rejoined into a single path. {\it The nontrivial fact is that after $\mbf z$, they remain together forever.} This follows from the fact that after meeting at $\mbf z$ these geodesics become portions of the unique geodesic started from a point without the $L/R$ distinction. See Theorem~\ref{thm:general_coalescence}\ref{itm:return_point} and the proof in Section~\ref{sec:pf_coal}.

\subsection{Coalescence of geodesics}

For each direction $\theta$ in the discontinuity set $\Busedc$, the $\theta+$ geodesics from all (uncountably many) initial points coalesce, and the same is true for $\theta-$.  If $\theta\notin\Busedc$, there is no $\pm$ distinction and again coalescence holds. 
We present a {\it new coalescence proof} that utilizes the regularity   of the Busemann process.  This argument is in Section \ref{sec:pf_coal} where it culminates in the proof of Theorem \ref{thm:general_coalescence}. 

Previously, two approaches to coalescence of planar geodesics  were available. (i)  A proof given by Licea and  Newman \cite{licea1996} used a modification argument followed by a Burton-Keane type lack-of-space argument. (ii) In \cite{Timo_Coalescence} the first author developed a softer proof that utilized the tree of dual geodesics and relied on properties of the stationary version of the growth process. This latter proof we applied to BLPP  in \cite{Seppalainen-Sorensen-21a}.

 \subsection{Fractal sets} Finite geodesics from a given initial point $\mbf x$ to all the points to its north and east begin by either a horizontal or a vertical step.  These two collections of finite geodesics are separated by an infinite path   that emanates from $\mbf x$, called the {\it competition interface}. See Figure \ref{fig:the competition interface}.  In the lattice CGM, the competition interface has a random direction into the open quadrant from each lattice point.  By contrast, in BLPP, the typical competition interface is {\it trivial} in the sense that it is an infinite vertical line. Geometrically, this means that {\it all} geodesics emanating from $\mbf x$ start with a horizontal step.  
 
 However, there is a Hausdorff dimension $\tfrac12$ set of exceptional initial points, called  $\CI$,  from which the competition interface has a nontrivial limiting slope. Even though the set $\CI$ is uncountable, the set of possible limiting slopes is countable.  These limiting slopes are characterized by the Busemann process (Theorem \ref{thm:ci_and_Buse_directions}). 
 
 Random fractals related to geodesics  appear in the KPZ fixed point and the directed landscape. Article \cite{Basu-Ganguly-Hammond-21} studies the {\it Airy difference profile},  the scaling limit of the process $z \mapsto L_{(0,-n^{2/3}),(z,n)}$  $- L_{(0,n^{2/3}),(z,n)}$, where $L$ denotes BLPP time (see Section~\ref{section:def of BLPP}). The  limiting object is a continuous nondecreasing process that is locally constant, except on a set of Hausdorff dimension $\f{1}{2}$. The result of~\cite{Basu-Ganguly-Hammond-21} is applied to the directed landscape in  \cite{Bates-Ganguly-Hammond-22} and is used to study  the set of pairs $y$ such that there exist two geodesics between $(-1,0)$ and $(1,0)$ whose only common points are the endpoints. This set is exactly the set of local variation of the Airy difference profile and therefore has Hausdorff dimension $\f{1}{2}$. See also~\cite{KPZ_violate_Johansson,Ganguly-Hegde-2021} for further study of random fractal sets that appear in the continuum models of the KPZ class. 
 
 \subsection{Queues} 
 Properties of the Brownian queue are central to our arguments. The spatial evolution of the Busemann process implies that it obeys transformations that arise in the queuing context. Our characterization of the distribution of the Busemann process  relies on a uniqueness theorem of Cator, L\'{o}pez and Pimentel~\cite{Cator-Lopez-Pimentel-2019} for the invariant distribution of a particular queuing transformation, stated in the present paper as Theorem~\ref{convergence Theorem in Cator 2019}.
 
  \subsection{Geodesics in the KPZ scaling}  Hammond made a detailed study of point-to-point geodesics in BLPP in the KPZ scaling regime, that is, geodesics between  points $\left(0,2n^{2/3} x\right)$ and $(n,2n^{2/3} y)$, where $x,y \in \R$ and $n$ is a large integer \cite{Hammond1,Hammond2,Hammond3,Hammond4}. This work has been valuable  for understanding the directed landscape. See, for example, \cite{Directed_Landscape,Basu-Ganguly-Hammond-21,Bates-Ganguly-Hammond-22,Ganguly-Hegde-2021}.  The setting of our work  is different, since we study BLPP 
 globally  instead of through a thin  $n\times n^{2/3}$ scaling window. However,  related themes arise. Theorem 1.1 in~\cite{Hammond2} gives explicit asymptotic bounds for the probability that there are $k$ disjoint BLPP geodesics between two intervals of size $n^{2/3}$.    Proposition 6.1 of~\cite{Hammond4}  establishes that,  with high probability, two geodesics from two sufficiently close initial points (in scaled coordinates) to the same terminal point coalesce well before the endpoint.

 Very recently, Rahman and Vir\'ag~\cite{Rahman-Virag-21} proved the existence of semi-infinite geodesics and Busemann functions in the directed landscape, the continuum scaling limit of the KPZ universality class. They first prove the existence of semi-infinite geodesics, then use the geodesics to define Busemann functions. Conversely, in our work, we construct the semi-infinite geodesics from the Busemann functions. While the models are different, there are some analogous results that appear. For example, Theorem 5 of~\cite{Rahman-Virag-21} states that, for a fixed direction and a fixed horizontal line, with probability one, there exists a random, at most countable, set of points from which the geodesic in the fixed direction is not unique. However, all geodesics in that fixed direction coalesce, so the splitting geodesics eventually come back together. This is the same phenomenon we observe in BLPP, proved in~\cite{Seppalainen-Sorensen-21a}, Theorem 3.1(iii),(vii). The present work describes the geometry of all geodesics, simultaneously across all directions, whereas~\cite{Rahman-Virag-21} focuses on a single, fixed direction. For example, in the present paper, we show that this ``bubble" phenomenon occurs simultaneously in  every direction (see Theorems~\ref{thm:NU} and~\ref{thm:general_coalescence}).

 The authors, together with Ofer Busani, apply the techniques developed in this paper to derive corresponding results for the directed landscape (DL)  in~\cite{Busani-Seppalainen-Sorensen-2022}. In particular, the new proof technique for coalescence is crucial, and analogous results hold on  non-uniqueness and random fractal sets. 

 \subsection{Relation to the lattice corner growth model} 
 The results we prove, and our approach, are related to  the work on  the  lattice CGM in \cite{Fan-Seppalainen-20} and~\cite{Janjigian-Rassoul-Seppalainen-19}.  BLPP is technically more challenging than  the lattice situation, and the present work benefits greatly from the direction provided by the prior work on discrete models.  We discuss the relations between the two models  in Section \ref{sec:CGM}. Originally, Glynn and Whitt \cite{glynn1991} derived BLPP as a weak limit of the lattice CGM. We show that under this same scaling, for two fixed directions, the Busemann process of  the exponential  CGM converges weakly to its BLPP counterpart. However, we prove all our results for the Busemann process and semi-infinite geodesics directly from the BLPP model, without importing results from the discrete model and appealing to the limit.
 
 \subsection{Organization of the paper} \label{sec:org} 
 Section~\ref{section:Defs_results} provides definitions and terminology and then states the main theorems. Section~\ref{sec:Buse_jd_intro} gives a detailed description of the distribution of the Busemann process. Section~\ref{section:main_geometry} describes the global structure of the semi-infinite geodesics and competition interfaces. Section~\ref{sec:CGM} illustrates connections to the corner growth model and the stationary horizon. In Section~\ref{sec:op}, we state several open problems. The remainder of the paper is devoted to the proofs. The proofs of the results for the Busemann process, including Theorem~\ref{thm:Theta properties} are in Section~\ref{sec:Buse_proofs}. The proofs of the description of the geodesics and competition interfaces, along with the proofs of the Theorems~\ref{thm:Busedc_class} and~\ref{thm:Haus_comp_interface}, are contained in Section~\ref{sec:detailed_SIG_proofs}. Section~\ref{sec:CGM_proofs} proves the results of Section~\ref{sec:CGM}. The Appendices contain some technical results and inputs from the literature.
 
 \subsection{Acknowledgements}
 Although the techniques and proofs are our own, the results in Section~\ref{sec:stat_horiz} came after the first version of this paper after discussions with Ofer Busani. The authors thank also Tom Alberts, Erik Bates, Wai Tong (Louis)
Fan, Sean Groathouse, Chris Janjigian, Leandro Pimentel, and Firas Rassoul-Agha for helpful discussions. T. Sepp\"al\"ainen
was partially supported by National Science Foundation grants DMS-1602846 and DMS-1854619 and by
the Wisconsin Alumni Research Foundation. E. Sorensen was partially supported by T. Sepp\"al\"ainen via
National Science Foundation grants DMS-1602846 and DMS-1854619.

 \section{Definitions and main results}
  \label{section:Defs_results}
  \subsection{Notation} \label{section:notation}
The following notation and conventions are used throughout the paper.
\begin{enumerate} [label=\rm(\roman{*}), ref=\rm(\roman{*})]  \itemsep=3pt 
\item $\Z$, $\Q$ and $\R$ are restricted by subscripts, as in for example $\Z_{> 0}=\{1,2,3,\dotsc\}$.  

\item \label{SE notation} We use two orderings of space-time points. In the standard coordinatewise ordering,   $(m,s) \le (n,t)$ means that  $m \le n \in \Z$ and $s \le t \in \R$.  
In the  down-right, or southeast, ordering,   $(r,s) \Ne (m,t)$ means that  $m \le r \in \Z$ and  $s < t \in \R$, as in Figure~\ref{fig:initial_point_config}. The weak version $\mbf (r,s) \NeN \mbf (m,t)$ means that  $m \le r$ and  $s \le t$.  
\item  $X \sim \Nor(\mu,\sigma^2)$ indicates that the random variable $X$ has normal distribution with mean $\mu$ and variance $\sigma^2$. For $\alpha > 0$, $X \sim \operatorname{Exp}(\alpha)$ indicates that $X$ has exponential distribution with rate $\alpha$, or equivalently, mean $\alpha^{-1}$.
\item  Equality in distribution between random variables and  processes is denoted by $\deq$.  
\item A two-sided Brownian motion is a continuous random process $\{B(t): t \in \R\}$ such that $B(0) = 0$ almost surely and such that $\{B(t):t \ge 0\}$ and $\{B(-t):t \ge 0\}$ are two independent standard Brownian motions on $[0,\infty)$. For $c > 0$, we call $\{\sqrt c B(t): t\in \R\}$ a Brownian motion of variance $c$.  
\item For $\lambda \in \R$,   $\{Y(t): t \in \R\}$ is a two-sided Brownian motion with drift $\lambda$ if the process $\{Y(t) - \lambda t: t \in \R\}$ is a two-sided Brownian motion. 
\item The square $\sigg$ as a superscript represents a sign: $-$ or $+$. 
\item \label{inc notation}
Increments of  a function $f$ are denoted by  $f(s,t)=f(t) - f(s)$.
\item \label{ginc notation} Increment ordering of two functions $Z,\widetilde Z:\R \rightarrow \R$ is defined as follows:  $Z \li \widetilde Z$ if  $Z(s,t) \leq \widetilde Z(s,t)$ whenever $s < t$.
\item \label{pinned notation}
  The space of continuous functions ``pinned" at $0$ is denoted by 
\[\CRpin = \{f \in C(\R): f(0) = 0\}.\]
\item \label{increment stationarity}
A stochastic process $(X(t))_{t\in\R}$ indexed by the real line $\R$  is increment-stationary if, for each $s \in \R$, this process-level equality in distribution holds: 
\[
(X(0,t))_{t \in \R} \deq (X(s,t + s))_{t \in \R}.
\]
A vector-valued process  $(X^1,\ldots,X^n)$    is jointly increment-stationary if, for each $s \in \R$,
\[
(X^1(0,t),\ldots,X^n(0,t))_{t \in \R} \deq (X^1(s,t + s),\ldots,X^n(s,t + s))_{t \in \R}.
\]
\end{enumerate}

\subsection{Geodesics in Brownian last-passage percolation} \label{section:def of BLPP}
The Brownian last-passage process is defined as follows. On a probability space $(\Omega, \F,\Pp)$, let $\mathbf B = \{B_r\}_{r \in \Z}$ be a field of independent, two-sided Brownian motions. For $(m,s) \le (n,t)$, define the set 
\[
\Pi_{(m,s),(n,t)} := \{\mbf s_{m,n} = (s_{m - 1},s_m,\ldots,s_n) \in \R^{n - m + 2}: s = s_{m - 1} \le s_m \le \cdots \le s_n = t   \}.
\]
 Denote the energy of a sequence $\mbf s_{m,n} \in \Pi_{(m,s),(n,t)}$ by
\be\label{E10} 
\E(\mbf s_{m,n}) = \sum_{r = m}^n B_r(s_{r - 1},s_r).
\ee
Now, for $\mbf x = (m,s) \le (n,t) = \mbf y$, define the Brownian last-passage time as
\begin{equation} \label{BLPP formula}
L_{\mbf x,\mbf y}=L_{\mbf x,\mbf y}(\mbf B) = \sup\{\E(\mbf s_{m,n}): \mbf s_{m,n} \in \Pi_{\mbf x,\mbf y}\}.
\end{equation}

\begin{figure}[t]
\begin{adjustbox}{max totalsize={5.5in}{5in},center}
\begin{tikzpicture}
\draw[gray,thin] (0.5,0) -- (15.5,0);
\draw[gray,thin] (0.5,0.5) --(15.5,0.5);
\draw[gray, thin] (0.5,1)--(15.5,1);
\draw[gray,thin] (0.5,1.5)--(15.5,1.5);
\draw[gray,thin] (0.5,2)--(15.5,2);
\draw[red,ultra thick] (1.5,0)--(4.5,0)--(4.5,0.5)--(7,0.5)--(7,1)--(9.5,1)--(9.5,1.5)--(13,1.5)--(13,2)--(15,2);
\filldraw[black] (1.5,0) circle (2pt) node[anchor = north] {$(0,s)$};
\filldraw[black] (15,2) circle (2pt) node[anchor = south] {$(4,t)$};
\node at (4.5,-0.5) {$s_0$};
\node at (7,-0.5) {$s_1$};
\node at (9.5,-0.5) {$s_2$};
\node at (13,-0.5) {$s_3$};
\node at (0,0) {$0$};
\node at (0,0.5) {$1$};
\node at (0,1) {$2$};
\node at (0,1.5) {$3$};
\node at (0,2) {$4$};
\end{tikzpicture}
\end{adjustbox}
\caption{\small Example of a planar path from $(0,s)$ to $(4,t)$, represented by the sequence \\$(s=s_{-1}, s_0, s_1, s_2, s_3, s_4=t)\in\Pi_{(0,s),(4,t)}$.}
\label{fig:BLPP_geodesic}
\end{figure}

\begin{figure}[t]
\begin{adjustbox}{max totalsize={5.5in}{5in},center}
    \begin{tikzpicture}
\draw[gray,thin] (0.5,0) -- (15.5,0);
\draw[gray,thin] (0.5,0.5) --(15.5,0.5);
\draw[gray, thin] (0.5,1)--(15.5,1);
\draw[gray,thin] (0.5,1.5)--(15.5,1.5);
\draw[gray,thin] (0.5,2)--(15.5,2);
\draw[black,thick] plot coordinates {(1.5,-0.1)(1.7,-0.2)(1.9,0.1)(2.1,0.4)(2.3,-0.3)(2.5,0.2)(2.7,0.1)(2.9,-0.3)(3.1,-0.4)(3.3,0.2)(3.5,0.3)(3.7,-0.4)(3.9,0.1)(4.1,-0.2)(4.3,-0.3)(4.45,0.2)};
\draw[red,thick] plot coordinates {(4.5,0.4)(4.7,0.7)(4.9,0.3)(5.1,0.1)(5.4,0.6)(5.7,0.3)(5.7,0.6)(5.9,0.8)(6.1,0.4)(6.3,0.3)(6.6,0.4)(6.95,0.8)};
\draw[blue,thick] plot coordinates {(7,0.8)(7.3,1.2)(7.7,1.1)(8,0.6)(8.2,0.8)(8.5,1.3)(8.7,1.1)(9,0.7)(9.2,1.1)(9.45,1.3)};
\draw[green,thick] plot coordinates
{(9.5,1.1)(9.7,1.3)(9.9,1.7)(10.2,1.1)(10.4,1.3)(10.6,1.9)(10.8,1.4)
(11.1,1.2)(11.3,1.6)(11.5,1.9)(11.8,1.4)(12,1.1)(12.2,1.8)(12.4,1.3)(12.6,1.2)(12.8,1.1)(12.95,1.7)};
\draw[brown,thick] plot coordinates {(13,2.3)(13.3,1.7)(13.5,2.4)(13.7,2.3)(13.9,2.1)(14.1,1.9)(14.3,1.7)(14.4,2.1)(14.6,1.8)(14.8,1.6)(15,2.1)};
\node at (1.5,-0.5) {$s$};
\node at (4.5,-0.5) {$s_0$};
\node at (7,-0.5) {$s_1$};
\node at (9.5,-0.5) {$s_2$};
\node at (13,-0.5) {$s_3$};
\node at (15,-0.5) {$t$};
\node at (0,0) {$0$};
\node at (0,0.5) {$1$};
\node at (0,1) {$2$};
\node at (0,1.5) {$3$};
\node at (0,2) {$4$};
\end{tikzpicture}
\end{adjustbox}
    \caption{\small The Brownian increments $B_r(s_{r - 1},s_r)$ for $r=0,\dotsc,4$ in \eqref{E10} that make up the energy of the path depicted in Figure \ref{fig:BLPP_geodesic}. 
    }
    \label{fig:BLPP maximizing path}
\end{figure}

Each element $\mbf s_{m,n} \in \Pi_{(m,s),(n,t)}$ represents a unique continuous path $\Gamma$ in $\R^2$ from $\mbf x$ to $\mbf y$ as follows: $\Gamma$ consists of horizontal segments $\{r\}\times[s_{r-1}, s_r]$ on level $r$ for $r=m,\dotsc,n$, connected by vertical unit segments $[r,r+1]\times\{s_r\}$ for $r=m,\dotsc,n-1$. See Figure \ref{fig:BLPP_geodesic}. Because of this bijection, we regard  $\Pi_{\mbf x,\mbf y}$ equivalently as the space of such up-right paths from $\mbf x$ to $\mbf y$. 
For $(m,t) \in \Z \times \R$, we graphically represent the $t$-coordinate as the horizontal coordinate (the time coordinate of the Brownian motions) and the $m$-coordinate as the vertical coordinate (level) on the plane. This is a convention that is taken from~\cite{blpp_utah}, although it disagrees with the standard $x-y$ labelling of the coordinate axes. By continuity and compactness, for all $(m,s) = \mbf x \le \mbf y = (n,t) \in \Z \times \R$, there exists   $\mbf s_{m,n} \in \Pi_{\mbf x,\mbf y}$ such that $\E(\mbf s_{m,n}) = L_{\mbf x,\mbf y}$.  We call a  maximizer $\mbf s_{m,n}$  and its associated path a \textit{geodesic} between $\mbf x$ and $\mbf y$.

The following lemma 
establishes uniqueness of finite geodesics for a fixed initial and terminal point. 
\begin{lemma}[\cite{Hammond4}, Theorem B.1] \label{lemma:uniqueness of LPP time}
Fix endpoints $\mbf x \le \mbf y \in \Z \times \R$. Then, with probability one, there is a unique path whose energy achieves $L_{\mbf x,\mbf y}(\mbf B)$.
\end{lemma}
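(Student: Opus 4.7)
\emph{Proof proposal.} The plan is to recast the energy $\E$ as a continuous centered Gaussian field on a compact parameter space and then invoke a standard Gaussian argmax uniqueness theorem.

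Writing $\mbf x=(m,s)$ and $\mbf y=(n,t)$, I would parameterize $\Pi_{\mbf x,\mbf y}$ bijectively by the tuple of free breakpoints $\mbf u=(s_m,\ldots,s_{n-1})$ lying in the compact simplex
\[
K=\bigl\{\mbf u\in\R^{n-m}:\ s=s_{m-1}\le s_m\le\cdots\le s_{n-1}\le s_n=t\bigr\},
\]
and define $F(\mbf u):=\sum_{r=m}^{n}B_r(s_{r-1},s_r)$. Since each $F(\mbf u)$ is a linear combination of jointly Gaussian increments of the $B_r$'s, the field $F$ is a centered Gaussian process on $K$, and by continuity of Brownian sample paths it has continuous trajectories on the compact set $K$. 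Uniqueness of the finite geodesic is then equivalent to the statement that $F:K\to\R$ has an a.s.\ unique maximizer.

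The main analytic input is the classical fact (often credited to Tsirelson, see also Lemma~2.6 of Kim--Pollard, \emph{Ann.\ Statist.}\ 1990): a continuous centered Gaussian process on a compact set $K$ whose increments satisfy $\Var(F(\mbf u_1)-F(\mbf u_2))>0$ for all $\mbf u_1\ne\mbf u_2$ has an a.s.\ unique maximizer. The only model-specific step is then to verify this nondegeneracy. Given distinct $\mbf u_1,\mbf u_2\in K$, let $r^*\in\{m,\ldots,n-1\}$ be the smallest index at which they disagree; so $s^{(1)}_{r^*-1}=s^{(2)}_{r^*-1}$ while $s^{(1)}_{r^*}\ne s^{(2)}_{r^*}$. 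Independence of the Brownian motions $\{B_r\}_{r\in\Z}$ across levels makes the per-level contributions to $F(\mbf u_1)-F(\mbf u_2)$ independent, so their variances add. The level-$r^*$ piece collapses to
\[
B_{r^*}(s^{(1)}_{r^*-1},s^{(1)}_{r^*})-B_{r^*}(s^{(2)}_{r^*-1},s^{(2)}_{r^*})=B_{r^*}(s^{(2)}_{r^*},s^{(1)}_{r^*}),
\]
a Brownian increment of variance $|s^{(1)}_{r^*}-s^{(2)}_{r^*}|>0$, so the total variance is strictly positive.

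The one obstacle worth flagging is the Gaussian uniqueness theorem itself; if one prefers a self-contained BLPP proof, the cleanest substitute is a covering argument. For any pair of disjoint compact sub-boxes $A,A'\subset K$ with rational corners, one checks that the joint law of $(\max_A F,\max_{A'}F)$ is nondegenerate bivariate (e.g.\ by conditioning on the finitely many Brownian values at the corners of $A,A'$ and using the same independence-across-levels input), so $\Pp(\max_A F=\max_{A'}F)=0$; a countable union over a rational partition of $K$ rules out two distinct maximizers. In either form, the argument isolates the entire probabilistic content of the lemma into the single variance computation above.
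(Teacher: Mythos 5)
The paper does not prove this lemma itself; it cites it from Theorem~B.1 of~\cite{Hammond4}, so I will assess your argument on its own merits. Your proof is correct, and the strategy --- view the energy as a continuous centered Gaussian field $F$ on the compact breakpoint simplex $K$, verify the nondegeneracy condition $\Var\bigl(F(\mbf u_1)-F(\mbf u_2)\bigr)>0$ for $\mbf u_1\ne\mbf u_2$, and quote a Gaussian argmax-uniqueness theorem (Kim--Pollard Lemma~2.6, or Tsirelson/Lifshits) --- is exactly the right one and, to my knowledge, also the route taken in~\cite{Hammond4}. Your variance computation is the only model-specific input and it is done cleanly: at the first disagreeing level $r^*$ the cross-level independence of the $B_r$'s makes the per-level contributions independent, so the variance of the level-$r^*$ term $B_{r^*}(s^{(2)}_{r^*},s^{(1)}_{r^*})$, namely $|s^{(1)}_{r^*}-s^{(2)}_{r^*}|>0$, lower-bounds the total.

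The one place to be careful is the proposed ``self-contained'' covering argument. Nondegeneracy of the bivariate law of $(\max_A F,\max_{A'}F)$ is not the kind of hypothesis from which $\Pp(\max_A F=\max_{A'}F)=0$ follows by soft reasoning: the two suprema are not jointly Gaussian, and a nondegenerate bivariate law can in general put mass on the diagonal, so ``nondegenerate'' would have to be upgraded to something like absolute continuity of the joint law. Proving that is precisely where the Cameron--Martin/conditioning step of the Kim--Pollard lemma does the work; your parenthetical (``conditioning on the finitely many Brownian values at the corners'') gestures at it but does not actually evade the lemma. In short, the covering sketch is a re-derivation of Kim--Pollard rather than an elementary substitute for it, so the cleanest version of your proof simply keeps the direct citation, the reduction to $K$, and the variance calculation.
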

However, it is also true that for each fixed initial point $\mbf x \in \Z \times \R$, with probability one, there exist points $\mbf y \ge \mbf x$ such that the geodesic between $\mbf x$ and $\mbf y$ is not unique.
We show how to construct such points in Lemma~\ref{lemma:mult_geod} and derive a bound on the number of geodesics in Lemma~\ref{lemma:geodesic_bound}.
The following important lemma  is a deterministic statement which holds for last-passage percolation across any field of continuous functions, hence in particular for Brownian motions.

\begin{lemma}[\cite{Directed_Landscape}, Lemma 3.5] \label{existence of leftmost and rightmost geodesics}
Between any two points $(m,s) \le (n,t) \in \Z \times \R$,  there is a rightmost and a leftmost Brownian last-passage geodesic between the two points. That is, there exist $\mbf s_{m,n}^L,\mbf s_{m,n}^R \in \mbf \Pi_{(m,s),(n,t)}$, that are maximal for $\E(\mbf s_{m,n})$, such that, for any other maximal sequence $\mbf s_{m,n}$, $s_{r}^L \le s_r \le s_r^R$ for $m \le r \le n$.
\end{lemma}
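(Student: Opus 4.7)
The statement is deterministic in the continuous field $\{B_r\}$, so the plan is a standard lattice/exchange argument. First, $\Pi_{(m,s),(n,t)}$ is compact in $\R^{n-m+2}$ and $\E$ is continuous, so the set of maximizers
\[
\mathcal{M} := \{\mbf s_{m,n} \in \Pi_{(m,s),(n,t)} : \E(\mbf s_{m,n}) = L_{(m,s),(n,t)}\}
\]
is nonempty and compact. It suffices to produce elementwise minimum and maximum elements of $\mathcal{M}$, since any such elements will automatically satisfy the bounding property claimed in the lemma.

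The crux is an exchange lemma. Given $\mbf s, \mbf s' \in \Pi_{(m,s),(n,t)}$, set $(\mbf s \wedge \mbf s')_r = \min(s_r, s_r')$ and $(\mbf s \vee \mbf s')_r = \max(s_r, s_r')$ for $m-1 \le r \le n$. Both are monotone sequences, agree with $\mbf s, \mbf s'$ at $r = m-1$ and $r = n$, and therefore remain in $\Pi_{(m,s),(n,t)}$. Because $\{\min(s_r, s_r'), \max(s_r, s_r')\} = \{s_r, s_r'\}$ as multisets,
\[
B_r(\min(s_r, s_r')) + B_r(\max(s_r, s_r')) = B_r(s_r) + B_r(s_r'),
\]
and likewise at index $r-1$. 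Subtracting and summing over $r = m, \ldots, n$ yields the energy exchange identity
\[
\E(\mbf s \wedge \mbf s') + \E(\mbf s \vee \mbf s') = \E(\mbf s) + \E(\mbf s').
\]
If $\mbf s, \mbf s' \in \mathcal{M}$, the right-hand side equals $2L_{(m,s),(n,t)}$, while each summand on the left is at most $L_{(m,s),(n,t)}$, forcing both to attain the maximum. Hence $\mathcal{M}$ is closed under coordinatewise $\wedge$ and $\vee$.

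For each $r \in \{m,\ldots,n\}$, compactness of $\mathcal{M}$ and continuity of the projection $\mbf s_{m,n} \mapsto s_r$ produce some $\mbf s^{(r)} \in \mathcal{M}$ attaining $s^L_r := \inf_{\mathcal{M}} s_r$. Iterating $\wedge$, the sequence $\mbf s^L := \mbf s^{(m)} \wedge \cdots \wedge \mbf s^{(n)}$ lies in $\mathcal{M}$ and attains $s^L_r$ in every coordinate simultaneously, so $s^L_r \le s_r$ for all $\mbf s_{m,n} \in \mathcal{M}$ and all $r$. The rightmost geodesic $\mbf s^R$ is constructed symmetrically from suprema and $\vee$.

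The only mild technicality is verifying that $\mbf s \wedge \mbf s'$ and $\mbf s \vee \mbf s'$ are legitimate up-right paths with the correct endpoints, which is immediate from coordinatewise monotonicity of $\min$ and $\max$. The real content is the energy exchange identity above, a continuous analog of the uncrossing/swap trick familiar from discrete LPP, which relies only on additivity of the Brownian increments and not on any probabilistic property; the rest of the argument is soft topology.
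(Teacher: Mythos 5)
Your proof is correct. The paper does not prove this lemma itself but imports it from the cited reference, so there is no internal proof to compare against; your exchange argument is the standard one for this kind of statement and it is complete. Two small points worth being explicit about, both of which you handle implicitly but should be aware of: (1) the coordinatewise $\wedge$ and $\vee$ of two admissible sequences remain admissible because $s_{r-1}\le s_r$ and $s_{r-1}'\le s_r'$ together give $\min(s_{r-1},s_{r-1}')\le\min(s_r,s_r')$ and likewise for $\max$, with the endpoints $s$ and $t$ pinned; (2) the energy-exchange identity works because the two terms at level $r$ use the \emph{same} function $B_r$ evaluated at indices $r$ and $r-1$, so the multiset identity $\{\min,\max\}=\{s_\cdot,s_\cdot'\}$ applies separately at each index and telescopes cleanly. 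The final step (taking $\mbf s^{(m)}\wedge\cdots\wedge\mbf s^{(n)}$ to realize all coordinate infima simultaneously) is exactly right: the result lies in $\mathcal M$ by the lattice property, is $\le s_r^L$ in every coordinate by construction, and is $\ge s_r^L$ because $s_r^L$ is the infimum over $\mathcal M$, hence equals $s_r^L$ for all $r$. Your argument is deterministic in the environment, as the lemma requires, and relies only on continuity, compactness, and additivity of increments, not on any Gaussian structure.
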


To an infinite sequence, $s = s_{m - 1} \le s_m \le s_{m + 1} \le \cdots$ we similarly associate a semi-infinite path. It is possible that $s_r = \infty$ for some $r \ge m$, in which case the last segment of the path is the ray $ \{r\}\times [s_{r - 1},\infty)$, where $r$ is the first index with $s_r = \infty$. The infinite path has direction $\theta \in [0,\infty]$ if 
\[
\lim_{n \rightarrow \infty}\f{s_n}{n} \qquad\text{exists and equals }\theta.
\]
We call an up-right semi-infinite path a \textit{semi-infinite geodesic} if, for any two points $\mbf x\le \mbf y \in \Z \times \R$ that lie on the path, the portion of the path between the two points is a geodesic. 

For a semi-infinite, up-right path $\Gamma$ starting from $\mbf x \in \Z \times \R$,   the coordinate-wise ordering $\le$ is a complete ordering of the set $\Gamma$. This motivates the following definition.
\begin{definition} \label{def:coalescence}
 Two semi-infinite, up-right paths $\Gamma_1$ and $\Gamma_2$ {\it coalesce} if there exists a point $\mbf z \in \Gamma_1\cap\Gamma_2$ such that for all $\mbf w \ge \mbf z$, 
 $\mbf w \in \Gamma_1$ iff  $\mbf w \in \Gamma_2$. We call the minimal such $\mbf z$ the {\it coalescence point}. See Figure~\ref{fig:coalpt}. 
\end{definition}    

\noindent
The following states the coalescence into a {\it fixed} direction 
 proved in~\cite{Seppalainen-Sorensen-21a}.  Theorem~\ref{thm:general_coalescence} of the present paper extends this result to coalescence of all Busemann geodesics with the same direction $\theta > 0$ and sign $\sigg \in \{-,+\}$ (see Section~\ref{sec:geometry_sub} for the precise definitions).
\begin{theorem}[\cite{Seppalainen-Sorensen-21a}, Theorem 3.1(vii)] \label{thm:fixedcoal}
Fix $\theta > 0$. Then, with probability one, all $\theta$-directed semi-infinite geodesics coalesce. 
\end{theorem}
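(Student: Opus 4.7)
The plan is to reduce coalescence of all $\theta$-directed semi-infinite geodesics to coalescence of the \emph{Busemann geodesics} --- the canonical family constructed from the almost surely existing Busemann function $\B^\theta$ --- and then prove the latter via a stationarity argument. On the $\theta$-dependent full-probability event where $\B^\theta(\mbf x,\mbf y)$ exists for all $\mbf x,\mbf y \in \Z\times\R$, I would build from each $\mbf x$ a leftmost and a rightmost $\theta$-directed Busemann geodesic by selecting, at each transition between consecutive levels, the extremal jump time that maximizes the appropriate Busemann-adjusted increment. By planar monotonicity combined with Lemma~\ref{existence of leftmost and rightmost geodesics}, every $\theta$-directed semi-infinite geodesic from $\mbf x$ is sandwiched between these two Busemann geodesics. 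Consequently, it suffices to show that Busemann geodesics emanating from any two starting points almost surely coalesce.

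For the coalescence of Busemann geodesics, I would first restrict to a dense countable set of initial points, say $\Z\times\Q$, and upgrade to all initial points via the sandwich property. For a fixed pair in $\Z\times\Q$, I would examine BLPP in its \emph{stationary} regime, i.e.\ driven on a horizontal level by an initial profile coming from $\B^\theta$ itself. Burke-type output properties of the Brownian queue imply that the distribution of $\B^\theta$ increments is preserved from level to level, so the joint law of the Busemann increments and of the associated tree of Busemann geodesics is horizontally translation invariant. Following the soft argument of \cite{Timo_Coalescence}, one represents coalescence classes via a dual tree of competition interfaces; ergodicity of the horizontal shift then forces the density of distinct infinite branches to be a deterministic constant, and a Burton--Keane style lack-of-space argument --- exploiting that all branches share asymptotic direction $\theta$ --- pins this constant at zero.

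The main obstacle is the lack-of-space step: quantitatively ruling out a positive density of non-coalescing branches. This requires (i) a measurable construction of the dual competition-interface tree with respect to horizontal shifts; (ii) the shift invariance and ergodicity of the stationary BLPP configuration driven by $\B^\theta$, obtainable from Burke-type invariance of the Brownian queue; and (iii) the geometric observation that if a positive density of branches fail to coalesce, ergodicity forces infinitely many of them into a bounded horizontal window at some high level, contradicting their common asymptotic slope $\theta$. A more involved alternative would be the Licea--Newman modification argument followed by a direct Burton--Keane count, but the softer stationarity-based proof seems cleaner here given that the Busemann-function framework is already in place.
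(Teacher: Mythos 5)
Your reduction to Busemann geodesics via the sandwich property (Theorem~\ref{existence of semi-infinite geodesics intro version}\ref{itm:all semi-infinite geodesics lie between leftmost and rightmost}) is exactly right, and appealing to the soft stationarity machinery of \cite{Timo_Coalescence} correctly identifies the route taken in \cite{Seppalainen-Sorensen-21a}, which is where this theorem is actually proved. But your description of that machinery conflates the two historical approaches that the paper keeps carefully apart. A Burton--Keane lack-of-space count is the engine of the Licea--Newman \cite{licea1996} modification route; it is not what drives the softer argument, and the step you flag yourself as the ``main obstacle'' is precisely the step that route avoids. What the cited proof actually does, as spelled out in the remark after Theorem~\ref{thm:general_coalescence}, is: build southwest semi-infinite geodesics in a dual environment; show that two disjoint $\theta$-directed forward geodesics trap a dual backward geodesic between them, so producing a bi-infinite geodesic with NE/SW asymptotic direction $\theta$; then invoke the separate result (Theorem 3.1(vi) of \cite{Seppalainen-Sorensen-21a}) that bi-infinite geodesics with fixed NE and SW directions almost surely do not exist. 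No ergodic-density or trifurcation count appears in that chain. To run the density argument you sketch you would instead need a measurable dual competition-interface tree, an ergodic theorem for horizontal shifts, and then a substitute for the lattice Burton--Keane pigeonhole, which in BLPP is not automatic because the initial points form a continuum rather than a countable set.

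Separately, it is worth knowing that the present paper supplies a genuinely new proof from which Theorem~\ref{thm:fixedcoal} follows as a special case, namely Theorem~\ref{thm:general_coalescence}\ref{itm:allsignscoalesce}, and that proof uses none of the above. There are no dual geodesics, no bi-infinite geodesics, and no ergodicity. It leans instead on the analytic regularity of the Busemann process in the direction parameter: for $\mbf x \SeS \mbf y$, Theorem~\ref{thm:coupled_BMs_technical}\ref{convBuse} gives $\B^{\theta+}(\mbf x,\mbf y) = \B^{\delta\sig}(\mbf x,\mbf y)$ for all $\delta>\theta$ sufficiently close (local constancy of the jump process $\theta\mapsto\B^{\theta\pm}(\mbf x,\mbf y)$), and Theorem~\ref{thm:equiv_coal_pt} converts equality of Busemann values across distinct directions into a common finite coalescence point, which Lemma~\ref{lemma:SE_coal_pt} then identifies as the actual coalescence point. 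Stationarity still enters implicitly, through the distributional identification that makes $\theta\mapsto\B^{\theta\pm}(\mbf x,\mbf y)$ a step function, but the geometric lack-of-space ideology disappears entirely, and one gets coalescence simultaneously over all $\theta$ and both signs $\sigg$ for free.
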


\begin{figure}[t]
 \centering
            \begin{tikzpicture}
            \draw[gray,thin] (0,0)--(10,0);
            \draw[gray,thin] (0,1)--(10,1);
            \draw[gray,thin] (0,2)--(10,2);
            \draw[gray,thin] (0,3)--(10,3);
            \draw[gray,thin] (0,4)--(10,4);
            \draw[gray,thin] (0,5)--(10,5);
            \draw[red,ultra thick,->] plot coordinates {(1,0)(2,0)(2,1)(3,1)(3,2)(4.5,2)(4.5,3)(7,3)(7,4)(8,4)(8,5)(10,5)(10,5.5)};
            \draw[red,ultra thick] plot coordinates {(0,2)(2.5,2)(2.5,3)(4,3)(4,4)(8,4)};
            \filldraw[black] (1,0) circle (2pt) node[anchor = north] {$\mbf x$};
            \filldraw[black] (0,2) circle (2pt) node[anchor = south] {$\mbf y$};
            \filldraw[black] (7,4) circle (2pt) node[anchor = south] {$\mbf z$};
            \end{tikzpicture}
            \caption{\small Coalescence of two up-right paths at $\mbf z$.}
            \label{fig:coalpt}
            \bigskip
        \end{figure}
\subsection{Main theorems} \label{sec:main_thm}

The geometric properties of BLPP obtained in this paper rest on studying the Busemann process $\{\B^{\theta \sig}(\mbf x,\mbf y): \mbf x,\mbf y \in \Z \times \R,\,\theta > 0,\,\sigg \in \{-,+\}\}$, defined for all points and directions simultaneously.
The $\sigg \in \{-,+\}$ distinction records the left- and right-continuous versions of the process as a function of $\theta$.   Theorem~\ref{thm:summary of properties of Busemanns for all theta} provides a detailed summary of the properties of this process. 
 The immediate connection between the Busemann process and the last-passage percolation process is the following limit, stated in 
Theorem~\ref{thm:summary of properties of Busemanns for all theta}\ref{busemann functions agree for fixed theta}:  for a fixed direction $\theta > 0$, with probability one, for all $\mbf x,\mbf y \in \Z \times \R$,
\[
\B^{\theta -}(\mbf x,\mbf y) = \B^{\theta +}(\mbf x,\mbf y) = \lim_{n \to \infty} \bigl[L_{\mbf x,(n,n\theta)} - L_{\mbf y,(n,n\theta)}\bigr].
\]

However, in general across all directions $\theta > 0$, it does not hold that $\B^{\theta -} = \B^{\theta +}$ as functions $(\Z \times \R)^2 \to \R$. The finer geometric properties of BLPP turn out to be intimately related  to the random set of discontinuities of the Busemann process, defined as follows: 
\be \label{eqn:Theta}
\Busedc_{\mbf x,\mbf y} = \{\theta > 0: \B^{\theta-}(\mbf x,\mbf y ) \neq \B^{\theta +}(\mbf x,\mbf y)\}\qquad \text{and}\qquad
\Busedc = \bigcup_{\mbf x,\mbf y \in \Z \times \R} \Busedc_{\mbf x,\mbf y}.
\ee
As the discontinuity set of a function of locally bounded variation (see Remark~\ref{rmk:dense_theta}), $\Busedc_{\mbf x,\mbf y}$ is at most countable. When it is understood that $\theta \notin \Busedc$, we write $\B^\theta$ without the $\pm$ distinction in the superscript. 
 Our first main result is a description of the random set of discontinuities of the Busemann process. 

\begin{theorem} \label{thm:Theta properties}
For each fixed $\theta > 0$, $\Pp(\theta \in \Busedc) = 0$. Further, the following hold on a single event of probability one. 
\begin{enumerate} [label=\rm(\roman{*}), ref=\rm(\roman{*})]  \itemsep=3pt
    \item \label{itm:Theta_count_intro}The set $\Busedc$ is countably infinite and dense in $\R_{>0}$. 
    \item \label{itm:const} For each $\mbf x\neq \mbf y \in \Z \times \R$, the set $\Busedc_{\mbf x,\mbf y}$ is infinite and either has a single limit point at $0$ or no limit points. Furthermore, on each open interval $I \subseteq (0,\infty) \setminus \Busedc_{\mbf x,\mbf y}$, the function  $\theta \mapsto \B^{\theta-}(\mbf x,\mbf y) =\B^{\theta+}(\mbf x,\mbf y)$ is constant on $I$.
    \item \label{itm:Theta=Vm} For each $m \in \Z$, the set $\Busedc_{(m,-t),(m,t)}$ is nondecreasing in $t \in \R_{\ge 0}$. For any $m \in \Z$ and any sequence $t_k \to \infty$,
    \be \label{eqn:line_dc}
    \Busedc = \bigcup_k \Busedc_{(m,-t_k),(m,t_k)}
    \ee
\end{enumerate}
\end{theorem}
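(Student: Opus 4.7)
The plan is to handle the fixed-$\theta$ claim directly from results cited in the introduction, then establish part (iii), and finally derive (i) and (ii) from (iii) together with the jump-intensity information from Section~\ref{sec:Buse_jd_intro}. All of the ``single event of probability one'' statements will live on the intersection of the countably many full-probability events already provided by Theorem~\ref{thm:summary of properties of Busemanns for all theta} (cocycle, monotonicity of $\theta \mapsto \B^{\theta\sig}((m,s),(m,t))$ for $s < t$, left/right continuity in $\theta$, and the distributional description of the process).

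For the fixed-$\theta$ statement, I invoke Theorem~\ref{thm:summary of properties of Busemanns for all theta}\ref{busemann functions agree for fixed theta}: for each fixed $\theta > 0$, on a full-probability event $\B^{\theta-}(\mbf x,\mbf y) = \B^{\theta+}(\mbf x,\mbf y)$ holds simultaneously for all $\mbf x, \mbf y \in \Z \times \R$. On this event $\theta \notin \Busedc_{\mbf x,\mbf y}$ for every pair, and so $\theta \notin \Busedc$.

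For part (iii), monotonicity in $t$ of $\Busedc_{(m,-t),(m,t)}$ is a direct cocycle computation: given $0 \le t_1 < t_2$, write
\[
\B^{\theta\sig}((m,-t_2),(m,t_2)) = \B^{\theta\sig}((m,-t_2),(m,-t_1)) + \B^{\theta\sig}((m,-t_1),(m,t_1)) + \B^{\theta\sig}((m,t_1),(m,t_2)).
\]
Each of the three horizontal summands is nondecreasing in $\theta$, so their jumps at any $\theta$ have the same sign and cannot cancel; a jump of the middle term forces a jump of the whole sum, yielding the desired inclusion. For the identity $\Busedc = \bigcup_k \Busedc_{(m,-t_k),(m,t_k)}$, the $\supseteq$ direction is trivial. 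For $\subseteq$, I take $\theta \in \Busedc_{\mbf x,\mbf y}$ and use the cocycle to decompose $\B^{\theta\sig}(\mbf x, \mbf y)$ into horizontal increments on level $m$ and one-step vertical increments $\B^{\theta\sig}((i,s),(i+1,s))$. At least one of these pieces has a $\theta$-jump. A horizontal jump on level $m$, on an interval $(s,t)$, lies in $\Busedc_{(m,-t_k),(m,t_k)}$ for large $t_k$ by the monotonicity just established (extended to arbitrary $s < t$ by another cocycle step). A vertical jump must be transferred to a horizontal one on level $m$; this is the crux, to be attacked by using that the Busemann process on a single level together with the underlying Brownian field $\mbf B$ determines, via the queueing/RSK recursion, the Busemann process on every other level, and that this recursion is continuous in its input, so discontinuities on any level propagate to level $m$.

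For part (i), countability of $\Busedc$ is immediate from (iii), since each $\Busedc_{(m,-t_k),(m,t_k)}$ is the discontinuity set of a monotone function and hence countable. Density and infinitude follow from the jump-intensity formulas derived in Section~\ref{sec:Buse_jd_intro}: the expected number of $\theta$-jumps of $\B^{\theta\sig}((m,-t),(m,t))$ in any interval $(a,b) \subset \R_{>0}$ is strictly positive and tends to infinity with $t$, so the set of jumps in $(a,b)$ is almost surely infinite, and taking the intersection over all rational $(a,b)$ yields density and infinitude on a single event. For part (ii), the pure-jump description of $\theta \mapsto \B^{\theta\sig}(\mbf x,\mbf y)$ from Section~\ref{sec:Buse_jd_intro} shows the map is constant on each open interval in $(0,\infty) \setminus \Busedc_{\mbf x,\mbf y}$; infinitude of $\Busedc_{\mbf x,\mbf y}$ and the absence of limit points away from $0$ follow from the jump rate, whose mass on $(a,\infty)$ is finite for each $a > 0$ while the mass on $(0,a)$ diverges.

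The main obstacle is the reverse inclusion in (iii), namely converting a vertical Busemann discontinuity into a horizontal one on the prescribed level $m$. This will require a careful continuity argument for the queueing operators connecting consecutive levels, and it is the technical core that legitimizes reducing all discontinuity questions to a single horizontal line.
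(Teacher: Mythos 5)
Your handling of the fixed-$\theta$ statement and the monotonicity in part~\ref{itm:Theta=Vm} is fine and matches the paper in spirit: monotonicity comes from the increment-ordering of the Busemann functions, packaged in the paper as Theorem~\ref{thm:coupled_BMs_technical}\ref{itm:ineqboundary}.

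However, the step you flag as the ``crux'' --- converting a vertical-edge (or higher-level) discontinuity into a horizontal discontinuity on level $m$ --- is where your plan fails, and the reason is that the continuity argument points in the wrong direction. The recursion available is $h_m^{\theta\sig} = D(h_{m+1}^{\theta\sig}, B_m)$, which goes \emph{down} in level, and $D(\cdot, B_m)$ is continuous in its first argument. Continuity therefore yields: if $h_{m+1}^{\theta+} = h_{m+1}^{\theta-}$ (no jump at level $m+1$) then $h_m^{\theta+} = h_m^{\theta-}$ (no jump at level $m$), i.e.\ the inclusion $H_m \subseteq H_{m+1}$. The inclusion you actually need, $H_{m+1} \subseteq H_m$, is the statement that $D(\cdot, B_m)$ does \emph{not} erase discontinuities --- an injectivity-type claim, not a continuity claim. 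Since $D$ is a queueing/smoothing operator, it is a genuine issue whether it can absorb a jump of its input into the queue length; the paper rules this out not by continuity but by a sign contradiction: assuming $h_m^{\theta+} = h_m^{\theta-}$ everywhere while $h_{m+1}^{\theta+}(s) \neq h_{m+1}^{\theta-}(s)$, one examines $f(t) = \sup_{t\le u}\{B_m(u) - h_{m+1}^{\theta+}(u)\} - \sup_{t\le u}\{B_m(u) - h_{m+1}^{\theta-}(u)\}$, which must be constant, and then uses the two-sided limits $\lim_{t\to\pm\infty}[B_m(t) - h_{m+1}^{\theta\sig}(t)] = \mp\infty$ together with the increment ordering $h_{m+1}^{\theta+} \li h_{m+1}^{\theta-}$ to force $f$ to take both a strictly positive and a nonpositive value, a contradiction. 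An inversion-based or continuity-based argument would need the map $h_m \mapsto h_{m+1}$ (not $h_{m+1} \mapsto h_m$) to be well-defined and continuous, which the paper never establishes and which does not hold without additional data such as the paired reflected process.

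There is also a secondary gap in your density and infinitude arguments in parts~\ref{itm:Theta_count_intro} and~\ref{itm:const}: knowing that the \emph{expected} number of jumps in a rational interval $(a,b)$ tends to infinity does not give almost-sure infinitude of the jump count, since a monotone limit with infinite mean can still be finite almost surely. The paper instead gets density from a pathwise construction (Theorem~\ref{thm:qualitative_Buse}\ref{itm:theta_split_h}: any two Busemann trajectories with parameters in $(\gamma,\delta)$ eventually separate, and the separation produces a jump direction in $[\gamma,\delta]\cap\Busedc$), and gets infinitude of $\Busedc_{\mbf x,\mbf y}$ by combining local constancy away from jumps (Theorem~\ref{thm:coupled_BMs_technical}\ref{convBuse}) with the divergence of $\B^{\theta\sig}(\mbf x,\mbf y)$ as $\theta\searrow 0$ or $\theta\to\infty$ in the decomposition~\eqref{hvdecomp} --- an argument that does not pass through expected jump counts at all. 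Your jump-intensity formulas also apply only to horizontal increments on a single level, so they do not directly control $\Busedc_{\mbf x,\mbf y}$ when $\mbf x$ and $\mbf y$ are on different levels.
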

\begin{remark}
  Part \ref{itm:Theta=Vm} above says that the entire set of discontinuities appears in the discontinuities of $\theta\mapsto \B^{\theta}((m,-t),(m,t))$ for $t$ outside any large bounded interval $[-T,T]$, on each horizontal level $m$ of the lattice. 
\end{remark}

\begin{remark}
Theorem~\ref{thm:Theta properties}\ref{itm:const} states that $\theta \mapsto \B^{\theta \pm}(\mbf x,\mbf y)$ are the right- and left-continuous versions of a jump process. This condition implies strong results about the collection of semi-infinite geodesics. In particular, the set $\Busedc$ classifies directions in which the collection of semi-infinite geodesics in that direction all coalesce. This is described in the next theorem. 
\end{remark}
\begin{theorem} \label{thm:Busedc_class}
The following hold on a single event of full probability.
\begin{enumerate}[label=\rm(\roman{*}), ref=\rm(\roman{*})]  \itemsep=3pt
\item \label{itm:BLPP_good_dir_coal} When $\theta \notin \Busedc$, all $\theta$-directed semi-infinite geodesics {\rm(}from each initial point{\rm)} coalesce. There is a countably infinite random set of initial points, outside of which, the semi-infinite geodesic in each direction $\theta\notin\Theta$ is unique.
\item \label{itm:BLPP_bad_dir_split} When $\theta \in \Busedc$, there are at least two coalescing families of $\theta$-directed semi-infinite geodesics, called the $\theta-$ and $\theta +$ geodesics. From each initial point $\mbf x \in \Z \times \R$, there exists at least one $\theta -$ geodesic and at least one $\theta +$ geodesic, which separate at some point $\mbf y \ge \mbf x$ and never come back together.
\end{enumerate}
\end{theorem}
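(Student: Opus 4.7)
My plan is to deduce this theorem from the structure of the Busemann process established in Theorem~\ref{thm:Theta properties} together with the Busemann geodesic construction recalled in the introduction. The key inputs are: (a) for each $\theta>0$ and sign $\sig$, the ``Busemann geodesic'' tree built from the increments of $\B^{\theta\sig}$ produces $\theta$-directed semi-infinite geodesics from every initial point; (b) by planar monotonicity, leftmost and rightmost $\theta\sig$ Busemann geodesics bound every semi-infinite geodesic in direction $\theta$; and (c) the general coalescence statement (Theorem~\ref{thm:general_coalescence}) which upgrades Theorem~\ref{thm:fixedcoal} so that, on a single full-probability event, for every $\theta>0$ and every $\sig\in\{-,+\}$, all $\theta\sig$ Busemann geodesics coalesce, and the set $\NU_0^{\theta\sig}$ of initial points with $L/R$ non-uniqueness is countable.

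For part~\ref{itm:BLPP_good_dir_coal}, condition on the full-probability event of Theorems~\ref{thm:Theta properties} and~\ref{thm:general_coalescence}. If $\theta\notin\Busedc$, then $\B^{\theta-}=\B^{\theta+}$ everywhere, so there is a single Busemann tree in direction $\theta$, all of whose geodesics coalesce by (c). By sandwiching (b), every $\theta$-directed semi-infinite geodesic from any initial point agrees with the $L$ and $R$ Busemann geodesics, so they too coalesce and moreover agree (hence the geodesic is unique) unless the initial point lies in $\NU_0^{\theta}$. To produce a countably infinite exceptional set of initial points that handles all $\theta\notin\Busedc$ simultaneously, I would take the union of $\NU_0^{\theta\sig}$ over the countable set $\Busedc$ together with a dense countable set of ``good'' directions; since the Busemann process is piecewise constant in $\theta$ on the complement of $\Busedc$ by Theorem~\ref{thm:Theta properties}\ref{itm:const}, this finite/countable collection captures all the $L/R$ splits that can occur at any $\theta\notin\Busedc$.

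For part~\ref{itm:BLPP_bad_dir_split}, fix $\theta\in\Busedc$. By (c) the $\theta-$ Busemann geodesics form one coalescing family and the $\theta+$ Busemann geodesics form another, so from every $\mbf x\in\Z\times\R$ there is at least one of each. To see they must separate from any $\mbf x$, I would argue by contradiction: if the $\theta-$ and $\theta+$ geodesics from $\mbf x$ coincided on their entire trajectories, then $\B^{\theta-}(\mbf x,\mbf y)=\B^{\theta+}(\mbf x,\mbf y)$ along every $\mbf y$ on the shared path (as increments of the Busemann function along a Busemann geodesic recover $\B^{\theta\sig}$ differences); pushing this relation far into direction $\theta$ and invoking Theorem~\ref{thm:Theta properties}\ref{itm:Theta=Vm} that the discontinuities must manifest on every horizontal line eventually, one contradicts $\theta\in\Busedc$. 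For the ``never meet again'' clause, suppose the two geodesics separated at $\mbf p$ and rejoined at $\mbf q$. Then the path increments of $\B^{\theta-}$ along the $\theta-$ branch from $\mbf p$ to $\mbf q$ and along the $\theta+$ branch from $\mbf p$ to $\mbf q$ both equal $\B^{\theta-}(\mbf p,\mbf q)$ and $\B^{\theta+}(\mbf p,\mbf q)$ respectively, yet the two branches pass through different levels/time-points, which by the monotonicity $\B^{\theta-}\leq\B^{\theta+}$ and the recovery of edge increments from the geodesic forces equality of local increments, contradicting the existence of the jump at $\theta$ ruled out along at least one such edge.

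The main obstacle is this last ``never rejoin'' argument: it requires carefully tracking how a loop between a separation and a coalescence point would force the two Busemann functions $\B^{\theta-}$ and $\B^{\theta+}$ to agree on enough edges to propagate into a global equality along the coalescence tree; this is where I expect to need the explicit recovery of $\B^{\theta\sig}$ increments from the geodesic together with the jump-process characterization of $\theta\mapsto \B^{\theta\sig}(\mbf x,\mbf y)$ from Theorem~\ref{thm:Theta properties}\ref{itm:const}.
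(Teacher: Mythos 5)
Part~\ref{itm:BLPP_good_dir_coal} of your proposal is correct in spirit: the paper reduces it to the equivalences \ref{notTheta}$\Leftrightarrow$\ref{coal}$\Leftrightarrow$\ref{countonegeod} of Theorem~\ref{thm:geod_description}, and the exceptional set is exactly $\NU_0=\bigcup_{\theta\in\Q_{>0}}\NU_0^\theta$ from Theorem~\ref{thm:NU}\ref{itm:union}; your ad hoc union (over $\Busedc$ plus a countable dense set of good directions) is effectively this set. But part~\ref{itm:BLPP_bad_dir_split} has a genuine gap in the ``never rejoin'' step.

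Your Busemann-increment loop argument does not produce a contradiction as stated. If the $\theta-$ and $\theta+$ geodesics from $\mbf x$ separate at $\mbf p$ and rejoin at $\mbf q$, then both branches are geodesics from $\mbf p$ to $\mbf q$, so indeed $\B^{\theta-}(\mbf p,\mbf q)=L_{\mbf p,\mbf q}=\B^{\theta+}(\mbf p,\mbf q)$. But this two-point equality is entirely compatible with $\theta\in\Busedc$ --- the set $\Busedc_{\mbf p,\mbf q}$ of jump directions for that single pair is countable, and $\theta\notin\Busedc_{\mbf p,\mbf q}$ is the generic situation even when $\theta\in\Busedc$. There is no local-increment contradiction to be extracted from a single bubble, because the jump at $\theta$ need not manifest between $\mbf p$ and $\mbf q$. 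What the paper actually uses (Remark~\ref{rmk:splitting_coalesceing_general}) is the $L/R$ distinction: fix the \emph{leftmost} representatives $\Gamma_{\mbf x}^{\theta-,L}$ and $\Gamma_{\mbf x}^{\theta+,L}$ and appeal to the fact that each is the leftmost geodesic between any two of its points (Theorem~\ref{existence of semi-infinite geodesics intro version}\ref{Leftandrightmost}). Since leftmost geodesics between fixed endpoints are unique, a reunion at $\mbf q$ forces $\Gamma_{\mbf x}^{\theta-,L}$ and $\Gamma_{\mbf x}^{\theta+,L}$ to agree from $\mbf x$ to $\mbf q$, contradicting the split. Your sketch never singles out the $L$ or $R$ representatives, and without that there is no uniqueness to invoke.

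Your argument for the ``must separate'' step --- pushing the equality $\B^{\theta-}=\B^{\theta+}$ along the shared path and appealing to Theorem~\ref{thm:Theta properties}\ref{itm:Theta=Vm} --- gestures in the right direction, but it does not follow from Theorem~\ref{thm:Theta properties} alone. The link you need is \ref{itm:pmBuse}$\Leftrightarrow$\ref{itm:disjointgeod} of Theorem~\ref{thm:thetanotsupp}, which connects equality of Busemann functions at a southeast-ordered pair to intersection of the appropriate $\theta\pm$ geodesics; its proof rests on the new coalescence machinery (Theorem~\ref{thm:general_coalescence} together with Lemmas~\ref{lemma:coaleqh}--\ref{lemma:coaleqdiag} and Theorem~\ref{thm:equiv_coal_pt}). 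The paper then packages all of this into the equivalence chain of Theorem~\ref{thm:geod_description}, whose contrapositive \ref{Lonegeod}$\Leftrightarrow$\ref{notTheta} is what actually forces the split. So your proposal correctly identifies the ingredients you will need but stops short of the two structural facts --- unique $L/R$ geodesics, and the Busemann/coalescence equivalence --- that make part~\ref{itm:BLPP_bad_dir_split} close.
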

\begin{remark}
There are two types of non-uniqueness present in Theorem~\ref{thm:Busedc_class}. The type mentioned in Part~\ref{itm:BLPP_good_dir_coal} is temporary in the sense that geodesics must come back to coalesce. This type of non-uniqueness occurs in every direction, but only from a countably infinite set of initial points. The second type of non-uniqueness  in Part~\ref{itm:BLPP_bad_dir_split} occurs from every initial point, but only in a countable dense set of directions. Unlike the previous type, the geodesics that separate do not come back together.  See Section~\ref{sec:non_unique_coal} for more discussion on non-uniqueness. In the case $\theta \in \Busedc$, we do not know whether there are more than two coalescing of families of geodesics, but we expect that this is not the case. In exponential last-passage percolation, it was shown in~\cite{Janjigian-Rassoul-Seppalainen-19} that there can be no more than two such families, using machinery from~\cite{Coupier-11} that relies on the connection to TASEP. See Remark~\ref{rmk:Busegeod} for further discussion.
\end{remark}

Due to the geometry of the space $\Z \times\R$, when the splitting of geodesics described in Theorem~\ref{thm:Busedc_class}\ref{itm:BLPP_bad_dir_split} occurs at a point $\mbf y$, one geodesic must make an upward step from $\mbf y$ while the other moves horizontally from $\mbf y$.  
The \textit{competition interface} from an initial point $\mbf y$ in discrete lattice models separates points $\mbf z \ge \mbf y$ depending on whether the geodesic from $\mbf y$ to $\mbf z$ makes an initial horizontal or vertical step. In BLPP, this concept is much more delicate. This is because, for a fixed initial point $(m,s) \in \Z \times \R$, with probability one, for \textit{every} point $(n,t)$ with $n \ge m$ and $t >s$, all geodesics from $(m,s)$ to $(n,t)$ travel initially along the horizontal line at level $m$. However, there is a random exceptional set of points at which this is not the case, defined as follows:
\begin{align}
\CI &= \{(m,s) \in \Z \times \R: \text{ for some } (n,t) \text{ with }n \ge m, t > s, \text{ there exists a}  \label{CI_intro}\\
&\qquad\qquad\text{ geodesic from $(m,s)$ to $(n,t)$ that makes an initial vertical step}\}. \nonumber 
\end{align}
Refer to Figure~\ref{fig:CI_Def} for clarity. 
\begin{figure}[t]
    \centering
    \begin{adjustbox}{max totalsize={.9\textwidth}{\textheight},center}
\begin{tikzpicture}
\draw[gray,thin] (0.5,0) -- (15.5,0);
\draw[gray,thin] (0.5,1) --(15.5,1);
\draw[gray, thin] (0.5,2)--(15.5,2);
\draw[gray,thin] (0.5,3)--(15.5,3);
\draw[gray,thin] (0.5,4)--(15.5,4);
\draw[red,ultra thick] (1.5,0)--(4.5,0)--(4.5,1)--(7,1)--(7,2)--(9.5,2)--(9.5,3)--(13,3)--(13,4)--(15,4);
\filldraw[black] (1.5,0) circle (2pt) node[anchor = north] {$(m,s)$};
\filldraw[black] (15,4) circle (2pt) node[anchor = south] {$(n,t)$};
\filldraw[black] (9.5,2) circle (2pt) node[anchor = north] {$(k,u)$};
\end{tikzpicture}
\end{adjustbox}
\caption{\small An example of a typical point $(m,s)$ that is \textit{not} in $\CI$. However, the point $(k,u)$ \textit{does} lie in $\CI$ since the geodesic from $(k,u)$ to $(n,t)$ makes an immediate vertical step.}
\label{fig:CI_Def}
\end{figure}
The following theorem describes this exceptional set.
 \begin{theorem} \label{thm:Haus_comp_interface}
 With probability one, for each level $m \in \Z$, the set 
    $
    \CI_m := \{s \in \R: (m,s) \in \CI\}
    $
    has Hausdorff dimension $\f{1}{2}$ and is dense in $\R$.  Hence, the set $\CI$ itself has Hausdorff dimension $\f{1}{2}$. For each $\mbf y \in \Z \times \R$, $\Pp(\mbf y \in \CI) = 0$. The set $\CI$ also has an equivalent description as the set of  $\mbf x \in \Z \times \R$ for which there exists a random direction $\theta > 0$ such that there are two semi-infinite geodesics from $\mbf x$ in direction $\theta$, whose only common point is the initial point $\mbf x$. 
 \end{theorem}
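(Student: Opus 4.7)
The plan is to first recast $\CI$ in terms of semi-infinite geodesics, then in terms of the Busemann process, and finally to compute the Hausdorff dimension using standard fractal techniques.

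First I would establish the equivalent description of $\CI$. If $(m,s) \in \CI$, there is some finite endpoint $(n,t)$ and a vertical-first geodesic from $(m,s)$ to $(n,t)$; by Lemma~\ref{existence of leftmost and rightmost geodesics} there is also a horizontal-first geodesic. Taking subsequential limits of endpoints $(n_k,t_k)$ with $t_k/n_k \to \theta$ for some $\theta \in (0,\infty]$ produces two semi-infinite geodesics from $(m,s)$ with different initial steps. By Theorem~\ref{thm:Busedc_class}\ref{itm:BLPP_good_dir_coal}, this forces $\theta \in \Busedc$, and by part~\ref{itm:BLPP_bad_dir_split} the $\theta\pm$ geodesics never rejoin, so their only common point is $(m,s)$. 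Conversely, any vertical-first semi-infinite geodesic from $(m,s)$ contains a vertical-first finite sub-geodesic and so gives membership in $\CI$.

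Next I would translate $(m,s) \in \CI_m$ into a condition on the Busemann process. By the Busemann-function construction of semi-infinite geodesics in~\cite{Seppalainen-Sorensen-21a}, the initial step of the $\theta\sig$-geodesic from $(m,s)$ is vertical if and only if $B_m(u)-B_m(s) \le \B^{\theta\sig}((m+1,s),(m+1,u))$ for all $u$ in some right neighborhood of $s$. Combined with the previous paragraph, this gives $(m,s)\in\CI_m$ iff there exists $\theta\in\Busedc$ for which this inequality holds with $\B^{\theta+}$ but not $\B^{\theta-}$, or vice versa. From this, $\Pp(\mbf y \in \CI)=0$ for each fixed $\mbf y$ follows because for each fixed $\theta$ the probability of the required Busemann discontinuity at $\mbf y$ is zero by Theorem~\ref{thm:Theta properties}; density of $\CI_m$ in $\R$ follows from Theorem~\ref{thm:Theta properties}\ref{itm:Theta=Vm}, which guarantees that every bounded horizontal window on level $m$ carries discontinuities of $\theta\mapsto \B^{\theta\pm}$, each producing a point of $\CI_m$.

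For the Hausdorff dimension, the upper bound $\dim \CI_m \le \tfrac{1}{2}$ follows from a covering argument. Cover $[-T,T]$ by intervals of length $\epsilon$; the local Brownian fluctuation of $B_m$ over an $\epsilon$-interval is of order $\sqrt{\epsilon}$, and a first-moment estimate combined with the regularity and monotonicity of the Busemann process in $\theta$ should give $\Pp(\CI_m \cap [s,s+\epsilon] \neq \varnothing) = O(\sqrt{\epsilon})$. Summing yields $\Hh^\alpha(\CI_m \cap [-T,T]) = 0$ for every $\alpha>1/2$. For the lower bound, apply Frostman's energy method: construct a random Borel measure $\mu$ supported on $\CI_m$ and show $\iint |s-s'|^{-\alpha}\,d\mu(s)\,d\mu(s') < \infty$ for every $\alpha < 1/2$. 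The key input is a two-point estimate of the form $\Pp(\{s,s'\}\subset \CI_m) \lesssim |s-s'|^{-1/2}$. Since $\Z$ is countable, the dimension of $\CI \subset \Z \times \R$ coincides with $\dim \CI_m = \tfrac{1}{2}$.

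The main obstacle is the two-point correlation estimate in the lower bound. Because the Busemann process at distinct spatial points is non-trivially correlated through the queuing-type spatial evolution described in Section~\ref{sec:Buse_jd_intro}, the joint probability $\Pp(\{s,s'\}\subset \CI_m)$ does not factor. A plausible route is to decompose over the random direction $\theta$ at which the split at $s$ occurs, condition on the value of the Busemann function and of $B_m$ at $s$, and then use Brownian scaling of the relevant excursion-like increments to extract the $|s-s'|^{-1/2}$ factor. The stationary-horizon invariance and the explicit distributional calculations of Section~\ref{sec:Buse_jd_intro} should provide the quantitative tools needed to execute this plan.
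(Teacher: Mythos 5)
Your Hausdorff dimension argument has a genuine gap, and it replaces a one-step reduction by an estimate you never prove. The paper's proof sandwiches $\CI_m$ between $\LM_{m+1}$ and $\bigcup_{n>m}\LM_n$, where $\LM_r$ is the classical left-maximizer set $\{s : B_{r-1}(s)-B_r(s) = \sup_{s\le u\le t}[B_{r-1}(u)-B_r(u)] \text{ for some } t>s\}$, whose Hausdorff dimension is $\tfrac12$ almost surely by Corollary~\ref{cor:hausdorff dimension for left maxes standard two-sided BM}. The lower inclusion $\LM_{m+1}\subseteq\CI_m$ is immediate (a left-max point of $B_m-B_{m+1}$ at $s$ gives a geodesic from $(m,s)$ to some $(m+1,t)$ whose leftmost representative starts with a vertical step), and the upper inclusion holds because if $n>m$ is the first level at which the leftmost geodesic from $(m,s)$ exits the vertical ray, that geodesic passes through $(n,s)$, which forces $s\in\LM_n$. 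Hausdorff dimension is preserved under countable unions, so both bounds follow at once, and the same sandwich gives $\Pp(\mbf y\in\CI)=0$ for free. Your proposed program---a first-moment covering bound $\Pp(\CI_m\cap[s,s+\epsilon]\neq\varnothing)=O(\sqrt\epsilon)$ and a Frostman two-point estimate $\Pp(\{s,s'\}\subset\CI_m)\lesssim|s-s'|^{-1/2}$---is unnecessary, and the two-point estimate is precisely the piece you defer: it is far from clear that it can be proved, since the Busemann process at nearby spatial points is correlated across all directions $\theta$ via the queueing evolution, the joint law does not factor, and you offer no mechanism to extract the required scaling. The proposal thus substitutes a hard, unexecuted quantitative program for the structural observation that $\CI_m$ lives inside a countable union of Brownian level-type sets.

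Your density argument is also not quite right as written: the fact that $\Busedc_{(m,-t),(m,t)}$ is infinite does not directly place a point of $\CI_m$ in every subinterval, because Theorem~\ref{thm:ci_and_Buse_directions} only identifies each $\theta\in\Busedc$ with $\theta_{(m,s)}^R$ for \emph{some} $(m,s)\in\CI$, without controlling which level or which subinterval hosts it. The paper instead observes that between any rationals $q_1<q_2$ on level $m$ the unique geodesic from $(m,q_1)$ to $(m+1,q_2)$ jumps to level $m+1$ at some $u\in(q_1,q_2)$, so $(m,u)\in\CI$ by Theorem~\ref{thm:ci_equiv}\ref{itm:some_vert_geod}. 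Your route to the equivalent description is sound in spirit (and Theorem~\ref{thm:ci_equiv} is the right tool), but the central dimension claim is where the proposal does not reach the finish line.
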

   \begin{remark}
    There are in fact many more equivalent ways to describe the set $\CI$. These are detailed in Theorem~\ref{thm:ci_equiv}. Compare Theorem~\ref{thm:Busedc_class}\ref{itm:BLPP_bad_dir_split} with Theorem~\ref{thm:Haus_comp_interface}. On one hand, when $\theta \in \Busedc$, from \textit{every} initial point $\mbf x$, there exist two semi-infinite geodesics in direction $\theta$ that eventually split. On the other hand, for all $\mbf x \notin \CI$, the two geodesics do not split immediately. See Figure~\ref{fig:intro_split}. This is in contrast to the exponential corner growth model studied in~\cite{Janjigian-Rassoul-Seppalainen-19}, where \textit{every} initial point has a random direction in which there are two semi-infinite geodesics in that direction that split immediately.  
  \end{remark}

\begin{figure}[t]
            \begin{tikzpicture}
            \draw[red,ultra thick,->] plot coordinates {(1,0)(4,0)(4,1)(6,1)(6,2)(8,2)(8,3)(9.5,3)(9.5,4)(12,4)};
            \draw[red, ultra thick] plot coordinates {(0,2)(2.5,2)(2.5,3)(4,3)(4,4)(9.5,4)};
            \draw[blue,thick,->] plot coordinates
            {(0,2)(2.5,2)(2.5,3)(3,3)(3,4)(3.5,4)(3.5,5)(8,5)(8,5.5)};
            \draw[blue,thick] plot coordinates {(1,0)(4,0)(4,1)(5,1)(5,2)(5.5,2)(5.5,3)(6,3)(6,4)(7,4)(7,5)};
            \filldraw[black] (1,0) circle (2pt) node[anchor = north] {\small $\mbf x$};
            \filldraw[black] (0,2) circle (2pt) node[anchor = east] {\small $\mbf y$};
            \filldraw[black] (3,3) circle (2pt) node[anchor = north] {\small $\mbf w$};
            \filldraw[black] (5,1) circle (2pt) node[anchor = north] {\small $\mbf v$};
            \end{tikzpicture}
            \caption{\small An illustration of the behaivor of Theorem~\ref{thm:Busedc_class}\ref{itm:BLPP_bad_dir_split} and Theorem~\ref{thm:Haus_comp_interface}. The red/thick paths are the $\theta +$ geodesics and the blue/thin paths are the $\theta-$ geodesics, for $\theta \in \Busedc$. The $\theta-$ geodesics all coalesce, and the $\theta +$ geodesics all coalesce.  From every initial point, there are two distinct semi-infinite geodesics in direction $\theta$, but the geodesics can only split at points lying in the Hausdorff dimension $\f{1}{2}$ set $\CI$ (In this example, the splitting points are $\mbf v$ and $\mbf w$). }
            \label{fig:intro_split}
        \end{figure}

\section{The distribution of the Busemann process} \label{sec:Buse_jd_intro}

 As alluded to in the previous section, Busemann functions give  the asymptotic difference of last-passage times from all pairs of   starting points to a common terminal point that travels to $\infty$ in a given direction. The direction is indexed by a parameter $\theta > 0$. See Figure~\ref{fig:Busemann functions} and Theorem~\ref{thm:summary of properties of Busemanns for all theta}\ref{busemann functions agree for fixed theta} below. Alberts, Rassoul-Agha, and Simper~\cite{blpp_utah} proved the existence of Busemann functions for fixed initial points and directions. In~\cite{Seppalainen-Sorensen-21a}, we extended this to the full Busemann process, indexed by all lattice pairs $(\mbf x, \mbf y)$, directions $\theta>0$ and signs $\pm$, that records also the discontinuities in the direction parameter.  This is our starting point. In order to clearly indicate whether a probability one statement applies globally or to fixed parameters, we refer to several full probability events that were constructed in~\cite{Seppalainen-Sorensen-21a}, namely $\Omega_2$, $\Omega^{(\theta)}$, and $\Omega_{\mbf x}^{(\theta)}$. 
 
   \begin{figure}[t]
  \centering
            \begin{tikzpicture}
            \draw[black,thick] (0,0)--(8,0);
            \draw[black,thick] (0,0)--(0,3.5);
            \draw[red, ultra thick] plot coordinates {(1.5,1)(2.7,1)(2.7,1.5)(3,1.5)(3,2)(4.6,2)};
            \draw[red, ultra thick,arrows = ->] plot  coordinates {(2.1,0.1)(3,0.1)(3,0.5)(3.6,0.5)(3.6,1)(4.1,1)(4.1,1.5)(4.6,1.5)(4.6,2)(5.5,2)(5.5,2.5)(6.1,2.5)(6.1,3)(6.9,3)};
            \draw[black,thick,arrows = ->] (7,3) --(8,3.5);
            \filldraw[black] (1.5,1) circle (2pt) node[anchor =  south] {\small $\mbf y$};
            \filldraw[black] (2,0) circle (2pt) node[anchor = south] {\small $\mbf x$};
            \filldraw[black] (7,3) circle (2pt);
            \node at (7,2.7) {\small $(n,n\theta)$};
            \draw[black,thick] (-0.2,3)--(0.2,3);
            \node at (-0.4,3) {\small $n$};
            \draw[black,thick] (7,-0.2)--(7,0.2);
            \node at (7,-0.4) {\small $n\theta$};
            \end{tikzpicture}
            \caption{\small Geodesics from $\mbf x$ and $\mbf y$ to a common terminal point $(n,n\theta)$. The Busemann limit sends $n\to\infty$.}
            \label{fig:Busemann functions}
        \end{figure}

\begin{theorem}[\cite{Seppalainen-Sorensen-21a}, Theorems 3.5 and 3.7] \label{thm:summary of properties of Busemanns for all theta}
On $(\Omega, \F,\Pp)$, there exists a process
\[
\{\B^{\theta \sig}(\mbf x,\mbf y): \theta > 0,\sigg \in \{-,+\}, \mbf x,\mbf y \in \Z \times \R\}
\]
with the following properties. Below, vertical and horizontal Busemann increments are abbreviated by 
\begin{align} 
    &\vv_{m}^{\theta\sig}(t) := \B^{\theta \sig}((m - 1,t),(m,t)), \text{ and} \label{vertical Busemann simple expression} \\
    &\h_m^{\theta \sig}(t) := \B^{\theta \sig}((m,0),(m,t)). \label{horizontal Busemann simple expression}
\end{align}
    \begin{enumerate} [label=\rm(\roman{*}), ref=\rm(\roman{*})]  \itemsep=3pt 
    \item{\rm(}Additivity{\rm)} On $\Omega_2$, whenever $\mathbf x,\mathbf y,\mathbf z \in (\Z \times \R)$, $\theta > 0$, and $\sigg \in \{-,+\}$,
    \[
    \B^{\theta \sig}(\mathbf x, \mathbf y) + \B^{\theta \sig}(\mathbf y,\mathbf z) = \B^{\theta\sig}(\mathbf x, \mathbf z). \label{general additivity Busemanns}
    \] 
    \item{\rm(}Monotonicity{\rm)} \label{general monotonicity Busemanns} On $\Omega_2$, whenever $0 <\gamma < \theta < \infty$, $m \in \Z$, and $t \in \R$,
    \begin{align*}
    0 &\le \vv_{m}^{\gamma -}(t) \leq \vv_{m}^{\gamma +}(t) \leq \vv_{m}^{\theta -}(t) \le \vv_{m}^{\theta +}(t), \text{ and } \\
    B_m&\li \h_m^{\theta +} \li \h_m^{\theta -} \li \h_m^{\gamma +} \li   \h_m^{\gamma -}. 
    \end{align*}
\item{\rm(}Convergence{\rm)} \label{general uniform convergence Busemanns} On $\Omega_2$, for every $m \in \Z$, $\theta > 0$ and $\sigg \in \{-,+\}$, 
    \begin{enumerate} [label=\rm(\alph{*}), ref=\rm(\alph{*})]  \itemsep=3pt 
        \item \label{general uniform convergence:limits from left} As $\gamma \nearrow \theta$,  $\B^{\gamma \sig}(\mbf x,\mbf y)$ converges, uniformly on compact subsets of $(\Z \times \R)^2$, to $\B^{\theta -}(\mbf x,\mbf y)$.  
        \item \label{general uniform convergence:limits from right} As $\delta \searrow \theta$, $\B^{\delta \sig}(\mbf x,\mbf y)$ converges, uniformly on compact subsets of $(\Z \times \R)^2$, to $\B^{\theta +}(\mbf x,\mbf y)$. 
        \item \label{general uniform convergence:limits to infinity}As $\gamma \nearrow \infty$, $\h_m^{\gamma \sig}$ converges,  uniformly on compact subsets of $\R$, to $B_m$. 
        \item \label{general uniform convergence:limits to 0} As $\delta \searrow 0$, $\vv_m^{\delta \sig}$ converges, uniformly on compact subsets of $\R$, to $0$. 
    \end{enumerate}
\item{\rm(}Continuity{\rm)} \label{general continuity of Busemanns}  
For all $r,m \in \Z$, $\theta > 0$, and $\sigg \in \{-,+\}$, $(s,t) \mapsto \B^{\theta \sig}((m,s),(r,t))$ is a continuous function $\R^2 \rightarrow \R$. 
\item{\rm(}Limits{\rm)}  \label{limits of B_m minus h m + 1}
    For each $\theta > 0$ and $\sigg \in \{-,+\}$,
    \[
    \lim_{s \rightarrow \pm \infty} \bigl[B_m(s) - \h_{m + 1}^{\theta \sig}(s)\bigr] = \mp \infty. 
    \]
    \item{\rm(}Queuing relationships for Busemann functions{\rm)}
    \label{general queuing relations Busemanns}
    For all $m \in \Z$, $\theta > 0$, and signs $ \sigg \in \{-,+\}$,  
    \[
    \vv_{m + 1}^{\theta \sig} = Q(\h_{m + 1}^{\theta \sig},B_m) \qquad\text{and}\qquad \h_m^{\theta \sig} =D(\h_{m + 1}^{\theta \sig},B_m), 
    \]
    where $Q$ and $D$ are defined in~\eqref{definition of Q}--\eqref{definition of D}. 
    \item{\rm(}Busemann limit in a fixed direction{\rm)} \label{busemann functions agree for fixed theta} Fix  $\theta > 0$. Then, on the event $\Omega^{(\theta)}$, for all $\mbf x,\mbf y \in \Z \times \R$ and all sequences $\{t_n\}$ satisfying $t_n/n \rightarrow \theta$ as $n\to\infty$,
\be\label{B120} 
\B^{\theta-}(\mbf x,\mbf y) = \lim_{n \rightarrow \infty} \bigl[L_{\mbf x,(n,t_n)} - L_{\mbf y,(n,t_n)}\bigr] =  \B^{\theta+}(\mbf x,\mbf y). 
\ee
    \item{\rm(}Independence{\rm)}
    For any $m \in \Z$,
    \[
    \{\h_r^{\theta \sig}: \theta > 0, \sigg \in \{-,+\}, r > m\} \text{ is independent of } \{B_r: r \le m\} \label{independence structure of Busemann functions on levels}.
    \]
    \item{\rm(}Marginal distributions{\rm)} \label{Buse_marg_dist}For each $\theta > 0$, the process $t \mapsto \h_m^\theta(t)$ is a two-sided Brownian motion with drift $\f{1}{\sqrt \theta}$. The process $t \mapsto \vv_m^\theta(t)$  is a stationary and reversible strong Markov process such that, for each $t \in \R$, $\vv_m^\theta(t) \sim \operatorname{Exp}\bigl(\f{1}{\sqrt \theta}\bigr)$.
\item{\rm(}Shift invariance{\rm)} \label{itm:shift_invariance}
For each $\mbf z \in \Z \times \R$,
\begin{align*}
&\{\B^{\theta \sig}(\mbf x,\mbf y):\mbf x,\mbf y \in \Z \times \R,\theta > 0, \sigg \in \{-,+\}\} \\\deq &\{\B^{\theta \sig}(\mbf x + \mbf z,\mbf y + \mbf z):\mbf x,\mbf y \in \Z \times \R,\theta > 0, \sigg \in \{-,+\}\}. 
\end{align*}
\end{enumerate}
\end{theorem}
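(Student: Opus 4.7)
The plan is a staged construction that builds up from fixed-direction existence to the full process. The starting point is the Alberts--Rassoul-Agha--Simper result: for each rational $\theta>0$ and each pair $\mbf x, \mbf y \in \Z \times \Q$, the Busemann limit in \eqref{B120} exists almost surely. A countable intersection produces a single full-measure event carrying all these limits simultaneously, giving additivity \ref{general additivity Busemanns} directly from the prelimit identity and shift invariance \ref{itm:shift_invariance} from time-shift invariance of the driving Brownian field $\{B_r\}_{r\in\Z}$. Monotonicity \ref{general monotonicity Busemanns} on the countable dense set is a deterministic planar LPP comparison: rotating the terminal direction clockwise weakly decreases horizontal Busemann increments and weakly increases vertical ones.

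With this monotonicity in hand, I would define the left- and right-continuous versions
\[
\B^{\theta\pm}(\mbf x,\mbf y) \;=\; \lim_{\gamma \to \theta^{\mp},\; \gamma \in \Q_{>0}} \B^{\gamma}(\mbf x,\mbf y),
\]
which yields \ref{general uniform convergence:limits from left}--\ref{general uniform convergence:limits from right}; the limits \ref{general uniform convergence:limits to infinity} and \ref{general uniform convergence:limits to 0} come from the same monotonicity together with control of the drift of the horizontal Busemann functions. Joint continuity \ref{general continuity of Busemanns} in $(s,t)$ for each fixed $\theta, \sigg$ is established by showing $t \mapsto \h_m^{\theta\sig}(t)$ inherits Brownian-type local modulus bounds, upgraded from the rational-direction case by sandwiching $\gamma \nearrow \theta$ and $\delta \searrow \theta$.

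The queuing relations \ref{general queuing relations Busemanns} descend from the finite-$n$ recursion $L_{(m,s),(n,t)} = \sup_{s\le u \le t}[B_m(s,u) + L_{(m+1,u),(n,t)}]$, which enforces the prelimit analogues $\h_m^{(n)} = D(\h_{m+1}^{(n)},B_m)$ and $\vv_{m+1}^{(n)} = Q(\h_{m+1}^{(n)},B_m)$; continuity of $D,Q$ under uniform convergence on compacts passes these relations to the limit. For the marginal distributions \ref{Buse_marg_dist}, time-shift invariance together with the queuing relation shows that the law of $\h_m^\theta$ is invariant under the Brownian queuing transformation; the Cator--L\'opez--Pimentel uniqueness theorem for this invariant measure then pins the law down to two-sided Brownian motion with drift $1/\sqrt\theta$, and the stationary output marginal $\vv_m^\theta(t) \sim \operatorname{Exp}(1/\sqrt\theta)$ follows. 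Statement \ref{busemann functions agree for fixed theta} holds because $\Busedc$ is almost surely countable, so a deterministic $\theta$ lies in its complement almost surely and both $\pm$ limits must agree with the direct prelimit. Independence \ref{independence structure of Busemann functions on levels} is immediate from the iterative queuing construction, since $\{\h_r^{\theta\sig}: r>m\}$ is measurable with respect to $\{B_j : j > m\}$.

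The hardest step is upgrading the countable-dense-set construction to a process indexed continuously by all $\theta > 0$, with joint continuity in $(\mbf x, \mbf y)$ for \emph{every} direction and sign simultaneously. Monotonicity is the essential tool, but it must be complemented by Brownian local-modulus bounds uniform over compact $\theta$-intervals to rule out pathological behavior on the uncountable parameter set. A secondary subtlety is extracting the precise drift $1/\sqrt\theta$: one needs both the queue-invariance of the candidate distribution and an asymptotic input from the Busemann limit itself to rule out other stationary laws, so the Cator--L\'opez--Pimentel uniqueness result becomes the key analytic input.
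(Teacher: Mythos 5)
Your overall architecture — construct on a countable dense set of rational directions and rational spatial points, extend by monotone limits in $\theta$ and by continuity in the spatial variable, then derive the queuing relations and distributional properties — is the right skeleton. Three points deserve pushback.

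First, a genuine logical gap in your argument for Part~\ref{busemann functions agree for fixed theta}. You claim the fixed-direction Busemann limit holds ``because $\Busedc$ is almost surely countable, so a deterministic $\theta$ lies in its complement almost surely.'' That inference is false: countability of a random set does not imply any fixed point is almost surely excluded from it (take $\Busedc = \{\theta_0\}$ deterministic). The correct argument is distributional: once Part~\ref{Buse_marg_dist} is in hand, both $t\mapsto\h_m^{\theta+}(t)$ and $t\mapsto\h_m^{\theta-}(t)$ are two-sided Brownian motions with drift $1/\sqrt\theta$, and Part~\ref{general monotonicity Busemanns} forces $\h_m^{\theta+}\li\h_m^{\theta-}$; since two ordered random variables with the same mean are a.s.\ equal, $h_m^{\theta+}(t)=h_m^{\theta-}(t)$ a.s.\ for each $t$, hence for all $t$ by continuity, and one then sandwiches the actual prelimit $L_{\mbf x,(n,t_n)}-L_{\mbf y,(n,t_n)}$ between the already-established limits for rational $\gamma<\theta<\delta$. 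Note also the mild anachronism: $\Busedc$ is defined in the present paper after Theorem~\ref{thm:summary of properties of Busemanns for all theta} is imported, and the paper's own logic goes in the opposite direction (Theorem~\ref{thm:Theta properties} deduces $\Pp(\theta\in\Busedc)=0$ from Part~\ref{busemann functions agree for fixed theta}, not vice versa).

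Second, the claim that ``continuity of $D,Q$ under uniform convergence on compacts'' passes the queuing relations to the limit is imprecise in a way that matters. The maps $D$ and $Q$ involve a supremum over the entire half-line $[t,\infty)$, so uniform convergence on compacts is not sufficient; you also need control of the asymptotic linear slopes of the arguments so that the maximizers stay in a compact set. The present paper's Lemma~\ref{uniform convergence of queueing mapping} states exactly this with the extra hypothesis~\eqref{eqn:lim_cond}, and without it the passage to the limit can fail. In the prelimit setup you propose, the further wrinkle is that the finite-horizon supremum runs only up to $t_n$, not to $+\infty$, so one must also argue that the supremum location does not escape to infinity as $n\to\infty$.

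Third, invoking the Cator--L\'opez--Pimentel uniqueness theorem to pin down the marginal law in Part~\ref{Buse_marg_dist} is a legitimate route, but almost certainly not the one taken in the cited paper. The marginal two-sided Brownian law with drift $1/\sqrt\theta$ for a single fixed direction comes from the stationary coupling (Harrison--Williams/ARS-type argument): one couples BLPP with a stationary boundary condition and identifies the Busemann increment with the stationary increment, which yields the law directly. The uniqueness theorem of~\cite{Cator-Lopez-Pimentel-2019} enters the present paper only for the \emph{joint} distribution across directions (Theorem~\ref{dist of Busemann functions and Bm} and its proof in Section~\ref{sec:uniqueness}), where invariance alone does not determine the joint law. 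Your CLP route would also require establishing, independently, the a.s.\ linear growth rates $\lim_{t\to\pm\infty}h_m^\theta(t)/t=1/\sqrt\theta$ before the uniqueness theorem's hypotheses are verified; that asymptotic input is again obtained from the LPP shape function, so the route is not circular, only more indirect.
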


\begin{remark}
 Since $h_m^{\theta \sig}(0) = 0$ for $m \in \Z$, $\theta > 0$, and $\sigg \in \{-,+\}$, the monotonicity of Part~\ref{general monotonicity Busemanns} implies that, for $m \in \Z$, $t > 0$, and $\gamma < \theta < \delta$, 
 \be \label{weakmont}
h_m^{\delta -}(t) \le h_m^{\theta +}(t) \le h_m^{\theta - }(t) \le h_m^{\gamma+}(t), \qquad\text{and for }t < 0,\text{ all inequalities reverse}.
 \ee
Note that Part~\ref{general monotonicity Busemanns} is much stronger than~\eqref{weakmont}, as {\it all}  increments of $h_m^{\theta - }$ dominate those of $h_m^{\theta +}$. This property is used often in the sequel. 
\end{remark}

\begin{remark} \label{rmk:dense_theta}  Theorem~\ref{thm:summary of properties of Busemanns for all theta}\ref{busemann functions agree for fixed theta} implies that we can fix an arbitrary  countable dense subset $\Lambda$ of directions in $\R_{>0}$ and then include in any  full-probability event  the condition that the limit \eqref{B120} holds for all $\theta\in\Lambda$. In particular, then $\B^{\theta -}(\mbf x,\mbf y) = \B^{\theta +}(\mbf x,\mbf y)$ for all $\mbf x,\mbf y \in \Z \times \R$ and all $\theta\in\Lambda$. This and the left and right limits in Part \ref{general uniform convergence Busemanns} then imply that   $\theta\mapsto\B^{\theta-}(\mbf x,\mbf y)$ and $\theta\mapsto\B^{\theta+}(\mbf x,\mbf y)$ are the left- and right-continuous versions of the same function of locally bounded variation, and a jump happens at any given $\theta$ with probability zero. 

When we prove our new results, we choose $\Lambda=\Q_{>0}$. This comes in the definition~\eqref{omega4} of the full-probability event $\Omega_4$ in the proofs section. As a result, rational directions $\theta$ will occupy a special role in some statements.

A key point is the distinction between the global view and the view into a fixed direction $\theta$. Only the global view reveals the $\pm$ distinction.  On the event  $\Omega^{(\theta)}$ we do not see the   $\pm$ distinction, and hence we can drop the sign from  the superscript and write $\B^\theta, h_m^{\theta}$, and $v_m^{\theta}$.  Note also that the limit in \eqref{B120} has not been established simultaneously in all directions.  
\end{remark}

The term ``Busemann increment'' is justified by the fact that $\B^{\theta \sig}(\mathbf x, \mathbf y)=\B^{\theta \sig}(\mathbf 0, \mathbf y)-\B^{\theta \sig}(\mathbf 0, \mathbf x)$.

The geometric properties of geodesics and competition interfaces explained in   Section~\ref{section:main_geometry} are proved from properties of the  distribution of the Busemann process $\B^{\theta \sig}(\mbf x,\mbf y)$, to which we now turn. 
Through the queuing transformations (Theorem~\ref{thm:summary of properties of Busemanns for all theta}\ref{general queuing relations Busemanns}), additivity (Theorem~\ref{thm:summary of properties of Busemanns for all theta}\ref{general additivity Busemanns}) and stationarity, 
  in principle we can understand the entire Busemann process by restricting our attention to the Busemann process  on a  single  horizontal level $m$:  $\{h_m^{\theta \sig}(t):  \theta > 0, \tspa \sig\in\{-,+\}, \tspa t \in \R \}$. 


\subsection{Horizontal Busemann functions as transforms of Brownian motions with drift}
\label{section:fdd}
The joint distribution of finitely many horizontal Busemann functions is constructed by applying queuing transformations to  independent Brownian motions with drift. 
We define first the path spaces, then the mappings, and lastly the distributions. Recall the pinned function space $\CRpin$ from Section \ref{section:notation}\ref{pinned notation}. 
Set 
\be\label{Yndef}\begin{aligned} 
\ \;  \Y_n := \Bigg\{\mathbf Z = (Z^1,\ldots, Z^n) \in \CRpin^n: \text{ for } 1 \le i \le n, \lim_{t \rightarrow \infty} \frac{Z^i(t)}{t} \text{ exists and lies in }\R_{> 0},  \\%
\qquad\qquad\text{and for } 2 \leq i \leq n, \lim_{t \rightarrow \infty} \frac{Z^i(t)}{t} > \lim_{t \rightarrow \infty} \frac{Z^{i - 1}(t)}{t}    \Bigg\},
\end{aligned}\ee
and
\begin{align} \label{Xndef}
\X_n := \Bigg\{\eta = (\eta^1,\ldots,\eta^n) \in \CRpin^n:  \eta^i \gi \eta^{i - 1}  \text{ for } 2 \leq i \leq n,  
\text{ and } \liminf_{t \rightarrow \infty} \frac{\eta^1(t)}{t} > 0 \Bigg\}.
\end{align}
Two larger spaces $\Ycomp_n$ and $\Xcomp_n$ are defined as above except that the lowest limits 
\[
\lim_{t\rightarrow \infty}
t^{-1}Z^1(t)
\qquad\text{ and }\qquad\liminf_{t\rightarrow \infty} t^{-1}\eta^1(t)
\]
are permitted to be $0$ while the other inequalities are still required to be strict. 
These four  spaces  
are Borel subsets of the space $C(\R)^n$ (see Section~\ref{sec:Buse_proofs}) and in particular separable metric spaces under the topology  of uniform convergence on compact subsets of $\R$. 


 For two functions $Z,B \in \CRpin$ satisfying $\limsup_{t \rightarrow \infty} [B(t) - Z(t)] = -\infty$, define the following mappings. 
\begin{align}
Q(Z,B)(t) = &\sup_{t \le s < \infty}\{B(t,s)-Z(t,s)\}, \label{definition of Q} \\
D(Z,B)(t) = &B(t) + \sup_{0 \le s < \infty}\{B(s) - Z(s)\} - \sup_{t \le s < \infty}\{B(s) - Z(s)\}, \label{definition of D}\\
R(Z,B)(t) = &Z(t) + \sup_{t \le s < \infty}\{B(s) - Z(s)\} - \sup_{0 \le s < \infty}\{B(s) - Z(s)\}. \label{definition of R}
\end{align}
Equivalently, $D(Z,B)(t) = Z(t) + Q(Z,B)(0) - Q(Z,B)(t)$, and $R(Z,B)(t) = B(t) + Q(Z,B)(t) - Q(Z,B)(0)$.
In queuing terms, the increments of $Z$ denote the arrivals process to the queue, while the increments of $B$ denote the service process. For outputs, $Q(Z,B)$ is the queue-length process, and the increments of $D(Z,B)$ form the departures process. See Section 5.3 and Appendix C of~\cite{Seppalainen-Sorensen-21a} for a more detailed description of the connections to queuing theory. For $0 \le a < b$ and $0 \le c < d$, the pair $(D,R)$ is bijective on the following space of functions, denoted $\Y_2^{(a,b),(c,d)}$:
\begin{align*}
 \Bigg\{(Z,B) \in \CRpin^2: \lim_{t \rightarrow \infty} \frac{B(t)}{t} = a,
\lim_{t \rightarrow \infty} \frac{Z(t)}{t} = b, \lim_{t \rightarrow -\infty}\frac{B(t)}{t} = c,
\lim_{t \rightarrow -\infty} \frac{Z(t)}{t} = d \Bigg\}.
\end{align*}
This is presented as Theorem D.1 in~\cite{Seppalainen-Sorensen-21a}, although some extra care is needed to show that $(D,R)$ and its inverse preserve the space $\Y_2^{(a,b),(c,d)}$.  We do not use the bijectivity of the map $(D,R)$ in the present paper, so we omit the full details. A proof that the map $(D,R)$ preserves limits as $t \rightarrow \infty$ is presented as Lemma~\ref{D and R preserve limits}.

We iterate the mapping $D$ as follows: first, set $D^{(1)}(Z) = Z$, and for $n \ge 2$,
\begin{align}
    D^{(n)}(Z^n,Z^{n - 1},\ldots,Z^1) = D(D^{(n - 1)}(Z^n,\ldots,Z^{2}),Z^1). \label{D iterated}
\end{align}
Next define a transformation  $\D^{(n)}$ that maps $\Y_n$ into $\X_n$ and $\Ycomp_n$ into $\Xcomp_n$. For $\mathbf Z  = (Z^1,\ldots,Z^n)\in \Ycomp_n$, the image $\D^{(n)}(\mbf Z) = \eta = (\eta^1,\ldots, \eta^n) \in \Xcomp_n$ is defined as follows: 
\begin{equation} \label{definition of script D}
\eta^i = D^{(i)}(Z^i,\ldots,Z^1) \text{ for } 1 \le i \le n.
\end{equation}
We used decreasing indexing in \eqref{D iterated} to match the main definition \eqref{definition of script D}.

As discussed above, these mappings have their origin in queuing theory. This goes back to the work of Harrison and Williams~\cite{Harrison1985,harrison1990,harrison1992}, but the particular formulation of these mappings matches more closely that in~\cite{brownian_queues}. The iterated mapping $\D^{(n)}$ has analogues in discrete queuing systems. See Theorem 2.1 in~\cite{Ferrari-Martin-2007} and Equation (3-3) in~\cite{Fan-Seppalainen-20}.


\begin{lemma} \label{image of script D lemma}
The mapping $\D^{(n)}$ satisfies the following properties: 
\begin{enumerate} [label=\rm(\roman{*}), ref=\rm(\roman{*})]  \itemsep=3pt 
    \item \label{itm:image} $\D^{(n)}$ maps $\Y_n$ into $\X_n \cap \Y_n$ and $\Ycomp_n$ into $\Xcomp_n \cap \Ycomp_n$.
    \item \label{itm:Ynlim} If $(Z^1,\ldots,Z^n) \in \Ycomp_n$ satisfies
    \[
    \lim_{t \rightarrow \infty} \f{Z^i(t)}{t} = a_i\qquad\text{for }  1 \le i \le n,
    \]
    then the image $(\eta^1,\ldots,\eta^n) = \D^{(n)}(Z^1,\ldots,Z^n)$ also satisfies
    \[
    \lim_{t \rightarrow \infty} \f{\eta^i(t)}{t} = a_i\qquad\text{for }  1 \le i \le n.
    \]
\end{enumerate}
\end{lemma}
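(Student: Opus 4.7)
The plan is to proceed by a joint induction on $n$, establishing parts \ref{itm:image} and \ref{itm:Ynlim} simultaneously. The base case $n=1$ is trivial since $\D^{(1)}(Z)=Z$. For the inductive step, observe that the first $n-1$ output components $(\eta^1,\ldots,\eta^{n-1})$ depend only on $(Z^1,\ldots,Z^{n-1})$ and coincide with $\D^{(n-1)}(Z^1,\ldots,Z^{n-1})$, so the inductive hypothesis delivers all required properties for them. The substantive work lies in analyzing $\eta^n$. The key algebraic identity is
\[
\eta^n = D(\widetilde\eta^{n-1},Z^1)\qquad\text{and}\qquad \eta^{n-1}=D(\widetilde\eta^{n-2},Z^1),
\]
where $(\widetilde\eta^1,\ldots,\widetilde\eta^{n-1}):=\D^{(n-1)}(Z^2,\ldots,Z^n)$ is obtained by applying the inductive hypothesis to the shifted tuple. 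Since $(Z^2,\ldots,Z^n)$ inherits the strict ordering of asymptotic slopes $a_2<\cdots<a_n$ from the original tuple, induction yields $\widetilde\eta^{n-1}\gi\widetilde\eta^{n-2}$ and $\lim_{t\to\infty}\widetilde\eta^j(t)/t=a_{j+1}$.

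For part \ref{itm:Ynlim}, use the explicit representation $D(Z,B)(t)=B(t)+M(0)-M(t)$ with $M(t)=\sup_{u\ge t}[B(u)-Z(u)]$, applied to $B=Z^1$ and $Z=\widetilde\eta^{n-1}$. The integrand $Z^1(u)-\widetilde\eta^{n-1}(u)$ has ratio to $u$ tending to $a_1-a_n<0$ (using $a_1<a_2\le a_n$), so a standard sandwich estimate shows that $M(t)/t\to a_1-a_n$ as $t\to\infty$. Hence $\eta^n(t)/t\to a_1+0-(a_1-a_n)=a_n$, as required. The same slope estimate supplies the condition $\limsup_{t\to\infty}[Z^1(t)-\widetilde\eta^{n-1}(t)]=-\infty$ needed for $D$ to be well-defined.

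For part \ref{itm:image}, the pinning $\eta^n(0)=0$ is immediate from the definition of $D$, and continuity of $\eta^n$ follows from continuity of $M$ (which is inherited from $B-Z$ via its definition as a future supremum). The heart of the argument is the monotonicity claim: if $Z_2\gi Z_1$, then $D(Z_2,B)\gi D(Z_1,B)$. Applied with $Z_1=\widetilde\eta^{n-2}$, $Z_2=\widetilde\eta^{n-1}$, and $B=Z^1$, this yields $\eta^n\gi\eta^{n-1}$. To prove the claim, rewrite $D(Z_i,B)(s,t)=B(s,t)+M_i(s)-M_i(t)$ where $M_i(t)=\sup_{u\ge t}[B(u)-Z_i(u)]$, so that the conclusion becomes the assertion that $M_1-M_2$ is nondecreasing. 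Let $g:=Z_2-Z_1$, which is nondecreasing with $g(0)=0$, and fix $s<t$. Pick a near-maximizer $u^*\ge s$ of $B(u)-Z_1(u)$. If $u^*\ge t$, then $M_1(s)=M_1(t)$ and the inequality follows from $M_2(s)\ge M_2(t)$. If $u^*\in[s,t)$, combine the bounds $M_2(s)\ge (B(u^*)-Z_1(u^*))-g(u^*)=M_1(s)-g(u^*)$ and $M_2(t)\le M_1(t)-g(t)$, the latter because $g(u)\ge g(t)$ for $u\ge t$, together with $g(u^*)\le g(t)$.

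The main obstacle is precisely this monotonicity claim for $D$ in its first argument. The increments of $D(Z,B)$ mix contributions from $B$ and from the future-supremum $M$, so the desired comparison is not a direct pointwise consequence of $Z_2\gi Z_1$; the one-sided structure of the supremum, exploited in the case analysis above, is essential. Once this Skorohod-reflection-type monotonicity is in hand, the induction closes cleanly and both parts follow.
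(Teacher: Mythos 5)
Your proof is correct and follows essentially the same inductive route as the paper: the identity $\eta^{i}=D\bigl(D^{(i-1)}(Z^{i},\ldots,Z^{2}),Z^{1}\bigr)$ together with monotonicity of $D$ in its first argument gives $\eta^{i}\gi\eta^{i-1}$, and your sandwich estimate for $M(t)/t$ is exactly the content of the paper's Lemma~\ref{D and R preserve limits}. The only stylistic difference is that you establish the key monotonicity $Z_1\li Z_2\Rightarrow D(Z_1,B)\li D(Z_2,B)$ directly via a case analysis on near-maximizers, whereas the paper isolates it as Lemma~\ref{ordering of queuing mappings flipped} and proves it more compactly by rewriting $D(Z,B)(s,t)=B(s,t)+\bigl(\cdots\bigr)^{+}$; both arguments are valid.
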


\begin{definition} \label{definition of v lambda and mu lambda}
Given $\lambda = (\lambda_1,\ldots,\lambda_n)$ with $0 < \lambda_1 < \cdots < \lambda_n$, define the probability measure $\nu^\lambda$ on $\Y_n$ as follows: the vector $\mathbf Z = (Z^1,\ldots,Z^n)$ has distribution $\nu^\lambda$ if the components of $\mathbf Z$ are independent and each $Z^i$ is a standard, two-sided Brownian motion with drift $\lambda_i$. The measure $\nu^\lambda$ is extended to $\Ycomp_n$ when $\lambda_1 = 0$. 
Define the measure $\mu^\lambda$ on $\X_n$ (or $\Xcomp_n$) as $\mu^\lambda = \nu^\lambda \circ (\D^{(n)})^{-1}$.
\end{definition}

\begin{lemma} \label{weak continuity and consistency}
The following properties of the measures $\mu^\lambda$ hold: 
\begin{enumerate} [label=\rm(\roman{*}), ref=\rm(\roman{*})]  \itemsep=3pt 
    \item \label{weak continuity} {\rm(}Weak continuity{\rm)}
    Let $\lambda = (\lambda_1,\ldots,\lambda_n)$ with $0 \le \lambda_1 < \cdots < \lambda_n$. For $1 \le i \le n$, let $\lambda_i^k \ge 0$ be sequences satisfying $\lim_{k \rightarrow \infty}\lambda_i^k = \lambda_i$. Then, if $\lambda^k = (\lambda_1^k,\lambda_2^k,\ldots,\lambda_n^k)$, $\mu^{\lambda^k} \rightarrow \mu^\lambda$ weakly, as probability measures on $\Xcomp_n$. 
    \item \label{consistency}{\rm(}Consistency{\rm)} If $(\eta^1,\ldots,\eta^n) \in \Xcomp_n$ has distribution $\mu^{(\lambda_1,\ldots,\lambda_n)}$ for $0 \le \lambda_1 < \cdots < \lambda_n$, then any subsequence $(\eta^{j_1},\ldots,\eta^{j_k})$ has distribution $\mu^{(\lambda_{j_1},\ldots,\lambda_{j_k})}$. 
    \item \label{scaling relations}{\rm(}Scaling relations{\rm)} Let $0 \le \lambda_1 < \cdots < \lambda_n$ and $c> 0, \nu \in \R$. If $(\eta^1,\ldots,\eta^n)$ has distribution $\mu^{(\lambda_1,\ldots,\lambda_n)}$ and $(\wt \eta^1,\ldots,\wt \eta^n)$ has distribution $\mu^{(c(\lambda_1 + \nu),\ldots,c(\lambda_n + \nu))}$, then
\[
\{(\eta^1(t),\ldots,\eta^n(t )):t \in \R\}\deq \Big\{\bigl(c \wt \eta^1(t/c^2) - \nu t,\ldots,c \wt \eta^n(t/c^2)-\nu t\bigr):t \in \R\Big\}.
\]
\end{enumerate}
\end{lemma}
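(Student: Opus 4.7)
The plan is to transfer each statement to the underlying independent Brownian motions through the iterated queuing map $\D^{(n)}$, and then argue continuity, queue-reduction, or scaling at the level of $D$.

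For weak continuity (i), I would realize $\nu^{\lambda^k}$ and $\nu^\lambda$ jointly via a Skorokhod-style coupling: take independent two-sided standard Brownian motions $W^1,\ldots,W^n$ and set $Z^{i,k}(t) = W^i(t)+\lambda_i^k t$ and $Z^i(t)=W^i(t)+\lambda_i t$, so that $(Z^{1,k},\ldots,Z^{n,k}) \to (Z^1,\ldots,Z^n)$ almost surely, uniformly on compact subsets of $\R$. By the continuous mapping theorem, it then suffices to prove that $\D^{(n)}$ is almost surely continuous at such limit points in the uniform-on-compacts topology on $\Ycomp_n$. By \eqref{D iterated}--\eqref{definition of script D}, this reduces to continuity of $(Z,B)\mapsto D(Z,B)$ on pairs with strictly positive drift separation $\mu_Z > \mu_B$, followed by iteration. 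The delicate point is that the supremum $\sup_{t\le s<\infty}[B(s)-Z(s)]$ in \eqref{definition of D} is a priori not continuous under uniform-on-compacts perturbations. However, since in the coupling the limiting drift gaps $\lambda_i - \lambda_{i-1}$ are strictly positive and $W^i - W^{i-1}$ obeys $W^i(s) - W^{i-1}(s) = o(s)$ a.s., one obtains a random $T < \infty$ and index $k_0$ such that for $k \ge k_0$ every supremum in the definition of $\D^{(n)}$ is attained in $[-T,T]$; reducing to suprema over a compact interval makes continuity immediate. The main obstacle is propagating this uniform localization through all $n$ nested applications of $D$ and handling the boundary case $\lambda_1 = 0$, where $\eta^1 = Z^1$ is unchanged and one must verify that the subsequent queues still enjoy strictly positive drift separation in the limit.

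For consistency (ii), I would argue by induction on $n-k$, successively removing unused indices one at a time. The single-index reduction rests on the Burke-type invariance for the Brownian queue: if $Z$ and $B$ are independent two-sided Brownian motions with drifts $\mu_Z > \mu_B$, then $(D(Z,B), R(Z,B))$ are independent two-sided Brownian motions with drifts $\mu_Z$ and $\mu_B$. To remove an index $i$ with $1 < i < n$ that is not in $\{j_1,\ldots,j_k\}$, I would isolate the innermost occurrence of $D$ involving $Z^i$ and $Z^{i-1}$, replace the pair $(Z^i, Z^{i-1})$ by $(D(Z^i,Z^{i-1}), R(Z^i,Z^{i-1}))$ -- still an independent pair of Brownian motions with the same drifts -- and observe that the discarded component $R$, of drift $\lambda_{i-1}$, does not feed into any retained $\eta^{j_r}$ and so can be integrated out. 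The hard part will be the bookkeeping: after each Burke swap one must re-verify that the remaining expression is an iterated $\D$-map applied to the correct independent Brownian family, which requires a re-associativity of the nested $D$'s and careful tracking of which argument plays the role of arrivals versus service.

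For the scaling relations (iii), a direct computation from \eqref{definition of D} shows that for continuous $Z,B$ with $\limsup[B-Z] = -\infty$, any $c>0$, and any $\nu\in\R$,
\[
D\bigl(c\,\wt Z(\cdot/c^2) - \nu\cdot,\,c\,\wt B(\cdot/c^2)-\nu\cdot\bigr)(t)
= c\,D(\wt Z,\wt B)(t/c^2) - \nu t,
\]
because the $-\nu t$ pieces cancel in $B-Z$ and the factor $c$ together with the time change $s \mapsto s/c^2$ pulls through each supremum. Iterating via \eqref{D iterated} yields the analogous identity for $\D^{(n)}$. Next, given independent standard Brownian motions $\wt W^i$, the processes $\wt Z^i(t) := \wt W^i(t) + c(\lambda_i+\nu)t$ have law $\nu^{(c(\lambda_1+\nu),\ldots,c(\lambda_n+\nu))}$, while $c\,\wt Z^i(t/c^2) - \nu t = c\,\wt W^i(t/c^2) + \lambda_i t$ has law $\nu^\lambda$ by Brownian scaling. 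Applying $\D^{(n)}$ on both sides and invoking the identity gives the claimed equality in distribution; this step is essentially algebraic and presents no further obstacle.
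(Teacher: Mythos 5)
Parts~\ref{weak continuity} and~\ref{scaling relations} of your proposal are sound and essentially match the paper. For weak continuity, the coupling $Z^{i,k}(t)=W^i(t)+\lambda_i^k t$ together with the continuity of the queuing map is exactly what the paper does; the ``localization'' step you identify as delicate (suprema being eventually attained on a common compact interval once the drifts are bounded away from each other) is precisely the content of Lemma~\ref{uniform convergence of queueing mapping}, which is proved for this purpose. For the scaling relations, your pullback identity for $D$ under $(c,\nu)$-rescaling is correct, iterates cleanly, and is equivalent to the paper's computation via the closed form in Lemma~\ref{identity for multiple queuing mappings flipped}.

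Part~\ref{consistency} has a genuine structural gap, and it is more than the ``bookkeeping'' you flag. First, the pair $(Z^i,Z^{i-1})$ does not sit inside an innermost $D$ in any retained component: by \eqref{D iterated}, the innermost $D$ inside $\eta^j = D^{(j)}(Z^j,\ldots,Z^1)$ is $D(Z^j,Z^{j-1})$, so for $j>i$ the variable $Z^i$ enters through $D(Z^{i+1},Z^i)$, one level below where $Z^{i-1}$ appears. The surgical replacement you describe only touches $\eta^i$, which is being thrown away, and leaves the retained $\eta^j$ ($j>i$) unchanged and still expressed in terms of the original $Z^i$. Second, the drift is backwards: you propose to discard $R(Z^i,Z^{i-1})$, which has drift $\lambda_{i-1}$, but index $i-1$ is being kept, so the component to be eliminated must have drift $\lambda_i$. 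The mechanism the paper actually uses is the intertwining identity \eqref{intertwining} (more generally Theorem~\ref{Alternate Rep of Iterated Queues}): apply it to the inner block $D^{(j-i+1)}(Z^j,\ldots,Z^i)$ to push $Z^i$ \emph{upward} into the higher-indexed arguments through the Burke chain $\wt Z^i = Z^i$, $\wt Z^\ell = R(Z^\ell,\wt Z^{\ell-1})$, producing $\hat Z^\ell = D(Z^\ell,\wt Z^{\ell-1})$ for $\ell>i$. Then $\eta^j = D^{(j-1)}(\hat Z^j,\ldots,\hat Z^{i+1},Z^{i-1},\ldots,Z^1)$, the $\hat Z^\ell$ are (by Theorem~\ref{multiline invariant distribution}) independent Brownian motions with drifts $\lambda_\ell$ and are independent of $Z^1,\ldots,Z^{i-1}$, and $Z^i$ has been absorbed --- the discarded drift is $\lambda_i$, as it should be. In short: the re-association must come \emph{before} any Burke swap, not after; the relevant neighbor of $Z^i$ is $Z^{i+1}$, not $Z^{i-1}$; and the needed identity already exists in the section as \eqref{intertwining}.
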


\noindent  Now, we can give the following description of finitely many horizontal   Busemann functions on a given level. 
There is no $\theta\pm$ distinction in the statement because it involves only finitely many $\theta$-values, and for a given $\theta$ and $m$, the functions $h_{m}^{\theta-}$ and $h_{m}^{\theta+}$ almost surely coincide.

\begin{theorem} \label{dist of Busemann functions and Bm}
Let $\theta_1 > \theta_2 > \cdots > \theta_n > 0$ and set $\lambda_i = \f{1}{\sqrt \theta_i}$ for $1 \le i \le n$. Then, for each level $m \in \Z$, the $(n+1)$-tuple of functions 
$ 
(B_m,h_{m}^{\theta_1},\ldots,h_{m}^{\theta_n})
$ 
 lies 
almost surely in the space $\Xcomp_{n+1}\cap\Ycomp_{n+1}$ and has probability distribution $\mu^{(0,\lambda_1,\ldots,\lambda_n)}$. 
\end{theorem}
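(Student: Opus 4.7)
The plan is to identify the joint law of $(B_m, h_m^{\theta_1}, \ldots, h_m^{\theta_n})$ as a fixed point of a queueing-style transformation acting across lattice levels, and then to apply the Cator--L\'{o}pez--Pimentel uniqueness theorem (Theorem~\ref{convergence Theorem in Cator 2019}) to identify that fixed point with $\mu^{(0,\lambda_1,\ldots,\lambda_n)}$.

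First, I would verify that $(B_m, h_m^{\theta_1}, \ldots, h_m^{\theta_n})$ lies in $\Xcomp_{n+1} \cap \Ycomp_{n+1}$ almost surely. The increment-ordering conditions defining $\Xcomp_{n+1}$ come from the monotonicity in Theorem~\ref{thm:summary of properties of Busemanns for all theta}\ref{general monotonicity Busemanns}, which yields $B_m \li h_m^{\theta_n} \li \cdots \li h_m^{\theta_1}$, and all $n+1$ components vanish at $0$. The asymptotic-slope conditions defining $\Ycomp_{n+1}$ come from the marginal distributions in Theorem~\ref{thm:summary of properties of Busemanns for all theta}\ref{Buse_marg_dist} together with the strong law for drifted Brownian motion, which produces strictly ordered asymptotic slopes $0 < \lambda_1 < \cdots < \lambda_n$.

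For the main identification, let $\pi_n$ denote the joint law of $(h_m^{\theta_1}, \ldots, h_m^{\theta_n})$; by the shift invariance in Theorem~\ref{thm:summary of properties of Busemanns for all theta}\ref{itm:shift_invariance}, $\pi_n$ does not depend on the level $m$. The queuing relation $h_m^{\theta_i} = D(h_{m+1}^{\theta_i}, B_m)$ from Theorem~\ref{thm:summary of properties of Busemanns for all theta}\ref{general queuing relations Busemanns}, combined with the independence of $B_m$ from $(h_{m+1}^{\theta_1}, \ldots, h_{m+1}^{\theta_n})$ supplied by Theorem~\ref{thm:summary of properties of Busemanns for all theta}\ref{independence structure of Busemann functions on levels}, shows that $\pi_n$ is invariant under the map $\Phi$ that takes a sample $(Z^1, \ldots, Z^n) \sim \pi_n$ together with an independent Brownian motion $B$ of drift $0$ and returns $(D(Z^1, B), \ldots, D(Z^n, B))$. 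A direct unpacking of the iterated definition~\eqref{D iterated}--\eqref{definition of script D} shows that $\mu^{(\lambda_1, \ldots, \lambda_n)}$ is also $\Phi$-invariant: for $(W^1, \ldots, W^n) \sim \nu^{(\lambda_1, \ldots, \lambda_n)}$ independent of $B$, the tuple $\bigl(D(D^{(i)}(W^i, \ldots, W^1), B)\bigr)_{i=1}^n$ is exactly the last $n$ coordinates of $\D^{(n+1)}(B, W^1, \ldots, W^n)$, and Lemma~\ref{weak continuity and consistency}\ref{consistency} identifies this marginal as $\mu^{(\lambda_1, \ldots, \lambda_n)}$.

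The central step, and the main obstacle, is to conclude that these two $\Phi$-invariant measures coincide, i.e.\ $\pi_n = \mu^{(\lambda_1, \ldots, \lambda_n)}$. For $n = 1$, this is the direct setting of Cator--L\'{o}pez--Pimentel, applied with the arrivals $h_m^{\theta_1}$ of drift $\lambda_1$ and the service $B_m$ of drift $0$. For $n \ge 2$ I would bootstrap the one-dimensional uniqueness by inducting on the number of directions, applying Theorem~\ref{convergence Theorem in Cator 2019} in turn to successive pairs $(h_m^{\theta_{i-1}}, h_m^{\theta_i})$, whose drifts are strictly ordered by Theorem~\ref{thm:summary of properties of Busemanns for all theta}\ref{general monotonicity Busemanns}, and propagating the identification along the chain by using the hierarchical structure of $\D^{(n)}$ together with the consistency of Lemma~\ref{weak continuity and consistency}\ref{consistency}. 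Once $\pi_n = \mu^{(\lambda_1, \ldots, \lambda_n)}$ is in hand, a final unpacking of the queuing relations and the independence of $B_m$ expresses the full tuple $(B_m, h_m^{\theta_1}, \ldots, h_m^{\theta_n})$ in distribution as $\D^{(n+1)}(B_m, W^1, \ldots, W^n)$ with $(B_m, W^1, \ldots, W^n) \sim \nu^{(0, \lambda_1, \ldots, \lambda_n)}$, yielding the claimed law $\mu^{(0, \lambda_1, \ldots, \lambda_n)}$.
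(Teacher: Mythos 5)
Your overall architecture mirrors the paper's: establish membership in $\Xcomp_{n+1}\cap\Ycomp_{n+1}$, show both the Busemann law and $\mu^{(\lambda_1,\ldots,\lambda_n)}$ are invariant under the one-step chain~\eqref{Busemann Markov chain}, and then invoke Cator--L\'opez--Pimentel to force the two to coincide. The membership verification and both invariance claims are handled correctly (your derivation of $\Phi$-invariance of $\mu^{(\lambda_1,\ldots,\lambda_n)}$ via consistency of the $\D^{(n+1)}$ marginals is a valid, slightly different route to the paper's Theorem~\ref{existence of an invariant measure for Busemann MC}).

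The uniqueness step, however, has a genuine gap. You propose applying Theorem~\ref{convergence Theorem in Cator 2019} ``in turn to successive pairs $(h_m^{\theta_{i-1}},h_m^{\theta_i})$'' and ``propagating the identification along the chain.'' Theorem~\ref{convergence Theorem in Cator 2019} is not a statement about pairs of arrival processes with ordered drifts: it is a coupling convergence theorem for a \emph{single} arrival process run against a drifted Brownian motion with matching slope, both driven through the chain by the \emph{same} service sequence $\mbf F$. Iterating over consecutive-index pairs $(h_m^{\theta_{i-1}},h_m^{\theta_i})$ cannot identify the joint law of the $n$-tuple: each such application at best confirms a one-dimensional marginal, which you already have from Theorem~\ref{thm:summary of properties of Busemanns for all theta}\ref{Buse_marg_dist}, and the joint law is not reconstructible from consecutive two-dimensional marginals here. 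The mechanism that actually closes the argument (Corollary~\ref{uniqueness of invariant measure for Busemann MC}) is to couple the $n$-dimensional chain started from the unknown invariant law $\pi_n$ against a second $n$-dimensional chain started from $\mu^{(\lambda_1,\ldots,\lambda_n)}$, chosen independent of the first but evolved with the \emph{same} driving sequence $\mbf F$, and then apply Theorem~\ref{convergence Theorem in Cator 2019} separately to each coupled component pair $(\eta_m^{\mbf X,i},\eta_m^{\mbf Z,i})$. Because all $n$ components converge jointly in probability and both laws are invariant, they must be equal. Your sketch neither sets up this coupling nor applies the theorem to the coupled components, so as written it does not go through.

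On the other hand, your final step --- adjoining $B_m$ --- is a genuinely different and correct route. You represent $(B_m,h_m^{\theta_1},\ldots,h_m^{\theta_n})\deq\D^{(n+1)}(B_m,W^1,\ldots,W^n)$ directly by combining the $n$-tuple identification at level $m+1$, the queuing relation $h_m^{\theta_i}=D(h_{m+1}^{\theta_i},B_m)$, and the independence of $B_m$ from the level-$(m+1)$ Busemann functions. The paper instead appeals to weak continuity of $\lambda\mapsto\mu^\lambda$ (Lemma~\ref{weak continuity and consistency}\ref{weak continuity}) together with $h_m^\theta\to B_m$ uniformly on compacts as $\theta\to\infty$. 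Your version is more algebraic and avoids the limiting argument; both work.
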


\subsection{Fixed time marginal process across directions}
\label{section:Busepp}
 
 In this section we study the process $h_0^{\theta \sig}(t)$ for a fixed $t$, as $\theta$ varies. 
 While $\theta$ is the geometrically natural parameter because it represents the asymptotic direction of semi-infinite geodesics, we will also find the parameter $\lambda := \f{1}{\sqrt \theta}$ useful. In particular, for $\lambda > 0$, $t\mapsto h_0^{1/\lambda^2}(t)$ is a Brownian motion with drift $\lambda$. 
 When $\lambda$ is the index, it is convenient to have the alternative notation   
 \[
X(\lambda;t) := h_0^{(1/\lambda^2)-}(t)
\qquad \text{for }  \lambda > 0 \text{ and } t \in \R,
\]
so that $\lambda \mapsto X(\lambda;t)$ is a cadlag process, and $\Ee[X(\lambda;t)] = \lambda t$.
 In light of Theorem~\ref{dist of Busemann functions and Bm}, it makes sense to extend the definition to $\lambda=0$ by setting    $X(0;t):= B_0(t)$. Next, we describe  the behavior of the process $\{X(\lambda;t): \lambda \ge 0\}$ for fixed $t\in\R$. Since the Busemann functions satisfy $h_m^\theta(0) = 0$ and $h_m^\theta \li h_m^\gamma$ for $\theta > \gamma$ (Theorem~\ref{thm:summary of properties of Busemanns for all theta}),   $\lambda \mapsto X(\lambda;t)$ is a nondecreasing process for each $t > 0$.

 \begin{remark} \label{rmk:any_interval_same}
From $h_0^{\theta \sig}(t) = \B^{\theta \sig}((0,0),(0,t))$ and shift invariance (Theorem~\ref{thm:summary of properties of Busemanns for all theta}\ref{itm:shift_invariance}), we have this equality  in distribution of $\lambda$-indexed processes, for each $s \in \R$ and $t > 0$:
\be \label{eqn:deq}
\{X(\lambda;t): \lambda \ge 0\} \deq \{X(\lambda;t + s) - X(\lambda;s): \lambda \ge 0\}.
\ee
Hence, while we focus our attention on the distribution of the left-hand side of~\eqref{eqn:deq}, our results apply as well to the right. Results for negative $t$ are obtained by noting that,  
with $s=-t<0$, \eqref{eqn:deq}  gives the  distributional equality $\{X(\lambda;t): \lambda \ge 0\} \deq \{-X(\lambda;-t): \lambda \ge 0\}$.  
\end{remark}
 A nondecreasing process $\{Y(\lambda): \lambda \in[0,\infty)\}$ is a \textit{jump process} if, with probability one, for every interval $[a,b] \subseteq [0,\infty)$, $Y$ has finitely many points of increase in $[a,b]$. The process $\lambda \mapsto X(\lambda;t)$ is a jump process, as described in Theorem \ref{thm:Busemann jump process intro version} and illustrated in  Figure~\ref{fig:jump process for X}. 

\begin{figure}[t]
    \centering
            \begin{tikzpicture}
            \draw[black,thick] (0,0)--(9,0);
            \draw[black,thick] (0,0)--(0,4);
            \draw[black,thick] (0,1)--(2.5,1);
            \draw[black,fill = black] (2.5,1.5) circle (2pt);
            \draw[black,thick] (2.55,1.5)--(4,1.5);
            \draw[black,fill = black] (4,3) circle (2pt);
            \draw[black,thick] (4.05,3)--(7,3);
            \draw[black,fill = black] (7,4) circle (2pt);
            \draw[black,thick] (7.05,4)--(8,4);
            \draw[black,fill = white] (2.5,1) circle (2pt);
            \draw[black,fill = white] (4,1.5) circle (2pt);
            \draw[black,fill = white] (7,3) circle (2pt);
            \node at (4.5,-0.5) {$\lambda$};
            \node at (-1,2) {$X(\lambda;t)$};
            \end{tikzpicture}
            \caption{\small A graphical description of the process $\lambda \mapsto X(\lambda;t)$ for a fixed $t>0$}
            \label{fig:jump process for X}
        \end{figure}

\begin{theorem} \label{thm:Busemann jump process intro version} 
Fix $t > 0$. Then, $\{ X(\lambda;t): \lambda\ge0\}$ is a nondecreasing real-valued process with stationary increments. With probability one, the path of the process is a step function whose jump locations are a discrete subset of $[0,\infty)$, there exists $\ve  >0$ such that $X(\lambda;t)=B_0(t)$ for $\lambda\in[0,\ve)$,
and $\lim_{\lambda\to\infty}X(\lambda;t)=\infty$.  The expected number of jumps   in an interval $[a,b] \subseteq [0,\infty)$ is given by
\[ \Ee\bigl[ \#\{\lambda\in[a,b]:  X(\lambda-;t) < X(\lambda+;t) \}\bigr] = 2(b - a) \sqrt{{t}/{\pi}}. \]
\end{theorem}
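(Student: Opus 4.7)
I would split the claims into (a)~structural properties that come directly from the already-developed theory and (b)~the explicit jump-rate identity, which is the main computation.

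For (a): monotonicity of $\lambda\mapsto X(\lambda;t)$ for $t>0$ is immediate from the increment ordering $h_m^{\theta-}\li h_m^{\gamma-}$ for $\gamma<\theta$ in Theorem~\ref{thm:summary of properties of Busemanns for all theta}\ref{general monotonicity Busemanns}, combined with $h_m^{\theta\sig}(0)=0$. Stationarity of increments in $\lambda$ follows from the scaling relation Lemma~\ref{weak continuity and consistency}\ref{scaling relations} with $c=1$: shifting every drift by $\nu\ge0$ only subtracts the deterministic function $\nu t$, so $(X(\lambda_i;t))_i\deq(X(\lambda_i+\nu;t)-\nu t)_i$. The step-function structure, the discreteness of the jump set, and the constancy between jumps come from Theorem~\ref{thm:Theta properties}\ref{itm:const} applied at $(\mathbf x,\mathbf y)=((0,0),(0,t))$: $\Busedc_{(0,0),(0,t)}$ is discrete in $(0,\infty)$ with at most one limit point at $\theta=0$, which under $\lambda=1/\sqrt\theta$ sits at $\lambda=\infty$. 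The absence of accumulation at $\lambda=0$, together with the uniform-on-compacts convergence $h_0^{\gamma\sig}\to B_0$ as $\gamma\to\infty$ (Theorem~\ref{thm:summary of properties of Busemanns for all theta}\ref{general uniform convergence Busemanns}(c)), delivers an $\epsilon>0$ with $X(\lambda;t)\equiv B_0(t)$ on $[0,\epsilon)$. Divergence $X(\lambda;t)\to\infty$ comes from monotonicity and the marginal $X(\lambda;t)\sim\Nor(\lambda t,t)$: the a.s.\ limit $L\in(-\infty,\infty]$ satisfies $\Pp(L\le K)\le\Pp(X(\lambda;t)\le K)\to 0$ for every $K$.

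For (b), by stationarity it suffices to compute $\Ee[N([0,\delta])]$ to leading order as $\delta\downarrow 0$. Theorem~\ref{dist of Busemann functions and Bm} realizes $(B_0,h_0^{1/\delta^2})\deq(Z^1,D(Z^2,Z^1))$ with $Z^1,Z^2$ independent two-sided Brownian motions of drifts $0$ and $\delta$. The alternative form of the $D$ map gives
\[
D(Z^2,Z^1)(t)-Z^1(t)=\sup_{s\ge 0}[Z^1(s)-Z^2(s)]-\sup_{s\ge t}[Z^1(s)-Z^2(s)].
\]
Setting $U:=Z^1-Z^2$ (a variance-$2$ Brownian motion with drift $-\delta$) and splitting the first supremum at time $t$ yields the clean identity
\[
X(\delta;t)-X(0;t)=(R-M')_+,
\]
where $R:=\sup_{0\le s\le t}U(s)-U(t)$ is measurable with respect to $U|_{[0,t]}$ and $M':=\sup_{s\ge 0}[U(t+s)-U(t)]\sim\Exp(\delta)$ is independent of $U|_{[0,t]}$. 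Therefore
\[
\Pp\bigl(X(\delta;t)>X(0;t)\bigr)=\Pp(R>M')=\Ee\bigl[1-e^{-\delta R}\bigr]=\delta\,\Ee[R]+O(\delta^2).
\]
Coupling $U(s)=U_0(s)-\delta s$ for a variance-$2$ drift-free $U_0$ gives $|R-R_0|\le\delta t$, so $\Ee[R]\to\Ee[R_0]$ as $\delta\to 0$ with $R_0:=\sup_{[0,t]}U_0(s)-U_0(t)$. Time reversal of $U_0$ on $[0,t]$ combined with Brownian symmetry identifies $R_0\deq\sup_{[0,t]}U_0$, whose expectation for a variance-$2$ Brownian motion is $\sqrt 2\cdot\sqrt{2t/\pi}=2\sqrt{t/\pi}$.

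To pass from the one-jump probability to $\Ee[N([0,L])]$, partition $[0,L]$ into $n$ equal intervals $I_1,\ldots,I_n$. Stationarity gives $\Ee\bigl[\sum_i\1\{N(I_i)\ge 1\}\bigr]=n\cdot\Pp(N([0,L/n])\ge 1)=2L\sqrt{t/\pi}+o(1)$, while pathwise $\sum_i\1\{N(I_i)\ge 1\}\le N([0,L])$ with equality once $n$ is large enough that each $I_i$ contains at most one of the discrete jump locations. Monotone convergence then yields $\Ee[N([0,L])]=2L\sqrt{t/\pi}$, and stationarity extends the formula to arbitrary $[a,b]$. The main obstacle is the small-drift analysis in step~(b): extracting the linear-in-$\delta$ leading term of $\Pp(R>M')$ and justifying the passage $\Ee[R]\to\Ee[R_0]$ as $\delta\to 0$; the remainder is bookkeeping on top of the already-established distributional and regularity properties of the Busemann process.
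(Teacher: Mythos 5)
Your route differs from the paper's, and the jump-rate computation is a genuinely nice alternative, but as written the argument is circular. You invoke Theorem~\ref{thm:Theta properties}\ref{itm:const} to conclude that $\lambda\mapsto X(\lambda;t)$ is a step function with discrete jumps. However, Theorem~\ref{thm:Theta properties} is proved at the end of Section~\ref{sec:Buse_proofs}, and its proof (through the event $\Omega_3$ defined in~\eqref{omega3} and Theorem~\ref{thm:coupled_BMs_technical}) explicitly cites Theorem~\ref{thm:Busemann jump process intro version} to establish exactly the step-function property you want to import. So you cannot use the one to prove the other. Without an independent argument for discreteness, the reduction $\sum_i\1\{N(I_i)\ge 1\}\uparrow N([0,L])$ and the identification $\Pp(N([0,\delta])\ge 1)=\Pp(X(\delta;t)>X(0;t))$ are unjustified, since for a general nondecreasing process the latter event only says there is a point of strict increase, not a jump.

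The fix is to internalize the abstract step that the paper isolates as Theorem~\ref{thm:jump process condition}: set $N_n=\sum_{i=1}^n\1\{X(iL/n;t)>X((i-1)L/n;t)\}$, note that $N_n$ is nondecreasing (along a refining dyadic sequence), and use your small-$\delta$ asymptotic $\Pp(X(\delta;t)>X(0;t))=2\sqrt{t/\pi}\,\delta+o(\delta)$ together with increment-stationarity to conclude $\Ee[N_n]\to 2L\sqrt{t/\pi}<\infty$; monotone convergence then gives $N_\infty:=\lim N_n<\infty$ a.s., and one then argues separately that $N_\infty$ equals the number of points of increase, which is the discreteness you need. This is precisely the role of Theorem~\ref{thm:jump process condition} in the paper's proof. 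Beyond that bookkeeping, your derivation of the jump rate via $X(\delta;t)-X(0;t)=(R-M')_+$ with $M'\sim\Exp(\delta)$ independent of $R$, leading to $\Pp(R>M')=\Ee[1-e^{-\delta R}]=\delta\Ee[R_0]+o(\delta)$ and $\Ee[R_0]=2\sqrt{t/\pi}$ by reflection, is correct and is a cleaner probabilistic route than the paper's, which differentiates the closed-form CDF $F(0;\lambda,t)$ from Theorem~\ref{thm:Buse_inc}/\eqref{eqn:0inc} and applies L'H\^opital. The two give the same constant, as they must: the paper's Theorem~\ref{thm:jump process condition} expresses the rate as $\Ee[X(1;t)-X(0;t)]/\liminf_{\delta\downarrow0}\Ee[X(\delta;t)-X(0;t)\mid X(\delta;t)>X(0;t)]$, and your expansion is just the numerator-over-jump-probability version of the same ratio.
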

\begin{remark} In terms of jump directions of the Busemann process, 
the last statement of Theorem~\ref{thm:Busemann jump process intro version} is equivalent to the following: For $m \in \Z$, $0 < \gamma < \delta \le \infty$ and $s < t \in \R$, the expected number of directions $\theta \in (\gamma,\delta)$ satisfying $h_m^{\theta+}(s,t) < h_m^{\theta -}(s,t)$ is given  by 
\be\label{Th670} 
\Ee\bigl[ \#\bigl(\Busedc_{(m,s),(m,t)}\cap(\gamma,\delta)\bigr) \bigr] = 2\sqrt{\f{t - s}{\pi}} \Big(\f{1}{\sqrt \gamma} - \f{1}{\sqrt \delta}\Big).
\ee
\end{remark}

\noindent Theorem~\ref{thm:Busemann jump process intro version} is proved by first showing increment-stationarity and then analyzing the distribution of an increment of the process. By the increment-stationarity of Theorem~\ref{thm:Busemann jump process intro version}, the distribution of $X(\lambda_2;t) - X(\lambda_1;t)$ is the same as the distribution of $X(\lambda;t) - X(0;t)$, where $\lambda = \lambda_2 - \lambda_1$.  Denote the distribution function of this increment by
\[ F(z;\lambda,t) = \Pp(X(\lambda;t) - X(0;t) \le z) \qquad\text{for } z\ge 0. \]


\begin{theorem} \label{thm:Buse_inc}
For $z \ge 0$, $t > 0$, and $\lambda > 0$,
\begin{equation} \label{BuseCDF}
    F(z;\lambda,t) = \Phi\Bigl(\frac{z - \lambda t}{\sqrt{2 t}}\Bigr) + e^{\lambda z}\biggl( (1 + \lambda z + \lambda^2 t)\Phi\Bigl(-\frac{z + \lambda t}{\sqrt{2 t}}\Bigr) - \lambda \sqrt{{t}/{ \pi}\tspb}\,e^{-\frac{(z + \lambda t)^2}{4 t}}    \biggr).
    \end{equation}
\end{theorem}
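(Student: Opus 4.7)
I plan to reduce to a classical Brownian computation using the queuing-theoretic realization of $h_0^\theta$.  Fix $t > 0$ and set $\theta := 1/\lambda^2$.  Combining Theorem~\ref{dist of Busemann functions and Bm} (with $n=1$) and Theorem~\ref{thm:summary of properties of Busemanns for all theta}\ref{busemann functions agree for fixed theta}, the pair of processes $(B_0,\,h_0^\theta)$ has distribution $\mu^{(0,\lambda)}$ of Definition~\ref{definition of v lambda and mu lambda}, namely the law of $(B,\,D(Z,B))$ for independent standard two-sided Brownian motions $B,Z$ of drifts $0$ and $\lambda$.  Evaluating at $t$,
\[ X(\lambda;t) - X(0;t) \deq D(Z,B)(t) - B(t), \]
so it suffices to compute the distribution of the right-hand side.

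From the definition \eqref{definition of D},
\[ D(Z,B)(t) - B(t) = \sup_{s \ge 0}W(s) - \sup_{s \ge t}W(s), \]
where $W := B - Z$ is a two-sided Brownian motion of variance $2$ with drift $-\lambda$; both suprema are almost surely finite.  Writing $\sup_{s\ge 0}W(s) = \max_{0 \le s \le t}W(s) \vee \sup_{s \ge t}W(s)$ and subtracting $W(t)$ inside each term,
\[ D(Z,B)(t) - B(t) = (U - E)^+, \qquad U := \max_{0 \le s \le t}W(s) - W(t), \;\; E := \sup_{s \ge t}W(s) - W(t). \]
By the strong Markov property at time $t$, the post-$t$ increment process $\{W(t+s) - W(t)\}_{s \ge 0}$ is a variance-$2$ Brownian motion with drift $-\lambda$ independent of $\{W(s)\}_{s \le t}$; hence $U$ and $E$ are independent and $E \sim \Exp(\lambda)$ by the standard formula for the supremum of a negatively drifted Brownian motion.

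To obtain the law of $U$ I time-reverse on $[0,t]$: the process $\tilde W(s) := W(t-s) - W(t)$, $s \in [0,t]$, is a variance-$2$ Brownian motion with drift $+\lambda$ starting from $0$, and $U = \max_{0 \le s \le t}\tilde W(s)$.  The CDF of the running maximum of a Brownian motion with positive drift is classical (e.g.\ from the inverse-Gaussian hitting-time density together with scaling), yielding
\[ F_U(z) = \Phi\Bigl(\tfrac{z - \lambda t}{\sqrt{2t}}\Bigr) - e^{\lambda z}\,\Phi\Bigl(-\tfrac{z + \lambda t}{\sqrt{2t}}\Bigr) \qquad \text{for } z \ge 0. \]

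For the final step, conditioning on $U$ and using the independent $E \sim \Exp(\lambda)$, for $z \ge 0$,
\[ F(z;\lambda,t) = \Pp\bigl((U-E)^+ \le z\bigr) = \Ee\bigl[e^{-\lambda(U-z)^+}\bigr] = F_U(z) + e^{\lambda z}\int_z^\infty e^{-\lambda u} f_U(u)\,du. \]
Evaluating the integral completes the calculation.  After differentiating $F_U$, the exponent $-\lambda u - (u - \lambda t)^2/(4t)$ collapses to $-(u + \lambda t)^2/(4t)$, so $e^{-\lambda u}f_U(u)$ splits into a pure Gaussian density that integrates to $2\,\Phi\bigl(-(z+\lambda t)/\sqrt{2t}\bigr)$, together with a $-\lambda\,\Phi\bigl(-(u+\lambda t)/\sqrt{2t}\bigr)$ term whose antiderivative is supplied by the identity $\int_y^\infty \Phi(-s)\,ds = \phi(y) - y\,\Phi(-y)$.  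Combined with the $-\Phi\bigl(-(z+\lambda t)/\sqrt{2t}\bigr)$ contribution from $F_U(z)$, the three $\Phi\bigl(-(z+\lambda t)/\sqrt{2t}\bigr)$ coefficients assemble into $1 + \lambda z + \lambda^2 t$, while the Gaussian density terms collect into $-\lambda\sqrt{t/\pi}\,e^{-(z+\lambda t)^2/(4t)}$; the surviving $\Phi\bigl((z-\lambda t)/\sqrt{2t}\bigr)$ inherited from $F_U(z)$ then reproduces \eqref{BuseCDF}.  I expect the only real obstacle to be this Gaussian bookkeeping at the end; the probabilistic input—the queuing identity, the strong Markov property, and the classical running-max formula—is collected in the first three paragraphs.
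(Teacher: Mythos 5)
Your proof is correct, and the first step — reducing via Theorem~\ref{dist of Busemann functions and Bm} to the law of $D(Z,B)(t)-B(t)=\sup_{s\ge 0}W(s)-\sup_{s\ge t}W(s)$ with $W$ a variance-$2$ Brownian motion of drift $-\lambda$ — is exactly the paper's reduction. Where you diverge is at the Brownian calculation: the paper simply invokes Theorem~\ref{thm:dist of busemann increment} (a black-box formula imported from the arXiv version of the earlier paper), whereas you re-derive that formula from scratch. Your route — writing the quantity as $(U-E)^+$, observing by the strong Markov property at time $t$ that $E:=\sup_{s\ge t}W(s)-W(t)\sim\Exp(\lambda)$ is independent of $U:=\max_{0\le s\le t}W(s)-W(t)$, time-reversing on $[0,t]$ so that $U$ is the running max of a drift-$+\lambda$, variance-$2$ motion, then computing $F_U(z)+e^{\lambda z}\int_z^\infty e^{-\lambda u}f_U(u)\,du$ — is sound, and the Gaussian bookkeeping works out as you anticipate: the exponent $-\lambda u-(u-\lambda t)^2/(4t)$ collapses to $-(u+\lambda t)^2/(4t)$, which doubles the Gaussian-density term into $2\Phi\bigl(-(z+\lambda t)/\sqrt{2t}\bigr)$; the $-\lambda\int\Phi$ term via $\int_y^\infty\Phi(-s)\,ds=\phi(y)-y\Phi(-y)$ produces $\lambda(z+\lambda t)\Phi\bigl(-(z+\lambda t)/\sqrt{2t}\bigr)-\lambda\sqrt{t/\pi}\,e^{-(z+\lambda t)^2/(4t)}$; and the $-e^{\lambda z}\Phi$ contribution from $F_U(z)$ reduces the coefficient $2+\lambda z+\lambda^2 t$ to $1+\lambda z+\lambda^2 t$, recovering \eqref{BuseCDF}. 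What your approach buys is a self-contained, probabilistically transparent proof: the independent $(U,E)$ structure cleanly explains the mix of a point mass (from $\{U\le z\}$, equivalently $\{U\le E\}$ when $z=0$) and a continuous part, and it avoids outsourcing the hard step. The cost is the extra Gaussian integral, which the paper dispenses with by citation.
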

\begin{remark}  
  Using~\eqref{BuseCDF}, the distribution of $X(\lambda;t) - X(0;t)$ can be written as a mixture of probability measures
    \[
    p \delta_0 + (1 - p) \pi,
    \]
    where $\delta_0$ is the point mass at the origin, 
    \be \label{eqn:0inc}
    p = F(0;\lambda,t)= \Pp(X(\lambda;t) - X(0;t)=0) = (2 + \lambda^2 t) \Phi\bigl(-\lambda\sqrt{{t}/{2}}\,\bigr) - \lambda e^{-\f{\lambda^2 t}{4}} \sqrt{{t}/{\pi}\,}
    \ee
    and $\pi$ is a continuous probability measure supported on $[0,\infty)$ with density \[
    (1 - p)^{-1}\Big[\dfrac{\partial}{\partial z}F(z;\lambda,t)\Big]\1(z > 0).
    \]
    Since $\lambda\mapsto X(\lambda;t) $ is nondecreasing, \eqref{eqn:0inc} implies that   $\lambda\mapsto F(0;\lambda,t)$ is nonincreasing. Further,  from~\eqref{eqn:0inc}, we can compute
    \be \label{eqn:Dlambda}
    \dfrac{\partial }{\partial \lambda} F(0;\lambda,t) = 2 \lambda t \Phi(-\lambda \sqrt{t/2}) - 2e^{-\f{\lambda^2 t}{4}}\sqrt{t/\pi}.
    \ee
    By Theorem 1.2.6 in~\cite{Durrett}, for all $y > 0$, $\int_{-\infty}^{-y} e^{-x^2/2}\,dx < y^{-1} e^{-y^2/2}$ (The theorem is stated with a weak inequality, but the proof shows that the equality is strict).  Applying this to~\eqref{eqn:Dlambda}, we see that for $t > 0$, $\lambda \mapsto F(0;\lambda,t)$ is strictly decreasing. 
    Hence, $p = F(0;\lambda,t) > 0$ for all $\lambda > 0$ and $t > 0$. From~\eqref{eqn:0inc}, for each $t > 0$,
    $\lim_{\lambda \rightarrow \infty} F(0;\lambda,t) = 0$. 
     
     The random variable $X(\lambda;t) - X(0;t)$ has the following Laplace transform/Moment generating function. For $\alpha \in \R_{\neq \lambda}$,
    \begin{align*}
&\Ee\Big[\exp\Big(-\alpha(X(\lambda;t) - X(0;t))\Big)\Big] \\
&=e^{\alpha^2 t - \alpha \lambda t}\Phi\Big((\lambda - 2\alpha)\sqrt{\f{t}{2}}\Big) \Big(1 - \f{\alpha^2}{(\lambda - \alpha)^2}\Big) \\
&\qquad\qquad\qquad+\Phi\Big(-\lambda \sqrt{\f{t}{2}}\Big)\Big(1 + \f{\alpha \lambda}{(\lambda - \alpha)^2} - \f{\alpha(1 + \lambda^2 t)}{\lambda - \alpha}\Big) +\f{\lambda \alpha}{(\lambda - \alpha)}\sqrt{\f{t}{\pi}} e^{-\lambda^2 t/4}.
\end{align*}
This is computed in Section~\ref{sec:global_Buse_proofs}.
\end{remark}

Recall that, for $s < t$, $h_m^{1/\lambda^2}(s,t)$ has the  $\mathcal N(\lambda (t - s), t - s)$ distribution and hence, by the monotonicity in Theorem \ref{thm:summary of properties of Busemanns for all theta}\ref{general monotonicity Busemanns}, $h_m^{1/\lambda^2}(s,t)\to +\infty$ as $\lambda\to\infty$. Part \ref{itm:convNor} below refines this statement. 

\begin{corollary} \label{cor:inc_dist_conseq}
The following hold. 
\begin{enumerate} [label=\rm(\roman{*}), ref=\rm(\roman{*})]  \itemsep=3pt
\item \label{itm:convNor} For fixed $s < t \in \R$, as $\lambda \rightarrow \infty$, $h_m^{1/\lambda^2}(s,t) - B_m(s,t) - \lambda (t- s)$ converges in distribution to a normal random variable with mean zero and variance $2(t - s)$. 
\item \label{itm:non_independent} For $t > 0$ and $0 \le \lambda_1 \le \lambda_2$, $X(\lambda_2;t) - X(\lambda_1;t)$ is not independent of $X(\lambda_1;t)$. Furthermore, the process $\lambda \mapsto X(\lambda;t)$ does not have independent increments.
\end{enumerate}
\end{corollary}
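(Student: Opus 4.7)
Both parts rest on the explicit distribution $F(z;\lambda,t)$ from Theorem~\ref{thm:Buse_inc} combined with the Brownian marginal in Theorem~\ref{thm:summary of properties of Busemanns for all theta}\ref{Buse_marg_dist}.

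For part~\ref{itm:convNor}, the plan is first to use shift invariance (Theorem~\ref{thm:summary of properties of Busemanns for all theta}\ref{itm:shift_invariance}) together with the a.s.\ agreement $h_m^{\theta-}=h_m^{\theta+}$ at a fixed $\theta$ (Theorem~\ref{thm:summary of properties of Busemanns for all theta}\ref{busemann functions agree for fixed theta}) to reduce the claim to showing $X(\lambda;t)-X(0;t)-\lambda t \Rightarrow \Nor(0,2t)$ as $\lambda \to \infty$ for $t>0$. The CDF of this centered random variable is $F(z+\lambda t;\lambda,t)$, and the substitution $z \mapsto z+\lambda t$ in Theorem~\ref{thm:Buse_inc} yields
\[
F(z+\lambda t;\lambda,t) = \Phi\!\left(\tfrac{z}{\sqrt{2t}}\right) + e^{\lambda(z+\lambda t)}\!\left[(1+\lambda z+2\lambda^2 t)\,\Phi\!\left(-\tfrac{z+2\lambda t}{\sqrt{2t}}\right) - \lambda\sqrt{t/\pi}\,e^{-(z+2\lambda t)^2/(4t)}\right].
\]
The first summand is already the target $\Nor(0,2t)$ CDF, so the task reduces to showing that the bracketed expression vanishes as $\lambda \to \infty$. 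The main tool is the Mill's ratio expansion $\Phi(-y) = (y\sqrt{2\pi})^{-1} e^{-y^2/2} + O(e^{-y^2/2}/y^3)$ with $y=(z+2\lambda t)/\sqrt{2t}$, together with the algebraic identity $\lambda(z+\lambda t) - y^2/2 = -z^2/(4t)$, which cancels the exponential prefactor exactly. Polynomial division gives $(1+\lambda z+2\lambda^2 t)/(z+2\lambda t) = \lambda + 1/(z+2\lambda t)$, so the leading Mill's-ratio contribution to the first summand in the bracket equals $\sqrt{t/\pi}\,(\lambda + O(1/\lambda))\,e^{-z^2/(4t)}$, and the subtracted second term removes precisely the $\lambda$ piece. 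The residual is $O(1/\lambda) \to 0$, which gives pointwise convergence of the CDFs and hence convergence in distribution.

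For part~\ref{itm:non_independent}, assume $\lambda_1 < \lambda_2$ (the statement is vacuous when $\lambda_1=\lambda_2$) and argue by a one-line variance computation. Theorem~\ref{thm:summary of properties of Busemanns for all theta}\ref{Buse_marg_dist} shows that for each fixed $\lambda \ge 0$, $t \mapsto X(\lambda;t)$ is a two-sided Brownian motion of drift $\lambda$; in particular $\Var(X(\lambda;t)) = t$, independent of $\lambda$. If $X(\lambda_2;t)-X(\lambda_1;t)$ were independent of $X(\lambda_1;t)$, additivity of variance would force
\[
t = \Var(X(\lambda_2;t)) = \Var(X(\lambda_1;t)) + \Var(X(\lambda_2;t)-X(\lambda_1;t)) = t + \Var(X(\lambda_2;t)-X(\lambda_1;t)),
\]
so the increment would be almost surely constant. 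However, by increment-stationarity (Theorem~\ref{thm:Busemann jump process intro version}), the increment has distribution $F(\,\cdot\,;\lambda_2-\lambda_1,t)$, which by~\eqref{eqn:0inc} has a strictly positive atom at $0$ while simultaneously having strictly positive mean $(\lambda_2-\lambda_1)t$, so it is non-degenerate---contradiction. The second statement follows immediately, since a process with independent increments would yield the very independence just ruled out.

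The principal obstacle is the delicate asymptotic cancellation in part~\ref{itm:convNor}: both pieces inside the bracket individually grow like $\lambda$, so only their exact cancellation---mediated by the Mill's-ratio expansion and the polynomial-division identity above---reveals the Gaussian limit. Everything else (the reduction to $s=0$, the variance contradiction, and the deduction that the process lacks independent increments) is routine given Theorems~\ref{thm:summary of properties of Busemanns for all theta} and~\ref{thm:Buse_inc}.
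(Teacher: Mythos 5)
Your parts (i) and (ii, first claim) follow the paper's approach. For (i), the paper simply sets $y=z+\lambda t$ in \eqref{BuseCDF} and asserts the limit; your Mills-ratio computation supplies exactly the cancellation the paper leaves to the reader, and the identity $\lambda(z+\lambda t)-(z+2\lambda t)^2/(4t)=-z^2/(4t)$ together with the polynomial division is the right way to see it. For (ii, first claim), the paper also runs a variance argument; your use of the atom at $0$ from~\eqref{eqn:0inc} to rule out the degenerate constant is essentially what the paper intends (the paper's phrasing there is looser).

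However, the proof of the final assertion of (ii) has a genuine gap. You write that ``a process with independent increments would yield the very independence just ruled out,'' but it would not. For a process $\lambda\mapsto X(\lambda;t)$ with random starting value $X(0;t)=B_0(t)$, the standard definition of independent increments gives that $X(\lambda_2;t)-X(\lambda_1;t)$ is independent of $X(\lambda_1;t)-X(0;t)$; it says nothing about independence from $X(\lambda_1;t)=X(0;t)+\bigl[X(\lambda_1;t)-X(0;t)\bigr]$ itself, precisely because $X(0;t)$ is random and may be (and in fact is, by your first-claim argument with $\lambda_1=0$) correlated with the increments. So the first claim does not automatically rule out independent increments, and a separate argument is required. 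The paper supplies one: since the process is nondecreasing in $\lambda$, the event $\{X(\lambda_2;t)=X(0;t)\}$ is the intersection $\{X(\lambda_2;t)=X(\lambda_1;t)\}\cap\{X(\lambda_1;t)=X(0;t)\}$, so under independent increments and the increment-stationarity of Theorem~\ref{thm:Busemann jump process intro version} the function $\lambda\mapsto\Pp(T(t)>\lambda)=F(0;\lambda,t)$, where $T(t)$ is the first jump time, would be multiplicative and hence memoryless (exponential). But~\eqref{eqn:0inc} shows $F(0;\lambda,t)$ is not exponential, giving the contradiction. You would need to replace your one-line deduction with this (or an equivalent) argument.
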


\begin{remark}
In addition to Corollary~\ref{cor:inc_dist_conseq}, numerical calculations give more information about the structure of this non-independence. Specifically, it appears that for $t > 0$ and $0 < \lambda_1 < \lambda_2$,
\[
\Pp\bigl(X(\lambda_2;t) = X(\lambda_1;t)\tspb\big\vert\tspb  X(\lambda_1;t) = X(0;t)\bigr) < \Pp\bigl(X(\lambda_2;t) = X(\lambda_1;t)\bigr).
\]
In other words, conditioning on  no jumps in the interval $[0,\lambda_1]$ increases the probability of a jump in $(\lambda_1,\lambda_2]$.
\end{remark}

\subsection{Coupled Brownian motions with drift}

On a fixed horizontal level $m$ of $\Z\times\R$, the Busemann functions  $h_m^{\theta \sig}(t)$ form an infinite family of coupled Brownian motions with drift.  
This section describes the structure of this family.

  We return to the parameter $\theta = \f{1}{\lambda^2}$   of the direction of semi-infinite geodesics. Recall that  $t \mapsto h_m^\theta(t)$ is a Brownian motion with drift $\f{1}{\sqrt \theta}$. It is convenient to extend the range of the parameter $\theta$ to infinity  by defining  $h_m^{\infty} = B_m$. By stationarity it is enough to consider the level $m=0$.  As pointed out in Remark~\ref{rmk:any_interval_same}, it is sufficient to restrict attention to nonnegative times $t\ge0$, and then  Theorem~\ref{thm:qualitative_Buse}  captures also the properties of the restarted process  $t \mapsto h_m^{\theta \sig}(t + s) - h_m^{\theta \sig}(s)$ for any fixed $(m,s) \in \Z \times \R$. Recall that $\Busedc$ is the set of discontinuities of the Busemann process defined in \eqref{eqn:Theta}. 


\begin{theorem} \label{thm:qualitative_Buse}
The following hold on a single event of probability one. 
\begin{enumerate} [label=\rm(\roman{*}), ref=\rm(\roman{*})]  \itemsep=3pt
    \item \label{itm:dist_incr} For  $0 < \gamma < \delta \le \infty$  and $\sigg_1,\sigg_2 \in \{-,+\}$, the difference  $h_0^{\gamma \sig_1}(t) - h_0^{\delta \sig_2}(t)$ between the two trajectories  is nonnegative and nondecreasing as a function of $t \ge 0$. For $\theta > 0$, the same is true of the difference $h_0^{\theta -}(t) - h_0^{\theta +}(t)$ as a function of $t \ge 0$.  
    \item \label{itm:stick_split} For  $0 < \gamma < \delta \le \infty$ and $\sigg_1,\sigg_2 \in \{-,+\}$, there exists a random time \\$S = S(\gamma\sigg_1,\delta\sigg_2) > 0$ such that $h_0^{\gamma \sig_1}(t) = h_0^{\delta \sig_2}(t)$ for $t \in [0,S]$, and $h_0^{\gamma \sig_1}(t) > h_0^{\delta \sig_2}(t)$ for $t > S$.
    \item \label{itm:theta_split_h} For every such value of $S = S(\gamma\sigg_1,\delta\sigg_2)$, there exists $\theta \in [\gamma,\delta]\cap \Busedc$ such that $h_0^{\theta -}(t) = h_0^{\theta +}(t)$ for $t \in [0,S]$, and $h_0^{\theta -}(t) > h_0^{\theta +}(t)$ for $t > S$. 
    \item \label{itm:count_trajectories}
    For each $T > 0$, the set of distinct trajectories  $\{t \mapsto h_m^{\theta \sig}(t): t\in[0,T],  \, \theta \in(0,\infty], \,\sigg \in \{-,+\}\}$ is countably infinite. 
    
    \item \label{itm:discrete_trajectories} At each fixed time $T > 0$, the set of values $\{h_m^{\theta \sig}(T): \theta \in (0,\infty],\sigg \in\{-,+\}\}$
    is a countably infinite subset of $\R$, bounded from below but unbounded from above, and has no limit points in $\R$. In particular, for every $\ve > 0$, there exists $\eta = \eta(\ve) > 0$ such that for all $0< \theta \le \eta$, $h_0^{\theta -}(\ve) \ge h_0^{\theta+}(\ve) > B_0(\ve)$.
\end{enumerate}
\end{theorem}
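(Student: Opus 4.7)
The plan is to tackle the parts in order (i), (ii), (iii), (iv), (v), with (iii) being by far the most delicate and the expected main obstacle. The hard part of (iii) is to combine a compactness argument in the direction parameter with careful use of the discrete structure of the direction jumps $\Busedc$: selecting a limit point $\theta_*$ of jump directions produced at times $t_n \downarrow S$ and showing that the splitting time of $h_0^{\theta_*\pm}$ is exactly $S$ requires ruling out a later splitting by exploiting left/right continuity of the Busemann process in $\theta$ together with Theorem~\ref{thm:Theta properties}\ref{itm:const}.

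Part (i) is immediate from the increment monotonicity of Theorem~\ref{thm:summary of properties of Busemanns for all theta}\ref{general monotonicity Busemanns}: for $\gamma<\delta$ and any signs, $h_0^{\delta\sig_2}\li h_0^{\theta\pm}\li h_0^{\gamma\sig_1}$, and the pinning $h_0^\theta(0)=0$ gives that $h_0^{\gamma\sig_1}(t)-h_0^{\delta\sig_2}(t)$ is nonnegative and nondecreasing; the same theorem's ordering $h_0^{\theta+}\li h_0^{\theta-}$ handles the $\theta-$ vs $\theta+$ case. For part (ii), set $S := \inf\{t > 0 : h_0^{\gamma \sig_1}(t) > h_0^{\delta \sig_2}(t)\}$; by (i) and continuity the trajectories coincide on $[0,S]$ and separate strictly for $t>S$. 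Finiteness of $S$ follows from the Brownian marginal of Theorem~\ref{thm:summary of properties of Busemanns for all theta}\ref{Buse_marg_dist}: by the SLLN, $h_0^\gamma(t)/t \to 1/\sqrt\gamma$ and $h_0^\delta(t)/t \to 1/\sqrt\delta$ almost surely, so $h_0^\gamma(t) - h_0^\delta(t) \to \infty$ since $1/\sqrt\gamma > 1/\sqrt\delta$. Positivity of $S$ uses the queuing relation $h_0^{\theta\sig}=D(h_1^{\theta\sig},B_0)$ of Theorem~\ref{thm:summary of properties of Busemanns for all theta}\ref{general queuing relations Busemanns}: formula~\eqref{definition of D} gives $D(Z,B)(t)=B(t)$ for $t$ up to the almost surely unique maximizer of $s\mapsto B(s)-Z(s)$ over $s\ge 0$, which is strictly positive because that process has negative drift $-1/\sqrt\theta$.

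For part (iii), fix $S$ and pick $t_n \downarrow S$. The difference $h_0^{\gamma\sig_1}(t_n) - h_0^{\delta\sig_2}(t_n)$ decomposes as a sum of jumps $h_0^{\theta-}(t_n) - h_0^{\theta+}(t_n)$ over $\theta$ ranging in an appropriate subset $J_n \subseteq \Busedc_{(0,0),(0,t_n)} \cap [\gamma,\delta]$, where inclusion or exclusion of the endpoints is determined by $(\sig_1,\sig_2)$ so as to match the monotonicity chain. Strict positivity of this sum for $t_n > S$ forces $J_n \neq \emptyset$; picking $\theta_n \in J_n$ and extracting a convergent subsequence $\theta_{n_k}\to\theta_*\in[\gamma,\delta]$ gives the candidate. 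Setting $T(\theta):=\sup\{t\ge 0:h_0^{\theta-}(t)=h_0^{\theta+}(t)\}$, the claim is $T(\theta_*)=S$. The lower bound $T(\theta_*)\ge S$ is immediate: the sign convention placing $\theta_*$ in the correct subinterval of $[\gamma,\delta]$ sandwiches $h_0^{\theta_*\pm}$ between $h_0^{\delta\sig_2}$ and $h_0^{\gamma\sig_1}$, which coincide on $[0,S]$. The upper bound $T(\theta_*)\le S$ is the main obstacle. If instead $T(\theta_*)>S$, pick $t^* \in (S,T(\theta_*))$, so $\theta_* \notin \Busedc_{(0,0),(0,t^*)}$. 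By Theorem~\ref{thm:Theta properties}\ref{itm:const}, this set has no limit points in $(0,\infty)$, so $\theta_*$ has an open neighborhood $U$ disjoint from it; combined with left/right continuity in $\theta$ from Theorem~\ref{thm:summary of properties of Busemanns for all theta}\ref{general uniform convergence Busemanns}, the map $\theta \mapsto h_0^{\theta\pm}(t^*)$ is constant on $U$. But part (i) shows $\Busedc_{(0,0),(0,t)}$ is nondecreasing in $t$, so $\theta_{n_k} \in \Busedc_{(0,0),(0,t_{n_k})} \subseteq \Busedc_{(0,0),(0,t^*)}$ for large $k$, while eventually $\theta_{n_k}\in U$, a contradiction.

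Parts (iv) and (v) follow from the discrete structure of $\Busedc_{(0,0),(0,T)}$. Part (i) shows that two trajectories agree on $[0,T]$ iff they agree at $T$, reducing (iv) to counting the distinct values of the nonincreasing step function $\theta\mapsto h_m^{\theta\pm}(T)$, whose jump set $\Busedc_{(0,0),(0,T)}$ is countably infinite by Theorem~\ref{thm:Theta properties}\ref{itm:Theta_count_intro}\ref{itm:const}. For (v), the range of this step function is bounded below by $h_m^\infty(T)=B_m(T)$, and unbounded above because Theorem~\ref{thm:summary of properties of Busemanns for all theta}\ref{Buse_marg_dist} gives $h_m^\theta(T)\sim\Nor(T/\sqrt\theta,T)$ so that $h_m^\theta(T)\to\infty$ in probability as $\theta\to 0^+$, and monotonicity in $\theta$ upgrades this to an a.s.\ monotone limit equal to $\infty$. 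Absence of finite limit points follows from Theorem~\ref{thm:Theta properties}\ref{itm:const}: a finite limit point of the range would force jumps of the step function to accumulate at some direction in $(0,\infty)$, forbidden by that theorem, while jumps accumulating at $\theta=0$ can only produce divergent values. The final assertion follows from the same divergence applied at time $\varepsilon$, combined with monotonicity in $\theta$.
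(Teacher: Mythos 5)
Your overall approach tracks the paper's closely: parts (i), (ii), (iv), (v) use the same monotonicity and discrete-jump-structure arguments, and your part (iii) is a compactness variant of the paper's nested-finite-sets argument. There is, however, one genuine gap in (iii).

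Your lower bound $T(\theta_*)\ge S$ relies on the sandwich $h_0^{\delta\sig_2}\li h_0^{\theta_*+}\li h_0^{\theta_*-}\li h_0^{\gamma\sig_1}$. This sandwich fails at the endpoints in the ``bad'' sign cases: if $\theta_*=\gamma$ and $\sigg_1=+$, you would need $h_0^{\gamma-}\li h_0^{\gamma+}$, which is false in general; symmetrically for $\theta_*=\delta$, $\sigg_2=-$. Extracting a convergent subsequence from $\theta_n\in J_n\subseteq(\gamma,\delta]$ does not by itself rule out $\theta_*=\gamma$: the limit of points strictly greater than $\gamma$ can equal $\gamma$. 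You need the extra observation that the $\theta_n$ live in the \emph{fixed finite} set $\Busedc_{(0,0),(0,t_1)}\cap[\gamma,\delta]$ --- using the nondecreasing property of $t\mapsto\Busedc_{(0,0),(0,t)}$ together with the discreteness and boundedness of the jump set on $[\gamma,\delta]$ (Theorem~\ref{thm:Theta properties}\ref{itm:const} and Theorem~\ref{thm:coupled_BMs_technical}\ref{convBM} for the $\delta=\infty$ case) --- so that any convergent subsequence is eventually constant and $\theta_*$ is literally one of the $\theta_n$, hence in the correct (half-)open interval. The paper sidesteps this entirely by running a decreasing-nested-nonempty-finite-sets argument that directly produces a $\theta^*$ belonging to the intersection, and then handles the endpoint sign cases by a separate short reduction to the case $\sigg_1=+$, $\sigg_2=-$.

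A smaller point on (ii): your queuing argument for $S>0$ is a fixed-$\theta$ statement (it rests on the positivity of the a.s.\ unique maximizer of a fixed drifted Brownian motion), whereas the theorem demands a single full-probability event valid for all $\gamma<\delta$ simultaneously. This is salvageable by running your argument over a countable dense set of rational $\gamma_0$ and using monotonicity $B_0\li h_0^{\gamma\sig_1}\li h_0^{\gamma_0}$ to cover irrational $\gamma>\gamma_0$; the paper packages exactly this uniformization as Theorem~\ref{thm:coupled_BMs_technical}\ref{itm:coupletoBM_smallt}, which you could cite directly instead.
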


\begin{remark}
For  $0 < \gamma < \delta \le \infty$ the distribution of the separation time is given by 
\[  \Pp[ S(\gamma,\delta) > t] = \Pp[h_0^{\gamma}(t) = h_0^{\delta}(t)] =  F(0;\tf{1}{\sqrt \gamma} - \tf{1}{\sqrt \delta},t)  \quad \text{ for }  t>0,  \]   where $F(0;\lambda,t)$ is from~\eqref{eqn:0inc}.  There is no $\pm$ distinction because for a fixed   $\theta > 0$,  $h_m^{\theta +}(t) = h_m^{\theta -}(t)$ for all $t \in \R$ with probability one. 
 \end{remark}
 \begin{figure}[t]
    \centering
    \includegraphics[width = 5.5in]{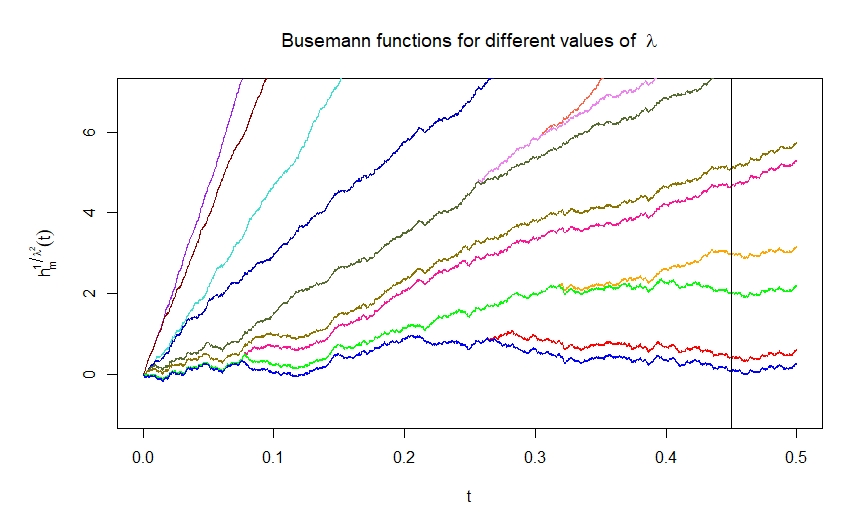}
    \caption{\small A simulation of the branching structure of the Busemann functions. Higher trajectories correspond to larger values of the drift $\lambda>0$, or equivalently, to smaller values of the direction parameter $\theta=\lambda^{-2}$ and thereby to geodesic directions approaching the vertical.}
    \label{fig:Buse_process_all}
\end{figure}
\begin{remark} \label{rmk:time_to_split}
Figure~\ref{fig:Buse_process_all} presents a simulation of the trajectories $\{h_0^\theta(t):t\ge 0\}$ for various values of the direction parameter 
$\theta=\lambda^{-2}$.
We see a visual representation of the statements of  Theorem~\ref{thm:qualitative_Buse}. The lowest (blue) trajectory  is the Brownian motion $h_0^{\infty} =B_0$ with direction $\theta=\infty$ and drift $\lambda = 0$. Trajectories move together from the origin and then split, and the distance between them is nondecreasing (Theorem~\ref{thm:qualitative_Buse}\ref{itm:dist_incr}--\ref{itm:stick_split}). Part~\ref{itm:theta_split_h} implies that when two trajectories split, there exists $\theta \in \Busedc$ such that $h_0^{\theta -}$ follows the upper trajectory and $h_0^{\theta+}$ follows the lower trajectory. We expect that three distinct trajectories do not split at the same time, but we do not have a proof and leave it as an open problem.

As one travels upward along the vertical line at $T = 0.45$ in the figure, one observes the process $\lambda \mapsto h_0^{(1/\lambda^2) \mp}(T) = X(\lambda \pm;T)$. Specifically, let $0<\lambda_1 <\lambda_2<\cdots$ be the jump times of this process. Then, for $0 \le \lambda < \lambda_1$, $X(\lambda \pm;T)$ is equal to the vertical coordinate of the bottom curve (blue). At $\lambda = \lambda_1$, $X(\lambda_1-;T)$ is still equal to the vertical coordinate of the bottom curve (blue), but $X(\lambda_1+;T)$ is equal to the vertical coordinate of the second-lowest curve (red). For $\lambda_1 < \lambda < \lambda_2$, $X(\lambda \pm;T)$ is equal to the vertical coordinate of the red curve, and so on. Lastly, in the figure, we see some trajectories splitting from $B_0$ very close to $t = 0$, as guaranteed by Part~\ref{itm:discrete_trajectories}. 
\end{remark}
\begin{remark}
Corollary 2 of Rogers and Pitman~\cite{Rogers-Pitman-81} describes another coupling of two  Brownian motions with drift
such that they agree for a finite amount of time.  Their result   is related to our work because it is used to show the stability of the Brownian queue (see, for example, page 289 in~\cite{brownian_queues}). However, the Rogers-Pitman coupling is different from ours, because, for example, theirs does not satisfy the monotonicity of increments given in Theorem~\ref{thm:qualitative_Buse}\ref{itm:dist_incr}.
\end{remark}

\section{Global geometry of geodesics and the competition interfaces} \label{section:main_geometry}
\subsection{Busemann geodesics} \label{section:SIG}

In addition to Theorems~\ref{thm:Busedc_class} and~\ref{thm:Haus_comp_interface}, the results of this section characterize uniqueness and coalescence of semi-infinite geodesics across all directions and initial points. These geometric properties are accessed through analytic and probabilistic  properties of the Busemann process. As in~\cite{Seppalainen-Sorensen-21a}, the following demonstrates how to construct semi-infinite geodesics from Busemann functions. 

\begin{definition} \label{def:semi-infinite geodesics}
For each initial point $(m,t) \in \Z\times\R$, direction $\theta>0$ and sign $\sig \in \{-,+\}$, let $\mbf T^{\theta\sig}_{(m,t)}$ denote the set of real sequences 
\[
t = \tau_{m - 1} \le \tau_m \le \tau_{m + 1}\le \cdots \le \tau_{r}\le \cdots
\]
that satisfy
\begin{equation} \label{sijump}
 B_{r}(\tau_{r})- \h_{r + 1}^{\theta \sig}(\tau_{r})  = \sup_{s \in [\tau_{r - 1},\infty)}\{B_r(s) - \h_{r + 1}^{\theta \sig}(s) \} \qquad\text{for each $r \ge m$.}  
\end{equation} 
 Theorem~\ref{thm:summary of properties of Busemanns for all theta}\ref{general continuity of Busemanns}--\ref{limits of B_m minus h m + 1} guarantees  that such sequences exist. Equality of two elements $(\tau_r)_{r\ge m-1}$ and $(\tau'_r)_{r\ge m-1}$ of $\mbf T_{\mbf x}^{\theta \sig}$ means that  $\tau_r=\tau'_r$ for all $r\ge m-1$. At each level $r$, there exist finite leftmost and rightmost maximizers.  Let 
\[
t = \tau_{(m,t),m - 1}^{\theta \sig,L} \le \tau_{(m,t),m}^{\theta\sig,L}  \le \cdots \qquad\text{and}\qquad t = \tau_{(m,t),m - 1}^{\theta \sig,R}\le \tau_{(m,t),m}^{\theta\sig,R}   \le \cdots
\]
denote the leftmost and rightmost sequences in $\mbf T_{(m,t)}^{\theta\sig}$. Since an increasing sequence of jump times  determines an infinite planar path,   as illustrated in   Figure~\ref{fig:BLPP_semi-infinite_geodesic},  we think of  $\mbf T_{\mbf x}^{\theta \sig}$ equivalently as the set of semi-infinite paths determined by its elements. For $S \in \{L,R\}$,   let $\Gamma_{(m,t)}^{\theta \sig,S}$ be the continuous  semi-infinite path on the plane defined by the jump times $\{\tau_{(m,t),r}^{\theta \sig,S}\}_{r\ge m-1}$. Finally, let 
$
\mbf T_{(m,t)}^{\theta} := \mbf T_{(m,t)}^{\theta +} \cup \mbf T_{(m,t)}^{\theta -} 
$
 denote the collection of all the sequences (or paths) associated to the direction parameter $\theta$. 
\end{definition}

\begin{figure}[t]
\begin{adjustbox}{max totalsize={5.5in}{5in},center}
\begin{tikzpicture}
\draw[gray,thin] (0.5,0) -- (15.5,0);
\draw[gray,thin] (0.5,0.5) --(15.5,0.5);
\draw[gray, thin] (0.5,1)--(15.5,1);
\draw[gray,thin] (0.5,1.5)--(15.5,1.5);
\draw[gray,thin] (0.5,2)--(15.5,2);
\draw[red, ultra thick,->] (1.5,0)--(4.5,0)--(4.5,0.5)--(7,0.5)--(7,1)--(9.5,1)--(9.5,1.5)--(13,1.5)--(13,2)--(15.5,2);
\filldraw[black] (1.5,0) circle (2pt) node[anchor = north] {$(m,t)$};
\node at (4.5,-0.5) {$\tau_{m}^\theta$};
\node at (7,-0.5) {$\tau_{m + 1}^\theta$};
\node at (9.5,-0.5) {$\tau_{m + 2}^\theta$};
\node at (13,-0.5) {$\cdots$};
\node at (0,0) {$m$};
\node at (0,0.5) {$m + 1$};
\node at (0,1) {$m + 2$};
\node at (0,1.5) {$\vdots$};
\end{tikzpicture}
\end{adjustbox}
\caption{\small Example of an element of $\mbf T_{(m,t)}^\theta$}
\label{fig:BLPP_semi-infinite_geodesic}
\end{figure}

\begin{remark} \label{rmk:max_seq}
We make the observation that in Definition~\ref{def:semi-infinite geodesics}, if, at any step $r$, the function $B_r(u) - h_{r 
+ 1}^{\theta \sig}(u)$ has more than one maximizer over $u \in [\tau_{r-1},\infty)$, then any choice $\tau_{r}$  of maximizer continues the sequence as an element of $\mbf T_{\mbf x}^{\theta \sig}$, regardless of the past steps. In terms of paths, if $\Gamma \in  \mbf T_{\mbf x}^{\theta \sig}$, then for any point $\mbf y \in \Gamma\cap(\Z\times\R)$, the portion of $\Gamma$ above and to the right of $\mbf y$ is an element of $\mbf T_{\mbf y}^{\theta \sig}$. 
\end{remark}

 It was proved in~\cite{Seppalainen-Sorensen-21a} 
 that every element of $\mbf T_{(m,t)}^\theta$ is a $\theta$-directed  semi-infinite geodesic  out of $\mbf x=(m,t)$. We call these {\it Busemann geodesics}.   In general, $\Gamma_{(m,t)}^{\theta-,L}$ is the leftmost among {\it all} $\theta$-directed semi-infinite geodesics out of $(m,t)$ and $\Gamma_{(m,t)}^{\theta+,R}$ is the rightmost.
These properties, along with other previously proved facts, are recorded below.

\begin{theorem}[\cite{Seppalainen-Sorensen-21a}, Theorems 3.1(iv)--(v), 4.3 and 4.5(ii)] \label{existence of semi-infinite geodesics intro version}
The following hold on the full-probability event $\Omega_2$, unless stated otherwise.
 \begin{enumerate} [label=\rm(\roman{*}), ref=\rm(\roman{*})]  \itemsep=3pt 
     \item{\rm(}Existence{\rm)} \label{energy of path along semi-infinte geodesic} For all $\mbf x \in \Z \times \R$, $ \theta > 0$, and $\sigg \in \{-,+\}$, every element of $\mbf T_{\mbf x}^{\theta \sig}$ defines a semi-infinite geodesic starting from $\mbf x$. More specifically, for any two points $\mbf y \le \mbf z$ along a path in $\mbf T_{\mbf x}^{\theta\sig}$, the energy of this path between $\mbf y$ and $\mbf z$ equals $\B^{\theta\sig}(\mbf y,\mbf z)$, and this energy is maximal over all paths between $\mbf y$ and $\mbf z$. 
     \item{\rm(}Leftmost and rightmost finite geodesics along paths{\rm)} \label{Leftandrightmost} If, for some $\theta > 0$, $\sigg \in \{-,+\}$, and $\mbf x \in \Z \times \R$, the points $\mbf y \le \mbf z \in \Z \times \R$ both lie on $\Gamma_{\mbf x}^{\theta \sig,L}$, then the portion of $\Gamma_{\mbf x}^{\theta \sig,L}$ between $\mbf y$ and $\mbf z$ coincides with  the leftmost finite geodesic between these two points. Similarly, $\Gamma_{\mbf x}^{\theta \sig,R}$ is the rightmost geodesic between any two of its points.  
     \item{\rm(}Monotonicity{\rm)} \label{itm:monotonicity of semi-infinite jump times} The following inequalities hold.
     \begin{enumerate} [label=\rm(\alph{*}), ref=\rm(\alph{*})]  \itemsep=3pt 
         \item \label{itm:monotonicity in theta} For all $0 < \gamma < \theta$, $S \in \{L,R\}$,  $(m,t) \in \Z \times \R$, and $r \ge m$,
    \[
     t \le \tau_{(m,t),r}^{\gamma -,S} \le \tau_{(m,t),r}^{\gamma +,S} \le \tau_{(m,t),r}^{\theta -,S} \le \tau_{(m,t),r}^{\theta +,S}.
    \]
    \item\label{itm:monotonicity in t} For all $\theta > 0$, $m \le r \in \Z$, $s < t \in \R$, and $\sig \in \{-,+\}$, 
    \[
    \tau_{(m,s),r}^{\theta \sig,L} \le \tau_{(m,t),r}^{\theta\sig,L}\qquad \text{and} \qquad \tau_{(m,s),r}^{\theta \sig,R} \le \tau_{(m,t),r}^{\theta \sig, R}.
    \] 
     \item \label{itm:strong monotonicity in t} For $\theta > 0$, on the $\theta$-dependent full-probability event $\Omega^{(\theta)}$,
    for all pairs of  initial points $(m,s)$ and $(m,t)$ in $\Z\times\R$ that satisfy $s<t$,
      we have 
    \[
    \tau_{(m,s),r}^{\theta,R} \le \tau_{(m,t),r}^{\theta,L} \quad \text{ for all $r \ge m$.} 
    \]
     \end{enumerate}
    
    \item{\rm(}Convergence{\rm)} \label{itm:convergence of geodesics}
    For all $(m,t) \in \Z \times \R$, $r \ge m$, $\theta > 0$, $\sigg \in \{-,+\}$, and $S \in \{L,R\}$, the following limits hold. 
    \begin{align}
        \lim_{\gamma \nearrow \theta} \tau_{(m,t),r}^{\gamma \sig,L} = \tau_{(m,t),r}^{\theta -,L}\qquad&\text{and}\qquad \lim_{\delta \searrow \theta} \tau_{(m,t),r}^{\delta \sig,R} = \tau_{(m,t),r}^{\theta +,R}, \label{eqn:limits in theta} \\
        \lim_{\theta \searrow 0} \tau_{(m,t),r}^{\theta \sig,S} = t\qquad&\text{and}\qquad\lim_{\theta \rightarrow \infty} \tau_{(m,t),r}^{\theta \sig,S} = \infty, \label{eqn:limits in theta to infinity}  \\
        \lim_{s \nearrow t} \tau_{(m,s),r}^{\theta \sig,L} =  \tau_{(m,t),r}^{\theta \sig,L}\qquad&\text{and}\qquad \lim_{u \searrow t} \tau_{(m,u),r}^{\theta \sig,R} = \tau_{(m,t),r}^{\theta \sig,R}. \label{eqn:limits in t}
    \end{align}
    \item \label{general limits for semi-infinite geodesics} {\rm(}Directedness{\rm)} For all $\mbf x \in \Z \times \R$, $\theta > 0$, $\sigg \in \{-,+\}$, and all $\{\tau_r\}_{r \ge m} \in \mbf T_{\mbf x}^{\theta \sig}$, 
    \[
    \lim_{n\rightarrow \infty} \f{\tau_n}{n} = \theta. 
    \]
    \item{\rm(}General directedness{\rm)}  \label{itm:gen_directedness} All semi-infinite geodesics, whether they are Busemann geodesics or not, are $\theta$-directed for some $\theta \in [0,\infty]$. The only $0$- or $\infty$-directed semi-infinite geodesics are vertical and horizontal lines, respectively. 
    \item{\rm(}Control of semi-infinite geodesics{\rm)}  \label{itm:all semi-infinite geodesics lie between leftmost and rightmost} If, for some $\theta > 0$ and $(m,t) \in \Z \times \R$, any other geodesic {\rm(}constructed from the Busemann functions or not{\rm)} is defined by the sequence $t = t_{m - 1} \le t_m \le \cdots$, starts at $(m,t)$, and has direction $\theta$, then for all $r \ge m$,
    \[
    \tau_{(m,t),r}^{\theta -,L} \le t_r \le \tau_{(m,t),r}^{\theta +,R}.  
    \]
    \end{enumerate}
    \end{theorem}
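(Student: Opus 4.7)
The unifying engine behind all seven parts is the Brownian queuing interpretation of the maximizer equation~\eqref{sijump}: at level $r$, the jump time $\tau_r$ is exactly a point where the queue length $v_{r+1}^{\theta\sig}=Q(h_{r+1}^{\theta\sig},B_r)$ vanishes, so the queue with arrivals $h_{r+1}^{\theta\sig}$ and service $B_r$ emits a departure. Existence of such maximizers in $[\tau_{r-1},\infty)$, along with their leftmost and rightmost versions, comes from Theorem~\ref{thm:summary of properties of Busemanns for all theta}\ref{general continuity of Busemanns}--\ref{limits of B_m minus h m + 1}, which guarantee continuity of $s\mapsto B_r(s)-h_{r+1}^{\theta\sig}(s)$ and its divergence to $-\infty$ at $\infty$. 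The plan is to combine this variational characterization with the structural properties of $\B^{\theta\sig}$ already listed in Theorem~\ref{thm:summary of properties of Busemanns for all theta} to settle each part in turn.

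For (i), I would use the queuing identity coming from $h_r^{\theta\sig}=D(h_{r+1}^{\theta\sig},B_r)$ to telescope the energy $\sum_{r=k}^\ell B_r(\tau_{r-1},\tau_r)$ along a Busemann path between lattice points $\mbf y\le\mbf z$; the boundary contributions collapse by additivity of $\B^{\theta\sig}$ and by the vanishing of $v_{r+1}^{\theta\sig}$ at each maximizer, identifying the path energy with $\B^{\theta\sig}(\mbf y,\mbf z)$. Maximality, i.e.\ equality with $L_{\mbf y,\mbf z}$, then follows from the general upper bound $L_{\mbf y,\mbf z}\le\B^{\theta\sig}(\mbf y,\mbf z)$, which for a fixed direction is a superadditivity consequence of the Busemann limit in Theorem~\ref{thm:summary of properties of Busemanns for all theta}\ref{busemann functions agree for fixed theta} and extends to all $\theta$ and signs by sandwiching with nearby rational directions (Remark~\ref{rmk:dense_theta}). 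Part (ii) is a propagation of leftmost (resp.\ rightmost) uniqueness: any finite geodesic between $\mbf y$ and $\mbf z$ lying strictly to the left of $\Gamma_{\mbf x}^{\theta\sig,L}$ at some level would extend to an admissible sequence in $\mbf T_{\mbf y}^{\theta\sig}$ strictly to the left of $\{\tau_{\mbf y,r}^{\theta\sig,L}\}$, contradicting its definition.

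Parts (iii) and (iv) reduce to monotone and continuous selection of argmax. For (iii)(a), the Busemann monotonicity in Theorem~\ref{thm:summary of properties of Busemanns for all theta}\ref{general monotonicity Busemanns} says $s\mapsto B_r(s)-h_{r+1}^{\theta\sig_2}(s)$ has uniformly larger forward increments than $s\mapsto B_r(s)-h_{r+1}^{\gamma\sig_1}(s)$ whenever $\gamma\le\theta$, forcing both leftmost and rightmost maximizers to shift rightward in $\theta$; recursion in $r$ together with the corresponding shift of the sup-domain $[\tau_{r-1},\infty)$ closes the induction. Parts (iii)(b) and (iii)(c) are the same recursive comparison in the initial time, with (iii)(c) invoking almost-sure uniqueness of maximizers on $\Omega^{(\theta)}$ to upgrade weak to strict inequalities. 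For (iv), the limits~\eqref{eqn:limits in theta} follow from uniform-on-compacts convergence of Busemann functions (Theorem~\ref{thm:summary of properties of Busemanns for all theta}\ref{general uniform convergence Busemanns}) together with one-sided semicontinuity of argmax, while (iii)(a) pins down the limit value; the limit $\theta\searrow 0$ uses $v_m^{\theta\sig}\to 0$, and~\eqref{eqn:limits in t} follows from planar monotonicity (iii)(b).

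Parts (v)--(vii) require global growth estimates, and (v) is the main obstacle. For directedness (v), I combine the Brownian-motion marginal $t\mapsto h_m^\theta(t)$ with drift $1/\sqrt{\theta}$ (Theorem~\ref{thm:summary of properties of Busemanns for all theta}\ref{Buse_marg_dist}) with the stationary $\Exp(1/\sqrt{\theta})$ marginal of $v_m^\theta$: the queuing relation identifies $\{\tau_n-\tau_{n-1}\}_{n\ge m}$ as a stationary ergodic sequence with marginal mean $\theta$, and Birkhoff's ergodic theorem yields $\tau_n/n\to\theta$, with the non-stationary initial condition absorbed via shift-invariance (Theorem~\ref{thm:summary of properties of Busemanns for all theta}\ref{itm:shift_invariance}). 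Part (vi) then follows because any semi-infinite geodesic is bounded by (vii) between Busemann geodesics of nearby rational directions, pinning its asymptotic slope via (v); the degenerate cases $\theta\in\{0,\infty\}$ correspond to vertical and horizontal lines by a direct analysis of extreme slopes. Finally (vii) is a planar sandwich: a $\theta$-directed geodesic exiting some level $r$ strictly to the left of $\tau_{\mbf x,r}^{\theta-,L}$ must, by (v), cross $\Gamma_{\mbf x}^{\theta-,L}$ again further up, and applying Lemma~\ref{existence of leftmost and rightmost geodesics} on the finite segment enclosed by two crossings contradicts the leftmost-maximizer definition of $\tau_{\mbf x,r}^{\theta-,L}$.
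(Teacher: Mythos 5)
Your outline for parts (i)--(iv) and (vii) is sound and aligned with the toolbox the paper lists from \cite{Seppalainen-Sorensen-21a}: the queuing identities $h_r^{\theta\sig}=D(h_{r+1}^{\theta\sig},B_r)$ and $v_{r+1}^{\theta\sig}=Q(h_{r+1}^{\theta\sig},B_r)$, Busemann additivity, increment monotonicity, and the argmax shift under increment comparison are exactly the right ingredients; the superadditivity bound $L_{\mbf y,\mbf z}\le \B^{\theta\sig}(\mbf y,\mbf z)$ coming from the Busemann limit closes (i), and the crossing/leftmost-uniqueness argument closes (vii). One minor attribution slip in (iv): the limit $\lim_{\theta\searrow 0}\tau^{\theta\sig,S}_{(m,t),r}=t$ is driven by the blow-up $h_{r+1}^{\theta\sig}(s,u)\to\infty$ for $s<u$ as $\theta\searrow 0$ (which forces the argmax of $B_r-h_{r+1}^{\theta\sig}$ towards the left endpoint), not by the uniform convergence $v_m^{\theta\sig}\to 0$, which by itself only says the queue length is small at the running argmax and gives no control on where the argmax sits.

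The genuine gap is in (v). You claim that $\{\tau_n-\tau_{n-1}\}_{n\ge m}$ is a stationary ergodic sequence with marginal mean $\theta$ and invoke Birkhoff. Neither half of this is justified. Stationarity fails at face value: the sequence of jump times starts from the deterministic initial condition $\tau_{m-1}=t$, and shifting by one level takes a geodesic started at $(m,t)$ to a geodesic started at the \emph{random} point $(m+1,\tau_m)$, whose distribution is not that of a fixed start. ``Absorbed via shift-invariance'' is a gesture, not an argument; making it rigorous would require coalescence with a translation-covariant family of geodesics, which in \cite{Seppalainen-Sorensen-21a} is established \emph{after} directedness and cannot be fed back in. The mean-$\theta$ claim is also not obvious: $\tau_r^{\theta\sig,L}$ is the first zero of $v_{r+1}^{\theta\sig}$ after $\tau_{r-1}^{\theta\sig,L}$ (by Lemma~\ref{lemma:equality of busemann to weights of BLPP}), and for the Brownian queue the zero set of $v$ has Lebesgue measure zero, so ``the time between consecutive zeros'' is not a renewal quantity with an elementary mean; computing $\Ee[\tau_r-\tau_{r-1}]$ requires a nontrivial argument. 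The standard route to (v) is a shape-function comparison: by (i) the path energy from $(m,t)$ to $(n,\tau_n)$ equals $\B^{\theta\sig}((m,t),(n,\tau_n))=\sum_{k=m+1}^n v_k^{\theta\sig}(t)+h_n^{\theta\sig}(t,\tau_n)$, which the ergodic theorem applied to the \emph{level-indexed} sequence $\{v_k^{\theta\sig}(t)\}_k$ (this one genuinely is stationary in $k$ by shift-invariance along levels, Theorem~\ref{thm:summary of properties of Busemanns for all theta}\ref{itm:shift_invariance}) and the Brownian drift of $h_n^{\theta\sig}$ pin to $n\sqrt\theta+(\tau_n-t)/\sqrt\theta+o(n)$; equating this with the LPP shape value $2\sqrt{n\tau_n}(1+o(1))$ forces $\tau_n/n\to\theta$. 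This is a different and tighter argument than the one you sketch, and it sidesteps the stationarity problem for the increment sequence entirely.

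There is also a circularity in your (vi). You propose to deduce (vi) from (vii), but (vii) takes as hypothesis that the geodesic already has a well-defined direction $\theta$, which is precisely what (vi) is to establish. The correct argument is the crossing lemma applied to \emph{rational} Busemann geodesics directly: if $\liminf t_n/n<\limsup t_n/n$, pick a rational $\gamma$ strictly between them; planarity forces the geodesic to cross $\Gamma_{\mbf x}^{\gamma,L}$ twice, and between crossings the uniqueness of leftmost geodesics (your part (ii)) rules out the offending excursion. The spirit is close to what you wrote, but the implication chain must avoid citing (vii) for the existence of a limit slope.
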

    \begin{remark}
    Theorem~\ref{thm:geod_stronger} and Remark~\ref{rmk:incomparable} below strengthen Theorem~\ref{existence of semi-infinite geodesics intro version} in the following ways. 
    Part~\ref{itm:monotonicity of semi-infinite jump times}\ref{itm:monotonicity in theta} is clarified to show that, in general, $\tau_{(m,t)}^{\gamma \sig,R}$ and $\tau_{(m,t)}^{\theta \sig,L}$ are incomparable for $\gamma < \theta$. Part~\ref{itm:monotonicity of semi-infinite jump times}\ref{itm:strong monotonicity in t} is strengthened to an almost sure statement simultaneously over all directions. The limits in Equation~\eqref{eqn:limits in theta} are strengthened to allow us to interchange $L$ and $R$ in both statements. The limits in Equation~\eqref{eqn:limits in t} are strengthened to allow both $L$ and $R$ in the converging jump time.   
    This illustrates how knowledge of the joint distribution of the Busemann process leads to almost sure structural results for geodesics. 
    \end{remark}

 The following result is new to the present paper and strengthens the regularity properties of the geodesics as functions of the direction and the initial point. 






\begin{theorem} \label{thm:geod_stronger}
On a single event of full probability, the following hold. 
\begin{enumerate}[label=\rm(\roman{*}), ref=\rm(\roman{*})]  \itemsep=3pt
    \item  \label{itm:stronger convergence theta} For each $n \ge m$ in $\Z$, $\theta > 0$, and compact subsets $K \subseteq \R$, there exists a random $\ve = \ve(m,n,K,\theta) > 0$ such that, whenever $t \in K$, $\theta - \ve < \gamma < \theta < \delta < \theta + \ve$, $m \le r \le n$, $\sigg \in\{-,+\}$, and $S \in \{L,R\}$,
    \be\label{tau134}
    \tau_{(m,t),r}^{\gamma\sig,S} = \tau_{(m,t),r}^{\theta -,S}\qquad\text{and}\qquad \tau_{(m,t),r}^{\delta\sig,S} = \tau_{(m,t),r}^{\theta +,S}.
    \ee
    \item \label{itm:global strong monotonicity in t}
    For each $\theta  > 0$, $\sigg \in \{-,+\}$, $m \le r \in \Z$ and $s < t \in \R$,
    $
    \tau_{(m,s),r}^{\theta \sig,R} \le \tau_{(m,t),r}^{\theta \sig,L}. 
    $
    \item \label{itm:stronger convergence in t} For each $\theta > 0$, $\sigg \in \{-,+\}$, $m \le r\in \Z$, $s \in \R$, and $S \in \{L,R\}$,
    \[
    \lim_{u \nearrow s} \tau_{(m,u),r}^{\theta \sig,S} = \tau_{(m,s),r}^{\theta \sig,L},\qquad\text{and}\qquad \lim_{t \searrow s} \tau_{(m,t),r}^{\theta \sig,S} = \tau_{(m,s),r}^{\theta \sig,R}.
    \]
\end{enumerate}
\end{theorem}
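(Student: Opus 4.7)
The plan is to prove (i), (ii), (iii) in that order, with (i) carrying the main technical burden and (ii), (iii) deducing from it. Throughout I work on a full-probability event intersecting $\Omega_2$, the event of Theorem~\ref{thm:Theta properties}, and the events $\Omega^{(\theta)}$ of Theorem~\ref{existence of semi-infinite geodesics intro version}\ref{itm:monotonicity of semi-infinite jump times}\ref{itm:strong monotonicity in t} for all rational $\theta$ in $\Lambda=\Q_{>0}$.

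For (i), the guiding idea is that the jump times at levels $m,\ldots,n$ are determined by argmax competitions happening in a bounded spatial region, and on that region the Busemann process is constant in $\theta$ on one-sided neighborhoods of any given direction. Concretely, I first argue $\Busedc_{(r,s),(r,t)}\subseteq \Busedc_{(r,-T),(r,T)}$ whenever $s,t\in[-T,T]$: by additivity the jump $h_{r}^{\theta-}(-T,T)-h_{r}^{\theta+}(-T,T)$ splits into the jump on $[s,t]$ plus two nonnegative jumps on the flanking intervals, nonnegativity coming from the increment monotonicity in Theorem~\ref{thm:summary of properties of Busemanns for all theta}\ref{general monotonicity Busemanns}. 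By Theorem~\ref{thm:Theta properties}\ref{itm:const} the set $\bigcup_{r=m+1}^{n+1}\Busedc_{(r,-T),(r,T)}$ is locally finite in $(0,\infty)$, so I can pick a random $\ve>0$ with $(\theta-\ve,\theta+\ve)\setminus\{\theta\}$ disjoint from it. The constancy part of Theorem~\ref{thm:Theta properties}\ref{itm:const} then forces $h_{r}^{\gamma\sig}(s,t)=h_{r}^{\theta-}(s,t)$ for every $\gamma\in(\theta-\ve,\theta)$, $\sigg\in\{-,+\}$, and $s,t\in[-T,T]$, and symmetrically $=h_{r}^{\theta+}(s,t)$ for $\delta\in(\theta,\theta+\ve)$.

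I next choose $T$ large (depending on $m,n,K,\theta$ and possibly shrinking $\ve$) so that for all starting times $t\in K$, all $r\in\{m,\ldots,n\}$, all $\sigg\in\{-,+\}$, all $S\in\{L,R\}$, and all $\gamma\in[\theta-\ve,\theta+\ve]$, the jump time $\tau^{\gamma\sig,S}_{(m,t),r}$ lies strictly inside $(-T,T)$. Finiteness of these values and monotonicity of $\tau$ in $\theta$ and $t$ from Theorem~\ref{existence of semi-infinite geodesics intro version}\ref{itm:monotonicity of semi-infinite jump times} make this possible after possibly shrinking $\ve$. I then induct on $r$ from $m$ upward: at the base $\tau^{\gamma\sig,S}_{(m,t),m-1}=\tau^{\theta-,S}_{(m,t),m-1}=t\in K$; at the inductive step, writing $a=\tau^{\gamma\sig,S}_{(m,t),r-1}=\tau^{\theta-,S}_{(m,t),r-1}$, the defining argmax of $B_r-h_{r+1}^{\gamma\sig}$ on $[a,\infty)$ is actually attained inside $[a,T]$ (by the choice of $T$ applied to both $\gamma$ and $\theta-$), and on $[a,T]$ the two functions $B_r-h_{r+1}^{\gamma\sig}$ and $B_r-h_{r+1}^{\theta-}$ coincide, so their leftmost/rightmost maximizers coincide. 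This proves \eqref{tau134}.

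For (ii), fix $\theta>0$, $\sigg\in\{-,+\}$, $m\le r$, and $s<t$. Apply (i) with $K=\{s,t\}$ and $n=r$ to obtain $\ve>0$, and pick a rational $\gamma\in(\theta-\ve,\theta)$ when $\sigg=-$ or $\gamma\in(\theta,\theta+\ve)$ when $\sigg=+$. Part (i) gives $\tau^{\gamma,R}_{(m,s),r}=\tau^{\theta\sig,R}_{(m,s),r}$ and $\tau^{\gamma,L}_{(m,t),r}=\tau^{\theta\sig,L}_{(m,t),r}$, and the rational-direction instance of Theorem~\ref{existence of semi-infinite geodesics intro version}\ref{itm:monotonicity of semi-infinite jump times}\ref{itm:strong monotonicity in t} on our event supplies $\tau^{\gamma,R}_{(m,s),r}\le \tau^{\gamma,L}_{(m,t),r}$. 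For (iii), combining (ii) with the trivial $\tau^L\le\tau^R$ yields, for $u<s$, the sandwich $\tau^{\theta\sig,L}_{(m,u),r}\le\tau^{\theta\sig,R}_{(m,u),r}\le\tau^{\theta\sig,L}_{(m,s),r}$, and \eqref{eqn:limits in t} sends the left end to $\tau^{\theta\sig,L}_{(m,s),r}$, forcing the $R$ limit to the same value. The $t\searrow s$ case is symmetric using $\tau^{\theta\sig,R}_{(m,s),r}\le\tau^{\theta\sig,L}_{(m,t),r}\le\tau^{\theta\sig,R}_{(m,t),r}$. The main obstacle is (i): arranging a single random $\ve$ that works uniformly in $(t,r,\sig,S)$ over the specified ranges, by first freezing the compact window $[-T,T]$ large enough to swallow all argmax competitions and then exploiting the local finiteness of the finitely many relevant $\Busedc$ sets.
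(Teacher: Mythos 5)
Your proof is correct, and it arrives at all three parts by a route that overlaps with but is not identical to the paper's. For (i), the paper does not induct level-by-level; it instead invokes Lemma~\ref{lemma:ptl_sig}\ref{itm:LR_geo_max} to characterize the $L/R$ jump times up to level $n$ as the extremal maximizers of a single point-to-line variational problem $\sum_{r=m}^n B_r(s_{r-1},s_r) - h_{n+1}^{\theta\sig}(s_n)$, bounds the maximizing sequences into a compact box using monotonicity, and then applies Theorem~\ref{thm:coupled_BMs_technical}\ref{itm:strong_unif_convergence} to make only the terminal function $h_{n+1}$ coincide across nearby directions. You replace this with a level-by-level argmax induction and re-derive the local constancy of the $h$-increments directly from Theorem~\ref{thm:Theta properties}\ref{itm:const} via the containment $\Busedc_{(r,s),(r,t)}\subseteq\Busedc_{(r,-T),(r,T)}$; this is essentially a re-proof of Theorem~\ref{thm:coupled_BMs_technical}\ref{itm:strong_unif_convergence}, which you could simply have cited. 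Both routes are valid; the paper's is structurally cleaner because it avoids tracking the maximizer interval endpoint across the induction, while yours is more self-contained and elementary. Part (ii) is the same argument phrased directly rather than by contradiction. Part (iii) is where you gain something: the paper again goes through the point-to-line formulation plus Lemma~\ref{lemma:convergence of maximizers from converging sets} to identify the limit as a maximizing sequence and then pins it down as the leftmost, whereas your sandwich $\tau^{\theta\sig,L}_{(m,u),r}\le\tau^{\theta\sig,R}_{(m,u),r}\le\tau^{\theta\sig,L}_{(m,s),r}$ combined with \eqref{eqn:limits in t} is a shorter and more transparent derivation of the same conclusion. One small point of care in your induction: to know that every maximizer of $B_r - h_{r+1}^{\gamma\sig}$ over $[a,\infty)$ lies in $[a,T]$ you should note that all maximizers are bounded above by the corresponding rightmost jump time, which requires a brief appeal to the fact that the rightmost maximizer over $[\tau^{\gamma\sig,L}_{\cdot,r-1},\infty)$ cannot exceed $\tau^{\gamma\sig,R}_{\cdot,r}$ — this follows from planar monotonicity and is implicitly covered by the paper's variational characterization, but in your formulation it is worth one sentence.
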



\begin{remark} \label{rmk:incomparable}
In general, the inequalities of Theorem~\ref{existence of semi-infinite geodesics intro version}\ref{itm:monotonicity of semi-infinite jump times}\ref{itm:monotonicity in theta} and the Equalities of~\eqref{tau134} cannot be extended to  mix $L$ with $R$. In fact, for every  $(m,t) \in \NU_1$, defined below, and $\ve  > 0$ there exist $\theta - \ve < \gamma < \theta$ such that 
\be \label{eqn:pm_counterexample}
 \tau_{(m,t),m}^{\theta -,L} = \tau_{(m,t),m}^{\theta +,L} < \tau_{(m,t),m}^{\gamma -,R} = \tau_{(m,t),m}^{\gamma +,R}.
\ee
To show this, choose $(m,t) \in \NU_1$ and $\ve > 0$. As noted in Remark~\ref{rmk:dense_theta}, there exists an event of probability one, on which, for all $\theta \in \Q_{>0}$, the $\pm$ distinction is not present. By Theorem~\ref{thm:NU}, there exists $\theta \in \Q_{>0}$ such that $\tau_{(m,t),m}^{\theta,L} < \tau_{(m,t),m}^{\theta,R}$. By Theorem~\ref{thm:geod_stronger}\ref{itm:stronger convergence theta}, there exists $\gamma  \in (\theta - \ve,\theta) \cap \Q_{>0}$ such that $\tau_{(m,t),m}^{\gamma,R} = \tau_{(m,t),m}^{\theta ,R}$, and~\eqref{eqn:pm_counterexample} holds.
\end{remark}

\subsection{Non-uniqueness and coalescence of geodesics} \label{sec:non_unique_coal}



In contrast with lattice LPP with continuous weights, in BLPP every direction has random exceptional initial points from which the directed geodesic is not unique. Let $\NU_0^{\theta \sig}$ be the set of space-time points from which emanate at least two semi-infinite Busemann geodesics with the same direction $\theta$ and sign $\sigg$. Let $\NU_1^{\theta \sig}$ be the subset of $\NU_0^{\theta \sig}$ of those points out of which  two  $\theta\sigg$ geodesics separate  on the first level.
    Precisely, 
 \begin{align} 
    \NU_0^{\theta \sig} &=\{(m,t) \in \Z \times \R: \tau_{(m,t),r}^{\theta \sig, L} < \tau_{(m,t),r}^{\theta \sig,R} \text{ for some }r \ge m \}, \text{ and } \label{NU_0 theta def} \\
    \NU_1^{\theta \sig} &= \{(m,t) \in \NU_0^{\theta \sig}: \tau_{(m,t),m}^{\theta \sig ,L} < \tau_{(m,t),m}^{\theta \sig,R} \}. \label{NU_1 theta def}
\end{align}
Define their unions over directions and signs as 
\begin{align}
\label{NU_0_1} \NU_0 &= \bigcup_{\theta > 0, \,\sigg \in \{-,+\}} \NU_0^{\theta \sig} \qquad\text{and}\qquad 
\NU_1 = \bigcup_{\theta > 0,\, \sigg \in \{-,+\}} \NU_1^{\theta \sig}.
\end{align}
The reason for singling out the subset $\NU_1$ of $\NU_0$ becomes evident later in Theorem \ref{thm:ci_and_NU}, where membership in $\NU_1$  connects with the behavior of the competition interface.

 \begin{remark} \label{rmk:NU_Sets} The role of the sets $\NU_0^{\theta\sig}$ in the  non-uniqueness of  $\theta$-directed geodesics is somewhat subtle.  It depends on the direction $\theta$ and on whether we take the $\theta$-specific view or the global view, that is,  the choice of the  full-probability event on which we view the situation.  
 
 (a) The crudest situation is that we fix $\theta$ and work on the event $\Omega^{(\theta)}$ of Theorem~\ref{thm:summary of properties of Busemanns for all theta}\ref{busemann functions agree for fixed theta}.    On this event,  the $\pm$ distinction is not visible, and we can drop the sign and  write $\NU_0^{\theta}=\NU_0^{\theta\pm}$. Now, on the full probability event  $\Omega^{(\theta)} \cap \Omega_2$, the set $\NU_0^\theta$ is {\it exactly} the set of initial points $\mbf x \in \Z \times \R$ out of which the $\theta$-directed geodesic is not unique. This follows because when there is no $\pm$ in Theorem~\ref{existence of semi-infinite geodesics intro version}\ref{itm:all semi-infinite geodesics lie between leftmost and rightmost},  non-uniqueness out of $\mbf x$ in direction $\theta$  happens iff the $L$ and $R$ geodesics in $\mbf T^{\theta}_{\mbf x}$ do not agree. 
 
 (b) If we want a global view, that is, a consideration of all directions simultaneously on a single full-probability event, then we must consider the 
  random set   $\Busedc$ of Busemann function jump points, defined in~\eqref{eqn:Theta}. The set $\Busedc$ is countably infinite (Theorem \ref{thm:Theta properties}\ref{itm:Theta_count_intro}).   If $\theta\notin\Busedc$, then  the $\pm$ distinction is not present and  the situation is as in point (a).  However, if $\theta\in\Busedc$, then out of \textit{every} $\mbf x \in \Z \times \R$ there is more than one $\theta$-directed semi-infinite geodesic. (See Theorem~\ref{thm:geod_description} and Remark \ref{rmk:splitting_coalesceing_general}.) Yet, the sets $\NU_0^{\theta -}$ and $\NU_0^{\theta + }$ are only countably infinite (Theorem \ref{thm:NU}).  Thus, for $\theta\in\Busedc$ the  union $\NU_0^{\theta -} \cup \NU_0^{\theta + }$ dramatically fails to catch all the initial points $\mbf x$ out of which the   $\theta$-directed geodesic is not unique. The reason is that $\NU_0^{\theta -} \cup \NU_0^{\theta + }$ 
  accounts only for the $L/R$ distinction of geodesics and not the $\pm$ distinction. 

 \end{remark}

The following theorem describes the sets $\NU_0$ and $\NU_1$. The countable unions in \eqref{decomp} below are technically crucial because they allow us to rule out left/right nonuniqueness from rational initial  points $\mbf x$ simultaneously  in {\it all} directions without accumulating uncountably many zero-probability events. Recall that $\NU_1^{\theta \sig} \subseteq \NU_0^{\theta \sig}$.

\begin{theorem} \label{thm:NU} The following hold on a single event of full probability. 
\begin{enumerate}[label=\rm(\roman{*}), ref=\rm(\roman{*})]  \itemsep=3pt
    \item \label{itm:allcount} For every $\theta > 0$ and $\sigg \in \{-,+\}$, the sets $\NU_0^{\theta \sig}$ and $\NU_1^{\theta \sig}$ are both countably infinite.
    \item \label{itm:union} The sets $\NU_0$ and $\NU_1$ are both countably infinite. Specifically,  
    \be \label{decomp}
    \NU_0 = \bigcup_{\theta \in \Q_{>0}} \NU_0^\theta\qquad \text{ and }\qquad\NU_1 = \bigcup_{\theta \in \Q_{> 0}} \NU_1^\theta.
    \ee
    \item \label{itm:NUp0} 
    For each $\mbf x \in \Z \times \R$, $\Pp(\mbf x \in \NU_0) = 0$. In particular, the full probability event of the theorem is constructed so that, for all $\mbf x\in\Z\times\Q$, $\mbf x \notin \NU_0$, or in other words, $\mbf T^{\theta\sig}_{\mbf x}$ contains a single sequence for all $\theta > 0$ and $\sigg \in \{-,+\}$. 
    
    \item \label{itm:only_endpoint}   The sets $\NU_0^{\theta \sig}$ and $\NU_1^{\theta \sig}$ can be described as follows, 
    \begin{align*}
    \NU_0^{\theta \sig} &=\{(m,t) \in \Z \times \R: t = \tau_{(m,t),r}^{\theta \sig, L} < \tau_{(m,t),r}^{\theta \sig,R} \text{ for some }r \ge m \}, \text{ and }  \\
    \NU_1^{\theta \sig} &= \{(m,t) \in \NU_0^{\theta \sig}:  t = \tau_{(m,t),m}^{\theta \sig ,L} < \tau_{(m,t),m}^{\theta \sig,R} \}.
    \end{align*}
    In other words, Busemann geodesics emanating from $(m,t)$ can separate only along the upward vertical ray from  $(m,t)$. 
\end{enumerate}
\end{theorem}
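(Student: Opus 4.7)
The plan is to prove part (iv) first, since parts (i)--(iii) will follow quickly from the structural description it provides. Throughout, I abbreviate $\tau^L_{(m,t),r}$ and $\tau^R_{(m,t),r}$ for $\tau^{\theta\sig,L}_{(m,t),r}$ and $\tau^{\theta\sig,R}_{(m,t),r}$, with $\theta > 0$ and $\sigg \in \{-,+\}$ understood from context. The main obstacle will be upgrading the description from $\NU_1^{\theta\sig}$, where the leftmost and rightmost Busemann geodesics already separate at the initial level, to $\NU_0^{\theta\sig}$, where separation may occur at any higher level. This requires an iterated application of the monotonicity argument foreshadowed in Remark~\ref{rmk:incomparable}.

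I first address $\NU_1^{\theta\sig}$. Suppose $\tau^L_{(m,t),m} < \tau^R_{(m,t),m}$ and assume toward a contradiction that $\tau^L_{(m,t),m} > t$. Then both values are interior maximizers of $s \mapsto B_m(s) - h_{m+1}^{\theta\sig}(s)$ on $[t,\infty)$, so by continuity they remain the leftmost and rightmost maximizers on $[t',\infty)$ for every $t' < t$ sufficiently close to $t$. Consequently $\tau^L_{(m,t'),m} = \tau^L_{(m,t),m}$ and $\tau^R_{(m,t'),m} = \tau^R_{(m,t),m}$; but global strong monotonicity (Theorem~\ref{thm:geod_stronger}\ref{itm:global strong monotonicity in t}) demands $\tau^R_{(m,t'),m} \le \tau^L_{(m,t),m}$, contradicting $\tau^L < \tau^R$. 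I will call this the \emph{case 1b obstruction}.

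For $\NU_0^{\theta\sig}$, define $r_* := \max\{r \ge m-1 : \tau^L_{(m,t),r} = t\}$, which is finite because every Busemann geodesic has direction $\theta \in (0,\infty)$ (Theorem~\ref{existence of semi-infinite geodesics intro version}\ref{general limits for semi-infinite geodesics}). The goal is to show $\tau^R_{(m,t),r_*} > t$, and this is the main technical step. Argue by contradiction: if $\tau^R_{(m,t),r_*} = t$, then both leftmost and rightmost Busemann geodesics coincide with the vertical segment from $(m,t)$ to $(r_*+1,t)$. By the restart property (Remark~\ref{rmk:max_seq}), $\tau^{L}_{(r_*+1,t),r_*+1}$ and $\tau^{R}_{(r_*+1,t),r_*+1}$ coincide with $\tau^{L}_{(m,t),r_*+1}$ and $\tau^{R}_{(m,t),r_*+1}$ respectively, and the leftmost value is strictly greater than $t$ by maximality of $r_*$. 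Applying the case 1b obstruction at the initial point $(r_*+1,t)$ then forces $\tau^L_{(r_*+1,t),r_*+1} = \tau^R_{(r_*+1,t),r_*+1}$, so the two paths continue to agree through level $r_*+1$. Iterating the same argument at each subsequent level keeps the two paths identified forever, contradicting $(m,t) \in \NU_0^{\theta\sig}$.

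With (iv) in hand, the remaining parts follow relatively quickly. For (ii), the inclusion $\bigcup_{\theta \in \Q_{>0}} \NU_0^\theta \subseteq \NU_0$ is immediate; for the reverse, given $(m,t) \in \NU_0^{\theta_0 \sig}$ for some $\theta_0 > 0$ and $\sigg \in \{-,+\}$, apply (iv) to extract a level $r$ with $t = \tau^L_{(m,t),r} < \tau^R_{(m,t),r}$, then invoke Theorem~\ref{thm:geod_stronger}\ref{itm:stronger convergence theta} to replace $\theta_0$ by a rational $\theta$ on the appropriate side of $\theta_0$, on which the jump times are locally constant and the $\pm$ distinction is absent (by construction of the full-probability event $\Omega_4$ referenced in Remark~\ref{rmk:dense_theta}). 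Part (i)'s countability then implies countability of $\NU_0$. For (iii), fix $\mbf x = (m,t)$: by (ii) it suffices to show $\Pp(\mbf x \in \NU_0^\theta) = 0$ for each rational $\theta$, and the characterization from (iv) requires the Brownian-type functional $B_r - h_{r+1}^{\theta}$ (whose joint distribution is described by Theorem~\ref{dist of Busemann functions and Bm}) to attain its supremum on $[t,\infty)$ both at $t$ and at some strictly larger point, a null event. A countable intersection over $\mbf x \in \Z \times \Q$ yields the rational-initial-point statement. Finally, (i) follows because (iv) expresses $\NU_0^{\theta\sig}$ as a countable union over lattice levels of exceptional-time sets for Brownian-like functionals, which are almost surely countable; infinitude comes from translation invariance (Theorem~\ref{thm:summary of properties of Busemanns for all theta}\ref{itm:shift_invariance}) combined with the positive density of non-uniqueness events guaranteed by Theorem~\ref{thm:Busemann jump process intro version}.
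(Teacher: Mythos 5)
Your ``case 1b obstruction'' for $\NU_1^{\theta\sig}$ is correct and is a nice use of Theorem~\ref{thm:geod_stronger}\ref{itm:global strong monotonicity in t}: if $t < \tau^L_{(m,t),m} < \tau^R_{(m,t),m}$, the fact that both are interior maximizers of a continuous function lets you slide the initial point to $t' < t$ while keeping both jump times, and then $\tau^R_{(m,t'),m} \le \tau^L_{(m,t),m}$ gives the contradiction. This is genuinely different from the paper, which does not prove the characterization from scratch but instead reduces to rational $\theta$ via Theorem~\ref{thm:geod_stronger}\ref{itm:stronger convergence theta} and then cites the fixed-direction result Theorem~\ref{thm:NU_paper1}\ref{itm:count_and_decomp} (proved in \cite{Seppalainen-Sorensen-21a}).

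However, the step from $\NU_1^{\theta\sig}$ to $\NU_0^{\theta\sig}$ has a real gap. Having forced $\tau^L_{(m,t),r_*+1}=\tau^R_{(m,t),r_*+1}=u>t$ by the obstruction, you restart at $(r_*+2,u)$ and assert that iterating ``keeps the two paths identified forever.'' But the obstruction at $(r_*+2,u)$ applies only when $\tau^L_{(r_*+2,u),r_*+2}>u$; it is silent when $\tau^L_{(r_*+2,u),r_*+2}=u$ (a vertical step, i.e.\ $v_{r_*+3}^{\theta\sig}(u)=0$). In that sub-case the leftmost jump time equals the current position, so it is not interior, the sliding argument cannot be initiated from below, and one cannot rule out $\tau^R_{(r_*+2,u),r_*+2}>u$. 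If that happened, the geodesics from $(m,t)$ would separate at level $r_*+2$ with $\tau^L_{(m,t),r_*+2}=u>t$, directly contradicting the conclusion you want. Excluding this sub-case requires an additional probabilistic input about the random value $u=\tau^L_{(m,t),r_*+1}$ not landing in the zero set of $v_{r_*+3}^{\theta\sig}$; this is exactly the kind of fact furnished by Theorem~\ref{thm:NU_paper1} for rational directions, and it does not follow from monotonicity alone. Your argument also needs $r_*\ge m$ to be meaningful (if $r_*=m-1$ the goal $\tau^R_{(m,t),r_*}>t$ is false by convention), which is the $r=m$ edge of the same problem: when $\tau^L_{(m,t),m}>t$ you have shown $\tau^L_{(m,t),m}=\tau^R_{(m,t),m}$, but the subsequent iteration again hits the vertical-step sub-case. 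A secondary issue: in (i), infinitude does not come from Theorem~\ref{thm:Busemann jump process intro version}; the paper derives it from the non-monotonicity of $s\mapsto B_m(s)-h_{m+1}^{\theta\sig}(s)$ (Lemma~\ref{lem:all_theta_max_comp}) together with density properties of the non-unique-maximizer set in Theorem~\ref{thm:countable non unique maximizers}.
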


\begin{remark}
Theorems~\ref{thm:ci_and_NU} and~\ref{thm:ci_and_Buse_directions} later in the section give more details about the nature of the sets $\NU_0$ and $\NU_1$ and relate them to the geometry of semi-infinite geodesics. Theorem~\ref{thm:ci_and_NU}\ref{itm:ci_int} gives some intuition into why we can write $\NU_1$ as a union over just a dense set of directions: When $(m,s) \in \NU_1$, then $(m,s) \in \NU_1^{\theta \sig}$ for all $\theta$ in a nonempty interval. 
\end{remark}

\begin{remark} \label{rmk:number_sIG} We spell out the geometric consequences of 
Theorem~\ref{thm:NU}, combined with some other facts. Refer to Figure~\ref{fig:multiple_sig}. Consider the set $\mbf T_{(m,t)}^{\theta \sig}$ of  semi-infinite $\theta\sigg$ Busemann geodesics out of the point $(m,t)$. Each of them is $\theta$-directed by Theorem~\ref{existence of semi-infinite geodesics intro version}\ref{general limits for semi-infinite geodesics}, and hence must eventually exit the vertical ray $\{(x,t):x \ge m\}$ that emanates from $(m,t)$.  
In particular, this is true of the  leftmost geodesic   $\Gamma_{(m,t)}^{\theta \sig,L}$ and consequently there is a level $r$ such that no $\theta\sigg$ Busemann geodesic out of $(m,t)$ goes through the point $(k,t)$ for $k \ge r$.

By Part~\ref{itm:NUp0} of Theorem~\ref{thm:NU}, there exists an event of full probability on which $\mbf T_{(r,q)}^{\theta \sig}$ contains a single element for each $(r,q) \in \Z \times \Q,\theta > 0$, and $\sigg \in \{-,+\}$. On this event, for each $r \ge m$, at most one $\mbf T_{(m,t)}^{\theta \sig}$ geodesic can  move to the right from the point $(r,t)$. Otherwise, two such geodesics pass through $(r,q)$ for some rational $q >t$ and produce   two   geodesics in the set $\mbf T_{(r,q)}^{\theta \sig}$, a contradiction. In other words, the $\mbf T_{(m,t)}^{\theta \sig}$  geodesics branch one by one from  the upward vertical ray emanating from the initial point $(m,t)$. 

One conclusion of the above is that 
  $\mbf T_{(m,t)}^{\theta \sig}$ is a finite set. By Theorem \ref{thm:general_coalescence}\ref{itm:split_return}, from some point onwards, all $\mbf T_{(m,t)}^{\theta \sig}$ geodesics are back together.

 Three interrelated open problems are left in this situation. (i) Do there exist  initial   points $(m,t)$ with more than two elements in $\mbf T_{(m,t)}^{\theta \sig}$?   (ii) Is branching at the first level the {\it only} possibility when $\mbf T_{(m,t)}^{\theta \sig}$ is not a singleton?  If the answer is negative, further questions arise. (iii) Is the difference $\NU_0\setminus\NU_1$ empty, or equivalently, does any branching from $(m,t)$  imply   branching already at level $m$?   If $\NU_0\setminus\NU_1\ne\varnothing$, how large is it?    
\end{remark}

\begin{figure}[t]
 \centering
            \begin{tikzpicture}
            \draw[gray,thin] (0,0)--(10,0);
            \draw[gray,thin] (0,1)--(10,1);
            \draw[gray,thin] (0,2)--(10,2);
            \draw[gray,thin] (0,3)--(10,3);
            \draw[gray,thin] (0,4)--(10,4);
            \draw[gray,thin] (0,5)--(10,5);
            \draw[red,ultra thick] plot coordinates {(0,0)(3.2,0)(3.2,1)};
            \draw[red,ultra thick] plot coordinates {(0,1)(6,1)(6,2)(7,2)(7,3)(7.5,3)(7.5,4)(9,4)(9,5)};
            \draw[red,ultra thick] plot coordinates {(0,2)(5,2)(5,3)(6,3)(7,3)};
            \draw[red,ultra thick,->] plot coordinates {(0,0)(0,4)(3,4)(3,5)(10,5)};
            \filldraw[black] (0,0) circle (2pt) node[anchor = north] {$\mbf x$};
            \filldraw[black] (9,5) circle (2pt) node[anchor = south] {$\mbf z$};
            \end{tikzpicture}
            \caption{\small In this example, out of the point $\mbf x$, there are multiple elements of $\mbf T_{\mbf x}^{\theta \boxempty}$, i.e., multiple Busemann geodesics from $\mbf x$ with the same direction and sign. The geodesics all split from each other at the vertical line containing the initial point. The point $\mbf x$ lies in the set $\NU_1$ (and thus also $\NU_0$) because there exist geodesics that split immediately at the initial point. Furthermore, although there are multiple $\theta \boxempty$ geodesics from the same point, they eventually come back together and agree from some point $\mbf z$ onwards by Theorem~\ref{thm:general_coalescence}\ref{itm:split_return}. We know that there exist points $\mbf x \in \Z \times \R$ with two elements of $\mbf T_{\mbf x}^{\theta \boxempty}$, but whether there are points with more than two such geodesics, as in the figure, is an open problem. }
            \label{fig:multiple_sig}
            \bigskip
        \end{figure}
        \begin{figure}[t]
    \centering
    \begin{tikzpicture}
    \draw[black,thick] (-0.5,0)--(7,0);
    \draw[black,thick] (0,-0.5)--(0,5);
    \filldraw[black](2,3) circle (2pt) node[anchor = west] {$\mbf y = (r,s)$};
    \filldraw[black] (5,3) circle (2pt) node[anchor = west] {$\mbf z = (r,t)$};
    \filldraw[black] (2,1) circle (2pt) node[anchor = west] {$\mbf w = (m,s)$};
    \filldraw[black] (5,1) circle (2pt) node[anchor = west] {$\mbf x = (m,t)$};
    \node at (2,-0.25) {$s$};
    \node at (5,-0.25) {$t$};
    \node at (-0.5,1) {$m$};
    \node at (-0.5,3) {$r$};
    \end{tikzpicture}
    \caption{\small In this figure, we have $\mbf y \Ne \mbf x$, $\mbf y \Ne \mbf z$, and $\mbf w \Ne \mbf x$. We also have $\mbf y \NeN \mbf w$ but $\mbf y \not \Ne \mbf w$ and $\mbf z \NeN \mbf x$ but $\mbf z \not \Ne \mbf x$. The points $\mbf w$ and $\mbf z$ satisfy the coordinatewise ordering $\mbf w\le\mbf z$, but they are incomparable under the ordering $\SeS$. }
    \label{fig:initial_point_config}
\end{figure}

\noindent The next theorem gives coalescence of all Busemann geodesics with the same direction $\theta > 0$ and sign $\sigg \in \{-,+\}$.  Recall the southeast ordering $\mbf x \SeS \mbf y$ from Section~\ref{section:notation}, referring to  Figure~\ref{fig:initial_point_config} for clarity.

\begin{theorem} \label{thm:general_coalescence}
The following hold on a single event of full probability, simultaneously for all directions  $\theta > 0$ and signs $\sigg \in \{-,+\}$. 
\begin{enumerate} [label=\rm(\roman{*}), ref=\rm(\roman{*})]  \itemsep=3pt
    \item \label{itm:allsignscoalesce} 
    Whenever $\mbf x,\mbf y \in \Z\times \R$, any two geodesics   $\Gamma_1 \in \mbf T_{\mbf x}^{\theta \sig}$ and $\Gamma_2 \in \mbf T_{\mbf y}^{\theta \sig}$  coalesce. If $\mbf x \Se \mbf y$, then the minimal point of intersection is the coalescence point. 
    \item \label{itm:return_point} 
    If $\Gamma_1,\Gamma_2 \in \mbf T_{(m,t)}^{\theta \sig}$ are distinct, their coalescence point is the minimal point of the set 
$ 
    (\Gamma_1 \cap \Gamma_2) \setminus \{(x,t):x \in [m,\infty)\}.
   $ 
    In other words, the coalescence point of $\Gamma_1$ and $\Gamma_2$ is the first point of intersection {\rm after} these  geodesics split.
    \item \label{itm:split_return} 
    For each $(m,t) \in \Z \times \R$, there exists a level $r \ge m$ and a sequence $s_r \le s_{r + 1} \le \cdots$ such that  every sequence $\{\tau_k\}_{k = m - 1}^\infty \in \mbf T_{(m,t)}^{\theta \sig}$ satisfies  $\tau_k = s_k$ for $k \ge r$. Equivalently, there exists a point $\mbf z \ge \mbf (m,t)$ and a semi-infinite geodesic $\Gamma_0 \in \mbf T_{\mbf z}^{\theta \sig}$ such that all geodesics in $\mbf T_{(m,t)}^{\theta \sig}$ agree with $\Gamma_0$ above and to the right of $\mbf z$.  See Figure~\ref{fig:multiple_sig}.
\end{enumerate}
\end{theorem}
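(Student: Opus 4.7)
The strategy is to prove the three parts in the order (i), (iii), (ii): part (iii) will follow from applying (i) to pairs of geodesics with the same initial point together with the finiteness of $\mbf T_{(m,t)}^{\theta\sig}$ from Remark~\ref{rmk:number_sIG}, and (ii) will follow from detailed use of the $\NU_0^{\theta\sig}$ description in Theorem~\ref{thm:NU}\ref{itm:only_endpoint}. The core of the argument is (i), which must hold simultaneously over all $\theta>0$ and $\sig\in\{-,+\}$. Following the new approach advertised in the introduction, the plan is to use the regularity of the Busemann process in $\theta$, rather than the dual-tree machinery of~\cite{Seppalainen-Sorensen-21a}.

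For (i), I would intersect the events of Theorems~\ref{thm:summary of properties of Busemanns for all theta}, \ref{thm:Theta properties}, \ref{existence of semi-infinite geodesics intro version}, \ref{thm:geod_stronger}, and~\ref{thm:NU} with the fixed-direction coalescence events $\Omega^{(\gamma)}$ of Theorem~\ref{thm:fixedcoal} taken over all $\gamma\in\Q_{>0}$. Consider $\Gamma_1\in\mbf T_{\mbf x}^{\theta\sig}$ and $\Gamma_2\in\mbf T_{\mbf y}^{\theta\sig}$; by symmetry it suffices to treat $\sig=-$ with leftmost geodesics, the other cases being handled by $\delta\searrow\theta$ and the analogous rightmost statements. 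For rational $\gamma<\theta$, the leftmost $\gamma$-geodesics from $\mbf x,\mbf y$ coalesce at some random point $(R_\gamma,T_\gamma)\in\Z\times\R$. Theorem~\ref{thm:geod_stronger}\ref{itm:stronger convergence theta} yields a random $\ve=\ve(m,n,K,\theta)>0$ for each compact $K\subseteq\R$ containing the initial times of $\mbf x,\mbf y$ and each level $n$, such that for $\gamma\in(\theta-\ve,\theta)$ the $\gamma$- and $\theta-$-leftmost geodesics from both $\mbf x$ and $\mbf y$ agree on levels $[m,n]$. The key step is to choose $\gamma$ close enough to $\theta$ that this agreement extends up to and past level $R_\gamma$, thereby forcing the $\theta-$-geodesics to meet at $(R_\gamma,T_\gamma)$. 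I expect to establish this by iterating the local-constancy estimate along the shared $\gamma$-path, using monotonicity in the initial time (Theorem~\ref{existence of semi-infinite geodesics intro version}\ref{itm:monotonicity of semi-infinite jump times}\ref{itm:monotonicity in t} and Theorem~\ref{thm:geod_stronger}\ref{itm:global strong monotonicity in t}) to squeeze the $\theta-$-geodesics between small rational-time perturbations of the starting point, where $\mbf T^{\theta-}$ is a singleton by Theorem~\ref{thm:NU}\ref{itm:NUp0}. Once the $\theta-$-geodesics meet, the same squeezing forces them to coincide from the meeting point onward. When $\mbf x\Se\mbf y$, Theorem~\ref{thm:geod_stronger}\ref{itm:global strong monotonicity in t} rules out crossings at earlier levels, so the first intersection is the coalescence point.

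For (iii), Remark~\ref{rmk:number_sIG} gives that $\mbf T_{(m,t)}^{\theta\sig}$ is finite; applying (i) to each pair produces a common tail beyond the maximum of the pairwise coalescence levels. For (ii), let $\Gamma_1,\Gamma_2\in\mbf T_{(m,t)}^{\theta\sig}$ be distinct; by Theorem~\ref{thm:NU}\ref{itm:only_endpoint} they split only along the initial vertical ray from $(m,t)$. Let $\mbf z=(m_z,t_z)$ be the minimal point of $(\Gamma_1\cap\Gamma_2)\setminus\{(x,t):x\ge m\}$; I claim $\mbf z\notin\NU_0^{\theta\sig}$, whence $\mbf T_{\mbf z}^{\theta\sig}$ is a singleton and $\Gamma_1,\Gamma_2$ agree from $\mbf z$ onward. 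Assume instead $\mbf z\in\NU_0^{\theta\sig}$. Theorem~\ref{thm:NU}\ref{itm:only_endpoint} then forces the leftmost continuation from $\mbf z$ to begin with a vertical step from $\mbf z$, while the rightmost continues horizontally. A short case analysis on the horizontal segments of $\Gamma_1,\Gamma_2$ at level $m_z$ shows that the two segments must overlap in a nondegenerate interval whose left endpoint is strictly less than $t_z$, producing an earlier common point and contradicting the minimality of $\mbf z$.

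The main obstacle is the interplay in the proof of (i) between the random coalescence level $R_\gamma$ of the rational approximants and the random local-constancy radius $\ve(m,n,K,\theta)$ of Theorem~\ref{thm:geod_stronger}\ref{itm:stronger convergence theta}: a priori $R_\gamma$ could blow up as $\gamma\nearrow\theta$ faster than $\ve$ permits, so the iteration argument sketched above must be executed carefully to avoid accumulating exceptional events at the shifted starting points. Closing this loop likely requires combining the iterated comparison with shift-invariance (Theorem~\ref{thm:summary of properties of Busemanns for all theta}\ref{itm:shift_invariance}) and the quantitative jump-process description of the Busemann process from Section~\ref{section:Busepp}, in particular the jump-rate formula~\eqref{Th670}.
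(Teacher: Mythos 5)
Your route to part~\ref{itm:allsignscoalesce} has a gap that you correctly identify but do not close, and it is the crux of the whole theorem. You propose transferring the fixed-direction coalescence of Theorem~\ref{thm:fixedcoal} from rational $\gamma<\theta$ to $\theta$ itself via the geodesic local-constancy statement Theorem~\ref{thm:geod_stronger}\ref{itm:stronger convergence theta}. The radius $\ve(m,n,K,\theta)$ of that statement depends on the target level $n$, and the $\gamma$-coalescence level $R_\gamma$ depends on $\gamma$; you need a single rational $\gamma\in(\theta-\ve(m,R_\gamma,K,\theta),\theta)$, which is circular. Saying the loop can ``likely'' be closed with shift-invariance and the jump-rate formula~\eqref{Th670} does not resolve this: those tools give distributional control, not the almost-sure, simultaneous-in-$\theta$ statement that is needed here, and nothing you cite rules out $R_\gamma\to\infty$ as $\gamma\nearrow\theta$ on a single exceptional realization. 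In fact that $R_\gamma$ is \emph{eventually constant} as $\gamma\nearrow\theta$ is essentially equivalent to the conclusion you are trying to prove.

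The paper sidesteps the circularity by working at the level of Busemann \emph{values} rather than geodesics. Theorem~\ref{thm:coupled_BMs_technical}\ref{convBuse} gives a random $\ve>0$ with $\B^{\theta+}(\mbf x,\mbf y)=\B^{\delta\sig}(\mbf x,\mbf y)$ for all $\delta\in(\theta,\theta+\ve)$ --- an equality of two real numbers, with no level-dependence to control. Theorem~\ref{thm:equiv_coal_pt} (built on Lemmas~\ref{lemma:coaleqh}--\ref{lemma:coaleqdiag}) then shows that equality of Busemann increments for two distinct directions forces a common \emph{finite} coalescence point $\mbf z^{\theta+}(\mbf x,\mbf y)=\mbf z^{\delta\sig}(\mbf x,\mbf y)$: the key is that the two coalescing geodesic bundles have different asymptotic slopes, so the level-by-level induction in the proof of Theorem~\ref{thm:equiv_coal_pt} must terminate --- directedness supplies the finiteness that your iteration cannot. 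Coalescence (rather than a single shared point) then follows from Lemma~\ref{lemma:SE_coal_pt}, and the general-initial-points case is handled by sandwiching with $\Se$-ordered rational perturbations, much as you suggest.

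Your argument for part~\ref{itm:return_point} also does not work as stated. You claim $\mbf z\notin\NU_0^{\theta\sig}$ and appeal to Theorem~\ref{thm:NU}\ref{itm:only_endpoint} to force an immediate vertical/horizontal split at $\mbf z$; but membership in $\NU_0^{\theta\sig}$ only forces the split to occur on the upward vertical ray from $\mbf z$ at some level $r'$, not at $\mbf z$ itself, and more importantly $\mbf z$ need not avoid $\NU_0^{\theta\sig}$. The paper's proof of~\ref{itm:return_point} does not attempt this: it locates the level $r$ at which $\Gamma_1$ and $\Gamma_2$ first branch, extracts $\Se$-ordered points $(r,t+\ve)\Se(r+1,t)$ on the two branches, and invokes part~\ref{itm:allsignscoalesce} directly. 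Your treatment of part~\ref{itm:split_return} via finiteness of $\mbf T^{\theta\sig}_{(m,t)}$ and pairwise application of~\ref{itm:allsignscoalesce} is fine; the paper's one-liner applies~\ref{itm:allsignscoalesce} only to the $L$/$R$ extremes, which squeeze the rest.
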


\begin{remark}
A consequence of Theorem~\ref{thm:general_coalescence} is that the non-uniqueness of semi-infinite geodesics captured by the $L/R$ distinction is temporary, and does not separate the geodesics all the way to $\infty$. That is, while there may be two geodesics in $\mbf T_{\mbf x}^{\theta \sig}$ that separate, they must come back together, as in Figure~\ref{fig:multiple_sig}.  By contrast, in the $\pm$ distinction, geodesics with the same direction split and never come back together. This is explained further in  Remarks~\ref{rmk:splitting_coalesceing_general}, \ref{rmk:split}, and~\ref{rmk:CI not NU 0}.
\end{remark}

\begin{remark}
In~\cite{Seppalainen-Sorensen-21a}, we proved that, for a fixed direction $\theta > 0$ with probability one, all $\theta$-directed geodesics (whether constructed by the Busemann functions or not) coalesce. This is recorded in the current paper as Theorem~\ref{thm:fixedcoal}. This theorem was proven by defining southwest semi-infinite geodesics in a dual environment. It was shown that, if two geodesics with the same direction $\theta$ are disjoint, there must exist a bi-infinite geodesic whose northwest and southeast directions are $\theta$. Then, it was shown that, for fixed northeast and southwest directions, there are almost surely no bi-infinite geodesics in those directions (Theorem 3.1(vi) in~\cite{Seppalainen-Sorensen-21a}). Theorem~\ref{thm:general_coalescence}\ref{itm:allsignscoalesce} does not rely on the result from~\cite{Seppalainen-Sorensen-21a} and provides a new method of proof. Theorem~\ref{thm:geod_description}\ref{allBuse} below states that for all $\theta \notin \Busedc$, all $\theta$-directed semi-infinite geodesics are Busemann geodesics, and Theorem~\ref{thm:Theta properties} states that $\Pp(\theta \in \Busedc) = 0$ for all $\theta > 0$. Therefore, Theorem~\ref{thm:fixedcoal} follows as a special case of Theorem~\ref{thm:general_coalescence}\ref{itm:allsignscoalesce}.
\end{remark}

\begin{figure}[t]
 \centering
            \begin{tikzpicture}
            \draw[gray,thin,dashed] (0,0)--(7,0);
            \draw[gray,thin,dashed] (0,1)--(7,1);
            \draw[gray,thin,dashed] (0,2)--(7,2);
            \draw[gray,thin,dashed] (0,3)--(7,3);
            \draw[gray,thin,dashed] (0,4)--(7,4);
            \draw[gray,thin,dashed] (0,5)--(7,5);
            \draw[red,ultra thick,->] plot coordinates {(0,0)(0,4)(3,4)(3,5)(7,5)};
            \draw[blue, thick] plot coordinates {(0,3)(4,3)(4,4)(5,4)(5,5)};
            \filldraw[black] (0,0) circle (2pt) node[anchor = north] {$\mbf x$};
            \filldraw[black] (0,3) circle (2pt) node[anchor = east] {$\mbf y$};
            \filldraw[black] (5,5) circle (2pt) node[anchor = south] {$\mbf w$};
            \end{tikzpicture}
            \caption{\small The red/thick path is $\Gamma_{\mbf x}^{\theta \boxempty,L}$ and 
            the blue/thin path is $\Gamma_{\mbf y}^{\theta \boxempty,R}$}
            \label{fig:meet_and_split}
            \bigskip
        \end{figure}

\begin{remark} \label{rmk:coal_pt_not_minpt}
 Without the strict ordering $\mbf x \Se \mbf y$   in Theorem~\ref{thm:general_coalescence}\ref{itm:allsignscoalesce}, the intersection point of two geodesics is not known to be the same as the coalescence point.   Whether the following occurs with positive probability  for a random  weakly ordered pair  $\mbf x = (m,s) \SeS \mbf y = (r,s)$ is left as an open problem: First, $\Gamma_{\mbf x}^{\theta \sig,L}$ moves vertically  from $(m,s)$ to $(r,s)$ and meets $\Gamma_{\mbf y}^{\theta \sig,R}$. After that, if $\Gamma_{\mbf x}^{\theta \sig,L}$ makes a vertical step to $(r + 1,s)$, but $\Gamma_{\mbf y}^{\theta \sig,R}$ makes a horizontal step, then the geodesics become separated. In this case  $\mbf z^{\theta \sig}(\mbf x,\mbf y) = \mbf y$ is the minimal point of intersection but not the coalescence point $\mbf w$, as illustrated in
 Figure~\ref{fig:meet_and_split}. 
\end{remark}

\subsection{Geodesics and the discontinuities of the Busemann process}
\label{sec:geometry_sub}
Recall the   discontinuity sets $\Busedc_{\mbf x,\mbf y}$ and $\Busedc$ defined in~\eqref{eqn:Theta}.


 \medskip 

Suppose that, for some initial points $\mbf x$ and $\mbf y$, signs $\sigg_1,\sigg_2 \in \{-,+\}$, and directions  $\gamma < \theta$, two geodesics $\Gamma_1 \in \mbf T_{\mbf x}^{\gamma \sig_1}$ and $\Gamma_2 \in \mbf T_{\mbf y}^{\gamma \sig_1}$ coalesce at some point $\mbf z$, and two other geodesics $\Gamma_3 \in \mbf T_{\mbf x}^{\theta \sig_2}$ and $\Gamma_4 \in \mbf T_{\mbf y}^{\theta \sig_2}$ also coalesce at that same point $\mbf z$. Then by additivity of the Busemann functions and Theorem~\ref{existence of semi-infinite geodesics intro version}\ref{energy of path along semi-infinte geodesic},
\be \label{coaldiff}
\B^{\gamma \sig_1}(\mbf x,\mbf y) = \B^{\gamma \sig_1}(\mbf x,\mbf z) - \B^{\gamma \sig_1}(\mbf y,\mbf z) = L_{\mbf x,\mbf z} - L_{\mbf y,\mbf z} = \B^{\theta \sig_2}(\mbf x,\mbf y).
\ee
In light of Theorem~\ref{thm:Theta properties}\ref{itm:const}, it is natural to ask whether the converse holds: that is, whether $\B^{\theta \sig_2}(\mbf x,\mbf y) = \B^{\gamma \sig_1}(\mbf x,\mbf y)$ implies that the $\theta$- and $\gamma$-directed geodesics out of $\mbf x$ and $\mbf y$  share a common coalescence point. The answer is affirmative for lattice LPP with continuous weights  (see section 3 of \cite{Janjigian-Rassoul-Seppalainen-19}). We show in Remark~\ref{rmk:coalcounter} that this does {\it not} hold in general for BLPP. However, such a statement does hold when restricted to certain configurations of initial points and when the $L/R$ type of the geodesics is specified appropriately. This is manifested in the following definition.

\begin{definition} \label{def:coalpt}
For $\mbf x \SeS \mbf y$ and $\theta > 0$, we define $\mbf z^{\theta \sig}(\mbf x,\mbf y) \in \Z \times \R$ to be the minimal point of intersection of the semi-infinite geodesics $\Gamma_{\mbf x}^{\theta \sig,L}$ and $\Gamma_{\mbf y}^{\theta \sig,R}$. 
\end{definition}
\begin{remark}
By Theorem~\ref{thm:general_coalescence}\ref{itm:allsignscoalesce}, $\mbf z^{\theta \sig}(\mbf x,\mbf y)$ is well-defined, and if $\mbf x$ and $\mbf y$ satisfy the strict ordering $\mbf x \Se \mbf y$, then $\mbf z^{\theta \sig}(\mbf x,\mbf y)$ is also the coalescence point of $\Gamma_{\mbf x}^{\theta \sig,L}$ and $\Gamma_{\mbf y}^{\theta \sig,R}$. In the general case $\mbf x \SeS \mbf y$, the definition of $\mbf z^{\theta \sig}(\mbf x,\mbf y)$ as the minimal intersection point (instead of the coalescence point) is required for the following theorems. Also, the condition $\mbf x \SeS \mbf y$ and the $L/R$ distinctions are essential. 
\end{remark}

\begin{theorem} \label{thm:common_coalesce}
On a single event of probability one, whenever $\gamma < \delta$, and $\mbf x \SeS \mbf y \in \Z \times \R$, the following are equivalent.
\begin{enumerate}[label=\rm(\roman{*}), ref=\rm(\roman{*})]  \itemsep=3pt
    \item \label{itm:sameBuse}$\B^{\gamma +}(\mbf x,\mbf y) = \B^{\delta-}(\mbf x,\mbf y)$
    \item \label{itm:samecoal}$\mbf z^{\gamma +}(\mbf x,\mbf y) = \mbf z^{\delta -}(\mbf x,\mbf y)$.
    \item \label{itm:allsamepath} There exists $\mbf z \in \Z \times \R$ and finite-length, upright paths $\Gamma_1$ {\rm(}connecting $\mbf x$ and $\mbf z${\rm)} and $\Gamma_2$ {\rm(}connecting $\mbf y$ and $\mbf z${\rm)} such that for all $\theta \in (\gamma,\delta)$ and $\sigg \in \{-,+\}$,  $\Gamma_1$ agrees with $\Gamma_{\mbf x}^{\theta \sig,L}$, $\Gamma_{\mbf x}^{\gamma+,L}$ and $\Gamma_{\mbf x}^{\delta-,L}$ from $\mbf x$ to $\mbf z$, and $\Gamma_2$ agrees with  $\Gamma_{\mbf y}^{\theta \sig,R}$, $\Gamma_{\mbf y}^{\gamma+,R}$, and $\Gamma_{\mbf y}^{\delta-,R}$ from $\mbf y$ to $\mbf z$. The paths $\Gamma_1$ and $\Gamma_2$ are disjoint before they reach the point $\mbf z$. 
\end{enumerate}
\end{theorem}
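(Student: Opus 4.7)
The plan is to prove the three equivalences via the cycle $\textrm{(iii)} \Rightarrow \textrm{(ii)} \Rightarrow \textrm{(i)} \Rightarrow \textrm{(iii)}$. The configuration $\mbf x \SeS \mbf y$ together with the specific combination of leftmost geodesic from $\mbf x$ and rightmost geodesic from $\mbf y$ built into $\mbf z^{\theta\sig}(\mbf x,\mbf y)$ is essential throughout, as is the uniqueness of leftmost and rightmost finite geodesics between any fixed endpoints, which combines Lemma~\ref{lemma:uniqueness of LPP time} with Lemma~\ref{existence of leftmost and rightmost geodesics}.

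For $\textrm{(iii)} \Rightarrow \textrm{(ii)}$, the paths $\Gamma_1,\Gamma_2$ coincide with $\Gamma_{\mbf x}^{\gamma+,L}$ and $\Gamma_{\mbf y}^{\gamma+,R}$ on their initial portions, meet at $\mbf z$, and are disjoint before $\mbf z$, forcing $\mbf z$ to be the minimal intersection $\mbf z^{\gamma+}(\mbf x,\mbf y)$; the identical argument at $\delta-$ gives $\mbf z = \mbf z^{\delta-}(\mbf x,\mbf y)$. For $\textrm{(ii)} \Rightarrow \textrm{(i)}$, let $\mbf z$ be the common minimal intersection. Since $\mbf z \in \Gamma_{\mbf x}^{\gamma+,L} \cap \Gamma_{\mbf y}^{\gamma+,R}$, Theorem~\ref{existence of semi-infinite geodesics intro version}\ref{energy of path along semi-infinte geodesic} identifies $\B^{\gamma+}(\mbf x,\mbf z) = L_{\mbf x,\mbf z}$ and $\B^{\gamma+}(\mbf y,\mbf z) = L_{\mbf y,\mbf z}$, and additivity yields $\B^{\gamma+}(\mbf x,\mbf y) = L_{\mbf x,\mbf z} - L_{\mbf y,\mbf z}$; the identical computation at $\delta-$ produces the same right-hand side.

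The substantive direction is $\textrm{(i)} \Rightarrow \textrm{(iii)}$. Write $\mbf x = (m,s) \SeS (n,t) = \mbf y$ with $n \le m$ and $s \le t$. The decomposition
\[
\B^{\theta\sig}(\mbf x,\mbf y) \;=\; -\sum_{i=n+1}^{m} \vv_i^{\theta\sig}(s) \;+\; \h_n^{\theta\sig}(s,t)
\]
together with the opposite $\theta$-monotonicities in Theorem~\ref{thm:summary of properties of Busemanns for all theta}\ref{general monotonicity Busemanns} shows that $\theta \mapsto \B^{\theta\sig}(\mbf x,\mbf y)$ is nonincreasing. Hypothesis (i) thus forces this function to be constant on $[\gamma,\delta] \times \{-,+\}$; in particular $(\gamma,\delta) \cap \Busedc_{\mbf x,\mbf y} = \emptyset$, and by the same monotonicity each summand in the decomposition is itself constant there. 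Set $\mbf z := \mbf z^{\gamma+}(\mbf x,\mbf y)$ and let $p$ be its level. The claim to establish is that $\mbf z^{\theta\sig}(\mbf x,\mbf y) = \mbf z$ for every $(\theta,\sigg) \in [\gamma,\delta] \times \{-,+\}$. Once established, the portions of $\Gamma_{\mbf x}^{\theta\sig,L}$ from $\mbf x$ to $\mbf z$ all coincide with the unique leftmost finite geodesic $\Gamma_1$ from $\mbf x$ to $\mbf z$; the analogous argument on the $\mbf y$ side produces $\Gamma_2$; and disjointness before $\mbf z$ is inherited from the $\gamma+$ case. For the claim itself I plan a connectedness argument on $[\gamma,\delta]$: Theorem~\ref{thm:geod_stronger}\ref{itm:stronger convergence theta} supplies local constancy of $\tau_{\mbf x,r}^{\cdot,L}$ and $\tau_{\mbf y,r}^{\cdot,R}$ in $\theta$ on the finite level range $r \in [n,p]$, giving the conclusion in a one-sided neighborhood of $\gamma+$, and one then propagates right across $(\gamma,\delta]$.

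The main obstacle is the propagation step: ruling out a jump of $\tau_{\mbf x,r_0}^{\cdot,L}$ or $\tau_{\mbf y,r_0}^{\cdot,R}$ at some $\theta_0 \in (\gamma,\delta]$ with $r_0 \le p$, using only the established constancy of $\B^{\theta\sig}(\mbf x,\mbf y)$ on $[\gamma,\delta]$. The strategy is to translate such a hypothetical jump into a strict discontinuity $\B^{\theta_0-}(\mbf x,\mbf y) > \B^{\theta_0+}(\mbf x,\mbf y)$, contradicting constancy. A geodesic jump at level $r_0$ reflects a discontinuity in $\h_{r_0+1}^{\theta_0\pm}$ at a relevant argument; the iterated queuing identity $\h_i^{\theta\sig} = D(\h_{i+1}^{\theta\sig},B_i)$ from Theorem~\ref{thm:summary of properties of Busemanns for all theta}\ref{general queuing relations Busemanns} then forces a discontinuity to cascade down through levels $r_0, r_0 - 1, \dotsc$ and ultimately alter either a vertical increment $\vv_i^{\theta_0\pm}(s)$ with $i \in (n,m]$ or the horizontal increment $\h_n^{\theta_0\pm}(s,t)$ appearing in the decomposition. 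The coalescence of Theorem~\ref{thm:general_coalescence} at both $\theta_0-$ and $\theta_0+$ is invoked to ensure this cascade reaches the specific pair $(\mbf x,\mbf y)$, since $\mbf z$ lies on the common coalesced paths from both initial points at levels bracketing $r_0$. A small separate argument handles the degenerate case $s=t$, where Remark~\ref{rmk:coal_pt_not_minpt} shows that $\mbf z^{\theta\sig}(\mbf x,\mbf y)$ is the first meeting point and need not be the coalescence point.
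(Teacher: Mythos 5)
Your $\textrm{(iii)}\Rightarrow\textrm{(ii)}$ and $\textrm{(ii)}\Rightarrow\textrm{(i)}$ steps are correct and essentially the paper's own short arguments (disjointness plus the first-intersection characterization of $\mbf z^{\theta\sig}$, then \eqref{coaldiff}). The burden lies entirely in $\textrm{(i)}\Rightarrow\textrm{(iii)}$, which reduces---as you correctly observe---to the assertion that constancy of $\theta\mapsto\B^{\theta\sig}(\mbf x,\mbf y)$ on $[\gamma,\delta]$ forces $\mbf z^{\theta\sig}(\mbf x,\mbf y)$ to be constant there, i.e.\ that equal Busemann increments along $\mbf x\SeS\mbf y$ imply a common first intersection of $\Gamma_{\mbf x}^{\cdot,L}$ and $\Gamma_{\mbf y}^{\cdot,R}$. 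That is precisely the content of Theorem~\ref{thm:equiv_coal_pt}, which the paper establishes before this theorem via Lemmas~\ref{lemma:coaleqh}, \ref{lemma:coaleqv}, \ref{lemma:coaleqdiag}, and then the proof of Theorem~\ref{thm:common_coalesce} is one line per direction.

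Your proposed replacement for that step is where the gap is. You invoke the local constancy of $\tau_{\cdot,r}^{\theta\sig,S}$ in $\theta$ (Theorem~\ref{thm:geod_stronger}\ref{itm:stronger convergence theta}) and then claim that a hypothetical jump of a geodesic jump time at some $\theta_0\in(\gamma,\delta]$ ``cascades down'' through the queuing identity to alter a vertical increment $\vv_i^{\theta_0\pm}(s)$ or the horizontal increment $\h_n^{\theta_0\pm}(s,t)$ and thereby contradict the constancy of $\B^{\theta\sig}(\mbf x,\mbf y)$. This is asserted, not proved, and it is exactly where the real work lies. The map $D$ does not automatically propagate a discontinuity of $\h_{r_0+1}^{\theta\pm}$ at one argument into a discontinuity of $\h_{r_0}^{\theta\pm}$ at the \emph{specific} arguments appearing in the decomposition of $\B^{\theta\sig}(\mbf x,\mbf y)$; the paper's $H_m = H_{m+1}$ result (proof of Theorem~\ref{thm:Theta properties}\ref{itm:Theta=Vm}) only yields existence of \emph{some} argument where the jump persists. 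To pin the jump to the relevant increments one must track the geodesic jump times level by level, which is what Lemma~\ref{lemma:coaleqh} does: equality of $\h_m^{\theta}(s,t)$ and $\h_m^{\gamma}(s,t)$ above the diagonal forces the right/left jump times $s',t'$ to agree at the two directions and forces $\h_{m+1}^\theta(s',t')=\h_{m+1}^\gamma(s',t')$, and the induction terminates because geodesics of distinct directions must eventually separate. Your cascade would essentially have to rediscover this lemma (running the induction upward, not downward as you describe). Also, beware that Remark~\ref{rmk:coalcounter} shows exactly this type of rigidity \emph{fails} for the $L/L$ pairing at horizontally adjacent points, so any argument here must use the $L$-from-SE-point/$R$-from-NW-point pairing in an essential way; nothing in your cascade sketch does so. Finally, two smaller points: your $\mbf x\SeS\mbf y$ convention is reversed relative to the paper's (Remark~\ref{rmk:Buse_mont} has $\mbf x$ at the lower level and later time), which flips the sign of monotonicity but does not affect the constancy conclusion; and your appeal to Remark~\ref{rmk:coal_pt_not_minpt} for the weak-ordering case is a one-sentence acknowledgment rather than a resolution, whereas Theorem~\ref{thm:equiv_coal_pt} handles the weak case uniformly.
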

\begin{remark}
If we remove condition~\ref{itm:allsamepath} from Theorem~\ref{thm:common_coalesce}, then we get a more general equivalence where we do not need to assume the signs $-$ and $+$ for the $\delta$ and $\gamma$ geodesics, respectively. See Theorem~\ref{thm:equiv_coal_pt}. 
\end{remark}

\begin{remark} \label{rmk:Buse_mont}
Set $\mbf x = (m,t)$, $\mbf y = (r,s)$ and $\mbf w = (r,t)$. The assumption $\mbf x \SeS \mbf y$ requires $r \ge m$ and $s \le t$. By the additivity of Theorem~\ref{thm:summary of properties of Busemanns for all theta}\ref{general additivity Busemanns}, we may write 
\be \label{hvdecomp}
\B^{\theta \sig}(\mbf x,\mbf y) = \B^{\theta \sig}(\mbf x,\mbf w) + \B^{\theta \sig}(\mbf w,\mbf y) = \sum_{k = m + 1}^{r} v_k^{\theta \sig}(t)  -h_r^{\theta \sig}(s,t).
\ee
By the monotonicity of Theorem~\ref{thm:summary of properties of Busemanns for all theta}\ref{general monotonicity Busemanns}, the process $\theta \mapsto \B^{\theta \sig}(\mbf x,\mbf y)$ is nondecreasing whenever $\mbf x \SeS \mbf y$. Hence, Condition~\ref{itm:sameBuse} of Theorem~\ref{thm:common_coalesce} is equivalent to the stronger statement that, for $\theta \in (\gamma,\delta)$ and $\sigg \in \{-,+\}$,
\[
\B^{\theta \sig}(\mbf x,\mbf y) = \B^{\gamma +}(\mbf x,\mbf y) = \B^{\delta-}(\mbf x,\mbf y).
\]
\end{remark}

\begin{theorem} \label{thm:thetanotsupp}
On a single event of full probability, whenever $\theta > 0$ and $\mbf x \SeS \mbf y \in \Z \times \R$, the following are equivalent. 
\begin{enumerate}[label=\rm(\roman{*}), ref=\rm(\roman{*})]  \itemsep=3pt
    \item \label{itm:pmBuse}$\B^{\theta -}(\mbf x,\mbf y) = \B^{\theta +}(\mbf x,\mbf y)$
    \item \label{itm:pmcoal}$\mbf z^{\theta -}(\mbf x,\mbf y) = \mbf z^{\theta+}(\mbf x,\mbf y)$.
    \item \label{itm:disjointgeod} $\Gamma_{\mbf x}^{\theta+,L} \cap \Gamma_{\mbf y}^{\theta-,R} \neq \varnothing$. 
\end{enumerate}
\end{theorem}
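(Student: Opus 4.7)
The plan is to establish the cycle of implications (ii) $\Rightarrow$ (iii) $\Rightarrow$ (i) $\Rightarrow$ (ii) on the intersection of the full-probability events of Theorems~\ref{thm:common_coalesce}, \ref{thm:Theta properties}, \ref{existence of semi-infinite geodesics intro version}, and~\ref{thm:general_coalescence}. The direction (ii) $\Rightarrow$ (iii) is immediate: the common value $\mbf z^{\theta-}(\mbf x,\mbf y) = \mbf z^{\theta+}(\mbf x,\mbf y)$ lies on $\Gamma_{\mbf x}^{\theta+,L}$ by the definition of $\mbf z^{\theta+}(\mbf x,\mbf y)$ and on $\Gamma_{\mbf y}^{\theta-,R}$ by the definition of $\mbf z^{\theta-}(\mbf x,\mbf y)$, so the intersection is nonempty.

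For (iii) $\Rightarrow$ (i), I would pick any $\mbf u \in \Gamma_{\mbf x}^{\theta+,L} \cap \Gamma_{\mbf y}^{\theta-,R}$ and use Theorem~\ref{existence of semi-infinite geodesics intro version}\ref{energy of path along semi-infinte geodesic} to get the identities $\B^{\theta+}(\mbf x,\mbf u) = L_{\mbf x,\mbf u}$ and $\B^{\theta-}(\mbf y,\mbf u) = L_{\mbf y,\mbf u}$. The complementary bounds $\B^{\theta-}(\mbf x,\mbf u) \ge L_{\mbf x,\mbf u}$ and $\B^{\theta+}(\mbf y,\mbf u) \ge L_{\mbf y,\mbf u}$ hold in general for coordinatewise-ordered pairs: the coalescence of all $\theta\sig$ Busemann geodesics (Theorem~\ref{thm:general_coalescence}\ref{itm:allsignscoalesce}) provides a point $\mbf c$ where both Busemann geodesics realize their respective LPP times, and ordinary LPP superadditivity $L_{\mbf a,\mbf c} \ge L_{\mbf a,\mbf w} + L_{\mbf w,\mbf c}$ gives the inequality after one application of additivity. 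Invoking additivity of the Busemann process (Theorem~\ref{thm:summary of properties of Busemanns for all theta}\ref{general additivity Busemanns}), I would chain
\[
\B^{\theta+}(\mbf x,\mbf y) = L_{\mbf x,\mbf u} - \B^{\theta+}(\mbf y,\mbf u) \le L_{\mbf x,\mbf u} - L_{\mbf y,\mbf u} \le \B^{\theta-}(\mbf x,\mbf u) - L_{\mbf y,\mbf u} = \B^{\theta-}(\mbf x,\mbf y),
\]
and combine with the reverse monotonicity bound $\B^{\theta-}(\mbf x,\mbf y) \le \B^{\theta+}(\mbf x,\mbf y)$ from Remark~\ref{rmk:Buse_mont} (valid because $\mbf x \SeS \mbf y$) to force equality, which is (i).

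For (i) $\Rightarrow$ (ii), I would use the jump-process description of $\theta \mapsto \B^{\theta \pm}(\mbf x,\mbf y)$ from Theorem~\ref{thm:Theta properties}\ref{itm:const}. Condition (i) says $\theta \notin \Busedc_{\mbf x,\mbf y}$, and since $\Busedc_{\mbf x,\mbf y}$ has no limit point in $(0,\infty)$, there exists an open interval $(\gamma,\delta) \ni \theta$ disjoint from $\Busedc_{\mbf x,\mbf y}$ on which $\theta' \mapsto \B^{\theta' \pm}(\mbf x,\mbf y)$ is constant. In particular $\B^{\gamma+}(\mbf x,\mbf y) = \B^{\delta-}(\mbf x,\mbf y)$, and Theorem~\ref{thm:common_coalesce}\ref{itm:allsamepath} then supplies a single point $\mbf z$ through which $\Gamma_{\mbf x}^{\theta'\sig,L}$ and $\Gamma_{\mbf y}^{\theta'\sig,R}$ both pass for every $\theta' \in (\gamma,\delta)$ and $\sig \in \{-,+\}$. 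Specializing to $\theta' = \theta$ and both signs yields $\mbf z^{\theta-}(\mbf x,\mbf y) = \mbf z = \mbf z^{\theta+}(\mbf x,\mbf y)$, which is (ii).

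The main obstacle I anticipate is the simultaneity in $(\theta,\mbf x,\mbf y)$: the assertion must hold on a single full-probability event for all positive $\theta$ and all pairs with $\mbf x \SeS \mbf y$, which is uncountably many conditions. However, Theorems~\ref{thm:common_coalesce}, \ref{thm:Theta properties}, and~\ref{thm:general_coalescence} already deliver their conclusions uniformly on such an event, so intersecting their events suffices; the remainder of the argument becomes essentially algebraic. A minor bookkeeping point is that the general inequality $\B^{\theta\sig}(\mbf a,\mbf w) \ge L_{\mbf a,\mbf w}$ must be in force for both signs simultaneously, but coalescence of $\theta\sig$-geodesics makes this an easy deduction from LPP superadditivity and the geodesic identity for Busemann functions.
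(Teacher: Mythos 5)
Your proof is correct, but it takes a genuinely different route from the paper's. The paper's cycle is $\ref{itm:pmBuse}\Rightarrow\ref{itm:pmcoal}$, $\ref{itm:pmcoal}\Rightarrow\ref{itm:pmBuse}$, $\ref{itm:pmcoal}\Rightarrow\ref{itm:disjointgeod}$, $\ref{itm:disjointgeod}\Rightarrow\ref{itm:pmcoal}$; you establish the cycle $\ref{itm:pmcoal}\Rightarrow\ref{itm:disjointgeod}\Rightarrow\ref{itm:pmBuse}\Rightarrow\ref{itm:pmcoal}$. Your $\ref{itm:pmcoal}\Rightarrow\ref{itm:disjointgeod}$ is cleaner than the paper's: you simply unwind Definition~\ref{def:coalpt}, whereas the paper invokes uniqueness of leftmost and rightmost finite geodesics (Theorem~\ref{existence of semi-infinite geodesics intro version}\ref{Leftandrightmost}) to argue that the four geodesics share the point. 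Your $\ref{itm:disjointgeod}\Rightarrow\ref{itm:pmBuse}$ is a genuinely new argument: the paper never proves this implication directly (it instead routes through $\ref{itm:disjointgeod}\Rightarrow\ref{itm:pmcoal}\Rightarrow\ref{itm:pmBuse}$, which requires the planar ordering lemma and the geometry of Figures~\ref{fig:samecoalpt}--\ref{fig:ordering}). Your sandwich argument is entirely algebraic, using only additivity, the Busemann-dominates-LPP inequality $\B^{\theta\sig}(\mbf a,\mbf w)\ge L_{\mbf a,\mbf w}$ (which you correctly deduce from coalescence of $\theta\sig$-geodesics, Theorem~\ref{existence of semi-infinite geodesics intro version}\ref{energy of path along semi-infinte geodesic}, and superadditivity of finite LPP), and the monotonicity $\B^{\theta-}(\mbf x,\mbf y)\le\B^{\theta+}(\mbf x,\mbf y)$ for $\mbf x\SeS\mbf y$ from Remark~\ref{rmk:Buse_mont}. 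This is arguably more robust than the paper's route, since it avoids any planar incidence geometry. The one step your argument shares essentially verbatim with the paper is $\ref{itm:pmBuse}\Rightarrow\ref{itm:pmcoal}$, which both rest on producing $\gamma<\theta<\delta$ with $\B^{\gamma+}(\mbf x,\mbf y)=\B^{\delta-}(\mbf x,\mbf y)$ (you cite Theorem~\ref{thm:Theta properties}\ref{itm:const}, the paper cites Theorem~\ref{thm:coupled_BMs_technical}\ref{convBuse}, but these are the same fact) and then applying Theorem~\ref{thm:common_coalesce}\ref{itm:allsamepath}.
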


\noindent We conclude this subsection with a theorem that characterizes the properties of $\theta$-directed semi-infinite geodesics depending on whether $\theta \in\Busedc$.

\begin{theorem} \label{thm:geod_description}
On a single event of probability one, the following are equivalent.
\begin{enumerate} [label=\rm(\roman{*}), ref=\rm(\roman{*})]  \itemsep=3pt
    \item  \label{notTheta} $\theta \notin \Busedc$.
    \item \label{LRsame} $\Gamma_{\mbf x}^{\theta +,R} = \Gamma_{\mbf x}^{\theta -,R}$ and $\Gamma_{\mbf x}^{\theta+,L} = \Gamma_{\mbf x}^{\theta - ,L}$ for all $\mbf x \in \Z \times \R$.
    \item \label{coal} All $\theta$-directed semi-infinite geodesics coalesce {\rm(}whether they are Busemann geodesics or not{\rm)}. 
    \item \label{countonegeod}  For all $\mbf x \in (\Z \times \R) \setminus \NU_0$, there exists a single $\theta$-directed semi-infinite geodesic starting from $\mbf x$. 
    \item \label{onegeod} There exists $\mbf x \in \Z \times \R$ such that there is exactly one $\theta$-directed semi-infinite geodesic starting from $\mbf x$. 
    \item \label{Ronegeod} There exists $\mbf x\in \Z \times \R$ such that $\Gamma_{\mbf x}^{\theta +,R} = \Gamma_{\mbf x}^{\theta-,R}$.
    \item \label{Lonegeod} There exists $\mbf x \in \Z \times \R$ such that $\Gamma_{\mbf x}^{\theta+,L} = \Gamma_{\mbf x}^{\theta-,L}$. 
\end{enumerate}
Under these equivalent conditions, the following also holds.
\begin{enumerate}[resume, label=\rm(\roman{*}), ref=\rm(\roman{*})]  \itemsep=3pt
    \item \label{allBuse} For all $\mbf x \in \Z \times \R$, all $\theta$-directed semi-infinite geodesics starting from $\mbf x$ are in the set $\mbf T_{\mbf x}^{\theta}$, i.e., they are all Busemann geodesics.
\end{enumerate}
\end{theorem}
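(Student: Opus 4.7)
I would prove the equivalences via the cycle (i) $\Rightarrow$ (ii) $\Rightarrow$ (iii),(iv),(vi),(vii),(viii), with the auxiliary chain (iv) $\Rightarrow$ (v) $\Rightarrow$ (vi),(vii) filling in the remaining links, and close the loop by showing each of (iii),(vi),(vii) implies (i). All work takes place on the intersection of the full-probability events supporting Theorems \ref{thm:summary of properties of Busemanns for all theta}, \ref{thm:Theta properties}, \ref{existence of semi-infinite geodesics intro version}, \ref{thm:NU}, \ref{thm:general_coalescence}, and \ref{thm:thetanotsupp}. Implication (i) $\Rightarrow$ (ii) is immediate: if $\theta\notin\Busedc$ then $h_r^{\theta-}=h_r^{\theta+}$ at every level $r$, so Definition \ref{def:semi-infinite geodesics} produces the same maximizers, and hence the same leftmost and rightmost paths, for both signs.

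Under (ii), the bracketing of Theorem \ref{existence of semi-infinite geodesics intro version}\ref{itm:all semi-infinite geodesics lie between leftmost and rightmost} forces every $\theta$-directed geodesic from $\mbf x$ to lie between two coinciding pairs of Busemann geodesics, yielding (viii). For $\mbf x\notin\NU_0$ we additionally have $\Gamma_{\mbf x}^{\theta\sig,L}=\Gamma_{\mbf x}^{\theta\sig,R}$ by the definition of $\NU_0$, so all four Busemann geodesics at $\mbf x$ collapse to one, giving (iv). Statement (iii) then follows by combining (viii) with Theorem \ref{thm:general_coalescence}\ref{itm:allsignscoalesce} applied to a single sign; conditions (vi),(vii) are immediate. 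For (iv) $\Rightarrow$ (v), the countability of $\NU_0$ in Theorem \ref{thm:NU} gives $\mbf x\notin\NU_0$ with a unique $\theta$-directed geodesic. For (v) $\Rightarrow$ (vi),(vii), a unique $\theta$-directed geodesic from $\mbf x$ forces all four Busemann geodesics $\Gamma_{\mbf x}^{\theta\sig,S}$—each $\theta$-directed by Theorem \ref{existence of semi-infinite geodesics intro version}\ref{general limits for semi-infinite geodesics}—to coincide, because then the monotonicity chain $\tau_{\mbf x,r}^{\theta-,L}\le\tau_{\mbf x,r}^{\theta\sig,S}\le\tau_{\mbf x,r}^{\theta+,R}$ from Theorem \ref{existence of semi-infinite geodesics intro version}\ref{itm:monotonicity of semi-infinite jump times} becomes an equality throughout.

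The main obstacle is closing the loop. The key tools are Theorem \ref{thm:thetanotsupp}, whose equivalence $\B^{\theta-}(\mbf x,\mbf y)=\B^{\theta+}(\mbf x,\mbf y)\iff\Gamma_{\mbf x}^{\theta+,L}\cap\Gamma_{\mbf y}^{\theta-,R}\ne\varnothing$ holds for all $\mbf x\SeS\mbf y$, combined with Theorem \ref{thm:Theta properties}\ref{itm:Theta=Vm}, which detects $\theta\in\Busedc$ solely from the discontinuity sets $\Busedc_{(m,-t_k),(m,t_k)}$ along a sequence $t_k\to\infty$ (pairs which are automatically $\Ne$-comparable). For (iii) $\Rightarrow$ (i): given $\mbf x\SeS\mbf y$, the $\theta$-directed geodesics $\Gamma_{\mbf x}^{\theta+,L}$ and $\Gamma_{\mbf y}^{\theta-,R}$ coalesce by (iii), hence intersect, so Theorem \ref{thm:thetanotsupp} gives $\B^{\theta-}(\mbf x,\mbf y)=\B^{\theta+}(\mbf x,\mbf y)$, and Theorem \ref{thm:Theta properties}\ref{itm:Theta=Vm} forces $\theta\notin\Busedc$. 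For (vi) $\Rightarrow$ (i), put $\Gamma:=\Gamma_{\mbf x}^{\theta+,R}=\Gamma_{\mbf x}^{\theta-,R}$; for arbitrary $\mbf y_1\SeS\mbf y_2$, applying Theorem \ref{thm:general_coalescence}\ref{itm:allsignscoalesce} within each sign class yields coalescence points $\mbf p\in\Gamma_{\mbf y_1}^{\theta+,L}\cap\Gamma$ and $\mbf q\in\Gamma_{\mbf y_2}^{\theta-,R}\cap\Gamma$, so any point on $\Gamma$ above both $\mbf p$ and $\mbf q$ witnesses $\Gamma_{\mbf y_1}^{\theta+,L}\cap\Gamma_{\mbf y_2}^{\theta-,R}\ne\varnothing$, and the argument closes as in the previous case. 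Implication (vii) $\Rightarrow$ (i) is the mirror argument with $L$ and $R$ interchanged on the $\mbf x$-geodesic.
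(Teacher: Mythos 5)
Your overall cycle structure is sound, and your closings (iii)$\Rightarrow$(i), (vi)$\Rightarrow$(i), (vii)$\Rightarrow$(i) are valid alternatives to the paper's route: the paper closes by showing (vi),(vii)$\Rightarrow$(ii) via uniqueness of rightmost geodesics, whereas you run each of (iii),(vi),(vii) through Theorem~\ref{thm:thetanotsupp} and Theorem~\ref{thm:Theta properties}\ref{itm:Theta=Vm} directly back to (i); both are correct and roughly the same amount of work.

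There is, however, a genuine gap in your treatment of (viii), which you then use to derive (iii) from (ii). You claim that ``the bracketing of Theorem~\ref{existence of semi-infinite geodesics intro version}\ref{itm:all semi-infinite geodesics lie between leftmost and rightmost} forces every $\theta$-directed geodesic from $\mbf x$ to lie between two coinciding pairs of Busemann geodesics, yielding (viii).'' Under (ii) the $\pm$ versions coincide, but the bracketing gives only $\tau_{\mbf x,r}^{\theta,L}\le t_r\le\tau_{\mbf x,r}^{\theta,R}$, and when $\mbf x\in\NU_0^{\theta}$ the leftmost and rightmost Busemann geodesics from $\mbf x$ are distinct. Being sandwiched between them does not force the jump times to satisfy the maximizer condition of Definition~\ref{def:semi-infinite geodesics}, so it does not make the path a Busemann geodesic. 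The paper's proof of (viii) is necessarily more delicate: it first establishes (iii) by the squeeze argument (any $\theta$-directed geodesic lies between $\Gamma_{(m,s)}^{\theta,L}$ and $\Gamma_{(m,t)}^{\theta,R}$, which coalesce by Theorem~\ref{thm:general_coalescence}\ref{itm:allsignscoalesce}), and then uses coalescence with a Busemann geodesic together with the identity $L_{\mbf x,\mbf z}=\B^\theta(\mbf x,\mbf z)$, additivity, and the monotonicity $B_r\li h_r^\theta$, $v_{r+1}^\theta\ge 0$ to conclude that each jump time of the arbitrary geodesic is in fact a maximizer of $B_r(u)-h_{r+1}^\theta(u)$, which is what makes it a Busemann geodesic. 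You should prove (ii)$\Rightarrow$(iii) directly by the squeeze argument without routing through (viii), and then give (viii) a separate energy-decomposition proof using the already-established coalescence.
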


\begin{remark} \label{rmk:splitting_coalesceing_general}
By Theorem~\ref{existence of semi-infinite geodesics intro version}\ref{Leftandrightmost}, for each $\mbf x \in \Z \times \R$, $\theta > 0$, and $\sigg \in \{-,+\}$, $\Gamma_{\mbf x}^{\theta \sig,L}$ is the leftmost geodesic between any two of its points.  Hence, by~\ref{Lonegeod}$\Leftrightarrow$\ref{notTheta}  of Theorem~\ref{thm:geod_description}, for each $\theta \in \Busedc$ and $\mbf x \in \Z \times \R$, there exists a point $\mbf v \ge \mbf x$ such that $\Gamma_{\mbf x}^{\theta -,L}$ and $\Gamma_{\mbf x}^{\theta +,L}$ split at $\mbf v$ and never come back together. The same is true with $L$ replaced with $R$ and ``left" replaced with ``right." On the other hand, for a given sign $\sigg \in \{-,+\}$, all $\theta \sigg$ geodesics coalesce. See Figure~\ref{fig:two_splitting}.
\end{remark}

\begin{figure}[t]
            \begin{tikzpicture}
            \draw[red,ultra thick,->] plot coordinates {(1,0)(4,0)(4,1)(6,1)(6,2)(8,2)(8,3)(9.5,3)(9.5,4)(12,4)};
            \draw[red, ultra thick] plot coordinates {(0,2)(2.5,2)(2.5,3)(4,3)(4,4)(9.5,4)};
            \draw[blue,thick,->] plot coordinates
            {(0,2)(2.5,2)(2.5,3)(3,3)(3,4)(3.5,4)(3.5,5)(8,5)(8,5.5)};
            \draw[blue,thick] plot coordinates {(1,0)(4,0)(4,1)(5,1)(5,2)(5.5,2)(5.5,3)(6,3)(6,4)(7,4)(7,5)};
            \filldraw[black] (1,0) circle (2pt) node[anchor = north] {\small $\mbf x$};
            \filldraw[black] (0,2) circle (2pt) node[anchor = east] {\small $\mbf y$};
            \filldraw[black] (3,3) circle (2pt) node[anchor = north] {\small $\mbf w$};
            \filldraw[black] (5,1) circle (2pt) node[anchor = north] {\small $\mbf v$};
            \filldraw[black] (7,5) circle (2pt) node[anchor = south] {\small $\mbf z^{\theta-}(\mbf x,\mbf y)$};
            \filldraw[black] (9.5,4) circle (2pt) node[anchor = south] {\small $\mbf z^{\theta +}(\mbf x,\mbf y)$};
            \end{tikzpicture}
            \caption{\small The red/thick paths are the $\theta +$ geodesics and the blue/thin paths are the $\theta-$ geodesics. In this figure, we consider the geodesics $\Gamma_{\mbf x}^{\theta-,L}$ and $\Gamma_{\mbf x}^{\theta+,L}$ out of $\mbf x$, and we consider the geodesics $\Gamma_{\mbf y}^{\theta-,R}$ and $\Gamma_{\mbf y}^{\theta+,R}$ out of $\mbf y$. The picture would be qualitatively the same if we changed the $L/R$ distinctions of the geodesics, as long as the choice of $L/R$ is the same at each initial point. We choose this particular configuration in the figure so that the coalescence points are $\mbf z^{\theta -}(\mbf x,\mbf y)$ and $\mbf z^{\theta +}(\mbf x,\mbf y)$, as defined in Definition~\ref{def:coalpt}. }
            \label{fig:two_splitting}
        \end{figure}
        
        \begin{figure}[t]
        \begin{adjustbox}{max totalsize={5.5in}{5in},center}
\begin{tikzpicture}
\draw[blue,ultra thick,->] (6,2)--(7,2)--(7,3)--(9,3)--(9,4)--(12,4);
\draw[red,ultra thick,->] (8,2)--(11,2)--(11,3)--(14,3)--(14,4)--(15,4);
\draw[red,ultra thick,->] (3,2)--(3,3)--(5,3)--(5,4)--(8,4)--(8,5);
\filldraw[black] (0.5,0.5) circle (1pt);
\filldraw[black] (1.5,1) circle (1pt);
\filldraw[black] (2.5,1.5) circle (1pt);
\filldraw[black] (-0.5,0) circle (2pt) node[anchor = north] {$\mbf x$};
\end{tikzpicture}
\end{adjustbox}
\caption{\small The picture in the (unknown) case that there exists a semi-infinite geodesic $\Gamma$ that is not a Busemann geodesic. After a certain level, there is a single $\theta-$ geodesic and a single $\theta+$ geodesic, and $\Gamma$ (blue, middle) lies strictly between the $\theta-$ Busemann geodesic (red, left) and the $\theta +$ Busemann geodesic (red, right). The diagonal dots are used to indicate that initially, there may be more than one $\theta \boxempty$ Busemann geodesic, but these all coalesce into a single geodesic at a certain level. }
\label{fig:nonBuse}
\end{figure}

\begin{remark} \label{rmk:Busegeod}
Theorem~\ref{thm:geod_description}\ref{allBuse} is proved by showing that if an arbitrary $\theta$-directed semi-infinite geodesic $\Gamma$ coalesces with a Busemann geodesic, then $\Gamma$ must also be a Busemann geodesic. The same proof can be applied to show the following statement:

Let $\theta > 0$ (possibly in $\Busedc$), and assume $\Gamma$ is an arbitrary $\theta$-directed semi-infinite geodesic from $\mbf x$. Also, assume that for some Busemann geodesic $\wh \Gamma$, there exists a sequence of points  $(m_n,t_n) \in \Gamma \cap \wh \Gamma$ with $m_n,t_n \rightarrow \infty$. Then, $\Gamma$ is a Busemann geodesic.

Therefore, by Theorems~\ref{existence of semi-infinite geodesics intro version}\ref{itm:all semi-infinite geodesics lie between leftmost and rightmost} and~\ref{thm:general_coalescence}\ref{itm:split_return}, if there exists some $\theta \in \Busedc$ and a $\theta$-directed semi-infinite geodesic $\Gamma$ that is not a Busemann geodesic, then there exists a level $r$ such that, above level $r$, $\Gamma_{\mbf x}^{\theta \sig,L}$ agrees with  $\Gamma_{\mbf x}^{\theta \sig,R}$ for $\sigg \in \{-,+\}$, and $\Gamma$ lies strictly between the $\theta-$ and $\theta +$ Busemann geodesics. See Figure~\ref{fig:nonBuse}. Whether such a geodesic $\Gamma$ exists with positive probability for some random direction is currently unknown and is left as an open problem.
\end{remark}

\subsection{The competition interfaces} \label{section:ci}
Loosely speaking,  the competition interface from a given initial point $(m,s) \in \Z \times \R$ separates the  points $(n,t) \ge (m,s)$ into two sets, differentiated by  whether the geodesic between $(m,s)$ and $(n,t)$ passes through $(m + 1,s)$.
However, since point-to-point geodesics are not unique in general (see Lemma~\ref{existence of leftmost and rightmost geodesics}), we introduce separate left and right competition interfaces. 
 For $(m,s) \le (n,t)$, let $\Gamma_{(m,s),(n,t)}^L$ and $\Gamma_{(m,s),(n,t)}^R$ denote, respectively, the leftmost and rightmost geodesics from $(m,s)$ to $(n,t)$. 
For $n > m$, define 
\[
\sigma_{(m,s),n}^L := \sup\Big\{t \ge s: \Gamma_{(m,s),(n,t)}^L \text{ passes through }(m + 1,s)\Big\}
\]
and
\[
\sigma_{(m,s),n}^R := \sup\Big\{t \ge s: \Gamma_{(m,s),(n,t)}^R \text{ passes through }(m + 1,s)\Big\}.
\]

\begin{remark}
We can represent the sequence $s = \sigma_{(m,s),m}^L \le \sigma_{(m,s),m + 1}^L \le \cdots$ as an infinite path on the plane as follows. See Figure~\ref{fig:the competition interface} for clarity. For $r \in \Z$, we let $r^\star=r - \f{1}{2}$. The path starts at $((m + 1)^\star,s)$, and for each $r > m$, the path takes a horizontal segment from $(r^\star,\sigma_{(m,s),r - 1}^L)$ to $(r^\star,\sigma_{(m,s),r}^L)$ and then a vertical segment from $(r^\star,\sigma_{(m,s),r}^L)$ to $((r + 1)^\star,\sigma_{(m,s),r}^L)$. The same rule is applied to the sequence $s = \sigma_{(m,s),m}^R \le \sigma_{(m,s),m + 1}^R \le \sigma_{(m,s),m + 2}^R \le \cdots$.
\end{remark}

\begin{definition}
The left competition interface from $(m,s)$ is the path  $s = \sigma_{(m,s),m}^L \le \sigma_{(m,s),m + 1}^L \le \sigma_{(m,s),m + 2}^L \le \cdots$.   The right competition interface is defined similarly, replacing $L$ in all superscripts with $R$. 
%
If $\sigma_{(m,s),n}^L = s$ for all $n \ge m$, we say that $(m,s)$ has \textit{trivial} left competition interface.  We say the same with $L$ replaced with $R$ and ``left" replaced with ``right." In this case, the associated path is the upward  vertical ray started at $((m +1)^\star,s)$.
\end{definition}

\begin{example}
In BLPP, it is fairly simple to construct a point $(m,s)$ whose right and left competition interfaces differ. Refer to Figure~\ref{fig:two_ci} for clarity. For each $m \in \Z$, the proof of Lemma~\ref{lemma:mult_geod}\ref{3 geod} constructs a random pair $s < t \in \R$ such that there exist exactly three geodesics between $(m,s)$ and $(m + 1,t)$: one that makes an immediate vertical step, one that makes a horizontal step from $(m,s)$ to $(m,t)$ and then a vertical step to $(m + 1,t)$, and one geodesic that jumps to level $m +1$ at an intermediate time $u \in (s,t)$. Since 
\[
L_{(m,s),(m + 1,t)} = \sup_{v \in [s,t]}\{B_m(s,v) + B_{m + 1}(v,t)\},
\]
the three geodesics correspond to the existence of exactly three maximizers of the function $B_m(v) - B_{m + 1}(v)$ over $v \in [s,t]$, namely $v = s, v = u$, and $v = t$. Thus, for $v\in[s,u)$ there is a unique geodesic between $(m,s)$ and $(m + 1,v)$, and this geodesic passes through $(m + 1,s)$. For $v\in[u,t)$ there are exactly two geodesics between $(m,s)$ and $(m + 1,v)$,  the left one jumps at $s$ and the right one at $u$. 
For $\hat t > t$, no geodesic between $(m,s)$ and $(m + 1,\hat t)$ can pass through $(m + 1,s)$. Otherwise, $u$ and $t$ would both be  maximizers of  $B_m(v) - B_{m + 1}(v)$ over $v \in [s,\hat t]$ lying in the interior of the interval, a contradiction to Corollary~\ref{cor:max_in_interior}. Therefore, $u = \sigma_{(m,s),m + 1}^R < \sigma_{(m,s),m + 1}^L = t$.  
\end{example}

\begin{figure}[t]
\begin{adjustbox}{max totalsize={5.5in}{5in},center}
\begin{tikzpicture}
\draw[gray,thin] (-0.5,0)--(15,0);
\draw[gray,thin] (-0.5,1)--(15,1);
\draw[gray,thin] (-0.5,2)--(15,2);
\draw[gray,thin] (-0.5,3)--(15,3);
\draw[gray,thin] (-0.5,4)--(11,4);
\draw[gray,thin] (13,4)--(15,4);
\draw[gray,thin,dashed] (-0.5,0.5)--(15,0.5);
\draw[gray,thin,dashed] (-0.5,1.5)--(15,1.5);
\draw[gray,thin,dashed] (-0.5,2.5)--(15,2.5);
\draw[gray,thin,dashed] (-0.5,3.5)--(15,3.5);

\draw[blue,thick,<-] (15,4.5)--(11,4.5)--(11,3.5)--(6,3.5)--(6,2.5)--(4.5,2.5)--(4.5,1.5)--(1.5,1.5)--(1.5,0.5)--(-0.5,0.5);
\draw[red,ultra thick] (-0.5,0)--(2,0)--(2,1)--(5,1)--(5,2)--(7,2)--(7,3)--(14,3)--(14,4)--(15,4);
\draw[red,ultra thick] (-0.5,0)--(-0.5,1)--(1,1)--(1,2)--(3,2)--(3,3)--(5,3)--(5,4)--(10,4);
\filldraw[black] (-0.5,0) circle (2pt) node[anchor = north] {$(m,s)$};
\filldraw[black] (10,4) circle (2pt) node[anchor = south] {$(n,t_1)$};
\filldraw[black] (15,4) circle (2pt) node[anchor = north] {$(n,t_2)$};
\filldraw[black] (11,4) circle (2pt) node[anchor = west] {$(n,\sigma_{(m,s),n})$};
\end{tikzpicture}
\end{adjustbox}
\caption{\small The competition interface based at $(m,s)$ (blue/thin path) separates points $(n,t)$ based on whether the geodesic (red/thick path) between $(m,s)$ and $(n,t)$ makes an initial vertical or horizontal step. The solid horizontal lines mark levels in $\Z$ and the dotted lines mark levels in $\Z-\tfrac12$. This figure ignores the distinction between left and right competition interface.}
\label{fig:the competition interface}
\end{figure}

\begin{figure}[t]
    \centering
    \begin{tikzpicture}
    \draw[gray,thin] (0,0)--(7,0);
    \draw[gray,thin] (0,1)--(7,1);
    \draw[red, ultra thick] plot coordinates {(4,0)(1,0)(1,1)(4,1)(4,0)(6,0)(6,1)(4,1)};
    \draw[blue,thick,->] plot coordinates {(1,0.55)(3.9,0.55)(3.9,1.5)};
    \draw[blue,dashed,->] plot coordinates {(1,0.45)(5.9,0.45)(5.9,1.5)};
    \filldraw[black] (1,0) circle (2pt) node[anchor = north] {$(m,s)$};
    \filldraw[black] (4,1) circle (2pt);
    \node at (3,1.25) {$(m + 1,u)$};
    \filldraw[black] (6,1) circle (2pt);
    \node at (7,1.25) {$(m + 1,t)$};
    \end{tikzpicture}
    \caption{\small The three geodesics (red/thick), the left competition interface (blue/dashed), and the right competition interface (blue/thin)} 
    \label{fig:two_ci}
\end{figure}


For $S \in \{L,R\}$ and $\sigg \in \{-,+\}$, set  $\tau_{(m,s),m}^{0\sig,S} = s$, and for $(m,s) \in \Z \times \R$, define
\be \label{eqn:intro_ci_dir}
\theta_{(m,s)}^L := \sup\{\theta \ge 0: \tau_{(m,s),m}^{\theta\sig,L} = s\},\;\;\text{and}\;\; \theta_{(m,s)}^R := \sup\{\theta \ge 0: \tau_{(m,s),m}^{\theta\sig,R} = s\}.  
\ee
 These quantities are the asymptotic directions of the competition interfaces, and they characterize points with nontrivial competition interface, as will be seen in the theorems that follow. By the monotonicity and limits of  Theorem~\ref{existence of semi-infinite geodesics intro version}\ref{itm:monotonicity of semi-infinite jump times}\ref{itm:monotonicity in theta} and Equation~\eqref{eqn:limits in theta}, $\theta_{(m,s)}^R$ and $\theta_{(m,s)}^L$ are independent of the choice of sign $\sigg \in\{-,+\}$ used in the definition \eqref{eqn:intro_ci_dir}.
 
 \begin{remark}
 From the definition, it immediately follows that, for $(m,s) \in \Z \times \R$ and $n > m$, 
 \be  \label{eqn:LR_ci}
 s \le \sigma_{(m,s),n}^R \le \sigma_{(m,s),n}^L.
 \ee 
 From~\eqref{eqn:intro_ci_dir} and the monotonicity of Theorem~\ref{existence of semi-infinite geodesics intro version}\ref{itm:monotonicity of semi-infinite jump times}\ref{itm:monotonicity in theta}, 
 \be \label{eqn:LR_ci_dir} 
 0 \le \theta_{(m,s)}^R \le \theta_{(m,s)}^L.
 \ee
 At first glance, these inequalities may seem strange, but the left competition interface is to the right of the right competition interface, because the modifiers ``left" and ``right" refer to the geodesics that are separated. 
 \end{remark}
 \begin{lemma} \label{lemma:ci_finite}
 On a single event of probability one, for every $(m,s) \in \Z \times \R$, $\sigma_{(m,s),n}^L$ and $\theta_{(m,s)}^L$ are finite. 
 \end{lemma}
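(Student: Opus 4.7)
The plan is to first derive finiteness of $\theta_{(m,s)}^L$ from the already-established limits of the Busemann geodesics, and then bound $\sigma_{(m,s),n}^L$ by the jump times of a specific leftmost semi-infinite Busemann geodesic. Everything will be carried out on the single full-probability event $\Omega_2$ of Theorem~\ref{existence of semi-infinite geodesics intro version}, so the uniform statement over $(m,s)\in\Z\times\R$ comes for free.

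For the first step, the limit $\lim_{\theta\to\infty}\tau_{(m,s),m}^{\theta -,L}=\infty$ from~\eqref{eqn:limits in theta to infinity} in Theorem~\ref{existence of semi-infinite geodesics intro version}\ref{itm:convergence of geodesics} holds simultaneously for every $(m,s)\in\Z\times\R$ on $\Omega_2$. Thus, for each such $(m,s)$, there exists a (random) $\theta^\ast<\infty$ with $\tau_{(m,s),m}^{\theta^\ast-,L}>s$, and by the definition~\eqref{eqn:intro_ci_dir} this immediately gives $\theta_{(m,s)}^L\le\theta^\ast<\infty$.

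For the second step, fix $n>m$ and any $\theta>\theta_{(m,s)}^L$, so $\tau_{(m,s),m}^{\theta -,L}>s$. Set $t^\ast:=\tau_{(m,s),n}^{\theta -,L}$, the time at which $\Gamma_{(m,s)}^{\theta -,L}$ first reaches level $n$. By Theorem~\ref{existence of semi-infinite geodesics intro version}\ref{Leftandrightmost}, the portion of $\Gamma_{(m,s)}^{\theta -,L}$ from $(m,s)$ to $(n,t^\ast)$ is exactly $\Gamma_{(m,s),(n,t^\ast)}^L$. Hence its first jump time on level $m$ equals $\tau_{(m,s),m}^{\theta -,L}>s$, and in particular this leftmost point-to-point geodesic does not pass through $(m+1,s)$. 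I then want to conclude that the first jump time on level $m$ of $\Gamma_{(m,s),(n,t)}^L$ is nondecreasing in $t\ge s$, which gives $\sigma_{(m,s),n}^L\le t^\ast<\infty$.

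The main obstacle is precisely this last monotonicity, which is not contained in the monotonicity statements already in the excerpt (those address semi-infinite geodesics). The cleanest route is a four-point exchange inequality: for $s\le u_1\le u_2$ and $t_1\le t_2$, any two up-right paths connecting $(m+1,u_1)\to(n,t_2)$ and $(m+1,u_2)\to(n,t_1)$ must cross, and swapping tails at a crossing point preserves the total Brownian energy per level while producing an up-right path from $(m+1,u_1)$ to $(n,t_1)$ and one from $(m+1,u_2)$ to $(n,t_2)$. Applying this to the optimal pair yields
\[
L_{(m+1,u_1),(n,t_2)}+L_{(m+1,u_2),(n,t_1)}\;\le\;L_{(m+1,u_1),(n,t_1)}+L_{(m+1,u_2),(n,t_2)}.
\]
Consequently $u\mapsto L_{(m+1,u),(n,t_2)}-L_{(m+1,u),(n,t_1)}$ is nondecreasing on $[s,t_1]$. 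Since the first jump time of $\Gamma_{(m,s),(n,t)}^L$ on level $m$ is the leftmost maximizer of $u\mapsto B_m(s,u)+L_{(m+1,u),(n,t)}$ on $[s,t]$, adding a nondecreasing function of $u$ as $t$ grows from $t_1$ to $t_2$ can only shift the leftmost maximizer weakly to the right, yielding the required monotonicity. Combining this with the two steps above on $\Omega_2$ produces the desired uniform almost sure statement.
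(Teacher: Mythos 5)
Your proposal is correct and follows essentially the same route as the paper: finiteness of $\theta_{(m,s)}^L$ comes from the $\theta\to\infty$ limit in~\eqref{eqn:limits in theta to infinity}, and then $\sigma_{(m,s),n}^L$ is bounded by $\tau_{(m,s),n}^{\theta-,L}$ using Theorem~\ref{existence of semi-infinite geodesics intro version}\ref{Leftandrightmost} together with planar monotonicity of leftmost finite geodesics in the terminal point. The only difference is presentational: where the paper invokes ``planarity'' as a known fact, you unpack it via the four-point exchange inequality and a monotone-comparative-statics argument for the leftmost maximizer of $u\mapsto B_m(s,u)+L_{(m+1,u),(n,t)}$, which is a correct and standard justification of exactly what the paper's one-word citation refers to.
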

 
 Now that we know all four quantities are finite, we can state the result for the asymptotic directions.

\begin{theorem} \label{thm:limiting direction of competition interface} On a single event of full probability, 
the following limits hold for each $(m,s) \in \Z \times \R$: 
\be\label{th-lim} 
\theta_{(m,s)}^L = \lim_{n\rightarrow \infty} \f{\sigma_{(m,s),n}^L}{n}\qquad\text{and}\qquad\theta_{(m,s)}^R =\lim_{n \rightarrow \infty} \f{\sigma_{(m,s),n}^R}{n}.
\ee
\end{theorem}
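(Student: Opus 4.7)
My plan is to sandwich $\sigma_{(m,s),n}^L$ between the level-$n$ jump times $\tau_{(m,s),n}^{\theta+,L}$ (for $\theta<\theta_{(m,s)}^L$) and $\tau_{(m,s),n}^{\theta-,L}$ (for $\theta>\theta_{(m,s)}^L$) of the leftmost Busemann geodesics, then invoke the directedness of Theorem~\ref{existence of semi-infinite geodesics intro version}\ref{general limits for semi-infinite geodesics} (which forces $\tau_n/n \to \theta$) and send $\theta$ to $\theta_{(m,s)}^L$ from both sides; the rightmost case uses the analogous rightmost Busemann geodesics. The argument hinges on a structural lemma: the set
\[ J_n^L \;:=\; \{t \ge s : \Gamma_{(m,s),(n,t)}^L \text{ passes through } (m+1,s)\} \]
is a left-closed interval in $[s,\infty)$, so that $\sigma_{(m,s),n}^L = \sup J_n^L$ yields both of the inequalities I need from a single membership or non-membership check at one carefully chosen $\tau_n$. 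I will prove the interval property via a swap argument: the BLPP variational identity
\[ L_{(m,s),(n,t)} \;=\; \sup_{u\in[s,t]}\bigl[B_m(s,u) + L_{(m+1,u),(n,t)}\bigr] \]
identifies the first jump time of $\Gamma^L_{(m,s),(n,t)}$ with the leftmost maximizer of the right-hand side; if $t_1 < t_2$ produced a leftmost maximizer strictly to the left at $t_2$ than at $t_1$, adding the two (strict) defining inequalities at the respective maximizers would yield a contradiction. The corresponding monotonicity for rightmost finite geodesics follows by the symmetric argument.

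For the upper bound, fix $\theta > \theta_{(m,s)}^L$ and set $\tau_n := \tau_{(m,s),n}^{\theta-,L}$. By definition of $\theta_{(m,s)}^L$, $\tau_{(m,s),m}^{\theta-,L} > s$, so $\Gamma_{(m,s)}^{\theta-,L}$ does not pass through $(m+1,s)$. By Theorem~\ref{existence of semi-infinite geodesics intro version}\ref{Leftandrightmost}, the portion of $\Gamma_{(m,s)}^{\theta-,L}$ between $(m,s)$ and its level-$n$ corner $(n,\tau_n)$ equals the leftmost finite geodesic $\Gamma^L_{(m,s),(n,\tau_n)}$ and therefore does not pass through $(m+1,s)$. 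Thus $\tau_n\notin J_n^L$, and the interval property yields $\sigma_{(m,s),n}^L \le \tau_n$; dividing by $n$ and applying Theorem~\ref{existence of semi-infinite geodesics intro version}\ref{general limits for semi-infinite geodesics} gives $\limsup_n \sigma_{(m,s),n}^L/n \le \theta$, and sending $\theta\searrow\theta_{(m,s)}^L$ closes the upper bound.

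For the lower bound, fix $\theta < \theta_{(m,s)}^L$ and choose $\theta' \in (\theta,\theta_{(m,s)}^L]$ with $\tau_{(m,s),m}^{\theta'-,L} = s$ (such a $\theta'$ exists by definition of the supremum in \eqref{eqn:intro_ci_dir}, using that the two sign conventions yield the same value of $\theta_{(m,s)}^L$). The $\theta$- and sign-monotonicity of Theorem~\ref{existence of semi-infinite geodesics intro version}\ref{itm:monotonicity of semi-infinite jump times}\ref{itm:monotonicity in theta} then gives $\tau_{(m,s),m}^{\theta+,L} \le \tau_{(m,s),m}^{\theta'-,L} = s$, so $\Gamma_{(m,s)}^{\theta+,L}$ starts with a vertical step at $s$, and its truncation to the level-$n$ corner $(n,\tau_n)$ with $\tau_n := \tau_{(m,s),n}^{\theta+,L}$ equals $\Gamma^L_{(m,s),(n,\tau_n)}$ (again by Theorem~\ref{existence of semi-infinite geodesics intro version}\ref{Leftandrightmost}) and passes through $(m+1,s)$. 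Hence $\tau_n\in J_n^L$ and $\sigma_{(m,s),n}^L \ge \tau_n$, giving $\liminf_n \sigma_{(m,s),n}^L/n \ge \theta$; sending $\theta\nearrow\theta_{(m,s)}^L$ finishes the lower bound. The case $\theta_{(m,s)}^L=0$ is handled by the upper bound together with the trivial $\sigma_{(m,s),n}^L \ge s$, and $\theta_{(m,s)}^L=\infty$ is ruled out by Lemma~\ref{lemma:ci_finite}. I expect the main obstacle to be a clean verification of the interval property for $J_n^L$ (and its rightmost analogue), where tie-breaking between leftmost and rightmost maximizers in the BLPP variational identity must be handled carefully; once this is in place, the sandwich itself is short and direct.
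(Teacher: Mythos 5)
Your proof is correct, and it takes a genuinely different route from the paper's. The paper proves the result by invoking Theorem~\ref{thm:ci_equiv}: for $\wh\theta = \theta_{(m,s)}^R>0$, the implication \ref{itm:thetaR > 0}$\Rightarrow$\ref{itm:Rsplit} gives that $\Gamma_{(m,s)}^{\wh\theta-,R}$ makes an immediate vertical step while $\Gamma_{(m,s)}^{\wh\theta+,R}$ does not, so one sandwiches $\sigma_{(m,s),n}^R$ between $\tau_{(m,s),n}^{\wh\theta-,R}$ and $\tau_{(m,s),n}^{\wh\theta+,R}$ and applies directedness at the single direction $\wh\theta$, with no limit in $\theta$ needed (the case $\wh\theta=0$ is dispatched via the equivalence \ref{itm:Rtriv}$\Leftrightarrow$\ref{itm:thetaR > 0}). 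You instead avoid Theorem~\ref{thm:ci_equiv} entirely, working with directions strictly above and below $\theta_{(m,s)}^L$, using only the monotonicity and convergence statements of Theorem~\ref{existence of semi-infinite geodesics intro version}, the sign-independence of $\theta_{(m,s)}^L$ noted after \eqref{eqn:intro_ci_dir}, and the finiteness from Lemma~\ref{lemma:ci_finite}, then closing the sandwich by sending $\theta\to\theta_{(m,s)}^L$ from each side. This buys self-containedness: you never need the sharp behavior of the geodesics exactly at $\wh\theta$, which in the paper rests on the regularity machinery of Theorem~\ref{thm:coupled_BMs_technical}/Theorem~\ref{thm:geod_stronger} routed through Theorem~\ref{thm:ci_equiv}. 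The cost is that you must make the interval structure of $J_n^L$ explicit; your quadrangle/crossing argument for that lemma is correct (the strict inequality from the leftmost maximizer at $t_1$ combined with the weak inequality at $t_2$ contradicts superadditivity of $L$ over crossing pairs), and it is worth noting that the paper's one-line appeal to ``by definition \eqref{sigmaR}'' for the upper inequality $\sigma_{(m,s),n}^R\le\tau_{(m,s),n}^{\wh\theta+,R}$ implicitly relies on exactly the same planarity of rightmost finite geodesics, so this lemma is not extra overhead peculiar to your route.
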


The next theorem provides  characterizations of  nontrivial competition interfaces, simultaneously, for all initial points.  In particular, the equivalences imply a sharp geometric dichotomy: either a competition interface is trivial, or it has a strictly positive limiting slope in \eqref{th-lim}. The latter case is triggered by having even one {\it finite} geodesic that starts with a vertical step before moving to the right, as in the definition of the random set $\CI$ from~\eqref{CI_intro}. 
 
 \begin{theorem} \label{thm:ci_equiv}
 On  a single event of full probability, for every $(m,s) \in \Z \times \R$,  the following are equivalent. 
 \begin{enumerate}[label=\rm(\roman{*}), ref=\rm(\roman{*})]  \itemsep=3pt
\item \label{itm:some_vert_geod} $(m,s) \in \CI$. That is, for some $n > m$ and $t > s$, at least one geodesic between $(m,s)$ and $(n,t)$ passes through $(m + 1,s)$.      
     \item \label{itm:Ltriv} $\sigma_{(m,s),n}^L > s$ for some $n > m$, i.e., $(m,s)$ has nontrivial left competition interface.
     \item \label{itm:Rtriv} $\sigma_{(m,s),n}^R > s$ for some $n > m$, i.e., $(m,s)$ has nontrivial right competition interface. 
     \item \label{itm:thetaL > 0}$\theta_{(m,s)}^L > 0$.
     \item \label{itm:thetaR > 0} $\theta_{(m,s)}^R > 0$.
     \item \label{itm:Lvert} There exists $\theta > 0$ and $\sigg \in \{-,+\}$ such that $\tau_{(m,s),m}^{\theta \sig,L} = s$, i.e., $\Gamma_{\mbf (m,s)}^{\theta \sig,L}$ makes an immediate vertical step to $(m + 1,s)$. In this case, $\theta \le \theta_{(m,s)}^L$, and the statement holds for all $\theta < \theta_{(m,s)}^L$.
     \item \label{itm:Rvert} There exists $\theta > 0$ and $\sigg \in \{-,+\}$ such that $\tau_{(m,s),m}^{\theta \sig,R} = s$, i.e., $\Gamma_{\mbf (m,s)}^{\theta \sig,R}$ makes an immediate vertical step to $(m + 1,s)$. In this case, $\theta \le \theta_{(m,s)}^R$, and the statement holds for all $\theta < \theta_{(m,s)}^R$.
     \item \label{itm:Lsplit} There exists $\theta > 0$ such that $\Gamma_{(m,s)}^{\theta-,L} \cap \Gamma_{(m,s)}^{\theta +,L} = \{(m,s)\}$. In other words, there exists $\theta > 0$ such that $\Gamma_{(m,s)}^{\theta-,L}$ makes a vertical step to $(m + 1,s)$, while $\Gamma_{(m,s)}^{\theta+,L}$ makes an initial horizontal step, and the two geodesics never meet again after the initial point. In this case, the following hold: 
     \begin{itemize}
         \item 
     $\theta = \theta_{(m,s)}^L$, 
     \item for $\gamma < \theta$ both  $\Gamma_{(m,s)}^{\gamma-,L}$ and $\Gamma_{(m,s)}^{\gamma+,L}$  take an initial  vertical step to $(m + 1,s)$,
     \item for $\gamma > \theta$ both  $\Gamma_{(m,s)}^{\gamma-,L}$ and $\Gamma_{(m,s)}^{\gamma+,L}$ take an initial horizontal step.
     \end{itemize} 
    In other words, $\theta_{(m,s)}^L$ is the unique direction $\gamma$ such that $\Gamma_{(m,s)}^{\gamma-,L} \cap \Gamma_{(m,s)}^{\gamma+,L}$ is a finite subset of the plane.
     \item \label{itm:Rsplit} Condition~\ref{itm:Lsplit} with superscript $L$ replaced with $R$.
     \item \label{itm:v = 0} $v_{m + 1}^{\theta \sig}(s)= 0$ for some $\theta > 0$ and $\sigg \in \{-,+\}$. In this case, $\theta \le \theta_{(m,s)}^L$, and $v_{m + 1}^{\theta \sig}(s) = 0$ for all $\theta < \theta_{(m,s)}^L$.
 \end{enumerate}
 \end{theorem}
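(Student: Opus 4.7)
The plan is to establish all ten equivalences by funneling the implications through the central equivalence (vi)$\Leftrightarrow$(x), which connects Busemann geodesics to the vertical Busemann increment $v_{m+1}^{\theta \sig}(s)$. Throughout, I would work on the intersection of the full-probability events from Theorems~\ref{thm:summary of properties of Busemanns for all theta},~\ref{existence of semi-infinite geodesics intro version},~\ref{thm:geod_stronger},~\ref{thm:NU},~\ref{thm:general_coalescence},~\ref{thm:limiting direction of competition interface}, and Lemma~\ref{lemma:ci_finite}. The equivalence (vi)$\Leftrightarrow$(x) is immediate after unpacking
\[
v_{m+1}^{\theta \sig}(s) \;=\; Q(h_{m+1}^{\theta \sig}, B_m)(s) \;=\; \sup_{u \ge s}\bigl\{B_m(s,u) - h_{m+1}^{\theta \sig}(s,u)\bigr\},
\]
since this supremum vanishes precisely when $u=s$ is a maximizer of $B_m - h_{m+1}^{\theta \sig}$ on $[s,\infty)$, which is the defining condition of $\tau_{(m,s),m}^{\theta \sig,L} = s$.

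Several implications are then short. The inclusions (ii), (iii) $\Rightarrow$ (i) are immediate from the definition of $\CI$. The implications (vi) $\Rightarrow$ (ii) and (vii) $\Rightarrow$ (iii) follow from Theorem~\ref{existence of semi-infinite geodesics intro version}\ref{Leftandrightmost}: the portion of $\Gamma^{\theta \sig,L}_{(m,s)}$ from $(m,s)$ to any later point on it is the leftmost finite geodesic between those two points, and by assumption it starts vertically. The trivial inequality $\tau^{L}_m \le \tau^{R}_m$ yields (vii) $\Rightarrow$ (vi). For (i) $\Rightarrow$ (iii), I would combine Lemma~\ref{lemma:uniqueness of LPP time} with a Fubini argument to see that the geodesic between $(m,s)$ and $(n,t)$ is almost surely unique for a dense set of terminal times $t>s$; monotonicity of the leftmost geodesic in the endpoint then propagates the vertical start to such a $t$, making the left and right competition interfaces agree. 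The equivalences (iv) $\Leftrightarrow$ (vi) and (v) $\Leftrightarrow$ (vii) follow from the definition of $\theta_{(m,s)}^{L/R}$ as a supremum together with the monotonicity of $\tau_{(m,s),m}^{\theta \sig,S}$ in $\theta$ (Theorem~\ref{existence of semi-infinite geodesics intro version}\ref{itm:monotonicity of semi-infinite jump times}\ref{itm:monotonicity in theta}), which makes $\{\theta: \tau^{\theta \sig,L}_{(m,s),m}=s\}$ an interval based at $0$.

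The heart of the theorem is (i) $\Rightarrow$ (vi), which I expect to be the main obstacle. The goal is, given a finite geodesic from $(m,s)$ to $(n,t)$ starting vertically, to produce a $\theta$-directed semi-infinite geodesic from $(m,s)$ that also starts vertically, for some $\theta>0$; once such a geodesic exists, Theorem~\ref{existence of semi-infinite geodesics intro version}\ref{itm:all semi-infinite geodesics lie between leftmost and rightmost} forces $\tau_{(m,s),m}^{\theta-,L} \le s$ and hence equality. The approach is a compactness/subsequential-limit argument: take the leftmost finite geodesics $\tilde \Gamma_N$ from $(m,s)$ to $(N, \lfloor N\theta\rfloor)$ for a candidate small $\theta>0$ and extract a limit, controlling the jump times on bounded vertical windows. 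The delicate point is selecting $\theta > 0$ so that infinitely many $\tilde \Gamma_N$ start vertically --- this requires transferring the vertical-start property from a single endpoint to a linearly growing sequence. This should follow from combining the monotonicity of $\sigma^L_{(m,s),n}$ in $n$ (the competition interface is an up-right path) with the limit $\sigma^L_{(m,s),n}/n \to \theta^L_{(m,s)}$ of Theorem~\ref{thm:limiting direction of competition interface}, together with a bootstrap to rule out the case where $\sigma^L_{(m,s),n}$ stays bounded. Once (i) $\Rightarrow$ (iv) is secured, (iv) $\Rightarrow$ (ii) closes the loop since $\sigma_{(m,s),n}^L/n \to \theta^L_{(m,s)}>0$.

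For the splitting conditions (viii) and (ix), the splitting direction must be $\theta_0 := \theta^{L}_{(m,s)}$ (resp.~$\theta_{(m,s)}^R$). Left-continuity of $\tau^{\theta-,L}_m$ in $\theta$ (Theorem~\ref{existence of semi-infinite geodesics intro version}\ref{itm:convergence of geodesics}, Equation~\eqref{eqn:limits in theta}) together with $\tau_{(m,s),m}^{\theta \sig,L}=s$ for $\theta<\theta_0$ yields $\tau_{(m,s),m}^{\theta_0 -,L} = s$, while Theorem~\ref{thm:geod_stronger}\ref{itm:stronger convergence theta} together with $\tau_{(m,s),m}^{\theta \sig,L}>s$ for $\theta>\theta_0$ yields $\tau_{(m,s),m}^{\theta_0 +,L} > s$. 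Hence $\Gamma^{\theta_0-,L}_{(m,s)}$ starts vertically and $\Gamma^{\theta_0+,L}_{(m,s)}$ horizontally. If these geodesics met again at some $\mbf z \neq (m,s)$, Theorem~\ref{existence of semi-infinite geodesics intro version}\ref{Leftandrightmost} would force the portion of each from $(m,s)$ to $\mbf z$ to be the unique leftmost finite geodesic between the two points --- contradicting that the portions start differently. The uniqueness of $\theta_0$ as the splitting direction follows from Theorem~\ref{thm:geod_stronger}\ref{itm:stronger convergence theta}: for $\gamma$ in a punctured neighborhood of $\theta_0$, $\tau^{\gamma-,L}_m = \tau^{\gamma+,L}_m$, so $\Gamma^{\gamma-,L}_{(m,s)}$ and $\Gamma^{\gamma+,L}_{(m,s)}$ share their first step and intersect beyond $\{(m,s)\}$.
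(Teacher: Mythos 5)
Your overall decomposition is correct in outline, and many of the short implications match the paper's proof (the representation $v_{m+1}^{\theta\sig}(s) = Q(h_{m+1}^{\theta\sig},B_m)(s)$ for (vi)$\Leftrightarrow$(x), the definitional equivalences (iv)$\Leftrightarrow$(vi) and (v)$\Leftrightarrow$(vii), the argument for (viii) and (ix) via Theorem~\ref{thm:geod_stronger}\ref{itm:stronger convergence theta} and the uniqueness of leftmost/rightmost finite geodesics from Theorem~\ref{existence of semi-infinite geodesics intro version}\ref{Leftandrightmost}). However, the two implications you explicitly flag as delicate, (i)$\Rightarrow$(iii) and (i)$\Rightarrow$(vi), are both needed to close your cycle, and both are genuinely broken as proposed.

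For (i)$\Rightarrow$(iii) you invoke Lemma~\ref{lemma:uniqueness of LPP time} plus Fubini. But the theorem is a deterministic equivalence that must hold for \emph{every} $(m,s)\in\Z\times\R$ on a single full-probability event. Lemma~\ref{lemma:uniqueness of LPP time} gives uniqueness for a fixed pair of endpoints with probability one; intersecting over rational terminal $t$ only helps if $(m,s)$ itself is rational. For irrational $s$ there is no a priori dense set of $t$ at which the geodesic from $(m,s)$ is unique, and one cannot intersect the full-probability events over the uncountable family of initial points. The paper instead proves (ii)$\Rightarrow$(iii) directly: it shows that a leftmost geodesic from $(m,s)$ that starts vertically and whose lower competition interfaces are trivial must actually climb the entire left boundary to $(n,s)$; then it uses the quantitative finite-geodesic bound (Lemma~\ref{lemma:geodesic_bound}\ref{itm:geod_bound}, which is built into $\Omega_4$) to choose $\hat t$ close enough to $s$ that the vertical path is the \emph{unique} maximizing sequence, forcing the rightmost geodesic to $(n,\hat t)$ to be vertical as well. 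This works for arbitrary real $s$ because the finite-geodesic count is a single almost-sure statement uniform in all endpoints.

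For (i)$\Rightarrow$(vi) your compactness argument is circular. You want to choose $\theta>0$ so that infinitely many of the leftmost finite geodesics to $(N,\lfloor N\theta\rfloor)$ start vertically, which is exactly the condition $\lfloor N\theta\rfloor \le \sigma^L_{(m,s),N}$ for infinitely many $N$. That requires $\limsup_N \sigma^L_{(m,s),N}/N \ge \theta > 0$, but by Theorem~\ref{thm:limiting direction of competition interface} this limit equals $\theta^L_{(m,s)}$, so you are assuming the conclusion $\theta^L_{(m,s)}>0$. Monotonicity of $\sigma^L_{(m,s),n}$ in $n$ gives you only that $\sigma^L$ eventually exceeds some fixed $t>s$, which is perfectly consistent with $\sigma^L_{(m,s),n}/n \to 0$; there is no ``bootstrap'' available to rule this out without additional input. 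The paper avoids this entirely by not proving (i)$\Rightarrow$(vi) directly. It instead proves (iii)$\Leftrightarrow$(v) (and analogously (ii)$\Leftrightarrow$(iv)) by a two-sided argument using Busemann geodesics: if $\theta^R_{(m,s)}=0$, then for every $\theta>0$ the rightmost Busemann geodesic $\Gamma_{(m,s)}^{\theta\sig,R}$ starts horizontally, so the finite rightmost geodesic to $(n,\tau^{\theta\sig,R}_{(m,s),n})$ starts horizontally, giving $\sigma^R_{(m,s),n}\le\tau^{\theta\sig,R}_{(m,s),n}$; sending $\theta\searrow 0$ and using Equation~\eqref{eqn:limits in theta to infinity} collapses $\sigma^R$ to $s$. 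The converse direction uses directedness to choose $n$ large enough that $\tau^{\theta\sig,R}_{(m,s),n}>s$ and then reads off $\tau^{\theta\sig,R}_{(m,s),m}>s$ from $\sigma^R_{(m,s),n}=s$. Neither direction requires a subsequential limit of finite geodesics, and the key input you are missing is the control $\sigma^R_{(m,s),n}\le\tau^{\theta\sig,R}_{(m,s),n}\to s$.
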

 \begin{remark} \label{rmk:split}
 For the implication \ref{itm:thetaR > 0}$\Rightarrow$\ref{itm:Rsplit}, refer to Figure~\ref{fig:semi-infinte geodesics split at initial point} for clarity. By this result and \ref{notTheta}$\Leftrightarrow$\ref{LRsame} of Theorem~\ref{thm:geod_description}, if  $\theta_{(m,s)}^R > 0$, then $\theta_{(m,s)}^R \in \Busedc$. The same is true if $R$ is replaced with $L$ in the superscript. For all other $\theta \in \Busedc$, the $\theta+$ and $\theta-$ geodesics from $(m,s)$ eventually split by~\ref{Ronegeod}$\Leftrightarrow$\ref{notTheta} and~\ref{Lonegeod}$\Leftrightarrow$\ref{notTheta} of Theorem~\ref{thm:geod_description}, but they travel together for some time before splitting. 
\end{remark}

\begin{figure}[t]
\begin{adjustbox}{max totalsize={5.5in}{5in},center}
\begin{tikzpicture}
\draw[gray,thin] (-0.5,0)--(15,0);
\draw[gray,thin] (-0.5,1)--(15,1);
\draw[gray,thin] (-0.5,2)--(15,2);
\draw[gray,thin] (-0.5,3)--(15,3);
\draw[gray,thin] (-0.5,4)--(15,4);
\draw[gray,thin] (-0.5,4)--(15,4);
\draw[gray,thin] (-0.5,5)--(15,5);
\draw[gray,thin,dashed] (-0.5,0.5)--(15,0.5);
\draw[gray,thin,dashed] (-0.5,1.5)--(15,1.5);
\draw[gray,thin,dashed] (-0.5,2.5)--(7,2.5);
\draw[gray,thin,dashed] (9,2.5)--(15,2.5);
\draw[gray,thin,dashed] (-0.5,3.5)--(3,3.5);
\draw[gray,thin,dashed] (5,3.5)--(15,3.5);
\draw[gray,thin,dashed] (-0.5,4.5)--(15,4.5);
\draw[blue,thick,<-] (14.5,5)--(14.5,4.5)--(13,4.5)--(13,3.5)--(6,3.5)--(6,2.5)--(4.5,2.5)--(4.5,1.5)--(1.5,1.5)--(1.5,0.5)--(-0.5,0.5);
\draw[red,ultra thick,->] (-0.5,0)--(2,0)--(2,1)--(5,1)--(5,2)--(7,2)--(7,3)--(14,3)--(14,4)--(15,4)--(15,4.5);
\draw[red,ultra thick,->] (-0.5,0)--(-0.5,1)--(1,1)--(1,2)--(3,2)--(3,3)--(5,3)--(5,4)--(10,4)--(10,5)--(13,5);
\node at (8,2.5) {$\Gamma_{(m,s)}^{\wh \theta+,R}$};
\node at (4,3.5) {$\Gamma_{(m,s)}^{\wh \theta-,R}$};
\filldraw[black] (-0.5,0) circle (2pt) node[anchor = north] {$(m,s)$};
\filldraw[black] (-0.5,1) circle (2pt) node[anchor = east] {$(m + 1,s)$};
\end{tikzpicture}
\end{adjustbox}
\caption{\small When the competition interface direction $\widehat\theta = \theta_{(m,s)}^R > 0$, $\Gamma_{(m,s)}^{\wh \theta-,R}$ (upper red/thick path) immediately splits from $\Gamma_{(m,s)}^{\wh \theta +,R}$ (lower red/thick path). These paths never touch after the initial point, and the competition interface (blue/thin) lies between the paths.}
\label{fig:semi-infinte geodesics split at initial point}
\end{figure}

 \noindent Recall the countable sets $\NU_0$ and $\NU_1$ of initial points of $L/R$ non-uniqueness of  Busemann geodesics for a given $\theta\sigg$, defined in  \eqref{NU_0_1}. The following relates these sets to the set of points with nontrivial competition interface.
 \begin{theorem} \label{thm:ci_and_NU}
 The following hold on a single event of full probability. \begin{enumerate} [label=\rm(\roman{*}), ref=\rm(\roman{*})]  \itemsep=3pt
     \item \label{contain} $\NU_0 \subseteq \CI$. 
     \item \label{itm:cisd} $
\NU_1 = \{(m,s) \in \Z \times \R: \theta_{(m,s)}^R \neq \theta_{(m,s)}^L\} = \{(m,s) \in \Z \times \R: 0 < \theta_{(m,s)}^R < \theta_{(m,s)}^L\}$.
\item \label{itm:ci_int} The following classifies the directions and signs for which $(m,s) \in \NU_1^{\theta \sig}$ {\rm(}with the convention $(a,a] = [a,a) = \varnothing$ for $a \in \R${\rm)}.
\begin{enumerate} [label=\rm(\alph{*}), ref=\rm(\alph{*})]  \itemsep=3pt
    \item $(m,s) \in \NU_1^{\theta -}$ if and only if $\theta \in (\theta_{(m,s)}^R,\theta_{(m,s)}^L]$.
    \item $(m,s) \in \NU_1^{\theta +}$ if and only if $\theta \in [\theta_{(m,s)}^R,\theta_{(m,s)}^L)$.
\end{enumerate} 
\item \label{itm:NU_dense_self} The set $\NU_1$ is dense in itself. Specifically, for $(m,s) \in \NU_1^{\theta \sig}$ and every $\ve > 0$, there exists $t \in (s - \ve,s)$ such that $(m,t) \in \NU_1^{\theta \sig}$. Further, if $(m,s) \in \NU_1$, then for each $\theta< \theta_{(m,s)}^R$, $\sigg \in \{-,+\}$ \rm{(}or $\theta = \theta_{(m,s)}^R$ and $\sigg = -$\rm{)} and $\ve > 0$, there exists $t \in (s,s + \ve)$ such that $(m,t) \in \NU_1^{\theta \sig}$.
\item \label{itm:NU_struct} For all $(m,s) \in \Z \times \R$, there exists a random $\ve = \ve(m,s) > 0$ such that for all $\theta > \theta_{(m,s)}^R$, $\sigg \in \{-,+\}$, and $t \in (s,s + \ve]$,  $(m,t) \notin \NU_1^{\theta \sig}$. The statement also holds for $\theta = \theta_{(m,s)}^R$ if $\sigg = +$.
\item \label{itm:CI_struct} For all $(m,s) \in \CI$ and all $\ve > 0$, there exists $t \in (s,s + \ve)$ such that $\theta_{(m,t)}^R = \theta_{(m,s)}^R > 0$. 
 \end{enumerate}
 \end{theorem}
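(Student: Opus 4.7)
The plan is to prove the six parts of Theorem~\ref{thm:ci_and_NU} in order. Parts~\ref{contain}--\ref{itm:ci_int} are essentially bookkeeping translations of definitions against Theorem~\ref{thm:ci_equiv}, while parts~\ref{itm:NU_dense_self}--\ref{itm:CI_struct} are the substantive ones and rely on the left/right continuity and monotonicity of jump times from Theorems~\ref{existence of semi-infinite geodesics intro version} and~\ref{thm:geod_stronger}.

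For~\ref{contain}: Theorem~\ref{thm:NU}\ref{itm:only_endpoint} forces $\tau_{(m,t),m}^{\theta\sig,L}=t$ whenever $(m,t)\in\NU_0^{\theta\sig}$, and Theorem~\ref{thm:ci_equiv}\ref{itm:Lvert}$\Leftrightarrow$\ref{itm:some_vert_geod} then places $(m,t)\in\CI$. For~\ref{itm:cisd} and~\ref{itm:ci_int}: the condition $(m,s)\in\NU_1^{\theta\sig}$ exactly says that the leftmost $\theta\sig$ Busemann geodesic from $(m,s)$ takes an initial vertical step while the rightmost does not. Theorem~\ref{thm:ci_equiv}\ref{itm:Lvert},~\ref{itm:Rvert} translate these two conditions to $\theta\le\theta_{(m,s)}^L$ and $\theta>\theta_{(m,s)}^R$ respectively, and the endpoint behavior at $\theta\in\{\theta_{(m,s)}^R,\theta_{(m,s)}^L\}$ for each sign is determined by the left/right limits in~\eqref{eqn:limits in theta} together with the final bullets of Theorem~\ref{thm:ci_equiv}\ref{itm:Lsplit},~\ref{itm:Rsplit}, which record the precise $\pm$ split at those critical directions. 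This proves~\ref{itm:ci_int}; then~\ref{itm:cisd} follows because the interval $[\theta_{(m,s)}^R,\theta_{(m,s)}^L]$ is nondegenerate iff $(m,s)\in\NU_1$, while $\theta_{(m,s)}^R=0\Rightarrow\theta_{(m,s)}^L=0$ comes from combining~\ref{contain} with the equivalence~\ref{itm:Lvert}$\Leftrightarrow$\ref{itm:Rvert} in Theorem~\ref{thm:ci_equiv}.

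For the density~\ref{itm:NU_dense_self}, fix $(m,s)\in\NU_1^{\theta\sig}$. Strong monotonicity (Theorem~\ref{thm:geod_stronger}\ref{itm:global strong monotonicity in t}) gives $\tau_{(m,u),m}^{\theta\sig,R}\le\tau_{(m,s),m}^{\theta\sig,L}=s$ for every $u<s$, and the left limit in Theorem~\ref{thm:geod_stronger}\ref{itm:stronger convergence in t} sends these values up to $s$ as $u\nearrow s$. Combined with $\tau_{(m,s),m}^{\theta\sig,R}>s$ (coming from $(m,s)\in\NU_1^{\theta\sig}$), the branching structure of $\theta\sig$ Busemann geodesics along the vertical ray $\{(m,u):u\le s\}$ forces infinitely many $u_n\nearrow s$ with $\tau_{(m,u_n),m}^{\theta\sig,L}=u_n$ and $\tau_{(m,u_n),m}^{\theta\sig,R}>u_n$, placing each $(m,u_n)\in\NU_1^{\theta\sig}$. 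The symmetric density from above for $\theta<\theta_{(m,s)}^R$ (or $\theta=\theta_{(m,s)}^R$, $\sigg=-$) uses the right limit in the same theorem together with~\ref{itm:ci_int}.

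For the gap~\ref{itm:NU_struct} in the case $\theta_{(m,s)}^R>0$: monotonicity in $\theta$ (Theorem~\ref{existence of semi-infinite geodesics intro version}\ref{itm:monotonicity of semi-infinite jump times}\ref{itm:monotonicity in theta}) and the last bullet of Theorem~\ref{thm:ci_equiv}\ref{itm:Rsplit} supply the uniform lower bound $\tau_{(m,s),m}^{\theta\sig,R}\ge\tau_{(m,s),m}^{\theta_{(m,s)}^R+,R}=s+\delta$ for some $\delta>0$ and all $\theta>\theta_{(m,s)}^R$; strong monotonicity then yields $\tau_{(m,t),m}^{\theta\sig,L}\ge s+\delta>t$ for $t\in(s,s+\delta)$, so $(m,t)\notin\NU_1^{\theta\sig}$. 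The degenerate case $\theta_{(m,s)}^R=0$ (so $(m,s)\notin\CI$) lacks a uniform positive gap in $\theta$ and is handled by combining the countable decomposition $\NU_1=\bigcup_{\theta\in\Q_{>0}}\NU_1^\theta$ from Theorem~\ref{thm:NU}\ref{itm:union} with part~\ref{itm:CI_struct}, proved independently. For~\ref{itm:CI_struct}, set $\gamma=\theta_{(m,s)}^R>0$ and follow $\Gamma_{(m,s)}^{\gamma-,R}$, which by Theorem~\ref{thm:ci_equiv}\ref{itm:Rsplit} takes an initial vertical step to $(m+1,s)$; Theorem~\ref{thm:geod_stronger}\ref{itm:stronger convergence in t} applied at both signs then shows that for $t>s$ sufficiently close to $s$, the restart at $(m,t)$ still has $\tau_{(m,t),m}^{\gamma-,R}=t$ while $\tau_{(m,t),m}^{\gamma+,R}>t$, giving $\theta_{(m,t)}^R=\gamma$. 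The main obstacle is the density argument in~\ref{itm:NU_dense_self} and the degenerate case of~\ref{itm:NU_struct}: both require going beyond soft monotonicity to exploit the double-argmax structure of $\NU_1^{\theta\sig}$, namely its characterization as the set of initial points where $B_m-h_{m+1}^{\theta\sig}$ attains its supremum on $[s,\infty)$ both at the left endpoint and at a strictly interior point.
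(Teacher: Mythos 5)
Your handling of parts~\ref{contain}--\ref{itm:ci_int} and the nondegenerate ($\theta_{(m,s)}^R>0$) case of part~\ref{itm:NU_struct} is essentially correct and close to the paper's route; the variant for~\ref{itm:NU_struct} via strong monotonicity in the initial point (Theorem~\ref{thm:geod_stronger}\ref{itm:global strong monotonicity in t}) rather than the uniqueness of maximizers in Lemma~\ref{lem:all_theta_max_comp}\ref{itm:all_no_3} is a valid alternative. The remaining parts, however, have genuine gaps.

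For part~\ref{itm:NU_dense_self}: soft monotonicity plus the left/right limits of Theorem~\ref{thm:geod_stronger}\ref{itm:stronger convergence in t} do not by themselves ``force'' points of $\NU_1^{\theta\sig}$ to accumulate from the left. What they give is that the nondecreasing map $u\mapsto\tau_{(m,u),m}^{\theta\sig,R}$ climbs up to $s$ as $u\nearrow s$, but this climb could in principle happen entirely through the set $M^U$ of initial points $u$ that are unique maximizers of $B_m-h_{m+1}^{\theta\sig}$ over $[u,\infty)$. What the paper actually uses is the local Brownian structure packaged in Lemma~\ref{lem:all_theta_max_comp}\ref{itm:approx_left}--\ref{itm:approx_right}, which descends from Theorem~\ref{thm:countable non unique maximizers} and says that the sets $M^U$ and $M^N$ (unique and non-unique maximizers) of a negatively drifted Brownian motion are densely interleaved in a one-sided way: to the immediate left of any $M^N$ point there is an $M^U$ point, and to the immediate right of any $M^U$ point there is an $M^N$ point. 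You flag at the end that you need ``the double-argmax structure,'' but you never supply the argument that this structure is non-discrete, and that is precisely the content that must be imported from Theorem~\ref{thm:countable non unique maximizers}.

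For part~\ref{itm:CI_struct} the proposed argument is actually wrong, not merely under-justified. The right limit $\lim_{t\searrow s}\tau_{(m,t),m}^{\gamma-,R}=\tau_{(m,s),m}^{\gamma-,R}=s$ from Theorem~\ref{thm:geod_stronger}\ref{itm:stronger convergence in t} does \emph{not} imply $\tau_{(m,t),m}^{\gamma-,R}=t$ for all (or even for eventually all) $t>s$ near $s$: the map $t\mapsto\tau_{(m,t),m}^{\gamma-,R}$ is a nondecreasing, piecewise constant function that can be strictly above the diagonal throughout any given right-neighborhood of $s$, and one only knows its infimum as $t\searrow s$ is $s$. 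The paper instead uses Lemma~\ref{lem:all_theta_max_comp}\ref{itm:approx_right} to produce $\hat t\in(s,(s+\ve)\wedge\tau_{(m,s),m}^{\gamma+,R})$ with $(m,\hat t)\in\NU_1^{\gamma-}$ and then Lemma~\ref{lem:all_theta_max_comp}\ref{itm:approx_left} to extract a point $t\in(s,\hat t)$ with $\tau_{(m,t),m}^{\gamma-,R}=t$; the inequality $\tau_{(m,t),m}^{\gamma+,R}=\tau_{(m,s),m}^{\gamma+,R}>t$ then follows because $t$ lies strictly below the $\gamma+$ rightmost maximizer. Since your treatment of the degenerate case $\theta_{(m,s)}^R=0$ of part~\ref{itm:NU_struct} is deferred to part~\ref{itm:CI_struct} (which in any case only concerns $(m,s)\in\CI$, i.e.\ $\theta_{(m,s)}^R>0$, so it cannot serve this purpose) and to the countable decomposition $\NU_1=\bigcup_{\theta\in\Q_{>0}}\NU_1^\theta$ (which rules out nothing, since countable sets can accumulate), that case is also left open.

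In short: the bookkeeping reductions to Theorem~\ref{thm:ci_equiv} are fine, but the density and accumulation claims in parts~\ref{itm:NU_dense_self}, \ref{itm:NU_struct} (degenerate case), and~\ref{itm:CI_struct} cannot be obtained from monotonicity and one-sided continuity of the jump-time maps alone; they need the non-discreteness and interleaving of $M^U$ and $M^N$ furnished by Lemma~\ref{lem:all_theta_max_comp} and Theorem~\ref{thm:countable non unique maximizers}.
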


 

\begin{remark} \label{rmk:CI not NU 0} The set $ \CI \setminus \NU_0$ has Hausdorff dimension $\f{1}{2}$, since $\CI$ has Hausdorff dimension $\f{1}{2}$ (Theorem~\ref{thm:Haus_comp_interface}) and $\NU_0$ is countable (Theorem~\ref{thm:NU}\ref{itm:union}). For all $\mbf x \in \CI \setminus \NU_0$, $\theta > 0$ and $\sigg \in \{-,+\}$, there is exactly one geodesic in $\mbf T_{\mbf x}^{\theta \sig}$. However, by \ref{itm:thetaL > 0}$\Rightarrow$\ref{itm:Lsplit} of Theorem~\ref{thm:ci_equiv}, for $\wh \theta = \theta_{\mbf x}^{L} = \theta_{\mbf x}^R$, the $\wh \theta -$ geodesic travels initially vertically while the $\wh \theta +$ geodesic travels initially horizontally, and the two geodesics never meet again. 
\end{remark}

\noindent The final theorem of this section relates the directions of the competition interfaces to the exceptional directions of  the Busemann process and  sharpens the weak inequalities $0 \le \theta_{(m,s)}^R \le \theta_{(m,s)}^L$ with a trichotomy. Before stating the theorem, we introduce some notation and make some remarks. For each $(m,s) \in \Z \times \R$, define the following closed subset of $\R_{\ge0}$: 
\be \label{Compset_def}
\Compset_{(m,s)} := \{\theta > 0: v_{m + 1}^{\theta -}(s) < v_{m + 1}^{\theta +}(s), \text{ and } h_{m}^{\theta +}(s,t) < h_m^{\theta -}(s,t) \text{ for all }t > s\} \cup A_{(m,s)},
\ee
where $A_{(m,s)} = \{0\}$ if $v_{m + 1}^{\theta \sig}(s) > 0$ for all $\theta > 0$ and $\sigg \in \{-,+\}$, and $A_{(m,s)} = \varnothing$ otherwise.
\begin{remark}
 We note that the set of $\theta > 0$ such that $h_m^{\theta +}(s,t) < h_m^{\theta -}(s,t)$ is decreasing as $t \searrow s$. This follows from the monotonicity of Theorem~\ref{thm:summary of properties of Busemanns for all theta}\ref{general monotonicity Busemanns}, which implies that, for $s < t < T$,
\[
0 \le h_m^{\theta -}(s,t) - h_m^{\theta +}(s,t) \le h_m^{\theta-}(s,T) - h_m^{\theta +}(s,T).
\]
The set $\Compset_{(m,s)}$ describes the directions that are simultaneously jump points for the Busemann function of the pair $(m,s)$ and $(m +1,s)$ and all the pairs $(m,s)$ and $(m,t)$ for $t > s$.
If $v_{m  +1}^{\theta \sig}(s) > 0$ for all $\theta > 0$ and $\sigg \in\{-,+\}$, then we include $0$ as a jump point because $v_{m + 1}^{\theta \sig}(s) \searrow 0$ as $\theta \searrow 0$ (Theorem~\ref{thm:summary of properties of Busemanns for all theta}\ref{general uniform convergence Busemanns}\ref{general uniform convergence:limits to 0}). Additionally, by Theorem~\ref{thm:coupled_BMs_technical}\ref{itm:infinite_inc_points}, for all $t > s$, $0$ is an accumulation point of the set of $\theta > 0$ such that $h_m^{\theta +}(s,t) < h_m^{\theta -}(s,t)$. 
\end{remark} 

\begin{remark}
The set $\Compset_{(m,s)}$ can be described as an intersection of supports of Lebesgue-Stieltjes measures on $[0,\infty)$. By Theorem~\ref{thm:Theta properties}\ref{itm:const}, we have
\be \label{eqn:measures}
\Compset_{(m,s)} = \supp \mu_{(m,s),(m + 1,s)} \cap \bigcap_{t:t > s}\supp \mu_{(m,t),(m,s)},
\ee
where, for $\mbf x \SeS \mbf y$, $\mu_{\mbf x,\mbf y}$ is the Lebesgue-Stieltjes measure of the nondecreasing function $\theta \mapsto \B^{\theta \sig}(\mbf x,\mbf y)$ (see Remark~\ref{rmk:Buse_mont}). While these functions are only defined for $\theta > 0$, we extend the measures to $[0,\infty)$ simply by defining $\mu_{\mbf x,\mbf y}\{0\} = 0$. Then, by the previous remark, $0$ is a point on the right-hand side of~\eqref{eqn:measures} if and only if $A_{(m,s)} = \{0\}$. In  this sense, the following theorem is the BLPP analogue of the corresponding result for lattice LPP with continuous weights, given in Theorem 3.7 of~\cite{Janjigian-Rassoul-Seppalainen-19}. The left/right distinction in the following creates a new phenomenon not present in the lattice model with continuous weights.  
\end{remark}

\begin{theorem} \label{thm:ci_and_Buse_directions}
The following hold on a single event of probability one.
\begin{enumerate}[label=\rm(\roman{*}), ref=\rm(\roman{*})]  \itemsep=3pt
    \item \label{itm:cicount} Recall the set $\Busedc$ of exceptional directions where Busemann functions jump, defined in~\eqref{eqn:Theta}.  Then,
\[
 \{\theta_{(m,s)}^L\}_{(m,s) \in \CI}  = \{\theta_{(m,s)}^R\}_{(m,s) \in \CI}  = \Busedc.
\]
In particular, there are only countably infinitely many distinct asymptotic directions of the competition interfaces across all initial points in $\Z\times\R$. 
\item \label{itm:ci_dir_supp}
For each $(m,s) \in \Z \times \R$, $\Compset_{(m,s)} = \{\theta_{(m,s)}^R\} \cap \{\theta_{(m,s)}^L\} $. Thus, by Theorems~\ref{thm:ci_equiv} and~\ref{thm:ci_and_NU}\ref{itm:cisd}, there are three possibilities for $S_{(m,s)}$:
\begin{enumerate}[label=\rm(\alph{*}), ref=\rm(\alph{*})]  \itemsep=3pt
    \item $(m,s) \in \NU_1$, $0 < \theta_{(m,s)}^R < \theta_{(m,s)}^L$, and $\Compset_{(m,s)} = \varnothing$ 
    \item $(m,s) \in \CI \setminus \NU_1$, $\theta_{(m,s)}^L = \theta_{(m,s)}^R > 0$, and $\Compset_{(m,s)} = \{\theta_{(m,s)}^L\} = \{\theta_{(m,s)}^R\}$.
    \item $(m,s) \notin \CI$, $\theta_{(m,s)}^L = \theta_{(m,s)}^R = 0$, and $\Compset_{(m,s)} = \{0\}$.
\end{enumerate}
\end{enumerate}
\end{theorem}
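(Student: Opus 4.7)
The plan is to derive part~(i) from the characterizations of the competition interface directions in Theorem~\ref{thm:ci_equiv} combined with the equivalences of Theorem~\ref{thm:geod_description}, and then derive part~(ii) by identifying $\theta_{(m,s)}^L$ as the unique jump location of $\theta\mapsto v_{m+1}^{\theta\sig}(s)$ and $\theta_{(m,s)}^R$ as the direction at which the entire horizontal Busemann trajectory $h_m^{\theta\sig}(s,\cdot)$ branches instantaneously.

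For the forward inclusion of part~(i), fix $\mbf x=(m,s)\in\CI$. Then $\theta_{\mbf x}^L>0$ by Theorem~\ref{thm:ci_equiv}, and the implication~\ref{itm:thetaL > 0}$\Rightarrow$\ref{itm:Lsplit} there produces leftmost Busemann geodesics $\Gamma_{\mbf x}^{\theta_{\mbf x}^L\pm,L}$ that differ; the equivalence~\ref{notTheta}$\Leftrightarrow$\ref{Lonegeod} of Theorem~\ref{thm:geod_description} then gives $\theta_{\mbf x}^L\in\Busedc$, and the same argument with $L$ replaced by $R$ and~\ref{Ronegeod} in place of~\ref{Lonegeod} handles $\theta_{\mbf x}^R$. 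For the reverse inclusion, fix $\theta\in\Busedc$ and any $\mbf x\in\Z\times\R$. The equivalence~\ref{notTheta}$\Leftrightarrow$\ref{Lonegeod} of Theorem~\ref{thm:geod_description} forces $\Gamma_{\mbf x}^{\theta-,L}\neq\Gamma_{\mbf x}^{\theta+,L}$; by the monotonicity of Theorem~\ref{existence of semi-infinite geodesics intro version}\ref{itm:monotonicity of semi-infinite jump times}\ref{itm:monotonicity in theta}, these two paths agree on an initial portion and then split at a first lattice point $\mbf y\geq\mbf x$ at which $\Gamma_{\mbf y}^{\theta-,L}$ takes an immediate vertical step while $\Gamma_{\mbf y}^{\theta+,L}$ takes an immediate horizontal step. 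Combining the defining sup in~\eqref{eqn:intro_ci_dir} with the right-continuity in direction provided by Theorem~\ref{thm:geod_stronger}\ref{itm:stronger convergence theta} then forces $\theta=\theta_{\mbf y}^L$, and in particular $\mbf y\in\CI$.

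For part~(ii), we prove the identity $\Compset_{(m,s)}=\{\theta_{(m,s)}^R\}\cap\{\theta_{(m,s)}^L\}$ by separately analyzing the two requirements in the definition of $\Compset_{(m,s)}$. The vertical condition $v_{m+1}^{\theta-}(s)<v_{m+1}^{\theta+}(s)$ isolates a single positive direction, namely the unique jump location of the nondecreasing function $\theta\mapsto v_{m+1}^{\theta\sig}(s)$, which by Theorem~\ref{thm:ci_equiv}\ref{itm:v = 0} coincides with $\theta_{(m,s)}^L$ whenever positive. The horizontal condition that $h_m^{\theta+}(s,t)<h_m^{\theta-}(s,t)$ holds for \emph{every} $t>s$ -- equivalently, that the two trajectories $h_m^{\theta\pm}(s,\cdot)$ split instantly at $s$ in the sense of Theorem~\ref{thm:qualitative_Buse}\ref{itm:stick_split} -- is shown to pin down $\theta=\theta_{(m,s)}^R$: the transition, as $\theta$ crosses $\theta_{(m,s)}^R$, of the rightmost Busemann geodesic from an immediate vertical step to an immediate horizontal step, translated through the departure identity $h_m^{\theta\sig}=D(h_{m+1}^{\theta\sig},B_m)$ of Theorem~\ref{thm:summary of properties of Busemanns for all theta}\ref{general queuing relations Busemanns}, is exactly what causes the splitting of $h_m^{\theta\pm}(s,\cdot)$ to propagate uniformly to every $t>s$. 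The case $\theta=0$ is handled through $A_{(m,s)}$ via the converse direction of~\ref{itm:v = 0}. The trichotomy then reads off immediately from Theorem~\ref{thm:ci_and_NU}\ref{itm:cisd} combined with the equivalences \ref{itm:some_vert_geod}$\Leftrightarrow$\ref{itm:thetaL > 0}$\Leftrightarrow$\ref{itm:thetaR > 0} in Theorem~\ref{thm:ci_equiv}.

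The main obstacle lies in part~(ii), specifically in establishing that the simultaneous splitting condition $h_m^{\theta+}(s,t)<h_m^{\theta-}(s,t)$ for \emph{all} $t>s$ picks out precisely $\theta=\theta_{(m,s)}^R$. The direction $\theta_{(m,s)}^R$ is defined purely geometrically via the rightmost Busemann geodesic, while the horizontal condition is analytic in nature; bridging the two requires carefully tracking how a first-step splitting of geodesics at $\theta_{(m,s)}^R$ propagates through the departure mapping $D$ to produce an immediate, uniformly-in-$t$ split of the horizontal trajectories. A related subtlety is ruling out any ``short sticking'' of $h_m^{\theta\pm}(s,\cdot)$ immediately after $s$, which should follow from the monotone-in-$t$ increment property of Theorem~\ref{thm:qualitative_Buse}\ref{itm:dist_incr} together with the strict positivity structure of the jump process $\theta\mapsto X(\lambda;t)$ from Theorem~\ref{thm:Busemann jump process intro version}.
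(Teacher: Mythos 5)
Your Part~(i) is essentially correct and follows the paper's route, except that you run the reverse inclusion through the $L$-geodesics where the paper uses $R$-geodesics; that is a cosmetic choice, since Theorem~\ref{thm:ci_equiv}\ref{itm:Lsplit} and~\ref{itm:Rsplit} are symmetric and the equivalence~\ref{notTheta}$\Leftrightarrow$\ref{Lonegeod} of Theorem~\ref{thm:geod_description} matches~\ref{notTheta}$\Leftrightarrow$\ref{Ronegeod} exactly. One small caveat: once you have located the first splitting point $\mbf y$, you should invoke Theorem~\ref{thm:ci_equiv}\ref{itm:Lsplit} directly (which states that a split of $\Gamma_{\mbf y}^{\theta\pm,L}$ immediately at $\mbf y$ forces $\theta=\theta_{\mbf y}^L$) rather than re-deriving it via Theorem~\ref{thm:geod_stronger}\ref{itm:stronger convergence theta}; the latter works but is a detour.

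Part~(ii) has two problems. First, your forward inclusion contains a factual error: the set $\{\theta>0: v_{m+1}^{\theta-}(s)<v_{m+1}^{\theta+}(s)\}$ is \emph{not} a single point. The map $\theta\mapsto v_{m+1}^{\theta\sig}(s)$ is a nondecreasing step function running from $0$ (as $\theta\searrow 0$, Theorem~\ref{thm:summary of properties of Busemanns for all theta}\ref{general uniform convergence Busemanns}\ref{general uniform convergence:limits to 0}) to $+\infty$ (as $\theta\to\infty$, Theorem~\ref{thm:coupled_BMs_technical}\ref{itm:hm_to_infty}), so it has infinitely many jump locations. What is true, and what the paper proves via Theorem~\ref{thm:ci_equiv}\ref{itm:v = 0}, is the weaker inclusion $\{\theta>0: v_{m+1}^{\theta-}(s)<v_{m+1}^{\theta+}(s)\}\subseteq[\theta_{(m,s)}^L,\infty)$, i.e.\ $\theta_{(m,s)}^L$ is only the \emph{smallest} jump. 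Symmetrically, the set of $\theta$ for which $h_m^{\theta+}(s,t)<h_m^{\theta-}(s,t)$ holds for all $t>s$ is shown to be contained in $[0,\theta_{(m,s)}^R]$, not to equal $\{\theta_{(m,s)}^R\}$. It is only after intersecting the two rays and using $\theta_{(m,s)}^R\le\theta_{(m,s)}^L$ that a singleton emerges. This correction matters because you cannot run the two conditions independently to single points and then intersect.

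Second, and more seriously, you acknowledge as the ``main obstacle'' the reverse inclusion $\{\theta_{(m,s)}^R\}\cap\{\theta_{(m,s)}^L\}\subseteq\Compset_{(m,s)}$ but you do not prove it. Your proposed bridge — propagating a first-step geodesic split ``through the departure identity $h_m^{\theta\sig}=D(h_{m+1}^{\theta\sig},B_m)$'' — is not worked out and it is not clear that it closes. The paper closes this gap with a geometric argument you do not mention: when $\widehat\theta=\theta_{(m,s)}^R=\theta_{(m,s)}^L>0$, both $\Gamma_{(m,s)}^{\widehat\theta+,R}$ and $\Gamma_{(m,s)}^{\widehat\theta+,L}$ make an immediate horizontal step, hence pass through a rational point $(m,u)$ with $u>s$; uniqueness of the Busemann sequence from $(m,u)\notin\NU_0$ (Theorem~\ref{thm:NU}\ref{itm:NUp0}) forces $\Gamma_{(m,s)}^{\widehat\theta+,L}=\Gamma_{(m,s)}^{\widehat\theta+,R}$, and this path is disjoint from $\Gamma_{(m,s)}^{\widehat\theta-,R}$ after $(m,s)$. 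Translating disjointness into the Busemann condition via the equivalence~\ref{itm:pmBuse}$\Leftrightarrow$\ref{itm:disjointgeod} of Theorem~\ref{thm:thetanotsupp} — applied to the pairs $((m+1,s),(m,s))$ and $((m,t),(m,s))$ for all $t>s$ — yields exactly the two conditions defining $\Compset_{(m,s)}$. Your sketch does not invoke Theorem~\ref{thm:thetanotsupp} at all, so the geometric-to-analytic passage is missing.
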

\begin{remark}
Theorem~\ref{thm:ci_and_Buse_directions}\ref{itm:cicount} is particularly interesting because $\CI$ is uncountable (Theorem~\ref{thm:Haus_comp_interface}).
\end{remark}

\section{Connections to exponential last-passage percolation and the stationary horizon} \label{sec:CGM}

\subsection{Busemann process in the exponential corner growth model}
Fan and the first author~\cite{Fan-Seppalainen-20} derived the joint distribution of Busemann functions for the exponential corner growth model (CGM). In Section 5.2 of~\cite{Seppalainen-Sorensen-21a}, we outlined the analogies between the construction of semi-infinite geodesics in the discrete case and  that of BLPP. Here, we discuss connections between the joint distribution of the Busemann functions and prove a weak convergence result from the exponential CGM to BLPP. 

Let $\{Y_{\mbf x}\}_{\mbf x \in \Z^2}$ be a collection of nonnegative i.i.d random variables, each associated to a vertex on the integer lattice.  For $\mbf x \le \mbf y \in \Z \times \Z$, define the last-passage time as
\[
G_{\mbf x,\mbf y} = \sup_{\mbf x_\centerdot \in \Pi_{\mbf x,\mbf y}} \sum_{k = 0}^{|\mbf y - \mbf x|_1} Y_{\mbf x_k}, 
\]
where $\Pi_{\mbf x, \mbf y}$ is the set of up-right paths $\{\mbf x_k\}_{k = 0}^{n}$ that satisfy $\mbf x_0 = \mbf x,\mbf x_{n} = \mbf y$, and $\mbf x_k - \mbf x_{k - 1} \in \{\mbf e_1,\mbf e_2\}$. In what follows, we will take $Y_{\mbf x} \sim \operatorname{Exp}(1)$ and refer to this model as the exponential CGM. In this case, Busemann functions exist and are indexed by  direction vectors $\xi$. It is convenient to index the direction vectors as follows, in terms of a real parameter $\alpha\in(0,1)$: 
\[
\xi(\alpha) = \Bigg(\frac{\alpha^2}{\alpha^2 + (1 - \alpha)^2} , \frac{(1 - \alpha)^2}{\alpha^2 + (1 - \alpha)^2}\Bigg).
\]
Then for  a fixed   $\alpha \in (0,1)$  we have the  almost sure Busemann  limit
\[
U^\alpha(\mbf x,\mbf y) = \lim_{n \rightarrow \infty} G_{\mbf x,n\xi(\alpha)} - G_{\mbf y,n\xi(\alpha)}. 
\]

Under the assumption that $Y_0$ has finite second moment, Glynn and Whitt   introduced BLPP as a universal scaling limit of the discrete CGM~\cite[Theorem 3.1 and Corollary 3.1]{glynn1991}. That is, the process 
\[
\{L_{(m,s),(n,t)}: (m,s) \le (n,t) \in \Z \times \R\}
\]
is the functional limit in distribution, as $k \rightarrow \infty$, of the properly interpolated version of the process
\[
\Bigl\{\f{1}{\sqrt k}\Big( G_{(m,sk),(n,tk)} - (t - s)k\Big): (m,s) \le (n,t) \in \Z \times \R \Bigr\}.
\]
Thus, it is reasonable to expect that the Busemann functions for BLPP can be obtained by a limit of the Busemann functions in the exponential CGM. Heuristically, for $\lambda > 0$,
\begin{align}
    \h_0^{1/\lambda^2}(t) &= \lim_{n \rightarrow \infty} L_{(0,0),(n,n/\lambda^2)} - L_{(0,t),(n,n/\lambda^2)} \nonumber \\
    &= \lim_{n \rightarrow \infty} \lim_{k \rightarrow \infty} \frac{1}{\sqrt{k}} \Big(G_{(0,0),(nk/\lambda^2,n)} - G_{(tk,0),(nk/\lambda^2,n)} - tk\Big) \nonumber \\
    &^*\!\!=^* \lim_{k \rightarrow \infty} \lim_{n \rightarrow \infty}\frac{1}{\sqrt{k}} \Big(G_{(0,0),(nk/\lambda^2,n)} - G_{(tk,0),(nk/\lambda^2,n)} - tk\Big)  \nonumber \\
    &= \lim_{k \rightarrow \infty} \f{1}{\sqrt k} \Big(U^{\frac{\sqrt k}{\sqrt k + \lambda}}((0,0),(tk,0)) - tk\Big). \label{Buselimk}
\end{align}
The $^*\!\!=^*$ notation is used to indicate that the order of the limits was changed without justification.

\noindent Similar to the construction in~\cite{Seppalainen-Sorensen-21a} for Busemann functions in BLPP, the Busemann functions for exponential last-passage percolation can be extended simultaneously to all directions, either as a cadlag or caglad version (see~\cite{Sepp_lecture_notes}).
\begin{theorem}[\cite{Fan-Seppalainen-20}, Theorem 3.4] \label{thm:CGM_jump_process}
The process $\alpha \mapsto U^{\alpha}((0,0),(0,1))$  is a jump process and can be explicitly described as follows. Take an inhomogeneous Poisson point process (with a rate function not specified here) that defines jump points for the process. Then, at each jump point, take an independent exponential random variable (whose parameter depends on the location of the jump) to determine the size of the jump. This process has independent, but not stationary, increments. 
\end{theorem}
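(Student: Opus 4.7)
The plan is to derive the structure of $\alpha\mapsto U^\alpha((0,0),(0,1))$ from the joint construction of the Busemann process in exponential last-passage percolation, in the spirit of the iterated-queueing description developed in Section~\ref{section:fdd} for the BLPP case. The one-parameter starting input is the well-known Burke-type stationarity for exponential LPP: for each fixed $\alpha\in(0,1)$, the families of vertical and horizontal Busemann increments $\{U^\alpha((k,0),(k,1))\}_{k\in\Z}$ and $\{U^\alpha((0,\ell),(1,\ell))\}_{\ell\in\Z}$ are i.i.d.\ $\operatorname{Exp}(\alpha)$ and i.i.d.\ $\operatorname{Exp}(1-\alpha)$, respectively, and the two families are mutually independent. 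In particular $U^\alpha((0,0),(0,1))$ has an $\operatorname{Exp}(\alpha)$ marginal.

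First I would use the monotonicity of the Busemann process in the direction parameter (the discrete CGM analogue of Theorem~\ref{thm:summary of properties of Busemanns for all theta}\ref{general monotonicity Busemanns}) to conclude that $\alpha\mapsto U^\alpha((0,0),(0,1))$ is monotone on $(0,1)$. A monotone real-valued process on a bounded interval is of bounded variation and has only countably many jumps, so the remaining question is the distribution of the jump times and the jump sizes.

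Next I would couple the full family $\{U^\alpha\}_{\alpha\in(0,1)}$. Given finitely many directions $0<\alpha_1<\cdots<\alpha_n<1$, one realizes the joint distribution of the vector $\bigl(U^{\alpha_j}((0,0),(0,1))\bigr)_{j=1}^n$ by applying an iterated queueing map to $n$ independent exponential arrival/service processes whose rates are determined by $\alpha_1,\ldots,\alpha_n$, in precise analogy with~\eqref{definition of script D}. The Burke output theorem for the discrete M/M/1 queue then shows that the consecutive differences
\[
 U^{\alpha_{j+1}}((0,0),(0,1))-U^{\alpha_j}((0,0),(0,1)), \qquad j=1,\ldots,n-1,
\]
are mutually independent. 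Refining the partition in the standard way gives independent increments of $\alpha\mapsto U^\alpha((0,0),(0,1))$ across disjoint subintervals of $(0,1)$.

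Finally, a monotone pure-jump process with independent increments whose one-dimensional marginal is $\operatorname{Exp}(\alpha)$ must be an inhomogeneous compound Poisson process: the memoryless property of the exponential forces the jump sizes to be independent exponentials with a location-dependent rate, and the jump times then necessarily form an inhomogeneous Poisson point process. The precise intensity and the parameter of the exponential jumps are extracted by matching the marginal Laplace transform $\Ee[e^{-s U^\alpha((0,0),(0,1))}]=\alpha/(\alpha+s)$ to the Laplace transform of the candidate compound Poisson process; because $\alpha\mapsto 1/\alpha$ is nonlinear, the increments are not stationary. The main obstacle is the coupled queueing construction of the second step: the independence of $\alpha$-increments is a delicate consequence of Burke's theorem for exponential queues and would fail for any non-exponential weight distribution, so the argument is genuinely special to the exponential CGM.
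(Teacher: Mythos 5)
The paper does not prove Theorem~\ref{thm:CGM_jump_process}; it cites it from \cite{Fan-Seppalainen-20}, Theorem~3.4, so there is no proof in this paper to compare against line by line. I will evaluate your sketch on its own terms.

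Your overall scaffolding (Burke gives the $\Exp(\alpha)$ marginal; monotonicity in $\alpha$ gives a nondecreasing process; independent increments plus the exponential marginal pin down an inhomogeneous compound Poisson via Laplace transforms) is the right shape. In particular step~4 can be made rigorous: once independence of increments is granted, one computes
\[
\Ee\bigl[e^{-s\,(U^{\alpha_2}-U^{\alpha_1})}\bigr]
=\frac{\Ee[e^{-s U^{\alpha_2}}]}{\Ee[e^{-s U^{\alpha_1}}]}
=\frac{\alpha_2(\alpha_1+s)}{\alpha_1(\alpha_2+s)}
=\exp\!\Bigl(-\!\int_{\alpha_2}^{\alpha_1}\frac{s}{u(u+s)}\,du\Bigr)
\]
for $\alpha_2<\alpha_1$, which identifies the jump-rate function as $1/u$ and the jump-size distribution at direction $u$ as $\Exp(u)$, with no drift part, and the non-constancy of the rate gives the non-stationarity. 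This is cleaner than the heuristic ``memorylessness forces exponential jumps'' that you wrote, and you should replace that sentence with the Laplace-transform identification above.

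The genuine gap is your step~3. You assert that ``the Burke output theorem for the discrete M/M/1 queue then shows that the consecutive differences $U^{\alpha_{j+1}}-U^{\alpha_j}$ are mutually independent.'' This is the entire mathematical content of the theorem, and it is not a standard corollary of Burke. There is also a structural reason to be suspicious: if Burke's theorem, run through the iterated queueing map $\D^{(n)}$, produced independent $\alpha$-increments as a matter of course, the same reasoning would yield independent increments for $\alpha\mapsto U^\alpha\bigl((0,0),(k,0)\bigr)$ with $k>1$ edges, and — as the present paper notes immediately after the theorem statement — that is \emph{false} for $k>1$, and false as well for the BLPP single-level process (Corollary~\ref{cor:inc_dist_conseq}\ref{itm:non_independent}). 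So the independence is a special feature of the single-edge coordinate, and any correct proof must use that specificity. What \cite{Fan-Seppalainen-20} actually does is compute the joint law of $(U^{\alpha_1},\dots,U^{\alpha_n})$ across a single edge explicitly (reducing the iterated queueing formula to a one-coordinate statement) and read off both independence and the rate function from the explicit formula; Burke-type queueing invariance is an input, but the independence of the $\alpha$-increments is established by a direct computation at a single site, not by a general queueing principle. Until you supply that computation (or some substitute argument that visibly breaks for $k>1$), step~3 is a hole.
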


The BLPP Busemann jump process $\lambda \mapsto X(\lambda;t)$ described by Theorem~\ref{thm:Busemann jump process intro version}   is more complicated than its discrete analogue described in  Theorem~\ref{thm:CGM_jump_process}:  in particular,  the increments   are not independent, as recorded in Corollary~\ref{cor:inc_dist_conseq}\ref{itm:non_independent}. However, the increments of $\lambda\mapsto X(\lambda;t)$ are stationary, in contrast with Theorem~\ref{thm:CGM_jump_process}. Furthermore, the set of jumps of $\lambda \mapsto X(\lambda;t)$ is \textit{not} a Poisson point process. Indeed, if $W$ is the location of the first jump of the process $\lambda \mapsto X(\lambda;t)$, then $\Pp(W > \lambda)$ is given by~\eqref{eqn:0inc}, which is not exponential.

While this stark contrast may seem strange, it is in fact not natural to expect that all properties of the process in Theorem~\ref{thm:CGM_jump_process}  transfer to the BLPP setting. The process described in that theorem only considers Busemann functions across a single horizontal edge. In the scaling between the discrete Busemann functions and BLPP Busemann functions~\eqref{Buselimk}, one considers the Busemann function $U^\alpha$ across $tk$ edges for large $k$. One can show using tools from~\cite{Fan-Seppalainen-20} that for an integer $k > 1$, the increments of the process $\alpha \mapsto U^\alpha((0,0),(k,0))$ are not independent. It remains an open problem to develop an explicit description of the process $\lambda \mapsto X(\lambda;t)$ in the BLPP setting. 

However, the finite-dimensional distributions of the Busemann functions between the two models have a very similar structure, and in fact, the two-dimensional BLPP Busemann process can be obtained from a limit of two-dimensional Busemann processes for the exponential CGM. The proof of the following result, as well as a detailed description of the queuing setup for the discrete model, can be found in Section~\ref{sec:CGM_proofs}.

\begin{theorem} \label{thm:convCGM}
Fix $0 \le \lambda_1 \le \lambda_2$. As an appropriately interpolated sequence of continuous functions, the process
\begin{equation} \label{eqn:CGM_conv}
\f{1}{\sqrt k}\Big(U^{\f{\sqrt k}{\sqrt k + \lambda_1}}((0,0),(\abullet k,0)) -  \abullet k, U^{\f{\sqrt k}{\sqrt k + \lambda_2}}((0,0),(\abullet k,0)) - \abullet k\Big),
\end{equation}
converges in distribution to $(h_0^{1/\lambda_1^2}(\abullet), h_0^{1/\lambda_2^2}(\abullet))$, in the sense of uniform convergence on compact sets.
\end{theorem}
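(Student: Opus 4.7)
The plan is to express both sides of the convergence as images of iterated queuing transformations applied to independent sequences, and then combine Donsker's invariance principle with a continuous-mapping argument, in the spirit of~\cite{Seppalainen-Sorensen-21a} and of Lemma~\ref{weak continuity and consistency}\ref{weak continuity} above.

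\textbf{Step 1 (Discrete queuing representation).} I first invoke the joint distributional description of the exponential CGM Busemann process from~\cite{Fan-Seppalainen-20}. For $0 < \alpha_2 \le \alpha_1 < 1$, the pair of horizontal discrete Busemann processes $\bigl(U^{\alpha_2}((0,0),(n,0)),\,U^{\alpha_1}((0,0),(n,0))\bigr)_{n \in \Z_{\ge 0}}$ along level $m = 0$ is distributed as the image, under the discrete iterated queuing map $\Dq^{(2)}$ that is the lattice analogue of~\eqref{D iterated}--\eqref{definition of script D}, of two independent i.i.d.\ exponential sequences with rates $\alpha_2$ and $\alpha_1$ respectively. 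When $\lambda_1 = 0$ (so $\alpha_1 = 1$), the first component $U^1$ degenerates to the sum of the environment weights $\{Y_{(j,0)}\}$ and the distributional identity is trivial in that coordinate.

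\textbf{Step 2 (Donsker scaling of the inputs).} Substitute $\alpha_i = \sqrt k/(\sqrt k + \lambda_i)$, let $S^{(i)}_n$ be the partial sum of $n$ independent $\Exp(\alpha_i)$ variables, and let $\wt Z^i_k$ be the linear interpolation of $t \mapsto k^{-1/2}(S^{(i)}_{\lfloor tk \rfloor} - tk)$. Since $1/\alpha_i = 1 + \lambda_i/\sqrt k$ and $\Var(\Exp(\alpha_i)) \to 1$, the functional CLT gives the joint weak convergence
\[
(\wt Z^1_k,\wt Z^2_k) \Longrightarrow (Z^1, Z^2)
\]
uniformly on compact subsets of $\R_{\ge 0}$, where $Z^1, Z^2$ are independent Brownian motions with drifts $\lambda_1$ and $\lambda_2$. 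The two-sided extension to $\R$ is handled by running the same Donsker scheme to the left of $0$, using the lattice shift invariance of the discrete Busemann law.

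\textbf{Step 3 (Continuous-mapping limit of the queuing map).} I would now argue that the diffusively rescaled discrete iterated queuing map $\Dq^{(2)}$ converges to the continuous map $\D^{(2)}$ of~\eqref{definition of script D} in the sense that if $(\wt Z^1_k, \wt Z^2_k) \to (Z^1,Z^2)$ as above, then
\[
k^{-1/2}\bigl(\Dq^{(2)}(S^{(2)}, S^{(1)})_{\lfloor \abullet k\rfloor}-\abullet\, k\bigr) \;\longrightarrow\; \D^{(2)}(Z^2, Z^1)
\]
uniformly on compact subsets of $\R$. Combined with Step~2 and the continuous mapping theorem, this gives convergence of~\eqref{eqn:CGM_conv} to $\D^{(2)}(Z^2, Z^1)$, and by Theorem~\ref{dist of Busemann functions and Bm} the latter has law $\mu^{(\lambda_1,\lambda_2)}$, i.e., is equal in distribution to $(h_0^{1/\lambda_1^2}, h_0^{1/\lambda_2^2})$ (with the convention $h_0^\infty = B_0$ in the $\lambda_1 = 0$ case). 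The boundary cases $\lambda_1 = \lambda_2$ and $\lambda_1 = 0$ are dispatched by a short diagonal argument using the weak continuity of $\mu^\lambda$ in $\lambda$ (Lemma~\ref{weak continuity and consistency}\ref{weak continuity}).

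\textbf{Main obstacle.} The delicate step is Step~3. The definition~\eqref{definition of D} takes a supremum over the unbounded interval $[t,\infty)$, so uniform-on-compacts convergence of the inputs does not automatically transfer to the outputs. The key is to use the strict positivity of the asymptotic slope differential, which ensures that the reverse Lindley supremum is attained in a tight random window. Quantitatively, a uniform-in-$k$ exponential tail bound on the discrete random walk maxima $\max_{s \ge 0}(\wt Z^1_k(t{+}s) - \wt Z^2_k(t{+}s))$ (readily obtained because the drift differential of $\wt Z^1_k - \wt Z^2_k$ equals the negative and bounded below by $-(\lambda_2-\lambda_1)/2 < 0$ for large $k$) shows that, on events of arbitrarily high probability, the supremum for every $t$ in a compact set is attained inside a deterministic compact window, reducing Step~3 to an elementary continuity argument for the truncated map.
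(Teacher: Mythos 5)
Your plan tracks the paper's actual proof quite closely in its architecture: both arguments start from the Fan--Sepp\"al\"ainen joint queuing representation of the exponential CGM Busemann process, scale the inputs diffusively via Donsker's theorem, and then transfer the convergence through the queuing map $\Dq^{(2)}$ to the continuous map $\D^{(2)}$. The place where you genuinely diverge is how you control the unbounded supremum inside $D$. The paper does not prove a uniform tightness bound on the location of the maximizer; instead it splits the supremum at a fixed threshold $T$, shows the truncated piece $\sup_{t\le u\le T}$ converges by Donsker and continuity, and shows the tail piece $\sup_{T\le u<\infty}$ converges in distribution directly, using the Markov property to reduce it to a stationary Lindley-type maximum and then citing a classical reverse-random-walk result (Proposition 6.9.4 of Resnick). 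Independence of the two pieces then yields convergence of the pair for every fixed $T$, and a separate tightness argument upgrades the truncated convergence on $(-\infty,T]$ to uniform convergence on all compacts. Your ``main obstacle'' paragraph replaces the paper's explicit tail-limit computation with a uniform-in-$k$ exponential tail bound forcing the supremum into a compact window, which is a legitimate alternative and arguably more elementary; the paper's version buys an explicit identification of the tail limit at the cost of invoking the Resnick lemma and an independence decomposition.

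Two small points to tighten. First, your Step~1 indexing is flipped: with $\alpha_2 \le \alpha_1$ the measure $\mud^{(\alpha_1,\alpha_2)}$ requires the \emph{decreasing}-$\alpha$ order $(U^{\alpha_1},U^{\alpha_2})$ (equivalently, increasing drift), so the components should appear in the order matching the theorem's $(\lambda_1,\lambda_2)$. Second, the degenerate cases $\lambda_1=\lambda_2$ and $\lambda_1=0$ are easier than your ``diagonal argument'' suggests: when $\lambda_1=\lambda_2$ the two components are identical random variables so the joint convergence reduces to a one-dimensional Donsker statement, and when $\lambda_1=0$ the first component is a centered exponential random walk converging to $B_0$; both can be absorbed into the main argument once the ordering is sorted out, exactly as Lemma~\ref{weak continuity and consistency}\ref{weak continuity} permits.
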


We expect that the full Busemann process of exponential last-passage percolation converges to the Busemann process for BLPP. This would require more technical tightness results on a richer space. Such a type of argument was recently achieved by Busani~\cite{Busani-2021}, who showed that under KPZ scaling, the entire Busemann process for the exponential CGM has a limit, termed the stationary horizon. It is expected that the stationary horizon is a universal object in the KPZ universality class. We explore this further in the following section.

\subsection{The BLPP Busemann process and the stationary horizon} \label{sec:stat_horiz}
To describe the stationary horizon, we introduce some notation from~\cite{Busani-2021}. The map $\Phi:C(\R) \times C(\R) \to C(\R)$ is defined as \[
\Phi(f,g)(t) = \begin{cases}
f(t) + \Big[W_0(f - g) + \inf_{0 \le s \le t} (f(s) - g(s))\Big]^{-} &t \ge 0 \\
f(t) - \Big[W_t(f - g) + \inf_{t < s \le 0} \Big(f(s) - f(t) - [g(s) - g(t)]\Big)\Big]^{-} &t < 0,
\end{cases}
\]
where 
\[
W_t(f) = \sup_{s \le t}[f(t) - f(s)].
\]
We note that the map $\Phi$ is well-defined only on the appropriate space of functions where the supremums are all finite.
This map extends to maps $\Phi^k:C(\R)^k \to C(\R)^k$ as follows. \begin{enumerate}\itemsep=3pt
    \item $\Phi^1(f_1)(t) = f_1(t).$ 
    \item $\Phi^2(f_1,f_2)(t) = [f_1(t),\Phi(f_1,f_2)(t)]$,\qquad\text{and for }$k \ge 3,$ 
    \item $\Phi^k(f_1,\ldots,f_k)(t) = \bigl[f_1(t),\Phi\bigl(f_1,[\Phi^{k - 1}(f_2,\ldots,f_k)]_1\bigr)(t),\ldots,\Phi\bigl(f_1,[\Phi^{k -1}(f_2,\ldots,f_k)]_{k - 1}\bigr)(t)\bigr]$.
\end{enumerate}

\begin{definition} \label{def:SH}
The stationary horizon $\{G_{\alpha}:\alpha \in \R\}$ is a process with state space $C(\R)$ and with paths in the Skorokhod space $D(\R,C(\R))$ of right-continuous functions $\R \to C(\R)$ with left limits. $C(\R)$ has the Polish topology of uniform convergence on compact sets.  
The law of the stationary horizon is characterized as follows: for real numbers $\alpha_1 < \cdots < \alpha_k$, the finite-dimensional vector $(G_{\alpha_1},\ldots,G_{\alpha_k})$ has the same law as $\Phi^k(f_1,\ldots,f_k)$, where $f_1,\ldots,f_k$ are independent two-sided variance $4$ Brownian motions with drifts $\alpha_1,\ldots,\alpha_k$.
\end{definition} 
We now present two theorems that relate the stationary horizon to the BLPP Busemann process. For a function $f:\R \rightarrow \R$, define $\wt f:\R \rightarrow \R$ by $\wt f(t) = -f(-t)$.  Define the map $\mathcal R_k: C(\R)^k \to C(\R)^k$ by $\mathcal R_k(f_1,\ldots,f_k)= (\wt f_1,\ldots,\wt f_k)$. Recall the measures $\mu^\lambda$ from Definition~\ref{definition of v lambda and mu lambda}.
\begin{theorem} \label{thm:dist_of_SH}
 For $\alpha = (\alpha_1,\ldots,\alpha_k)$ with $-\infty <\alpha_1 < \cdots < \alpha_k < \infty$, the $k$-tuple of functions $
 (G_{\alpha_1},\ldots,G_{\alpha_k})$
 has distribution $\mu^\alpha \circ \mathcal R_k^{-1}$. Furthermore, as random elements of the Skorokhod space $D(\R_{\ge 0},C(\R))$, the following distributional equality holds: 
 \[
 \{\wt h_0^{(1/\lambda^2)-}(4 \tspb\abullet)\}_{\lambda \ge 0} \deq \{G_{4\lambda}(\abullet)\}_{\lambda \ge 0}.
 \]
 That is, the reflected and scaled horizontal BLPP Busemann process is equal in distribution to the stationary horizon, restricted to nonnegative drifts $\lambda$.
 \end{theorem}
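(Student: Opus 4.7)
The proof has two parts: identification of finite-dimensional distributions, and upgrading to process-level equality in the Skorokhod space $D(\R_{\ge 0},C(\R))$. The central algebraic observation driving the first part is that, modulo the time-reversal $u\mapsto\wt u$, the binary map $\Phi$ of the stationary horizon coincides with the queuing map $D$ that is used to build $\mu^\alpha$ in Definition~\ref{definition of v lambda and mu lambda}.

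I would first unify the two cases in the definition of $\Phi$. Direct simplification of the $[\cdot]^-$ and $W_t$ terms (using $[x]^- = (-x)\vee 0$) shows that for continuous $f, g$ with $g - f$ admitting the requisite finite supremums,
\[
\Phi(f,g)(t) = f(t) + \sup_{s\le t}(g-f)(s) - \sup_{s\le 0}(g-f)(s), \qquad t\in\R.
\]
A change of variable $s\mapsto -s$ in~\eqref{definition of D} gives the matching identity
\[
\wt{D(Z,B)}(t) = \wt B(t) + \sup_{s\le t}(\wt Z - \wt B)(s) - \sup_{s\le 0}(\wt Z - \wt B)(s),
\]
so $\Phi(\wt B,\wt Z) = \wt{D(Z,B)}$ whenever the drift of $Z$ exceeds that of $B$. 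Since both $\Phi^k$ and $\D^{(k)}$ preserve the first coordinate and, for $i\ge 2$, define the $i$-th coordinate by applying the binary map with the first coordinate as one argument and the corresponding sub-iterate on the remaining $k-1$ coordinates as the other, a short induction on $k$ yields
\[
\Phi^k(\wt{Z^1},\ldots,\wt{Z^k}) = \mathcal R_k\bigl(\D^{(k)}(Z^1,\ldots,Z^k)\bigr).
\]
Specializing $Z^i$ to independent two-sided Brownian motions with drifts $\alpha_1<\cdots<\alpha_k$ and invoking Definitions~\ref{definition of v lambda and mu lambda} and~\ref{def:SH} gives the first claim.

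For the process-level statement, combine Theorem~\ref{dist of Busemann functions and Bm} with the consistency part of Lemma~\ref{weak continuity and consistency}\ref{consistency} to extract the joint law $\mu^{(\lambda_1,\ldots,\lambda_k)}$ of the horizontal Busemann vector, then apply the scaling relation Lemma~\ref{weak continuity and consistency}\ref{scaling relations} with $c=2$, $\nu=0$ to identify the law of $(h_0^{1/\lambda_i^2}(4\abullet))_{i=1}^k$. Reflecting and invoking the finite-dimensional claim above gives the equality $(\wt h_0^{(1/\lambda_i^2)-}(4\abullet))_i \deq (G_{4\lambda_i})_i$ for every finite $0\le\lambda_1<\cdots<\lambda_k$. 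To upgrade to equality of $D(\R_{\ge 0},C(\R))$-valued laws, both sides must be verified to have right-continuous paths in $\lambda$ with values in $C(\R)$ under the uniform-on-compacts topology. The stationary horizon is cadlag by Definition~\ref{def:SH}. For the Busemann side, the left-continuity in $\theta$ recorded in Theorem~\ref{thm:summary of properties of Busemanns for all theta}\ref{general uniform convergence Busemanns}\ref{general uniform convergence:limits from left}--\ref{general uniform convergence:limits from right} translates to right-continuity in $\lambda = \theta^{-1/2}$, and reflection composed with the time change $t\mapsto 4t$ preserves uniform convergence on compact sets. Matching finite-dimensional distributions on two cadlag processes with values in a Polish space then forces equality of laws.

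\textbf{Main obstacle.} The computational heart is the unified formula for $\Phi$: its definition splits into $t\ge 0$ and $t<0$ cases with very different-looking $W_t$ and $\inf$ expressions, and unpacking the $[\cdot]^-$ bracket in both regimes to confirm that they collapse to the single sup-of-$(g-f)$ form is the main technical step. A secondary bookkeeping issue is reconciling the variance-$4$ convention of the stationary horizon with the unit-variance Brownian motions in $\nu^\lambda$; this is absorbed cleanly via the scaling relation Lemma~\ref{weak continuity and consistency}\ref{scaling relations}.
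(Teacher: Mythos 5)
Your proposal follows the same route as the paper's proof: both hinge on the deterministic reversal identity $\Phi(\wt B,\wt Z)=\wt{D(Z,B)}$ together with the nested recursive structure of $\Phi^k$ and $\D^{(k)}$, followed by an induction to the $k$-fold case and a short cadlag check. The paper packages the unified $\sup$-formula for $\Phi$ and the reversal identity as standalone lemmas (the unnamed lemma after Definition~\ref{def:SH} and Lemma~\ref{lem:modified_time_reversal}, respectively) and uses Lemma~\ref{lem:iterated_map_Phi} to flatten the recursive definition of $\Phi^k$ into the form $\Psi^i(f_1,\ldots,f_i)$ before running the induction; your ``short induction'' is implicitly doing the same thing, and it would be cleaner to cite or restate that reduction, since without it the $i$-th coordinate of $\Phi^k$ is not visibly of the nested form you describe.

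One point of minor imprecision worth flagging: your invocation of Lemma~\ref{weak continuity and consistency}\ref{scaling relations} with $c=2$, $\nu=0$ does not by itself ``identify the law of $(h_0^{1/\lambda_i^2}(4\abullet))_i$'' as a $\mu^\beta$-measure. That choice of constants yields $\eta^i(4t)\deq 2\wt\eta^i(t)$ with $\wt\eta\sim\mu^{(2\lambda_1,\ldots,2\lambda_k)}$, and the leftover multiplicative factor of $2$ has to be absorbed by noting that $D$ (and hence $\D^{(k)}$) is positively homogeneous of degree one, so $2\D^{(k)}(W)=\D^{(k)}(2W)$ where $2W_i$ are the variance-$4$ Brownian motions with drift $4\lambda_i$ required by Definition~\ref{def:SH}. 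Equivalently, one can bypass the scaling lemma entirely by observing that $\D^{(k)}$ commutes with the deterministic time change $t\mapsto 4t$, which is how the paper handles the variance-$4$ bookkeeping. This is a cosmetic gap, not a conceptual one; the overall argument is correct and matches the paper's in substance.
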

  \begin{remark}
 The reflection $\mathcal R_k$ appears because in the present work geodesics travel northeast, while the stationary horizon is constructed in~\cite{Busani-2021} as the scaling limit of the Busemann process of~\cite{Fan-Seppalainen-20} where 
 geodesics travel to the southwest. Using Theorem~\ref{thm:dist_of_SH}, the distributional information of the Busemann process obtained in Section~\ref{section:Busepp} applies as well to the stationary horizon. For each fixed $t \in \R$, Theorem~\ref{thm:dist_of_SH} and Remark~\ref{rmk:any_interval_same} give the following distributional equality without the reflection:
 \[
  \{h_0^{(1/\lambda^2)-}(4t)\}_{\lambda \ge 0} \deq \{G_{4\lambda}(t)\}_{\lambda \ge 0}.
 \]
 \end{remark}
 \begin{remark}
 To fit the space of Busemann functions in BLPP, the measures $\mu^\alpha$, defined in Definition~\ref{definition of v lambda and mu lambda} require the vector of drifts to be all nonnegative. However, we can still define the measures for all sequences $\alpha_1 < \cdots < \alpha_k$, as long as the inequalities are all strict. The extensions of Lemmas~\ref{image of script D lemma} and~\ref{weak continuity and consistency} to arbitrary real-valued sequences of drifts follow by the same proofs. 
 \end{remark}
 
 \begin{theorem} \label{thm:conv_to_SH}
Let $\alpha_1 < \cdots < \alpha_k$ be a sequence of real numbers. The process
\[
-n^{-1/3}(h_0^{1 - 2 n^{-1/3}\alpha_1}(-4n^{2/3} \abullet) + n^{2/3}\abullet,\ldots,h_0^{1 - 2n^{-1/3}\alpha_k}(-4n^{2/3}\abullet)+ n^{2/3}\abullet)
\]
converges in distribution to $(G_{4\alpha_1},\ldots,G_{4\alpha_k})$.
\end{theorem}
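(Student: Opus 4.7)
The plan is to reduce the convergence to (a) the identification of the joint distribution of horizontal Busemann functions as $\mu^{\lambda^{(n)}}$ via Theorem~\ref{dist of Busemann functions and Bm}, (b) the Brownian scaling relation of Lemma~\ref{weak continuity and consistency}\ref{scaling relations} applied with parameters tuned to the KPZ scaling, (c) the weak continuity of $\lambda\mapsto \mu^\lambda$ from Lemma~\ref{weak continuity and consistency}\ref{weak continuity}, and (d) the identification of $\mu^\alpha\circ \mathcal R_k^{-1}$ with the finite-dimensional marginals of the stationary horizon from Theorem~\ref{thm:dist_of_SH}.

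First, set $\theta_i^{(n)}=1-2n^{-1/3}\alpha_i$ and $\lambda_i^{(n)}=1/\sqrt{\theta_i^{(n)}}$. For large $n$ the $\theta_i^{(n)}$ are positive with $\lambda_1^{(n)}<\cdots<\lambda_k^{(n)}$, so Theorem~\ref{dist of Busemann functions and Bm} together with the consistency in Lemma~\ref{weak continuity and consistency}\ref{consistency} gives $(h_0^{\theta_i^{(n)}})_{i=1}^k \sim \mu^{\lambda^{(n)}}$ as a random element of $\Xcomp_k$. I apply Lemma~\ref{weak continuity and consistency}\ref{scaling relations} with $c=4n^{1/3}$ and $\nu=-1$: letting $\beta_i^{(n)}:=4n^{1/3}(\lambda_i^{(n)}-1)$ and $(\tilde\eta_i^{(n)})_{i=1}^k\sim \mu^{\beta^{(n)}}$ (allowing real drifts as noted after Theorem~\ref{thm:dist_of_SH}), the relation yields the process-level identity
\[
h_0^{\theta_i^{(n)}}(t)\deq 4n^{1/3}\tilde\eta_i^{(n)}\bigl(t/(16n^{2/3})\bigr)+t \quad \text{(jointly in $i$).}
\]
Evaluating at $t=-4n^{2/3}s$, combining with the linear counterterm inside the brackets, and multiplying by $-n^{-1/3}$, the vector appearing on the left-hand side of the theorem is equal in distribution to $(-4\tilde\eta_i^{(n)}(-s/4))_{i=1}^k$.

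A Taylor expansion gives $\lambda_i^{(n)}-1 = n^{-1/3}\alpha_i+O(n^{-2/3})$, whence $\beta_i^{(n)}\to 4\alpha_i$. By Lemma~\ref{weak continuity and consistency}\ref{weak continuity}, $\mu^{\beta^{(n)}}\to \mu^{(4\alpha_1,\ldots,4\alpha_k)}$ weakly on $\Xcomp_k$ in the topology of uniform convergence on compact subsets of $\R$. The reflect-and-rescale map $(f_i)_{i=1}^k\mapsto(s\mapsto(-4 f_i(-s/4))_{i=1}^k)$ is continuous in this topology, so by the continuous mapping theorem the sequence converges in distribution to the image of $\mu^{(4\alpha_1,\ldots,4\alpha_k)}$ under this map. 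Finally, the second assertion of Theorem~\ref{thm:dist_of_SH}, together with the Brownian rescaling of Lemma~\ref{weak continuity and consistency}\ref{scaling relations}, identifies this image in distribution with the finite-dimensional marginal $(G_{4\alpha_1},\ldots,G_{4\alpha_k})$ of the stationary horizon.

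The main subtlety is choosing $c$ and $\nu$ in Lemma~\ref{weak continuity and consistency}\ref{scaling relations} so that a single application simultaneously rescales time by the KPZ factor $n^{2/3}$ and shifts the drifts from their base value $\lambda=1$ down to the $n^{-1/3}$-scale at which they encode $4\alpha_i$; the linear counterterm subtracted inside the brackets is precisely what cancels the otherwise divergent $O(n^{1/3})$ linear piece coming from the mean of $h_0^{\theta_i^{(n)}}(-4n^{2/3}s)$. Once the scaling parameters are arranged correctly, weak continuity, continuous mapping, and the previously established characterization of the stationary horizon combine to finish the argument.
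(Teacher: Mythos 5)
Your overall strategy is the same as the paper's: Theorem~\ref{dist of Busemann functions and Bm} to identify the joint law as $\mu^{\lambda^{(n)}}$, the scaling relation and weak continuity from Lemma~\ref{weak continuity and consistency}, and Theorem~\ref{thm:dist_of_SH} to match the stationary horizon. The one genuine variation is that you fold the factor $4$ into the scaling constant, taking $c=4n^{1/3}$ so that $\beta_i^{(n)}\to 4\alpha_i$ directly; the paper instead uses $c=n^{1/3}$ (so $\alpha_i^k\to\alpha_i$) and dismisses the factor of $4$ with a closing remark. Your choice is in principle cleaner, since it lets the weak limit land on $\mu^{4\alpha}$ in one shot.

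However, the cancellation you assert in the middle step does not actually hold with the counterterm as written. With $c=4n^{1/3}$ and $\nu=-1$, the scaling relation gives $h_0^{\theta_i^{(n)}}(t)\deq 4n^{1/3}\tilde\eta_i^{(n)}(t/(16n^{2/3}))+t$, so at $t=-4n^{2/3}s$ one gets $4n^{1/3}\tilde\eta_i^{(n)}(-s/4)-4n^{2/3}s$. Adding the counterterm $+n^{2/3}s$ from the theorem statement leaves $4n^{1/3}\tilde\eta_i^{(n)}(-s/4)-3n^{2/3}s$, and multiplying by $-n^{-1/3}$ produces $-4\tilde\eta_i^{(n)}(-s/4)+3n^{1/3}s$, which retains a divergent deterministic drift $3n^{1/3}s$. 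So the left-hand side of the theorem is \emph{not} equal in distribution to $(-4\tilde\eta_i^{(n)}(-s/4))_i$, and your assertion that the counterterm \emph{precisely} cancels the $O(n^{1/3})$ piece of the mean is false: it cancels only a quarter of it. The underlying source of the mismatch is a typo in the theorem statement itself, which needs the counterterm $4n^{2/3}\abullet$ rather than $n^{2/3}\abullet$ (one can check this directly from the mean $\Ee\,h_0^{\theta_i^{(n)}}(-4n^{2/3}s)=-4\lambda_i^{(n)}n^{2/3}s$, whose $O(n^{2/3})$ part is $-4n^{2/3}s$). With that correction your cancellation is exact, $\beta_i^{(n)}\to 4\alpha_i$, Lemma~\ref{weak continuity and consistency}\ref{weak continuity} gives $\mu^{\beta^{(n)}}\to\mu^{4\alpha}$, and your final identification $(-4\tilde\eta_i(-s/4))_i\deq (G_{4\alpha_i}(s))_i$ (which I checked is correct via the scaling relation with $c=4$, $\nu=0$ and Theorem~\ref{thm:dist_of_SH}) closes the argument. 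But as written you claim an identity that does not hold, so this needs to be flagged and fixed rather than asserted.
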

 
\noindent Theorems~\ref{thm:dist_of_SH} and~\ref{thm:conv_to_SH} are proved by showing that the map $\Phi$ is a reflected version of the map $D$. The details are in Section~\ref{sec:SH_proofs}.
  We believe that one should be able to show convergence in the Skorokhod space, as was done for the exponential CGM in~\cite{Busani-2021}. This requires some tightness arguments to guarantee that jumps of the Busemann process do not happen too close together. We leave such details to future work.

\section{Open problems} \label{sec:op}
Before moving to the proofs, we state a list of open problems. 
\begin{enumerate} [label=\rm(\roman{*}), ref=\rm(\roman{*})]  \itemsep=3pt 
    \item Can Theorem~\ref{thm:geod_description}\ref{allBuse} be extended to show that {\it all}  semi-infinite geodesics with direction $\theta \in \Busedc$ are also Busemann geodesics? In the exponential CGM, Coupier~\cite{Coupier-11} showed that, with probability one, there is no direction with more than two semi-infinite geodesics from the same point. The proof of this fact relied on the coupling with TASEP. In~\cite{Janjigian-Rassoul-Seppalainen-19}, this fact was used to give a complete description of the number of geodesics  in each direction. Because of the $L/R$ distinction in BLPP, there are points and directions with more than two geodesics. For example, by Theorems~\ref{thm:ci_equiv}\ref{itm:Lsplit} and~\ref{thm:ci_and_NU}\ref{itm:ci_int}, if $(m,s) \in \NU_1$ and $\theta = \theta_{(m,s)}^L$, then $\Gamma_{(m,s)}^{\theta-,L},\Gamma_{(m,s)}^{\theta+,L},$ and $\Gamma_{(m,s)}^{\theta-,R}$ are three distinct geodesics, but $\Gamma_{(m,s)}^{\theta-,L}$ and $\Gamma_{(m,s)}^{\theta-,R}$ coalesce by Theorem~\ref{thm:general_coalescence}\ref{itm:allsignscoalesce}.  Perhaps something can be said about the maximal number of non-coalescing geodesics.  
    \item With positive probability, is there some random initial point $\mbf x \in \Z \times \R$ such that $\mbf T_{\mbf x}^{\theta \sig}$ contains more than two sequences for some $\theta > 0$ and $\sigg \in\{-,+\}$? See Remark~\ref{rmk:number_sIG}.
    \item By Theorem~\ref{thm:NU}, we know that the sets $\NU_1 \subseteq \NU_0$ are both countably infinite. The same is true if we choose any $\theta > 0$ and $\sigg \in\{-,+\}$ and consider the sets $\NU_1^{\theta\sig} \subseteq \NU_0^{\theta \sig}$. Theorem~\ref{thm:NU_paper1}\ref{non-discrete or dense} shows that for each $\theta > 0$, the set $\NU_1^\theta$ is neither discrete nor dense, and Theorem~\ref{thm:ci_and_NU}\ref{itm:NU_dense_self} shows that the set $\NU_1$ is dense in itself. However, presently we do not know whether the set $\NU_0 \setminus \NU_1$ is nonempty. This raises several questions.
    \begin{enumerate}
        \item Is $\NU_0 \setminus \NU_1$ nonempty?
        \item If the answer to the previous question is yes, is the set discrete?
        \item Are either of the sets $\NU_0$ or $\NU_1$ dense in $\Z \times \R$?
    \end{enumerate}
    We note that the existence of a point $(m,s) \in \NU_0^{\theta \sig} \setminus \NU_1^{\theta \sig}$ implies that the phenomenon described in Remark~\ref{rmk:coal_pt_not_minpt} occurs. 
    \item We know from Theorem~\ref{thm:ci_and_NU}\ref{itm:cisd} that the countable set $\NU_1$ is exactly the set of initial points whose left and right competition interfaces have different limiting directions. Are there random points whose left and right competition interfaces are different, but have the same direction? If so, do they coalesce? 
    \item It is widely expected that for models in the KPZ universality class, there exist no bi-infinite geodesics with probability one. This was proven for the exponential CGM using two different approaches~\cite{SlyNonexistenceOB,Balzs2019NonexistenceOB}. In~\cite{Seppalainen-Sorensen-21a}, we proved that for fixed northwest and southeast directions, there are almost surely no bi-infinite geodesics in BLPP. Can one show that there are almost surely no bi-infinite geodesics in BLPP without the fixed directional constraint? 
    \item Is there a sharper bound for the maximal number of finite geodesics in BLPP than that in Lemma~\ref{lemma:geodesic_bound}?
    \item Theorem~\ref{thm:CGM_jump_process} records an explicit description of the Busemann process across a single horizontal edge for the exponential CGM. In the BLPP setting, can one describe the process $\lambda \mapsto X(\lambda;t)$ in Theorem~\ref{thm:Busemann jump process intro version} more explicitly?
    \item Are independent Brownian motions with drift the unique invariant distribution of the multiline process defined in Section~\ref{section:multiline process}?  See Theorem~\ref{multiline invariant distribution} for the invariance statement. 
    \item As discussed in Remark~\ref{rmk:time_to_split}, can three or more distinct Busemann trajectories split at the same time?
    \item In Theorem~\ref{thm:conv_to_SH}, we showed convergence of the horizontal BLPP Busemann process to the stationary horizon, in the sense of finite-dimensional distributions. Show that the BLPP Busemann process converges to the stationary horizon in the Skorokhod space $D(\R,C(\R))$. 
\end{enumerate}

 \section{Proof of Results from Section~\ref{sec:Buse_jd_intro} and Theorem~\ref{thm:Theta properties}} \label{sec:Buse_proofs}
 Theorem~\ref{thm:Theta properties} is proved at the very end of this section.
 
\begin{proof}[Proof of Measurability of the state spaces $\X_n$ and $\Y_n$]
We show that $\X_n, \Xcomp_n,\Y_n$, and $\Ycomp_n$ are all Borel subsets of $C(\R)^n$. For $\Y_n$, and $\Ycomp_n$, it is sufficient to show that  
\[
\liminf_{t \rightarrow \infty}\f{Z^i(t)}{t}\qquad\text{and}\qquad\limsup_{t \rightarrow \infty}\f{Z^i(t)}{t}
\]
are both random variables. For $a \in \R$, 
\[
\Bigg\{\liminf_{t \rightarrow \infty} \f{Z^i(t)}{t} > a    \Bigg\} = \bigcup_{k \in \N}\bigcup_{N \in \N} \bigcap_{t \ge N} \Big\{\f{Z^i(t)}{t} \ge a + \f{1}{k}\Big\}.
\]
By continuity of $Z^i$, the intersection over $t \ge N$ can be changed to an intersection over $t \in [t,\infty) \cap \Q$. Hence, the set on the left is measurable.
This also shows that 
$\limsup_{t\rightarrow \infty}\f{Z^i(t)}{t}$ is measurable.
 By continuity, we may write $\X_n$ as 
\begin{multline*}
\X_n = \{\eta: \eta^i(0) = 0 \text{ for } 2 \le i \le n \} \cap \Big\{\eta: \liminf_{t \rightarrow \infty} \f{\eta^1(t)}{t} > 0\Big\} \\
\cap \bigcap_{i = 2}^n \bigcap_{s < t, s,t \in \Q}\Big\{\eta^i(s,t) \ge \eta^{i-1}(s,t)\Big\},
\end{multline*}
which is measurable. By replacing the inequality in the middle event with a weak inequality, $\Xcomp_n$ is also measurable.
\end{proof}

\subsection{Lemmas and identities for queuing mappings} All results of this subsection are deterministic facts related to the queuing mappings. Recall  the notation that $Z \li \wt Z$ if $Z(s,t) \le \wt Z(s,t)$ whenever $s \le t$. We say $f \in \CRpin$ if $f:\R\to\R$ is continuous and $f(0) = 0$. 

\begin{lemma} \label{identity for multiple queuing mappings flipped}
Let $\mathbf Z = (Z^1,\ldots,Z^n) \in \CRpin^n$. Define
\[
A_{n}^{\mathbf Z}(t) = \sup_{t \leq t_1\leq t_{2} \cdots \leq t_{n - 1} < \infty} \sum_{i = 1}^{n - 1} (Z^i(t_i) - Z^{i + 1}(t_i))   
\]
and assume that $A_n^{\mathbf Z}(t)$ is finite. Then, for $n \geq 2$,
\[
D^{(n)}(Z^n,Z^{n - 1}\ldots,Z^1)(t) = Z^1(t) + A_{n}^{\mathbf Z}(0) - A_{n}^{\mathbf Z}(t).
\]
\end{lemma}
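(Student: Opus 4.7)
The plan is to prove the identity by induction on $n \ge 2$. The base case $n=2$ is immediate from the definition \eqref{definition of D}: since $D^{(2)}(Z^2, Z^1) = D(Z^2, Z^1)$, we have
\[
D^{(2)}(Z^2,Z^1)(t) = Z^1(t) + \sup_{0 \le s}\{Z^1(s) - Z^2(s)\} - \sup_{t \le s}\{Z^1(s) - Z^2(s)\},
\]
which is exactly $Z^1(t) + A_2^{\mathbf Z}(0) - A_2^{\mathbf Z}(t)$.

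For the inductive step, set $W := D^{(n-1)}(Z^n, \ldots, Z^2)$ and let
\[
A^*(t) := \sup_{t \le t_2 \le \cdots \le t_{n-1} < \infty} \sum_{i=2}^{n-1}\bigl(Z^i(t_i) - Z^{i+1}(t_i)\bigr),
\]
so that the induction hypothesis, applied to the reindexed $(n-1)$-tuple $(Z^2,\ldots,Z^n)$, yields $W(t) = Z^2(t) + A^*(0) - A^*(t)$. By the recursion \eqref{D iterated}, $D^{(n)}(Z^n,\ldots,Z^1)(t) = D(W, Z^1)(t)$, and substituting the expression for $W$ into the definition of $D$, the $A^*(0)$ terms cancel and the claim reduces to the single identity
\[
\sup_{s \ge t}\bigl\{Z^1(s) - Z^2(s) + A^*(s)\bigr\} = A_n^{\mathbf Z}(t) \qquad \text{for all } t \in \R.
\]

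The heart of the argument is this identity, which I establish by unfolding the two nested suprema into one joint supremum: the left-hand side equals
\[
\sup_{s \ge t}\;\sup_{s \le t_2 \le \cdots \le t_{n-1}}\Bigl\{Z^1(s) - Z^2(s) + \sum_{i=2}^{n-1}\bigl(Z^i(t_i) - Z^{i+1}(t_i)\bigr)\Bigr\},
\]
and relabeling $t_1 := s$ transforms this into exactly the supremum defining $A_n^{\mathbf Z}(t)$. The interchange of suprema is the elementary identity $\sup_s \{f(s) + \sup_{u \ge s} g(s,u)\} = \sup_{s \le u}\{f(s) + g(s,u)\}$; finiteness throughout is guaranteed by the hypothesis that $A_n^{\mathbf Z}(t) < \infty$, which dominates both $A^*(t)$ (via $t_1 = t$) and the partial suprema that appear. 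The main obstacle is purely bookkeeping — keeping the shifted indices straight between $D^{(n)}$, $D^{(n-1)}$, and the two versions of the $A$-functional — rather than any analytic subtlety; once the reindexing is set up, the algebra unwinds mechanically.
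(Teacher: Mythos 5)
Your proof is correct and follows essentially the same route as the paper: induction on $n$ with base case the definition of $D$, substitution of the inductive hypothesis for the inner iterated map, and collapse of the nested suprema into a single joint supremum. The only cosmetic difference is that you index the induction from $n-1$ to $n$ while the paper goes from $n$ to $n+1$, and you isolate the key reduction as a displayed identity rather than carrying it through one long chain of equalities.
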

\begin{proof}
The case $n = 2$ is the definition of $D$.
Now, assume that the statement holds for some $n \geq 2$. Then,  we have that
\begin{align*}
    &D^{(n + 1)}(Z^{n + 1},Z^{n },\ldots,Z^1)(t) = D(D^{(n)}(Z^{n + 1},Z^{n},\ldots,Z^{2}),Z^1)(t) \\
    &= Z^1(t) + \sup_{0 \leq s <\infty}\{Z^1(s)-D^{(n)}(Z^{n + 1},\ldots,Z^{2})(s)    \}
    \\&\qquad\qquad\qquad\qquad
    -\sup_{t \leq s <\infty}\{ Z^1(s)-D^{(n)}(Z^{n + 1},\ldots,Z^{2})(s)  \} \\
    &= Z^1(t) + \sup_{0 \le s < \infty}\Bigg\{Z^1(s)-Z^{2}(s) + \sup_{s \leq t_2 \leq \cdots\leq t_{n}  < \infty}\sum_{i = 2}^n (Z^i(t_i) - Z^{i + 1}(t_i))   \Bigg\} \\
    &\qquad\qquad\qquad- \sup_{t \le s < \infty}\Bigg\{Z^1(s) -Z^{2}(s) + \sup_{s \leq t_2 \leq \cdots\leq t_{n}  < \infty}\sum_{i = 2}^n (Z^i(t_i) - Z^{i + 1}(t_i)) \Bigg\} \\
    &= Z^1(t) + A_{n + 1}^{\mathbf Z}(0) - A_{n + 1}^{\mathbf Z}(t). \qquad\qquad\qquad\qquad\qquad\qquad\qquad\qquad\qquad\qquad\mbox{\qedhere}
\end{align*}
\end{proof}

\begin{lemma} \label{uniform convergence of queueing mapping}
Let $0 \le a < b$, and let $Z_k,B_k \in \CRpin$ be sequences such that $Z_k \rightarrow Z$ and $B_k \rightarrow B$ uniformly on compact sets. Assume further that 
\begin{equation} \label{eqn:lim_cond}
\limsup_{\substack{t \rightarrow \infty \\ k \rightarrow \infty} } \biggl|\f{1}{t} Z_k(t) - b \,\biggr| = 0 = \limsup_{\substack{t \rightarrow \infty \\ k \rightarrow \infty} } \biggl|\f{1}{t} B_k(t) - a \,\biggr|.
\end{equation}
Then, $Z' := D(Z,B),B' := R(Z,B), Z_k' := D(Z_k,B_k)$, and $ B_k' := R(Z_k,B_k)$ are well-defined for sufficiently large $k$. Furthermore,  
\be
\lim_{k \rightarrow \infty} Z_k' = Z' \qquad\text{and}\qquad \lim_{k \rightarrow \infty} B_k' = B',
\ee
in the sense of uniform convergence on compact subsets of $\R$, and
\be
\limsup_{\substack{t \rightarrow \infty \\ k \rightarrow \infty} } \biggl|\f{1}{t} Z_k'(t) - b \biggr| = 0 = \limsup_{\substack{t \rightarrow \infty \\ k \rightarrow \infty} } \biggl|\f{1}{t} B_k'(t) - a \biggr|.
\ee
\end{lemma}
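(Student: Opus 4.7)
The plan is to reduce both mappings to the single quantity $S_k(t) := \sup_{s \ge t}\{B_k(s) - Z_k(s)\}$ via the identities
\[
Z_k'(t) = B_k(t) + S_k(0) - S_k(t), \qquad B_k'(t) = Z_k(t) + S_k(t) - S_k(0),
\]
and show that the joint limsup hypothesis together with $a < b$ forces the supremum defining $S_k(t)$ to be attained in a compact region independent of $k$ for large $k$. Once this uniform localization is in hand, the uniform convergence $B_k - Z_k \to B - Z$ on compact sets passes through the supremum, and both claims follow from the identities above.

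Concretely, I would first fix $\varepsilon \in (0,(b-a)/4)$ and use the joint limsup hypothesis to choose $T_0 > 0$ and $K_0 \in \N$ such that for all $s \ge T_0$ and $k \ge K_0$,
\[
|Z_k(s)/s - b| < \varepsilon \qquad\text{and}\qquad |B_k(s)/s - a| < \varepsilon.
\]
Sending $k \to \infty$ yields the same bound for $(Z,B)$, so that for $s \ge T_0$ and $k \ge K_0$,
\[
B_k(s) - Z_k(s) \le -(b-a-2\varepsilon)\, s.
\]
In particular $\limsup_{s \to \infty}[B(s) - Z(s)] = -\infty$ and $D(Z,B), R(Z,B)$ are well-defined. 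Uniform convergence of $B_k, Z_k$ on $[0,T_0]$ bounds $\{B_k - Z_k\}_{k \ge K_0}$ uniformly on $[0,T_0]$; combined with the decaying tail estimate above, this localizes the supremum defining $S_k(t)$ in a compact interval depending only on $t$ and $T_0$ (not on $k$) for all $k \ge K_0$. Hence, for any compact $K \subseteq \R$, uniform convergence of $B_k, Z_k$ on the enlarged compact set $K \cup [0,T_0]$ gives $S_k(t) \to S(t) := \sup_{s \ge t}\{B(s) - Z(s)\}$ uniformly for $t \in K$. Plugging into the two identities yields the desired uniform convergence $Z_k' \to Z'$ and $B_k' \to B'$ on compact sets.

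For the limsup conditions on the outputs, the same tail bound gives
\[
-(b-a+2\varepsilon)\, t \le B_k(t) - Z_k(t) \le S_k(t) \le -(b-a-2\varepsilon)\, t \qquad\text{for } t \ge T_0,\; k \ge K_0,
\]
while $S_k(0)$ is bounded uniformly in $k \ge K_0$ by the bound on $[0,T_0]$. Dividing the identities by $t$ and letting $t, k \to \infty$ jointly gives $Z_k'(t)/t \to a + (b-a) = b$ and $B_k'(t)/t \to b + (a-b) = a$, which is exactly the two limsup conditions for the outputs. The only delicate point is the uniform-in-$k$ localization of the suprema; once the joint limsup hypothesis has been used to trap the tails uniformly, the remaining steps are routine.
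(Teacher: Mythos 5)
Your proposal is correct and takes essentially the same route as the paper's proof: both reduce the problem to the supremum $S_k(t) = \sup_{s\ge t}\{B_k(s)-Z_k(s)\}$, use the joint limsup hypothesis together with $a<b$ to force the supremum to be attained in a compact region independent of $k$, and then pass uniform convergence on compacts through that localized supremum. The only organizational difference is that the paper argues by contradiction (showing maximizers cannot escape to infinity) whereas you produce the localization radius $R_t$ directly from the tail estimate; this is a cosmetic variation, not a different approach.
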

\begin{proof}
The conditions~\eqref{eqn:lim_cond} guarantee that for all $k$ sufficiently large,
\[
\limsup_{t \rightarrow \infty} [B_k(t) - Z_k(t)] = \limsup_{t \rightarrow \infty} [B(t) - Z(t)] = -\infty. 
\]
By definition of the maps $D$ and $R$, 
\begin{align*}
 Z_k'(t)
= B_k(t) + \sup_{0 \le s <\infty}\{B_k(s) - Z_k(s)\} - \sup_{t \le s < \infty}\{B_k(s) - Z_k(s)\}, \text{ and } \\
B_k'(t) = Z_k(t) + \sup_{t \le s < \infty}\{B_k(s) - Z_k(s)\} - \sup_{0 \le s < \infty}\{B_k(s) - Z_k(s)\}.
\end{align*}
It therefore suffices to show that $\sup_{t \le s < \infty}\{B_k(s) - Z_k(s)\}$ converges uniformly, on compact subsets of $\R$, to $\sup_{t \le s < \infty} \{B(s) - Z(s)\}$, and that 
\be \label{eqn:limit_pres}
\limsup_{\substack{t \rightarrow \infty \\ k \rightarrow \infty} } \biggl|\f{1}{t}\sup_{t \le s < \infty}\{B_k(s) - Z_k(s)\}  - (a - b) \biggr| = 0.
\ee
We first prove pointwise convergence. Let $s_t$ be a maximizer of $B(s) - Z(s)$ over $s \in [t,\infty)$. Then, 
\begin{align*}
\liminf_{k \rightarrow \infty}\sup_{t \le s < \infty}\{B_k(s) - Z_k(s)\} &\ge 
\liminf_{k \rightarrow \infty} [B_k(s_t) - Z_k(s_t)] \\&= B(s_t)- Z(s_t) = \sup_{t \le s < \infty}\{B(s) - Z(s)\}.
\end{align*}
For the converse, for $k$ sufficiently large, let $s_t^k$ be a maximizer of $B_k(s) - Z_k(s)$ over $s \in [t,\infty)$. If
\be \label{eqn:lb}
\limsup_{k \rightarrow \infty}\sup_{t \le s < \infty}\{B_k(s) - Z_k(s)\} > \sup_{t \le s < \infty}\{B(s) - Z(s)\},
\ee
then by the uniform convergence of $B_k$ to $B$ and $Z_k$ to $Z$, it must be the case that $s_t^{k_j} \rightarrow \infty$ along some subsequence $k_j$. Then, by the assumption~\eqref{eqn:lim_cond},
\[
\limsup_{j \rightarrow \infty} \sup_{t \le s < \infty}\{B_{k_j}(s) - Z_{k_j}(s)\} = \limsup_{j \rightarrow \infty} [B_{k_j}(s_t^{k_j}) - Z_{k_j}(s_t^{k_j})] = -\infty,
\]
a contradiction to~\eqref{eqn:lb}.
Therefore, $s_t^k$ is a bounded sequence and for each $t \in \R$, there exists some $R_t \in \R$ such that, for all $k$ sufficiently large, 
\begin{align*}
\sup_{t \le s < \infty} \{B_k(s) - Z_k(s)\} &= \sup_{t \le s \le R_t}\{B_k(s) - Z_k(s)\}, \text{ and} \\
\sup_{t \le s < \infty} \{B(s) - Z(s)\} &= \sup_{t \le s \le R_t}\{B(s) - Z(s)\}.
\end{align*}
The quantity $\sup_{t \le s \le R_t}\{B_k(s) - Z_k(s)\}$ converges to $\sup_{t \le s < \infty} \{B(s) - Z(s)\}$ by the assumed uniform convergence on compact subsets $Z_k \rightarrow Z$ and $B_k \rightarrow B$. The uniform convergence on compact subsets of the function $t \mapsto \sup_{t \le s < \infty} \{B_k(s) - Z_k(s)\}$ is as follows: for $N\in \R$,
\begin{align*}
\limsup_{k \rightarrow \infty}&\sup_{t \in [-N,N]} \; \bigl\lvert \,\sup_{t \le s < \infty} \{B_k(s) - Z_k(s)\} - \sup_{t \le s < \infty} \{B(s) - Z(s)\}\bigr\rvert \\
= \limsup_{k \rightarrow \infty}&\sup_{t \in [-N,N]} |\sup_{t \le s \le R_N} \{B_k(s) - Z_k(s)\} - \sup_{t \le s \le R_N} \{B(s) - Z(s)\}| = 0.
\end{align*}
It remains to prove~\eqref{eqn:limit_pres}. Let $0 < \ve < \f{b - a}{2}$. By assumption~\eqref{eqn:lim_cond}, there exists $K,T \ge 0$ such that for all $k \ge K, t \ge T$, $B_k(t) \le (a + \ve)t$ and $Z_k(t) \ge (b - \ve)t$. Then, for such $t,k$,
\[
\sup_{t \le s < \infty}\{B_k(s) - Z_k(s)\} \le \sup_{t \le s < \infty}\{(a + \ve)s - (b - \ve)s\} = (a - b + 2\ve)t.
\]
For the reverse inequality, we simply apply the assumption~\eqref{eqn:lim_cond} to
\[
\sup_{t \le s < \infty}\{B_k(s) - Z_k(s)\} \ge B_k(t) - Z_k(t). \qedhere
\]
\end{proof}

The following is a direct corollary of Lemma~\ref{uniform convergence of queueing mapping}, setting $B_k = B$ and $Z_k = Z$ for all $k$.
\begin{lemma} \label{D and R preserve limits}
Let $(B,Z) \in \Y_2$ satisfy the following limits:
\[
\lim_{t \rightarrow \infty}\frac{Z(t)}{t} = b\qquad\text{and}\qquad \lim_{t \rightarrow \infty}\frac{B(t)}{t} = a,
\]
with $b > a$. Then, the mappings $D(Z,B)$ and $R(Z,B)$ are well-defined, and 
\[
\lim_{t \rightarrow \infty}\frac{D(Z,B)(t)}{t} = b \qquad \text{and}\qquad \lim_{t \rightarrow \infty}\frac{R(Z,B)(t)}{t} = a.
\]
\end{lemma}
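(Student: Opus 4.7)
The plan is to obtain this as a direct application of the preceding Lemma (uniform convergence of queuing mappings), specialized to constant sequences $B_k \equiv B$ and $Z_k \equiv Z$. First, I would check the hypotheses. Uniform convergence on compact subsets of $\R$ of $Z_k$ to $Z$ and of $B_k$ to $B$ is trivial for constant sequences. The joint limit condition
\[
\limsup_{\substack{t \to \infty \\ k \to \infty}} \biggl\lvert\f{1}{t}Z_k(t) - b\biggr\rvert \;=\; 0 \;=\; \limsup_{\substack{t \to \infty \\ k \to \infty}} \biggl\lvert\f{1}{t}B_k(t) - a\biggr\rvert
\]
reduces, for constant sequences, to the single-variable limits $\lim_{t\to\infty}Z(t)/t = b$ and $\lim_{t\to\infty}B(t)/t = a$, which are exactly the standing assumptions. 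The positivity hypothesis $0 \le a < b$ of the preceding lemma is automatic: since $(B,Z) \in \Y_2$ and $\Y_2$ (defined in~\eqref{Yndef}) requires both asymptotic slopes to lie in $\R_{>0}$ with strict increase, we in fact have $0 < a < b$.

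Next I would read off the conclusion. Because $b > a$, the preceding lemma's setup guarantees $\limsup_{t \to \infty}[B(t) - Z(t)] = -\infty$, so the defining suprema in \eqref{definition of D}--\eqref{definition of R} are finite and $D(Z,B)$ and $R(Z,B)$ are well-defined on all of $\R$. Applying the preceding lemma with $Z_k' = D(Z,B)$ and $B_k' = R(Z,B)$ (both independent of $k$), the output joint-limit condition
\[
\limsup_{\substack{t \to \infty \\ k \to \infty}} \biggl\lvert\f{1}{t}Z_k'(t) - b\biggr\rvert = 0 = \limsup_{\substack{t \to \infty \\ k \to \infty}} \biggl\lvert\f{1}{t}B_k'(t) - a\biggr\rvert
\]
collapses to the ordinary limits $\lim_{t \to \infty}D(Z,B)(t)/t = b$ and $\lim_{t \to \infty} R(Z,B)(t)/t = a$, which is exactly the conclusion.

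Since this lemma is flagged in the text as a direct corollary, there is no genuine obstacle; the only subtlety is verifying that the strict inequality $a < b$ and the nonnegativity of $a$ (required by the preceding lemma) come for free from the membership $(B,Z) \in \Y_2$. The proof is thus a one-line reduction, and the writeup only needs to justify this reduction clearly.
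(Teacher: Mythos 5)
Your proof is correct and is exactly the paper's own argument: the paper proves this lemma in one line by specializing Lemma~\ref{uniform convergence of queueing mapping} to the constant sequences $B_k\equiv B$, $Z_k\equiv Z$. Your verification that the hypotheses $0\le a<b$ and the joint limit conditions are met for constant sequences (with $a>0$ forced by $(B,Z)\in\Y_2$) is the right, and only, thing to check.
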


\begin{lemma} \label{D preserves liminf}
Assume that $Z,B \in \CRpin$ are such that 
\[
b := \liminf_{t \rightarrow \infty} \frac{Z(t)}{t}  > 0 \qquad\text{and}\qquad\lim_{t \rightarrow \infty} \frac{B(t)}{t} = 0.
\]
Then,
\[
\liminf_{t \rightarrow \infty} \frac{D(Z,B)(t)}{t} \ge b.
\]
\end{lemma}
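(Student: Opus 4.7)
The plan is to work from the explicit formula
\[
D(Z,B)(t) = B(t) + \sup_{s \ge 0}\{B(s) - Z(s)\} - \sup_{s \ge t}\{B(s) - Z(s)\}
\]
given in \eqref{definition of D} and show that each of the three terms, divided by $t$, has a controllable limit behavior as $t \to \infty$. First I would verify that the definition makes sense: since $\liminf_{t \to \infty} Z(t)/t = b > 0$ and $\lim_{t \to \infty} B(t)/t = 0$, we have $\limsup_{t \to \infty}[B(t) - Z(t)] = -\infty$, so by continuity and $B(0)=Z(0)=0$ the constant $c := \sup_{s \ge 0}\{B(s) - Z(s)\}$ is finite and nonnegative. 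Hence $c/t \to 0$, and by hypothesis $B(t)/t \to 0$ as well.

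The main content is the estimate on $\sup_{s \ge t}\{B(s) - Z(s)\}/t$. Fix $\ve \in (0, b/2)$. The hypothesis on $Z$ gives $T_1$ such that $Z(s) \ge (b - \ve)s$ for all $s \ge T_1$, and the hypothesis on $B$ gives $T_2$ such that $|B(s)| \le \ve s$ for all $s \ge T_2$. Setting $T = \max(T_1, T_2)$, for every $s \ge T$ we have
\[
B(s) - Z(s) \le \ve s - (b - \ve) s = -(b - 2\ve) s.
\]
Consequently, for every $t \ge T$,
\[
\sup_{s \ge t}\{B(s) - Z(s)\} \le -(b - 2\ve)\,t,
\]
so $\limsup_{t \to \infty} t^{-1} \sup_{s \ge t}\{B(s) - Z(s)\} \le -(b - 2\ve)$.

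Combining these three pieces yields
\[
\liminf_{t \to \infty} \frac{D(Z,B)(t)}{t} \;\ge\; 0 + 0 - \limsup_{t \to \infty} \frac{\sup_{s \ge t}\{B(s) - Z(s)\}}{t} \;\ge\; b - 2\ve.
\]
Letting $\ve \searrow 0$ gives the desired bound $\liminf_{t \to \infty} D(Z,B)(t)/t \ge b$. There is no real obstacle here; the only mildly subtle point is that we only have a \emph{liminf} lower bound on $Z$ (and not a matching limsup upper bound), but this is precisely why the argument has to go through the representation using $B(t)$ rather than through $D(Z,B)(t) = Z(t) + Q(Z,B)(0) - Q(Z,B)(t)$, for which we would need to control $Q(Z,B)(t)/t$ directly.
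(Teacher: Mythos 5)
Your proof is correct and follows essentially the same route as the paper: start from the representation $D(Z,B)(t) = B(t) + \sup_{s\ge 0}\{B(s)-Z(s)\} - \sup_{s\ge t}\{B(s)-Z(s)\}$, observe that the first two terms are $o(t)$, and bound $\sup_{s\ge t}\{B(s)-Z(s)\}$ from above by $-(b-2\ve)t$ using the hypotheses on $Z$ and $B$. Your additional checks (finiteness of the constant and the closing remark about why one uses the $B(t)$ representation rather than the $Z(t)$ one) are fine but not material differences.
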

\begin{proof}
Since
\[
D(Z,B)(t) = B(t) + \sup_{0 \le s < \infty}\{B(s) - Z(s)\} - \sup_{t \le s < \infty}\{B(s) - Z(s)\},
\]
it is sufficient to prove that 
\[
\limsup_{t \rightarrow \infty} \frac{\sup_{t \le s < \infty}\{B(s) - Z(s)\}}{t} \le -b.
\]
For all $\varepsilon > 0$, there is $T$ large enough so that for $t \ge T$, 
\[
Z(t) \ge bt - \varepsilon t \qquad\text{and}\qquad B(t) \le \varepsilon t.
\]
Then, for such $t$ and  $2\varepsilon < b$,
\[
\sup_{t \le s < \infty}\{B(s) - Z(s)\} \le \sup_{t \le s < \infty}\{2\varepsilon s - bs\} = 2\varepsilon t - bt. \qedhere
\]
\end{proof}

\begin{lemma} \label{ordering of queuing mappings flipped}
Assume that $Z,Z',B \in \CRpin$ satisfy $Z \li Z'$ and
\[
\limsup_{t \rightarrow \infty} B(t) - Z(t) = -\infty.
\]
Then $B \li D(Z,B) \li D(Z',B)$. 
\end{lemma}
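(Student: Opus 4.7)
The plan is to establish the two inequalities in the chain separately. The first, $B \li D(Z,B)$, is essentially built into the definition of $D$. For $s < t$ the formula gives
\[
D(Z,B)(t) - D(Z,B)(s) - [B(t) - B(s)] = \sup_{u \ge s}\{B(u) - Z(u)\} - \sup_{u \ge t}\{B(u) - Z(u)\},
\]
and this is nonnegative since $[t,\infty) \subseteq [s,\infty)$.

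The substantive content is the second inequality $D(Z,B) \li D(Z',B)$. Writing $A_r := \sup_{u \ge r}\{B(u) - Z(u)\}$ and $A'_r := \sup_{u \ge r}\{B(u) - Z'(u)\}$ and canceling the common $B(t)-B(s)$ term, the desired increment comparison reduces to the supermodular inequality
\[
A_s + A'_t \le A_t + A'_s \qquad \text{for } s < t.
\]
The key structural input is that $Z \li Z'$ forces $c(u) := Z'(u) - Z(u)$ to be nondecreasing in $u$. My plan is a swap argument: pick a maximizer $u^*$ of $A_s$ and a maximizer $v^*$ of $A'_t$, and exchange the roles of $Z$ and $Z'$ between them.

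Before executing the swap, I need the suprema to be attained. The hypothesis $\limsup_{u\to\infty}[B(u)-Z(u)] = -\infty$ and continuity give this for $A_r$. Since $Z(0)=Z'(0)=0$ and $Z \li Z'$ yield $Z'(u)\ge Z(u)$ for $u\ge 0$, we also have $\limsup_{u\to\infty}[B(u)-Z'(u)] = -\infty$, so $A'_r$ is attained as well. With $u^*, v^*$ in hand I split into two cases. If $u^* \ge t$, then $u^*$ is admissible for $A_t$, forcing $A_s = A_t$, and the inequality collapses to $A'_t \le A'_s$, which is immediate. Otherwise $u^* < t \le v^*$, and, setting $M := B-Z$ and $M' := B-Z'$,
\[
A_s + A'_t = \bigl[M'(u^*) + c(u^*)\bigr] + \bigl[M(v^*) - c(v^*)\bigr] \le M'(u^*) + M(v^*) \le A'_s + A_t,
\]
where the first inequality uses $c(u^*) \le c(v^*)$ and the second uses that $u^* \ge s$ and $v^* \ge t$.

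The main obstacle is really just making the swap step clean; the monotonicity of $c$ does all the heavy lifting. If the attainment of suprema becomes awkward in any edge case, I will simply replace $u^*, v^*$ by $\varepsilon$-maximizers, run the same two-case argument, and send $\varepsilon \to 0$.
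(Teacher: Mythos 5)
Your proof is correct, and it takes a genuinely different route from the paper's. The paper's argument is purely algebraic: it rewrites the increment $D(Z,B)(s,t)$ by splitting $\sup_{s\le u<\infty}$ into $\max\bigl(\sup_{s\le u\le t},\sup_{t\le u<\infty}\bigr)$ and then shifting by $Z(t)$ to obtain
\[
D(Z,B)(s,t) = B(s,t) + \Bigl(\sup_{s\le u\le t}\{B(u)+Z(u,t)\} - \sup_{t\le u<\infty}\{B(u)-Z(t,u)\}\Bigr)^+ ,
\]
so that only \emph{increments} of $Z$ appear, and both inequalities drop out instantly by monotonicity of the two suprema in those increments. Your argument instead isolates the inequality $A_s + A'_t \le A_t + A'_s$ as a supermodularity statement and proves it by a maximizer-swap exploiting that $c = Z' - Z$ is nondecreasing, together with a case split on whether $u^\ast\ge t$. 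Both are complete; the paper's is shorter and avoids any discussion of attainment, while yours makes the supermodular structure of the queue map visible, at the cost of needing attainment of the suprema (which you correctly verify, including for $Z'$, and correctly flag the $\varepsilon$-maximizer fallback). Both correctly handle the positive-part / case-split subtlety that arises because the relevant maximizer may or may not lie past $t$.
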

\begin{proof} 
By definition of $D$~\eqref{definition of D}, for $s \leq t$,
\begin{align*} \label{DZB}
D(Z,B)(s,t) &= B(s,t) + \sup_{s \leq u < \infty}\{ B(u) - Z(u)\} - \sup_{t \leq u < \infty}\{B(u) - Z(u)\} \nonumber \\
&=B(s,t) + \Big(\sup_{s \leq u \le t}\{ B(u) - Z(u)\} - \sup_{t \leq u < \infty}\{B(u) - Z(u)\}\Big)^+ \\
&= B(s,t) +\Big(\sup_{s \leq u \le t}\{ B(u) + Z(u,t)\} - \sup_{t \leq u < \infty}\{B(u) - Z(t,u)\}\Big)^+,
\end{align*}
and both inequalities follow from the last line. 
\end{proof}


\begin{proof}[Proof of Lemma~\ref{image of script D lemma}]

\medskip \noindent \textbf{Part~\ref{itm:image}:}
We prove that $\D^{(n)}:\Y_n \rightarrow \X_n \cap \Y_n$. By the same reasoning, $\D^{(n)}:\Ycomp_n \rightarrow \Xcomp_n \cap \Ycomp_n$. Lemma~\ref{identity for multiple queuing mappings flipped} gives us that since $Z^1(0) = 0$, we also have $\eta^i(0)  = D^{(i)}(Z^i,\ldots,Z^1)(0)= 0$ for $1 \le i \le n$. 
Since $\eta^1 = Z^1$, the requirement that 
\[
\liminf_{t \rightarrow \infty} \f{\eta^1(t)}{t} > 0
\]
is immediately satisfied. 
By  Lemma~\ref{ordering of queuing mappings flipped},  
$\eta^2 = D(Z^2,Z^1) \gi Z^1 = \eta^1$.
Assume inductively that
\[
\eta^{i} = D^{(i)}(Z^{i},\ldots,Z^1) \gi D^{(i - 1)}(Z^{i - 1},\ldots,Z^1) = \eta^{i - 1}.
\]
Then, after applying this assumption with $Z^2,\ldots,Z^{i + 1}$ in place of $Z^1,\ldots,Z^i$ and using Lemma~\ref{ordering of queuing mappings flipped}, we get that 
\begin{align*}
\eta^{i + 1} = &D^{(i + 1)}(Z^{i + 1},\ldots,Z^1) = D(D^{(i)}(Z^{i + 1},\ldots,Z^2),Z^1) \\
\gi &D(D^{(i - 1)}(Z^{i},\ldots,Z^2),Z^1) = D^{(i)}(Z^i,\ldots,Z^1) = \eta^i.  
\end{align*}
The fact that $\D^{(n)}$ preserves $\Y_n$ follows from Part~\ref{itm:Ynlim}.

\medskip \noindent \textbf{Part~\ref{itm:Ynlim}}: This follows by definition of $\D^{(n)}$ and repeated application of Lemma~\ref{D and R preserve limits}. \end{proof}

The following lemma has the most technical proof of the section, but is key to proving Theorem~\ref{dist of Busemann functions and Bm}. 
\begin{lemma} \label{analogue of Lemma 4.4 in Joint Buse Paper}
Let $(B^1,Z^1,Z^2) \in \Ycomp_3$  and set $B^2 = R(Z^1,B^1)$.
Then,
\[
D(D(Z^2,B^2),D(Z^1,B^1)) = D(D(Z^2,Z^1),B^1).
\]
\end{lemma}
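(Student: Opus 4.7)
My plan is to expand both sides into closed-form suprema and show that their difference is constant in $t$; combined with the equality $\text{LHS}(0) = B^1(0) = \text{RHS}(0)$ from the identity $D(Z,B)(0) = B(0)$, this forces the two sides to agree.

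Introduce the nonincreasing auxiliary functions $\phi(t) := \sup_{s \ge t}\{B^1(s) - Z^1(s)\}$ and $\psi(t) := \sup_{s \ge t}\{B^2(s) - Z^2(s)\}$, which are finite under the hypothesis $(B^1, Z^1, Z^2) \in \widehat{\mathcal Y}_3$ (using Lemma~\ref{D and R preserve limits} to see that $B^2 = R(Z^1, B^1)$ inherits the asymptotic drift of $B^1$ and hence lies strictly below the drift of $Z^2$). The definitions of $D$ and $R$ give
\[
D(Z^1, B^1)(t) = B^1(t) + \phi(0) - \phi(t), \qquad B^2(t) = Z^1(t) + \phi(t) - \phi(0),
\]
and therefore $D(Z^2, B^2)(t) = Z^1(t) + \phi(t) - \phi(0) + \psi(0) - \psi(t)$. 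Subtracting these and substituting into the definition of the outer $D$ yields, after the additive constants cancel,
\[
\text{LHS}(t) = B^1(t) + \phi(0) - \phi(t) + \nu(0) - \nu(t),
\]
where $\nu(t) := \sup_{s \ge t}\{B^1(s) - Z^1(s) - 2\phi(s) + \psi(s)\}$. On the other side, applying Lemma~\ref{identity for multiple queuing mappings flipped} with $n = 3$ to $D^{(3)}(Z^2, Z^1, B^1) = D(D(Z^2, Z^1), B^1)$ gives
\[
\text{RHS}(t) = B^1(t) + \rho(0) - \rho(t), \quad \rho(t) := \sup_{t \le s_1 \le s_2}\bigl\{B^1(s_1) - Z^1(s_1) + Z^1(s_2) - Z^2(s_2)\bigr\}.
\]
The identity therefore reduces to proving that $\phi(t) + \nu(t) - \rho(t)$ is constant in $t$, equivalently that
\[
\rho(t_1) - \rho(t_2) = \phi(t_1) - \phi(t_2) + \nu(t_1) - \nu(t_2) \quad \text{for all } t_1 < t_2.
\]

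To establish this increment identity, I would unfold $\psi(s) = -\phi(0) + \sup_{r \ge s}\{Z^1(r) - Z^2(r) + \phi(r)\}$ inside $\nu$, turning $\nu(t) + \phi(0)$ into a double supremum over $\{(s, r): t \le s \le r\}$ of the expression $B^1(s) - Z^1(s) - 2\phi(s) + Z^1(r) - Z^2(r) + \phi(r)$. The pointwise bounds $B^1(s) - Z^1(s) \le \phi(s)$ and $\phi(r) \le \phi(s)$ (for $r \ge s$) show that maximizers $(s, r)$ must satisfy $B^1(s) - Z^1(s) = \phi(s) = \phi(r)$, i.e.\ $s$ is a right-argmax of $B^1 - Z^1$ whose supremum is achieved past $r$. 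Writing $\phi(r) = B^1(u) - Z^1(u)$ for such a right-argmax $u \ge r$ and pairing this with an independent right-argmax contributing to $\phi(t)$, one finds that $\phi(t) + \nu(t)$ equals, up to an additive constant in $t$, the value of $\rho(t)$ with $s_1$ playing the role of the $\phi(t)$-argmax and $s_2 = r$. Tracking how the admissible argmaxes in each of $\phi$, $\nu$, $\rho$ are forced to shift when $t$ increases from $t_1$ to $t_2$ then yields the increment identity.

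The main obstacle is that the pointwise equality $\phi(t) + \nu(t) = \rho(t)$ does \emph{not} hold in general -- only the increments match, with a nonzero constant offset -- so the matching of argmaxes must be done carefully and the argument cannot be reduced to a single pointwise comparison. The specific form $B^2 = R(Z^1, B^1)$ is crucial: it is precisely the combination $-2\phi(s) + \psi(s)$ appearing in $\nu$ that, upon unfolding $\psi$, supplies the middle term $Z^1(s_2) - Z^2(s_2)$ and the second copy of $B^1(s_1) - Z^1(s_1)$ (via $\phi(r) = B^1(u) - Z^1(u)$) needed to reproduce $\rho$. This queueing reflection identity is what synchronizes the offsets on the two sides.
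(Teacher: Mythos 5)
Your opening reduction is sound and, up to notation, is exactly the paper's: defining $\phi$ and $\psi$, expressing $D(Z^1,B^1)$, $B^2$, and $D(Z^2,B^2)$ in terms of them, unfolding $\psi$ as a double supremum, and invoking Lemma~\ref{identity for multiple queuing mappings flipped} for the right-hand side, you arrive (after cancelling additive constants) at the same identity the paper verifies in the heart of its proof. Your ``nonzero constant offset'' is precisely $\phi(0)$, so the identity does hold pointwise once that constant is absorbed into the statement; there is no genuine obstruction coming from the offset, and the increment reformulation does not buy you anything.

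The gap is in your characterization of the maximizer. You claim that the pointwise bounds $B^1(s) - Z^1(s) \le \phi(s)$ and $\phi(r) \le \phi(s)$ (for $r \ge s$) ``show that maximizers $(s,r)$ must satisfy $B^1(s) - Z^1(s) = \phi(s) = \phi(r)$.'' This does not follow. Those bounds yield the pointwise estimate $g(s,r) \le Z^1(r) - Z^2(r)$ for the integrand $g(s,r) = B^1(s) - Z^1(s) - 2\phi(s) + Z^1(r) - Z^2(r) + \phi(r)$, but an argmax of $g$ is under no obligation to saturate a pointwise upper bound that depends differently on $(s,r)$, and generically it will not. Concretely, suppose $\phi \equiv c$ on $[t,T]$ because the supremum defining $\phi(t)$ is attained only at times past $T$, and $B^1(s) - Z^1(s) < c$ strictly throughout $[t,T]$. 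If $Z^1(r) - Z^2(r) + \phi(r)$ has a sufficiently pronounced peak at some $r_0 \in (t,T)$, the optimal pair $(s^*, r^*)$ has $r^* \in (t,T)$, and then every $s \in [t,r^*]$ satisfies $\phi(s) = c > B^1(s) - Z^1(s)$. Your claimed argmax structure fails, so the subsequent pairing of argmaxes you use to reproduce $\rho$ has no foundation, and the ``tracking how admissible argmaxes shift'' step cannot be carried out.

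The paper never characterizes where the maxima occur. It proves the two inequalities separately, each via a case analysis conditioned on whether the right supremum of $B^1 - Z^1$ taken from the candidate maximizer agrees with the one taken from $t$ (respectively, whether the right suprema from $s^*$ and $u^*$ coincide); the harder case of one direction further requires producing an intermediate time $w^*$ by the Intermediate Value Theorem. You will need an argument of comparable substance — the argmax step as written does not carry the load.
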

\begin{proof}
We first note that by Lemma~\ref{identity for multiple queuing mappings flipped},
\begin{align}
    D(D(Z^2,Z^1),B^1)(t) &= D^{(3)}(Z^2,Z^1,B^1)(t) \nonumber \\
    &= B^1(t) + \sup_{0 \le s \le u < \infty}\{B^1(s) - Z^1(s) + Z^1(u) - Z^2(u)\} \nonumber \\
    &\qquad\qquad- \sup_{t \le s \le u < \infty}\{B^1(s) - Z^1(s) + Z^1(u) - Z^2(u)\}. \label{expression for rhs in 4.4 analogue}
\end{align}
On the other hand, by definitions of the mappings $D$ and $R$,
\begin{align}
   &\;\;\;\;\;\;D(D(Z^2,B^2),D(Z^1,B^1))(t) \nonumber \\
   &= D(Z^1,B^1)(t) + \sup_{0 \le s < \infty}\{D(Z^1,B^1)(s) - D(Z^2,B^2)(s)  \} \nonumber \\
   &- \sup_{t \le s < \infty}\{D(Z^1,B^1)(s) - D(Z^2,B^2)(s)  \} \nonumber \\
   &= B^1(t) + \sup_{0 \le s < \infty}\{B^1(s) - Z^1(s)\} - \sup_{t \le s < \infty}\{B^1(s) - Z^1(s)\} \nonumber \\
    &+ \sup_{0 \le s < \infty}\Big[B^1(s) - \sup_{s \le u < \infty}\{B^1(u) - Z^1(u)\} - B^2(s) + \sup_{s \le u <\infty}\{B^2(u) - Z^2(u)\}   \Big] \nonumber \\
    &\qquad- \sup_{t \le s < \infty}\Big[B^1(s) - \sup_{s \le u < \infty}\{B^1(u) - Z^1(u)\} - B^2(s) + \sup_{s \le u <\infty}\{B^2(u) - Z^2(u)\}   \Big] \nonumber \\
    &= B^1(t) + \sup_{0 \le s < \infty}\{B^1(s) - Z^1(s)\} - \sup_{t \le s < \infty}\{B^1(s) - Z^1(s)\} \nonumber  \\
    &\qquad+ \sup_{0 \le s < \infty}\Big[B^1(s) - Z^1(s) - 2\sup_{s \le u < \infty}\{B^1(u) - Z^1(u)\} \nonumber \\
    &\qquad \qquad\qquad\qquad\qquad+ \sup_{s \le u \le v <\infty}\{B^1(v) - Z^1(v) + Z^1(u) -  Z^2(u)\}   \Big] \nonumber \\
    &- \sup_{t \le s < \infty}\Big[B^1(s) - Z^1(s) - 2\sup_{s \le u < \infty}\{B^1(u) - Z^1(u)\} \nonumber \\
    &\qquad \qquad\qquad\qquad\qquad+ \sup_{s \le u \le v <\infty}\{B^1(v) - Z^1(v) + Z^1(u) -  Z^2(u)\}   \Big] \label{expression for lhs in 4.4 analogue}.
\end{align}
Comparing~\eqref{expression for rhs in 4.4 analogue} with~\eqref{expression for lhs in 4.4 analogue}, it is sufficient to show that, for arbitrary $t \in \R$,
\begin{align}
    &\sup_{t \le s < \infty}\{B^1(s) - Z^1(s) \}  +\sup_{t \le s < \infty} \Big[B^1(s) - Z^1(s)- 2\sup_{s \le u < \infty} \{B^1(u) - Z^1(u)  \} \label{lhs} \\
&\qquad \qquad\qquad\qquad\qquad\qquad+\sup_{s \le u \le v < \infty}\{Z^1(u) - Z^2(u) + B^1(v) - Z^1(v)\}  \Big] \nonumber\\
      = &\sup_{t \le s \le u < \infty}\{B^1(s) - Z^1(s) + Z^1(u) - Z^2(u) \}. \label{rhs}
\end{align}

We will first prove that $\eqref{lhs} \leq\eqref{rhs}$. We note that
\begin{align} 
\eqref{lhs} \leq &\sup_{t \le u < \infty}\{B^1(u) - Z^1(u)\} + \sup_{t \le s < \infty} \Big[B^1(s) - Z^1(s) - 2\sup_{s \le u < \infty}\{B^1(u) - Z^1(u)\} \nonumber \\
&\qquad \qquad\qquad\qquad\qquad+\sup_{s \le u < \infty}\{Z^1(u) - Z^2(u)\} + \sup_{s \le u < \infty}\{B^1(u) - Z^1(u)\}\Big] \nonumber\\
=&\sup_{t \le u < \infty}\{B^1(u) - Z^1(u)\} + \sup_{t \le s < \infty}\Big[B^1(s) - Z^1(s) \nonumber \\
&\qquad \qquad\qquad\qquad\qquad- \sup_{s \le u < \infty}\{B^1(u) - Z^1(u)\}+\sup_{s \le u < \infty}\{Z^1(u) - Z^2(u)\} \Big] \label{greater than lhs}. 
\end{align}
Now, we let $s^*\geq t$ be a point such that 
\begin{multline*}
B^1(s^*) - Z^1(s^*) - \sup_{s^* \le u < \infty}\{B^1(u) - Z^1(u)\}+\sup_{s^* \le u < \infty}\{Z^1(u) - Z^2(u)\}\\
=\sup_{t \le s < \infty}\Big[B^1(s) - Z^1(s) - \sup_{s \le u < \infty}\{B^1(u) - Z^1(u)\}+\sup_{s \le u < \infty}\{Z^1(u) - Z^2(u)\} \Big]. 
\end{multline*}
We consider two cases.

\noindent \textbf{Case 1:} 
\[
\sup_{s^* \le u < \infty}\{B^1(u) - Z^1(u)\} = \sup_{t \le u < \infty}\{B^1(u) - Z^1(u)\}.
\]
Then,
\begin{align*}
    \eqref{lhs} \le \eqref{greater than lhs} &= \sup_{t \le u < \infty}\{B^1(u) - Z^1(u)\} + B^1(s^*) - Z^1(s^*) \\
    &\qquad \qquad\qquad- \sup_{s^* \le u < \infty}\{B^1(u) - Z^1(u)\} + \sup_{s^* \le u < \infty}\{Z^1(u) - Z^2(u)\}  \\
    &= B^1(s^*) - Z^1(s^*) + \sup_{s^* \le u < \infty}\{Z^1(u) - Z^2(u)\} \\
    &\le \sup_{t \le s \le u < \infty}\{B^1(s) - Z^1(s) + Z^1(u) - Z^2(u) \} = \eqref{rhs}.
\end{align*}

\noindent \textbf{Case 2:} 
\[
\sup_{s^* \le u < \infty}\{B^1(u) - Z^1(u)\} < \sup_{t \le u < \infty}\{B^1(u) - Z^1(u)\}.
\]
Then, we have that 
\[
\sup_{t \le u < \infty}\{B^1(u) - Z^1(u)\} = \sup_{t \le u \le s^*}\{B^1(u) - Z^1(u)\},
\]
so, noting that $B^1(s^*) - Z^1(s^*) \le \underset{s^* \le u < \infty}{\sup}\{B^1(u) - Z^1(u)\}$,
\begin{align*}
    \eqref{lhs} \le \eqref{greater than lhs} &= \sup_{t \le u \le s^*}\{B^1(u) - Z^1(u)\} + B^1(s^*) - Z^1(s^*) \\
    &\qquad \qquad- \sup_{s^* \le u < \infty}\{B^1(u) - Z^1(u)\} + \sup_{s^* \le u < \infty}\{Z^1(u) - Z^2(u)\}  \\
    &\le \sup_{t \le u \le s^*}\{B^1(u) - Z^1(u)\} + \sup_{s^* \le u < \infty}\{Z^1(u) - Z^2(u)\} \\
    &= \sup_{t \le u \le s^* \le v< \infty}\{B^1(u) - Z^1(u) + Z^1(v) - Z^2(v)  \} \\
    &\le \sup_{t \le u \le v <\infty}\{B^1(u) - Z^1(u) + Z^1(v) - Z^2(v)  \} = \eqref{rhs}.
\end{align*}

Now, we prove that $\eqref{rhs} \le \eqref{lhs}$. Let $t \le s^* \le u^* < \infty$ be such that 
\[
B^1(s^*) - Z^1(s^*)+Z^1(u^*) - Z^2(u^*) = \sup_{t \le s \le u < \infty}\{B^1(s) - Z^1(s) + Z^1(u) - Z^2(u) \}.
\]
We consider two new cases.

\noindent \textbf{Case 1:} 
\[
\sup_{s^* \le v < \infty}\{B^1(v) - Z^1(v)\} = \sup_{u^* \le v < \infty}\{B^1(v)  -Z^1(v)\}.
\]
Then,
\begin{align*}
    \eqref{rhs}
    = &B^1(s^*) - Z^1(s^*) + Z^1(u^*) - Z^2(u^*) \\
    &\qquad \qquad\qquad+ \sup_{u^* \le v < \infty}\{B^1(v) - Z^1(v)\} -\sup_{s^* \le v < \infty}\{B^1(v) - Z^1(v)\} \\
    \le &\sup_{t \le s < \infty}\Big[B^1(s) - Z^1(s) - \sup_{s \le u < \infty}\{B^1(u) - Z^1(u)\}\\
    &\qquad \qquad\qquad+ \sup_{s \le u \le v < \infty}\{Z^1(u) - Z^2(u) + B^1(v) - Z^1(v)\}  \Big] \\
    \le &\sup_{t \le s < \infty}\{B^1(s) - Z^1(s)\} \\
    &+\sup_{t \le s < \infty}\Big[B^1(s) - Z^1(s) - 2\sup_{s \le u < \infty}\{B^1(u) - Z^1(u)\}\\
    &\qquad \qquad\qquad+\sup_{s \le u \le v < \infty}\{Z^1(u) - Z^2(u) + B^1(v) - Z^1(v)  \Big],
\end{align*}
and this equals \eqref{lhs}.

\noindent \textbf{Case 2:}
\[
\sup_{s^* \le v < \infty}\{B^1(v) - Z^1(v)\} > \sup_{u^* \le v < \infty}\{B^1(v)  -Z^1(v)\}.
\]
Then, we have that
\[
\sup_{s^* \le v < \infty}\{B^1(v) - Z^1(v)\} = \sup_{s^* \le v \le u^*}\{B^1(v) - Z^1(v)\} > B^1(u^*) - Z^1(u^*).
\]
In other words, there is a point $v^* \in [s^*,u^*)$ such that 
\[
B^1(v^*) - Z^1(v^*) = \sup_{s^* \le v < \infty}\{B^1(v) - Z^1(v)\}. 
\]
Next, we define $w^\star \in (v^\star,u^\star]$ as
\[
w^* = \sup\Big\{w \in (v^*,u^*]: B^1(w) - Z^1(w) = \sup_{u^* \le v < \infty}\{B^1(v) - Z^1(v) \} \Big\}.
\]\\
To see that the set over which the supremum is taken is nonempty, note that
\begin{align*}
B^1(v^*) - Z^1(v^*) &= \sup_{s^* \le v < \infty}\{B^1(v) - Z^1(v)\} \\
&> \sup_{u^* \le v < \infty}\{B^1(v)  -Z^1(v)\} \ge B^1(u^*) - Z^1(u^*),
\end{align*}
and use the Intermediate Value Theorem. By continuity, we observe that 
\begin{equation} \label{sup_eq}
B^1(w^*) - Z^1(w^*) = \sup_{u^* \le v < \infty}\{B^1(v) - Z^1(v)\} = \sup_{w^* \le v < \infty}\{B^1(v) - Z^1(v)  \}.
\end{equation}

\begin{figure}[t]
\centering
    \begin{tikzpicture}
    \draw[black,thick] (10,0)--(20,0);
    \draw[blue,thick] (10,1.5)--(20,1.5);
    \filldraw[black] (12,0) circle (2pt) node[anchor = north] {$v^*$};
    \filldraw[black] (15,0) circle (2pt) node[anchor = north] {$w^*$};
    \filldraw[black] (17,0) circle (2pt) node[anchor = north] {$u^*$};
    \draw [red] plot [smooth] coordinates {(12,2)(13,1)(14,1.75)(15,1.5)(17,0.5)(19,1.5)(20,1)};
    \end{tikzpicture}
    \caption{\small Example graph of the function $B^1(w) - Z^1(w)$. The upper (blue) line represents the value of $\underset{u^\star \le v < \infty}{\sup}\{B^1(v) - Z^1(v)\}$.}
    \label{fig:existence of w^*}
\end{figure}

Refer to figure~\ref{fig:existence of w^*} for clarity. Now, by~\eqref{sup_eq}, we have 
\begin{align*}
    \eqref{rhs} = &B^1(s^*) - Z^1(s^*) + Z^1(u^*) - Z^2(u^*) \\
    = &B^1(s^*) - Z^1(s^*) + B^1(w^*) - Z^1(w^*) - 2\sup_{w^* \le v < \infty}\{B^1(v) - Z^1(v)\} \\
    &\qquad+Z^1(u^*) - Z^2(u^*) + \sup_{u^* \le v < \infty}\{B^1(v) - Z^1(v)\} \\
    &\le \sup_{t \le s < \infty}\{B^1(s) - Z^1(s)\}\\
    + &\sup_{t \le w  <\infty}\Big\{B^1(w) - Z^1(w) - 2\sup_{w \le v < \infty}\{B^1(u) - Z^1(u)\} \\
    &\qquad\qquad\qquad + \sup_{w \le u \le v < \infty}\{Z^1(u) - Z^2(u) + B^1(v) - Z^1(v)\} \Big\} = \eqref{lhs}. 
\end{align*}
This concludes all cases of the proof.
\end{proof}

\begin{theorem} \label{Alternate Rep of Iterated Queues}
Let $n \ge 2$, and assume $(B^1,Z^1,Z^{2},\ldots,Z^n) \in \Ycomp_{n + 1}$.   For $2 \le j \le n$ define
$
B^j = R(Z^{j - 1},B^{j - 1}).
$  
Then, for $1 \le k \le n - 1$,
\begin{align*}
    &D^{(n + 1)}(Z^n,Z^{n - 1},\ldots,Z^1,B^1) \\
    &\qquad\qquad = D^{(k + 1)}(D^{(n - k + 1)}(Z^n,\ldots,Z^{k + 1},B^{k + 1}),D(Z^k,B^k),\ldots,D(Z^1,B^1)).
\end{align*}
\end{theorem}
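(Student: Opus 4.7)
The plan is to induct on $k$, with the inductive engine being Lemma~\ref{analogue of Lemma 4.4 in Joint Buse Paper} (the ``swap identity''). Morally, each step of the induction moves the $B$-argument one level up in the iterated $D$-stack, and each such move is exactly one application of the swap lemma.

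First, a one-line induction on $m$ from the recursive definition~\eqref{D iterated} gives, for $m \ge 2$,
\[
D^{(m)}(X^m, X^{m-1}, \ldots, X^1) = D^{(m-1)}\bigl(D(X^m, X^{m-1}), X^{m-2}, \ldots, X^1\bigr). \qquad (\star)
\]
That is, we may collapse the top two arguments of $D^{(m)}$ first. For the base case $k=1$, set $W := D^{(n-1)}(Z^n, \ldots, Z^2)$. By the definition of $D^{(n+1)}$ the left side of the theorem unfolds as $D(D(W, Z^1), B^1)$ and, since $D^{(n)}(Z^n, \ldots, Z^2, B^2) = D(W, B^2)$, the right side unfolds as $D(D(W, B^2), D(Z^1, B^1))$. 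Using $B^2 = R(Z^1, B^1)$, these two expressions coincide by Lemma~\ref{analogue of Lemma 4.4 in Joint Buse Paper} applied with $Z^2 \mapsto W$.

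For the inductive step, assume the identity for some $k$ with $1 \le k \le n-2$ and write $A_j := D^{(n-j+1)}(Z^n, \ldots, Z^{j+1}, B^{j+1})$ and $Y_j := D(Z^j, B^j)$, so the $k$-version reads $D^{(n+1)}(Z^n, \ldots, Z^1, B^1) = D^{(k+1)}(A_k, Y_k, \ldots, Y_1)$. Applying $(\star)$ to the right side of the $(k+1)$-version yields
\[
D^{(k+2)}(A_{k+1}, Y_{k+1}, Y_k, \ldots, Y_1) = D^{(k+1)}\bigl(D(A_{k+1}, Y_{k+1}), Y_k, \ldots, Y_1\bigr),
\]
so the inductive step collapses to the single identity $A_k = D(A_{k+1}, Y_{k+1})$. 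Setting $W := D^{(n-k-1)}(Z^n, \ldots, Z^{k+2})$ and unfolding both sides, this identity reads
\[
D\bigl(D(W, Z^{k+1}), B^{k+1}\bigr) = D\bigl(D(W, B^{k+2}), D(Z^{k+1}, B^{k+1})\bigr),
\]
which is once again Lemma~\ref{analogue of Lemma 4.4 in Joint Buse Paper}, now applied with $(Z^1, Z^2, B^1) \mapsto (Z^{k+1}, W, B^{k+1})$ and with $B^{k+2} = R(Z^{k+1}, B^{k+1})$ in the role of the new $B^2$.

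The main care required is verifying that each invocation of the swap lemma has its hypotheses met---namely, that the relevant triple lies in $\Ycomp_3$. This reduces to bookkeeping on asymptotic slopes: by Lemma~\ref{D and R preserve limits} each $B^j$ inherits the asymptotic slope of $B^1$, by Lemma~\ref{image of script D lemma}\ref{itm:Ynlim} the function $W$ inherits the largest slope (that of $Z^n$), and the strict ordering of slopes is inherited from the assumption $(B^1, Z^1, \ldots, Z^n) \in \Ycomp_{n+1}$. These checks are routine, so the substance of the argument is exactly the twofold use of the swap lemma outlined above.
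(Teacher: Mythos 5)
Your proof is correct, and the key tool is the same as in the paper---Lemma~\ref{analogue of Lemma 4.4 in Joint Buse Paper} (the ``swap identity'')---but the organization of the induction is genuinely different. The paper inducts on $n$: at each level it establishes the $k=1$ case directly via the swap lemma, then reduces $2\le k\le n-1$ to $k=1$ by invoking the inductive hypothesis for $n-1$ with index $k-1$. You instead fix $n$ and induct on $k$, using the auxiliary collapsing identity $D^{(m)}(X^m,\ldots,X^1)=D^{(m-1)}(D(X^m,X^{m-1}),X^{m-2},\ldots,X^1)$ (itself a short induction from the recursive definition~\eqref{D iterated}) to reduce each step $k\to k+1$ to the single relation $A_k=D(A_{k+1},Y_{k+1})$, which is again the swap lemma. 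This makes the role of the swap lemma more transparent: it is applied exactly once per unit of $k$, rather than having those applications distributed through a recursion on $n$. The paper's scheme buys a slightly shorter write-up since the $k=1$ case is the only place the swap lemma is invoked explicitly; yours buys a cleaner bookkeeping that keeps $n$ fixed throughout. The domain/slope verifications you sketch at the end (via Lemmas~\ref{image of script D lemma}\ref{itm:Ynlim} and~\ref{D and R preserve limits}, using $k\le n-2$ to keep the slope inequalities strict) are exactly what is needed and are correct.
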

\begin{proof}
With Lemma~\ref{analogue of Lemma 4.4 in Joint Buse Paper} in place, we can now follow the argument of Theorem 4.5 in~\cite{Fan-Seppalainen-20}.
By Lemma~\ref{multiline process well-defined}, all the given operations are well-defined. Lemma~\ref{analogue of Lemma 4.4 in Joint Buse Paper} gives us the statement for $n = 2$. Assume, by induction, that the statement is true for some $n - 1 \ge 2$. We will show the statement is also true for $n$. We first prove the case $k = 1$. 
\begin{align*}
    &\;\;\;D^{(2)}(D^{(n)}(Z^n,\ldots,Z^2,B^2),D(Z^1,B^1)) \\
    &= D(D(D^{(n -1)}(Z^n,\ldots,Z^2),B^2),D(Z^1,B^1)) \\
    &= D(D(D^{(n - 1)}(Z^n,\ldots,Z^2),Z^1),B^1) \\
    &= D(D^{(n)}(Z^n,\ldots,Z^1),B^1) \\
    &= D^{(n + 1)}(Z^n,\ldots,Z^1,B^1).
\end{align*}
The second equality above was a consequence of Lemma~\ref{analogue of Lemma 4.4 in Joint Buse Paper}.
Now, let $2 \le k \le n - 1$. Then, applying the definition of $D^{(k + 1)}$ followed by the induction assumption,
\begin{align*}
    &D^{(k + 1)}(D^{(n - k + 1)}(Z^n,\ldots,Z^{k + 1},B^{k + 1}),D(Z^k,B^k),\ldots,D(Z^1,B^1)) \\
    = &D(D^{(k)}(D^{(n - k + 1)}(Z^n,\ldots,Z^{k + 1},B^{k + 1}),D(Z^k,B^k),\ldots,D(Z^2,B^2)),D(Z^1,B^1)) \\
    = &D(D^{(n)}(Z^n,\ldots,Z^2,B^2),D(Z^1,B^1)) = D^{(2)}(D^{(n)}(Z^n,\ldots,Z^2,B^2),D(Z^1,B^1)).
\end{align*}
Hence, we have reduced this to the $k = 1$ case. 
\end{proof}
\noindent We note that the case $k= n - 1$ of Theorem~\ref{Alternate Rep of Iterated Queues} gives us
\begin{equation} \label{intertwining}
    D^{(n + 1)}(Z^n,\ldots,Z^1,B^1) = D^{(n)}(D(Z^n,B^n),\ldots,D(Z^1,B^1)).
\end{equation}

\subsection{Multiline process}
\label{section:multiline process}
The multiline process is a discrete-time Markov chain on the state space $\Y_n$ of \eqref{Yndef}. The analogous process is defined in a discrete setting in~\cite{Fan-Seppalainen-20}. 
The transition from the time $m-1$ state $\mathbf Z_{m - 1} =\mathbf Z =  (Z^1,Z^2,\ldots, Z^n) \in \Y_n$ to the time $m$ state 
\begin{equation*} 
\mathbf Z_{m} = \overline{\mathbf Z} = (\overline Z^1,\overline Z^2,\ldots, \overline Z^n) \in \Y_n
\end{equation*}
is defined as follows. The driving force is an auxiliary   function $B \in \CRpin$ that satisfies
\[
\lim_{t \rightarrow \infty} t^{-1}B(t) = 0. 
\]
First, set $B^1 = B$,  and $\overline Z^1 = D(Z^1,B^1)$. 
    Then, iteratively for $i = 2,3,\ldots,n$:
    \be \label{multiline process}
    B^i = R(Z^{i - 1},B^{i - 1}), \qquad\text{and}\qquad
    \overline Z^i = D(Z^i,B^i).
    \ee

\begin{lemma} \label{multiline process well-defined}
The multiline process~\eqref{multiline process} is well-defined on the state space $\Y_n$.
\end{lemma}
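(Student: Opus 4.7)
The plan is a straightforward induction on $i = 1, 2, \ldots, n$, tracking the asymptotic slopes through the iteration. Set $a_i := \lim_{t \to \infty} t^{-1} Z^i(t)$, so by the definition of $\Y_n$ we have $0 < a_1 < a_2 < \cdots < a_n$, and by hypothesis $\lim_{t \to \infty} t^{-1} B(t) = 0$. At each step $i$ I will verify (a) the relevant $\limsup$ condition needed to define $R(Z^{i-1}, B^{i-1})$ and $D(Z^i, B^i)$, (b) that $B^i, \overline Z^i \in \CRpin$, and (c) the slope identities $\lim_{t\to\infty} t^{-1} B^i(t) = 0$ and $\lim_{t\to\infty} t^{-1}\overline Z^i(t) = a_i$.

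For the base case $i = 1$, since $B^1 = B$ has slope $0$ and $Z^1$ has slope $a_1 > 0$, we have $\limsup_{t\to\infty}[B^1(t)-Z^1(t)] = -\infty$, so $\overline Z^1 = D(Z^1, B^1)$ is well-defined, and Lemma~\ref{D and R preserve limits} gives $\lim_{t\to\infty} t^{-1} \overline Z^1(t) = a_1$. For the inductive step, the induction hypothesis gives $\lim t^{-1} B^{i-1}(t) = 0$, and since $Z^{i-1}$ has slope $a_{i-1} > 0$, Lemma~\ref{D and R preserve limits} guarantees that $B^i = R(Z^{i-1}, B^{i-1})$ is well-defined with $\lim t^{-1} B^i(t) = 0$. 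The same lemma applied to $D(Z^i, B^i)$ (whose well-definedness follows from $a_i > 0 = \lim t^{-1} B^i(t)$) yields $\lim t^{-1} \overline Z^i(t) = a_i$.

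To conclude that $\overline{\mathbf Z} = (\overline Z^1, \ldots, \overline Z^n) \in \Y_n$, the ordering $0 < a_1 < \cdots < a_n$ of the output slopes is inherited directly from the input. The pinning $\overline Z^i(0) = 0$ follows from the identity $D(Z,B)(t) = Z(t) + Q(Z,B)(0) - Q(Z,B)(t)$, which gives $D(Z,B)(0) = Z(0) = 0$ whenever $Z \in \CRpin$; similarly $R(Z,B)(0) = B(0) = 0$, which justifies inductively that each $B^i$ pins at the origin. Continuity of each $\overline Z^i$ reduces to continuity of $t \mapsto \sup_{t \le s < \infty}\{B^i(s) - Z^i(s)\}$, which holds because this function is monotone and the integrand $B^i - Z^i$ is continuous and tends to $-\infty$ at $+\infty$, so an approaching interval from the left only adds values of a continuous function, preventing jumps.

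There is really no central obstacle here: the lemma is a bookkeeping statement rather than a theorem with content, and the whole argument is a direct application of Lemma~\ref{D and R preserve limits} at each level of the iteration. The one subtle point to flag in writing is confirming the continuity of the running suprema, but this follows from a short elementary argument using monotonicity and the $-\infty$ drift of $B^i - Z^i$.
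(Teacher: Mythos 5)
Your proof is correct and follows the same route as the paper: an induction tracking asymptotic slopes via Lemma~\ref{D and R preserve limits}, which shows each $B^i$ has zero slope and hence $\limsup_{t\to\infty}[B^i(t)-Z^i(t)]=-\infty$. You go a bit further than the paper by also verifying that the output tuple $\overline{\mathbf Z}$ actually lands back in $\Y_n$ (pinning, continuity, slope ordering) — the paper leaves this implicit and only checks that the queueing maps can be applied at each stage — but both readings of ``well-defined'' are handled correctly by your argument.
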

\begin{proof}
This follows from Lemma~\ref{D and R preserve limits}: Inductively, each $B^i$ satisfies
\[
\lim_{t \rightarrow \infty}\frac{B^i(t)}{t} = 0,
\]
so since $\mathbf Z \in \Y_n$, we have that, for $1 \le i \le n$,
\[
\limsup_{t \rightarrow \infty}B^i(t) - Z^i(t) = -\infty. \qedhere
\]
\end{proof}

\begin{theorem} \label{multiline invariant distribution}
For each $\lambda = (\lambda_1,\ldots,\lambda_n) \in \R^n_{> 0}$ with $0 < \lambda_1 < \cdots < \lambda_n$, the measure $\nu^\lambda$ on $\Y_n$ is invariant for the multiline process~\eqref{multiline process} if the driving function $B$ at each step of the evolution is taken to be an independent standard, two-sided Brownian motion. 
\end{theorem}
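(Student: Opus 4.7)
\smallskip

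The plan is to prove the claim by induction on $n$, using as the crucial single-step input a Burke-type output theorem for one Brownian queue: if $Z$ and $B$ are independent two-sided Brownian motions with drifts $\mu>0$ and $0$ respectively, then
\[
D(Z,B) \text{ and } R(Z,B) \text{ are independent two-sided Brownian motions with drifts } \mu \text{ and } 0.
\]
This is the Brownian analogue of Burke's theorem due to O'Connell--Yor \cite{brownian_queues}, and it exactly matches the way $\overline Z^1 = D(Z^1,B^1)$ and $B^2 = R(Z^1,B^1)$ are built at the first line of the iteration \eqref{multiline process}. The qualitative fact that $\overline Z^1\in\Y_1$ (correct asymptotic slope $\lambda_1$) is already supplied by Lemma \ref{D and R preserve limits}; the content of the Burke input is the Brownian law of $\overline Z^1$ together with the independence $\overline Z^1 \perp B^2$ and the fact that $B^2$ is again a standard Brownian motion.

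For the base case $n=1$ there is nothing beyond the Burke theorem: $(Z^1)$ with drift $\lambda_1$ driven by standard $B^1$ produces $\overline Z^1 = D(Z^1,B^1)$, which is again a Brownian motion with drift $\lambda_1$, hence $\overline{\mathbf Z}\sim \nu^{(\lambda_1)}$.

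For the inductive step, suppose the invariance is proved for $n-1$ lines. Start the $n$-line multiline process with $\mathbf Z = (Z^1,\ldots,Z^n)\sim \nu^{(\lambda_1,\ldots,\lambda_n)}$ and independent driving Brownian motion $B^1$. Applying Burke at level $1$ gives that $\overline Z^1$ and $B^2$ are independent, $\overline Z^1$ is a two-sided Brownian motion with drift $\lambda_1$, and $B^2$ is a standard two-sided Brownian motion. Because $\overline Z^1$ and $B^2$ are both measurable functions of $(Z^1,B^1)$, which is independent of the tuple $(Z^2,\ldots,Z^n)$, the random elements
\[
\overline Z^1,\; B^2,\; Z^2,\; Z^3,\; \ldots,\; Z^n
\]
are jointly mutually independent. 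In particular $(Z^2,\ldots,Z^n)\sim \nu^{(\lambda_2,\ldots,\lambda_n)}$ is independent of the standard Brownian motion $B^2$, so by definition of the multiline process the next-to-last $n-1$ outputs $(\overline Z^2,\ldots,\overline Z^n)$ are exactly the image of an $(n-1)$-line multiline step applied to $(Z^2,\ldots,Z^n)$ with driving function $B^2$. The induction hypothesis then gives $(\overline Z^2,\ldots,\overline Z^n)\sim\nu^{(\lambda_2,\ldots,\lambda_n)}$. Finally, since $(\overline Z^2,\ldots,\overline Z^n)$ is a measurable functional of $(Z^2,\ldots,Z^n,B^2)$, and that triple is independent of $\overline Z^1$, we conclude $\overline Z^1 \perp (\overline Z^2,\ldots,\overline Z^n)$, so that $\overline{\mathbf Z}\sim \nu^{(\lambda_1,\ldots,\lambda_n)}$, as desired.

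The only real work sits in the single-line Burke input; everything else is the intertwining/independence bookkeeping displayed above, and Lemmas \ref{D and R preserve limits}--\ref{multiline process well-defined} ensure all $D$ and $R$ operations in \eqref{multiline process} are well defined and respect the asymptotic slopes so that the sequence of states indeed lives in $\Y_n$ throughout. If a self-contained derivation of the Burke identity is desired rather than a citation, the cleanest route is to verify in distribution that $(D(Z,B),R(Z,B))\deq(Z,B)$ using the time-reversal $f\mapsto \widetilde f(t)=-f(-t)$ together with the explicit sup representations in \eqref{definition of D}--\eqref{definition of R}; the reflection interchanges the roles of service and arrivals, and a pathwise check shows that under this involution the pair $(D,R)$ is conjugate to the identity, which combined with the standard reversibility of $(B,Z)\deq(\widetilde B,\widetilde Z)$ delivers the distributional equality and the independence simultaneously.
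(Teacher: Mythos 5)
Your proof is correct and matches the paper's argument in substance: both reduce the claim to the Brownian Burke theorem of O'Connell--Yor (stated in the paper as Theorem~\ref{O Connell Yor BM independence theorem queues}) applied once per line, with the rest being the independence bookkeeping that $\overline Z^1, B^2, Z^2,\dots,Z^n$ are jointly independent. The paper structures this as a forward loop on the level index $k$ with the inductive hypothesis that $\overline Z^1,\dots,\overline Z^{k-1},B^k,Z^k,\dots,Z^n$ are mutually independent, while you package the identical observation as an induction on the number of lines $n$ by noting that $(\overline Z^2,\dots,\overline Z^n)$ is an $(n-1)$-line multiline step driven by $B^2$ -- these are the same argument in slightly different notation.
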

\begin{proof}
Assume that $\mathbf Z = (Z^1,\ldots, Z^n) \in \Y_n$ has distribution $\nu^\lambda$. We will show that $\overline{\mathbf Z}$ also has distribution $\nu^\lambda$. The assumption on $\mbf Z$ means that $Z^1,\ldots,Z^n$ are independent two-sided Brownian motions with drift $\lambda_i$. By Theorem~\ref{O Connell Yor BM independence theorem queues}, 
$
\overline Z^1 = D(Z^1,B^1)
$
is a two-sided Brownian motion with  drift $\lambda_1$, independent of
$
B^2 = R(Z^1,B^1),
$
which is a two-sided Brownian motion with zero drift. Hence, the random paths $\overline Z^1, B^2,Z^2,\ldots, Z^n$ are mutually independent. We iterate this process as follows: Assume, for some $2 \leq k \leq n - 1$, that the random paths $\overline Z^1,\ldots,\overline Z^{k - 1},B^k,Z^k,\ldots, Z^n$ are mutually independent, where for $1 \leq i \leq k - 1$, $\overline Z^i$ is a Brownian motion with drift $\lambda_i$. Then, by another application of Theorem~\ref{O Connell Yor BM independence theorem queues},
$
\overline Z^k = D(Z^k,B^k)
$
is a two-sided Brownian motion with drift $\lambda_k$, independent of 
$
B^{k + 1} = R(Z^k,B^k),
$
which is a two-sided Brownian motion with zero drift.
Since $(\overline Z^k,B^{k + 1})$ is a function of $(B^k,Z^k)$, we have that $\overline Z^1,\ldots,\overline Z^k,B^{k + 1},Z^{k + 1},\ldots,Z^n$ are mutually independent, completing the proof. 
\end{proof}
\begin{remark}  Presently, it is open whether  $\nu^\lambda$ is the unique invariant measure with  asymptotic limits   $(\lambda_1,\ldots,\lambda_n)$.  Later, we establish  uniqueness of the invariant distributions  for the Markov chain that describes the Busemann functions.   
\end{remark}

\subsection{Busemann Markov chain}

Define a Markov chain $\eta := (\eta_m)_{m \in \Z_{\ge 0}} = ((\eta_m^1,\ldots,\eta_m^n))_{m \in \Z_{\ge 0}}$ with state space $\X_n$ as follows. It is essential that the state space is $\X_n$ and not $\Xcomp_n$ so that the evolution of this chain is well-defined. Henceforth,  $\mbf F =\{F_m\}_{m \ge 1}$ denotes an i.i.d.\ sequence of two-sided Brownian motions with zero drift, independent of the initial configuration $\eta_0\in\X_n$. At each discrete time step $m \ge 1$, set $ F_m$ to be the driving Brownian motion. Given the time $m-1$ state $\eta_{m-1}$,   define the time $m$   state of the chain as  
\begin{equation} \label{Busemann Markov chain}
\eta_{m} = \bigl(D(\eta^1_{m - 1},F_m),D(\eta^2_{m - 1},F_m),\ldots,D(\eta^n_{m - 1},F_m)\bigr).
\end{equation}
Lemmas~\ref{D preserves liminf} and~\ref{ordering of queuing mappings flipped} imply that if $\eta_{m -1} \in \X_n$, then $\eta_{m} \in \X_n$ as well. 
\begin{theorem} \label{existence of an invariant measure for Busemann MC}
The measure $\mu^\lambda$ of Definition~\ref{definition of v lambda and mu lambda} is invariant for the Markov chain~\eqref{Busemann Markov chain}.
\end{theorem}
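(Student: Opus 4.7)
The plan is to realize the Busemann Markov chain~\eqref{Busemann Markov chain} as a projection of the multiline process~\eqref{multiline process} under the map $\D^{(n)}$, and then push forward the invariance of $\nu^\lambda$ established in Theorem~\ref{multiline invariant distribution}.

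First, I would start from the ``upstairs'' picture: let $\mathbf Z = (Z^1, \ldots, Z^n)$ be a random element of $\Y_n$ with law $\nu^\lambda$, and let $F$ be an independent two-sided standard Brownian motion. Set $\eta = \D^{(n)}(\mathbf Z)$, so that $\eta \sim \mu^\lambda$ by the very definition of $\mu^\lambda$, and $F$ is independent of $\eta$. It then suffices to show that the vector $(D(\eta^1, F), \ldots, D(\eta^n, F))$ obtained from one step of the Busemann chain has law $\mu^\lambda$ again.

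Next, I would run one step of the multiline process~\eqref{multiline process} driven by $B^1 = F$: define $B^j = R(Z^{j-1}, B^{j-1})$ and $\bar Z^j = D(Z^j, B^j)$ for $1 \le j \le n$. By Theorem~\ref{multiline invariant distribution}, $\bar{\mathbf Z} := (\bar Z^1, \ldots, \bar Z^n)$ has law $\nu^\lambda$, hence $\bar\eta := \D^{(n)}(\bar{\mathbf Z})$ has law $\mu^\lambda$. The remaining task is to identify $\bar\eta^i$ with $D(\eta^i, F)$ for each $i$.

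For this identification I would invoke the intertwining identity~\eqref{intertwining} (applied with $n$ replaced by $i$; the case $i=1$ is trivial):
\[
\bar\eta^i \;=\; D^{(i)}(\bar Z^i, \ldots, \bar Z^1) \;=\; D^{(i)}\bigl(D(Z^i, B^i), \ldots, D(Z^1, B^1)\bigr) \;=\; D^{(i+1)}(Z^i, \ldots, Z^1, F).
\]
On the other hand, the recursive definition~\eqref{D iterated} gives
\[
D^{(i+1)}(Z^i, \ldots, Z^1, F) \;=\; D\bigl(D^{(i)}(Z^i, \ldots, Z^1), F\bigr) \;=\; D(\eta^i, F),
\]
so $\bar\eta^i = D(\eta^i, F)$ for every $i$, proving invariance. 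The only genuine content is the intertwining relation from Theorem~\ref{Alternate Rep of Iterated Queues} (whose engine is Lemma~\ref{analogue of Lemma 4.4 in Joint Buse Paper}); once that identity is in hand, the present invariance is a tidy unwinding of definitions coupled with the upstairs invariance of $\nu^\lambda$.
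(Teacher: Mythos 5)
Your proposal is correct and is essentially the paper's own argument: both proofs realize the chain on $\eta$ as the $\D^{(n)}$-image of one step of the multiline process, invoke the intertwining identity~\eqref{intertwining} to identify $D(\eta^i,F)$ with $D^{(i)}(\bar Z^i,\ldots,\bar Z^1)$, and push forward the invariance of $\nu^\lambda$ from Theorem~\ref{multiline invariant distribution}. The only cosmetic difference is the direction of the computation (you unwind $\bar\eta^i$ to reach $D(\eta^i,F)$, the paper unwinds $D(\eta^k,B)$ to reach $\D^{(n)}_k(\Ss^B(\mathbf Z))$), which is immaterial.
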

\begin{proof}
This follows by an intertwining argument originating for particle systems in~\cite{Ferrari-Martin-2007} and carried out for exponential last-passage percolation in~\cite{Fan-Seppalainen-20}. Let $\mathbf Z \sim \nu^\lambda$. Assume that $\eta$ has distribution $\mu^\lambda$--the distribution of $\D^{(n)}(\mathbf Z)$. Without loss of generality, we assume that $\eta = \D^{(n)}(\mathbf Z)$. Then, for Brownian motion $B$, let $\Ss^B$ denote the mapping of a single evolution step of $\mathbf Z$ according to the multiline process~\eqref{multiline process} and $\T^B$ denote the mapping of a single evolution step of $\eta$ according to the Markov chain~\eqref{Busemann Markov chain}. Then, using the definition of $D^{(k)}$ and Equation~\eqref{intertwining},
\begin{align*}
\T_k^B(\eta) = &D(\eta^k,B) = D(D^{(k)}(Z^k,\ldots,Z^1),B^1) = D^{(k + 1)}(Z^k,\ldots,Z^1,B^1) \\
= &D^{(k)}(D(Z^k,B^k),D(Z^{k - 1},B^{k - 1}),\ldots,D(Z^1,B^1))  \\
= &D^{(k)}(\Ss_k^B(\mathbf Z),\Ss_{k - 1}^B(\mathbf Z),\ldots,\Ss_1^B(\mathbf Z)) = \D_k^{(n)}(\Ss^B(\mathbf Z)).
\end{align*}
Hence, $\T^B(\eta) = \D^{(n)}(\Ss^B(\mathbf Z))$. Since $\eta = \D^{(n)}(\mathbf Z)$, we have that
\[
\T^B(\D^{(n)}(\mathbf Z)) = \D^{(n)}(\Ss^B(\mathbf Z)).
\]
By Theorem~\ref{multiline invariant distribution}, $\Ss^B(\mathbf Z) \overset{d}{=} \mathbf Z \sim \nu^\lambda$. Therefore, $\T^B(\eta) \overset{d}{=} \D^{(n)}(\mathbf Z) \sim \mu^\lambda$.
\end{proof}

\subsection{Uniqueness of the invariant measure} \label{sec:uniqueness}
The existence of an invariant measure for the Markov chain~\eqref{Busemann Markov chain} in the case $n = 1$ is recorded in Theorem~\ref{O Connell Yor BM independence theorem queues}, and is originally due to Harrison and Williams \cite{harrison1990}. In words, if $Z$ is a two-sided Brownian motion with drift $\lambda > 0$, and $B$ is an independent two-sided Brownian motion, then $D(Z,B)$ is also a two-sided Brownian motion with drift $\lambda$. However, it was not until 2019 that Cator, Lopez, and Pimentel~\cite{Cator-Lopez-Pimentel-2019} proved the uniqueness of this invariant measure. The proof comes from constructing a coupling $(\eta_m,\wt \eta_m)_{m \ge 0}$  of the Markov chain~\eqref{Busemann Markov chain} started from two different initial inputs but with the same driving Brownian motions. Below is the main theorem of~\cite{Cator-Lopez-Pimentel-2019}. We note that there is a typographical error in the statement of the theorem in~\cite{Cator-Lopez-Pimentel-2019} which has been confirmed to us by the authors. The corrected version is stated below. The version we state also reflects the fact that the queuing mappings we work with are the reverse-time versions of those given in~\cite{Cator-Lopez-Pimentel-2019}.  
\begin{theorem}
[\cite{Cator-Lopez-Pimentel-2019}, Theorem  3] \label{convergence Theorem in Cator 2019}
Let $\lambda \in (0,\infty)$ and let $X \in \CRpin$ be a random process such that, with probability one,  
\begin{equation} \label{lim_cond_conv}
\limsup_{t \rightarrow -\infty}\frac{X(t)}{t} \le \lambda\qquad\text{and}\qquad\liminf_{t \rightarrow \infty}\frac{X(t)}{t} \ge \lambda.
\end{equation}
Let $(X,Z)$ be a coupling of $X$ and $Z$ where $Z$ is a Brownian motion with drift $\lambda$, and $(X,Z)$ is independent of the field of independent two-sided Brownian motions $\mbf F = \{F_m: m \ge 1\}$. Consider the coupling $(\eta^{X}_m,\eta^{Z}_m)_{m \ge 0}$ defined by initial conditions $\eta^{X}_0 = X$ and $\eta^{Z}_0 = Z$, where the evolution of the process is defined by $\eta^{X}_m = D(\eta_{m - 1}^X,F_m)$, and $\eta^Z_m = D(\eta_{m - 1}^Z,F_m)$. Then, for all compact $K \subseteq \R$ and $\varepsilon > 0$,
\[
\lim_{m \rightarrow \infty} \Pp\Big(\sup_{t \in K}|\eta^{X}_m(-t-\lambda^{-2}m,-\lambda^{-2} m) - \eta^{Z}_m(-t - \lambda^{-2}m,-\lambda^{-2} m)| > \varepsilon   \Big) = 0.
\]
\end{theorem}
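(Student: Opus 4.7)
The strategy rests on two structural facts about the departure map $D$ that already appear in the excerpt: monotonicity under the increment order, $Z \li Z'$ implying $D(Z,B) \li D(Z',B)$ (Lemma~\ref{ordering of queuing mappings flipped}), and Harrison--Williams invariance, which says that $D(Z,F)$ is again a two-sided Brownian motion with drift $\lambda$ whenever $Z$ is and $F$ is an independent zero-drift Brownian motion. The centering $-\lambda^{-2}m$ is the characteristic (hydrodynamic) speed of the stationary queue: the argmax defining $D(Z,F)$ at a point $t$ lies, in the stationary regime, at expected distance $\lambda^{-2}$ ahead of $t$, and after $m$ iterations the ``decision window'' at a fixed observation point has drifted by $-\lambda^{-2}m$.

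My plan would be to combine these two facts with a sandwich argument. For each $\delta>0$, on an enlarged probability space I would introduce two two-sided Brownian motions $W^\pm$ with drifts $\lambda \pm \delta$, independent of $\mbf F$, coupled with $X$ and $Z$ so that $W^- \li X \li W^+$ and $W^- \li Z \li W^+$ hold on an event whose complement has probability at most $\delta$. The hypothesis that $X$ has asymptotic slope $\lambda$ in both tails is exactly what makes such a sandwich possible with only a $\delta$-loss. Iterating monotonicity yields $\eta_m^{W^-} \li \eta_m^X \li \eta_m^{W^+}$ for every $m$, and likewise with $Z$ in place of $X$. It then suffices to compare $\eta_m^{W^\pm}$ with $\eta_m^Z$ on the window centered at $-\lambda^{-2}m$ and let $\delta \downarrow 0$. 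For the Brownian-to-Brownian comparison, Harrison--Williams ensures that both iterates are themselves Brownian motions with matching drift up to the $\delta$-perturbation; using the iterated supremum identity of Lemma~\ref{identity for multiple queuing mappings flipped}, the difference of the two coupled chains can be written as a difference of suprema over decreasing chains $u_m \ge u_{m-1} \ge \cdots \ge u_1$, with the initial-data-dependence appearing only through $W^\pm(u_1)$ or $Z(u_1)$. Because the objective has negative drift in $u_1$, the argmaxes are tight in a bounded window around the characteristic center, and within that window the two argmaxes coincide with high probability, forcing the difference to vanish in probability as $m \to \infty$.

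The chief obstacle is to close the sandwich uniformly in $\delta$. The characteristic speeds of $W^\pm$ equal $(\lambda \pm \delta)^{-2}$, differing from $\lambda^{-2}$ by $O(\delta)$, so after $m$ iterations the ``correct'' windows of $\eta_m^{W^\pm}$ and of $\eta_m^Z$ are spatially offset by $O(\delta m)$. For the sandwich bound to survive an offset of this magnitude, the Brownian-to-Brownian comparison must yield quantitative tightness of the argmaxes that is uniform in small perturbations of the drift. Establishing this uniformity --- essentially a concentration estimate for the argmax of a Brownian objective with negative drift, stable under small changes of the drift --- together with a quantitative rate in $m$ for the pure-Brownian coupling, is the substantive content of the theorem and what makes the result in~\cite{Cator-Lopez-Pimentel-2019} deeper than a soft consequence of monotonicity and invariance alone.
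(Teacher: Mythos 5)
The statement you are proving is not actually proved in this paper. It is imported verbatim (with a notational reversal and a typographical correction noted in the surrounding text) from Cator--L\'opez--Pimentel, Theorem~3, and the paper supplies no argument of its own. So there is no paper-internal proof to compare against; what you are attempting is a reconstruction of the argument in the cited reference, and it should be judged on those terms.

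The sandwich step is a genuine gap, and it fails before your acknowledged obstacle is even reached. You want a coupling under which $W^- \li X$ holds on an event of probability at least $1-\delta$, where $W^-$ is a two-sided Brownian motion with drift $\lambda - \delta$. By the definition in Section~\ref{section:notation}\ref{ginc notation}, $W^- \li X$ means precisely that $X - W^-$ is a non-decreasing function. The hypotheses~\eqref{lim_cond_conv} constrain only the asymptotic slopes of $X$ as $t \to \pm\infty$ and say nothing about its local increments, so increment domination by a Brownian motion is simply not available. Consider the admissible initial datum $X(t) = \lambda t$. Then $W^- \li X$ would force $W^-(t) - \lambda t$ to be non-increasing, i.e.\ the driftless Brownian part of $W^-$ to be Lipschitz on every interval, an event of probability zero for any Brownian motion, independent of coupling. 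The same obstruction arises for any $X$ with Brownian-like roughness that is not literally built from $W^-$ by adding an increasing process. Thus the inequality $W^- \li X \li W^+$ cannot be engineered to hold with probability $1-\delta$ from~\eqref{lim_cond_conv} alone, and the iterated monotonicity you plan to apply has nothing to act on. Your closing paragraph identifies the offset of the characteristic windows as the chief difficulty, but the proof collapses one step earlier.

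The structural facts you invoke --- monotonicity of $D$ under $\li$ (Lemma~\ref{ordering of queuing mappings flipped}), Harrison--Williams invariance (Theorem~\ref{O Connell Yor BM independence theorem queues}), and the identification of $-\lambda^{-2}m$ as the characteristic recentering --- are all correct and relevant. What is missing is the actual mechanism. After $m$ iterations the iterated-supremum identity of Lemma~\ref{identity for multiple queuing mappings flipped} shows that $\eta_m^X(-t-\lambda^{-2}m,-\lambda^{-2}m)$ depends on the initial condition $X$ only through the first coordinate of the maximizing chain $u_1 \le \cdots \le u_m$ (the ``exit point''). The slope hypotheses~\eqref{lim_cond_conv} are used to prevent that exit point from escaping to $\pm\infty$, and the heart of the argument is to show that the exit points of the $X$-chain and the $Z$-chain coalesce with probability tending to one as $m\to\infty$ --- an argmax-coalescence argument, not a monotone sandwich. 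That coalescence estimate, quantitative in $m$ and robust under recentering, is the substantive content that the hypotheses~\eqref{lim_cond_conv} were designed to deliver and that your proposal does not supply.
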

\begin{remark}
The above theorem holds true for any initial condition $\eta_0^X$ satisfying the given conditions, but in general, the conclusion only holds for the increments of the processes in the interval $(-t - \lambda^{-2}m,-\lambda^{-2}m)$. However, the queuing mappings preserve increment-stationarity, so if the initial condition is increment-stationary, the conclusion holds for an arbitrary increment. 
\end{remark}
 A straightforward generalization of this  proves distributional convergence of the Markov chain~\eqref{Busemann Markov chain} from an appropriate initial condition to the measure $\mu^\lambda$: 
 
\begin{corollary} \label{uniqueness of invariant measure for Busemann MC}
Let $\lambda = (\lambda_1,\ldots,\lambda_n) \in \R^n$ be such that $0 < \lambda_1 < \cdots < \lambda_n$. Let $\mbf X = (X^1,\ldots,X^n)$ be a random function in $\X_n$ such that, for $1 \le i \le n$, $X^{i}$ is increment-stationary with 
\begin{equation} \label{eqn:limprop}
\limsup_{t \rightarrow -\infty} \frac{X^i(t)}{t} \le \lambda_i\qquad\text{and}\qquad\liminf_{t \rightarrow \infty} \frac{X^i(t)}{t} \ge \lambda_i.
\end{equation}
Let $(\mbf X,\mbf Z)$ be a coupling of $\mbf X$ and $\mbf Z$ where $\mbf Z = (Z^1,\ldots,Z^n) \sim \mu^\lambda$, and, for $1 \le i \le n$, $(X^i, Z^i)$ is jointly increment-stationary. Consider the coupling 
\[
(\eta_m^{\mbf X},\eta_m^{\mbf Z})_{m \ge 0} = (\eta_m^{\mbf X,1},\ldots,\eta_m^{\mbf X,n},\eta_m^{\mbf Z,1},\ldots,\eta_m^{\mbf Z,n})_{m \ge 0},
\]
where $\eta_0^{\mbf X} = \mbf X$ and $\eta_0^{\mbf Z} = \mbf Z$ and the evolution of the processes is given by the Markov chain~\eqref{Busemann Markov chain}, run simultaneously with the same driving Brownian motions $\mathbf F = \{F_m\}_{m \ge 1}$, independent of $(\mbf X,\mbf Z)$.  Then, for all compact $K \subseteq \R^n$ and $\varepsilon > 0$,
\[
\limsup_{m \rightarrow \infty} P\Big(\sup_{t \in K}|\eta^{\mbf X}_m(t) - \eta^{\mbf Z}_m(t)|_1 > \varepsilon   \Big) = 0,
\]
where $|\cdot|_1$ is the $\ell^1$ norm on $\R^n$.
\end{corollary}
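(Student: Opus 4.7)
The plan is to reduce the multi-component statement to $n$ parallel applications of the single-component Theorem~\ref{convergence Theorem in Cator 2019}, then combine via a union bound. The crucial structural observation is that the Markov chain~\eqref{Busemann Markov chain} acts componentwise: for each $1 \le i \le n$, the updates $\eta_m^{\mbf X,i} = D(\eta_{m-1}^{\mbf X,i}, F_m)$ and $\eta_m^{\mbf Z,i} = D(\eta_{m-1}^{\mbf Z,i}, F_m)$ depend only on the $i$th coordinate of the previous state and the common driver $F_m$. Thus for each fixed $i$, the pair $(\eta_m^{\mbf X,i}, \eta_m^{\mbf Z,i})_{m \ge 0}$ is exactly the type of coupling that Theorem~\ref{convergence Theorem in Cator 2019} handles, with parameter $\lambda_i$.

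Before applying that theorem to each coordinate, I verify three prerequisites. First, $Z^i$ is marginally a two-sided Brownian motion with drift $\lambda_i$: writing $Z^i = D^{(i)}(W^i, W^{i-1}, \ldots, W^1)$ for independent Brownian motions $W^j$ with drifts $\lambda_j$ and iterating Theorem~\ref{O Connell Yor BM independence theorem queues} establishes this. Second, $(X^i, Z^i)$ is independent of the field $\mbf F$ by hypothesis. Third, the asymptotic-slope condition~\eqref{lim_cond_conv} for $X^i$ at drift $\lambda_i$ is exactly~\eqref{eqn:limprop}. Theorem~\ref{convergence Theorem in Cator 2019} as stated gives convergence only on the shifted window $(-t - \lambda_i^{-2}m, -\lambda_i^{-2}m)$; the remark immediately following it removes the shift under joint increment-stationarity of the initial pair. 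Since both $\eta_m^{\mbf X,i}$ and $\eta_m^{\mbf Z,i}$ are pinned at $0$, increment convergence at $0$ becomes uniform convergence of the processes themselves on any compact subset of $\R$.

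The preservation of joint increment-stationarity under one coupled step rests on the shift-equivariance identity $D(\tau_s X, \tau_s F) = \tau_s D(X, F)$, where $\tau_s f(t) := f(s+t) - f(s)$. This identity follows from definition~\eqref{definition of D} by a change of variables in the suprema. Because each new driver $F_m$ is an independent two-sided Brownian motion and hence increment-stationary, an induction on $m$ propagates joint increment-stationarity of $(\eta_m^{\mbf X,i}, \eta_m^{\mbf Z,i})$ from that of $(X^i, Z^i)$. A union bound over $i = 1, \ldots, n$ then gives
\[
\Pp\Big(\sup_{t \in K}|\eta_m^{\mbf X}(t) - \eta_m^{\mbf Z}(t)|_1 > \varepsilon\Big) \le \sum_{i=1}^n \Pp\Big(\sup_{t \in K}|\eta_m^{\mbf X,i}(t) - \eta_m^{\mbf Z,i}(t)| > \varepsilon/n\Big) \longrightarrow 0.
\]
The main obstacle is the bookkeeping for joint increment-stationarity preservation under the coupled $D$-map; conceptually routine but essential, since $D$ is defined via global suprema rather than purely local increments, so the shift-equivariance identity is the crux of the argument.
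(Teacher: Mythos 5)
Your proof is correct and follows essentially the same route as the paper: propagate joint increment-stationarity of $(\eta_m^{\mathbf X,i},\eta_m^{\mathbf Z,i})$ through the chain via the shift-equivariance of $D$ (this is exactly Lemma~\ref{lemma:D preserves increment stationarity} in the paper), invoke the remark after Theorem~\ref{convergence Theorem in Cator 2019} to remove the moving window, and apply the single-component convergence to each coordinate. Your additional verification of the marginal law of $Z^i$ and the explicit union bound are fine details that the paper leaves implicit, but the core argument is identical.
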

\begin{proof}
First, we note that since $(X^i,Z^i)$ is jointly increment-stationary and $F_1$ is an independent Brownian motion (and therefore increment stationary), then the process $(X^i,Z^i,F_1)$ is jointly increment-stationary. Then, as a translation respecting mapping of $(X^i,Z^i,F_1)$ (Lemma~\ref{lemma:D preserves increment stationarity}), $(\eta^{\mbf X,i}_1,\eta^{\mbf Z,i}_1)$  is jointly increment stationary. By induction, $(\eta^{\mbf X,i}_m,\eta^{\mbf Z,i}_m)$ is jointly increment-stationary for each $m$. The desired conclusion then follows by applying Theorem~\ref{convergence Theorem in Cator 2019} separately to each of the $n$ components of $\eta^{\mbf X}_m(t) - \eta^{\mbf Z}_m(t)$.
\end{proof}

\noindent We now use Corollary~\ref{uniqueness of invariant measure for Busemann MC} to prove the following precursor to Theorem~\ref{dist of Busemann functions and Bm}.

\begin{theorem} \label{joint distribution of Busemann functions}
If $ \theta_1 > \theta_2 > \cdots > \theta_n > 0$, then for each $m \in \Z$, the vector
\begin{equation} \label{hBusevec}
(h_m^{\theta_1},h_m^{\theta_2},\ldots,h_m^{\theta_n})
\end{equation}
almost surely lies in the space $\X_n \cap \Y_n$ and has distribution $\mu^\lambda$ with $\lambda_i = \frac{1}{\sqrt\theta_i}$ for $1 \le i \le n$.
\end{theorem}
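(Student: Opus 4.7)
The strategy is to identify the vector $\mbf h_m := (h_m^{\theta_1}, \ldots, h_m^{\theta_n})$ as an invariant distribution of the Busemann Markov chain~\eqref{Busemann Markov chain} with drift vector $\lambda = (\lambda_1, \ldots, \lambda_n) = (1/\sqrt{\theta_1}, \ldots, 1/\sqrt{\theta_n})$, and then invoke the uniqueness result Corollary~\ref{uniqueness of invariant measure for Busemann MC} to conclude this distribution must be $\mu^\lambda$. First I verify $\mbf h_m \in \X_n \cap \Y_n$ almost surely: the monotonicity in Theorem~\ref{thm:summary of properties of Busemanns for all theta}\ref{general monotonicity Busemanns} gives $h_m^{\theta_i} \gi h_m^{\theta_{i-1}}$ for $2 \le i \le n$ (since $\theta_i < \theta_{i-1}$), and the marginal description in part~\ref{Buse_marg_dist} identifies each $h_m^{\theta_i}$ as a two-sided Brownian motion with drift $\lambda_i > 0$. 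The strong law of large numbers then yields $\lim_{t \to \pm \infty} t^{-1} h_m^{\theta_i}(t) = \lambda_i$ almost surely, with $0 < \lambda_1 < \cdots < \lambda_n$, so the vector lands in $\X_n \cap \Y_n$.

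Next, the queuing relation in part~\ref{general queuing relations Busemanns} gives $h_m^{\theta_i} = D(h_{m+1}^{\theta_i}, B_m)$ for every $i$, with one and the same driving Brownian motion $B_m$ applied coordinatewise. Therefore $\mbf h_m$ is obtained from $\mbf h_{m+1}$ by exactly one step of the chain~\eqref{Busemann Markov chain}. By the independence statement in part~\ref{independence structure of Busemann functions on levels}, $B_m$ is independent of $\mbf h_{m+1}$, so this is a bona fide Markov transition with independent driving noise. The shift invariance in part~\ref{itm:shift_invariance} forces the law $\mathcal{L}$ of $\mbf h_m$ to be independent of $m$; combined with the preceding observation, $\mathcal{L}$ is therefore an invariant distribution of the chain supported on $\X_n \cap \Y_n$, with the correct one-dimensional marginals (drift $\lambda_i$ on coordinate $i$).

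To finish, I couple $\mathcal{L}$ with $\mu^\lambda$. Let $\mbf X \sim \mathcal{L}$ and, independently of $\mbf X$, let $\mbf Z \sim \mu^\lambda$. Let $\mbf F = \{F_k\}_{k \ge 1}$ be an i.i.d.\ sequence of two-sided Brownian motions independent of $(\mbf X, \mbf Z)$. Each $X^i$ is a Brownian motion with drift $\lambda_i$ by part~\ref{Buse_marg_dist}, and each $Z^i$ is too by Theorem~\ref{existence of an invariant measure for Busemann MC} together with Lemma~\ref{image of script D lemma}\ref{itm:Ynlim}; the independence of $\mbf X$ and $\mbf Z$ then delivers the joint increment-stationarity of each $(X^i, Z^i)$ and the asymptotic conditions~\eqref{lim_cond_conv} with equalities. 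Evolving both processes under the same sequence $\mbf F$, Corollary~\ref{uniqueness of invariant measure for Busemann MC} yields convergence to zero, in probability uniformly on compacts, of $\eta_k^{\mbf X}(t) - \eta_k^{\mbf Z}(t)$ as $k \to \infty$. Both laws are invariant, so $\eta_k^{\mbf X} \sim \mathcal{L}$ and $\eta_k^{\mbf Z} \sim \mu^\lambda$ for every $k$; bounded convergence applied to any bounded continuous functional depending only on values over a compact interval forces $\int f\, d\mathcal{L} = \int f\, d\mu^\lambda$, and hence $\mathcal{L} = \mu^\lambda$ on the separable metric space $\X_n$.

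The only technical point that requires care is verifying the hypotheses of the coupling Corollary~\ref{uniqueness of invariant measure for Busemann MC} — specifically, the joint increment-stationarity of each pair $(X^i, Z^i)$ and the one-sided asymptotic slope bounds~\eqref{lim_cond_conv} — but both are immediate once one knows each marginal is a two-sided Brownian motion with the correct drift, so the proof reduces to assembling the pieces built up in the preceding subsections.
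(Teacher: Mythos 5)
Your proposal is correct and follows essentially the same route as the paper: establish membership in $\X_n\cap\Y_n$ from monotonicity and the Brownian marginals, identify the one-step transition via $h_m^{\theta_i}=D(h_{m+1}^{\theta_i},B_m)$ together with the independence of $B_m$ from level $m+1$, note that the law is independent of $m$, and invoke Corollary~\ref{uniqueness of invariant measure for Busemann MC} to conclude uniqueness of the invariant measure. The only cosmetic difference is that you derive $m$-independence of the law from the shift invariance in Theorem~\ref{thm:summary of properties of Busemanns for all theta}\ref{itm:shift_invariance}, whereas the paper deduces it from the Busemann limit~\ref{busemann functions agree for fixed theta} together with the shift invariance of the i.i.d.\ Brownian environment; both are valid.
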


\begin{proof}
In Corollary~\ref{uniqueness of invariant measure for Busemann MC},  we can  choose $\mbf Z \sim \mu^\lambda$ to satisfy the joint increment-stationarity of $(X^i,Z^i)$, for example,  by taking $\mbf Z$ independent of $\mbf X$. By the invariance of the measure $\mu^\lambda$ (Theorem~\ref{existence of an invariant measure for Busemann MC}), Corollary~\ref{uniqueness of invariant measure for Busemann MC} implies that, under the given assumptions on $\mbf X$, $\eta_m^{\mbf X}$ converges in distribution to $\mu^\lambda$, in the sense of uniform convergence on compact sets. Thus, $\mu^\lambda$ is the unique such invariant measure of the Markov chain~\eqref{Busemann Markov chain} among distributions whose marginal distributions are increment-stationary and satisfy~\eqref{lim_cond_conv}.

By Parts~\ref{general monotonicity Busemanns} and~\ref{Buse_marg_dist} of  Theorem~\ref{thm:summary of properties of Busemanns for all theta}, for $m \in \Z$ and $\theta_1 > \theta_2 > \cdots > \theta_n > 0$, $h_m^{\theta_{i}} \li h_m^{\theta_{i + 1}}$ for $1 \le i \le n - 1$, and each $h_m^{\theta_i}$ is a two-sided Brownian motion with drift $\f{1}{\sqrt{\theta_i}}$. Thus, for $1 \le i \le n$,
\be \label{eqn:BM_dir}
\lim_{t \rightarrow \pm \infty} \f{h_m^{\theta_i}(t)}{t} = \f{1}{\sqrt \theta_i}, \qquad\text{a.s.,}
\ee
and so $(h_m^{\theta_1},h_m^{\theta_2},\ldots,h_m^{\theta_n}) \in \X_n \cap \Y_n$ with probability one. 
By Theorem~\ref{thm:summary of properties of Busemanns for all theta}\ref{general queuing relations Busemanns}, for $1 \le i \le n$, $h_m^{\theta_i} = D(h_{m + 1}^{\theta_i},B_m)$ almost surely. Furthermore, by Theorem~\ref{thm:summary of properties of Busemanns for all theta}\ref{busemann functions agree for fixed theta}, with probability one, for each $1 \le i \le n$ and $t \in \R$,
\begin{align*}
h_m^\theta(t) &= \lim_{n \rightarrow \infty} [L_{(m,0),(n,n\theta)}(\mbf B) - L_{(m,t),(n,n\theta)}(\mbf B)]\\ &= \lim_{n \rightarrow \infty} [L_{(m,0),(n + m,(n + m)\theta)}(\mbf B) - L_{(m,t),(n + m,(n + m)\theta)}(\mbf B)],
\end{align*}
and
since the environment of i.i.d. Brownian motions $\{B_r\}_{r \in \Z}$ has the same distribution as the environment $\{B_{r + k}\}_{r \in \Z}$ for each $k \in \Z$, the distribution of $(h_m^{\theta_1},\ldots,h_m^{\theta_n})$ is independent of $m$. Therefore, $(h_m^{\theta_1},\ldots,h_m^{\theta_n})$ must be distributed as the unique invariant distribution of the Markov chain~\eqref{Busemann Markov chain}, under the limit condition~\eqref{eqn:BM_dir}. By Corollary~\ref{uniqueness of invariant measure for Busemann MC}, this distribution is $ \mu^{(\lambda_1,\ldots,\lambda_n)}$,
where $\lambda_i = \f{1}{\sqrt \theta_i}$ for $1 \le i \le n$.
\end{proof}

\begin{proof}[Proof of Lemma~\ref{weak continuity and consistency}]
\textbf{Part~\ref{weak continuity}}
We show the existence of $\eta_k \sim \mu^{\lambda^k}$ and $\eta \sim \mu^\lambda$ such that, for $1 \le i \le n$,  $\eta_k^i \rightarrow \eta^i$, uniformly on compact sets, almost surely. Let $\mbf Z = (Z^1,Z^2,\ldots,Z^n) \sim \nu^\lambda$, and  define $Z_k^i(t) = Z^i(t) + (\lambda_i^k - \lambda_i)t$. Then, $(Z_k^1,Z_k^2,\ldots,Z_k^n) \sim \nu^{\lambda^k}$. Set $\eta = \D^{(n)}(Z)$ and $\eta_k = \D^{(n)}(Z_k)$. By construction, for $1 \le i \le n$,  $Z_k^i \rightarrow Z^i$, uniformly on compact sets, and 
\[
\limsup_{\substack{t \rightarrow \infty \\ k \rightarrow \infty} } \Bigl|\f{1}{t} Z_k^i(t) - \lambda_i \Bigr| = 0.
\]
Thus, the convergence of $\eta_k^1 \rightarrow \eta^1$ is immediate. By Lemma~\ref{uniform convergence of queueing mapping}, $\eta_k^2 = D(Z_k^2,Z_k^1)$ converges to $\eta^2 = D(Z^2,Z^1)$ uniformly on compact sets, and
\[
\limsup_{\substack{t \rightarrow \infty \\ k \rightarrow \infty} } \Bigl|\f{1}{t} \eta_k^2(t) - \lambda_2 \Bigr| = 0.
\]
Now, assume by induction that for $i\ge 2$, $\eta_k^i = D^{(i)}(Z_k^i,\ldots,Z_k^1)$ converges uniformly on compact sets to $\eta^i$ and that 
\[
\limsup_{\substack{t \rightarrow \infty \\ k \rightarrow \infty} } \Bigl|\f{1}{t} \eta_k^i(t) - \lambda_i \Bigr| = 0.
\]
Then, by shifting indices and setting $\wt \eta_k^i = D^{(i)}(Z_k^{i + 1},\ldots,Z_k^2)$ and $\wt \eta^i = D^{(i)}(Z^{i + 1},\ldots,Z^2)$, it also holds that $\wt \eta_k^{i}$ converges uniformly on compact sets to $\wt \eta^{i}$, and
\[
\limsup_{\substack{t \rightarrow \infty \\ k \rightarrow \infty} } \Bigl|\f{1}{t} \wt \eta_k^i(t) - \lambda_i \Bigr| = 0.
\]
By definition of $D^{(i + 1)}$~\eqref{D iterated} and the $i = 2$ case,
\[
\eta_k^{i + 1} = D^{(i + 1)}(Z_k^{i + 1},\ldots,Z_k^1) = D(\wt \eta_k^i,Z_k^1) \to D(\wt \eta^i,Z^1) = D^{(i + 1)}(Z^{i + 1},\ldots,Z^1)= \eta^{i + 1},
\]
where the convergence is almost sure, uniformly on compact sets. Furthermore, the $i = 2$ case also guarantees
\[
\limsup_{\substack{t \rightarrow \infty \\ k \rightarrow \infty} } \Bigl|\f{1}{t} \eta_k^{i + 1}(t) - \lambda_{i+1} \Bigr| = 0.
\]

\medskip \noindent \textbf{Part~\ref{consistency}:} It suffices to show that if $(\eta^1,\ldots,\eta^n) \in \Xcomp_n$ has distribution $\mu^{\lambda_1,\ldots, \lambda_n}$, then \[(\eta^1,\ldots,\eta^{i - 1},\eta^{i + 1},\ldots, \eta^n) \sim \mu^{\lambda_1,\ldots,\lambda_{i - 1},\lambda_{i + 1},\ldots,\lambda_n}.\] Recall that $\mu^\lambda$ is the distribution of $\D^{(n)}(Z^1,\ldots,Z^n)$, where $Z^i$ are independent Brownian motions with drifts $\lambda_i,$ and the $j$th component of $\D^{(n)}(Z^1,\ldots,Z^n)$ is $D^{(j)}(Z^j,\ldots,Z^1)$~\eqref{definition of script D}.

\noindent For $i = n$, the statement is immediate from the definition of the map $\D^{(n)}$. Next, we show the case $i = 1$. For $2 \le j \le n$, we use~\eqref{intertwining} to write
\[
D^{(j)}(Z^j,\ldots,Z^1) = D^{(j - 1)}(D(Z^j,\wt Z^{j - 1}),\ldots, D(Z^3,\wt Z^{2}),D(Z^2,\wt Z^{1})),
\]
where $\wt Z^{1} = Z^1$, and for $i > 1$, $\wt Z^{i} = R(Z^{i},\wt Z^{i - 1})$. Then $(\eta^2,\ldots,\eta^n) = \D^{(n - 1)}(\hat Z^2,\ldots,\hat Z^n)$, where $\hat Z^i = D(Z^i,\wt Z^{i - 1})$ for $2 \le i \le n$. By Theorem~\ref{multiline invariant distribution}, $\hat Z^2,\ldots,\hat Z^n$ are independent, so
this completes the proof of the $i = 1$ case. By definition of $D^{(j)}$, for $i < j \le n$,
\begin{equation} \label{queue_iter}
D^{(j)}(Z^j,\ldots,Z^1) = D(D(\cdots D(D^{(j - i + 1)}(Z^j,\ldots,Z^{i}),Z^{i-1}),\ldots,Z^2),Z^1).
\end{equation}
 Similarly as in the $i = 1$ case, we apply~\eqref{intertwining} to get that
\begin{align} \label{queue_iter_2}
&\;\;\;D^{(j - i + 1)}(Z^j,\ldots,Z^i)\nonumber \\
&= D^{(j - i)}(D(Z^j,\wt Z^{j - 1}),\ldots,D(Z^{i + 1},\wt Z^{i})) = D^{(j - i)}(\hat Z^j,\ldots,\hat Z^{i + 1}),
\end{align}
where, $\wt Z^{i} = Z^i$, and for $j > i$, $\wt Z^{j} =R(Z^j,\wt Z^{j- 1})$. For $j > i$, we define $\hat Z^j = D(Z^j,\wt Z^{j - 1})$.
Then, by~\eqref{queue_iter} and~\eqref{queue_iter_2},  for $i < j \le n$,
\[D^{(j)}(Z^j,\ldots,Z^1) = D^{(j - 1)}(\hat Z^j,\ldots,\hat Z^{i + 1}, Z^{i - 1},\ldots,Z^1),
\]
 and so 
\be \label{eqn:marg}
(\eta^1,\ldots,\eta^{i - 1},\eta^{i + 1},\ldots,\eta^n) = \D^{(n - 1)}(Z^1,\ldots,Z^{i - 1},\hat Z^{i + 1},\ldots,\hat Z^n).
\ee 
By Theorem~\ref{multiline invariant distribution}, $\hat Z^{i + 1},\ldots,\hat Z^{n}$ are independent Brownian motions with drifts $\lambda_{i + 1},\ldots,\lambda_n$. Since these are functions of $Z^{i},\ldots,Z^n$, the functions $Z^1,\ldots,Z^{i - 1},\hat Z^{i + 1},\ldots,\hat Z^j$ are independent as well, and by~\eqref{eqn:marg}, \[
(\eta^1,\ldots,\eta^{i - 1},\eta^{i + 1},\ldots, \eta^n) \sim \mu^{(\lambda_1,\ldots,\lambda_{i - 1},\lambda_{i + 1},\ldots,\lambda_n)}. 
\]

\medskip \noindent \textbf{Part~\ref{scaling relations}}
We note that if $Z^1,\dotsc, Z^n$ are independent Brownian motions with drifts $\lambda_1, \dotsc, \lambda_n$ and $\wt Z^1,\ldots,\wt Z^n$ are independent Brownian motions with drifts $c(\lambda_1 + \nu),\ldots, c(\lambda_n + \nu)$,  then 
\be \label{eqn:BM_drift_dist_eq}
\{Z^1(t),\ldots,Z^n(t):t \in \R\} \deq \Big\{c\wt Z^1(t/c^2) - \nu t,\ldots, c\wt Z^n(t/c^2)-\nu t: t \in \R\Big\}. 
\ee
  Let $(\eta^1,\ldots,\eta^n) = \D^{(n)}(Z^1,\ldots,Z^2)$ and $(\wt \eta^1,\ldots,\wt \eta^n) = \D^{(n)}(\wt Z^1,\ldots,\wt Z^n)$. By Lemma~\ref{identity for multiple queuing mappings flipped},
\begin{align*}
   &\;\;\;\Big(c \tspa\wt \eta^k({t}/{c^2})- \nu t/c: t \in \R \Big)_{1 \le k \le n} \\
    &= \Big(c \wt Z^k({t}/{c^2}) - \nu t +  c\sup_{0 \leq t_1\leq t_{2} \cdots \leq t_{n - 1} < \infty} \sum_{i = 1}^{k - 1} (\wt Z^i(t_i) - \wt Z^{i + 1}(t_i) ) \\
    &\qquad\qquad\qquad - c\sup_{t/c^2 \leq t_1\leq t_{2} \cdots \leq t_{n - 1} < \infty} \sum_{i = 1}^{k - 1} (\wt Z^i(t_i)  - \wt Z^{i + 1}(t_i) ):t \in \R \Big)_{1 \le k \le n}. \\
    &= \Big(c \wt Z^k({t}/{c^2})- \nu t \\
    &+  \sup_{0 \leq t_1/c^2\leq \cdots \leq t_{n - 1}/c^2 < \infty} \sum_{i = 1}^{k - 1} (c \wt Z^i(t_i/c^2)- \nu t_i - c \wt Z^{i + 1}(t_i/c^2)+ \nu t_i) \\
    & - \sup_{t/c^2 \leq t_1/c^2\leq  \cdots \leq t_{n - 1}/c^2 < \infty} \sum_{i = 1}^{k - 1} (c \wt Z^i(t_i/c^2)- \nu t_i - c \wt Z^{i + 1}(t_i/c^2)+ \nu t_i): t \in \R \Big)_{1 \le k \le n} \\
    &\deq \Big(Z^k(t) +  \sup_{0 \leq t_1\leq t_{2} \cdots \leq t_{n - 1} < \infty} \sum_{i = 1}^{k - 1} (Z^i(t_i) - Z^{i + 1}(t_i)) \\
    &\qquad\qquad\qquad - \sup_{t \leq t_1\leq t_{2} \cdots \leq t_{n - 1} < \infty} \sum_{i = 1}^{k - 1} (Z^i(t_i) - Z^{i + 1}(t_i)):t \in \R \Big)_{1 \le k \le n} \\
    &= (\eta^k(t): t \in \R)_{1 \le k \le n},
\end{align*}
where in the second-to-last equality, we used the distributional equality~\eqref{eqn:BM_drift_dist_eq}.
\end{proof}

\begin{proof}[Proof of Theorem~\ref{dist of Busemann functions and Bm}] By Theorem~\ref{thm:summary of properties of Busemanns for all theta}\ref{general uniform convergence Busemanns}\ref{general uniform convergence:limits to infinity}, for each fixed $m \in \Z$, as $\theta \rightarrow \infty$, $h_m^{\theta}$ converges uniformly on compact sets to $B_m$.
The theorem then follows from Theorem~\ref{joint distribution of Busemann functions} and  Lemma~\ref{weak continuity and consistency}\ref{weak continuity}.
\end{proof}

\subsection{Proofs of results stated in Section~\ref{section:Busepp}} \label{sec:global_Buse_proofs}
We first prove Theorem~\ref{thm:Buse_inc} and then handle the remaining results from Section~\ref{section:Busepp}.

\begin{proof}[Proof of Theorem~\ref{thm:Buse_inc}]
 Set $(\eta^1,\eta^2) = \D^{(2)}(Z^{1},Z^2) = (Z^1,D(Z^2,Z^1))$ where $(Z^1,Z^2) \sim \nu^{0,\lambda}$, with $\lambda > 0$. Recall that this means that $Z^1$ and $Z^2$ are independent Brownian motions with drifts $0$ and $\lambda$, respectively.  Theorem~\ref{dist of Busemann functions and Bm} gives the first equality in distribution below:
\begin{align*}
X(\lambda;t) - X(0;t) &\deq \eta^2(t) - \eta^{1}(t) = D(Z^2,Z^{1})(t) - Z^{1}(t) \\
&= \sup_{0 \le s < \infty}\{ Z^{1}(s)-Z^2(s) \} - \sup_{t \le s < \infty}\{Z^{1}(s)-Z^2(s) \}.
\end{align*}
Since $Z^1$ and $Z^2$ are independent, the statement now follows from a direct application of Theorem~\ref{thm:dist of busemann increment}.
 \end{proof}

 \begin{proof}[Computation of the Laplace transform of the increment distribution]
  We first take $t = 1$. Denote the increment $Z = X(\lambda_2;1) - X(\lambda_1;1)$ with $\lambda = \lambda_2 - \lambda_1$. Consider the following lemma.
\begin{lemma} \label{Identity for Laplace Transform}
For a non-negative random variable $Z$ on $(\Omega,\F,\Pp)$ and $\alpha \in \R$, the following holds.
\[
\int_\Omega e^{-\alpha Z}\, d\Pp  = 1 - \alpha \int_0^\infty e^{-\alpha z}\Pp(Z > z)\, dz. 
\]
\end{lemma}
\begin{proof}
We use Tonelli's Theorem below.
\begin{align*}
    &-\alpha \int_0^\infty  e^{-\alpha z} \Pp(Z > z)\, dz = -\alpha \int_0^\infty \int_{\Omega}e^{-\alpha z}1(Z > z)\, d\Pp dz \\
    = &-\alpha \int_{\Omega}\int_0^\infty  e^{-\alpha z} 1(Z >z)\, dzd\Pp = -\alpha \int_\Omega \int_0^Z  e^{-\alpha z}\, dzd\Pp \\
    = &\int_\Omega (e^{-\alpha Z} - 1)\, d\Pp = \int_\Omega e^{-\alpha Z}d\Pp - 1.
\end{align*}
Rearranging this equation completes the proof.
\end{proof}

Using~\eqref{BuseCDF}, we get that 
\be \label{eqn:1-BuseCDF}
\Pp(Z > z) = \Phi\Big(-\zfraclambda\Big) - e^{\lambda z}(1 + \lambda z + \lambda^2)\Phi\Big(\negzfraclambda\Big) + \f{\lambda e^{\lambda z}}{\sqrt \pi} e^{-\f{(z + \lambda)^2}{4}}.
\ee
we compute the Laplace transform $\Ee[\exp(-\alpha Z)]$ via Lemma~\ref{Identity for Laplace Transform}, handling each of the three terms on in the sum on the right-hand side of~\eqref{eqn:1-BuseCDF}.  Using Fubini's theorem, we have
\begin{align}
    &\int_0^\infty -\alpha e^{-\alpha z} \Phi\Big(-\f{z - \lambda}{\sqrt 2}\Big)\,dz = \int_0^\infty \int_{-\infty}^{-\zfraclambda} -\alpha e^{-\alpha z} \f{1}{\sqtwopi} e^{-x^2/2}\, dxdz \nonumber \\
    = &\int_{-\infty}^{\fraclambda}\int_0^{\lambda - \sqrt 2 x}-\alpha e^{-\alpha z} \f{1}{\sqtwopi} e^{-x^2/2}\,dz\,dx = \int_{-\infty}^{\fraclambda} (e^{-\alpha(\lambda - \sqrt 2 x)} - 1)\f{1}{\sqtwopi} e^{-x^2/2}\, dx \nonumber  \\
    = &\int_{-\infty}^{\fraclambda} \f{e^{-\alpha(\lambda - \sqrt 2 x)}e^{-x^2/2}}{\sqtwopi} \, dx - \Phi\Big(\fraclambda\Big) 
    = e^{\alpha^2 - \alpha \lambda}\Phi\Big(\f{\lambda - 2\alpha}{\sqrt 2}\Big) - \Phi\Big(\fraclambda\Big).  \label{Laplace part 1}
\end{align}
Next, we compute the second term, using integration by parts in the second step
\begin{align}
    &\int_0^\infty \alpha e^{(\lambda - \alpha)z}(1 + \lambda z + \lambda^2)\Phi\Big(\negzfraclambda\Big)\,dz \nonumber  \\
    =&\int_{-\infty}^{-\fraclambda} \int_0^{-\sqrt 2 x - \lambda} \alpha e^{(\lambda - \alpha)z}\f{(1  + \lambda z + \lambda^2)e^{-x^2/2}}{\sqtwopi}\, dzdx \nonumber  \\
    = &\int_{-\infty}^{-\fraclambda} \Bigg((1 + \lambda(-\sqrt 2 x - \lambda) + \lambda^2)\f{\alpha}{\lambda - \alpha} e^{(\alpha - \lambda)(\sqrt 2 x + \lambda)} \nonumber  \\
    &\qquad\qquad - \f{\alpha(1 + \lambda^2)}{\lambda - \alpha} -\int_0^{-\sqrt 2 x - \lambda} \f{\alpha \lambda}{\lambda - \alpha}e^{(\lambda - \alpha)z}\, dz\Bigg) \f{e^{-x^2/2}}{\sqtwopi}\,dx \nonumber \\
    = &\int_{-\infty}^{-\fraclambda} \Bigg((1 -\sqrt 2 x \lambda)\f{\alpha}{\lambda - \alpha} e^{(\alpha - \lambda)(\sqrt 2 x + \lambda)} \nonumber \\
    &\qquad\qquad - \f{\alpha(1 + \lambda^2)}{\lambda - \alpha}-\f{\alpha \lambda}{(\lambda - \alpha)^2}e^{(\alpha - \lambda)(\sqrt 2 x + \lambda)} + \f{\alpha \lambda}{(\lambda - \alpha)^2}\Bigg)\f{e^{-x^2/2}}{\sqtwopi}\,dx \nonumber \\
    = &\f{\alpha(\alpha \lambda^2 + \alpha - \lambda^3)}{(\lambda - \alpha)^2}\Phi\Big(-\fraclambda\Big) \nonumber \\
    &\qquad\qquad+ \int_{-\infty}^{-\fraclambda}\f{\alpha(-\sqrt 2 x \lambda^2 - \alpha + \sqrt 2 x \alpha \lambda)e^{(\alpha - \lambda)(\sqrt 2 x + \lambda)}}{(\lambda - \alpha)^2}\f{e^{-x^2/2}}{\sqtwopi}\,dx \nonumber \\
    = &\f{\alpha(\alpha \lambda^2 + \alpha - \lambda^3)}{(\lambda - \alpha)^2}\Phi\Big(-\fraclambda\Big)  \nonumber \\
    &\qquad\qquad+ \int_{-\infty}^{-\fraclambda}\f{\alpha(-\sqrt 2 x \lambda^2 - \alpha + \sqrt 2 x \alpha \lambda)e^{-(x - \sqrt 2(\alpha - \lambda))^2/2 + \alpha(\alpha - \lambda)}}{(\lambda - \alpha)^2\sqtwopi}\,dx \nonumber\\
    = &\f{\alpha(\alpha \lambda^2 + \alpha - \lambda^3)}{(\lambda - \alpha)^2}\Phi\Big(-\fraclambda\Big) \nonumber\\
    &\qquad\qquad+ e^{\alpha^2 - \alpha \lambda}\Bigg( \f{ \lambda \alpha}{\sqrt \pi(\lambda - \alpha)}e^{-\lambda^2/4 + \alpha \lambda - \alpha^2} + \Big(2\lambda \alpha - \f{\alpha^2}{(\lambda - \alpha)^2}\Big) \Phi\Big(\f{\lambda - 2\alpha}{\sqrt 2}\Big) \Bigg) \nonumber\\
    = &\f{\alpha(\alpha \lambda^2 + \alpha - \lambda^3)}{(\lambda - \alpha)^2}\Phi\Big(-\fraclambda\Big) \nonumber \\
    &\qquad\qquad+ \f{\lambda \alpha}{\sqrt \pi(\lambda - \alpha)}e^{-\lambda^2/4} + e^{\alpha^2 - \alpha \lambda}\Big(2\lambda \alpha - \f{\alpha^2}{(\lambda - \alpha)^2}\Big)\Phi\Big(\f{\lambda - 2\alpha}{\sqrt 2}\Big). \label{Laplace part 2}
\end{align}
Lastly, we handle the third term:
\begin{align}
    \int_0^\infty -\alpha e^{(\lambda - \alpha)z}\f{\lambda}{\sqrt \pi }e^{-(z + \lambda)^2/4}\, dz = -2\alpha \lambda e^{\alpha^2 - \alpha \lambda} \Phi\Big(\f{\lambda - 2\alpha}{\sqrt 2}\Big). \label{Laplace part 3}
\end{align}

Adding $1$,~\eqref{Laplace part 1},\eqref{Laplace part 2}, and\eqref{Laplace part 3}, we get 
\begin{align*}
    \Ee[\exp(-\alpha Z)] &= e^{\alpha^2 - \alpha \lambda}\Phi\Big(\f{\lambda - 2\alpha}{\sqrt 2}\Big)\Big(1 - \f{\alpha^2}{(\lambda - \alpha)^2}\Big) \nonumber\\
    &\qquad\qquad+ \Phi\Big(-\fraclambda\Big)\Big(1 + \f{\alpha \lambda}{(\lambda - \alpha)^2} - \f{\alpha(1 + \lambda^2)}{\lambda - \alpha}\Big) + \f{\lambda \alpha}{\sqrt \pi (\lambda - \alpha)}e^{-\lambda^2/4}.
\end{align*}
The statement for general $t$ follows from Lemma~\ref{weak continuity and consistency}\ref{scaling relations}, replacing $\alpha$ with $\alpha \sqrt t$ and $\lambda$ with $\lambda \sqrt t$.
 \end{proof}

 Theorem~\ref{thm:Busemann jump process intro version} is proved by applying the following theorem, which gives a condition for a general increment-stationary process to be a jump process.  
\begin{theorem} \label{thm:jump process condition}
On a probability space $(\Omega,\F,\Pp)$, let $Y = \{Y(t): t \ge 0\}$ be a nondecreasing, increment-stationary process such that the following three conditions hold:
\begin{enumerate} [label=\rm(\roman{*}), ref=\rm(\roman{*})]  \itemsep=3pt    
    \item \label{finitemean} $I := \Ee[Y(1)-Y(0)] < \infty$.
    \item $\Pp[Y(t) = Y(0)] \in (0,1)$ for sufficiently small $t > 0$.
    \item \label{liminf assumption} $c := \liminf_{t \searrow 0} \Ee[Y(t) - Y(0)|Y(t) > Y(0)] > 0$.
\end{enumerate} 
Then, with probability one, the paths of  $t \mapsto Y(t)$ are step functions with finitely many jumps in each bounded interval. For each $t \ge 0$, there is a jump at $t$ with probability $0$. For  $a < b$, the expected number of jump points in the interval $[a,b]$ equals
\[
\f{(b - a)I}{c} = \f{\Ee[Y(b) - Y(a)]}{\liminf_{t \searrow 0}\Ee[Y(t) - Y(0)|Y(t) > Y(0)]  }.
\]
\end{theorem}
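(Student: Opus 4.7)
The plan is to approximate the number of jumps of $Y$ in $(a,b]$ by counting sub-intervals of a dyadic partition of $[a,b]$ that contain at least one jump, and then to exploit increment-stationarity together with assumption (iii) to pass to the limit. The main obstacle is the $L \le c$ direction of the limit identification in the third step below: it requires a separate packing of length-$s_m$ sub-intervals using a subsequence that realizes the liminf.

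First I would establish the linearity $\Ee[Y(t) - Y(0)] = tI$ for $t \ge 0$. Setting $g(t) := \Ee[Y(t) - Y(0)]$, increment-stationarity gives $g(s+t) = g(s) + g(t)$, and the monotonicity of $g$ (coming from $Y$ nondecreasing) forces linearity. Writing $f(t) := \Ee[Y(t) - Y(0) \mid Y(t) > Y(0)]$, the conditional decomposition yields $\Pp(Y(t) > Y(0)) = tI/f(t)$, which together with (iii) forces $\Pp(Y(t) > Y(0)) \searrow 0$ as $t \searrow 0$. As an immediate byproduct, for each fixed $t_0 \ge 0$,
\[
\Pp(Y(t_0) > Y(t_0-)) \le \lim_{\ve \downarrow 0}\Pp(Y(t_0) > Y(t_0 - \ve)) = \lim_{\ve \downarrow 0}\Pp(Y(\ve) > Y(0)) = 0,
\]
which is the no-fixed-jump conclusion.

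Next, fix $0 \le a < b$, set $t_n := (b-a)/n$, and define
\[
N_n(a,b) := \sum_{j=1}^n \1\{Y(a + j t_n) > Y(a + (j-1)t_n)\}.
\]
By stationarity each indicator has probability $\Pp(Y(t_n) > Y(0))$, so $\Ee[N_n(a,b)] = (b-a)I/f(t_n)$. Along the dyadic subsequence $n_k := 2^k$, refining splits each sub-interval into two halves, and any sub-interval containing a jump forces at least one half to contain a jump, giving $N_{n_{k+1}} \ge N_{n_k}$ pointwise. Setting $N^*(a,b) := \lim_k N_{n_k}(a,b)$ and $L := \lim_k f(t_{n_k})$ (which exists because $\Ee[N_{n_k}]$ is nondecreasing and hence $f(t_{n_k})$ is nonincreasing), monotone convergence yields $\Ee[N^*(a,b)] = (b-a)I/L$.

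For the identification $L = c$: the bound $L \ge c$ is immediate from $t_{n_k} \searrow 0$ and the liminf definition. For the harder direction $L \le c$, pick $s_m \searrow 0$ with $f(s_m) \to c$, set $k_m := \lfloor(b-a)/s_m\rfloor$, and consider the $k_m$ disjoint length-$s_m$ sub-intervals of $[a, a+k_m s_m] \subseteq [a,b]$. The number $\wt N_m$ of these sub-intervals containing a jump has expectation $k_m s_m I/f(s_m) \to (b-a)I/c$, and one checks $\wt N_m \le N^*(a,b)$ (any length-$s_m$ sub-interval containing a jump is eventually subdivided into dyadic sub-intervals that light up), so $\Ee[N^*(a,b)] \ge (b-a)I/c$, forcing $L \le c$. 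To identify $N^*(a,b)$ with the true jump count $N(a,b)$, note that $\Ee[N^*(a,b)] = (b-a)I/c < \infty$ forces $N^*(a,b) < \infty$ a.s.; if $N(a,b) > N^*(a,b)$ at some $\omega$, then $N^*(a,b)(\omega) + 1$ distinct jump times in $(a,b]$ would fall into distinct dyadic sub-intervals for $k$ large enough, forcing $N_{n_k}(a,b)(\omega) \ge N^*(a,b)(\omega) + 1 > N^*(a,b)(\omega)$, a contradiction. Hence $N = N^* < \infty$ a.s., paths are step functions with finitely many jumps in each bounded interval, and $\Ee[N(a,b)] = (b-a)I/c$.
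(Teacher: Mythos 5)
Your proof is correct and follows the same dyadic-partitioning strategy as the paper, but with one substantive and genuinely valuable addition. The paper's argument produces $\Ee[J_\infty] = \lim_{n}\,\Ee[Y(1)-Y(0)]/c_n$ with $c_n := f(2^{-n})$, where $f(t) := \Ee[Y(t)-Y(0)\mid Y(t)>Y(0)]$; since $c_n$ is nonincreasing it converges to some $c^\star \ge \liminf_{t\searrow 0}f(t) = c$, but the paper does not verify $c^\star = c$, which is what the stated formula $(b-a)I/c$ requires. You close this gap explicitly with the packing argument: taking a sequence $s_m \searrow 0$ with $f(s_m)\to c$ and packing $[a,b]$ with disjoint length-$s_m$ intervals gives $\Ee[N^*] \ge (b-a)I/c$, hence $L \le c$, hence $L = c$. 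This is a real improvement in rigor (in the paper's application, Theorem~\ref{thm:Busemann jump process intro version}, the liminf is computed and is actually a full limit, so the distinction evaporates there, but your proof handles the general statement as written).

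Two points deserve to be spelled out. First, the inequality $\wt N_m \le N^*$ and the contradiction argument for $N \le N^*$ both implicitly require that no jump of $Y$ lands exactly on a partition boundary (an $s_m$-lattice point or a dyadic point of $[a,b]$): a point of increase sitting exactly at such a boundary need not light up the interval on either side. These are countably many fixed points, each a jump location with probability zero by your no-fixed-jump step, so you should intersect over a countable full-probability event (this is precisely the role of the paper's $\Omega_{\mathbb{Q}_2}$). Second, your identification $N^* = N$ only argues $N \le N^*$ by contradiction; the reverse inequality $N^* \le N$ is also needed and holds because each lit dyadic interval contains a point of increase (in its interior, a.s.) and distinct lit intervals give distinct points of increase, so $N_{n_k} \le N$ for every $k$. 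With those two clarifications inserted, the argument is complete.
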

\begin{remark} \label{rmk:mean number jumps}
The claim that $(b - a)I = \Ee[Y(b) - Y(a)]$ follows from increment-stationarity and the fact that $Y$ is nondecreasing, as follows. By increment-stationarity, it suffices to show that $\Ee[Y(t) - Y(0)] = tI$ for all $t > 0$. Since $Y$ is nondecreasing, $t \mapsto \Ee[Y(t) - Y(0)]$ is nondecreasing, so it further suffices to show that $\Ee[Y(t) - Y(0)] = tI$ just for rational $t > 0$. For any integer $k$,
\[
\Ee[Y(k) - Y(0)] = \sum_{i = 1}^k \Ee[Y(i) - Y(i -1)] = k\Ee[Y(1) - Y(0)] = kI.
\]
Then for positive integers $r$ and $k$,
\[
rI = \Ee[Y(r) - Y(0)] = \sum_{i = 1}^k\Ee[Y(ri/k) - Y(r(i -1 )/k)] = k\Ee[Y(r/k) - Y(0)],
\]
and $\Ee[Y(r/k)-Y(0)] = \tf{r}{k} I$.
\end{remark}
\begin{remark}
Heuristically, we can think of Condition~\ref{liminf assumption} in the following way: on average, the size of the jumps are bounded away from $0$, and therefore the jumps cannot accumulate because an increment of the process itself has finite expectation by Condition~\ref{finitemean}.   
\end{remark}

\begin{proof}[Proof of Theorem~\ref{thm:jump process condition}]
We show there are finitely many jumps in the interval $[a,b] = [0,1]$, and the general case follows by increment-stationarity. Consider discrete versions of the process $Y$ as follows. For $n \in \Z_{>0}$, let $D_n = \{\f{j}{2^n}: j \in \Z, 0 \le j \le 2^n\}$, and consider the process $Y_n := \{Y(t): t \in D_n\}$. Let $J_n$ be the number of jumps of $Y_n$, i.e.,
\[
J_n = \sum_{j =1}^{2^n} \1\Bigl(Y\bigl(\tf{j}{2^n}\bigr) > Y\bigl(\tf{j -1}{2^n}\bigr)\Bigr)
\]
Then, $J_n$ is nondecreasing in $n$, so it has a limit, denoted as the random variable $J_\infty$. Let $K \in \{0,1,2,\ldots\} \cup \{\infty\}$ be the number of points of increase of $Y$ on the interval $[0,1]$. Specifically, a point $t \in (0,1)$ is a point of increase, if  $Y(t + \ve)  > Y(t -\ve)$ for all $\ve > 0$. We say $0$ is a point of increase if $Y(t) > Y(0)$ for all $t > 0$, and we likewise say that $1$ is a point of increase if $Y(t) < Y(1)$ for all $t < 1$.
We now show that $K \le J_\infty$. If $K < \infty$, let $k = K$, and otherwise, let $k$ be an arbitrary positive integer. It suffices to show that $J_\infty \ge k$.  By definition of $k$, we may choose $k$ points of increase $t_1<\dotsm<t_k$. First, we handle the case where $t_i \in (0,1)$ for all $i$. Then, for all sufficiently large $n$, there exist $n$-dependent positive integers $0 < j_1 < \cdots < j_k < 2^n$ so that for each $i$, $j_{i + 1} > j_i + 2$,  and 
\be \label{eqn:ti in interval}
\f{j_i -  1}{2^n} < t_i < \f{j_i + 1}{2^n}.
\ee
 Since $t_i$ is a point of increase and $Y$ is nondecreasing, $Y(\f{j_i + 1}{2^n}) > Y(\f{j_i - 1}{2^n})$. Therefore, $Y(\f{j_i + 1}{2^n}) > Y(\f{j_i}{2^n})$ or $Y(\f{j_i}{2^n}) > Y(\f{j_i - 1}{2^n})$. By assumption that $j_{i + 1} > j_i + 2$, the intervals $[\f{j_i - 1}{2^n},\f{j_i + 1}{2^n}]$ are mutually disjoint, so $J_n \ge k$ and therefore $J_\infty \ge k$. The case where $t_1 = 0$ or $t_k = 1$ is handled similarly.

Now, we show that $\Pp(J_\infty < \infty) = 1$. 
Let 
\[
c_n = \Ee[Y(2^{-n}) - Y(0)|Y(2^{-n}) > Y(0)]
\]
Then, using increment-stationarity,
\begin{align}
 \Ee[Y(1) - Y(0)] &= \sum_{j = 1}^{2^n} \Ee\Big[Y\big(\tf{j}{2^n}\big) - Y\big(\tf{j - 1}{2^n}\big)\Big]   \nonumber \\
    &= \sum_{j = 1}^{2^n} \Ee\Big[Y\big(\tf{j}{2^n}\big) - Y\big(\tf{j - 1}{2^n}\big)\Big|Y\big(\tf{j}{2^n}\big) > Y\big(\tf{j - 1}{2^n}\big) \Big]\Pp\Big(Y\big(\tf{j}{2^n}\big) > Y\big(\tf{j - 1}{2^n}\big)\Big) \nonumber \\
    &= c_n \sum_{j = 1}^{2^n}  \Pp\Big(Y\big(\tf{j}{2^n}\big) > Y\big(\tf{j - 1}{2^n}\big)\Big) \nonumber
    = c_n \Ee[J_n].
\end{align}
By assumptions~\ref{finitemean} and~\ref{liminf assumption} and the monotone convergence theorem,
\be \label{eqn:Jinf_mean}
\Ee[J_\infty] = \lim_{n \rightarrow \infty} \Ee[J_n] = \lim_{n \rightarrow \infty} \f{\Ee[Y(1) - Y(0)]}{c_n} < \infty. 
\ee
Therefore, $\Pp(J_\infty < \infty) = 1$. Since $K \le J_\infty$, with probability one, $Y$ has only finitely many points of increase on $[0,1]$. Therefore, with probability one, $Y:[0,1]\to \R$ is locally constant except at the finitely many jump points. Hence, for each $t \in (0,1)$, the left and right limits of $Y$ at $t, Y(t \pm)$ exist. The limits $Y(0+)$ and $Y(1-)$ exist as well. Since $Y$ is increasing, for each $t \in(0,1)$ and $\ve > 0$, we can apply Remark~\ref{rmk:mean number jumps} and Assumption~\ref{finitemean} to get
\[
\Ee[Y(t+) - Y(t-)] \le \Ee[Y(t + \ve) - Y(t - \ve)] = 2\ve \Ee[Y(1) - Y(0)] < \infty.
\]
Sending $\ve \searrow 0$, the left-hand side is $0$ and therefore, a jump occurs at time $t$ with probability $0$. Similar arguments apply to $t = 0$ and $t = 1$. Therefore, there exists an event of probability one, $\Omega_{\Q_2}$ on which $Y$ has no jumps at points of the form $\f{j}{2^n}$ for positive integers $j$ and $n$.

To compute the mean number of jumps, we show that $J_\infty = K$ on the event $\Omega_{\Q_2}$. We already showed that $K \le J_\infty$, so it remains to show $J_\infty \le K$.

 We start by showing that if $Y(b) > Y(a)$ for some $a < b$, there must be some point of increase in the interval $[a,b]$. We prove this as follows: let $c$ be the midpoint of $a$ and $b$. Then, since $Y$ is nondecreasing, either $Y(b) > Y(c)$ or $Y(c) > Y(a)$. If, without loss of generality, $Y(b) > Y(c)$, then we can bisect the interval again with midpoint $d$ and get that $Y(b) > Y(d)$ or $Y(d) > Y(c)$, where $d$ is the midpoint of $a$ and $b$. Inductively, this constructs a sequence of nested intervals $[a_n,b_n] \subseteq [a_{n - 1},b_{n - 1}]\subseteq [a,b]$, where $[a_n,b_n]$ is either the left or right half of the previous interval. Then, $a_n$ is nondecreasing and $b_n$ is nonincreasing and $b_n - a_n \rightarrow 0$. Then, set $t = \lim_{n \rightarrow \infty} a_n = \lim_{n \rightarrow \infty} b_n$, and we have that $t \in [a_n,b_n]$ for all $n$. If $t \in (0,1)$, then for all $\ve > 0$,  we may choose $n$ large enough so that, because $Y$ is nondecreasing,
\[
Y(t + \ve) - Y(t - \ve) \ge Y(b_n) - Y(a_n) > 0.
\]
Hence, $t$ is a point of increase. The case where $t = 0$ or $1$ is handled similarly.

Now, we show that on $\Omega_{\Q_2}$, $J_n \le K$ for all $n$. By definition, $J_n$ is the number of integers $0 < j \le 2^n$ such that $Y(j2^{-n}) > Y((j -1)2^{-n})$. For each such $j$, we just showed that there must be a point of increase in $[(j - 1)2^{-n},j2^{-n}]$, and on the event $\Omega_{\Q_2}$, that point of increase must lie in the interior of the interval. Thus, $J_n \le K$, and $J_\infty \le K$, so $J_\infty = K$ on $\Omega_{\Q_2}$. Equation~\eqref{eqn:Jinf_mean} computes the mean number of jump points. 
\end{proof}

\begin{proof}[Proof of Theorem~\ref{thm:Busemann jump process intro version}]
By Theorem~\ref{dist of Busemann functions and Bm} we can realize the distribution of the process as a function of independent Brownian motions  $Z^1,\ldots,Z^n$ with respective drifts $\lambda_1,\ldots,\lambda_n$: 
\[
\big(X(\lambda_1;t),\ldots,X(\lambda_n;t)\big) \deq (\eta^1(t),\ldots ,\eta^n(t)) = \D^{(n)}(Z^1,\ldots,Z^n)(t). 
\]
From this,  $\eta^1(t) = Z^1(t)$, and  by Lemma~\ref{identity for multiple queuing mappings flipped}, for $2 \le k \le n$,
\begin{align*}
\eta^k(t)
= Z^1(t) &+  \sup_{0 \leq t_1\leq t_{2} \cdots \leq t_{n - 1} < \infty} \sum_{i = 1}^{k - 1} (Z^i(t_i) - Z^{i + 1}(t_i)) \\
&\qquad- \sup_{t \leq t_1\leq t_{2} \cdots \leq t_{n - 1} < \infty} \sum_{i = 1}^{k - 1} (Z^i(t_i) - Z^{i + 1}(t_i)).
\end{align*}
Hence, the $Z^1(t)$ terms in $\eta^{k + 1}(1) - \eta^k(t)$ cancel out. 
With independent, standard Brownian motions $W^1,\ldots,W^n$, we can write 
\[
Z^i(t_i) - Z^{i + 1}(t_i) = W^i(t_i) - W^{i + 1}(t_i) -(\lambda_{i + 1} - \lambda_i)t_i.
\]
Hence the distribution of the vector of increments 
\[
\big(X(\lambda_2;t) - X(\lambda_1;t),X(\lambda_3;t) - X(\lambda_2;t),\ldots, X(\lambda_n;t) - X(\lambda_{n - 1};t)\big)
\]
depends only on the differences $\lambda_2 - \lambda_1,\ldots,\lambda_{n} - \lambda_{n - 1}$ and  not on the individual values  $\lambda_i$. This shows increment-stationarity of the process. 
To complete the proof, we show that $\Ee[X(\lambda;t) - X(0;t)]  = \lambda t$ and that 
\be \label{Xlambda_liminf}
\liminf_{\lambda \searrow 0} \Ee[X(\lambda;t) - X(0;t)|X(\lambda;t) > X(0;t)]  = \f{\sqrt{\pi t}}{2},
\ee
allowing us to invoke Theorem~\ref{thm:jump process condition}. The fact that $\Ee[X(\lambda;t) - X(0;t)] = \lambda t$ follows since $h_m^{1/\lambda^2}(t)$ is a Brownian motion with drift $\lambda$ and $t \mapsto X(0;t)$ is a Brownian motion with zero drift. Since $X(\lambda;t) - X(0;t)$ is nonnegative,
\begin{align*}
\Ee[X(\lambda;t) - X(0;t)|X(\lambda;t) > X(0;t)] 
= \f{\Ee[X(\lambda;t) - X(0;t)]}{\Pp(X(\lambda;t) > X(0;t))}
= \f{\lambda t}{\Pp(X(\lambda;t) > X(0;t))}. 
\end{align*}
 By~\eqref{BuseCDF},  for $t,\lambda > 0$,
\[
\Pp(X(\lambda;t)> X(0;t) ) =  1 - (2 + \lambda^2 t) \Phi\bigl(-\lambda\sqrt{{t}/{2}}\,\bigr) + \lambda e^{-\f{\lambda^2 t}{4}} \sqrt{{t}/{\pi}\,}. 
\]
Substitute this to the denominator above and 
apply L'H{\^o}pital's rule to deduce~\eqref{Xlambda_liminf}. Hence, we may apply Theorem~\ref{thm:jump process condition}. By Remark~\ref{rmk:any_interval_same}, for $0 < \gamma \le \infty$, the mean number of directions $\theta$ satisfying $h_m^{\theta+}(s,t) < h_m^{\theta -}(s,t)$ is distributed as the number of jumps of $\lambda \mapsto X(\lambda;s + (t - s)) - X(\lambda;s)$ in the interval $\lambda \in [\f{1}{\sqrt \delta},\f{1}{\sqrt \gamma}]$, which has mean
\[
\Big(\f{1}{\sqrt \gamma} - \f{1}{\sqrt \delta}\Big)\lambda (t - s)  \Bigg(\f{\sqrt{\pi (t - s)}}{2}\Bigg)^{-1} = 2\sqrt{\f{t - s}{\pi}} \Big(\f{1}{\sqrt \gamma} - \f{1}{\sqrt \delta}\Big).
\]
The almost sure existence of $\ve > 0$ such that $X(\lambda;t) = X(0;t) = B_0(t)$ for $\lambda \in [0,\ve)$ follows because Theorem~\ref{thm:jump process condition} states that $0$ is a jump point of $\lambda \mapsto X(\lambda;t)$ with zero probability.  The limit $\lim_{\lambda \rightarrow \infty} X(\lambda;t) = \infty$
follows by monotonicity and because $X(\lambda;t) = h_0^{1/\lambda^2}(t) \sim \Nor(t\lambda,t)$.
\end{proof}

\begin{proof}[Proof of Corollary~\ref{cor:inc_dist_conseq}]
\textbf{Part~\ref{itm:convNor}:}
In~\eqref{BuseCDF}, set $y = z + \lambda t$ and send $\lambda \rightarrow \infty$. The limit is $\Phi\bigl(\f{y}{\sqrt{2t}}\bigr)$.

\medskip \noindent \textbf{Part~\ref{itm:non_independent}:} 
 For $\lambda_2 > \lambda_1 > 0$, 
\be \label{Xlambda2}
X(\lambda_2;t) = X(\lambda_2;t) - X(\lambda_1;t) + X(\lambda_1;t).
\ee
Recall that $X(\lambda;t) = h_m^{(1/\lambda^2)-}(t)$, and for each fixed $\lambda$, $h_m^{1/\lambda^2}(t)$ is a two-sided Brownian motion with drift $\lambda$ by Theorem~\ref{thm:summary of properties of Busemanns for all theta}\ref{Buse_marg_dist}. Then, the left-hand side of~\eqref{Xlambda2} has variance $t$.  If, by way of contradiction, $X(\lambda_1;t)$ is independent of $X(\lambda_2;t) - X(\lambda_1;t)$, then the right-hand side of~\eqref{Xlambda2} has variance $\Var(X(\lambda_2;t) - X(\lambda_1;t)) + t$, implying that  $\Var(X(\lambda_2;t) - X(\lambda_1;t)) = 0$ and $X(\lambda_2;t)  = X(\lambda_1;t)$ is zero, almost surely. This cannot be true because $X(\lambda_1;t)$ and $X(\lambda_2;t)$ have different distributions: $X(\lambda_i;t)$ is normal with mean $\lambda_i t$ for $i = 1,2$.

For the second statement, by monotonicity, $X(\lambda_2;t) - X(0;t) = 0$ if and only if $X(\lambda_2;t) - X(\lambda_1;t) = 0$ and $X(\lambda_1;t) - X(0;t) = 0$. Then, if by contradiction, $\lambda \mapsto X(\lambda;t)$ has independent increments, then for $0 < \lambda_1 < \lambda_2$, 
\[
\Pp(X(\lambda_2;0) - X(0;t) = 0) = \Pp(X(\lambda_2;t) - X(\lambda_1;t) = 0)\Pp(X(\lambda_1;t)-X(0;t) = 0),
\]
or equivalently,
\be \label{meminc}
\Pp(X(\lambda_2;0) - X(0;t) = 0|X(\lambda_1;t) - X(0;t) = 0) = \Pp(X(\lambda_2;t) - X(\lambda_1;t) = 0).
\ee
If we let $T(t)$ be the time of the first jump of the process $\lambda \mapsto X(\lambda;t)$, then by the increment-stationarity of Theorem~\ref{thm:Busemann jump process intro version}, Equation~\eqref{meminc} is equivalent to 
\be \label{memoryless}
\Pp(T(t) > \lambda_2|T(t) > \lambda_1) = \Pp(T(t) > \lambda_2 - \lambda_1).
\ee
Note that $\Pp(T(t) > \lambda) = F(0,\lambda,t)$, which by~\eqref{eqn:0inc}, is not an exponential distribution and therefore not a memoryless distribution. Thus,~\eqref{memoryless} fails. 
\end{proof}

\subsection{Proof of Theorems~\ref{thm:qualitative_Buse} and~\ref{thm:Theta properties}}
We first prove a rather technical seeming theorem, but many of whose statements have natural geometric meaning. For example, Part \ref{convBM} says that uniformly for directions sufficiently close to horizontal, the rightmost geodesic must travel horizontally to some distance bounded away from $0$, and thereby the  horizontal Busemann process coincides with the environment of Brownian motions, throughout a given interval (See Lemma~\ref{lemma:equality of busemann to weights of BLPP}). Part \ref{itm:coupletoBM_smallt} gives   a dual statement: uniformly for directions bounded away from the vertical,  the  horizontal Busemann process coincides with the environment of Brownian motions at least for some nondegenerate interval.

\begin{theorem} \label{thm:coupled_BMs_technical}
There exists an event of full probability, on which the following hold. 
\begin{enumerate} [label=\rm(\roman{*}), ref=\rm(\roman{*})]  \itemsep=3pt
\item \label{itm:ineqboundary} For all $S < T \in \R$ and $\theta > 0$,  if $h_m^{\theta +}(s,t) < h_m^{\theta - }(s,t)$ for some $s < t \in [S,T]$, then $h_m^{\theta +}(S,T) < h_m^{\theta -}(S,T)$. 
\item \label{itm:jump_points_increasing} In particular, for every $m \in \Z$ and $s < t \in \R$, the paths of the process $\theta \mapsto h_m^{\theta \pm}(s,t)$ are the right and left continuous versions of a nonincreasing step function with discrete jumps. If, for some $s < t$, $\theta^\star$ is a jump point for $\theta \mapsto h_m^{\theta\pm}(s,t)$, then for all $S < s$ and $T > t$,  $\theta^\star$ is also a jump point for the process $\theta \mapsto h_m^{\theta\pm}(S,T)$. 
\item \label{convBM} For each $m \in \Z$ and each compact set $K \subseteq \R$, there exists a random $\eta = \eta(m,K) > 0$ such  that for all $\theta > \eta$ and $s,t \in K$, $h_m^{\theta+}(s,t) = h_m^{\theta-}(s,t) = B_m(s,t)$.
\item \label{itm:strong_unif_convergence} For all $\theta > 0,m \in \Z$ and compact $K \subseteq \R$, there exists a random $\ve = \ve(\theta,K,m) > 0$  such that whenever $\theta - \ve < \gamma <\theta < \delta < \theta  + \ve$, $\sigg \in \{-,+\}$, and $s < t \in K$, $h_m^{\gamma \sig}(s,t) = h_m^{\theta -}(s,t)$ and $h_m^{\delta \sig }(s,t) = h_m^{\theta + }(s,t)$.
\item \label{convBuse} More generally, for each compact set $K \subseteq \Z \times\R$ and $\theta > 0$,  there exists a random $\ve = \ve(K,\theta)> 0$ such that whenever $\mbf x,\mbf y \in K$, $\theta - \ve < \gamma < \theta < \delta < \theta + \ve$, and $\sigg \in \{-,+\}$,
\[
\B^{\gamma \sig}(\mbf x,\mbf y) = \B^{\theta -}(\mbf x,\mbf y),\qquad\text{and}\qquad\B^{\delta \sig}(\mbf x,\mbf y) = \B^{\theta +}(\mbf x,\mbf y).
\]
    \item \label{itm:coupletoBM_smallt} For all $\eta > 0$ and $m \in \Z$, there exists a random $\ve = \ve(m,\eta) > 0$ such that,  for all $|t| \le \ve$  and $\theta > \eta$,
    \[
    h_m^{\theta +}(t) = h_m^{\theta -}(t) = B_m(t).
    \]
    \item \label{itm:hm_to_infty}
    For all  $m \in \Z$, $\sigg \in \{-,+\}$, and $s < t \in \R$, 
    \[\lim_{\theta \searrow 0} h_m^{\theta \sig}(s,t) = + \infty, \qquad{and}\qquad \lim_{\theta \rightarrow \infty} v_m^{\theta \sig}(t) = +\infty.\]
    \item \label{itm:infinite_inc_points} For each $m \in \Z$ and  $s < t \in \R$, the process $\{h_m^{\theta \sig}(s,t): \theta > 0\}$ has infinitely many points of decrease,  whose unique  accumulation point  is $\theta = 0$.
\end{enumerate}  
\end{theorem}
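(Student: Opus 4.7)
The plan is to prove the eight parts in order, using as the key inputs the jump-process description of Theorem~\ref{thm:Busemann jump process intro version} (via the shift invariance of Remark~\ref{rmk:any_interval_same}) together with the monotonicity and uniform convergence of Theorem~\ref{thm:summary of properties of Busemanns for all theta}. Part~\ref{itm:ineqboundary} is immediate from the increment ordering $h_m^{\theta+}\li h_m^{\theta-}$ of Theorem~\ref{thm:summary of properties of Busemanns for all theta}\ref{general monotonicity Busemanns}: any nonnegative subinterval increment of $h_m^{\theta-}-h_m^{\theta+}$ is bounded by its increment over the enclosing $[S,T]$. The first claim of Part~\ref{itm:jump_points_increasing} then follows by applying Theorem~\ref{thm:Busemann jump process intro version} to the process $\lambda\mapsto h_m^{(1/\lambda^2)\pm}(s,t)$ for each fixed $s<t$, and the second (persistence-of-jump-points) claim is just Part~\ref{itm:ineqboundary}.

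For Part~\ref{convBM}, fix $K=[-T,T]$. Theorem~\ref{thm:Busemann jump process intro version} provides a random $\ve>0$ such that $\lambda\mapsto h_m^{(1/\lambda^2)-}(-T,T)$ has no jumps in $[0,\ve)$; by Part~\ref{itm:jump_points_increasing} the same $\ve$ works for every $(s,t)\subseteq[-T,T]$, and $\eta=\ve^{-2}$ is the desired threshold. Part~\ref{itm:strong_unif_convergence} is analogous: the discreteness of the jump set of $\lambda\mapsto h_m^{(1/\lambda^2)\pm}(-T,T)$ in a neighborhood of $1/\sqrt{\theta}$ gives some $\ve>0$ such that $(1/\sqrt{\theta+\ve},1/\sqrt{\theta-\ve})$ meets this set only (possibly) at $1/\sqrt{\theta}$ itself, and Part~\ref{itm:jump_points_increasing} propagates the resulting one-sided constancy uniformly in $s,t\in K$.

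Part~\ref{convBuse} is the main technical point. Given compact $K\subseteq\Z\times\R$, use additivity (Theorem~\ref{thm:summary of properties of Busemanns for all theta}\ref{general additivity Busemanns}) to decompose $\B^{\gamma\sig}(\mbf x,\mbf y)$ into horizontal increments $h_n^{\gamma\sig}(s,t)$, handled by Part~\ref{itm:strong_unif_convergence}, and vertical increments $v_k^{\gamma\sig}(s)$. For the latter invoke the queueing representation $v_k^{\gamma\sig}(s)=\sup_{u\ge s}\{B_{k-1}(s,u)-h_k^{\gamma\sig}(s,u)\}$ from Theorem~\ref{thm:summary of properties of Busemanns for all theta}\ref{general queuing relations Busemanns}. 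For $\gamma$ in a neighborhood $(\theta-\ve_0,\theta+\ve_0)$ of $\theta>0$, monotonicity in $\gamma$ gives $h_k^{\gamma\sig}\gi h_k^{(\theta+\ve_0)-}$, whose drift $1/\sqrt{\theta+\ve_0}>0$ is strictly positive; combined with a standard Brownian fluctuation bound on $B_{k-1}$ this produces a random $M$, uniform in $s$ in the time projection of $K$ and in $\gamma$ in this neighborhood, such that the supremum is attained in $[s,M]$. Part~\ref{itm:strong_unif_convergence} applied on the compact $[-T,M]$ then renders the supremand locally constant in $\gamma$, so the supremum itself is locally constant. The main obstacle is precisely this uniform localization of the argmax in $u$, which is what forces the domination by the $\gamma=\theta+\ve_0$ path.

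For Part~\ref{itm:coupletoBM_smallt}, the monotonicity chain $B_m\li h_m^{\theta+}\li h_m^{\theta-}\li h_m^{\eta-}$ reduces the problem to showing $S^+:=\sup\{t\ge 0:h_m^{\eta-}(t)=B_m(t)\}>0$ almost surely (with an analogous argument for $t\le 0$). Since $t\mapsto h_m^{\eta-}(t)-B_m(t)$ is nonnegative and nondecreasing on $[0,\infty)$, $\{S^+=0\}\subseteq\{h_m^{\eta-}(1/n)>B_m(1/n)\}$ for every $n\ge 1$, and Theorem~\ref{thm:Buse_inc} gives $\Pp(h_m^{\eta-}(1/n)=B_m(1/n))=F(0;1/\sqrt{\eta},1/n)\to 1$ via~\eqref{eqn:0inc}, so $\Pp(S^+=0)=0$. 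Part~\ref{itm:hm_to_infty} follows from monotonicity in $\theta$ combined with Borel--Cantelli along $\theta_n=n^{-2}$, where $h_m^{\theta_n\sig}(s,t)\sim\Nor(n(t-s),t-s)$ has summable Gaussian tails; the $v$-limit is analogous using the $\Exp(1/\sqrt{\theta})$ marginal from Theorem~\ref{thm:summary of properties of Busemanns for all theta}\ref{Buse_marg_dist}. Finally, Part~\ref{itm:infinite_inc_points} combines~\eqref{Th670} (finiteness of the expected jump count on each compact $\theta$-subinterval of $(0,\infty)$) with the observation that a step function with only finitely many jumps in total would be eventually constant, contradicting the divergence from Part~\ref{itm:hm_to_infty}.
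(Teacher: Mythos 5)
Your overall strategy matches the paper's: you build the full-probability event from the jump-process statement (Theorem~\ref{thm:Busemann jump process intro version}) applied on integer-endpoint intervals, propagate by the increment-comparison inequality~\eqref{comp_mont}, and handle the vertical Busemann increments in Part~\ref{convBuse} via the queueing representation and localization of the argmax. Parts~\ref{itm:ineqboundary}--\ref{itm:strong_unif_convergence}, \ref{itm:coupletoBM_smallt}, and \ref{itm:infinite_inc_points} are in line with the paper (with the usual implicit countable reduction for \ref{itm:coupletoBM_smallt} over rational $\eta$). In Part~\ref{convBuse} you bound the argmax by domination of the supremand by the $\gamma=\theta+\ve_0$ path, whereas the paper localizes using the geodesic jump time $\tau_{(m,T),m}^{(\theta+1)+,R}$; these are the same idea expressed in two ways, and both rely on the established fact that $B_{k-1}(s,u)-h_k^{\theta\sig}(s,u)\to-\infty$ as $u\to\infty$ (Theorem~\ref{thm:summary of properties of Busemanns for all theta}\ref{limits of B_m minus h m + 1}).

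There is a genuine gap in Part~\ref{itm:hm_to_infty} for the $v$-limit. Calling it ``analogous'' to the $h$-limit is misleading. The Borel--Cantelli argument from the marginal $\Exp(1/\sqrt\theta)$ only gives $\lim_{\theta\to\infty}v_m^{\theta\sig}(t)=\infty$ for each \emph{fixed} $t$; to get it for \emph{all} $t$ simultaneously on a single event you cannot reuse the mechanism that rescues the $h$-limit. For $h$, the extension from a countable dense set of $(s,t)$ to all pairs works because $h_m^\theta(s,t)\ge B_m(s,s')+h_m^\theta(s',t')+B_m(t',t)$ for rational $s<s'<t'<t$, so an increment over $[s',t']$ is a lower bound. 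But $v_m^\theta(t)$ is a point evaluation, not an increment, so the lower-bounding trick does not apply directly. The paper avoids this entirely by a \emph{deterministic} argument that covers all $t$ at once: from $v_m^{\theta\sig}(t)=\sup_{t\le s<\infty}\{B_{m-1}(t,s)-h_m^{\theta\sig}(t,s)\}$, the uniform-on-compacts convergence $h_m^{\theta\sig}\to B_m$ as $\theta\to\infty$ (Theorem~\ref{thm:summary of properties of Busemanns for all theta}\ref{general uniform convergence Busemanns}\ref{general uniform convergence:limits to infinity}) gives
\[
\liminf_{\theta\to\infty}v_m^{\theta\sig}(t)\ge B_{m-1}(t)-B_m(t)+\sup_{t\le s\le T}\{B_{m-1}(s)-B_m(s)\}
\]
for every $T$, and $\sup_{s\ge 0}\{B_{m-1}(s)-B_m(s)\}=\infty$ almost surely because $B_{m-1}-B_m$ is a two-sided Brownian motion. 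To salvage your approach you would instead need the additivity identity $v_m^\theta(t)=v_m^\theta(q)+h_m^\theta(q,t)-h_{m-1}^\theta(q,t)$ with $q$ rational, together with the $\theta\to\infty$ uniform convergence of both horizontal Busemann functions to the Brownian environment; that works, but it is a different argument from the $h$-limit case and should be stated.
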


\begin{proof}
\noindent \textbf{Part~\ref{itm:ineqboundary}} holds on the event $\Omega_2$ as follows. Theorem~\ref{thm:summary of properties of Busemanns for all theta}\ref{general monotonicity Busemanns} implies that for $S \le s \le t \le T$, $0 \le \theta \le \delta \le \infty$, and $\sigg_1,\sigg_2 \in \{-,+\}$ (if $\theta = \delta$, we require $\sigg_1 = -$ and $\sigg_2 = +$),
\[
h_m^{\delta \sig_2}(S,s) + h_m^{\delta \sig_2}(t,T) \le h_m^{\theta \sig_1}(S,s) + h_m^{\theta \sig_1}(t,T). 
\]
Here, we define $h_m^\infty(s,t) = B_m(s,t)$.
This inequality can be rearranged to get
\be \label{comp_mont}
0 \le  h_m^{\theta \sig_1}(s,t) - h_m^{\delta \sig_2}(s,t) \le h_m^{\theta \sig_1}(S,T) - h_m^{\delta \sig_2 }(S,T).
\ee
The case $\theta = \delta$ and $\sigg_1 = -,\sigg_2 = +$ proves~\ref{itm:ineqboundary}.

\be \label{omega3} \text{For the remaining parts, let $\Omega_3$ be the subset of $\Omega_2$ on which the following hold:}
\ee
\begin{enumerate}  \itemsep=3pt 
\item \label{itm:jumpallT} The paths of $\theta \mapsto h_m^{\theta \pm}(S,T)$ are step functions with discrete jumps for all $m,S<T \in \Z$. For such $m,S,T$, there exists $\eta = \eta(S,T) > 0$ such that for $\theta > \eta$ and $\sigg \in\{-,+\}$, $h_m^{\theta \sig}(S,T) = B_m(S,T)$. 
\item \label{itm:equalendpt} For each $\theta \in \Q_{>0}$ and $m \in \Z$, there exists $N \in \Z_{> 0}$ such that $h_m^{\theta}(\pm N^{-1}) = B_m(\pm N^{-1})$.
\item \label{itm:limit}For each $m \in \Z$, $s < t \in \R$, and $\sigg \in \{-,+\}$, $\lim_{\theta \searrow 0} h_m^{\theta \sig}(s,t) = +\infty$.
\item \label{itm:Buse_dir} For every  $m \in \Z$, $\theta > 0$, and $\sigg \in \{-,+\}$, $\lim_{t \rightarrow \infty} \f{h_m^{\theta \sig}(t)}{t} = \f{1}{\sqrt \theta}$.
\item \label{itm:supBm} For every $m \in \Z$, 
\[
\sup_{0 \le s < \infty}\{B_{m -1 }(s) - B_m(s)\} = +\infty.
\]
\end{enumerate}

We first show that $\Omega_3$ has probability one, and then show that the remaining parts of the theorem hold on this event. By Theorem~\ref{thm:Busemann jump process intro version}, Condition~\eqref{itm:jumpallT} holds with probability one. Next, rearranging $B_m(s,t) \le h_m^{\theta \sig}(s,t)$ for $s < t$ gives, for $a < t < b$,
\be \label{mont}
h_m^{\theta \sig}(a) - B_m(a) \le  h_m^{\theta \sig}(t) - B_m(t) \le h_m^{\theta \sig}(b) - B_m(b).
\ee
 Thus, for $\theta \in \Q_{>0}$, if $h_m^{\theta}(\pm N^{-1}) = B_m(\pm N^{-1})$, then also $h_m^{\theta}(\pm (N + 1)^{-1}) = B_m(\pm (N + 1)^{-1})$. Therefore, 
 \[
 \Pp\biggl(\bigcup_{N \in \Z_{>0}} \bigl\{h_m^{\theta}(\pm N^{-1}) = B_m(\pm N^{-1})\bigr\}\biggr) = \lim_{N \rightarrow \infty} \Pp\left(h_m^{\theta}(\pm N^{-1}) = B_m(\pm N^{-1})\right) = 1,
 \]
 where the last equality follows from Theorem~\ref{dist of Busemann functions and Bm} and \eqref{eqn:0inc} with $t \searrow 0$. Therefore, Condition~\ref{itm:equalendpt} holds with probability one. 
  Next, we show that Condition~\eqref{itm:limit} holds with probability one. For all $m \in \Z$ and a countable dense set of pairs  $s < t$, this follows from Theorem~\ref{thm:Busemann jump process intro version} and Remark~\ref{rmk:any_interval_same}. The monotonicity of Equation~\eqref{comp_mont} with $\theta \searrow 0$ and  $\delta$ fixed extends Condition~\eqref{itm:limit} to all $S < T \in \R$ on a single event of probability one.

 Condition~\eqref{itm:Buse_dir} holds with probability one for all $\theta \in \Q_{>0}$, since each $h_m^\theta$ is a Brownian motion with drift $\f{1}{\sqrt \theta}$ (Theorem~\ref{thm:summary of properties of Busemanns for all theta}\ref{Buse_marg_dist}). The monotonicity of~\eqref{weakmont} extends this to all $\theta > 0$ and $\sigg \in \{-,+\}$. Since $B_{m - 1}$ and $B_m$ are independent,    $B_{m - 1} - B_{m}$ is a variance $2$ Brownian motion. Hence,  Condition~\eqref{itm:supBm} holds with probability one, and $\Pp(\Omega_3) = 1$, as desired. We now prove the remaining parts of the theorem.

\medskip \noindent \textbf{Part~\ref{itm:jump_points_increasing}:} This now follows from Condition~\eqref{itm:jumpallT} of the event $\Omega_3$ and Part~\ref{itm:ineqboundary}. 
 
 \medskip \noindent \textbf{Parts~\ref{convBM}}:  Let $m \in \Z$, $S,T \in \Z_{>0}$, and, without loss of generality, let $K = [S,T]$. By Condition~\eqref{itm:equalendpt} and the $\delta = \infty$ case of~\eqref{comp_mont}: There exists $\eta = \eta(m,K) > 0$ such that, when $\theta > \eta$, $\sigg \in \{-,+\}$, and $s < t \in K$,
 \[
  0 \le h_m^{\theta \sig}(s,t) - B_m(s,t) \le h_m^{\theta \sig}(S,T) -B_m(S,T) = 0.
 \]
 
 \medskip \noindent \textbf{Part~\ref{itm:strong_unif_convergence}:} This follows similarly as the proof Part~\ref{convBM}: By Condition~\eqref{itm:jumpallT} and~\eqref{comp_mont}, when $m,S,T \in \Z$ with $S < T$ and $\theta > 0$, then setting $K = [S,T]$, there exists $\ve = \ve(\theta,m,K) > 0$ such that, when $\theta - \ve < \gamma < \theta < \delta < \theta + \ve$ and $\sigg \in \{-,+\}$,
 \begin{align*}
     0 &= h_m^{\theta+}(s,t) - h_m^{\delta \sig}(s,t) = h_m^{\theta+}(S,T) - h_m^{\delta \sig}(S,T),\quad\text{and} \\
     0 &= h_m^{\gamma \sig}(s,t) - h_m^{\theta-}(s,t) = h_m^{\gamma \sig}(S,T) - h_m^{\theta -}(S,T).
 \end{align*}

 \medskip \noindent \textbf{Part~\ref{convBuse}}: By Part~\ref{itm:strong_unif_convergence} and the additivity of Theorem~\ref{thm:summary of properties of Busemanns for all theta}\ref{general additivity Busemanns}, it is sufficient to prove that for each $m \in \Z$, compact  $K \subseteq \R$ and $\theta > 0$, there exists $\ve = \ve(m,K,\theta)$ such that $v_{m+1}^{\gamma \sig}(t) = v_{m + 1}^{\theta-}(t)$ and $v_{m +1}^{\delta \sig}(t) = v_{m + 1}^{\theta +}(t)$ whenever $t \in K$ and $\theta - \ve < \gamma < \theta < \delta < \theta + \ve$. By Theorem~\ref{thm:summary of properties of Busemanns for all theta}\ref{general queuing relations Busemanns} and Theorem~\ref{existence of semi-infinite geodesics intro version}\ref{itm:monotonicity of semi-infinite jump times}\ref{itm:monotonicity in theta}, for $\delta < \theta + 1$, 
 \be \label{vtrun}
 v_{m +1}^{\delta \sig}(t) = \sup_{t \le s < \infty}\{B_{m}(t,s) - h_{m + 1}^{\delta \sig}(t,s)\} = \sup_{t \le s \le \tau_{(m,t),m }^{(\theta + 1)+,R}} \{B_{m}(t,s) - h_{m + 1}^{\delta \sig}(t,s)\}.
 \ee
 By Part~\ref{itm:strong_unif_convergence}, there exists $\ve > 0$, such that, for $\delta \in (\theta,\theta + \ve)$, the right-hand side of~\eqref{vtrun} is equal to
 \[
 \sup_{t \le s \le \tau_{(m ,T),m }^{(\theta + 1)+,R}} \{B_{m}(t,s) - h_{m + 1}^{\theta +}(t,s)\} = v_{m + 1}^{\theta +}(t),
 \]
 where $T = \sup K$. The result for $\gamma < \theta$ is proved similarly.

\medskip \noindent  \textbf{Part~\ref{itm:coupletoBM_smallt}:} Let $m \in \Z$, $\eta > 0$ and let $\eta_1 < \eta$ be rational. By Condition~\eqref{itm:equalendpt} of $\Omega_3$ and~\eqref{mont}, there exists $\ve  = \ve(m,\eta_1)$ such that $B_m(t) = h_m^{\eta_1}(t)$ for all $|t| \le \ve$. Then, by the monotonicity of Theorem~\ref{thm:summary of properties of Busemanns for all theta}\ref{general monotonicity Busemanns}, for all $\theta > \eta > \eta_1$, $\sigg \in \{-,+\}$, and $|t| \le \ve$, $B_m(t) = h_m^{\theta \sig}(t)$ as well.   

 \medskip \noindent \textbf{Part~\ref{itm:hm_to_infty}:} The first limit is exactly Condition~\eqref{itm:limit} of the event $\Omega_3$. For the second limit, we use the representation of Theorem~\ref{thm:summary of properties of Busemanns for all theta}\ref{general queuing relations Busemanns} to get
 \[
 v_{m}^{\theta \sig}(t) = \sup_{t \le s < \infty}\{B_{m - 1}(t,s) - h_{m}^{\theta \sig}(t,s)\}.
 \]
 By Theorem~\ref{thm:summary of properties of Busemanns for all theta}\ref{general uniform convergence Busemanns}\ref{general uniform convergence:limits to infinity}, $h_m^{\theta \sig}$ converges to $B_m$ uniformly on compact sets as $\theta \rightarrow \infty$.  Therefore, for any $T \ge t$,
 \begin{align*}
 \liminf_{\theta \rightarrow \infty} v_m^{\theta \sig}(t) &=\liminf_{\theta \rightarrow \infty} \sup_{t \le s < \infty}\{B_{m - 1}(t,s) - h_{m}^{\theta \sig}(t,s)\} \\
 &\ge \liminf_{\theta \to \infty} [h_{m}^{\theta \sig}(t) - B_{m - 1}(t) + \sup_{t \le s \le T}\{B_{m - 1}(s) - h_m^{\theta \sig}(s)\}] \\
 &= B_{m}(t) - B_{m - 1}(t) + \sup_{t \le s \le T}\{B_{m -1 }(s) - B_m(s)\}.
 \end{align*}
 Since this holds for all $T \ge t$, the limit is $+\infty$ by Condition~\eqref{itm:supBm}   of the event $\Omega_3$~\eqref{omega3}.
 
 \medskip \noindent 
 \textbf{Part~\ref{itm:infinite_inc_points}:} This follows directly from Parts~\ref{itm:jump_points_increasing} and~\ref{itm:hm_to_infty}. 
\end{proof}

\begin{proof}[Proof of Theorem~\ref{thm:qualitative_Buse}]

The full probability event of this theorem is $\Omega_3$, defined in~\eqref{omega3}.

\medskip \noindent \textbf{Part~\ref{itm:dist_incr}:} The monotonicity of Theorem~\ref{thm:summary of properties of Busemanns for all theta}\ref{general monotonicity Busemanns} implies that for $0 < t < T$, 
\[
0 = h_0^{\gamma \sig_1}(0) - h_0^{\delta \sig_2 }(0) \le h_0^{\gamma \sig_1}(t) - h_0^{\delta \sig_2 }(t) \le h_0^{\gamma \sig_1}(T) - h_0^{\delta \sig_2 }(T).
\]

\medskip \noindent \textbf{Part~\ref{itm:stick_split}:} Let $\gamma < \theta$ and $\sigg_1,\sigg_2 \in \{-,+\}$. By Theorem~\ref{thm:coupled_BMs_technical}\ref{itm:coupletoBM_smallt}, there exists $\ve > 0$ small enough so that $B_m(t)= h_m^{\gamma \sig_1}(t) = h_m^{\theta \sig_2}(t)$ for $0 \le t \le \ve$. On the other hand, by definition of the event $\Omega_3$~\eqref{omega3}, 
\[
\lim_{t \rightarrow \infty} \f{h_0^{\gamma \sig_1}(t)}{t} = \f{1}{\sqrt \gamma} > \f{1}{\sqrt \theta} = \lim_{t \rightarrow \infty} \f{h_0^{\theta \sig_2}(t)}{t}.
\]
Hence  $h_0^{\gamma \sig_1}(t) > h_0^{\theta \sig_2}(t)$ for sufficiently large $t$. By the monotonicity of increments from Part~\ref{itm:dist_incr}, 
and continuity of Theorem~\ref{thm:summary of properties of Busemanns for all theta}\ref{general continuity of Busemanns}, 
separation happens at a unique time
$S\ge\ve$. 

\medskip \noindent \textbf{Part~\ref{itm:theta_split_h}:} Let $S = S(\gamma \sigg_1,\delta\sigg_2)$. We first consider the case $\sigg_1 = +$ and $\sigg_2 = -$. Then, the assumption is that $h_0^{\gamma +}(t) = h_0^{\delta - }(t)$ for $0 \le t \le S$ and $h_0^{\gamma +}(t) > h_0^{\delta -}(t)$ for $t > S$. Then, the jump process $\theta \mapsto h_0^{\theta \pm}(S)$ has no jumps in the interval $(\gamma,\delta)$, but for each $\ve > 0$, the process $\theta \mapsto h_0^{\theta \pm}(S + \ve)$ has a nonzero finite number of jumps in the open interval $(\gamma,\delta)$. Furthermore, by Theorem~\ref{thm:coupled_BMs_technical}\ref{itm:jump_points_increasing}, jumps are only added as $\ve$ increases. Hence, there must exist some $\theta^\star \in (\gamma,\delta)$ such that, for every $\ve > 0$, $\theta^\star$ is a jump point of the process $\theta \mapsto h_0^{\theta \pm}(S + \ve)$, i.e. $h_0^{\theta^\star\!-}(S + \ve) > h_0^{\theta^\star\!+}(S +\ve)$ for all $\ve > 0$. But since $h_0^{\gamma+}(S) = h_0^{\delta-}(S)$, the monotonicity of Equation~\eqref{weakmont} requires $h_0^{\theta^\star\!-}(S) = h_0^{\theta^*\!+}(S)$. Hence, $\theta^\star$ has the desired property and lies in $\Busedc$ by definition (Equation~\eqref{eqn:Theta}). 

Next, we prove the statement in the case $\sigg_1 = \sigg_2 = +$. The remaining cases follow similarly. The assumption is now that $h_0^{\gamma+}(t) = h_0^{\delta +}(t)$ for $0 \le t \le S$ and $h_0^{\gamma+}(t) > h_0^{\delta+}(t)$ for $t > S$. By~\eqref{weakmont}, this implies that $h_0^{\gamma+}(t) = h_0^{\delta-}(t) = h_0^{\delta+}(t)$ for $0 \le t \le S$. We may write
\[
h_0^{\gamma+}(t) - h_0^{\delta+}(t) = h_0^{\gamma+}(t) - h_0^{\delta-}(t) + h_0^{\delta-}(t) - h_0^{\delta+}(t),
\]
and therefore, by Part~\ref{itm:dist_incr}, either $h_0^{\gamma +}(t) > h_0^{\delta-}(t)$ for $t > S$ (allowing us to apply the previous case) or $h_0^{\delta-}(t) > h_0^{\delta+}(t)$ for $t > S$, in which case $\theta = \delta$ satisfies the desired property. 

\medskip \noindent \textbf{Parts~\ref{itm:count_trajectories}--\ref{itm:discrete_trajectories}:} By Theorem~\ref{thm:coupled_BMs_technical}\ref{itm:jump_points_increasing},\ref{itm:hm_to_infty}, for each $T > 0$, the path $\theta \mapsto h_0^{\theta \pm}(T)$ is a nonincreasing step function with discrete jump locations that tends to $\infty$ as $\theta \searrow 0$, so the set 
$\{h_0^{\theta \sig}(T): \theta > 0,\sigg \in \{-,+\}\}$
is discrete and infinite. 
Since distances between the Busemann  trajectories are nondecreasing by Part~\ref{itm:dist_incr}, each of these discrete values corresponds to a single trajectory from time $0$ up to time $T$.
\end{proof}

\begin{proof}[Proof of Theorem~\ref{thm:Theta properties}]
The fact that $\Pp(\theta \in \Theta)$ for each fixed $\theta > 0$ is a direct consequence of Theorem~\ref{thm:summary of properties of Busemanns for all theta}\ref{busemann functions agree for fixed theta}. We now prove the various parts of the theorem. The full probability event of the theorem is $\Omega_3$, constructed in~\eqref{omega3}.

\textbf{Part~\ref{itm:Theta_count_intro}:} 
 The density of $\Busedc$ is a direct consequence of Theorem~\ref{thm:qualitative_Buse}\ref{itm:theta_split_h}. Now, set 
 \[
 H_m = \{\theta > 0: h_m^{\theta+}(t) \neq h_m^{\theta -}(t) \text{ for some }t \in \R\}.
 \]
 By the additivity of Theorem~\ref{thm:summary of properties of Busemanns for all theta}\ref{general additivity Busemanns} and the relations of Theorem~\ref{thm:summary of properties of Busemanns for all theta}\ref{general queuing relations Busemanns}, the entire Busemann process can be obtained by a deterministic function of the process $\{h_m^{\theta \sig}(t): m \in \Z, t \in \R,\theta > 0,\sigg \in \{-,+\}\}$. Hence, $\Busedc = \bigcup_{m \in \Z} H_m$. It then suffices to prove that each $H_m$ is countably infinite. By Theorem~\ref{thm:coupled_BMs_technical}\ref{itm:ineqboundary},
 \begin{align}
H_m 
&= \bigcup_{T = 1}^\infty  \{\theta > 0: h_m^{\theta-}(t) \neq h_m^{\theta+}(t) \text{ for some }t \in [-T,T]\} \nonumber \\
&= \bigcup_{T \in \Z} \{\theta > 0: h_m^{\theta -}(T) \neq h_m^{\theta +}(T)\}. \label{eqn:at_T_bound}
\end{align}
For each $T \in \Z$, $\theta \mapsto h_m^{\theta \sig}(T)$ is monotone, so there are only countably many values of $\theta > 0$ such that $h_m^{\theta -}(T) \neq h_m^{\theta +}(T)$. Hence, $H_m$ is countable as well.

 \medskip \noindent \textbf{Part~\ref{itm:const}:} By Theorem~\ref{thm:coupled_BMs_technical}\ref{convBuse}, for every $\theta > 0$, there exists a random $\ve = \ve(\theta,\mbf x,\mbf y)$ such that for $\theta - \ve < \gamma < \theta < \delta < \theta +\ve$ and $\sigg \in \{-,+\}$, $\B^{\theta-}(\mbf x,\mbf y) = \B^{\gamma \sig}(\mbf x,\mbf y)$, and $\B^{\theta +}(\mbf x,\mbf y) = \B^{\delta \sig}(\mbf x,\mbf y)$. Hence, there are no points of $\Busedc_{\mbf x,\mbf y}$ in $(\theta - \ve,\theta + \ve)\setminus \{\theta\}$, and so $\Busedc_{\mbf x,\mbf y}$ has no nonzero limit points. As a result, the notion of two successive points of $\Busedc_{\mbf x,\mbf y}$ is well-defined.  Furthermore, if $\theta \notin \Busedc_{\mbf x,\mbf y}$, then $\B^{\theta-}(\mbf x,\mbf y) = \B^{\theta +}(\mbf x,\mbf y)$, so there exists a random $\ve = \ve(\mbf x,\mbf y) > 0$ such that $\theta \mapsto \B^{\theta \pm}(\mbf x,\mbf y)$ is constant in the interval $(\theta - \ve,\theta + \ve)$. Hence, if $\gamma < \delta$ are any two successive points of $\Busedc_{\mbf x,\mbf y}$, the function $\theta \mapsto \B^{\theta \pm}(\mbf x,\mbf y)$ is continuous and is everywhere locally constant on $(\gamma,\delta)$. Thus, $\theta \mapsto \B^{\theta \pm}(\mbf x,\mbf y)$ must be constant on the entire interval $(\gamma,\delta)$. 
 
 Lastly, set $\mbf x = (m,t)$, $\mbf y = (r,s)$ and $\mbf w = (r,t)$. Without loss of generality, assume $r \ge m$.  By~\eqref{hvdecomp},
\[
\B^{\theta \sig}(\mbf x,\mbf y) = \B^{\theta \sig}(\mbf x,\mbf w) + \B^{\theta \sig}(\mbf w,\mbf y) = \sum_{k = m + 1}^{r} v_k^{\theta \sig}(t)  -h_r^{\theta \sig}(s,t).
\]
By Theorem~\ref{thm:coupled_BMs_technical}\ref{itm:hm_to_infty} and Theorem~\ref{thm:summary of properties of Busemanns for all theta}\ref{general uniform convergence Busemanns}\ref{general uniform convergence:limits to infinity}-\ref{general uniform convergence:limits to 0}, for each $k \in \Z$ and $s < t \in \R$,
\begin{align*}
&\lim_{\theta \rightarrow \infty} h_k^{\theta \sig}(s,t) = B_k(s,t), \quad \lim_{\theta \searrow 0} h_k^{\theta \sig}(s,t) = + \infty, \\\
&\lim_{\theta \rightarrow \infty} v_k^{\theta \sig}(t) = +\infty,\quad \text{and}\quad \lim_{\theta \searrow 0} v_k^{\theta \sig}(t) = 0.
\end{align*}
Since $\mbf x \neq \mbf y$, $r > m$ or $s \neq t$ (or both). Hence, $\B^{\theta \sig}(\mbf x,\mbf y)$ converges to $+\infty$ or $-\infty$ either as $\theta \rightarrow \infty$ or $\theta \searrow 0$, and since $\theta \mapsto \B^{\theta \sig}(\mbf x,\mbf y)$ is constant on every open interval in $(0,\infty) \setminus \Busedc_{\mbf x,\mbf y}$, the set $\Busedc_{\mbf x,\mbf y}$ must be infinite.

\medskip \noindent \textbf{Part~\ref{itm:Theta=Vm}:} The fact that $\Busedc_{(m,-t),(m,t)}$ is nondecreasing follows by Theorem~\ref{thm:coupled_BMs_technical}\ref{itm:ineqboundary}. 
We prove the equality~\eqref{eqn:line_dc} by proving $H_m = H_{m + 1}$, i.e., that $h_m^{\theta+}(t) = h_m^{\theta-}(t)$ for all $t \in \R$ if and only if $h_{m + 1}^{\theta+}(s) = h_{m + 1}^{\theta-}(s)$ for all $s \in \R$. The if-part is immediate from $h_m^{\theta \sig} = D(h_{m + 1}^{\theta \sig},B_m)$ (Theorem~\ref{thm:summary of properties of Busemanns for all theta}\ref{general queuing relations Busemanns}).  

Next, for $\omega \in \Omega_4$, assume that $h_{m + 1}^{\theta +}(s) \neq h_{m + 1}^{\theta -}(s)$ for some $s \in \R$. Assume, by way of contradiction, that $h_{m}^{\theta +}(t) = h_m^{\theta -}(t)$ for all $t$. By Theorem~\ref{thm:summary of properties of Busemanns for all theta}\ref{general queuing relations Busemanns},  
\[
h_m^{\theta \sig}(t) =  B_m(t) + \sup_{0 \le u <\infty}\{B_m(u) - h_{m + 1}^{\theta \sig}(u)\} - \sup_{t \le u <\infty}\{B_m(u) - h_{m + 1}^{\theta \sig}(u)\},
\]
so the function
\[
t \mapsto f(t) := \sup_{t \le u <\infty}\{B_m(u) - h_{m + 1}^{\theta +}(u)\} - \sup_{t \le u <\infty}\{B_m(u) - h_{m + 1}^{\theta -}(u)\}
\]
is constant. First, consider the case $s > 0$. By Theorem~\ref{thm:summary of properties of Busemanns for all theta}\ref{general monotonicity Busemanns}, $h_{m + 1}^{\theta+} \li h_{m + 1}^{\theta -}$, so for $u \ge s$,
\[
h_{m + 1}^{\theta -}(u) - h_{m + 1}^{\theta+}(u) \ge h_{m + 1}^{\theta -}(s) - h_{m + 1}^{\theta+}(s) > 0.
\]
 Then, $f(s) > 0$. On the other hand, by Theorem~\ref{thm:summary of properties of Busemanns for all theta}\ref{limits of B_m minus h m + 1},
\be \label{eqn:pmlim}
\lim_{t \rightarrow \mp \infty} B_m(t) - h_{m + 1}^{\theta \sig}(t) = \pm \infty,
\ee
so we may choose $s_0 < 0$ to be sufficiently negative so that 
\[
\sup_{s_0 \le u < \infty}\{B_m(u)- h_{m + 1}^{\theta \sig}(u)\} =  \sup_{s_0 \le u \le 0}\{B_m(u)- h_{m + 1}^{\theta \sig}(u)\}
\]
for $\sigg \in \{-,+\}$.
By Equation~\eqref{weakmont}, $h_{m + 1}^{\theta +}(u) \ge h_{m + 1}^{\theta -}(u)$ for $u \le 0$. Thus, $f(s_0) \le 0$, a contradiction to the finding that $f(s) > 0$ and $f$ is constant.

Now, consider the case $s < 0$. Using $h_{m +1}^{\theta +}\li h_{m + 1}^{\theta -}$, as in the $s > 0$ case, $h_{m + 1}^{\theta +}(u) > h_{m +1}^{\theta -}(u)$ for all $u < s$. By~\eqref{eqn:pmlim}, we can choose $s_0$ to be sufficiently negative so that for $\sigg \in \{-,+\}$,
\[
\sup_{s_0 \le u < \infty}\{B_m(u)- h_{m + 1}^{\theta \sig}(u)\} =  \sup_{s_0 \le u \le s}\{B_m(u)- h_{m + 1}^{\theta \sig}(u)\},
\]
and hence $f(s_0) < 0$. On the other hand, by~\eqref{weakmont}, $h_{m + 1}^{\theta - }(u) \ge h_{m 
+ 1}^{\theta +}(u)$ for $u > 0$, so $f(0) \ge 0$, giving a similar contradiction.
\end{proof}

\section{Proofs of the results from Section~\ref{section:main_geometry} and Theorems~\ref{thm:Busedc_class} and~\ref{thm:Haus_comp_interface}} \label{sec:detailed_SIG_proofs}

We start by proving the results of Section~\ref{section:main_geometry}, and the proofs of Theorems~\ref{thm:Busedc_class} and~\ref{thm:Haus_comp_interface} are presented at the very end of this section. The full probability event of the results in Section~\ref{section:main_geometry} is denoted as $\Omega_4$, which we now define.
Recall the discussion of the events $\Omega_2$, $\Omega^{(\theta)}$, and $\Omega_{\mbf x}^{(\theta)}$ immediately before Theorem~\ref{thm:summary of properties of Busemanns for all theta}. 
Let $\Omega_3 \subseteq \Omega_2$ be the full probability event of Lemma~\ref{thm:coupled_BMs_technical}. Let $\wt \Omega$ be the full probability event of Lemma~\ref{lemma:geodesic_bound}. For each $m \in \Z$ and $\gamma > 0$, the function $s\mapsto B_m(s) - h_{m + 1}^\gamma(s)$ is a variance $2$ Brownian motion with negative drift by Theorems~\ref{thm:summary of properties of Busemanns for all theta}\ref{Buse_marg_dist} and~\ref{thm:summary of properties of Busemanns for all theta}\ref{independence structure of Busemann functions on levels}.   Let $C_{m,\gamma}$ be the full probability event on which the conclusions of Theorem~\ref{thm:countable non unique maximizers} hold, applied to $s \mapsto B_m(s) - h_{m + 1}^\gamma(s)$.  For $r \in \Z$, let $A_r$ be the event  on which the set 
    \[
    \Big\{s \in \R: B_{r -1 }(s) - B_{r}(s) = \sup_{s \le u \le t}\{B_{r - 1}(u) - B_{r}(u)\} \;\text{ for some } t> s\Big\}.
    \]
    has Hausdorff dimension $\f{1}{2}$. Since $B_r$ and $B_{r - 1}$ are independent, $B_r - B_{r - 1}$ is a variance $2$ Brownian motion, and Corollary~\ref{cor:hausdorff dimension for left maxes standard two-sided BM} implies that $\Pp(A_r) = 1$.

The full-probability event $\Omega_4$ is defined to be
\be \label{omega4}
\Omega_4 := \Omega_2 \cap \Omega_3 \cap \wt \Omega \cap \bigcap_{\theta \in \Q_{>0}} \Omega^{(\theta)} \cap \bigcap_{\theta \in \Q_{>0},\mbf x \in \Z \times \Q} \Omega_{\mbf x}^{(\theta)} \cap \bigcap_{m \in \Z,\gamma \in \Q_{>0}} C_{m,\gamma} \cap \bigcap_{r \in \Z} A_r.
\ee
For the sake of reference, the following properties hold on this event.
\begin{enumerate} [label=\rm(\roman{*}), ref=\rm(\roman{*})]  \itemsep=3pt
    \item \label{itm:Buse_eq} For $\theta \in \Q_{>0}$ and $\mbf x,\mbf y \in \Z \times \R$, $\B^{\theta -}(\mbf x,\mbf y) = \B^{\theta +}(\mbf x,\mbf y)$ (Theorem~\ref{thm:summary of properties of Busemanns for all theta}\ref{busemann functions agree for fixed theta}).   
    \item \label{itm:notNU} For each $\mbf x \in \Z \times \Q$ and $\theta \in \Q_{>0}$, $\mbf x \notin \NU_0^\theta$. In other words, for $\theta \in \Q_{>0}$ and $\mbf x \in \Z \times \Q$, there is a unique $\theta$-directed semi-infinite geodesic out of $\mbf x$. (See Theorem~\ref{thm:NU_paper1}\ref{itm:uniqueness of geodesic for fixed point and direction} and Remark \ref{rmk:NU_Sets}(a)).
    \item \label{itm:rational_non_unique} For each $\theta \in \Q_{>0}$, the sets $\NU_0^\theta$ and $\NU_1^\theta$ are countably infinite. (Theorem~\ref{thm:NU_paper1}\ref{itm:count_and_decomp}).
    \item The conclusions of Theorem~\ref{thm:coupled_BMs_technical} hold. 
    \item \label{itm:omega4_unique_geod} For every $(m,q_1) \le (r,q_2)$ with $q_1,q_2 \in \Q_{>0}$, there exists a unique geodesic between $(m,q_1)$ and $(r,q_2)$. That unique geodesic does not pass through $(k,q_1)$ for $k > m$ or $(n,q_2)$ for $n < r$ (Lemma~\ref{lemma:geodesic_bound}\ref{itm:unique_geod_rational}).
    \item \label{itm:finite_geodesics} For every pair of points $(m,s) \le (n,t)$, there are finitely many geodesics between the two points (Lemma~\ref{lemma:geodesic_bound}\ref{itm:geod_bound}). 
\end{enumerate} 
With this event in place, we have the following result.
\begin{lemma} \label{lem:all_theta_max_comp}
On the event $\Omega_4$, the following hold for all $\theta > 0$ and $\sigg \in \{-,+\}$.
\begin{enumerate} [label=\rm(\roman{*}), ref=\rm(\roman{*})]  \itemsep=3pt
    \item \label{itm:max_agree} For each compact set $K$, there exists $\gamma \in \Q_{>0}$ such that, for each $t \in K$, the functions $s \mapsto B_m(s) - h_{m + 1}^{\theta \sig}(s)$ and $s \mapsto B_m(s) - h_{m + 1}^\gamma(s)$ agree on the common compact set containing all maximizers of the functions over $s \in [t,\infty)$.
    \item \label{itm:all_no_3} There exist no points $t \in \R$ such that the function $s \mapsto B_m(s) - h_{m + 1}^{\theta \sig}(s)$ has three maximizers over $s \in [t,\infty)$. If the function has two maximizers over $s \in [t,\infty)$, one of them is $s = t$.
    \item \label{itm:all_not_monotone}The function $s \mapsto B_m(s) - h_{m + 1}^{\theta \sig}(s)$ is not monotone on any nonempty interval.
    \item \label{itm:approx_right} If, for some $s \in \R$, $\tau_{(m,s),m}^{\theta \sig,R} = s$, then for every $\ve > 0$, there exists $t \in (s,s + \ve)$ such that $(m,t) \in \NU_1^{\theta \sig}$.
    \item \label{itm:approx_left} If $(m,s) \in \NU_1^{\theta \sig}$, there exists $t \in (s - \ve,s)$ such that $\tau_{(m,t),m}^{\theta \sig,R} = t$.
    \item \label{itm:no_NU1_left} For all $(m,s) \in \NU_1^{\theta \sig}$, there exists $\delta  > 0$ such that $\NU_1^{\theta \sig} \cap (\{m\}\times (s,s+\delta)) = \varnothing$.
\end{enumerate}
\end{lemma}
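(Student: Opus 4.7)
The plan is to treat Part~\ref{itm:max_agree} as a local-to-global reduction from the general direction/sign $(\theta,\sigg)$ to a rational direction $\gamma \in \Q_{>0}$, and then to leverage the strong a.s.\ regularity of Brownian motion on the event $C_{m,\gamma}$ to deduce the remaining parts. Set $g(s) := B_m(s) - h_{m+1}^{\theta\sig}(s)$. For Part~\ref{itm:max_agree}, I will first note that Theorem~\ref{thm:summary of properties of Busemanns for all theta}\ref{limits of B_m minus h m + 1} gives $g(s) \to -\infty$ as $s \to \infty$, so all maximizers of $g$ over $[t,\infty)$, with $t$ ranging over the compact set $K$, lie in some random compact interval $[\inf K, M]$. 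Using the continuity-in-$\theta$ of the Busemann process (Theorem~\ref{thm:summary of properties of Busemanns for all theta}\ref{general uniform convergence Busemanns}), I would enlarge $M$ so that the analogous maximizers of $B_m - h_{m+1}^{\gamma}$ stay in $[\inf K, M]$ uniformly for $\gamma$ in a neighborhood of $\theta$. Theorem~\ref{thm:coupled_BMs_technical}\ref{itm:strong_unif_convergence} then supplies a rational $\gamma$ in a one-sided neighborhood of $\theta$ (chosen to match $\sigg$) for which $h_{m+1}^\gamma$ and $h_{m+1}^{\theta\sig}$ agree in increments on $[\inf K, M]$. Consequently $g$ and $\tilde g := B_m - h_{m+1}^\gamma$ differ by an additive constant on that set and share the same maximizers over every $[t,\infty)$ with $t \in K$.

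With Part~\ref{itm:max_agree} in hand, Parts~\ref{itm:all_no_3} and~\ref{itm:all_not_monotone} reduce to the corresponding statements for $\tilde g$. By Theorem~\ref{thm:summary of properties of Busemanns for all theta}\ref{independence structure of Busemann functions on levels}--\ref{Buse_marg_dist}, the process $\tilde g$ is a two-sided variance-$2$ Brownian motion with drift $-1/\sqrt{\gamma} < 0$, and on $C_{m,\gamma}$ the conclusions of Theorem~\ref{thm:countable non unique maximizers} apply: $\tilde g$ admits no three maximizers over any $[t,\infty)$; if exactly two exist then one equals $t$; and $\tilde g$ is not monotone on any interval. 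Each of these facts transfers to $g$ via Part~\ref{itm:max_agree}, since any pathology of $g$ on $[t,\infty)$ is inherited by $\tilde g$ once $K$ is chosen to cover the relevant points. Part~\ref{itm:no_NU1_left} then follows directly: if $(m,s) \in \NU_1^{\theta\sig}$, Part~\ref{itm:all_no_3} furnishes $u > s$ with $g(u) = g(s)$, and for any $t \in (s,u)$ Part~\ref{itm:all_not_monotone} rules out $g$ being constant on $[s,t]$, so $g(t) < g(s) = g(u)$; thus $t$ does not maximize $g$ over $[t,\infty)$, and by Part~\ref{itm:all_no_3} this precludes $(m,t) \in \NU_1^{\theta\sig}$. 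The choice $\delta = u - s$ finishes the argument.

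For Part~\ref{itm:approx_left}, my starting observation will be that $(m,s) \in \NU_1^{\theta\sig}$ rules out $s$ being a strict local maximum of $g$ from the left: otherwise $g(t) < g(s) = g(u)$ for $t$ in a left neighborhood of $s$ would produce two maximizers of $g$ over $[t,\infty)$ at $s$ and $u$, neither equal to $t$, contradicting Part~\ref{itm:all_no_3}. Hence for every $\ve > 0$ some $t \in (s-\ve, s)$ satisfies $g(t) > g(s)$, and taking $t^*$ to be the rightmost maximizer of $g$ on $[s-\ve, s]$ will yield $g(v) < g(t^*)$ for $v \in (t^*, s]$ and $g(v) \le g(s) < g(t^*)$ for $v \ge s$; thus $\tau_{(m,t^*),m}^{\theta\sig,R} = t^*$. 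Part~\ref{itm:approx_right} will be the main technical obstacle. From $\tau_{(m,s),m}^{\theta\sig,R} = s$ I have $g(v) < g(s)$ for all $v > s$, and the task is to produce, inside every $(s, s+\ve)$, a point $t$ at which the running supremum $M(t) := \sup_{v \ge t} g(v)$ is attained both at $t$ and at some strictly larger point (which places $(m,t) \in \NU_1^{\theta\sig}$ by Part~\ref{itm:all_no_3}). My plan is to invoke Part~\ref{itm:max_agree} to transport the question to $\tilde g$, where on $C_{m,\gamma}$ the structural results of Theorem~\ref{thm:countable non unique maximizers} on matched local maxima of Brownian motion with negative drift will provide such $t$ arbitrarily close to $s$ from the right. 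The delicate check is that these matched-maximum locations for $\tilde g$ remain inside the agreement window, which follows from the uniform containment of maximizers established in the proof of Part~\ref{itm:max_agree}.
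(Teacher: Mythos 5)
Your proposal follows the paper's overall strategy: establish Part~\ref{itm:max_agree} by using Theorem~\ref{thm:coupled_BMs_technical}\ref{itm:strong_unif_convergence} to replace $h_{m+1}^{\theta\sig}$ with a rational $h_{m+1}^\gamma$ on a compact window containing all relevant maximizers, and then transfer the regularity facts from Theorem~\ref{thm:countable non unique maximizers} (which hold on $C_{m,\gamma}$ by the definition~\eqref{omega4} of $\Omega_4$) to the general $(\theta,\sigg)$. Your treatments of Parts~\ref{itm:max_agree}--\ref{itm:all_not_monotone} and~\ref{itm:approx_right} match the paper's. The one place where you use a bound on maximizers, the paper instead uses the explicit upper bound $\tau_{(m,t_1),m}^{(\theta+1)+,R}$ via Theorem~\ref{existence of semi-infinite geodesics intro version}\ref{itm:monotonicity of semi-infinite jump times}\ref{itm:monotonicity in theta}--\ref{itm:monotonicity in t}, which makes the ``uniform containment of maximizers'' concrete; your vaguer ``enlarge $M$'' is repairable but would benefit from this explicit device.

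Where you diverge usefully is in Parts~\ref{itm:approx_left} and~\ref{itm:no_NU1_left}: the paper simply says ``analogous, using Theorem~\ref{thm:countable non unique maximizers}\ref{non_discrete}--\ref{non_discrete MN}'', whereas you derive these directly from Parts~\ref{itm:all_no_3} and~\ref{itm:all_not_monotone} of the lemma itself. This is cleaner and more self-contained, and the underlying Brownian-motion engine is the same. However, two small citation/logic points need adjusting in Part~\ref{itm:no_NU1_left}. First, the existence of $u>s$ with $g(u)=g(s)$ is not furnished by Part~\ref{itm:all_no_3}; it is the definition of $(m,s)\in\NU_1^{\theta\sig}$ (take $u=\tau_{(m,s),m}^{\theta\sig,R}$). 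Second, the inference ``$g$ not constant on $[s,t]$ implies $g(t)<g(s)$'' does not follow from Part~\ref{itm:all_not_monotone} --- non-constancy does not preclude $g(t)=g(s)$. What you want is Part~\ref{itm:all_no_3}: if $g(t)=g(s)=g(u)$ with $s<t<u$ then $g$ has three maximizers over $[s,\infty)$, which is forbidden; hence $g(t)<g(s)$. Likewise in Part~\ref{itm:approx_left}, the hypothesis for a contradiction should be ``$g(t)\le g(s)$ for all $t$ in a left neighborhood'' rather than strict inequality, since equality at some $t$ produces three maximizers, and the rightmost maximizer should be taken over $[t,s]$ (for $t$ with $g(t)>g(s)$ inside the $\ve$-window) rather than over $[s-\ve,s]$, to ensure it lands strictly inside $(s-\ve,s)$.
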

\begin{proof}
\textbf{Part~\ref{itm:max_agree}:}
Let $t_0 = \min K$ and $t_1 = \max K$. By Theorem~\ref{thm:coupled_BMs_technical}\ref{itm:strong_unif_convergence}, for $\theta > 0$ and $\sigg \in \{-,+\}$, we may choose $\gamma \in \Q_{>0}$ to be sufficiently close to $\theta$ (from the right for $\sigg = +$ and from the left for $\sigg = -$) so that $h_{m + 1}^{\theta \sig}(s) = h_{m +1}^\gamma(s)$ for all $s \in [t_0,\tau_{(m,t_1),m}^{(\theta + 1)+,R}]$. Since $\tau_{(m,t),m}^{\theta \sig,R}$ is the right-most maximizer of $B_m(s) - h_{m + 1}^{\theta \sig}(s)$ over $s \in [t,\infty)$, by monotonicity of Theorem~\ref{existence of semi-infinite geodesics intro version}\ref{itm:monotonicity of semi-infinite jump times}\ref{itm:monotonicity in theta}--\ref{itm:monotonicity in t}, for all $t \in K$,  the set $[t_0,\tau_{(m,t_1),m}^{(\theta + 1)+,R}]$  contains all maximizers of $B_m(s) - h_{m + 1}^{\theta \sig}(s)$ and $B_m(s) - h_{m + 1}^\gamma(s)$  over $s \in [t,\infty)$. 

\medskip \noindent \textbf{Part~\ref{itm:all_no_3}:} If, by contradiction, for some $t \in \R$, $B_m(s) - h_{m + 1}^{\theta \sig}(s)$ has two maximizers over $s \in [t,\infty)$ that are both strictly greater than $t$, then by Part~\ref{itm:max_agree}, the same holds for $B_m(s) - h_{m + 1}^{\gamma}(s)$, where $\gamma$ is some rational direction. This contradicts the definition of $\Omega_4$ that the conclusion of Theorem~\ref{thm:countable non unique maximizers}\ref{itm:2_3_max} holds for the function $s \mapsto B_m(s) - h_{m + 1}^\gamma(s)$.

\medskip \noindent \textbf{Part~\ref{itm:not_mont}:}
Assume by way of contradiction that $s\mapsto B_m(s) - h_{m + 1}^{\theta\sig}(s)$ is monotone on some compact interval $I$. By Theorem~\ref{thm:coupled_BMs_technical}\ref{itm:strong_unif_convergence}, there exists a rational $\gamma \in \Q_{>0}$ such that $B_m(s) - h_{m + 1}^\gamma(s) = B_m(s) - h_{m + 1}^{\theta \sig}(s)$ for $s \in I$. Then, $s \mapsto B_m(s) - h_{m + 1}^{\gamma}(s)$ is monotone on $I$, contradicting the definition of the event $\Omega_4 \subseteq C_{m,\gamma}$~\eqref{omega4} and Theorem~\ref{thm:countable non unique maximizers}\ref{itm:not_mont}.

\medskip \noindent \textbf{Part~\ref{itm:approx_right}:} Assume that $\tau_{(m,s),m}^{\theta \sig,R} = s$ and let $\ve > 0$. By definition of right-most maximizers, $u = s$ is the unique maximizer of $B_m(u) - h_{m + 1}^{\theta \sig}(u)$ over $u \in [s,\infty)$.  Let $K = [s,s + \ve]$. Then, by Part~\ref{itm:max_agree}, there exists a rational $\gamma > 0$ such that $B_m(u) - h_{m + 1}^{\theta \sig}(u) = B_m(u) - h_{m + 1}^\gamma(u)$ for all $u$ in a common compact set containing all maximizers of both functions over $u \in [t,\infty)$, for each $t \in K$. By definition of $\Omega_4 \subseteq C_{m,\gamma}$ and Theorem~\ref{thm:countable non unique maximizers}\ref{non_discrete}, there exists $t \in (s,s + \ve)$ such that $B_m(u) - h_{m +1}^\gamma(u)$ (and therefore also $B_m(u) - h_{m + 1}^{\theta \sig}(u)$) has two maximizers over $u \in [t,\infty)$. Thus, $(m,t) \in \NU_1^{\theta \sig}$.

\medskip \noindent \textbf{Parts~\ref{itm:approx_left}--\ref{itm:no_NU1_left}:} These follow by an analogous proof as Part~\ref{itm:approx_right}, using Part~\ref{itm:max_agree}, the definition of $\Omega_4\subseteq C_{m,\gamma}$, and Theorem~\ref{thm:countable non unique maximizers}, Parts~\ref{non_discrete}--\ref{non_discrete MN}.
\end{proof}

\begin{proof}[Proof of Theorem~\ref{thm:geod_stronger}]
\noindent \textbf{Part~\ref{itm:stronger convergence theta}}: We prove the statement for limits from the right, and the other statement follows analogously. Without loss of generality, set $K = [t_0,t_1]$ for $t_0 < t_1$.
By Lemma~\ref{lemma:ptl_sig}\ref{itm:LR_geo_max}, for each  $t \in K$, $n \ge m$, and $\delta>0$,   the sequences $t = \tau_{(m,t),m - 1}^{\delta \sig,S} \le \tau_{(m,t),m}^{\delta \sig,S} \le \cdots \le \tau_{(m,t),n}^{\delta \sig,S}$ for  $S\in\{L,R\}$ are the leftmost and rightmost  maximizers of 
\be \label{eqn:ptl}
\sum_{r = m}^n B_r(s_{r - 1},s_r) - h_{n + 1}^{\delta \sig}(s_n)
\ee
over all sequences $t = s_{m - 1} \le s_m \le \cdots \le s_n < \infty$.
By Theorem~\ref{existence of semi-infinite geodesics intro version}\ref{itm:monotonicity of semi-infinite jump times}\ref{itm:monotonicity in theta}--\ref{itm:monotonicity in t}, for all $\delta \in [\theta,\theta + 1]$, $(m,t) \in \Z \times \R$, $\sigg \in \{-,+\}$, $S \in \{L,R\}$, and $r \ge m$, 
\be \label{gathmont}
\tau_{(m,t),r}^{\delta \sig ,S} \le \tau_{(m,t_1),r}^{(\theta+  1)+,R}.
\ee
Hence, for all $t \in K$, $\delta \in [\theta,\theta + 1]$ and $\sigg \in \{-,+\}$, 
the maximizers of \eqref{eqn:ptl} remain the same 
when the maximum is restricted to sequences $t = s_{m - 1} \le s_m \le \cdots \le s_n \le \tau_{(m,t_1),n}^{(\theta + 1)+,R}$.

By Theorem~\ref{thm:coupled_BMs_technical}\ref{itm:strong_unif_convergence} and the monotonicity of Theorem~\ref{existence of semi-infinite geodesics intro version}\ref{itm:monotonicity of semi-infinite jump times}\ref{itm:monotonicity in theta}--\ref{itm:monotonicity in t}, for each $n$, there exists a random $\ve > 0$ such that for $t \in K$, $\theta < \delta < \theta + \ve$, and $\sigg \in\{-,+\}$,  $h_{n + 1}^{\delta \sig }(s_n) = h_{n + 1}^{\theta +}(s_n)$ for $t_0 \le s_n \le \tau_{(m,t_1),n}^{(\theta + 1)+,R}$ (Recall that $h_{n +1}^{\delta \sig}(s_n) = h_{n +1}^{\delta \sig}(0,s_n)$). 
Hence, for $\theta < \delta < \theta + \ve$ and each $t = s_{m -1} \in K$, the functions
\[
\sum_{r = m}^n B_r(s_{r - 1},s_r) - h_{n + 1}^{\delta \sig}(s_n),\qquad\text{and}\qquad  \sum_{r = m}^n B_r(s_{r - 1},s_r) - h_{n + 1}^{\theta+}(s_n)
\]
are equal on the common compact set of sequences $t = s_{n - 1} \le \cdots \le s_n \le \tau_{(m,t_1),n}^{(\theta + 1)+,R}$ which contains all their maximizers. In particular, their left- and right-most maximizers coincide, and therefore 
$\tau_{(m,t),r}^{\delta\sig,S} = \tau_{(m,t),r}^{\theta +,S}$ for $S \in \{L,R\}$  and $m\le r \le n$ by Lemma~\ref{lemma:ptl_sig}\ref{itm:LR_geo_max}.

\medskip \noindent \textbf{Part~\ref{itm:global strong monotonicity in t}}. Assume, by way of contradiction, that there exists $\omega \in \Omega_4$ such that, for some $s < t \in \R,m \le r \in \Z$, $\theta > 0$, and $\sigg \in \{-,+\}$, $\tau_{(m,s),r}^{\theta \sig,R} > \tau_{(m,t),r}^{\theta \sig,L}$. Without loss of generality, take $\sigg = +$. Then, by Part~\ref{itm:stronger convergence theta}, for sufficiently close rational $\delta > \theta$,
\be \label{100}
\tau_{(m,s),r}^{\delta ,R} = \tau_{(m,s),r}^{\theta +,R} > \tau_{(m,t),r}^{\theta +,L} = \tau_{(m,t),r}^{\delta,L}.
\ee
By Theorem~\ref{existence of semi-infinite geodesics intro version}\ref{itm:monotonicity of semi-infinite jump times}\ref{itm:strong monotonicity in t}, $\tau_{(m,s),r}^{\delta,R} \le \tau_{(m,t),r}^{\delta,L}$ on the event $\Omega^{(\delta)}$, which contains $\Omega_4$ because $\delta$ is rational. This is a contradiction to~\eqref{100}. 

\medskip \noindent \textbf{Part~\ref{itm:stronger convergence in t}:} By the monotonicity of Theorem~\ref{existence of semi-infinite geodesics intro version}\ref{itm:monotonicity of semi-infinite jump times}\ref{itm:monotonicity in t}, the limits $\tau_r := \lim_{u \nearrow s} \tau_{(m,u),r}^{\theta \sig,S}$ exist  for $r \ge m$, and by Part~\ref{itm:global strong monotonicity in t}, $\tau_r \le \tau_{(m,s),r}^{\theta \sig,L}$. By Lemma~\ref{lemma:ptl_sig}\ref{itm:geo_maxes}, for $r \ge m$, $\tau_{(m,u),m}^{\theta \sig,S},\ldots,\tau_{(m,u),n}^{\theta \sig,S}$ is a maximizing sequence for 
\be \label{eqn:var}
 \sup\Biggl\{\sum_{r = m}^n B_r(s_{r - 1},s_r) - \h_{n +1}^{\theta\sig}(s_n): u = s_{m - 1} \le s_m \le \cdots \le s_n < \infty  \Biggr\}.
\ee
By Lemma~\ref{lemma:convergence of maximizers from converging sets}, $\tau_m,\ldots,\tau_n$ is a maximizing sequence for~\eqref{eqn:var}, replacing $u$ with $s$. By Lemma~\ref{lemma:ptl_sig}\ref{itm:LR_geo_max}, $\tau_{(m,s),m}^{\theta \sig,L},\ldots,\tau_{(m,s),n}^{\theta \sig,L}$ is the leftmost such maximizing sequence.  Since $\tau_r \le \tau_{(m,s),r}^{\theta \sig,L}$, we must have that $\tau_r = \tau_{(m,s),r}^{\theta \sig,L}$ for $m \le r \le n$.  The proof for limits as $t \searrow s$ is analogous.

\end{proof}

\subsection{Proof of the results from Section~\ref{sec:non_unique_coal}} \label{sec:pf_coal}

\begin{proof}[Proof of Theorem~\ref{thm:NU}]
We prove Part~\ref{itm:allcount} last.

\medskip \noindent \textbf{Part~\ref{itm:union}}: 
To  establish~\eqref{decomp}, we show that, for $i  =0,1$, if $(m,t) \in \NU_i^{\theta\sig}$, then $(m,t) \in \NU_i^\gamma$ for some $\gamma \in \Q_{>0}$. This follows from
Theorem~\ref{thm:geod_stronger}\ref{itm:stronger convergence theta}, by which on the event $\Omega_4$, if $\tau_{(m,t),r}^{\theta \sig,L} < \tau_{(m,t),r}^{\theta \sig,R}$, then $\tau_{(m,t),r}^{\gamma,L} < \tau_{(m,t),r}^{\gamma,R}$ for all rational $\gamma$ sufficiently close to $\theta$ on the appropriate side (greater than $\theta$ for $\sigg = +$ and less than $\theta$ for $\sigg = -$).  With~\eqref{decomp} established, Item~\ref{itm:rational_non_unique} below~\eqref{omega4} implies that, on $\Omega_4$, $\NU_0$ and $\NU_1$ are both countably infinite.

\medskip \noindent \textbf{Part~\ref{itm:NUp0}:} This follows from Part~\ref{itm:union} and Theorem~\ref{thm:NU_paper1}\ref{itm:uniqueness of geodesic for fixed point and direction} since $\Pp(\mbf x \in \NU_0^\theta) = 0$ for any $\theta \in \Q_{>0}$. Equation~\eqref{decomp} and Item~\ref{itm:notNU} below~\eqref{omega4} imply that, on $\Omega_4$, $\NU_0$ contains no points of $\Z \times \Q$.

\medskip \noindent \textbf{Part~\ref{itm:only_endpoint}:}  Let $(m,t)$ be such that $\tau_{(m,t),r}^{\theta \sig,L} < \tau_{(m,t),r}^{\theta \sig,R}$ for some $r \ge m$, and take $r$ to be the minimal such index. We show that $\tau_{(m,t),r}^{\theta \sig,L} = t$.  By  Theorem~\ref{thm:geod_stronger}\ref{itm:stronger convergence theta}, there exists rational $\gamma > 0$ sufficiently close to $\theta$ from the appropriate side such that $\tau_{(m,t),k}^{\theta \sig,S} = \tau_{(m,t),k}^{\gamma,S}$ for $m \le k \le r$ and $S \in \{L,R\}$. Then, $\tau_{(m,t),r}^{\gamma,L} < \tau_{(m,t),r}^{\gamma,R}$ and $\tau_{(m,t),k}^{\gamma,L} = \tau_{(m,t),k}^{\gamma,R}$ for $m \le k < r$. Since $\gamma$ is rational, $\Omega_4 \subseteq \Omega^{(\gamma)}$ by~\eqref{omega4}. Then, by Theorem~\ref{thm:NU_paper1}\ref{itm:count_and_decomp},
\[
t = \tau_{(m,t),r}^{\gamma,L} = \tau_{(m,t),r}^{\theta \sig,L}.
\]

\medskip \noindent \textbf{Part~\ref{itm:allcount}:}
Since $\NU_1^{\theta \sig} \subseteq \NU_0^{\theta \sig} \subseteq \NU_0$, and $\NU_0$ is countably infinite by Part~\ref{itm:union}, it suffices to show that for every $\theta > 0$ and $\sigg \in\{-,+\}$,  $\NU_1^{\theta \sig}$ is infinite. We start by showing that $\NU_1^{\theta \sig}$ is nonempty. To do this, we first show the existence of a point $(m,t) \in \Z \times \R$ such that $\tau_{(m,t),m}^{\theta \sig,L} > t$. If such a point does not exist, then for each $m \in \Z$, the function $B_m(s) - h_{m + 1}^{\theta \sig}(s)$ over $s \in [t,\infty)$ is maximized at $s = t$ for each $t \in \R$. But then, $B_m(s) - h_{m 
+ 1}^{\theta \sig}(s)$ is a nonincreasing function on $\R$, contradicting Lemma~\ref{lem:all_theta_max_comp}\ref{itm:all_not_monotone}.

Since $\tau_{(m,t),m}^{\theta \sig,L} > t$, all maximizers of the function $B_m(s) - h_{m + 1}^{\theta \sig}(s)$ over $s \in [t,\infty)$ are greater than $t$.  In other words, letting 
\[
M := \sup_{t \le s < \infty}\{B_m(s) - h_{m + 1}^{\theta \sig}(s)\},
\]
we have $B_m(t) - h_{m + 1}^{\theta \sig}(t) < M$.
By Theorem~\ref{thm:summary of properties of Busemanns for all theta}\ref{general continuity of Busemanns}--\ref{limits of B_m minus h m + 1}, $s \mapsto B_m(s) - h_{m + 1}^{\theta \sig}(s)$ is continuous and satisfies
\[
    \lim_{s \rightarrow \pm \infty} B_m(s) - \h_{m + 1}^{\theta \sig}(s) = \mp \infty.
    \]
Therefore, the quantity
\[
T := \sup\{s < t: B_m(s) - h_{m + 1}^{\theta \sig}(s) = M\} \qquad\text{is well-defined and finite.}
\]
In words, we go backwards from $t$ until we reach the first point $T$ where $B_m(T) - h_{m + 1}^{\theta \sig}(T) = M$. 
 Then, 
 \[
 \sup_{s \ge T} \{B_m(s) - h_{m + 1}^{\theta \sig}(s)\} = M,
 \]
 and the maximum is achieved at two locations in  $ [T,\infty)$, namely $T$ and $\tau_{(m,t),m}^{\theta \sig,L}$. Hence, \\ $T = \tau_{(m,T),m}^{\theta \sig,L} < \tau_{(m,T),m}^{\theta \sig,R}$, and $(m,T) \in \NU_1^{\theta \sig}$. 

Lastly, we show that $\NU_1^{\theta \sig}$ is infinite by showing that for $(m,s) \in \NU_1^{\theta \sig}$ and $\ve > 0$, there exists $t \in (s - \ve,s)$ such that $(m,t) \in \NU_1^{\theta \sig}$. By Lemma~\ref{lem:all_theta_max_comp}\ref{itm:approx_left}, there exists $t^\star \in (s - \ve,s)$ such that $\tau_{(m,t^\star),m}^{\theta \sig,R} = t^\star$. Then, by Lemma~\ref{lem:all_theta_max_comp}\ref{itm:approx_right}, there exists $t \in (t^\star,s)$ such that $(m,t) \in \NU_1^{\theta \sig}$.
 \end{proof}

\noindent 
We begin now to work towards the coalescence claims of  Theorem~\ref{thm:general_coalescence}. First, we present a technical lemma.

\begin{lemma} \label{lemma:SE_coal_pt}
Let $\omega \in \Omega_4$, $\mbf x \Se \mbf y$, $\theta > 0$, and $\sigg \in \{-,+\}$. Then, if $\Gamma_1 \in \mbf T_{\mbf x}^{\theta \sig}$ and $\Gamma_2 \in \mbf T_{\mbf y}^{\theta \sig}$ are such that $\Gamma_1 \cap \Gamma_2 \neq \varnothing$, then $\Gamma_1$ and $\Gamma_2$ coalesce, and the minimal point of intersection is the coalescence point of the two geodesics. 
\end{lemma}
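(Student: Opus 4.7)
The plan is to analyze the structure at the minimal intersection point $\mbf z = (k, u)$ and use two ingredients: (i) the minimality of $\mbf z$ together with the strict ordering $\mbf x \Se \mbf y$ to constrain the approach times $\tau_{k-1}^{(i)}$ at level $k$, and (ii) the uniqueness of Busemann geodesics from rational time points guaranteed by Item~\ref{itm:notNU} below~\eqref{omega4}. Write $\mbf x = (m,t)$, $\mbf y = (r,s)$ with $r \ge m$ and $s < t$, and let $\{\tau_j^{(i)}\}$ denote the jump times of $\Gamma_i$, so $\mbf z = (k,u)$ satisfies $\tau_{k-1}^{(i)} \le u \le \tau_k^{(i)}$ for $i = 1,2$, and $k \ge r$.

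The first step is to show, using minimality of $\mbf z$, that WLOG $\tau_{k-1}^{(1)} = u$ and $\tau_{k-1}^{(2)} < u$. If both $\tau_{k-1}^{(i)} < u$, their level-$k$ segments overlap in an interval $[\max(\tau_{k-1}^{(1)},\tau_{k-1}^{(2)}), u]$ with nonempty interior, and a rational time $q$ there yields $(k,q) \in \Gamma_1 \cap \Gamma_2$ earlier than $\mbf z$, contradicting minimality. If both $\tau_{k-1}^{(i)} = u$, then $(k-1,u) \in \Gamma_1 \cap \Gamma_2$ (provided $k - 1 \ge r$, or else one handles the case $k=r$ using $\tau_{k-1}^{(2)} = s < u$, where $u \ge \tau_{k-1}^{(1)} \ge t > s$ by the strict ordering). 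Here the hypothesis $s<t$ is essential: it is exactly what rules out the weak-ordering pathology described in Remark~\ref{rmk:coal_pt_not_minpt}.

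The second step is to show $\tau_k^{(1)} = \tau_k^{(2)}$. Supposing WLOG $\tau_k^{(1)} < \tau_k^{(2)}$, there are two subcases. If $\tau_k^{(1)} > u$, then the common horizontal segment $[u,\tau_k^{(1)}]$ at level $k$ contains a rational $q' \in (u, \tau_k^{(1)})$ with $(k,q') \in \Gamma_1 \cap \Gamma_2$; the singleton property of $\mbf T_{(k,q')}^{\theta\sig}$ forces $\tau_k^{(1)} = \tau_k^{(2)}$. If $\tau_k^{(1)} = u$, then since both tails from $\mbf z$ lie in $\mbf T_{\mbf z}^{\theta\sig}$ (Remark~\ref{rmk:max_seq}), both $u$ and $\tau_k^{(2)}$ are maximizers of $s \mapsto B_k(s) - h_{k+1}^{\theta\sig}(s)$ over $[u,\infty)$; comparing with $\tau_k^{(2)}$'s maximizing status over the larger interval $[\tau_{k-1}^{(2)},\infty)$, both $u$ and $\tau_k^{(2)}$ are also maximizers there. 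Since $\tau_{k-1}^{(2)} < u < \tau_k^{(2)}$, neither maximizer is the endpoint $\tau_{k-1}^{(2)}$, violating Lemma~\ref{lem:all_theta_max_comp}\ref{itm:all_no_3}. The remaining subcase $\tau_k^{(2)} = u$ with $\tau_k^{(1)} > u$ is ruled out by a symmetric computation, again via Lemma~\ref{lem:all_theta_max_comp}\ref{itm:all_no_3} applied over $[\tau_{k-1}^{(2)},\infty)$.

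With $\tau_k^{(1)} = \tau_k^{(2)} =: v$ established, the third step imports uniqueness. Pick a rational $q \in (\tau_{k-1}^{(2)}, u)$. Then $(k,q) \in \Gamma_2$, and by Item~\ref{itm:notNU}, $\mbf T_{(k,q)}^{\theta\sig}$ is the singleton $\{\wh\Gamma\}$, which coincides with the tail of $\Gamma_2$ from $(k,q)$. Construct the switched path that runs horizontally from $(k,q)$ to $(k,v)$, jumps to $(k+1,v)$, and then follows the tail of $\Gamma_1$ from $(k+1,v)$. Its level-$k$ jump $v = \tau_k^{(1)}$ is a maximizer over $[q,\infty)$ because $\tau_k^{(2)} = v$ is a maximizer over the larger interval $[\tau_{k-1}^{(2)},\infty)$; at every higher level, the jump conditions are inherited from the validity of $\Gamma_1$'s tail as an element of $\mbf T_{(k,v)}^{\theta\sig}$ (which starts with $\tau_k^{(1)} = v$ as its previous jump time, matching the interval constraint). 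Thus the switched path lies in $\mbf T_{(k,q)}^{\theta\sig}$, so by uniqueness it equals $\wh\Gamma$, forcing $\tau_{k+j}^{(1)} = \tau_{k+j}^{(2)}$ for every $j \ge 0$. Hence $\Gamma_1$ and $\Gamma_2$ agree at every level $\ge k$ past $\mbf z$, establishing coalescence with coalescence point $\mbf z$. The main obstacle is step two: making the Lemma~\ref{lem:all_theta_max_comp}\ref{itm:all_no_3} argument work uniformly across the immediate-jump subcases requires careful bookkeeping of which interval each maximization is taken over, and it is here that the strict inequality $\tau_{k-1}^{(2)} < u$, ultimately stemming from $s < t$, delivers the contradiction.
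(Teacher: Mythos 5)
Your proof is correct and rests on the same essential mechanism as the paper's: pin down the minimal intersection point $\mbf z = (k,u)$, use the strict ordering $\mbf x \Se \mbf y$ together with minimality to force $\tau_{k-1}^{(2)} < u = \tau_{k-1}^{(1)}$, pick a rational $q$ strictly between $\tau_{k-1}^{(2)}$ and $u$, and invoke the singleton property of $\mbf T^{\theta\sig}_{(k,q)}$ guaranteed by Theorem~\ref{thm:NU}\ref{itm:NUp0} on $\Omega_4$.

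Your step two, however, is a detour you do not need, and Lemma~\ref{lem:all_theta_max_comp}\ref{itm:all_no_3} plays no role in the paper's argument. The paper goes straight from your step one to the uniqueness step: since $\tau_k^{(2)}$ maximizes $w \mapsto B_k(w) - h_{k+1}^{\theta\sig}(w)$ over $[\tau_{k-1}^{(2)},\infty) \supseteq [q,\infty) \supseteq [u,\infty)$ and lies in $[u,\infty)$, the maxima over all three intervals coincide, so $\tau_k^{(1)}$ (which maximizes over $[u,\infty)$) also maximizes over $[q,\infty)$. Hence \emph{both} sequences $q \le \tau_k^{(1)} \le \tau_{k+1}^{(1)} \le \cdots$ and $q \le \tau_k^{(2)} \le \tau_{k+1}^{(2)} \le \cdots$ are already elements of $\mbf T^{\theta\sig}_{(k,q)}$, and the singleton property yields $\tau_{k+j}^{(1)} = \tau_{k+j}^{(2)}$ for every $j \ge 0$ in one stroke, with no need to first establish $\tau_k^{(1)} = \tau_k^{(2)}$ by a case split on immediate jumps, and no ``switched path'' construction. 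The no-three-maximizers lemma and the preliminary agreement of the level-$k$ jump times thus buy you nothing here. A minor further point: the ``WLOG'' in your step one overstates the symmetry — the SE ordering with $s<t$ together with the absence of an earlier intersection already fixes that $\Gamma_1$ arrives vertically and $\Gamma_2$ horizontally, which the paper simply asserts from the geometry.
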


 \begin{figure}[t]
 \centering
            \begin{tikzpicture}
            \draw[gray,thin] (0,0)--(10,0);
            \draw[gray,thin] (0,1)--(10,1);
            \draw[gray,thin] (0,2)--(10,2);
            \draw[gray,thin] (0,3)--(10,3);
            \draw[gray,thin] (0,4)--(10,4);
            \draw[gray,thin] (0,5)--(10,5);
            \draw[red,ultra thick,->] plot coordinates {(1,0)(2,0)(2,1)(3,1)(3,2)(4.5,2)(4.5,3)(7,3)(7,4)(9,4)(9,5)(10,5)(10,5.5)};
            \draw[red,ultra thick] plot coordinates {(0,2)(2.5,2)(2.5,3)(4,3)(4,4)(8,4)};
            \filldraw[black] (1,0) circle (2pt) node[anchor = north] {\small $\mbf x = (m,t)$};
            \filldraw[black] (0,2) circle (2pt) node[anchor = south] {\small $\mbf  y = (r,s)$};
            \filldraw[black] (7,4) circle (2pt) node[anchor = south] {\small $\mbf z = (k,\tau_{(m,t),k - 1})$};
            \filldraw[black] (5.2,4) circle (2pt) node[anchor = north] {\small $(k,u)$};
            \filldraw[black] (4,4) circle (2pt) node[anchor = south] {\small $(k,\tau_{(r,s),k - 1})$};
            \filldraw[black] (9,4) circle (2pt) node[anchor = north] {\small $(k,\tau_{(m,t),k})$};
            \node at (4.5,1.5) {$\Gamma_1$};
            \node at (2.5,3.5) {$\Gamma_2$};
            \end{tikzpicture}
            \caption{\small Coalescence of geodesics.}
            \label{fig:stay}
            \bigskip
        \end{figure}

\begin{proof}
For this proof, refer to Figure~\ref{fig:stay} for clarity. Set $\mbf x = (m,t)$ and $\mbf y = (r,s)$. Let $t = \tau_{(m,t),m - 1} \le \tau_{(m,t),m} \le \cdots$ denote the jump times for $\Gamma_1$, and let $s = \tau_{(r,s),r - 1} \le \tau_{(r,s),r} \le \cdots$ denote the jump times for $\Gamma_2$. Assume $\Gamma_1 \cap \Gamma_2 \neq \varnothing$, and let $\mbf z = (k,v)$ be the minimal point of intersection of the two geodesics. Since $\mbf x \Se \mbf y$, the geometry of BLPP paths requires that the first intersection occurs when $\Gamma_1$ makes an upward step to hit $\Gamma_2$. In terms of jump times, this means
\be \label{eqn:before_coal}
\tau_{(r,s),k - 1} < v = \tau_{(m,t),k - 1} \le \tau_{(m,t),k} \wedge \tau_{(r,s),k}.
\ee
Let $u \in [\tau_{(r,s),k - 1},\tau_{(m,t),k - 1}]$ be rational. Then, by~\eqref{eqn:before_coal} and construction of the semi-infinite geodesics in terms of maximizers (see Definition~\ref{def:semi-infinite geodesics}), both $\tau_{(m,t),k}$ and $\tau_{(r,s),k}$ maximize the function $B_k(w) - h_{k + 1}^{\theta \sig}(w)$ over $w \in [u,\infty)$. Inductively, the successive jump times $\tau_{(m,t),n}$ and $\tau_{(r,s),n}$ for $n > k$ maximize the function $B_n(w) - h_{n + 1}^{\theta \sig}(w)$ over $w \in [\tau_{(m,t),n - 1},\infty)$ and $w \in [\tau_{(r,s),n - 1},\infty)$, respectively. Therefore, the sequences
\[
u \le \tau_{(m,t),k} \le \tau_{(m,t),k + 1} \le \cdots \qquad\text{and}\qquad u \le \tau_{(r,s),k} \le \tau_{(r,s),k + 1} \le \cdots
\]
both define semi-infinite geodesics in $\mbf T_{\mbf (k,u)}^{\theta \sig}$. By Theorem~\ref{thm:NU}\ref{itm:NUp0}, because $(k,u) \in \Z \times \Q$, there is only one element in $\mbf T_{\mbf (k,u)}^{\theta \sig}$. Thus, $\tau_{(m,t),n} = \tau_{(r,s),n}$ for $n \ge k$, completing the proof.
\end{proof}

\noindent The following remark underscores the importance of the configuration and choice of $L/R$ geodesics in the lemmas and theorems that follow. 
\begin{remark} \label{rmk:coalcounter}
Let $s < t$ and $m \in \Z$. Consider the two initial points $(m,s)$ and $(m,t)$ which lie along the same horizontal line. The geodesics $\Gamma_{(m,s)}^{\theta \sig,L}$ and $\Gamma_{(m,t)}^{\theta \sig,L}$, which start from $(m,s)$ and $(m,t)$, respectively, coalesce at the point $(m,t)$ if and only if $\tau_{(m,s),m}^{\theta\sig,L} \ge t$.
However, by Theorem~\ref{thm:summary of properties of Busemanns for all theta}\ref{general queuing relations Busemanns}, for all $\theta > 0$ and $\sigg \in \{-,+\}$,
\begin{align*}
 h_m^{\theta \sig}(s,t) 
= B_m(s,t) + \sup_{s \le u < \infty}\{B_m(u) - h_{m + 1}^{\theta\sig}(u)\} - \sup_{t \le u < \infty}\{B_m(u) -h_{m + 1}^{\theta \sig}(u)\},
\end{align*}
and therefore, $h_m^{\theta \sig}(s,t) = B_m(s,t)$ if and only if $\tau_{(m,s),m}^{\theta \sig,R}$ (the rightmost maximizer of $B_m(u) - h_{m +1}^{\theta \sig}(u)$ over $u \in [s,\infty)$) is greater than or equal to $t$.  

Now, choose an arbitrary $\theta  \in (0,\infty) \setminus \Busedc$. 
We choose $(m,s) \in \NU_1^\theta$ and $t > s$ to be such that
\be \label{eqn:taus < t}
s = \tau_{(m,s),m}^{\theta,L} < t < \tau_{(m,s),m}^{\theta,R}. 
\ee
By~\eqref{eqn:limits in theta to infinity}, for $\delta > \theta$ large enough,
\be \label{eqn:tau > t} 
\tau_{(m,s),m}^{\delta \sig,L} > t.
\ee
By Theorem~\ref{thm:geod_stronger}\ref{itm:stronger convergence theta}, for $\gamma$ sufficiently close to  $\theta$ and $\sigg \in \{-,+\}$, 
\be \label{eqn:tau < t}
s = \tau_{(m,s),m}^{\gamma \sig,L} < t < \tau_{(m,s),m}^{\gamma \sig,R}.
\ee
Hence, there exists $\gamma < \theta$ such that, for all $\eta\in (\gamma,\infty)$,  $\tau_{(m,s),m}^{\eta \sig,R} > t$ and therefore $h_m^{\eta\sig}(s,t) = B_m(s,t)$. Furthermore, by~\eqref{eqn:tau > t}, $\Gamma_{(m,s)}^{\eta \sig,L}$ and $\Gamma_{(m,t)}^{\eta \sig,L}$ coalesce at $(m,t)$ for all $\eta$ sufficiently large, but by~\eqref{eqn:tau < t}, $\Gamma_{(m,s)}^{\eta \sig,L}$ and $\Gamma_{(m,t)}^{\eta \sig,L}$ do not coalesce at $(m,t)$ for $\eta\in (\gamma,\theta)$. In other words, if $\eta_1 \in (\gamma,\theta)$ and $\eta_2$ is large enough, then $\B^{\eta_1 \sig_1}((m,s),(m,t)) = \B^{\eta_2 \sig_2}((m,s),(m,t))$ for $\sigg_1,\sigg_2 \in \{-,+\}$, but $\Gamma_{(m,s)}^{\eta_1 \sig_1,L}$ and $\Gamma_{(m,t)}^{\eta_1 \sig_1,L}$ do not coalesce at the same place as $\Gamma_{(m,s)}^{\eta_2 \sig_2,L}$ and $\Gamma_{(m,t)}^{\eta_2 \sig_2,L}$. Hence, when $(m,s) \in \NU_1$, the $L/R$ distinction is essential in our statements about the connection between equality of Busemann functions and common coalesce points.
\end{remark}

\begin{lemma} \label{lemma:coaleqh}
Let $\omega \in \Omega_2$, $s < t \in \R$, $m \in \Z$, $0 < \gamma \le \theta$, and $\sigg_1,\sigg_2 \in \{-,+\}$. If $\gamma = \theta$, we require $\sigg_1 = -$ and $\sigg_2 = +$.  Then, $h_m^{\theta \sig_2}(s,t) = h_m^{\gamma \sig_1}(s,t)$ if and only if one of the following two conditions hold:
\begin{enumerate} [label=\rm(\roman{*}), ref=\rm(\roman{*})]  \itemsep=3pt
    \item $h_m^{\theta \sig_2}(s,t) = h_m^{\gamma \sig_1}(s,t) = B_m(s,t)$, and $\tau_{(m,s),m}^{\theta \sig_2,R} \ge \tau_{(m,s),m}^{\gamma \sig_1,R} \ge t$.  \label{itm:=Bm}
    \item  \label{itm: > Bm} $h_m^{\theta \sig_2}(s,t) = h_m^{\gamma \sig_1}(s,t) > B_m(s,t)$, and
    \[
    s' := \tau_{(m,s),m}^{\theta \sig_2,R} = \tau_{(m,s),m}^{\gamma \sig_1,R} < t \le \tau_{(m,t),m}^{\theta \sig_2,L} = \tau_{(m,t),m}^{\gamma \sig_1,L} =: t', \text{ and } h_{m + 1}^{\theta \sig_2}(s',t') = h_{m + 1}^{\gamma \sig_1}(s',t').
    \]
\end{enumerate}
\end{lemma}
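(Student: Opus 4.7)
The plan is to rewrite both sides via the queuing formula of Theorem~\ref{thm:summary of properties of Busemanns for all theta}\ref{general queuing relations Busemanns} and then exploit a single nondecreasing ``gap'' function. Set
\[
f_\alpha(u) := B_m(u) - h_{m+1}^{\alpha\sig}(u), \qquad M_\alpha(u) := \sup_{v \ge u} f_\alpha(v) \qquad (\alpha \in \{\gamma,\theta\}),
\]
using the sign $\sigg_1$ for $\alpha=\gamma$ and $\sigg_2$ for $\alpha=\theta$. Then
$h_m^{\alpha\sig}(s,t) = B_m(s,t) + M_\alpha(s) - M_\alpha(t)$, and $s'_\alpha := \tau_{(m,s),m}^{\alpha\sig,R}$ is the rightmost maximizer of $f_\alpha$ on $[s,\infty)$ while $t'_\alpha := \tau_{(m,t),m}^{\alpha\sig,L}$ is the leftmost maximizer of $f_\alpha$ on $[t,\infty)$ (Lemma~\ref{lemma:ptl_sig}). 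The monotonicity of Theorem~\ref{thm:summary of properties of Busemanns for all theta}\ref{general monotonicity Busemanns} gives $h_{m+1}^{\theta\sig_2} \li h_{m+1}^{\gamma\sig_1}$, which is exactly the statement that
\[
g(u) := h_{m+1}^{\gamma\sig_1}(u) - h_{m+1}^{\theta\sig_2}(u) = f_\theta(u) - f_\gamma(u)
\]
is nondecreasing in $u$. Direct algebra then yields the decomposition
\be \label{eqn:pathrep_proposal}
h_m^{\alpha\sig}(s,t) = B_m(s,s'_\alpha) + B_m(t'_\alpha,t) + h_{m+1}^{\alpha\sig}(s'_\alpha,t'_\alpha),
\ee
in which the last term is the only $\alpha$-dependent part on the ``geodesic skeleton''.

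First I will establish two monotonicity facts: $s'_\theta \ge s'_\gamma$ and $t'_\theta \ge t'_\gamma$. For the former, suppose $s'_\theta < s'_\gamma$; the rightmost-maximizer property gives $f_\theta(s'_\gamma) < f_\theta(s'_\theta)$, while $s'_\gamma$ being the rightmost maximizer of $f_\gamma$ gives $f_\gamma(s'_\theta) \le f_\gamma(s'_\gamma)$; subtracting yields $g(s'_\gamma) < g(s'_\theta)$ with $s'_\theta < s'_\gamma$, contradicting monotonicity of $g$. The argument for $t'_\theta \ge t'_\gamma$ is symmetric, using leftmost-maximizer strictness on the low side. This also disposes of the ``only if'' direction of case~\ref{itm:=Bm}: equality to $B_m(s,t)$ forces $M_\alpha(s)=M_\alpha(t)$, hence $s'_\alpha \ge t$ for both $\alpha$, and the previous step supplies the required inequality $s'_\theta \ge s'_\gamma \ge t$. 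The ``if'' direction of \ref{itm:=Bm} is immediate since $s'_\alpha \ge t$ forces $M_\alpha(s)=M_\alpha(t)$. The ``if'' direction of \ref{itm: > Bm} is also immediate from \eqref{eqn:pathrep_proposal}: with $s'_\theta=s'_\gamma=s'$, $t'_\theta=t'_\gamma=t'$, and $h_{m+1}^{\theta\sig_2}(s',t')=h_{m+1}^{\gamma\sig_1}(s',t')$, the two right-hand sides agree.

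The substantive step is the ``only if'' direction of case~\ref{itm: > Bm}. Here $s'_\theta,s'_\gamma \in [s,t)$ and $t'_\theta,t'_\gamma \in [t,\infty)$. The equality $h_m^{\theta\sig_2}(s,t)=h_m^{\gamma\sig_1}(s,t)$ is equivalent to
\be \label{eqn:plan_eq}
f_\theta(s'_\theta) - f_\theta(t'_\theta) = f_\gamma(s'_\gamma) - f_\gamma(t'_\gamma).
\ee
Using $s'_\theta \ge s$ and $t'_\theta \ge t$ and the maximizing properties of $s'_\gamma, t'_\gamma$,
\[
f_\gamma(s'_\gamma) - f_\gamma(t'_\gamma) \ge f_\gamma(s'_\theta) - f_\gamma(t'_\theta) = \bigl[f_\theta(s'_\theta)-f_\theta(t'_\theta)\bigr] + \bigl[g(t'_\theta)-g(s'_\theta)\bigr],
\]
and the last bracket is nonnegative because $g$ is nondecreasing and $s'_\theta \le t'_\theta$ (since $s'_\theta < t \le t'_\theta$). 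Combined with \eqref{eqn:plan_eq}, this chain forces equality in each step: $f_\gamma(s'_\gamma)=f_\gamma(s'_\theta)$, $f_\gamma(t'_\gamma)=f_\gamma(t'_\theta)$, and $g(s'_\theta)=g(t'_\theta)$. The first equality, combined with $s'_\theta \ge s'_\gamma$ and the rightmost-maximizer property of $s'_\gamma$, forces $s'_\theta = s'_\gamma =: s'$. The third equality, together with the monotonicity of $g$, shows $g$ is constant on $[s',t'_\theta]$.

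The main obstacle --- which I expect to be the only nonroutine step --- is upgrading $t'_\theta \ge t'_\gamma$ to equality. Here is where the constancy of $g$ on $[s',t'_\theta]$ pays off. Suppose $t'_\gamma < t'_\theta$. Since $t'_\theta$ is the \emph{leftmost} maximizer of $f_\theta$ on $[t,\infty)$, the strict inequality $f_\theta(t'_\gamma) < f_\theta(t'_\theta)$ holds; combining with $f_\gamma(t'_\gamma)=f_\gamma(t'_\theta)$ (second equality above) gives $g(t'_\gamma) < g(t'_\theta)$. But $s' \le t'_\gamma \le t'_\theta$ puts $t'_\gamma$ inside the interval on which $g$ is constant, a contradiction. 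Hence $t'_\theta=t'_\gamma=:t'$, and the constancy of $g$ on $[s',t']$ translates to $h_{m+1}^{\gamma\sig_1}(s',t') - h_{m+1}^{\theta\sig_2}(s',t') = g(t')-g(s') = 0$, completing case~\ref{itm: > Bm}.
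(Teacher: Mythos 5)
The setup (the queuing representation, the decomposition of $h_m^{\alpha\sig}(s,t)$ along the geodesic skeleton, the observation that the gap $g=f_\theta-f_\gamma$ is nondecreasing, and the monotonicity facts $s'_\gamma\le s'_\theta$, $t'_\gamma\le t'_\theta$) is all correct, and both \emph{if} directions together with the \emph{only if} direction of case~\ref{itm:=Bm} go through. However, the inequality opening your chain in the \emph{only if} direction of case~\ref{itm: > Bm} is not justified. You assert
\[
f_\gamma(s'_\gamma) - f_\gamma(t'_\gamma) \;\ge\; f_\gamma(s'_\theta) - f_\gamma(t'_\theta),
\]
citing the maximizing properties of $s'_\gamma$ and $t'_\gamma$. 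But those properties give $f_\gamma(s'_\gamma)\ge f_\gamma(s'_\theta)$ \emph{and} $f_\gamma(t'_\gamma)\ge f_\gamma(t'_\theta)$, which push in opposite directions in the displayed difference; the inequality does not follow from them, and it is easy to build nondecreasing $g$ with $s'_\gamma=s'_\theta$ and $t'_\gamma<t'_\theta$ for which the displayed inequality fails. (Of course it does hold once the hypothesis forces $s'_\gamma=s'_\theta$, $t'_\gamma=t'_\theta$ --- but that is exactly what you are trying to prove, so your chain is circular.) The fix is to pair the two \emph{inner} maximizers $s'_\theta$ and $t'_\gamma$ rather than $s'_\theta$ with $t'_\theta$. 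From $M_\theta(s)=f_\theta(s'_\theta)$, $M_\gamma(s)\ge f_\gamma(s'_\theta)$, $M_\gamma(t)=f_\gamma(t'_\gamma)$, $M_\theta(t)\ge f_\theta(t'_\gamma)$ one gets
\[
g(s'_\theta)\;\ge\;M_\theta(s)-M_\gamma(s)\;=\;M_\theta(t)-M_\gamma(t)\;\ge\;g(t'_\gamma),
\]
and since $g$ is nondecreasing with $s'_\theta<t\le t'_\gamma$, all four quantities coincide. Equality in the first step gives $M_\gamma(s)=f_\gamma(s'_\theta)$, hence $s'_\theta=s'_\gamma$ (rightmost maximizer); equality in the last step gives $M_\theta(t)=f_\theta(t'_\gamma)$, hence $t'_\gamma=t'_\theta$ (leftmost maximizer); and $g(s')=g(t')$ gives the claimed equality of $h_{m+1}$-increments. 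This is precisely the chain the paper uses, and it also renders your final paragraph (extracting $t'_\gamma=t'_\theta$ from the constancy of $g$) unnecessary.
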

\begin{proof}
The keys to this proof are the monotonicity of the Busemann functions and semi-infinite geodesics from Theorems~\ref{thm:summary of properties of Busemanns for all theta}\ref{general monotonicity Busemanns} and Theorem~\ref{existence of semi-infinite geodesics intro version}\ref{itm:monotonicity of semi-infinite jump times}\ref{itm:monotonicity in theta}. For simplicity of notation, we suppress the $\sigg_1$ and $\sigg_2$ notation in the superscripts, noting that Theorems~\ref{thm:summary of properties of Busemanns for all theta}\ref{general monotonicity Busemanns} and Theorem~\ref{existence of semi-infinite geodesics intro version}\ref{itm:monotonicity of semi-infinite jump times}\ref{itm:monotonicity in theta} still hold when we place $\sigg_1$ next to $\gamma$ and $\sigg_2$ next to $\theta$.  

By Lemma~\ref{lemma:equality of busemann to weights of BLPP}, $h_m^\theta(s,t) = B_m(s,t)$ if and only if $\tau_{(m,s),m}^{\theta,R} \ge t$, which covers the first of the two possible conditions (The inequality $\tau_{(m,s),m}^{\theta,R} \ge \tau_{(m,s),m}^{\gamma,R}$ follows by Theorem~\ref{existence of semi-infinite geodesics intro version}\ref{itm:monotonicity of semi-infinite jump times}\ref{itm:monotonicity in theta}).  By Theorem~\ref{thm:summary of properties of Busemanns for all theta}\ref{general queuing relations Busemanns},
\begin{equation} \label{hmr}
h_m^\theta(s,t)=  B_m(s,t) + \sup_{s \le u < \infty}\{B_m(u) - h_{m + 1}^\theta(u)\} - \sup_{t \le u < \infty}\{B_m(u) - h_{m + 1}^\theta(u)\}.
\end{equation}
Therefore, $h_m^\theta(s,t) = h_m^\gamma(s,t) > B_m(s,t)$ if and only if two conditions hold. The first is
\begin{multline*}
\sup_{s \le u < \infty}\{B_m(u) - h_{m +1}^\theta(u)\} - \sup_{s \le u < \infty}\{B_m(u) - h_{m +1}^\gamma(u)\}  \\
    =  \sup_{t \le u < \infty}\{B_m(u) - h_{m +1}^\theta(u)\} - \sup_{t \le u < \infty}\{B_m(u) - h_{m +1}^\gamma(u)\},
\end{multline*}
which comes from applying~\eqref{hmr} for both $\theta$ and $\gamma$.
By the previous case and the monotonicity of Theorem~\ref{existence of semi-infinite geodesics intro version}\ref{itm:monotonicity of semi-infinite jump times}\ref{itm:monotonicity in theta}, the second condition is 
\begin{equation} \label{mont_max}
\tau_{(m,s),m}^{\gamma,R} \le \tau_{(m,s),m}^{\theta,R} < t \le \tau_{(m,t),m}^{\gamma,L} \le \tau_{(m,t),m}^{\theta,L}.
\end{equation}
Then,
\begin{align}
    &B_m(\tau_{(m,s),m}^{\theta,R}) - h_{m + 1}^\theta(\tau_{(m,s),m}^{\theta,R}) - (B_m(\tau_{(m,s),m}^{\theta,R}) - h_{m + 1}^\gamma(\tau_{(m,s),m}^{\theta,R})) \nonumber \\
    \ge &\sup_{s \le u < \infty}\{B_m(u) - h_{m +1}^\theta(u)\} - \sup_{s \le u < \infty}\{B_m(u) - h_{m +1}^\gamma(u)\} \label{coaleq1} \\
    =  &\sup_{t \le u < \infty}\{B_m(u) - h_{m +1}^\theta(u)\} - \sup_{t \le u < \infty}\{B_m(u) - h_{m +1}^\gamma(u)\} \nonumber \\
    \ge &B_m(\tau_{(m,t),m}^{\gamma,L}) - h_{m + 1}^\theta(\tau_{(m,t),m}^{\gamma,L}) - (B_m(\tau_{(m,t),m}^{\gamma,L}) - h_{m + 1}^\gamma(\tau_{(m,t),m}^{\gamma,L})). \label{coaleq2}
\end{align}
Comparing the first and last lines above yields
\begin{equation} \label{hm+1ineq}
h_{m + 1}^\theta(\tau_{(m,s),m}^{\theta,R},\tau_{(m,t),m}^{\gamma,L}) \ge h_{m + 1}^\gamma(\tau_{(m,s),m}^{\theta,R},\tau_{(m,t),m}^{\gamma,L}).
\end{equation}
 
Theorem~\ref{thm:summary of properties of Busemanns for all theta}\ref{general monotonicity Busemanns} and~\eqref{mont_max} imply that~\eqref{hm+1ineq} is an equality. Thus, inequality~\eqref{coaleq1} is an equality, implying that
\[
B_m(\tau_{(m,s),m}^{\theta,R}) - h_{m + 1}^\gamma(\tau_{(m,s),m}^{\theta,R}) = \sup_{s \le u < \infty}\{B_m(u) - h_{m +1}^\gamma(u)\}.
\]
Thus, $\tau_{(m,s),m}^{\theta,R}$ is a maximizer of $B_m(u) - h_{m + 1}^\gamma(u)$ over $u \in [s,\infty)$. By definition, $\tau_{(m,s),m}^{\gamma,R}$ is the rightmost such maximizer, so $\tau_{(m,s),m}^{\theta,R} \le \tau_{(m,s),m}^{\gamma,R}$. However, by the first inequality of \eqref{mont_max},  $\tau_{(m,s),m}^{\gamma,R} = \tau_{(m,s),m}^{\theta,R}$. An analogous  argument using the equality in \eqref{coaleq2} shows that $\tau_{(m,t),m}^{\gamma,L} = \tau_{(m,t),m}^{\theta,L}$. The equality $h_{m + 1}^\theta(s',t') = h_{m + 1}^{\gamma}(s',t')$ follows since~\eqref{hm+1ineq} is an equality. 
\end{proof}

\begin{lemma} \label{lemma:vert_increments}
On the full-probability event $\Omega_2$, for all $m < r \in \Z$, $t \in \R$, $\theta > 0$, and $\sigg \in \{-,+\}$,
\begin{align} \label{vsum}
&\B^{\theta\sig}((m,t),(r,t)) \nonumber \\
= &\sup \Bigl\{\sum_{k = m}^{r - 1} B_k(t_{k - 1},t_k) - h_{r}^{\theta \sig}(t,t_{r - 1}): t = t_{m - 1} \le t_m \le \cdots \le t_{r - 1} <\infty\Bigr\}.
\end{align}
\end{lemma}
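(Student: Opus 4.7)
The plan is to prove the identity by establishing two matching inequalities: the supremum on the right is at most $\B^{\theta\sig}((m,t),(r,t))$, with equality attained by any Busemann-geodesic sequence out of $(m,t)$.

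For attainment, I would pick any sequence $t = \tau_{m-1} \le \tau_m \le \tau_{m+1} \le \cdots$ in $\mbf T_{(m,t)}^{\theta\sig}$, whose existence is guaranteed by Definition~\ref{def:semi-infinite geodesics} together with Theorem~\ref{thm:summary of properties of Busemanns for all theta}\ref{general continuity of Busemanns}--\ref{limits of B_m minus h m + 1}. Since $(r,\tau_{r-1})$ lies on this Busemann geodesic, Theorem~\ref{existence of semi-infinite geodesics intro version}\ref{energy of path along semi-infinte geodesic} gives that the energy of the geodesic between $(m,t)$ and $(r,\tau_{r-1})$ equals $\B^{\theta\sig}((m,t),(r,\tau_{r-1}))$. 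Because the portion of this path on level $r$ stretches from time $\tau_{r-1}$ to itself and contributes $B_r(\tau_{r-1},\tau_{r-1}) = 0$, that energy is exactly $\sum_{k=m}^{r-1} B_k(\tau_{k-1},\tau_k)$. Applying additivity (Theorem~\ref{thm:summary of properties of Busemanns for all theta}\ref{general additivity Busemanns}) in the form
\[
\B^{\theta\sig}((m,t),(r,\tau_{r-1})) = \B^{\theta\sig}((m,t),(r,t)) + h_r^{\theta\sig}(t,\tau_{r-1})
\]
then shows that evaluating the bracketed expression in the lemma at $(\tau_{m-1},\ldots,\tau_{r-1})$ produces exactly $\B^{\theta\sig}((m,t),(r,t))$, so the supremum is at least $\B^{\theta\sig}((m,t),(r,t))$.

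For the upper bound, I would view the sum $\sum_{k=m}^{r-1} B_k(t_{k-1},t_k)$ as the energy of an up-right path from $(m,t)$ to $(r,t_{r-1})$ with a degenerate final segment on level $r$, so it is bounded by $L_{(m,t),(r,t_{r-1})}$. The general comparison $L_{\mbf x,\mbf y} \le \B^{\theta\sig}(\mbf x,\mbf y)$ for $\mbf x \le \mbf y$ follows by extending any path $\Gamma$ from $\mbf x$ to $\mbf y$ up to $(n,n\theta)$ along an $L$-maximizer, giving $\E(\Gamma) \le L_{\mbf x,(n,n\theta)} - L_{\mbf y,(n,n\theta)}$, and passing to the limit using the Busemann limit \eqref{B120}. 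Applying the same additivity identity as above then yields
\[
\sum_{k=m}^{r-1} B_k(t_{k-1},t_k) - h_r^{\theta\sig}(t,t_{r-1}) \,\le\, \B^{\theta\sig}((m,t),(r,t_{r-1})) - h_r^{\theta\sig}(t,t_{r-1}) = \B^{\theta\sig}((m,t),(r,t)).
\]

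The main obstacle is that the comparison $L_{\mbf x,\mbf y} \le \B^{\theta\sig}(\mbf x,\mbf y)$ must hold for all $\mbf x \le \mbf y$, all $\theta > 0$, and both signs $\sig$ on a single full-probability event, whereas Theorem~\ref{thm:summary of properties of Busemanns for all theta}\ref{busemann functions agree for fixed theta} only gives the Busemann limit for a fixed $\theta$. My plan is to first establish the inequality on $\bigcap_{\theta \in \Q_{>0}}\Omega^{(\theta)} \cap \Omega_2$ for rational $\theta$ by the concatenation argument above, and then use the monotonicity in $\theta$ from Theorem~\ref{thm:summary of properties of Busemanns for all theta}\ref{general monotonicity Busemanns} together with the convergence statement of Theorem~\ref{thm:summary of properties of Busemanns for all theta}\ref{general uniform convergence Busemanns}\ref{general uniform convergence:limits from left}--\ref{general uniform convergence:limits from right} to upgrade to arbitrary $\theta > 0$ and both $\sig \in \{-,+\}$. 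With that comparison in hand, the remainder of the argument is a direct combination of additivity with the Busemann-geodesic energy identity.
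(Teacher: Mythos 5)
Your strategy is genuinely different from the paper's. The paper proves \eqref{vsum} by induction on $r$: the base case $r=m+1$ is precisely the queuing relation $v_{m+1}^{\theta\sig}(t) = Q(h_{m+1}^{\theta\sig},B_m)(t)$ from Theorem~\ref{thm:summary of properties of Busemanns for all theta}\ref{general queuing relations Busemanns}, and the inductive step rewrites $h_r^{\theta\sig}(t,t_{r-1})$ via additivity and the queuing relation for $v_{r+1}^{\theta\sig}$, folding the extra level into the supremum. That argument is entirely algebraic and needs nothing beyond the $\Omega_2$ properties of Theorem~\ref{thm:summary of properties of Busemanns for all theta}. Your approach is more geometric: lower bound by plugging in the Busemann geodesic jump times and using additivity plus the energy identity, upper bound by recognizing the sum as a truncated path energy and comparing with $\B^{\theta\sig}$. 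The lower bound is correct and entirely on $\Omega_2$.

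The one real issue is with your upper bound. The comparison $L_{\mathbf x,\mathbf y} \le \B^{\theta\sig}(\mathbf x,\mathbf y)$ cannot be derived from the Busemann limit \eqref{B120} for \emph{all} $\theta>0$ and both signs on $\Omega_2$: that limit is a fixed-$\theta$ statement holding on $\Omega^{(\theta)}$, and $\Omega_2$ is not contained in $\bigcap_{\theta\in\Q_{>0}}\Omega^{(\theta)}$. Your rational-then-limit workaround is valid as stated, but it proves the lemma on the smaller event $\Omega_2\cap\bigcap_{\theta\in\Q_{>0}}\Omega^{(\theta)}$, which is strictly less than what the lemma asserts. There is a simpler route to $L_{\mathbf x,\mathbf y}\le\B^{\theta\sig}(\mathbf x,\mathbf y)$ that lives entirely on $\Omega_2$ and closes the gap: decompose $\B^{\theta\sig}(\mathbf x,\mathbf y)$ by additivity along the up-right path achieving $L_{\mathbf x,\mathbf y}$ into horizontal increments $h_k^{\theta\sig}(t_{k-1},t_k)$ and vertical increments $v_{k+1}^{\theta\sig}(t_k)$, then use the monotonicity of Theorem~\ref{thm:summary of properties of Busemanns for all theta}\ref{general monotonicity Busemanns}, namely $B_k\li h_k^{\theta\sig}$ and $v_{k+1}^{\theta\sig}\ge 0$, to see that each horizontal Busemann increment dominates the corresponding Brownian increment while the vertical terms are nonnegative. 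This yields $\B^{\theta\sig}(\mathbf x,\mathbf y)\ge \E(\Gamma)$ for every up-right $\Gamma$ from $\mathbf x$ to $\mathbf y$, hence $\ge L_{\mathbf x,\mathbf y}$, on all of $\Omega_2$. With that substitution, your two-inequality proof becomes fully correct and is a legitimate alternative to the paper's induction; it has the additional merit of exhibiting the Busemann geodesic as an explicit maximizer of the variational problem on the right-hand side of \eqref{vsum}, which the paper's algebraic induction does not make visible.
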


\begin{proof}
We proceed by induction. The case $r = m + 1$ is another way of stating $v_{m +1}^{\theta \sig}(t) = Q(h_{m +1}^{\theta \sig},B_m)(t)$, which is Theorem~\ref{thm:summary of properties of Busemanns for all theta}\ref{general queuing relations Busemanns}. Assume that~\eqref{vsum} holds for some $r > m$. By additivity and  Theorem~\ref{thm:summary of properties of Busemanns for all theta}\ref{general queuing relations Busemanns},
\begin{align*}
h_r^{\theta \sig}(t,t_{r - 1}) &= h_{r + 1}^{\theta \sig}(t,t_{r - 1}) + v_{r +1}^{\theta \sig}(t) - v_{r + 1}^{\theta \sig}(t_{r - 1}) \\
&=h_{r + 1}^{\theta \sig}(t,t_{r - 1}) + v_{r +1}^{\theta \sig}(t) - \sup_{t_{r - 1} \le t_r < \infty}\{B_r(t_{r - 1},t_r) - h_{r + 1}^{\theta \sig}(t_{r - 1},t_r)\} \\
&=v_{r + 1}^{\theta \sig}(t) - \sup_{t_{r - 1} \le t_r < \infty}\{B_r(t_{r - 1},t_r) - h_{r + 1}^{\theta \sig}(t,t_r)\}.
\end{align*}
Insert this into~\eqref{vsum} as follows. 
\begin{align*}
    &\sup_{t = t_{m - 1} \le \cdots \le t_r <\infty} \Bigl\{\sum_{k = m}^{r} B_k(t_{k - 1},t_k) - h_{r + 1}^{\theta \sig}(t,t_{r})\Bigr\} \\
    = &\B^{\theta \sig}((m,t),(r,t)) + v_{r+1}^{\theta \sig}(t) =\B^{\theta \sig}((m,t),(r + 1,t)),
\end{align*}
where the last equality above follows by additivity.
\end{proof}

\begin{lemma} \label{lemma:coaleqv}
Let $\omega \in \Omega_2$, $m  < r \in \Z$ and $t \in \R$. Assume that for some $0 < \gamma \le \theta$ and $\sigg_1,\sigg_2 \in \{-,+\}$,  $\B^{\theta \sig_2}((m,t),(r,t)) = \B^{\gamma \sig_1}((m,t),(r,t))$.  Then, $t_k := \tau_{(m,t),k}^{\theta \sig_2,L} = \tau_{(m,t),k}^{\gamma \sig_1,L}$ for $m \le k \le r - 1$, and $h_r^{\theta \sig_2}(t,t_{r -1 }) = h_r^{\gamma \sig_1}(t,t_{r - 1})$.    
\end{lemma}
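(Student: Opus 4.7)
The plan is to exploit the variational representation of Lemma~\ref{lemma:vert_increments} together with the monotonicity of Theorem~\ref{thm:summary of properties of Busemanns for all theta}\ref{general monotonicity Busemanns}. Write the variational objective
\[
F^{\theta \sig_2}(t_\bullet) := \sum_{k = m}^{r - 1} B_k(t_{k - 1},t_k) - h_{r}^{\theta \sig_2}(t,t_{r - 1}),
\]
and similarly $F^{\gamma\sig_1}(t_\bullet)$, over admissible sequences $t = t_{m - 1} \le t_m \le \cdots \le t_{r - 1}$. Under the hypothesis on signs, $h_r^{\theta \sig_2} \li h_r^{\gamma \sig_1}$, so $h_r^{\gamma\sig_1}(t,t_{r-1}) \ge h_r^{\theta\sig_2}(t,t_{r-1})$ for all $t_{r-1} \ge t$, giving $F^{\gamma\sig_1}(t_\bullet) \le F^{\theta\sig_2}(t_\bullet)$ term-by-term and hence $\B^{\gamma\sig_1}((m,t),(r,t)) \le \B^{\theta\sig_2}((m,t),(r,t))$. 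The assumed equality of the suprema then forces equality throughout a short chain of inequalities.

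Next I would plug the $\gamma\sig_1$-leftmost sequence $t^\gamma_k := \tau_{(m,t),k}^{\gamma\sig_1,L}$ (which by Lemma~\ref{lemma:ptl_sig}\ref{itm:LR_geo_max} is the coordinate-wise leftmost maximizer of $F^{\gamma\sig_1}$) into the $\theta\sig_2$ objective:
\[
\B^{\gamma\sig_1}((m,t),(r,t)) = F^{\gamma\sig_1}(t^\gamma_\bullet) \le F^{\theta\sig_2}(t^\gamma_\bullet) \le \B^{\theta\sig_2}((m,t),(r,t)).
\]
The assumed equality of the outer terms collapses both inequalities, yielding simultaneously that (a) $t^\gamma_\bullet$ is a maximizer of $F^{\theta\sig_2}$, and (b) $h_r^{\theta\sig_2}(t,t^\gamma_{r-1}) = h_r^{\gamma\sig_1}(t,t^\gamma_{r-1})$. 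A symmetric substitution of $t^\theta_k := \tau_{(m,t),k}^{\theta\sig_2,L}$ into $F^{\gamma\sig_1}$ shows that $t^\theta_\bullet$ is a maximizer of $F^{\gamma\sig_1}$ with $h_r^{\theta\sig_2}(t,t^\theta_{r-1}) = h_r^{\gamma\sig_1}(t,t^\theta_{r-1})$.

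To conclude, I would close the loop by invoking leftmostness twice: since $t^\gamma_\bullet$ lies in the maximizer set of $F^{\theta\sig_2}$ and $t^\theta_\bullet$ is the coordinate-wise leftmost member of that set, $t^\theta_k \le t^\gamma_k$ for each $k$; the symmetric argument gives the reverse inequality, so $t^\gamma_k = t^\theta_k =: t_k$ for $m \le k \le r-1$. The identity $h_r^{\theta\sig_2}(t,t_{r-1}) = h_r^{\gamma\sig_1}(t,t_{r-1})$ is then just step (b). The main obstacle I anticipate is the opening appeal to Lemma~\ref{lemma:ptl_sig}\ref{itm:LR_geo_max}: one must ensure that the leftmost maximizer of the modified variational problem with the $h_r^{\theta\sig}$ endpoint penalty truly coincides with the leftmost Busemann jump sequence up to level $r-1$, which relies on the standard planar-lattice property (pointwise min of two maximizers is a maximizer) being transported through the $h_r$ penalty term via the monotonicity and continuity already in hand.
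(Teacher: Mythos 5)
Your proof follows essentially the same route as the paper's: use Lemma~\ref{lemma:vert_increments} (the variational representation), plug the $\gamma\sigg_1$-leftmost sequence into the $\theta\sigg_2$ objective, and force equality through the chain via the increment-monotonicity $h_r^{\theta\sig_2}\li h_r^{\gamma\sig_1}$ of Theorem~\ref{thm:summary of properties of Busemanns for all theta}\ref{general monotonicity Busemanns}. Up through ``(a) $t^\gamma_\bullet$ is a maximizer of $F^{\theta\sig_2}$, and (b) $h_r^{\theta\sig_2}(t,t^\gamma_{r-1})=h_r^{\gamma\sig_1}(t,t^\gamma_{r-1})$,'' the argument is sound and coincides with the paper's.

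The step that fails is the ``symmetric substitution.'' Plugging $t^\theta_\bullet$ into $F^{\gamma\sig_1}$ gives, again by $h_r^{\theta\sig_2}\li h_r^{\gamma\sig_1}$,
\[
F^{\gamma\sig_1}(t^\theta_\bullet)\;\le\;F^{\theta\sig_2}(t^\theta_\bullet)\;=\;\B^{\theta\sig_2}((m,t),(r,t))\;=\;\B^{\gamma\sig_1}((m,t),(r,t)),
\]
which is the trivial direction (a value below the supremum) and does not establish that $t^\theta_\bullet$ maximizes $F^{\gamma\sig_1}$. So the basis you propose for the reverse inequality $t^\gamma_k\le t^\theta_k$ does not hold as written. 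Fortunately you do not need it: that inequality is exactly the a priori monotonicity of leftmost jump times in the direction parameter, Theorem~\ref{existence of semi-infinite geodesics intro version}\ref{itm:monotonicity of semi-infinite jump times}\ref{itm:monotonicity in theta}, which gives $\tau^{\gamma\sig_1,L}_{(m,t),k}\le\tau^{\theta\sig_2,L}_{(m,t),k}$ directly (for $\gamma<\theta$, or $\gamma=\theta$ with $\sigg_1=-$, $\sigg_2=+$). Combine this with your step (a), which shows $t^\gamma_\bullet$ maximizes $F^{\theta\sig_2}$ and hence $t^\theta_k\le t^\gamma_k$ by leftmostness, and the equality $t^\gamma_k=t^\theta_k$ follows; the identity for $h_r$ is then your step (b). This is precisely how the paper closes the argument. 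Your anticipated worry about Lemma~\ref{lemma:ptl_sig}\ref{itm:LR_geo_max} is a non-issue: that lemma is stated exactly for this modified variational problem with the $h_{n+1}^{\theta\sig}$ terminal penalty.
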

\begin{proof}[Proof Lemma~\ref{lemma:coaleqv}]
If $\gamma = \theta$, without loss of generality, we may assume that $\sigg_1 = - $ and $\sigg_2 = +$. As in the proof of Lemma~\ref{lemma:coaleqh}, we suppress the $\sigg_1$ and $\sigg_2$ in the proof. 
By Lemmas~\ref{lemma:ptl_sig}\ref{itm:geo_maxes} and~\ref{lemma:vert_increments},
\begin{align}
&\sum_{k = m}^{r - 1} B_k(\tau_{(m,t),k - 1}^{\gamma,L},\tau_{(m,t),k}^{\gamma,L}) - h_{r}^\gamma(t,\tau_{(m,t),r - 1}^{\gamma,L}) \label{th1}\\
&=\sup_{t = t_{m - 1} \le t_m \le \cdots \le t_{r - 1} <\infty} \Bigl\{\sum_{k = m}^{r - 1} B_k(t_{k - 1},t_k) - h_{r}^\gamma(t,t_{r - 1})\Bigr\} \nonumber \\
&= \B^\gamma((m,t),(r,t)) = \B^\theta((m,t),(r,t)) \nonumber \\
&= \sup_{t = t_{m - 1} \le t_m \le \cdots \le t_{r - 1} <\infty} \Bigl\{\sum_{k = m}^{r - 1} B_k(t_{k - 1},t_k) - h_{r}^\theta(t,t_{r - 1})\Bigr\} \label{theta_max_vert} \\
&\ge \sum_{k = m}^{r - 1} B_k(\tau_{(m,t),k - 1}^{\gamma,L},\tau_{(m,t),k}^{\gamma,L}) - h_{r}^\theta(t,\tau_{(m,t),r - 1}^{\gamma,L}).  \label{ga1}
\end{align}
Comparing $h_r^\theta$ and $h_r^\gamma$ and using Theorem~\ref{thm:summary of properties of Busemanns for all theta}\ref{general monotonicity Busemanns}, the inequality~$\eqref{th1} \ge \eqref{ga1}$ must be an equality. Therefore, $t = \tau_{(m,t),m - 1}^{\gamma,L} \le\tau_{(m,t),m }^{\gamma,L} \le \cdots \le \tau_{(m,t),r - 1}^{\gamma,L}$ is maximal for the supremum in~\eqref{theta_max_vert}. Since $\tau_{(m,t),k}^{\gamma,L} \le \tau_{(m,t),k}^{\theta,L}$ for all $k \ge m$ (Theorem~\ref{existence of semi-infinite geodesics intro version}\ref{itm:monotonicity of semi-infinite jump times}\ref{itm:monotonicity in theta}) and $\tau_{(m,t),k}^{\theta,L}$ is the leftmost such maximizing sequence, $\tau_{(m,t),k}^{\theta,L} = \tau_{(m,t),k}^{\gamma,L}$ for $m \le k \le r - 1$, as desired. The equality $h_r^{\theta}(t,t_{r - 1}) = h_r^\gamma(t,t_{r- 1})$ then follows from the equality~\eqref{th1}=\eqref{ga1}.
\end{proof}

The previous lemma generalizes to points subject to southeast ordering. 

\begin{lemma} \label{lemma:coaleqdiag}
Let $\omega \in \Omega_2$ and  $(m,t) = \mbf x \SeS \mbf y = (r,s)$. Assume that for some $0 < \gamma \le \theta$ and $\sigg_1,\sigg_2 \in \{-,+\}$ that $\B^{\theta \sigg_2}(\mbf x,\mbf y) = \B^{\gamma \sigg_1}(\mbf x,\mbf y)$. Then, $t_k := \tau_{(m,t),k}^{\theta \sig_2,L} = \tau_{(m,t),k}^{\gamma \sig_1,L}$ for $m \le k \le r - 1$, and $h_r^{\theta \sig_2}(s,t_{r - 1}) = h_r^{\gamma \sig_1}(s,t_{r - 1})$.
\end{lemma}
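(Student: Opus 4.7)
The plan is to reduce this diagonal statement to the purely vertical statement of Lemma~\ref{lemma:coaleqv} by absorbing the horizontal shift $s\le t$ into the variational formula for $\B^{\theta\sig}((m,t),(r,t))$. Using additivity (Theorem~\ref{thm:summary of properties of Busemanns for all theta}\ref{general additivity Busemanns}) and the definition $h_r^{\theta\sig}(s,t)=h_r^{\theta\sig}(t)-h_r^{\theta\sig}(s)$, I first write
\[
\B^{\theta\sig}(\mbf x,\mbf y)=\B^{\theta\sig}((m,t),(r,t))-h_r^{\theta\sig}(s,t).
\]
Combining this with Lemma~\ref{lemma:vert_increments} and the identity $h_r^{\theta\sig}(t,t_{r-1})+h_r^{\theta\sig}(s,t)=h_r^{\theta\sig}(s,t_{r-1})$ yields the key variational representation
\[
\B^{\theta\sig}(\mbf x,\mbf y)=\sup\Bigl\{\sum_{k=m}^{r-1}B_k(t_{k-1},t_k)-h_r^{\theta\sig}(s,t_{r-1}):\ t=t_{m-1}\le t_m\le\cdots\le t_{r-1}<\infty\Bigr\}.
\]
Because $h_r^{\theta\sig}(s,t)$ does not depend on the sequence, the set of maximizers of this supremum coincides with the set of maximizers appearing in Lemma~\ref{lemma:vert_increments}. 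In particular, by Lemma~\ref{lemma:ptl_sig}\ref{itm:geo_maxes}--\ref{itm:LR_geo_max}, the leftmost maximizer is $(\tau_{(m,t),k}^{\theta\sig,L})_{m-1\le k\le r-1}$.

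With this representation in hand, I copy the argument of Lemma~\ref{lemma:coaleqv} almost verbatim. Let $\tau_k := \tau_{(m,t),k}^{\gamma\sig_1,L}$. Then, by the same Lemma~\ref{lemma:ptl_sig} applied at $(\gamma,\sig_1)$,
\begin{align*}
\B^{\gamma\sig_1}(\mbf x,\mbf y)&=\sum_{k=m}^{r-1}B_k(\tau_{k-1},\tau_k)-h_r^{\gamma\sig_1}(s,\tau_{r-1})\\
&\le\sum_{k=m}^{r-1}B_k(\tau_{k-1},\tau_k)-h_r^{\theta\sig_2}(s,\tau_{r-1})\\
&\le\sup\Bigl\{\sum_{k=m}^{r-1}B_k(t_{k-1},t_k)-h_r^{\theta\sig_2}(s,t_{r-1})\Bigr\}=\B^{\theta\sig_2}(\mbf x,\mbf y),
\end{align*}
where the first inequality uses Theorem~\ref{thm:summary of properties of Busemanns for all theta}\ref{general monotonicity Busemanns}, which gives $h_r^{\gamma\sig_1}\gi h_r^{\theta\sig_2}$ (this is where the convention $\sig_1=-,\sig_2=+$ when $\gamma=\theta$ is needed), applied on the interval $[s,\tau_{r-1}]$, which is nondegenerate because $\tau_{r-1}\ge t>s$. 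The hypothesis $\B^{\theta\sig_2}(\mbf x,\mbf y)=\B^{\gamma\sig_1}(\mbf x,\mbf y)$ forces equality throughout.

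Equality in the second inequality says that $(\tau_k)_{m-1\le k\le r-1}$ is a maximizer of the variational problem that defines $\B^{\theta\sig_2}(\mbf x,\mbf y)$. Since $(\tau_{(m,t),k}^{\theta\sig_2,L})$ is the leftmost such maximizer, we get $\tau_{(m,t),k}^{\theta\sig_2,L}\le\tau_k$ for $m\le k\le r-1$. Combining with the reverse inequality $\tau_k\le\tau_{(m,t),k}^{\theta\sig_2,L}$ from Theorem~\ref{existence of semi-infinite geodesics intro version}\ref{itm:monotonicity of semi-infinite jump times}\ref{itm:monotonicity in theta} yields $t_k:=\tau_{(m,t),k}^{\gamma\sig_1,L}=\tau_{(m,t),k}^{\theta\sig_2,L}$ for $m\le k\le r-1$. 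Equality in the first inequality of the chain, together with the monotonicity $h_r^{\gamma\sig_1}(s,t_{r-1})\ge h_r^{\theta\sig_2}(s,t_{r-1})$ and $t_{r-1}>s$, then gives the remaining claim $h_r^{\gamma\sig_1}(s,t_{r-1})=h_r^{\theta\sig_2}(s,t_{r-1})$.

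The one genuinely new input beyond Lemma~\ref{lemma:coaleqv} is the variational identity above; once it is established, the rest of the argument is formally identical. I expect no real obstacle, but I should take minor care to verify that the maximizers of the two variational problems coincide (which is immediate because the two objective functions differ by a constant in the sequence variables).
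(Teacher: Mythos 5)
Your proof is correct, and the conclusion matches the paper's, but the route is genuinely different. The paper's proof begins from the decomposition $\B^{\theta\sig}(\mbf x,\mbf y)=\sum_{k=m+1}^{r} v_k^{\theta\sig}(t)-h_r^{\theta\sig}(s,t)$, observes that monotonicity pushes every term in the same direction (each $v_k^{\theta}(t)\ge v_k^{\gamma}(t)$ and $h_r^{\theta}(s,t)\le h_r^{\gamma}(s,t)$), and concludes that equality of the sum forces termwise equality: $v_k^{\theta}(t)=v_k^{\gamma}(t)$ for each $k$ and $h_r^{\theta}(s,t)=h_r^{\gamma}(s,t)$. From $v_k^{\theta}(t)=v_k^{\gamma}(t)$ it gets $\B^{\theta}((m,t),(r,t))=\B^{\gamma}((m,t),(r,t))$, invokes Lemma~\ref{lemma:coaleqv} on this purely vertical increment to obtain the jump-time equality and $h_r^{\theta}(t,t_{r-1})=h_r^{\gamma}(t,t_{r-1})$, and then adds back the separately established $h_r^{\theta}(s,t)=h_r^{\gamma}(s,t)$. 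You instead derive a single variational identity for the diagonal increment (using Lemma~\ref{lemma:vert_increments}, additivity, and the observation that a sequence-independent constant may be absorbed) and then rerun the squeeze argument of Lemma~\ref{lemma:coaleqv} with the shifted objective $-h_r^{\theta\sig}(s,t_{r-1})$. Both are fundamentally monotonicity arguments; the paper's is more modular and gives as a byproduct the additional information $v_k^{\theta}(t)=v_k^{\gamma}(t)$ and $h_r^{\theta}(s,t)=h_r^{\gamma}(s,t)$, while yours is more self-contained and reaches the stated conclusion in one pass at the cost of repeating the squeeze. One minor nit: $\mbf x\SeS\mbf y$ allows $s=t$, so $[s,\tau_{r-1}]$ may be degenerate if additionally $\tau_{r-1}=t$; in that case both sides of your first inequality vanish, so the chain still closes, but the word ``nondegenerate'' (and the later ``$t_{r-1}>s$'') slightly overstates what you have. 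Neither is needed for the argument, which already works with the weak inequality $\tau_{r-1}\ge t\ge s$.
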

\begin{proof}
If $\gamma = \theta$, without loss of generality, we may assume that $\sigg_1 = - $ and $\sigg_2 = +$. As in the proofs of Lemmas~\ref{lemma:coaleqh} and~\ref{lemma:coaleqv}, we suppress the $\sigg_1$ and $\sigg_2$ in the proof. By additivity,  $\B^\theta(\mbf x,\mbf y) = \B^\gamma(\mbf x,\mbf y)$ gives 
\be \label{hvthga}
\sum_{k = m + 1}^{r} v_k^{\theta }(t)  -h_r^{\theta }(s,t) = \sum_{k = m + 1}^{r} v_k^{\gamma }(t)  -h_r^{\gamma }(s,t). 
\ee
By Theorem~\ref{thm:summary of properties of Busemanns for all theta}\ref{general monotonicity Busemanns}, $v_k^{\theta} \ge v_k^{\gamma }$ and $h_m^{\theta } \li h_m^{\gamma}$. Hence, equality in~\eqref{hvthga} forces  $v_k^{\theta }(t) = v_k^{\gamma}(t)$ for $m + 1 \le k \le r$
and $h_r^\theta(s,t) = h_r^\gamma(s,t)$. Then, by Lemma~\ref{lemma:coaleqv}, $t_k := \tau_{(m,t),k}^{\theta,L} = \tau_{(m,t),k}^{\gamma,L}$ for $m \le k \le r - 1$ and $h_r^\theta(t,t_{r - 1}) = h_r^\gamma(t,t_{r - 1})$. Combining this equality with the equality $h_r^\theta(s,t) = h_r^\gamma(s,t)$ completes the proof.
\end{proof}

The next theorem contains the final step needed before we tackle the proof of Theorem~\ref{thm:general_coalescence}.  Recall the point $\mbf z^{\theta \sig}(\mbf x,\mbf y)$ defined in~\eqref{def:coalpt}. We have not yet shown that this quantity is well-defined. If, a priori, $\Gamma_{\mbf x}^{\theta \sig,L}$ and $\Gamma_{\mbf y}^{\theta \sig,R}$ do not intersect, we set $\mbf z^{\theta \sig}(\mbf x,\mbf y) = \infty$, interpreted as the point at $\infty$ in the one-point compactification of $\R^2$. Under this definition, $\mbf z^{\theta \sig}(\mbf x,\mbf y) \in \Z \times \R$ if and only if $\Gamma_{\mbf x}^{\theta \sig,L}$ and $\Gamma_{\mbf y}^{\theta \sig,R}$ intersect.

\begin{theorem} \label{thm:equiv_coal_pt}
Let $\omega \in \Omega_2$, $\gamma < \theta$,  $\sigg_1,\sigg_2 \in \{-,+\}$, and $\mbf x \SeS \mbf y$. Then, $\B^{\gamma \sig_1}(\mbf x,\mbf y) = \B^{\theta \sig_2}(\mbf x,\mbf y)$ if and only if $\mbf z^{\gamma \sig_1}(\mbf x,\mbf y) = \mbf z^{\theta \sig_2}(\mbf x,\mbf y) \in \Z \times \R$.  
\end{theorem}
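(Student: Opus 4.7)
The plan is to prove the two implications separately: the converse is a short additivity computation, while the forward implication proceeds by iterated applications of Lemma~\ref{lemma:coaleqh} starting from Lemma~\ref{lemma:coaleqdiag}.

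For the converse, suppose $\mbf z := \mbf z^{\gamma \sig_1}(\mbf x,\mbf y) = \mbf z^{\theta \sig_2}(\mbf x,\mbf y) \in \Z \times \R$. By Definition~\ref{def:coalpt}, $\mbf z$ lies on each of the four geodesics $\Gamma_{\mbf x}^{\theta \sig_2,L}, \Gamma_{\mbf y}^{\theta \sig_2,R}, \Gamma_{\mbf x}^{\gamma \sig_1,L}, \Gamma_{\mbf y}^{\gamma \sig_1,R}$. Theorem~\ref{existence of semi-infinite geodesics intro version}\ref{energy of path along semi-infinte geodesic} then gives $\B^{\theta \sig_2}(\mbf x,\mbf z) = L_{\mbf x,\mbf z}$ and $\B^{\theta \sig_2}(\mbf y,\mbf z) = L_{\mbf y,\mbf z}$, and the same identities hold for $\gamma \sig_1$. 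Busemann additivity (Theorem~\ref{thm:summary of properties of Busemanns for all theta}\ref{general additivity Busemanns}) then forces both $\B^{\theta \sig_2}(\mbf x,\mbf y)$ and $\B^{\gamma \sig_1}(\mbf x,\mbf y)$ to equal $L_{\mbf x,\mbf z} - L_{\mbf y,\mbf z}$, reproducing the calculation~\eqref{coaldiff}.

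For the forward implication, write $\mbf x = (m,t)$ and $\mbf y = (r,s)$. Lemma~\ref{lemma:coaleqdiag} applied to the hypothesis yields common jump times $t = t_{m-1} \le t_m \le \cdots \le t_{r-1}$ shared by $\Gamma_{\mbf x}^{\theta \sig_2,L}$ and $\Gamma_{\mbf x}^{\gamma \sig_1,L}$ through levels $m,\ldots,r-1$, together with the equality $h_r^{\theta \sig_2}(s, t_{r-1}) = h_r^{\gamma \sig_1}(s, t_{r-1})$ on level $r$; in particular both leftmost geodesics reach level $r$ at the common point $(r, t_{r-1})$ with $t_{r-1} \ge t > s$. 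I then iterate Lemma~\ref{lemma:coaleqh}: at step $k \ge 0$, the iteration carries a pair $s^{(k)} < t^{(k)}$ on level $r + k$ satisfying $h_{r+k}^{\theta \sig_2}(s^{(k)}, t^{(k)}) = h_{r+k}^{\gamma \sig_1}(s^{(k)}, t^{(k)})$, where $(r+k, s^{(k)})$ lies on $\Gamma_{\mbf y}^{\theta \sig_2,R} \cap \Gamma_{\mbf y}^{\gamma \sig_1,R}$ and $(r+k, t^{(k)})$ lies on $\Gamma_{\mbf x}^{\theta \sig_2,L} \cap \Gamma_{\mbf x}^{\gamma \sig_1,L}$. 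If Case 1 of Lemma~\ref{lemma:coaleqh} applies, then $\tau_{(r+k, s^{(k)}), r+k}^{\theta \sig_2, R} \wedge \tau_{(r+k, s^{(k)}), r+k}^{\gamma \sig_1, R} \ge t^{(k)}$, so both rightmost geodesics from $\mbf y$ also pass through $(r+k, t^{(k)})$; combined with Lemma~\ref{lemma:SE_coal_pt} applied to the strict pair $(r+k, t^{(k)}) \Se \mbf y$, this identifies $(r+k, t^{(k)})$ as the common minimal intersection point, proving the forward direction. Otherwise Case 2 supplies the next pair $(s^{(k+1)}, t^{(k+1)})$ on level $r + k + 1$ with the same structure, and the iteration continues.

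The main obstacle is ruling out that Case 2 persists indefinitely. To terminate the iteration on the sub-event $\Omega_4 \subseteq \Omega_2$, I plan to pass to rational directions via Theorem~\ref{thm:geod_stronger}\ref{itm:stronger convergence theta}: for any prescribed level $N$, one can select rational $\theta^\star$ close to $\theta$ on the side prescribed by $\sigg_2$ (and $\gamma^\star$ close to $\gamma$ as dictated by $\sigg_1$) so that the first $N$ jump times of $\Gamma_{\mbf x}^{\theta \sig_2, L}, \Gamma_{\mbf y}^{\theta \sig_2, R}, \Gamma_{\mbf x}^{\gamma \sig_1, L}, \Gamma_{\mbf y}^{\gamma \sig_1, R}$ coincide with those of their rational-direction counterparts. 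Since $\theta^\star \in \Q_{>0}$ gives $\omega \in \Omega^{(\theta^\star)} \subseteq \Omega_4$, Theorem~\ref{thm:fixedcoal} forces $\Gamma_{\mbf x}^{\theta^\star, L}$ and $\Gamma_{\mbf y}^{\theta^\star, R}$ to coalesce at some random finite level $K^\star$; choosing $N > K^\star$ transports this intersection into one between $\Gamma_{\mbf x}^{\theta \sig_2, L}$ and $\Gamma_{\mbf y}^{\theta \sig_2, R}$ at a level at most $K^\star$, contradicting indefinite persistence of Case 2. The iteration therefore terminates at some finite step $k^\star$, producing the common coalescence point $\mbf z^{\theta \sig_2}(\mbf x,\mbf y) = \mbf z^{\gamma \sig_1}(\mbf x,\mbf y) = (r + k^\star, t^{(k^\star)})$.
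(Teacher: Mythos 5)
Your converse direction and your setup for the forward direction (reduce to a single level via Lemma~\ref{lemma:coaleqdiag}, then iterate Lemma~\ref{lemma:coaleqh}) match the paper exactly. The genuine gap is your termination argument.

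The paper rules out indefinite persistence of Case~2 by a one-line directedness observation: if Case~2 persisted at every level, the $\theta\sigg_2$ and $\gamma\sigg_1$ leftmost geodesics out of $\mbf x$ would share every jump time past level $r$, hence coincide as paths; but by Theorem~\ref{existence of semi-infinite geodesics intro version}\ref{general limits for semi-infinite geodesics} one is $\theta$-directed and the other $\gamma$-directed with $\gamma < \theta$, a contradiction. This works directly on $\Omega_2$ as the theorem demands.

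Your replacement via rational approximation and Theorem~\ref{thm:fixedcoal} does not close. You prescribe a level $N$, pick a rational $\theta^\star$ within the $\ve(m,N,K,\theta)$ tolerance of Theorem~\ref{thm:geod_stronger}\ref{itm:stronger convergence theta}, obtain a coalescence level $K^\star$ for the $\theta^\star$-geodesics, and then require $N > K^\star$. But $K^\star$ depends on $\theta^\star$, which depends on $N$: the chain is $N \mapsto \theta^\star \mapsto K^\star$, and there is no mechanism forcing $K^\star < N$. In fact, if indefinite Case~2 did occur (so that $\Gamma_{\mbf x}^{\theta\sig_2,L}$ and $\Gamma_{\mbf y}^{\theta\sig_2,R}$ never intersect), then for each $n$ and every rational $\theta^\star$ close enough to $\theta$ on the $\sigg_2$-side, the $\theta^\star$-geodesics agree with the $\theta\sig_2$-geodesics through level $n$ and therefore do not meet below level $n$, forcing $K^\star(\theta^\star) > n$. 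As $\theta^\star \to \theta$ the coalescence levels would escape to infinity, and your inequality $N > K^\star$ can never be realized. So the argument establishes no contradiction and the iteration is not shown to terminate. A secondary issue: your route invokes Lemma~\ref{lemma:SE_coal_pt}, Theorem~\ref{thm:geod_stronger}\ref{itm:stronger convergence theta}, and $\Omega^{(\theta^\star)}$, so at best it proves the statement on $\Omega_4 \subsetneq \Omega_2$, weaker than the claim; and the appeal to Lemma~\ref{lemma:SE_coal_pt} is also unnecessary, since $\mbf z^{\theta\sig}(\mbf x,\mbf y)$ is by Definition~\ref{def:coalpt} the minimal intersection point and the iteration already exhibits it directly.
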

\begin{remark}
The statement of this theorem is somewhat subtle. The ``only if" implication says two things: If there exists $\gamma < \theta$ such that $\B^{\gamma \sig_1}(\mbf x,\mbf y) = \B^{\theta \sig_2}(\mbf x,\mbf y)$, then $\mbf z^{\gamma \sig_1}(\mbf x,\mbf y)$ and $\mbf z^{\theta \sig_2}(\mbf x,\mbf y)$ are both in $\Z \times \R$, and they are equal. 
\end{remark}
\begin{proof}[Proof of Theorem~\ref{thm:equiv_coal_pt}]
 Set $\mbf x = (m,t)$ and $\mbf y = (r,s)$. The condition $\mbf x \SeS \mbf y$ gives $m \le r$ and $t \ge s$. Again, we suppress the $\sigg_1$ and $\sigg_2$ in the proofs. Assume first $\B^{\gamma}(\mbf x,\mbf y) = \B^{\theta}(\mbf x,\mbf y)$. By Lemma~\ref{lemma:coaleqdiag}, $\Gamma_{\mbf x}^{\theta,L}$ and $\Gamma_{\mbf x}^{\gamma,L}$ agree up to level $r$, and $h_r^\theta(s,t_{r - 1}) = h_r^\gamma(s,t_{r - 1})$. Therefore, by restarting the geodesics from level $r$, it suffices to assume $r = m$  so that $\mbf y = (r,s)$ and $\mbf x = (r,t)$ for some $r \in \Z$ and $s \le t$. If $s = t$, there is nothing to show, so we assume $s < t$.    
 
Now apply  Lemma~\ref{lemma:coaleqh} whose two cases are  illustrated in Figure~\ref{fig:samejump}.  In Case~\ref{itm:=Bm}, $\tau_{(r,s),r}^{\theta,R}$ and $\tau_{(r,s),r}^{\gamma,R}$ are both greater than or equal to $t$. Therefore, for $\eta = \theta,\gamma$, the first point of intersection of $\Gamma_{(r,s)}^{\eta,R}$ and $\Gamma_{(r,t)}^{\eta,L}$ is $(r,t)$. This precisely means that $\mbf z^{\theta}(\mbf x,\mbf y) = \mbf z^{\gamma}(\mbf x,\mbf y) = (r,t)$. 
 
In Case~\ref{itm: > Bm}, the paths $\Gamma_{(r,t)}^{\theta,L}$ and $\Gamma_{(r,t)}^{\gamma,L}$ both jump at time $t'$ to level $r + 1$, while $\Gamma_{(r,s)}^{\theta,R}$ and $\Gamma_{(r,s)}^{\gamma, R}$ both jump at time $s' < t$ to level $r + 1$. Furthermore, $h_{r + 1}^\theta(s',t') = h_{r + 1}^\gamma(s',t')$, so we may inductively repeat this procedure. By Theorem~\ref{existence of semi-infinite geodesics intro version}\ref{general limits for semi-infinite geodesics}, $\Gamma_{(r,t)}^{\theta,L}$ and $\Gamma_{(r,s)}^{\theta,R}$ are $\theta$-directed while  $\Gamma_{(r,t)}^{\gamma,L}$ and $\Gamma_{(r,s)}^{\gamma,R}$ are $\gamma$-directed. Hence, $\Gamma_{(r,t)}^{\theta,L}$ and $\Gamma_{(r,t)}^{\gamma,L}$ must separate eventually, and similarly for  $\Gamma_{(r,s)}^{\theta,R}$ and $\Gamma_{(r,s)}^{\gamma,R}$. Thus, this inductive procedure must terminate on some level at Case~\ref{itm:=Bm} of Lemma~\ref{lemma:coaleqh} and then $\mbf z^{\theta}(\mbf x,\mbf y) = \mbf z^{\gamma}(\mbf x,\mbf y) \in \Z \times \R$.
 
 For the converse claim of the theorem, if $\mbf z :=\mbf z^{\gamma}(\mbf x,\mbf y) = \mbf z^{\theta}(\mbf x,\mbf y) \in \Z \times \R$, then by Equation~\eqref{coaldiff},
$ 
\B^{\gamma}(\mbf x,\mbf y) = L_{\mbf x,\mbf z} - L_{\mbf y,\mbf z} = \B^{\theta}(\mbf x,\mbf y).
$ 
\end{proof}

\begin{figure}[t]
    \centering
    \begin{tikzpicture}
    \draw[gray,thin,dashed] (0,0)--(5,0);
    \draw[gray,thin,dashed] (6,0)--(11,0);
    \draw[gray,thin,dashed] (0,1)--(5,1);
    \draw[gray,thin,dashed] (6,1)--(11,1);
    \filldraw[black] (1,0) circle (2pt) node[anchor = north] {$(r,s)$};
    \filldraw[black] (2,0) circle (2pt) node[anchor = north] {$(r,t)$};
    \filldraw[black] (7,0) circle (2pt) node[anchor = north] {$(r,s)$};
    \filldraw[black] (9,0) circle (2pt) node[anchor = north] {$(r,t)$};
    \filldraw[black] (8,1) circle (2pt) node[anchor = south] {$(r + 1,s')$};
    \filldraw[black] (10,1) circle (2pt) node[anchor = south] {$(r + 1,t')$};
    \draw[red,ultra thick,->] plot coordinates {(1,0)(4,0)(4,1)(5,1)};
    \draw[blue,thick,->] plot coordinates {(1,0.05)(3,0.05)(3,1)(3.5,1)};
    \draw[red,ultra thick,] plot coordinates {(7,0)(8,0)(8,1)};
    \draw[blue,thick] plot coordinates {(7,0.05)(7.95,0.05)(7.95,1)};
    \draw[red,ultra thick] plot coordinates {(9,0)(10,0)(10,1)};
    \draw[blue,thick] plot coordinates {(9,0.05)(9.95,0.05)(9.95,1)};
    \end{tikzpicture}
    \caption{\small $\theta$-directed geodesics are depicted in red/thick and $\gamma$-directed geodesics are depicted in blue/thin. The picture on the left depicts the case $h_r^\theta(s,t) = h_r^\gamma(s,t) = B_r(s,t)$ and the geodesics coalesce immediately at $(r,t)$. On the right, $h_r^\theta(s,t) = h_r^\gamma(s,t) > B_r(s,t)$. The geodesics from $(r,s)$ and $(r,t)$ make the same jumps to the next level$,h_{r + 1}^{\theta}(s',t') = h_{r + 1}^{\gamma}(s',t')$, and the induction continues.}
    \label{fig:samejump}
\end{figure}

\begin{proof}[Proof of Theorem~\ref{thm:general_coalescence}:]

\textbf{Part~\ref{itm:allsignscoalesce}}
First, we assume $\mbf x \Se \mbf y$ and show coalescence of the geodesics $\Gamma_{\mbf x}^{\theta +,L}$ and $\Gamma_{\mbf y}^{\theta+,R}$. By Theorem~\ref{thm:coupled_BMs_technical}\ref{convBuse}, $\B^{\theta +}(\mbf x,\mbf y) = \B^{\delta \sig}(\mbf x,\mbf y)$ for all $\delta > \theta$ sufficiently close. Then, by Theorem~\ref{thm:equiv_coal_pt}, $\mbf z^{\theta +}(\mbf x,\mbf y) \in \Z \times \R$, meaning that $\Gamma_{\mbf x}^{\theta+,L}$ and $\Gamma_{\mbf y}^{\theta+,R}$ intersect. By Lemma~\ref{lemma:SE_coal_pt}, the coalescence point is $\mbf z^{\theta+}(\mbf x,\mbf y)$. A symmetric argument applies to the geodesics $\Gamma_{\mbf x}^{\theta -,L}$ and $\Gamma_{\mbf y}^{\theta-,R}$ in the case $\mbf x \Se \mbf y$.

Now, let $\mbf x,\mbf y \in \Z \times \R$ be arbitrary and let $\Gamma_1 \in \mbf T_{\mbf x}^{\theta \sig}$ and $\Gamma_2 \in \mbf T_{\mbf y}^{\theta \sig}$.  Since $\Gamma_1$ and $\Gamma_2$ are infinite paths with direction $\theta > 0$, there exists $m \in \Z$ such that $(m,s) \in \Gamma_1$ and $(m,t) \in \Gamma_2$ for some $s,t \in \R$. Assume, without loss of generality, that $s \le t$. Let $s = s_{m - 1} \le s_m \le \cdots$ and $t = t_{m - 1} \le t_m \le \cdots$ denote the jump times from level $m$ of the semi-infinite geodesics $\Gamma_1$ and $\Gamma_2$, respectively. Then, for $r \ge m$, by Theorem~\ref{thm:geod_stronger}\ref{itm:global strong monotonicity in t},
\[
\tau_{(m,s - 1),r}^{\theta \sig,R} \le \tau_{(m,s),r}^{\theta\sig,L} \le s_r,t_r \le \tau_{(m,t),r}^{\theta \sig,R} \le \tau_{(m,t + 1),r}^{\theta \sig,L}.
\]
By the $\mbf x \Se \mbf y$ case, $\Gamma_{(m,s - 1)}^{\theta \sig,R}$ and $\Gamma_{(m,t + 1)}^{\theta \sig,L}$ coalesce. Therefore, for all sufficiently large $r$, $\tau_{(m,s - 1),r}^{\theta \sig,R} = \tau_{(m,t + 1),r}^{\theta \sig,L}$, so the above inequalities are all equalities for large $r$, and $\Gamma_1$ and $\Gamma_2$ indeed coalesce. The statement that the coalescence point is the first point of intersection in the case $\mbf x \Se \mbf y$ is then a direct consequence of Lemma~\ref{lemma:SE_coal_pt}.

\medskip \noindent \textbf{Part~\ref{itm:return_point}:}
By Theorem~\ref{thm:NU}\ref{itm:only_endpoint} and Remark~\ref{rmk:number_sIG}, for any two distinct geodesics $\Gamma_1, \Gamma_2 \in \mbf T_{\mbf x}^{\theta \sig}$, $\Gamma_1$ and $\Gamma_2$ exit the vertical line containing the point $\mbf x$ on different levels. Precisely, without loss of generality, there exists a level $r \ge m$ on which $\Gamma_1$ makes a vertical step from $(r,t)$ to $(r + 1,t)$, while $\Gamma_2$ makes a horizontal step to $(r,t +\ve)$ for some $\ve > 0$. Then, $(r,t + \ve) \Se (r + 1,t)$, and so by Part~\ref{itm:allsignscoalesce}, the minimal point of intersection of two $\theta \sig$ geodesics from $(r + 1,t)$ and $(r,t + \ve)$ is the coalescence point.  

\medskip \noindent \textbf{Part~\ref{itm:split_return}:} This follows because all geodesics in $\mbf T_{\mbf x}^{\theta \sig}$ lie between $\Gamma_{\mbf x}^{\theta \sig,L}$ and $\Gamma_{\mbf x}^{\theta \sig,R}$, which must coalesce by Part~\ref{itm:allsignscoalesce}.
\end{proof}

 \subsection{Proof the results from Section~\ref{sec:geometry_sub}} \label{section:geometry_proofs}

\begin{proof}[Proof of Theorem~\ref{thm:common_coalesce}]

\medskip \noindent \ref{itm:sameBuse}$\Leftrightarrow$\ref{itm:samecoal}: This is a direct application of Theorem~\ref{thm:equiv_coal_pt}.

\medskip \noindent 
\ref{itm:samecoal}$\Rightarrow$\ref{itm:allsamepath}:  If $\mbf z := \mbf z^{\gamma +}(\mbf x,\mbf y) = \mbf z^{\delta -}(\mbf x,\mbf y)$, then between $\mbf x$ and $\mbf z$, the portions of the paths $\Gamma_{\mbf x}^{\gamma +,L}$ and $\Gamma_{\mbf x}^{\delta -,L}$ are both leftmost geodesics between $\mbf x$ and $\mbf z$ by Theorem~\ref{existence of semi-infinite geodesics intro version}\ref{Leftandrightmost}. Hence, $\Gamma_{\mbf x}^{\gamma +,L}$ and $\Gamma_{\mbf x}^{\delta -,L}$ agree up to the point $\mbf z$. Let $\Gamma_1$ be this finite-length path from $\mbf x$ to $\mbf z$. Analogously, $\Gamma_{\mbf y}^{\gamma +,R}$ and $\Gamma_{\mbf y}^{\delta -,R}$  agree up to the point $\mbf z$. Let $\Gamma_2$ be the finite-length path from $\mbf x$ to $\mbf z$. By the monotonicity of Theorem~\ref{existence of semi-infinite geodesics intro version}\ref{itm:monotonicity of semi-infinite jump times}\ref{itm:monotonicity in theta}, for $\theta \in (\gamma,\delta)$ and $\sigg \in \{-,+\}$, $\Gamma_{\mbf x}^{\theta \sig,L}$ lies between $\Gamma_{\mbf x}^{\gamma +,L}$ and $\Gamma_{\mbf x}^{\delta -,L}$, so it must agree with $\Gamma_1$ between the two points. Similarly, $\Gamma_{\mbf y}^{\theta \sig,R}$ must agree with $\Gamma_2$ between $\mbf x$ and $\mbf z$. The point $\mbf z$ is the first point of intersection of $\Gamma_1$ and $\Gamma_2$ by definition. This is exactly~\ref{itm:allsamepath}.

\medskip \noindent \ref{itm:allsamepath}$\Rightarrow$\ref{itm:sameBuse}:  If~\ref{itm:allsamepath} holds, then by~\eqref{coaldiff}, 
\[
\B^{\gamma+}(\mbf x,\mbf y) = L_{\mbf x,\mbf z} - L_{\mbf y,\mbf z} = \B^{\delta-}(\mbf x,\mbf y). \qedhere
\]
\end{proof}


\begin{proof}[Proof of Theorem~\ref{thm:thetanotsupp}]
\ref{itm:pmBuse}$\Rightarrow$\ref{itm:pmcoal}: If $\B^{\theta -}(\mbf x,\mbf y) = \B^{\theta +}(\mbf x,\mbf y)$, then Theorem~\ref{thm:coupled_BMs_technical}\ref{convBuse} implies that for some $\gamma < \theta < \delta$, $\B^{\gamma +}(\mbf x,\mbf y) = \B^{\delta -}(\mbf x,\mbf y)$. Then,~\ref{itm:sameBuse}$\Rightarrow$\ref{itm:allsamepath} of Theorem~\ref{thm:common_coalesce} implies the existence of $\mbf z \in \Z \times \R$ and disjoint paths $\Gamma_1$ (from $\mbf x$ to $\mbf z$) and $\Gamma_2$ (from $\mbf y$ to $\mbf z$) such that for $\eta \in (\gamma,\delta)$ and $\sigg \in\{-,+\}$, $\Gamma_{\mbf x}^{\eta \sig,L}$ agrees with $\Gamma_1$ up to the point $\mbf z$, and $\Gamma_{\mbf y}^{\eta \sig,R}$ agrees with $\Gamma_2$ up to the point $\mbf z$. Applying this twice--both times with $\eta = \theta$, but once with $\sigg = +$ and once with $\sigg = -$, implies that $\mbf z = \mbf z^{\theta -}(\mbf x,\mbf y) = \mbf z^{\theta +}(\mbf x,\mbf y)$. 

\medskip \noindent \ref{itm:pmcoal}$\Rightarrow$\ref{itm:pmBuse}: If $\mbf z := \mbf z^{\theta -}(\mbf x,\mbf y) = \mbf z^{\theta +}(\mbf x,\mbf y)$, then by~\eqref{coaldiff}, 
\[
\B^{\theta -}(\mbf x,\mbf y) = L_{\mbf x,\mbf z} - L_{\mbf y,\mbf z} = \B^{\theta +}(\mbf x,\mbf y).
\]

\begin{figure}[t]
            \begin{tikzpicture}
            \draw[gray,thin] (0,0)--(10,0);
            \draw[gray,thin] (0,1)--(10,1);
            \draw[gray,thin] (0,2)--(10,2);
            \draw[gray,thin] (0,3)--(10,3);
            \draw[gray,thin] (0,4)--(10,4);
            \draw[gray,thin] (0,5)--(10,5);
            \draw[red,ultra thick,->] plot coordinates {(1,0)(2,0)(2,1)(3,1)(3,2)(4.5,2)(4.5,3)(7,3)(7,4)(9,4)(9,5)(10,5)(10,5.5)};
            \draw[red,ultra thick] plot coordinates {(0,2)(2.5,2)(2.5,3)(4,3)(4,4)(8,4)};
            \filldraw[black] (1,0) circle (2pt) node[anchor = north] {$\mbf x$};
            \filldraw[black] (0,2) circle (2pt) node[anchor = south] {$\mbf y )$};
            \filldraw[black] (7,4) circle (2pt) node[anchor = south] {\small $\mbf z = \mbf z^{\theta +}(\mbf x,\mbf y) =\mbf z^{\theta -}(\mbf x,\mbf y) $};
            \node at (4,1.5) {\small $\Gamma_{\mbf x}^{\theta+,L},\Gamma_{\mbf x}^{\theta -,L}$};
            \node at (3,3.5) {\small $\Gamma_{\mbf y}^{\theta+,R},\Gamma_{\mbf y}^{\theta -,R}$};
            \end{tikzpicture}
            \caption{\small Common coalescence.}
            \label{fig:samecoalpt}
        \end{figure}
        
        \begin{figure}[t]
            \begin{tikzpicture}
            \draw[red,ultra thick,->] plot coordinates {(1,0)(5,0)(5,1)(7,1)(7,2)(9,2)(9,3)(9.5,3)(9.5,4)(10,4)(10,5)(10,5.5)};
            \draw[red, ultra thick] plot coordinates
            {(0,2)(0,3)(2,3)(2,4)(2.5,4)(3.5,4)(3.5,5)(10,5)};
            \draw[blue, thick,->] plot coordinates {(1,0)(2,0)(2,1)(3,1)(3,2)(4.5,2)(4.5,3)(7,3)(7,4)(9,4)(9,5)(10,5)(10,5.5)};
            \draw[blue,thick] plot coordinates {(0,2)(2.5,2)(2.5,3)(4,3)(4,4)(8,4)};
            \filldraw[black] (1,0) circle (2pt) node[anchor = north] {\small $\mbf x$};
            \filldraw[black] (0,2) circle (2pt) node[anchor = east] {\small $\mbf y$};
            \filldraw[black] (10,5) circle (2pt) node[anchor = west] {\small $\mbf z$};
            \end{tikzpicture}
            \caption{\small The red/thick paths are $\Gamma_{\mbf x}^{\theta+,L}$ and $\Gamma_{\mbf y}^{\theta -,R}$, and the blue/thin paths are $\Gamma_{\mbf x}^{\theta-,L}$ and $\Gamma_{\mbf y}^{\theta +,R}$}
            \label{fig:ordering}
        \end{figure}

\medskip \noindent \ref{itm:pmcoal}$\Rightarrow$\ref{itm:disjointgeod}
Assume that $\mbf z := \mbf z^{\theta -}(\mbf x,\mbf y) = \mbf z^{\theta +}(\mbf x,\mbf y)$. Since leftmost Busemann geodesics are leftmost geodesics between their points and the same for rightmost (Theorem~\ref{existence of semi-infinite geodesics intro version}\ref{Leftandrightmost}), $\Gamma_{\mbf y}^{\theta +,R}$ agrees with $\Gamma_{\mbf y}^{\theta -,R}$ up to $\mbf z$, and $\Gamma_{\mbf x}^{\theta -,L}$ agrees with $\Gamma_{\mbf x}^{\theta +,L}$ up to $\mbf z$, as in Figure~\ref{fig:samecoalpt}. Therefore, $\Gamma_{\mbf x}^{\theta +,L}$ and $\Gamma_{\mbf y}^{\theta -,R}$ both contain the common point $\mbf z$.

\medskip \noindent \ref{itm:disjointgeod}$\Rightarrow$\ref{itm:pmcoal}:
Assume that $\mbf z \in \Gamma_{\mbf x}^{\theta+,L} \cap \Gamma_{\mbf y}^{\theta-,R}$ for some $\mbf z \in \Z \times \R$. Take $\mbf z$ to be the minimal point of intersection. In the degenerate case where $\mbf x$ lies directly below $\mbf y$, it is possible that $\Gamma_{\mbf x}^{\theta+,L}$ moves directly from $\mbf x$ to $\mbf y$, in which case $\Gamma_{\mbf x}^{\theta-,L}$ also moves directly up to $\mbf y$, so $\mbf y = \mbf z^{\theta -}(\mbf x,\mbf y) = \mbf z^{\theta +}(\mbf x,\mbf y)$. Otherwise, using monotonicity of the geodesics (Theorems~\ref{existence of semi-infinite geodesics intro version}\ref{itm:monotonicity of semi-infinite jump times}\ref{itm:monotonicity in theta} and~\ref{thm:geod_stronger}\ref{itm:global strong monotonicity in t}) the semi-infinite geodesics $\Gamma_{\mbf x}^{\theta-,L}$ and $\Gamma_{\mbf y}^{\theta+,R}$ both lie in between $\Gamma_{\mbf x}^{\theta+,L}$ and $\Gamma_{\mbf y}^{\theta-,R}$. See Figure~\ref{fig:ordering}. Thus, $\mbf z \in \Gamma_{\mbf x}^{\theta+,L} \cap \Gamma_{\mbf y}^{\theta-,R} \cap \Gamma_{\mbf x}^{\theta-,L}\cap \Gamma_{\mbf y}^{\theta+,R}$. Since $\Gamma_{\mbf x}^{\theta+,L}$ and $\Gamma_{\mbf x}^{\theta -,L}$ are both leftmost geodesics between $\mbf x$ and $\mbf z$ (Theorem~\ref{existence of semi-infinite geodesics intro version}\ref{Leftandrightmost}), they agree up to the point $\mbf z$. The same holds for rightmost geodesics between $\mbf y$ and $\mbf z$. Hence, the picture is given as in Figure~\ref{fig:samecoalpt}, not as in Figure~\ref{fig:ordering}, and $\mbf z = \mbf z^{\theta+}(\mbf x,\mbf y) = \mbf z^{\theta-}(\mbf x,\mbf y)$.
\end{proof}

\begin{proof}[Proof of Theorem~\ref{thm:geod_description}]
\ref{notTheta}$\Rightarrow$\ref{LRsame}: If $\theta \notin \Busedc$, then $h_m^{\theta+}(t) = h_m^{\theta -}(t)$ for all $m \in \Z$ and $t \in \R$, so \ref{LRsame} follows by the construction of semi-infinite geodesics from Busemann functions. 

\medskip \noindent 
\ref{LRsame}$\Rightarrow$\ref{coal}: Assuming that~\ref{LRsame} holds, we can dispense with the $\pm$ distinction. It suffices to show that for $m \in \Z$ and $s \le t \in \R$ , any $\theta$-directed semi-infinite geodesic, $\Gamma_1$, starting from $(m,s)$, coalesces with any $\theta$-directed semi-infinite geodesic $\Gamma_2$, starting from $(m,t)$. By Theorem~\ref{existence of semi-infinite geodesics intro version}\ref{itm:monotonicity of semi-infinite jump times},\ref{itm:all semi-infinite geodesics lie between leftmost and rightmost}, $\Gamma_1$ and $\Gamma_2$ lie between $\Gamma_{(m,s)}^{\theta,L}$ and $\Gamma_{(m,t)}^{\theta,R}$. Then, 
by Theorem~\ref{thm:general_coalescence}\ref{itm:allsignscoalesce}, $\Gamma_{(m,s)}^{\theta,L}$ and $\Gamma_{(m,t)}^{\theta,R}$ coalesce, so $\Gamma_1$ and $\Gamma_2$ also coalesce. 

\medskip \noindent \ref{coal}$\Rightarrow$\ref{notTheta}: We prove the contrapositive. If $\theta \in \Busedc$, then by Theorem~\ref{thm:Theta properties}\ref{itm:Theta=Vm}, $h_0^{\theta+}(s,t) < h_0^{\theta -}(s,t)$ for some $s < t$. By \ref{itm:disjointgeod}$\Leftrightarrow$\ref{itm:pmBuse} of Theorem~\ref{thm:thetanotsupp}, $\Gamma_{(0,t)}^{\theta +,L} \cap \Gamma_{(0,s)}^{\theta -,R} = \varnothing$, and these two $\theta$-directed geodesics do not coalesce.

\medskip \noindent \ref{LRsame}$\Rightarrow$\ref{countonegeod}: By definition~\eqref{NU_0_1}, for all $\mbf x \in (\Z\times \R) \setminus \NU_0$, $\theta > 0$, and $\sigg = \{-,+\}$, $\Gamma_{\mbf x}^{\theta \sig,R} = \Gamma_{\mbf x}^{\theta \sig,L}$. By assumption, we also have $\Gamma_{\mbf x}^{\theta -,R} = \Gamma_{\mbf x}^{\theta+,R}$. Therefore $\Gamma_{\mbf x}^{\theta -,L} = \Gamma_{\mbf x}^{\theta +,R}$, and the result then follows from Theorem~\ref{existence of semi-infinite geodesics intro version}\ref{itm:all semi-infinite geodesics lie between leftmost and rightmost}.

\medskip \noindent \ref{countonegeod}$\Rightarrow$\ref{onegeod}: This is immediate since $\NU_0$ is countable by Theorem~\ref{thm:NU}\ref{itm:union} and therefore not all of $\Z \times \R$. 

\medskip \noindent \ref{onegeod}$\Rightarrow$\ref{Ronegeod} and $\ref{onegeod}\Rightarrow\ref{Lonegeod}$ follow immediately by Theorem~\ref{existence of semi-infinite geodesics intro version}\ref{itm:all semi-infinite geodesics lie between leftmost and rightmost}.

\medskip \noindent \ref{Ronegeod}$\Rightarrow$\ref{LRsame}: Assume that $\Gamma_{\mbf x}^{\theta+,R} = \Gamma_{\mbf x}^{\theta -,R}$ for some $\mbf x \in \Z \times \R$. Let $\mbf y \in \Z \times \R$. By Theorem~\ref{thm:general_coalescence}\ref{itm:allsignscoalesce}, $\Gamma_{\mbf y}^{\theta-,L}, \Gamma_{\mbf y}^{\theta +,L},\Gamma_{\mbf y}^{\theta-,R}$, and $\Gamma_{\mbf y}^{\theta +,R}$ all coalesce with  $\Gamma_{\mbf x}^{\theta+,R} = \Gamma_{\mbf x}^{\theta -,R}$. Hence, $\Gamma_{\mbf y}^{\theta-,R}$ and $\Gamma_{\mbf y}^{\theta +,R}$ coalesce. Let $\mbf z$ be the coalescence point. By uniqueness of rightmost geodesics, $\Gamma_{\mbf y}^{\theta-,R}$ and $\Gamma_{\mbf y}^{\theta +,R}$ agree from $\mbf y$ to $\mbf z$, so $\mbf z = \mbf y$, and $\Gamma_{\mbf y}^{\theta-,R} = \Gamma_{\mbf y}^{\theta +,R}$. By a similar argument, $\Gamma_{\mbf y}^{\theta-,L} = \Gamma_{\mbf y}^{\theta +,L}$.

\medskip \noindent \ref{Lonegeod}$\Rightarrow$\ref{LRsame}: This follows analogously as for \ref{Ronegeod}$\Rightarrow$\ref{LRsame}.

\medskip \noindent \textbf{Part~\ref{allBuse}}:
Let $\theta \in (0,\infty) \setminus \Busedc$  and let $t = t_{m - 1} \le t_m \le \cdots$ be the jump times of  a $\theta$-directed geodesic  $\Gamma$  started from $\mbf x = (m,t)\in \Z \times \R$.   By the implication~\ref{notTheta}$\Rightarrow$\ref{coal}, $\Gamma$ coalesces with $\Gamma_{\mbf x}^{\theta,R}$. By Theorem~\ref{existence of semi-infinite geodesics intro version}\ref{Leftandrightmost}, for any point $\mbf z =(n,u) \in \Gamma \cap \Gamma_{\mbf x}^{\theta,R}$, the energy of a geodesic between $\mbf x$ and $\mbf z$ is given by $\B^\theta(\mbf x,\mbf z)$. Then, by the additivity of Theorem~\ref{thm:summary of properties of Busemanns for all theta}\ref{general additivity Busemanns},
\begin{align*}
\sum_{r = m}^{n - 1} B_r(t_{r - 1},t_r) + B_n(t_{n - 1},u) &= L_{\mbf x,\mbf z} \\
&= \B^{\theta}(\mbf x,\mbf z) = \sum_{r = m}^{n - 1} \bigl[ h_r^\theta(t_{r - 1},t_r) + v_{r + 1}^\theta(t_r)\bigr] + h_n^\theta(t_{n - 1},u).
\end{align*}
By the monotonicity of Theorem~\ref{thm:summary of properties of Busemanns for all theta}\ref{general monotonicity Busemanns}, $B_r(t_{r -1},t_r) = h_r^\theta(t_{r - 1},t_r)$ and $v_{r + 1}^\theta(t_r) = 0$ for $m \le r \le n - 1$. Combine this with the following consequences of Theorem~\ref{thm:summary of properties of Busemanns for all theta}\ref{general queuing relations Busemanns}:
\begin{align*}
h_r^\theta(t_{r - 1},t_r) &= B_m(t_{r - 1},t_r) + \sup_{t_{r - 1} \le u < \infty} \{B_r(u) - h_{r +1}^\theta(u)\} \\ & \qquad\qquad\qquad\qquad\qquad - \sup_{t_{r} \le u < \infty} \{B_r(u) - h_{r +1}^\theta(u)\}, \\
\text{and} \quad 
v_{r + 1}^\theta(t_r) &= \sup_{t_r \le u < \infty}\{B_r(t_r,u) - h_{r + 1}^\theta(t_r,u)\}. 
\end{align*}
From this, 
\begin{align*}
&\sup_{t_{r - 1} \le u < \infty} \{B_r(u) - h_{r +1}^\theta(u)\} =\sup_{t_{r} \le u < \infty} \{B_r(u) - h_{r +1}^\theta(u)\}  
=B_r(t_r) - h_{r + 1}^\theta(t_r),
\end{align*}
where the first equality comes from $h_r^\theta(t_{r - 1},t_r) = B_r(t_{r - 1},t_r)$, and the second inequality comes from $v_{r + 1}^\theta(t_r) = 0$.
Hence, $t_r$ maximizes $B_r(u) - h_{r + 1}^\theta(u)$ over $u \in [t_{r - 1},\infty)$ for $m \le r \le n - 1$. Letting $n \rightarrow \infty$ completes the proof. 
\end{proof}

\subsection{Proofs of results from Section~\ref{section:ci}}
For $n > m$, we recall the definitions 
\be \label{sigmaL}
\sigma_{(m,s),n}^L := \sup\big\{t \ge s: \Gamma_{(m,s),(n,t)}^L \text{ passes through }(m + 1,s)\big\}.
\ee
and
\be \label{sigmaR}
\sigma_{(m,s),n}^R := \sup\big\{t \ge s: \Gamma_{(m,s),(n,t)}^R \text{ passes through }(m + 1,s)\big\}.
\ee

 We now restate the definition of $\theta_{(m,s)}^L$ and $\theta_{(m,s)}^R$ given in Section~\ref{section:ci} for convenience of the reader. 
\begin{equation} \label{eqn:cidir}
\theta_{(m,s)}^L := \sup\{\theta \ge 0: \tau_{(m,s),m}^{\theta\sig,L} = s\},\;\;\text{and}\;\; \theta_{(m,s)}^R := \sup\{\theta \ge 0: \tau_{(m,s),m}^{\theta\sig,R} = s\}.  
\end{equation}

\begin{proof}[Proof of Lemma~\ref{lemma:ci_finite}]
 By~\eqref{eqn:limits in theta to infinity}, $\lim_{\theta \rightarrow \infty}\tau_{(m,s),m}^{\theta \sig,L} = \infty$ for all $(m,s) \in \Z \times \R$, so $\tau_{(m,s),m}^{\theta \sig,L} > s$ for sufficiently large $\theta > 0$. Hence, $\theta_{(m,s)}^L < \infty$. Further, by Theorem~\ref{existence of semi-infinite geodesics intro version}\ref{Leftandrightmost}, the portion of the semi-infinite geodesic $\Gamma_{(m,s)}^{\theta-,L}$ between the points $(m,s)$ and $(n,\tau_{(m,s),n}^{\theta -,L})$ is the leftmost geodesic between these two points. Hence, by~\eqref{sigmaL} and planarity, for such sufficiently large $\theta$, $\sigma_{(m,s),n}^L < \tau_{(m,s),n}^{\theta\sig,L} < \infty$.
\end{proof}

We first prove Theorem~\ref{thm:ci_equiv} before moving on to the proof of Theorem~\ref{thm:limiting direction of competition interface}.

\begin{figure}[t]
\begin{tikzpicture}
\centering
\draw[gray,thin] (-0.5,0)--(10,0);
\draw[gray,thin] (-0.5,1)--(10,1);
\draw[gray,thin] (-0.5,2)--(10,2);
\draw[gray,thin] (-0.5,3)--(10,3);
\draw[gray,thin] (-0.5,4)--(10,4);
\draw[gray,thin] (-0.5,4)--(10,4);
\draw[gray,thin] (-0.5,5)--(10,5);
\draw[gray,thin,dashed] (-0.5,0.5)--(10,0.5);
\draw[gray,thin,dashed] (-0.5,1.5)--(10,1.5);
\draw[gray,thin,dashed] (-0.5,2.5)--(10,2.5);
\draw[gray,thin,dashed] (-0.5,3.5)--(10,3.5);
\draw[gray,thin,dashed] (-0.5,4.5)--(10,4.5);
\draw[blue,thick] (-0.45,0.5)--(-0.45,3.5)--(4.5,3.5)--(4.5,4.5)--(8,4.5)--(8,5);
\draw[red,ultra thick] (-0.5,0)--(-0.5,4)--(3,4);
\filldraw[black] (-0.5,0) circle (2pt) node[anchor = north] {$(m,s)$};
\filldraw[black] (-0.5,3) circle (2pt) node[anchor = east] {$(n - 1,s)$};
\filldraw[black] (-0.5,4) circle (2pt) node[anchor = east] {$(n,s)$};
\filldraw[black] (3,4) circle (2pt) node[anchor = south] {$(n,t)$};
\end{tikzpicture}
\caption{\small Example of a nontrivial competition interface (blue/thin) that is vertical for the first three steps. The red/thick path gives the geodesic between $(m,s)$ and $(n,t)$.}
\label{fig:nontrivial competition interface vertical then horizontal}
\end{figure}

\begin{proof}[Proof of Theorem~\ref{thm:ci_equiv}]
\medskip \noindent \ref{itm:some_vert_geod}$\Rightarrow$\ref{itm:Ltriv}: If, for some $n > m$ and $t > s$, one geodesic between $(m,s)$ and $(n,t)$ passes through $(m + 1,s)$, then the leftmost geodesic passes through $(m + 1,s)$, and $\sigma_{(m,s),n}^L \ge t > s$.

\medskip \noindent \ref{itm:Ltriv}$\Rightarrow$\ref{itm:some_vert_geod}: If $\sigma_{(m,s),n}^L > s$ for some $n > m$, then by definition~\eqref{sigmaL}, when $t \in [s,\sigma_{(m,s),n}^L]$, the leftmost geodesic between $(m,s)$ and $(n,t)$ passes through $(m +1,s)$

. 

\medskip \noindent\ref{itm:Ltriv}$\Rightarrow$\ref{itm:Rtriv}: Assume that $(m,s) \in \Z \times \R$ has trivial left competition interface, and let $n > m$ be the minimal index such that $\sigma_{(m,s),n}^L > s$. We show that $\sigma_{(m,s),n}^R > s$.  

Under this assumption, by~\eqref{eqn:LR_ci}, $\sigma_{(m,s),r}^R= \sigma_{(m,s),r}^L = s$ for $m \le r < n$. By definition, there exists some $t > s$ such that the leftmost geodesic between $(m,s)$ and $(n,t)$ passes through $(m + 1,s)$. Because of this and the assumption that $\sigma_{(m,s),n - 1}^L = s$, this geodesic cannot pass through $(n - 1,u)$ for any $u > s$. Hence, the left-most geodesic passes through $(n,s)$, as in Figure~\ref{fig:nontrivial competition interface vertical then horizontal}. Recall that the jump times for geodesics between $(m,s)$ and $(n,t)$ are defined as maximizers of the function
\[ 
B_m(s,s_m) + B_{m + 1}(s_m,s_{m + 1}) + \cdots + B_{n - 1}(s_{n - 2},s_{n - 1})+ B_n(s_{n -1},t)
\]
over all sequences $s \le s_m \le \cdots \le s_{n - 1} \le t$. Equivalently, they are maximizers of the function
\be \label{eqn:max_seq}
B_m(s_m) + B_{m + 1}(s_m,s_{m + 1}) + \cdots + B_{n - 1}(s_{n - 2},s_{n - 1})- B_n(s_{n -1}),
\ee
over the same set of sequences.
Since the left-most geodesic between $(m,s)$ and $(n,t)$ passes through $(n,s)$, this means that the sequence $s = s_m = s_{m + 1} = \cdots = s_{n - 1}$ is a maximizing sequence for~\eqref{eqn:max_seq} over all sequences $s \le s_m \le \cdots \le s_{n - 1} \le t$. By Item~\ref{itm:finite_geodesics} below the definition~\eqref{omega4} of $\Omega_4$, there are only finitely many maximizers. Choose $\hat t > s$ to be such that 
\[
\hat t  < t \wedge \min\{s_{n - 1} > s: s = s_{m - 1} \le \cdots \le s_{n - 1} \le t \text{ is a maximizing sequence for}~\eqref{eqn:max_seq}\},
\]
where we define the minimum of the empty set to be $\infty$.
Then, $s = s_{m - 1} = \cdots = s_{n  - 1}$ is the unique maximizing sequence of~\eqref{eqn:max_seq} over all sequences $s = s_{m - 1} \le \cdots \le s_{n - 1} \le \hat t$, so there is a unique geodesic between $(m,s)$ and $(n,\hat t)$, and it passes through $(m + 1,s)$. Hence, $\sigma_{(m,s),n}^R > s$, as desired.

\medskip \noindent \ref{itm:Rtriv}$\Rightarrow$\ref{itm:Ltriv}: This is immediate from~\eqref{eqn:LR_ci}.

\medskip \noindent \ref{itm:Ltriv}$\Leftrightarrow$\ref{itm:thetaL > 0} and \ref{itm:Rtriv}$\Leftrightarrow$\ref{itm:thetaR > 0}: We prove \ref{itm:Rtriv}$\Leftrightarrow$\ref{itm:thetaR > 0}, and \ref{itm:Ltriv}$\Leftrightarrow$\ref{itm:thetaL > 0} is analogous. For this, we choose an arbitrary point $(m,s)$ and use the shorthand notation $\wh \theta = \theta_{(m,s)}^R$. 

If $\wh \theta =0 $, then by the definition~\eqref{eqn:cidir}, $\tau_{(m,s),m}^{\theta \sig,R} > s$ for all $\theta > 0$ and $\sigg \in\{-,+\}$. Then, for all $\theta > 0$ and $\sigg \in \{-,+\}$, $\Gamma_{(m,s)}^{\theta \sig,R}$ does not pass through $(m + 1,s)$. For $n > m$, $(n,\tau_{(m,s),n}^{\theta\sig,R}) \in \Gamma_{(m,s)}^{\theta \sig,R}$, so $\sigma_{(m,s),n}^R \le \tau_{(m,s),n}^{\theta \sig,R}$ for all $\theta > 0$ and $\sigg \in \{-,+\}$. By~\eqref{eqn:limits in theta to infinity}, $\lim_{\theta \searrow 0} \tau_{(m,s),n}^{\theta \sig,R} = s$, so $\sigma_{(m,s),n}^R = s$ for all $n > m$. 

Now, assume  $\sigma_{(m,s),n}^R = s$ for all $n > m$. Then, by~\eqref{sigmaR}, for all $n > m$ and $t > s$, the rightmost geodesic between $(m,s)$ and $(n,t)$ passes through $(m,s + \ve)$ for some $\ve > 0$. 
By Theorem~\ref{existence of semi-infinite geodesics intro version}\ref{general limits for semi-infinite geodesics}, for each $\theta > 0$ and $\sigg \in \{-,+\}$,  we may choose $n$ large enough so that $\tau_{(m,s),n}^{\theta \sig,R} > s$. Then, the rightmost geodesic between $(m,s)$ and $(n,\tau_{(m,s),n}^{\theta \sig,R})$ passes through $(m,s + \ve)$ for some $\ve > 0$. By Theorem~\ref{existence of semi-infinite geodesics intro version}\ref{Leftandrightmost}, this rightmost geodesic agrees with the portion of $\Gamma_{(m,s)}^{\theta \sig,R}$ from $(m,s)$ to $(n,\tau_{(m,s),n}^{\theta \sig,R})$. Thus, for all $\theta > 0$ and $\sigg \in\{-,+\}$, $\Gamma_{(m,s)}^{\theta \sig,R}$ passes through $(m,s + \ve)$ for some $\ve >  0$, and $\tau_{(m,s),m}^{\theta \sig,R} > s$. By~\eqref{eqn:cidir}, $\wh \theta = 0$.

\medskip \noindent \ref{itm:thetaL > 0}$\Leftrightarrow$\ref{itm:Lvert} and \ref{itm:thetaR > 0}$\Leftrightarrow$\ref{itm:Rvert}: These are immediate from the definitions~\eqref{eqn:cidir}. The clarifications for Parts~\ref{itm:Lvert} and~\ref{itm:Rvert} are outlined in the proof of the next implications. 

\medskip \noindent \ref{itm:thetaL > 0}$\Rightarrow$\ref{itm:Lsplit} and \ref{itm:thetaR > 0}$\Rightarrow$\ref{itm:Rsplit}. We prove \ref{itm:thetaR > 0}$\Rightarrow$\ref{itm:Rsplit}, and \ref{itm:thetaL > 0}$\Rightarrow$\ref{itm:Lsplit} is analogous. Again, we use the shorthand notation $\wh \theta = \theta_{(m,s)}^R$ and assume $\wh \theta > 0$. By definition~\eqref{eqn:cidir} and the monotonicity of Theorem~\ref{existence of semi-infinite geodesics intro version}\ref{itm:monotonicity of semi-infinite jump times}\ref{itm:monotonicity in theta}, for $\gamma < \wh \theta$, and $\sigg \in \{-,+\}$, $\tau_{(m,s),m}^{\gamma\sig,R} = s$, while for  $\delta > \wh \theta$ and $\sigg \in \{-,+\}$, $\tau_{(m,s),m}^{\delta \sig,R} > s$.  By Theorem~\ref{thm:geod_stronger}\ref{itm:stronger convergence theta}, for $\gamma < \wh \theta < \delta$ sufficiently close to $\wh \theta$, and $\sigg \in \{-,+\}$, $\tau_{(m,s),m}^{\gamma \sig,R} = \tau_{(m,s),m}^{\wh \theta -,R}$ and $\tau_{(m,s),m}^{\delta \sig,R} = \tau_{(m,s),m}^{\wh \theta +,R}$. Therefore, $\tau_{(m,s),m}^{\wh \theta -,R} = s$ and  $\tau_{(m,s),m}^{\wh \theta+,R} > s$. In other words, $\Gamma_{(m,s)}^{\wh \theta-,R}$ makes a vertical step to $(m + 1,s)$ while $\Gamma_{(m,s)}^{\wh \theta+,R}$ moves horizontally to $(m,s + \ve)$ for some $\ve > 0$.
By Theorem~\ref{existence of semi-infinite geodesics intro version}\ref{Leftandrightmost}, $\Gamma_{(m,s)}^{\wh \theta-,R}$ and $\Gamma_{(m,s)}^{\wh \theta+,R}$  are both the rightmost geodesic between any two of their points. 
Thus, $\Gamma_{(m,s)}^{\wh \theta-,R}$ and $\Gamma_{(m,s)}^{\wh \theta+,R}$ cannot meet again after the initial point $(m,s)$, or else there would be two rightmost geodesics between $(m,s)$ and some point $(n,t) \ge (m,s)$.  Refer to Figure~\ref{fig:pf_semi-infinte geodesics split at initial point} for clarity. Hence, $\Gamma_{(m,s)}^{\wh \theta-,R} \cap \Gamma_{(m,s)}^{\wh \theta+,R} = \{(m,s)\}$.  By the monotonicity of Theorem~\ref{existence of semi-infinite geodesics intro version}\ref{itm:monotonicity of semi-infinite jump times}\ref{itm:monotonicity in theta}, for $\gamma < \wh \theta$, $\Gamma_{(m,s)}^{\gamma+,R}$ and $\Gamma_{(m,s)}^{\gamma -,R}$  both travel to $(m + 1,s)$ and thus contain $(x,s)$ for $x \in [m,m + 1]$, and for $\gamma > \wh \theta$, $\Gamma_{(m,s)}^{\gamma+,R}$ and $\Gamma_{(m,s)}^{\gamma -,R}$ both travel to $(m,s + \ve)$ for some $\ve > 0$ and therefore contain $(m,x)$ for $x \in [s,s + \ve]$. Therefore, $\wh \theta$ is indeed the unique direction $\gamma$ such that $\Gamma_{(m,s)}^{\gamma-,R} \cap \Gamma_{(m,s)}^{\gamma+,R}$ is a finite set.

In summary, for $S \in \{L,R\}$, $\tau_{(m,s),m}^{\theta \sig,S} = s$ for $\theta < \theta_{(m,s)}^{S}$, while $\tau_{(m,s),m}^{\theta \sig,S}$ for $\theta > \theta_{(m,s),m}^S$. Furthermore, $\tau_{(m,s),m}^{\theta_{(m,s)}^{S}-,S} = s < \tau_{(m,s),m}^{\theta_{(m,s)}^{S}+,S}$. This proves the clarifications stated in Parts~\ref{itm:Lvert} and~\ref{itm:Rvert}.

\medskip \noindent \ref{itm:Lsplit}$\Rightarrow$\ref{itm:Lvert} and \ref{itm:Rsplit}$\Rightarrow$\ref{itm:Rvert}: If $\Gamma_{(m,s)}^{\theta -,L} \cap \Gamma_{(m,s)}^{\theta +,L} = \{(m,s)\}$, then $\Gamma_{(m,s)}^{\theta -,L}$ must make an initial vertical step to $(m + 1,s)$. The same is true for $`L'$ replaced with $`R'$. 

\medskip \noindent \ref{itm:Lvert}$\Leftrightarrow$\ref{itm:v = 0}: By Theorem~\ref{thm:summary of properties of Busemanns for all theta}\ref{general queuing relations Busemanns}, 
$
v_{m + 1}^{\theta \sig}(s) = \sup_{s \le u <\infty}\{B_m(s,u) - h_{m + 1}^{\theta\sig}(s,u)\}
$,
so 
\be \label{eqn:v = 0 equiv}
v_{m +1}^{\theta \sig}(s) = 0 
\Leftrightarrow \text{$s$ maximizes $B_m(u) - h_{m + 1}^{\theta \sig}(u)$ over $u \in [s,\infty)$} \Leftrightarrow \tau_{(m,s),m}^{\theta \sig,L} = s,
\ee
thus completing the proof.
\end{proof}

\begin{figure}[t]
\begin{adjustbox}{max totalsize={5.5in}{5in},center}
\begin{tikzpicture}
\draw[gray,thin] (-0.5,0)--(15,0);
\draw[gray,thin] (-0.5,1)--(15,1);
\draw[gray,thin] (-0.5,2)--(15,2);
\draw[gray,thin] (-0.5,3)--(15,3);
\draw[gray,thin] (-0.5,4)--(15,4);
\draw[gray,thin] (-0.5,4)--(15,4);
\draw[gray,thin] (-0.5,5)--(15,5);
\draw[gray,thin,dashed] (-0.5,0.5)--(15,0.5);
\draw[gray,thin,dashed] (-0.5,1.5)--(15,1.5);
\draw[gray,thin,dashed] (-0.5,2.5)--(7,2.5);
\draw[gray,thin,dashed] (9,2.5)--(15,2.5);
\draw[gray,thin,dashed] (-0.5,3.5)--(3,3.5);
\draw[gray,thin,dashed] (5,3.5)--(15,3.5);
\draw[gray,thin,dashed] (-0.5,4.5)--(15,4.5);
\draw[blue,thick,<-] (14.5,5)--(14.5,4.5)--(13,4.5)--(13,3.5)--(6,3.5)--(6,2.5)--(4.5,2.5)--(4.5,1.5)--(1.5,1.5)--(1.5,0.5)--(-0.5,0.5);
\draw[red,ultra thick,->] (-0.5,0)--(2,0)--(2,1)--(5,1)--(5,2)--(7,2)--(7,3)--(14,3)--(14,4)--(15,4)--(15,4.5);
\draw[red,ultra thick,->] (-0.5,0)--(-0.5,1)--(1,1)--(1,2)--(3,2)--(3,3)--(5,3)--(5,4)--(10,4)--(10,5)--(13,5);
\node at (8,2.5) {$\Gamma_{(m,s)}^{\wh \theta+,R}$};
\node at (4,3.5) {$\Gamma_{(m,s)}^{\wh \theta-,R}$};
\filldraw[black] (-0.5,0) circle (2pt) node[anchor = north] {$(m,s)$};
\filldraw[black] (-0.5,1) circle (2pt) node[anchor = east] {$(m + 1,s)$};
\end{tikzpicture}
\end{adjustbox}
\caption{\small When the competition interface direction $\widehat\theta = \theta_{(m,s)}^R > 0$, $\Gamma_{(m,s)}^{\wh \theta-,R}$ (upper red/thick path) immediately splits from $\Gamma_{(m,s)}^{\wh \theta +,R}$ (lower red/thick path). These paths never touch after the initial point, and the competition interface (blue/thin) lies between the paths.}
\label{fig:pf_semi-infinte geodesics split at initial point}
\end{figure}

\begin{proof}[Proof of Theorem~\ref{thm:limiting direction of competition interface}] We prove the limit for $\wh \theta := \theta_{(m,s)}^R$, and the other limit follows analogously. We consider two cases: $\wh \theta = 0$ and $\wh \theta > 0$. If $\wh \theta = 0$, then by \ref{itm:Rtriv}$\Leftrightarrow$\ref{itm:thetaR > 0} of Theorem~\ref{thm:ci_equiv}, $\sigma_{(m,s),n}^{R} = s$ for all $n > m$, and $\lim_{n \rightarrow \infty}\tf{\sigma_{(m,s),n}^{R}}{n} = 0$.   

Now, assume $\wh \theta > 0$. By~\ref{itm:thetaR > 0}$\Rightarrow$\ref{itm:Rsplit} of Theorem~\ref{thm:ci_equiv}, $\Gamma_{(m,s)}^{\wh \theta-,R}$ makes an immediate vertical step to $(m + 1,s)$, while $\Gamma_{(m,s)}^{\wh \theta+,R}$ makes an immediate horizontal step. By Theorem~\ref{existence of semi-infinite geodesics intro version}\ref{Leftandrightmost}, $\Gamma_{(m,s)}^{\wh \theta-,R}$ and $\Gamma_{(m,s)}^{\wh \theta+,R}$ are rightmost geodesics between any of their points. In particular, $(n,\tau_{(m,s),n}^{\wh \theta \sig,R})$ lies on $\Gamma_{(m,s)}^{\wh \theta \sig,R}$ for $\sigg \in \{-,+\}$, so by definition~\eqref{sigmaR}, for each $n > m$,
\[
\tau_{(m,s),n}^{\wh \theta-,R} \le \sigma_{(m,s),n}^{R} \le \tau_{(m,s),n}^{\wh \theta+,R}.
\]
The result now follows by Theorem~\ref{existence of semi-infinite geodesics intro version}\ref{general limits for semi-infinite geodesics}.
\end{proof}

\begin{proof}[Proof of Theorem~\ref{thm:ci_and_NU}]
Part~\ref{contain}: By Theorem~\ref{thm:NU}\ref{itm:only_endpoint}, if $(m,s) \in \NU_0$, then \\$\tau_{(m,s),r}^{\theta \sig,L} = s$ for some $\theta > 0$, $\sigg \in \{-,+\}$, and $r \ge m$. By definition,
\[
s = \tau_{(m,s),m - 1}^{\theta \sig,L} \le \tau_{(m,s),m}^{\theta \sig,L} \le \cdots,
\]
so $\tau_{(m,s),m}^{\theta \sig,L} = s$, and $(m,s) \in \CI$ by Condition~\ref{itm:Lvert} of Theorem~\ref{thm:ci_equiv}.  

\medskip \noindent \textbf{Part~\ref{itm:cisd}}
The equality
\[
\{(m,s) \in \Z \times \R: \theta_{(m,s)}^R \neq \theta_{(m,s)}^L\} = \{(m,s) \in \Z \times \R: 0 < \theta_{(m,s)}^R < \theta_{(m,s)}^L \} 
\]
follows by~\eqref{eqn:LR_ci_dir} and~\ref{itm:thetaL > 0}$\Rightarrow$\ref{itm:thetaR > 0} of Theorem~\ref{thm:ci_equiv}. Now, let $\omega \in \Omega_4$ and $(m,s) \in \NU_1$. By Theorem~\ref{thm:NU}\ref{itm:union}--\ref{itm:only_endpoint}, there exists some rational $\delta > 0$ such that $s = \tau_{(m,s),m}^{\delta,L} < \tau_{(m,s),m}^{\delta,R}$. By~\eqref{eqn:cidir}, $\theta_{(m,s)}^L \ge \delta$. Since $\delta$ is rational, and we are working on the event $\Omega_4$, there is no $\pm$ distinction for direction $\delta$ (See Item~\ref{itm:Buse_eq} below~\eqref{omega4}). Then, by Theorem~\ref{thm:geod_stronger}\ref{itm:stronger convergence theta}, for all $\ve$ sufficiently small and $\sigg \in\{-,+\}$, $\tau_{(m,s),m}^{(\delta - \ve) \sig,R} = \tau_{(m,s),m}^{\delta,R} > s$. Hence, by~\eqref{eqn:cidir}, $\theta_{(m,s)}^R < \delta \le \theta_{(m,s)}^L$. Next, assume that $(m,s) \notin \NU_1$. By definition~\eqref{NU_0_1}, $\tau_{(m,s),m}^{\theta \sig,L} = \tau_{(m,s),m}^{\theta \sig,R}$ for all $\theta > 0$ and $\sigg \in \{-,+\}$. The definition~\eqref{eqn:cidir} implies that $\theta_{(m,s)}^R = \theta_{(m,s)}^L$.

\medskip \noindent \textbf{Part~\ref{itm:ci_int}:} We assume that $(m,s) \in \NU_1$ so that $0 < \theta_{(m,s)}^R < \theta_{(m,s)}^L$. Otherwise, by Part~\ref{itm:cisd}, the statement is vacuously true. 

By Theorem~\ref{thm:ci_equiv}\ref{itm:Lsplit}, $\tau_{(m,s),m}^{\theta\sig,L} = s$ if and only if either $\theta = \theta_{(m,s)}^L$ and $\sigg = -$ or $\theta < \theta_{(m,s)}^L$. By Theorem~\ref{thm:ci_equiv}\ref{itm:Rsplit}, $\tau_{(m,s),m}^{\theta \sig,R} > s$ if and only if either $\theta = \theta_{(m,s)}^R$ and $\sigg = +$ or $\theta > \theta_{(m,s)}^R$. Therefore, using Theorem~\ref{thm:NU}\ref{itm:only_endpoint},
\begin{align*}
&(m,s) \in \NU_1^{\theta-} \iff \tau_{(m,s),m}^{\theta-,L} = s < \tau_{(m,s),m}^{\theta-,R} \iff \theta \in (\theta_{(m,s)}^R,\theta_{(m,s)}^L], \qquad\text{and} \\
&(m,s) \in \NU_1^{\theta+} \iff \tau_{(m,s),m}^{\theta+,L} = s < \tau_{(m,s),m}^{\theta+,R} \iff \theta \in [\theta_{(m,s)}^R,\theta_{(m,s)}^L).
\end{align*}

\medskip \noindent \textbf{Part~\ref{itm:NU_dense_self}:}
Let $(m,s) \in \NU_1^{\theta \sig}$. By Lemma~\ref{lem:all_theta_max_comp}\ref{itm:approx_left}, there exists $t^\star \in (s - \ve,s)$ such that $\tau_{(m,t^\star),m}^{\theta \sig,R} = t^\star$. Then, by Lemma~\ref{lem:all_theta_max_comp}\ref{itm:approx_right}, there exists $t \in (t^\star,s)$ such that $(m,t) \in \NU_1^{\theta \sig}$.

Next, for $(m,s) \in \NU_1$, $\theta_{(m,s)}^R > 0$ by Part~\ref{itm:cisd}, and by Theorem~\ref{thm:ci_equiv}\ref{itm:Rsplit}, when $\theta < \theta_{(m,s)}^R$ and $\sigg \in \{-,+\}$ (or $\theta = \theta_{(m,s)}^R$ and $\sigg = -$), $\tau_{(m,s),m}^{\theta \sig,R} = s$.  By  Lemma~\ref{lem:all_theta_max_comp}\ref{itm:approx_right}, there exists $t \in (s,s + \ve)$ such that $(m,t) \in \NU_1^{\theta \sig}$.

\medskip \noindent \textbf{Part~\ref{itm:NU_struct}:}
Choose $\theta \ge \theta_{(m,s)}^R$ and $\sigg \in \{-,+\}$ or $\theta = \theta_{(m,s)}^{R}$ and $\sigg = +$. Set $\ve = \tau_{(m,s),m}^{\wh \theta+,R} - s$, where $\wh \theta  = \theta_{(m,s)}^R$. By Theorem~\ref{thm:ci_equiv}\ref{itm:Rsplit}, $\ve > 0$. By the monotonicity of Theorem~\ref{existence of semi-infinite geodesics intro version}\ref{itm:monotonicity of semi-infinite jump times}\ref{itm:monotonicity in theta}, $\tau_{(m,s),m}^{\theta \sig,R} \ge \tau_{(m,s),m}^{\wh \theta+,R} > s$. By Lemma~\ref{lem:all_theta_max_comp}\ref{itm:all_no_3}, for all $t \in (s,s + \ve]$, $\tau_{(m,s),m}^{\theta \sig,R}$ is the unique maximizer of $B_m(u) - h_{m + 1}^{\theta \sig}(u)$ over $u \in [t,\infty)$, and $(m,t) \notin \NU_1^{\theta \sig}$.

\medskip \noindent \textbf{Part~\ref{itm:CI_struct}:}
Let $(m,s) \in \CI$ and set $\wh \theta = \theta_{(m,s)}^R$. By Theorem~\ref{thm:ci_equiv}\ref{itm:Rsplit}, $\tau_{(m,s),m}^{\wh \theta-,R} = s < \tau_{(m,s),m}^{\wh \theta+,R}$. By Lemma~\ref{lem:all_theta_max_comp}\ref{itm:approx_right}, for $\ve > 0$, there exists $\hat t \in (s,(s + \ve)\wedge \tau_{(m,s),m}^{\wh \theta +,R})$ such that $(m,\hat t) \in \NU_1^{\wh \theta-}$. By Lemma~\ref{lem:all_theta_max_comp}\ref{itm:approx_left}, there exists $t \in (s,\hat t)$ such that $\tau_{(m,t),m}^{\wh \theta-,R} = t$. On the other hand, since $s < t < \tau_{(m,s),m}^{\wh \theta+,R}$, and  $\tau_{(m,s),m}^{\wh \theta+,R}$ is the rightmost maximizer of $B_m(u) - h_{m + 1}^{\wh \theta+}(u)$ over $u \in [s,\infty)$, we also have $\tau_{(m,t),m}^{\wh \theta+,R} = \tau_{(m,s),m}^{\wh \theta+,R} > t$. In summary, 
\[
\tau_{(m,t),m}^{\wh \theta-,R} = t < \tau_{(m,t),m}^{\wh \theta+,R},
\]
so by Theorem~\ref{thm:ci_equiv}\ref{itm:Rsplit}, $\theta_{(m,t)}^R = \wh \theta = \theta_{(m,s)}^R$.
\end{proof}

\begin{proof}[Proof of Theorem~\ref{thm:ci_and_Buse_directions}]
\textbf{Part~\ref{itm:cicount}:} We show that 
$
\{\theta_{(m,s)}^R\}_{(m,s) \in \CI} = \Busedc$. 
 The statement for the collection $\{\theta_{(m,s)}^L\}_{(m,s) \in \CI}$ has an analogous proof. First, let $(m,s) \in \CI$ and set $\wh \theta = \theta_{(m,s)}^{R}$. Then, by Parts~\ref{itm:thetaR > 0} and~\ref{itm:Rsplit} of Theorem~\ref{thm:ci_equiv}, $\wh \theta > 0$ and $\Gamma_{(m,s)}^{\wh \theta-,R} \ne \Gamma_{(m,s)}^{\wh \theta+,R}$. By~\ref{notTheta}$\Leftrightarrow$\ref{LRsame} of Theorem~\ref{thm:geod_description}, $\wh \theta \in \Busedc$.

Now, assume that $\theta \in \Busedc$. By \ref{Ronegeod}$\Leftrightarrow$\ref{notTheta} of Theorem~\ref{thm:geod_description}, for all $\mbf x \in \Z \times \R$, $\Gamma_{\mbf x}^{\theta+,R} \neq \Gamma_{\mbf x}^{\theta-,R}$. Since both $\Gamma_{\mbf x}^{\theta+,R}$ and $\Gamma_{\mbf x}^{\theta-,R}$ are rightmost geodesics between any two of their points (Theorem~\ref{existence of semi-infinite geodesics intro version}\ref{Leftandrightmost}), the two geodesics must split at some point $\mbf v \ge \mbf x$ and never come back together. See Figure~\ref{fig:splitting}. Then, $\Gamma_{\mbf v}^{\theta -,R} \cap \Gamma_{\mbf v}^{\theta +,R} = \{\mbf v\}$, and by Theorem~\ref{thm:ci_equiv}\ref{itm:Rsplit}, $\theta = \theta_{\mbf v}^R$. 

\begin{figure}[t]
            \begin{tikzpicture}
            \draw[red,ultra thick,->] plot coordinates {(1,0)(4,0)(4,1)(6,1)(6,2)(8,2)(8,3)(9.5,3)};
            \draw[blue,thick,->] plot coordinates {(1,0)(4,0)(4,1)(5,1)(5,2)(5.5,2)(5.5,3)(6,3)(6,4)};
            \filldraw[black] (1,0) circle (2pt) node[anchor = north] {\small $\mbf x$};
            \filldraw[black] (5,1) circle (2pt) node[anchor = north] {\small $\mbf v$};
            \end{tikzpicture}
            \caption{\small The red/thick path is the $\theta +$ geodesic and the blue/thin path is the $\theta-$ geodesic}
            \label{fig:splitting}
        \end{figure}

\medskip \noindent \textbf{Part~\ref{itm:ci_dir_supp}:}
First, we show that for $(m,s) \in \Z \times \R$, and $t > s$, 
\begin{multline} \label{eqn:square_dir}
\{\theta > 0: v_{m + 1}^{\theta -}(s) < v_{m + 1}^{\theta +}(s)\} \subseteq [\theta_{(m,s)}^L,\infty),
\quad\text{and}\\ \bigcap_{t: t > s}\{\theta > 0: h_m^{\theta +}(s,t) < h_m^{\theta -}(s,t)\} \subseteq [0,\theta_{(m,s)}^R].
\end{multline}
If $\theta_{(m,s)}^L = 0$, there is nothing to show for the first inclusion, so we assume that $\theta_{(m,s)}^L > 0$. If $\theta < \theta_{(m,s)}^L$, then by~\eqref{eqn:cidir}, $\tau_{(m,s),m}^{\theta \sig,L} = s$ for $\sigg \in \{-,+\}$. Thus, by~\eqref{eqn:v = 0 equiv},   $v_{m + 1}^{\theta \sig}(s)=0$ for  $\theta\in[0,\theta_{(m,s),m}^{\theta,L})$, and therefore, $\{\theta > 0: v_{m + 1}^{\theta -}(s) < v_{m + 1}^{\theta +}(s)\} \subseteq [\theta_{(m,s)}^L,\infty)$.

Now, for the second statement, we note by the last statement of Lemma~\ref{lemma:equality of busemann to weights of BLPP} that $B_m(s,t) =h_m^{\theta \sig}(s,t)$ if and only if $\tau_{(m,s),m}^{\theta \sig,R} \ge t$. Hence, $\theta \mapsto h_m^{\theta \sig}(s,t)$ is constant in the interval $(\theta_{(m,s),t}^R ,\infty)$, where 
\[
\theta_{(m,s),t}^R = \inf\{\theta > 0: \tau_{(m,s),m}^{\theta \sig,R}  \ge t\}.
\]
Therefore, for any $t>s$,  \[
\bigcap_{u: u > s}\{\theta > 0: h_m^{\theta +}(s,u) < h_m^{\theta -}(s,u)\} \subseteq [0,\theta_{(m,s),t}].
\]
The proof of the claim is complete once we show that
\be \label{convthetastar} 
\theta_{(m,s),t}^R \searrow \theta_{(m,s)}^R \text{ as } t \searrow s.
\ee
First, by Definition~\eqref{eqn:cidir} and monotonicity (Theorem~~\ref{existence of semi-infinite geodesics intro version}\ref{itm:monotonicity of semi-infinite jump times}\ref{itm:monotonicity in theta}), 
$\theta_{(m,s)}^R$ is equivalently defined as 
\[
\inf \{\theta > 0: \tau_{(m,s),m}^{\theta \sig,R} > s\},
\]
so $\theta_{(m,s)}^R \le \theta_{(m,s),t}^R$ for all $s < t$. However, setting $\wh \theta = \theta_{(m,s)}^R$,  monotonicity and the Definition~\eqref{eqn:cidir} imply that $s < \tau_{(m,s),m}^{\wh \theta +,R} \le \tau_{(m,s),m}^{\theta \sig,R} $ for $\theta > \wh \theta$. Hence, $\theta_{(m,s),t}^{R} = \theta_{(m,s)}^R$ for all $t \in (s,\tau_{(m,s),m}^{\theta \sig,R}]$. Specifically,~\eqref{convthetastar} holds, as desired. 

Since $\theta_{(m,s)}^R \le \theta_{(m,s)}^L$, the inclusions of~\eqref{eqn:square_dir} guarantee that
\be \label{compsetinclusion}
\Compset_{(m,s)} \subseteq \{\theta_{(m,s)}^R\} \cap \{\theta_{(m,s)}^L\}.
\ee
Hence, if $\theta_{(m,s)}^R < \theta_{(m,s)}^L$, $\Compset_{(m,s)} = \varnothing$. In the case that $\theta_{(m,s)}^R = \theta_{(m,s)}^L$, we show that $\theta_{(m,s)}^R \in \Compset_{(m,s)}$. We break this into two cases, one where $\theta_{(m,s)}^R = 0$ and the other where $\theta_{(m,s)}^R > 0$.

\medskip \noindent \textbf{Case 1: $\theta_{(m,s)}^R = 0$} : By~\eqref{compsetinclusion}, we just need to show that $0 \in \Compset_{(m,s)}$, which by definition holds if  and only if $v_{m + 1}^{\theta \sig}(s) > 0$ for all $\theta > 0$ and $\sigg \in \{-,+\}$. \ref{itm:v = 0}$\Leftrightarrow$\ref{itm:thetaR > 0} of Theorem~\ref{thm:ci_equiv} completes the proof of this case. 

\medskip \noindent \textbf{Case 2: $\theta_{(m,s)}^R > 0$}: In this proof, refer to Figure~\ref{fig:LRsplitting}. Setting $\wh \theta = \theta_{(m,s)}^R = \theta_{(m,s)}^L$, \ref{itm:thetaR > 0}$\Rightarrow$\ref{itm:Rsplit} and \ref{itm:thetaL > 0}$\Rightarrow$\ref{itm:Lsplit} of Theorem~\ref{thm:ci_equiv} imply that $\Gamma_{(m,s)}^{\wh \theta-,R}$ and $\Gamma_{(m,s)}^{\wh \theta-,L}$ make immediate vertical steps, while $\Gamma_{(m,s)}^{\wh \theta+,R}$ and $\Gamma_{(m,s)}^{\wh \theta+,L}$ make immediate horizontal steps (See also Remark~\ref{rmk:split}). Hence, both $\Gamma_{(m,s)}^{\wh \theta+,R}$ and $\Gamma_{(m,s)}^{\wh \theta+,L}$ pass through some $(m,u) \in \Z \times \Q_{>s}$. By Theorem~\ref{thm:NU}\ref{itm:NUp0}, on $\Omega_4$, $(m,u) \notin \NU_0$, and there is a single $\theta \sig$ Busemann geodesic starting from $(m,u)$. Since $\Gamma_{(m,s)}^{\wh \theta+,L}$  and $\Gamma_{(m,s)}^{\wh \theta+,R}$ both pass through $(m,u)$, it follows that $\Gamma_{(m,s)}^{\wh \theta+,L} = \Gamma_{(m,s)}^{\wh \theta+,R}$. The implication \ref{itm:thetaR > 0}$\Rightarrow$\ref{itm:Rsplit} of Theorem~\ref{thm:ci_equiv} also implies that the only common point between $\Gamma_{(m,s)}^{\wh \theta+,L} = \Gamma_{(m,s)}^{\wh \theta+,R}$ and $\Gamma_{(m,s)}^{\wh \theta-,R}$ is $(m,s)$.  Therefore, $\Gamma_{(m,s)}^{\wh \theta +,L} \cap \Gamma_{(m + 1,s)}^{\wh \theta-,R} = \varnothing$, and for all $t > s$, $\Gamma_{(m,t)}^{\wh \theta+,L} \cap \Gamma_{(m,s)}^{\wh \theta-,R} = \varnothing$. By~\ref{itm:pmBuse}$\Leftrightarrow$\ref{itm:disjointgeod} of Theorem~\ref{thm:thetanotsupp}, $\wh \theta \in \Compset_{(m,s)}$, as desired.
\end{proof}

        \begin{figure}[t]
\begin{adjustbox}{max totalsize={5.5in}{5in},center}
\begin{tikzpicture}
\draw[gray,thin] (-0.5,0)--(15,0);
\draw[gray,thin] (-0.5,1)--(15,1);
\draw[gray,thin] (-0.5,2)--(15,2);
\draw[gray,thin] (-0.5,3)--(15,3);
\draw[gray,thin] (-0.5,4)--(15,4);
\draw[gray,thin] (-0.5,4)--(15,4);
\draw[gray,thin] (-0.5,5)--(15,5);
\draw[red,ultra thick,->] (-0.5,0)--(2,0)--(2,1)--(5,1)--(5,2)--(7,2)--(7,3)--(14,3)--(14,4)--(15,4)--(15,4.5);
\draw[red,ultra thick,->] (-0.5,0)--(-0.5,1)--(1,1)--(1,2)--(3,2)--(3,3)--(5,3)--(5,4)--(10,4)--(10,5)--(13,5);
\node at (8.5,2.5) {$\Gamma_{(m,s)}^{\wh \theta+,R} = \Gamma_{(m,s)}^{\wh \theta+,L} $};
\node at (4,3.5) {$\Gamma_{(m,s)}^{\wh \theta-,R}$};
\filldraw[black] (-0.5,0) circle (2pt) node[anchor = north] {$(m,s)$};
\filldraw[black] (-0.5,1) circle (2pt) node[anchor = east] {$(m + 1,s)$};
\filldraw[black] (1,0) circle (2pt) node[anchor = north] {$(m,u)$};
\end{tikzpicture}
\end{adjustbox}
\caption{\small $\Gamma_{(m,s)}^{\wh \theta-,R}$ (upper red/thick path) travels to $(m + 1,s)$, while $\Gamma_{(m,s)}^{\wh \theta +,R}$ and $\Gamma_{(m,s)}^{\wh \theta +,R}$ (lower red/thick path) both pass through $(m,u)$ for some rational $u > s$.}
\label{fig:LRsplitting}
\end{figure}

\subsection{Proofs of Theorems~\ref{thm:Busedc_class} and~\ref{thm:Haus_comp_interface}} \label{section:main_proofs}

\begin{proof}[Proof of Theorem~\ref{thm:Busedc_class}]
Part~\ref{itm:BLPP_good_dir_coal} follows from the equivalences \ref{notTheta}$\Leftrightarrow$\ref{coal}$\Leftrightarrow$\ref{countonegeod} of Theorem~\ref{thm:geod_description}.
Part~\ref{itm:BLPP_bad_dir_split} follows from Remark~\ref{rmk:splitting_coalesceing_general}. 
\end{proof}

\begin{proof}[Proof of Theorem~\ref{thm:Haus_comp_interface}]
Recall that the last-passage time between $(m,s)$ and \\$(m + 1,t)$ is
\be \label{eqn:1levlpp}
\sup_{s \le u \le t}\{B_m(s,u) + B_{m + 1}(u,t)\} = B_{m +1}(t) - B_m(s) + \sup_{s \le u \le t}\{B_m(u) - B_{m + 1}(u)\},
\ee
and the maximizer $u$ gives the location of the jump from level $m$ to level $m + 1$.  For $r \in \Z$, define the random sets $\LM_r$ as
\begin{equation} \label{eqn:LM}
\Big\{s \in \R: B_{r -1 }(s) - B_{r}(s) = \sup_{s \le u \le t}\{B_{r - 1}(u) - B_{r}(u)\} \;\text{ for some } t> s\Big\}.
\end{equation}

As in the proof of \ref{itm:Ltriv}$\Rightarrow$\ref{itm:Rtriv} of Theorem~\ref{thm:ci_equiv}, if $\sigma_{(m,s),n}^L > s$ for some $n > m$, then letting $n$ be the smallest such integer, there exists $t > s$ such that the leftmost geodesic between $(m,s)$ and $(n,t)$ passes through $(n,s)$.  Again, refer to Figure~\ref{fig:nontrivial competition interface vertical then horizontal}. Therefore, 
\begin{align} \label{eqn:ntlm}
    \LM_{m + 1} \subseteq \{s \in \R: \sigma_{(m,s),n}^L > s \text{ for some }n > m \} \subseteq 
   \bigcup_{n > m }\LM_{n}.
\end{align}
On the event $\Omega_4 \subseteq A_r$~\eqref{omega4}, $\LM_r$ has Hausdorff dimension $\f{1}{2}$ for each $r \in \Z$. Using Condition~\ref{itm:Ltriv} of Theorem~\ref{thm:ci_equiv}, the set
\[
\{s \in \R: \sigma_{(m,s),n}^L > s \text{ for some }n > m \} = \{s \in \R: (m,s) \in \CI\}
\]
also has Hausdorff dimension $\f{1}{2}$.
By~\eqref{eqn:ntlm} and Corollary~\ref{cor:hausdorff dimension for left maxes standard two-sided BM}\ref{itm:left_max}, each point $s \in \R$ lies in the set $\CI_m$ with probability $0$.

Next, we show the density of the sets $\CI_m$.  On the event $\Omega_4$ (See Item~\ref{itm:omega4_unique_geod} below~\eqref{omega4}), for each $m \in \Z$ and rational $q_1 < q_2$,  there is a unique geodesic between $(m,q_1)$ and $(m +1,q_2)$ that jumps to level $m + 1$ at a time $u \in (q_1,q_2)$. Then, $(m,u) \in \CI$ by Condition~\ref{itm:some_vert_geod} of Theorem~\ref{thm:ci_equiv}.

Lastly, we show that $\CI$ is exactly the set of points $\mbf x \in \Z \times \R$ such that there exist two semi-infinite geodesics in the same direction, whose only common point is the initial point.  If such a pair of geodesics exists, then one of the geodesics must travel vertically, so $\mbf x \in \CI$ by definition~\eqref{CI_intro}. The reverse implication follows from \ref{itm:some_vert_geod}$\Rightarrow$\ref{itm:Lsplit} of Theorem~\ref{thm:ci_equiv}. 
\end{proof}

 \section{Proofs of the results in Section~\ref{sec:CGM}} \label{sec:CGM_proofs}
\subsection{Discrete queues and the proof of Theorem~\ref{thm:convCGM}}
We discuss the queuing setting from~\cite{Fan-Seppalainen-20} that will allow us to prove Theorem~\ref{thm:convCGM}. We note that in~\cite{Fan-Seppalainen-20}, the Busemann functions were constructed from limits of geodesics traveling to the southwest, so the notation is changed to reflect northeast geodesics. 

Let $I = (I_k)_{k \in \Z}$ and $\omega = (\omega_k)_{k \in \Z}$ be sequences that satisfy
\[
\lim_{m \rightarrow \infty} \sum_{i = 0}^m (\omega_{i + 1} - I_i) = -\infty.
\]
The sequence $I$ gives the inter-arrival times between customers in the queue, and $\omega$ gives the service times.  
Let $H_k$ be the sequence satisfying $H_0 = 0$ and $H_{k + 1} - H_{k} = I_k$, and let $S_k$ be the sequence satisfying $S_0 = 0$ and $S_k - S_{k - 1} = \omega_k$. We define the sequence $\wt I = (\wt I_k)_{k \in \Z}$ by 
\be \label{eqn:def of wt I k}
\wt I_k = \omega_k + \sup_{m: m \ge k}[S_m -H_m] - \sup_{m: m \ge k + 1}[S_m - H_m].
\ee
 In queuing terms, $\wt I$ is the process of departures from the queue.  We encode the mapping $(I,\omega) \rightarrow \wt I$ as the function $\wt I = \Dq(I,\omega)$, with subscript $d$ for discrete.  Similar to the spaces $\X_n,\Y_n$ defined in~\eqref{Yndef} and~\eqref{Xndef}, the following sequence spaces are defined in~\cite{Fan-Seppalainen-20} for the Busemann functions. Fix $n$, and define
\begin{align*}
    \Yd_n&=\Big\{I = (I^1,\ldots,I^n) \in (\R_{\ge 0}^{\Z})^n : \\ 
    &\qquad\qquad\qquad\text { for }2 \le i \le n, \lim_{m \rightarrow \infty} \f{1}{m} \sum_{k = 1}^m I_k^i > \lim_{m \rightarrow \infty} \f{1}{m}\sum_{k = 1}^m I_k^{i - 1} > 0 \Big\}, \\
    \Xd_n &=\Big\{\eta = (\eta^1,\ldots,\eta^n) \in (\R_{\ge 0}^{\Z})^n : \\ 
    &\qquad\qquad\qquad \text { for }2 \le i \le n,\;\; \eta^i \ge \eta^{i - 1}, \text{ and } \liminf_{m \rightarrow \infty} \f{1}{m} \sum_{k = 1}^m \eta_k^1 > 0 \Big\}.
\end{align*}

\noindent Above, each  component $I^i$ and $\eta^i$ is a nonnegative  sequence indexed by $\Z$, and $\eta^i \ge \eta^{i - 1}$ means coordinatewise ordering:   $\eta^i_k \ge \eta^{i - 1}_{k}$ for all $k \in \Z$. Similarly as in Section~\ref{section:fdd}, we iterate this map as follows:
\begin{align*}
    \Dq^{(1)}(I) =&D(I,0) =  I \\
    \Dq^{(n)}(I^n,I^{n - 1},\ldots,I^1) = &\Dq(\Dq^{(n - 1)}(I^n,\ldots,I^{2}),I^1) \text{ for } n \geq 2.
\end{align*}
We now define the map $\DqD^{(n)}:\Yd_n \rightarrow \Xd_n$ as follows: for $I = (I^1,\ldots,I^n) \in \Yd_n$, the image $\eta = (\eta^1,\ldots,\eta^n) = \DqD^{(n)}(I)$ is defined by 
\[
\eta^i = \Dq^{(i)}(I^i,I^{i -1 },\ldots,I^1)\qquad \text{for}\qquad i = 1,\ldots,n.
\]

Let $\alpha = (\alpha_1,\ldots,\alpha_n)$ be such that $\alpha_1> \cdots > \alpha_n > 0$. On the space $\Yd_n$, we define the measure $\nud^\alpha$ such that $(I^1,\ldots,I^n) \sim \nud^{\alpha}$ if all coordinates $I_k^i$  are independent and $I_k^i \sim \Exp(\alpha_i)$ for $k \in \Z$ and $1 \le i \le n$. On the space $\Xd_n$, we define the measure $\mud^\alpha = \nud^\alpha \circ (\DqD^{(n)})^{-1}$.  

For each level $m \in \Z$, define the level-$m$ sequence of weights $\overline Y_m = (Y_{(k,m)})_{k \in \Z}$, and for given $\alpha > 0$,  we denote the Busemann functions at level $m$ by $\overline U_m^{\alpha,\mbf e_1} = (U^{\alpha}((k - 1,m),(k,m))_{k \in \Z}$.
The joint distribution of Busemann functions along a horizontal edge is then described as follows. 
\begin{theorem}[\cite{Fan-Seppalainen-20}, Theorem 3.2] \label{CGM_dist}
  Let $1 > \alpha_1 >\alpha_2 > \cdots > \alpha_n > 0$. For each level $m \in \Z$, the $(n  + 1)-$tuple of sequences $(\overline Y_m,\overline U_m^{\alpha_1,\mbf e_1},\ldots,\overline U_m^{\alpha_n,\mbf e_1})$ has distribution $\mu^{(1,\alpha_1,\ldots,\alpha_n)}$. 
\end{theorem}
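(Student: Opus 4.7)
The plan is to mirror, in the discrete/exponential setting, exactly the strategy used earlier in this paper for Theorem~\ref{joint distribution of Busemann functions} and Theorem~\ref{dist of Busemann functions and Bm}. In particular, I would set up a discrete multiline process and a discrete Busemann Markov chain on the sequence spaces $\Yd_n$ and $\Xd_n$, show that $\nud^\alpha$ and $\mud^\alpha$ are invariant for these processes respectively, and then identify the Busemann functions on each horizontal level as a realization of the unique such invariant measure.

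First I would formalize a discrete reflection map $R_d$ as the natural counterpart of the continuous $R$ from \eqref{definition of R}, so that $(\Dq, R_d)$ represents the departure/service pair for a single-server queue. The key probabilistic input is the discrete Burke-type property: if $I^1$ is i.i.d.\ $\Exp(\alpha_1)$ and $I^2$ is independent i.i.d.\ $\Exp(\alpha_2)$ with $\alpha_2 > \alpha_1$, then $\Dq(I^2, I^1)$ is i.i.d.\ $\Exp(\alpha_1)$, the reflected sequence $R_d(I^2, I^1)$ is i.i.d.\ $\Exp(\alpha_2 - \alpha_1)$ or of the appropriate rate, and the two output sequences are independent of each other. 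Iterating this across the $n$ lines exactly as in the proof of Theorem~\ref{multiline invariant distribution} shows that the product measure $\nud^\alpha$ is invariant for the multiline dynamics driven by an i.i.d.\ $\Exp(1)$ sequence $\omega$.

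Next I would run an intertwining argument in the spirit of Theorem~\ref{existence of an invariant measure for Busemann MC}. Define $\Ss^\omega$ as one step of the multiline evolution and $\T^\omega$ as the map on $\Xd_n$ that applies $\Dq(\abullet, \omega)$ coordinatewise. The identity $\T^\omega \circ \DqD^{(n)} = \DqD^{(n)} \circ \Ss^\omega$, which is the discrete analogue of \eqref{intertwining}, follows from an iterated queue identity analogous to Lemma~\ref{analogue of Lemma 4.4 in Joint Buse Paper} and Theorem~\ref{Alternate Rep of Iterated Queues}. Combined with the previous step, this yields invariance of $\mud^\alpha = \nud^\alpha \circ (\DqD^{(n)})^{-1}$. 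On the other side, one shows from the i.i.d.\ structure of the weights $\overline Y_m$, the shift invariance of the exponential CGM, and the edge queuing relation $\overline U_m^{\alpha,\mbf e_1} = \Dq(\overline U_{m + 1}^{\alpha, \mbf e_1}, \overline Y_m)$ (the discrete counterpart of Theorem~\ref{thm:summary of properties of Busemanns for all theta}\ref{general queuing relations Busemanns}) that the distribution of $(\overline Y_m, \overline U_m^{\alpha_1,\mbf e_1}, \ldots, \overline U_m^{\alpha_n,\mbf e_1})$ does not depend on $m$ and therefore is itself invariant for the appropriate coordinatewise Markov chain, with prescribed asymptotic rates $(1, \alpha_1, \ldots, \alpha_n)$ determined by the shape function of the exponential CGM.

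The hard part, as in the Brownian case, will be the uniqueness of the invariant measure with prescribed asymptotic rates: this is the step that allows us to conclude that the true Busemann distribution must equal $\mud^{(1,\alpha_1,\ldots,\alpha_n)}$. I would adapt the Cator--Lopez--Pimentel coupling (Theorem~\ref{convergence Theorem in Cator 2019}) to the discrete exponential queue, showing that two copies of the Markov chain on $\Xd_1$ driven by the same i.i.d.\ $\Exp(1)$ service sequence but started from different jointly increment-stationary initial conditions with matching asymptotic rate $\alpha$ couple in finite windows as $m \to \infty$. Lifting this coordinatewise to $\Xd_n$, as in Corollary~\ref{uniqueness of invariant measure for Busemann MC}, then delivers uniqueness and completes the identification. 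The main technical obstruction is the coupling itself: the discrete $\Exp$ setting admits a second-class-particle interpretation via TASEP that one can exploit, but the bookkeeping of jointly increment-stationary couplings in the sequence spaces $\Xd_n$ requires some care to ensure that the shift-invariance needed to invoke the one-dimensional uniqueness is preserved under the Markov evolution.
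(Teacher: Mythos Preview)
This theorem is not proved in the present paper: it is quoted verbatim from \cite{Fan-Seppalainen-20}, Theorem 3.2, and used here as a black box input for the comparison with BLPP in Section~\ref{sec:CGM}. So there is no ``paper's own proof'' to compare against.

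That said, your outline is essentially the strategy actually used in \cite{Fan-Seppalainen-20}, and indeed the Brownian arguments in this paper (Theorems~\ref{multiline invariant distribution}, \ref{existence of an invariant measure for Busemann MC}, \ref{Alternate Rep of Iterated Queues}, and~\ref{joint distribution of Busemann functions}) are explicitly modeled on that discrete development---see the remark after Lemma~\ref{analogue of Lemma 4.4 in Joint Buse Paper} pointing to Theorem~4.5 of \cite{Fan-Seppalainen-20}, and the comment before Theorem~\ref{existence of an invariant measure for Busemann MC} referencing Theorem~5.5 of \cite{Fan-Seppalainen-20}. One correction to your sketch: in the discrete exponential setting the uniqueness of the invariant measure does not require an analogue of the Cator--L\'opez--Pimentel coupling; \cite{Fan-Seppalainen-20} obtains it more directly via queuing fixed-point results (Mairesse--Prabhakar-type arguments for tandem M/M/1 queues), which is considerably lighter than what you propose. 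The coupling route you describe would work in principle, but it is overkill for the exponential CGM.
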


\noindent  The input sequences $I$ and $\omega$ can be encoded by the sequences $H$ and $S$ used in the definition of $\Dq$~\eqref{eqn:Dq}, where $H_{k + 1} - H_k = I_k$ and $S_k - S_{k - 1} = \omega_k$. For these sequences, we can define continuous functions $H,S \in \CRpin$ to be the piecewise linear interpolations such that $S(k) = S_k$ and $H(k) = H_k$ for $k \in \Z$. In this setting, $\Yd_n$ and $\Xd_n$ can be viewed as subspaces of $\Y_n$ and $\X_n$ as defined in Equations (4.1) and (4.2). Furthermore, the operators $\Dq^{(i)}$ and $\DqD^{(n)}$ can likewise be viewed as operators on spaces of functions. We define an output sequence $\wt H = (\wt H_k)_{k \in \Z}$ by $\wt H_0 = 0$ and $\wt H_{k + 1} - \wt H_k = \wt I_k$.  From this point of view,~\eqref{eqn:def of wt I k} implies that for $t \in \Z$,
\be \label{eqn:Dq}
\wt H_t = \Dq(S,H)_t = S(-1,t - 1) + \sup_{0 \le u <\infty}\{S(u) - H(u)\} - \sup_{t \le u < \infty}\{S(u) - H(u)\}.
\ee
The supremum over the integers can be replaced with the supremum over the reals because $S$ and $H$ are linear interpolations. Then, extend $\wt H(t)$ to all $t \in \R$ by~\eqref{eqn:Dq}, which is a continuous, piecewise linear interpolation of the sequence $\wt H$. Note that~\eqref{eqn:Dq} nearly matches the operator $D$~\eqref{definition of D}:
\be \label{eqn:D}
D(Z,B)(t) = B(t) + \sup_{0 \le u <\infty}\{B(u) - Z(u)\} - \sup_{t \le u < \infty}\{B(u) - Z(u)\}.
\ee
The only difference is that the term $B(t)$ has been replaced with $S(- 1,t - 1)$. However, when these random walks are appropriately scaled, this discrepancy is eliminated. This is made precise in the following proof.

\begin{proof}[Proof of Theorem~\ref{thm:convCGM}]
This follows by Donsker's Theorem, with some extra care. We show that distributional convergence is preserved under the queuing map. A similar proof is given for the Brownian queue in~\cite{harrison1990}, although the queues in that setting are only infinite in the positive direction. We appeal to tightness to prove the result in the bi-infinite setting. We start by showing that for fixed $T \in \Z$, 
\begin{multline} \label{eqn:truncated_conv}
\f{1}{\sqrt k}\Big(U^{\f{\sqrt k}{\sqrt k + \lambda_1}}((0,0),(tk,0)) -  t k, U^{\f{\sqrt k}{\sqrt k + \lambda_2}}((0,0),(tk,0)) - t k\Big)_{t \in (-\infty,T]}\\ \overset{k \rightarrow \infty}{\Longrightarrow} (h_0^{1/\lambda_1^2}(t), h_0^{1/\lambda_2^2}(t))_{t \in (-\infty,T]},
\end{multline}
and then the extension of the convergence to all $t \in \R$ follows by tightness, as discussed below. We may take $T > 0$ since the statement holds for any $T\le 0$ as long as it holds for some $T > 0$.
For $j \in \Z$, and $i = 1,2$, let $X_{j}^i \sim \Exp(1)$ be mutually independent random variables. For $k \in \Z_{>0}$, set $Z_k^1(0) = Z_k^2(0) = 0$, and for $tk \in \Z_{>0}$, set
\begin{align} \label{eqn:rw_rep_Z1}
Z_k^1(tk) &= \f{1}{\sqrt k}\sum_{j = 1}^{tk} \Big(\f{\sqrt k + \lambda_i}{\sqrt k}X_{j}^i - 1\Big),\quad \text{and}\\
Z_k^1(-tk) &= -\f{1}{\sqrt k}\sum_{j = -tk + 1}^{0} \Big(\f{\sqrt k + \lambda_i}{\sqrt k}X_{j}^i - 1\Big). \nonumber 
\end{align}
For general $t \in \R$, let $Z_k^i(tk)$ be the linear interpolation of the above. Equivalently, for $tk \in \Z_{>0}$,
\begin{align} \label{eqn:rw_rep2}
Z_k^i(tk) &= \Big(\f{\sqrt k + \lambda_i}{\sqrt k}\Big)\f{1}{\sqrt k} \sum_{j = 1}^{tk} \Big(X_{j}^i - 1\Big) + \lambda_i t, \quad \text{and}\\
Z_k^i(-tk) &= -\Bigg(\Big(\f{\sqrt k + \lambda_i}{\sqrt k}\Big)\f{1}{\sqrt k} \sum_{j = -tk + 1}^{0} \Big(X_{j}^i - 1\Big) + \lambda_i t\Bigg) \nonumber
\end{align}
Since $X_{j}^i$ has mean and variance $1$, this representation allows us to apply 
 Donsker's Theorem and conclude that, in the sense of uniform convergence on compact sets,
\be \label{eqn:Donsker}
(Z_k^1(tk),Z_k^2(tk))_{t \in \R} \overset{k\rightarrow \infty}{\Longrightarrow} (Z^1(t),Z^2(t)), 
\ee
where $Z^1$ and $Z^2$ are independent two-sided Brownian motions with drift $\lambda_1$ and $\lambda_2$, respectively. 
By~\eqref{eqn:Dq}, for $t \le T$ with $tk \in \Z$,
\begin{align*}
&\quad\Dq(Z_k^2,Z_k^1)(tk) \\
&= Z_k^1(-1,tk - 1) + \sup_{0 \le u <\infty}\{Z_k^1(u) - Z_k^2(u)\} - \sup_{tk \le u < \infty}\{Z_k^1(u) - Z_k^2(u)\} \\ 
&= Z_k^1(-1,tk - 1)  + \sup_{0 \le u \le Tk}\{Z_k^1(u) - Z_k^2(u)\}
\vee \sup_{Tk \le u < \infty}\{Z_k^1(u) - Z_k^2(u)\}  \\
&\qquad- \sup_{tk \le u \le Tk}\{Z_k^1(u) - Z_k^2(u)\} \vee \sup_{Tk \le u < \infty}\{Z_k^1(u) - Z_k^2(u)\} \\
&= Z_k^1(-1,tk - 1) \\
&\qquad+ \sup_{0 \le u \le Tk}\{Z_k^1(Tk,u) - Z_k^2(Tk,u)\}
\vee \sup_{Tk \le u < \infty}\{Z_k^1(Tk,u) - Z_k^2(Tk,u)\}  \\
&\qquad\qquad - \sup_{t \le u \le Tk}\{Z_k^1(Tk,u) - Z_k^2(Tk,u)\} \vee \sup_{Tk \le u < \infty}\{Z_k^1(Tk,u) - Z_k^2(Tk,u)\} \\
&= Z_k^1(-1,tk - 1) \\
&\qquad + \sup_{0 \le u \le T}\{Z_k^1(Tk,uk) - Z_k^2(Tk,uk)\}
\vee \sup_{T \le u < \infty}\{Z_k^1(Tk,uk) - Z_k^2(Tk,uk)\} \\
&\qquad \qquad - \sup_{t \le u \le T}\{Z_k^1(Tk,uk) - Z_k^2(Tk,uk)\} \vee \sup_{T \le u < \infty}\{Z_k^1(Tk,uk) - Z_k^2(Tk,uk)\},
\end{align*}
while by a similar computation, for $t \le T$, 
\begin{align*}
D(Z^2,Z^1)(t) = Z^1(t) &+ \sup_{0 \le u \le T}\{Z^1(T,u) - Z^2(T,u)\}
\vee \sup_{T \le u < \infty}\{Z^1(T,u) - Z^2(T,u)\}  \\
&- \sup_{t \le u \le T}\{Z^1(T,u) - Z^2(T,u)\} \vee \sup_{T \le u < \infty}\{Z^1(T,u) - Z^2(T,u)\}  .
\end{align*}
With $\Dq$ and $D$ represented this way, to prove~\eqref{eqn:truncated_conv}, it is sufficient to show the following. 
\begin{enumerate} [label=\rm(\roman{*}), ref=\rm(\roman{*})]  \itemsep=3pt 
    \item \label{itm:queue_rep} The following distributional equality holds.
    \begin{multline*}
    \f{1}{\sqrt k}\Big(U^{\f{\sqrt k}{\sqrt k + \lambda_1}}((0,0),(tk,0)) - tk, U^{\f{\sqrt k}{\sqrt k + \lambda_2}}((0,0),(tk,0)) - tk\Big)_{t \in \R}\\ \deq (Z_k^1(tk),\Dq(Z_k^2,Z_k^1)(tk))_{t \in \R}.
    \end{multline*}
    \item \label{itm:twopt_conv}The following distributional convergence holds in the topology of uniform convergence on compact sets. 
    \begin{multline*}
    \Big(Z_k^1(-1,tk - 1),\sup_{t \le u \le T}\{Z_k^1(Tk,uk) - Z_k^2(Tk,uk)\}\Big)_{t \le T} \\ \overset{k \rightarrow \infty}{\Longrightarrow} \Big(Z_k^1(t), \sup_{t \le u \le T}\{Z_k^1(T,u) - Z_k^2(T,u)\}\Big)_{t \le T}.
    \end{multline*}
    \item \label{itm:tail_conv} The following distributional convergence holds.
    \[
    \sup_{T \le u < \infty}\{Z_k^1(Tk,uk) - Z_k^2(Tk,uk)\} \overset{k \rightarrow \infty}{\Longrightarrow} \sup_{T \le u < \infty}\{Z^1(T,u) - Z^2(T,u)\}.
    \]
    \item \label{itm:indep} The process
    \[
    \Big(Z_k^1(-1,tk - 1),\sup_{t \le u \le T}\{Z_k^1(Tk,uk) - Z_k^2(Tk,uk)\}\Big)_{t \le T}
    \] is independent of $\sup_{T \le u < \infty}\{Z_k^1(Tk,uk) - Z_k^2(Tk,uk)\}$, while the process 
    \[\Big(Z_k^1(t), \sup_{t \le u \le T}\{Z_k^1(T,u) - Z_k^2(T,u)\}\Big)_{t \le T}
    \]
    is independent of $\sup_{T \le u < \infty}\{Z^1(T,u) - Z^2(T,u)\}$. 
\end{enumerate}
Then,~\eqref{eqn:CGM_conv} follows from~\eqref{eqn:truncated_conv} by showing that
\begin{enumerate} [resume, label=\rm(\roman{*}), ref=\rm(\roman{*})]  \itemsep=3pt
    \item  \label{itm:tight} The sequence 
    \[\Big\{\f{1}{\sqrt k}\Big(U^{\f{\sqrt k}{\sqrt k + \lambda_1}}((0,0),(tk,0)) -  t k, U^{\f{\sqrt k}{\sqrt k + \lambda_2}}((0,0),(tk,0)) - t k\Big)_{t \in \R}\Big\}_{k \ge 1}
    \]
    is tight in $C(\R \rightarrow \R^2)$. 
\end{enumerate}

\medskip \noindent \textbf{Item~\ref{itm:queue_rep}:} This follows by Theorem~\ref{CGM_dist} and the representation~\eqref{eqn:rw_rep_Z1} because $\f{\sqrt k + \lambda_i}{\sqrt k}$ times an $\Exp(1)$ random variable has distribution $\Exp\Big(\f{\sqrt k}{\sqrt k + \lambda_i}\Big)$.

\medskip \noindent \textbf{Item~\ref{itm:twopt_conv}:} Note that $Z_k^1(-1) \rightarrow 0$ since it is $O(k^{-1/2})$. The rest follows by the uniform convergence on compact sets of $(Z_k^1(tk),Z_k^2(tk))_{t \in \R}$ to $(Z^1(t),Z^2(t))_{t \in \R}$.

\medskip \noindent \textbf{Item~\ref{itm:tail_conv}:} By the Markov property, $\sup_{T \le u < \infty}\{Z_k^1(Tk,uk) - Z_k^2(Tk,uk)\}$ has the same distribution as $\sup_{0 \le u < \infty}\{Z_k^1(u) - Z_k^2(u)\}$, and $\sup_{T \le u < \infty}\{Z^1(T,u) - Z^2(T,u)\}$ has the same distribution as $\sup_{0 \le u <\infty}\{Z^1(u) - Z^2(u)\}$. Since $Z^1$ and $Z^2$ are independent Brownian motions with drift $\lambda_1 < \lambda_2$, $Z^1 - Z^2$ is a variance $2$ Brownian motion with negative drift. Then, the weak convergence
\[
\sup_{0 \le u < \infty}\{Z_k^1(u) - Z_k^2(u)\} \Longrightarrow \sup_{0 \le u <\infty}\{Z^1(u) - Z^2(u)\}
\]
follows by Proposition 6.9.4 in~\cite{resnick} (See also Chapter VIII, Section 6 in~\cite{Asmussen-1987}).

\medskip \noindent \textbf{Item~\ref{itm:indep}:} This follows from independence of increments of random walks and Brownian motion. 

\medskip \noindent \textbf{Item~\ref{itm:tight}:} It is sufficient to show that each of the components is tight in $C(\R \rightarrow \R)$. Each component is a scaled random walk converging to Brownian motion by Donsker's theorem, so tightness follows. 
\end{proof}

\subsection{The stationary horizon and proof of Theorems~\ref{thm:dist_of_SH} and~\ref{thm:conv_to_SH}} \label{sec:SH_proofs}
\noindent The following lemmas relate the mappings $\Phi$ and $\Phi^k$ to the mappings $D$, $D^{(k)}$, and $\D^{(k)}$.
\begin{lemma}
If $f(0) = g(0) = 0$, then for all $t \in \R$,
\[
\Phi(f,g)(t) = f(t) + \sup_{-\infty < s \le t}[g(s)- f(s)] - \sup_{-\infty < s \le 0}[g(s) - f(s)].
\]
\end{lemma}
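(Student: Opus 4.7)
The plan is a direct unfolding of the definition of $\Phi$ in the two cases $t\ge 0$ and $t<0$, using the convention $[x]^-=\max(-x,0)$ together with $f(0)=g(0)=0$. The identity we want asserts exactly that $\Phi(f,g)$ can be written as a ``two-sided $D$-style'' transform: $f$ plus the increment from $0$ to $t$ of $s\mapsto\sup_{u\le s}[g(u)-f(u)]$. The key observation is the telescoping relation
\[
\sup_{u\le t}[g(u)-f(u)]=\max\!\Bigl(\sup_{u\le 0}[g(u)-f(u)],\,\sup_{0\le u\le t}[g(u)-f(u)]\Bigr)\quad\text{for }t\ge 0,
\]
and the analogous decomposition through the intermediate level $t$ for $t<0$.

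For the case $t\ge 0$, I would first use $f(0)=g(0)=0$ to simplify $W_0(f-g)=\sup_{s\le 0}[g(s)-f(s)]$, and write $\inf_{0\le s\le t}[f(s)-g(s)]=-\sup_{0\le s\le t}[g(s)-f(s)]$. Setting $A:=\sup_{s\le 0}[g-f](s)$ and $B:=\sup_{0\le s\le t}[g-f](s)$, the bracketed quantity is $A-B$, so $[A-B]^-=\max(B-A,0)=\max(A,B)-A$. The telescoping relation above identifies $\max(A,B)=\sup_{s\le t}[g-f](s)$, giving the claimed formula.

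For the case $t<0$, I would expand $W_t(f-g)=(f(t)-g(t))+\sup_{s\le t}[g(s)-f(s)]$ and
\[
\inf_{t<s\le 0}\bigl[f(s)-f(t)-(g(s)-g(t))\bigr]=-\sup_{t<s\le 0}[g(s)-f(s)]-(f(t)-g(t));
\]
the two copies of $f(t)-g(t)$ cancel, leaving $\sup_{s\le t}[g-f](s)-\sup_{t<s\le 0}[g-f](s)$ inside the $[\,\cdot\,]^-$. Setting $A:=\sup_{s\le t}[g-f](s)$ and $B:=\sup_{t<s\le 0}[g-f](s)$, and noting $\max(A,B)=\sup_{s\le 0}[g-f](s)$, we get $[A-B]^-=\max(A,B)-A=\sup_{s\le 0}[g-f](s)-\sup_{s\le t}[g-f](s)$; the sign in the definition of $\Phi$ for $t<0$ then produces the desired expression $f(t)+\sup_{s\le t}[g-f](s)-\sup_{s\le 0}[g-f](s)$.

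There is no real obstacle here beyond careful bookkeeping; the only place to be cautious is the sign convention for $[\,\cdot\,]^-$ and the telescoping cancellation of $f(t)-g(t)$ in the $t<0$ case. Once those are in place, both cases collapse to the single formula, proving the lemma.
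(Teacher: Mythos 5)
Your proof is correct and follows essentially the same route as the paper's: direct unfolding of the definition of $\Phi$ in the two cases, using $f(0)=g(0)=0$ and the identity $[A-B]^-=\max(A,B)-A$ together with the telescoping relation for suprema. The only difference is that the paper writes out the $t\ge0$ case and asserts that $t<0$ follows similarly, whereas you spell out the $t<0$ bookkeeping; both treatments lead to the same three-line computation.
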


\begin{proof}
We prove the statement for $t \ge 0$, and the statement for $t < 0$ follows similarly. We have
\begin{align*}
    \Phi(f,g)(t) &= f(t) + \Big[\sup_{s \le 0}[f(0) - g(0) - f(s) + g(s)] + \inf_{0 \le s \le t}[f(s) - g(s)]\Big]^{-} \\
    &= f(t) + \Big[\sup_{s \le 0}[g(s) - f(s)] - \sup_{0 \le s \le t}[g(s) - f(s)]\Big]^- \\
    &= f(t) + \sup_{-\infty < s \le t}[g(s)- f(s)] - \sup_{-\infty < s \le 0}[g(s) - f(s)]. \qquad\qquad\qquad\qquad \mbox{\qedhere}
\end{align*}
\end{proof}
\begin{lemma} \label{lem:iterated_map_Phi}
Define the mappings $\Psi^k: C(\R)^k \to C(\R)$ as follows:
\[
\Psi^1(f_1) = f_1,\qquad\text{and}\qquad \Psi^{k}(f_1,\ldots,f_k) = \Phi(f_1,\Psi^{k - 1}(f_2,\ldots,f_k)).
\]
Let $f_1,f_2,\ldots$ be an infinite sequence of continuous functions such that each of the operations below is well-defined. Let $(g_1,\ldots,g_k) = \Phi^k(f_1,\ldots,f_k)$. Then, for $1 \le i \le k$,
\[
g_i = \Psi^i(f_1,\ldots,f_i).
\]
\end{lemma}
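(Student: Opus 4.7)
The plan is to prove Lemma~\ref{lem:iterated_map_Phi} by induction on $k$, matching the recursive definition of $\Phi^k$ against the recursive definition of $\Psi^i$.

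For the base case $k=1$, the claim is immediate from $\Phi^1(f_1) = f_1$ and $\Psi^1(f_1) = f_1$. The case $k=2$ is a direct unpacking of definitions: $\Phi^2(f_1,f_2) = [f_1,\Phi(f_1,f_2)]$, while $\Psi^1(f_2) = f_2$ and $\Psi^2(f_1,f_2) = \Phi(f_1,\Psi^1(f_2)) = \Phi(f_1,f_2)$.

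For the inductive step, suppose the equality $[\Phi^{k-1}(f_1,\ldots,f_{k-1})]_i = \Psi^i(f_1,\ldots,f_i)$ holds for all $1 \le i \le k-1$ and all admissible inputs. Given $(g_1,\ldots,g_k) = \Phi^k(f_1,\ldots,f_k)$, the definition of $\Phi^k$ gives $g_1 = f_1 = \Psi^1(f_1)$ and, for $2 \le i \le k$,
\[
g_i = \Phi\bigl(f_1,\,[\Phi^{k-1}(f_2,\ldots,f_k)]_{i-1}\bigr).
\]
Apply the inductive hypothesis to the shifted sequence $(f_2,\ldots,f_k)$ at index $i-1$ to conclude $[\Phi^{k-1}(f_2,\ldots,f_k)]_{i-1} = \Psi^{i-1}(f_2,\ldots,f_i)$. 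Substituting and using the defining recursion $\Psi^i(f_1,\ldots,f_i) = \Phi(f_1,\Psi^{i-1}(f_2,\ldots,f_i))$ yields $g_i = \Psi^i(f_1,\ldots,f_i)$, closing the induction.

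The argument is essentially bookkeeping; there is no analytic obstacle, as the well-definedness of all suprema is assumed in the hypothesis. The only minor subtlety is keeping the index shift straight between the inner $\Phi^{k-1}$ (which takes $f_2,\ldots,f_k$ as inputs and outputs $k-1$ functions) and the outer $\Psi^i$ (whose recursion also drops the first argument), so I would state the inductive hypothesis with the shift made explicit before invoking it.
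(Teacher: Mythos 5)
Your proof is correct and follows essentially the same inductive argument as the paper, down to the key step of applying the inductive hypothesis to the shifted sequence $(f_2,\ldots,f_k)$ at index $i-1$. The explicit remark about stating the inductive hypothesis "for all admissible inputs" so that the index shift is licensed is a small but worthwhile clarification that the paper leaves implicit.
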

\begin{proof}
The statements for $k = 1,2$ follow immediately from the definition. Assume the statement is true for some $k - 1$. For $i = 1$, $g_1 = f_1 = \Psi^1(f_1)$. For $2 \le i \le k$,
\[
g_i = \Phi(f_1,[\Phi^{k - 1}(f_2,\ldots,f_k)]_{i - 1}) = \Phi(f_1,\Psi^{i - 1}(f_{2},\ldots,f_i)) = \Psi^i(f_1,\ldots,f_i). \qedhere
\]
\end{proof}
\noindent Using these representations of the map $\Phi$ and $\Phi^k$, we have the following lemma from~\cite{Seppalainen-Sorensen-21a}. 
\begin{lemma}[\cite{Seppalainen-Sorensen-21a}, Lemma D.2] \label{lem:modified_time_reversal}
Let $Z,B:\R\rightarrow \R$ be continuous functions satisfying $Z(0) = B(0) = 0$ and
\[
\lim_{t \rightarrow \pm \infty} (B(t) - Z(t)) = \mp \infty.
\]
Then, for all $t \in \R$
\[
-D(Z,B)(-t) = \Phi(\wt B,\wt Z).
\]
\end{lemma}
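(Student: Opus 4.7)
The plan is essentially a direct calculation after rewriting $\Phi$ in the integrated form supplied by the first lemma of the subsection, followed by a change of variables in the suprema defining $D$.

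First I would invoke the lemma that gives
\[
\Phi(f,g)(t) = f(t) + \sup_{-\infty < s \le t}[g(s) - f(s)] - \sup_{-\infty < s \le 0}[g(s) - f(s)],
\]
applied with $f = \wt B$ and $g = \wt Z$ (both pinned at $0$ since $B(0)=Z(0)=0$). This reduces the lemma to matching this expression against $-D(Z,B)(-t)$. Note that the tail hypothesis $\lim_{t\to \pm \infty}(B(t)-Z(t)) = \mp\infty$ translates, under $u \mapsto -u$, to $\lim_{u \to \pm \infty}(\wt Z(u) - \wt B(u)) = \mp \infty$, which is exactly what is needed for the suprema in the $\Phi$ formula to be finite and for $D(Z,B)$ itself to be well-defined.

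Next I would expand the left-hand side using the definition of $D$:
\[
-D(Z,B)(-t) = -B(-t) - \sup_{0 \le s < \infty}\{B(s)-Z(s)\} + \sup_{-t \le s < \infty}\{B(s)-Z(s)\}.
\]
The first term is $\wt B(t)$ by definition of $\wt{\phantom{B}}$. In each supremum I substitute $s = -u$, so that $s \ge 0$ becomes $u \le 0$, $s \ge -t$ becomes $u \le t$, and $B(s) - Z(s) = B(-u) - Z(-u) = \wt Z(u) - \wt B(u)$. This turns the two suprema into $\sup_{-\infty < u \le 0}[\wt Z(u) - \wt B(u)]$ and $\sup_{-\infty < u \le t}[\wt Z(u) - \wt B(u)]$, respectively, and the signs match up after distributing the leading minus sign. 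The resulting expression coincides term-for-term with the integrated formula for $\Phi(\wt B,\wt Z)(t)$, finishing the proof.

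There is no genuine obstacle here; the only subtlety worth checking is the bookkeeping for the boundary index (the endpoint of $\Phi$'s supremum is $s \le t$, while that of $D$'s is $s \ge t$, and these correspond correctly under $s \mapsto -u$), together with the verification that both sides are well-defined under the stated one-sided divergence hypothesis. I would handle the $t \ge 0$ and $t < 0$ cases uniformly by working directly with the integrated form of $\Phi$ rather than the piecewise definition, which avoids splitting into cases.
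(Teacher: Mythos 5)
Your computation is correct and is the natural (and, one would assume, the paper's intended) argument: rewrite $\Phi$ in the integrated form given by the preceding lemma, then perform the change of variables $s\mapsto -u$ in the two suprema defining $D$.

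One small sign slip in the parenthetical sanity check: from $\wt Z(u) - \wt B(u) = B(-u) - Z(-u)$ and the hypothesis $\lim_{t\to\pm\infty}(B(t)-Z(t))=\mp\infty$, the correct translation is $\lim_{u\to\pm\infty}(\wt Z(u)-\wt B(u)) = \pm\infty$, not $\mp\infty$ as you wrote. The correct version is what actually guarantees finiteness of $\sup_{-\infty<s\le t}[\wt Z(s)-\wt B(s)]$ (one needs the integrand to tend to $-\infty$ as $s\to-\infty$); the version you stated would make those suprema infinite. This is only a typo in the well-definedness check, though — the main change-of-variables computation is unaffected and correct.
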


\noindent We are now ready to prove Theorems~\ref{thm:dist_of_SH} and~\ref{thm:conv_to_SH}. 
 \begin{proof}[Proof of Theorem~\ref{thm:dist_of_SH}]
Let $f_1,\ldots,f_k$ be independent variance $4$ Brownian motions with drifts $4\alpha_1,\ldots,4\alpha_k$. For $1 \le i \le k$, let $Z^i = \wt f_i$, and note that $(Z^1,\ldots,Z^k) \deq (f_1,\ldots,f_k)$. Set $(g_1,\ldots,g_k) = \Phi^k(f_1,\ldots,f_k)$, and $(\eta^1,\ldots,\eta^k) = \D^{(k)}(Z^1,\ldots,Z^k)$ (Recall~\ref{definition of script D}). Since $(G_{\alpha_1},\ldots,G_{\alpha_k}) \deq (g_1,\ldots,g_k)$ and $(\eta^1,\ldots,\eta^k) \sim \mu^\alpha$ by Definition~\ref{definition of v lambda and mu lambda}, it suffices to show that, for $1 \le i \le k$, $\eta^i = \wt g_i$. For $i = 1$, this is immediate because $g_1 = f_1$ and $\eta^1 = Z^1$. The $i = 2$ case is Lemma~\ref{lem:modified_time_reversal}. Now, assume the statement holds for some $i < k$. By definition and Lemma~\ref{lem:iterated_map_Phi}, this means that for $t \in \R$, $-D^{(i)}(Z^i,\ldots,Z^1)(-t) = \Psi^i(f_1,\ldots,f_i)(t)$. Then, applying this assumption along with~\eqref{D iterated} and Lemmas~\ref{lem:iterated_map_Phi} and~\ref{lem:modified_time_reversal},
\[
g_{i + 1}(t) = \Phi(f_1,\Psi^{i}(f_{2},\ldots,f_{i + 1}))(t) = - D(D^{(i)}(Z^{i + 1},\ldots,Z^2),Z^1)(-t) =  -\eta^{i + 1}(-t). 
\]
For the second statement, Theorem~\ref{dist of Busemann functions and Bm} implies equality of the finite-dimensional distributions. Theorem~\ref{thm:summary of properties of Busemanns for all theta}\ref{general uniform convergence Busemanns} implies that the process $\{\wt h_0^{(1/\lambda^2)-}(4\abullet):\lambda \ge 0\}$ is right-continuous with left limits, in the sense of uniform convergence on compact sets. Thus, the process $\{\wt h_0^{(1/\lambda^2)-}(4\abullet)\}_{\lambda \ge 0}$ also lies in the 
Skorokhod space $D(\R,C(\R))$.
 \end{proof}

\begin{proof}[Proof of Theorem~\ref{thm:conv_to_SH}]
 Theorem~\ref{dist of Busemann functions and Bm} and the scaling relations of Lemma~\ref{weak continuity and consistency}\ref{scaling relations} imply that the vector
\[
n^{-1/3}(h_0^{1 - 2n^{-1/3}\alpha_1}(n^{2/3} \abullet) - n^{2/3}\abullet,\ldots,h_0^{1 - 2n^{-1/3}\alpha_k}(n^{2/3}\abullet)- n^{2/3}\abullet)
\]
has distribution $\mu^{\alpha^k}$, where, for $1 \le i \le k$,
\[
\alpha_i^k = \sqrt \f{n}{n^{1/3} - 2\alpha_i} - n^{1/3}.
\]
Noting that 
\begin{align*}
\sqrt \f{n}{n^{1/3} - 2\alpha_i} - n^{1/3} &= \f{2\alpha_i n^{2/3}}{(n^{1/3} - 2\alpha_i)\Big(\sqrt \f{n}{n^{1/3} - 2\alpha_i} + n^{1/3}\Big)} \\
&= \f{2\alpha_i}{(1 - 2\alpha_i n^{-1/3})\Big(\sqrt \f{n^{1/3}}{n^{1/3} - 2\alpha_i} + 1\Big)} \overset{n \to \infty}{\longrightarrow} \alpha_i,
\end{align*}
the continuity of the measures $\mu^\lambda$ from Theorem~\ref{weak continuity and consistency}\ref{weak continuity} completes the proof, via Theorem~\ref{thm:dist_of_SH}. We scale by a factor of $4$ to match Definition~\ref{def:SH}.
\end{proof}

\begin{appendix}
\section{Finite geodesics in BLPP} \label{sec:finite_geod}
\noindent Recall the uniqueness of geodesics for fixed initial and terminal point from Lemma~\ref{lemma:uniqueness of LPP time}. The following shows how to find random points in BLPP such that multiple geodesics exist.

\begin{lemma} \label{lemma:mult_geod}
The following hold.
\begin{enumerate}[label=\rm(\roman{*}), ref=\rm(\roman{*})]  \itemsep=3pt
    \item \label{2 geod}Fix an initial point $(m,s) \in \Z \times \R$. With probability one, there exist random points $(m + 1,t) \ge (m,s)$ such that there exist exactly two geodesics between $(m,s)$ and $(m + 1,t)$.
    \item \label{3 geod}With probability one, there exist random pairs of points $(m,s) \le (m + 1,t)$ such that there are exactly three geodesics between $(m,s)$ and $(m + 1,t)$.
\end{enumerate}
\end{lemma}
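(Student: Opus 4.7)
The plan is to exploit the fact that any path from $(m,s)$ to $(m+1,t)$ is determined by a single jump time $u \in [s,t]$ at which the path moves from level $m$ to level $m+1$. The energy of such a path equals
\[
B_m(s,u) + B_{m+1}(u,t) = [B_m(u) - B_{m+1}(u)] + [B_{m+1}(t) - B_m(s)],
\]
so the geodesics between $(m,s)$ and $(m+1,t)$ are in bijection with the maximizers of $f(u) := B_m(u) - B_{m+1}(u)$ over $u \in [s,t]$. Since $B_m$ and $B_{m+1}$ are independent two-sided standard Brownian motions, $f$ is a two-sided variance-$2$ Brownian motion. Producing endpoints at which $f$ has exactly two or three maximizers therefore reduces the problem to a classical one-dimensional question about BM.

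For Part~\ref{2 geod}, I fix the initial point $(m,s)$ and choose any deterministic $T > s$. Almost surely $f$ restricted to $[s,T]$ has a unique argmax $u^\star$, with maximal value $M$; moreover $u^\star \in (s,T)$ almost surely, since for fixed $T-s$ the distribution of $\max_{[0,T-s]} B$ is continuous and strictly positive. Define
\[
\tau := \inf\{u > T : f(u) = M\},
\]
which is almost surely finite by the recurrence of one-dimensional Brownian motion. By continuity $f(\tau) = M$ and $f(u) < M$ for $u \in (T,\tau)$, so the maximizers of $f$ on $[s,\tau]$ are exactly the two points $u^\star$ and $\tau$. This yields exactly two geodesics from $(m,s)$ to $(m+1,\tau)$, as required.

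For Part~\ref{3 geod}, I extend the same construction backwards in time. With the $s$, $u^\star$, $M$, and $\tau$ from the previous paragraph, set
\[
\sigma := \sup\{u < s : f(u) \ge M\}.
\]
The two-sided recurrence $\limsup_{u \to -\infty}f(u) = +\infty$ makes $\sigma$ finite almost surely; since $f(s) < M$ and $f$ is continuous, $\sigma < s$ strictly, $f(\sigma) = M$, and $f(u) < M$ for $u \in (\sigma,s)$. On $[\sigma,\tau]$ the function $f$ therefore attains $M$ at exactly the three points $\sigma$, $u^\star$, and $\tau$, producing exactly three distinct geodesics between the random initial point $(m,\sigma)$ and the random terminal point $(m+1,\tau)$.

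The only delicate points are ruling out exceptional coincidences: the almost sure uniqueness and interiority of $u^\star$, the almost sure equalities $f(\sigma) = f(\tau) = M$ rather than strict excess, and the absence of any further maximizers created by the backward and forward extensions. Each follows directly from standard properties of one-dimensional Brownian motion---continuity of the distribution of the running maximum, continuity of sample paths, and recurrence of two-sided BM---so no substantive obstacle arises.
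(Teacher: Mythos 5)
Your proof is correct and follows essentially the same argument as the paper: reduce to counting maximizers of the variance-$2$ Brownian motion $B_m - B_{m+1}$ over a subinterval, use uniqueness and interiority of the argmax on a deterministic reference interval, and then extend the interval forward (and, for part (ii), backward) by recurrence to the first hitting time of the max level. The only cosmetic difference is that the paper fixes $T = s+1$ and writes the backward extension with an equality instead of your $\ge$, but both give the same point.
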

\begin{proof}
\textbf{Part~\ref{2 geod}}: For fixed $(m,s) \in \Z \times \R$, consider points of the form $(m + 1,t)$ for $t > s$. The last passage time is 
\[
B_{m +1}(t) - B_m(s) + \sup_{s \le u \le t}\{B_{m}(u) - B_{m + 1}(u)\}.
\]
Note that $B_{m} -B_{m + 1}$ is a variance $2$ Brownian motion. By Lemma~\ref{lemma:uniqueness of LPP time}, there is almost surely a unique maximizer of $B_{m}(u) - B_{m + 1}(u)$ over $u \in [s,s + 1]$, and that maximizer $u^\star \in (s,s + 1)$. Since Brownian motion is recurrent, there exists $v > s + 1$ such that $B_{m}(v) - B_{m + 1}(v) = B_{m}(u^\star) - B_{m + 1}(u^\star)$. Letting
\[
t = \inf\{v > s + 1: B_{m }(v) - B_{m +1 }(v) = B_{m}(u^\star) - B_{m + 1}(u^\star)\},
\]
there exist two geodesics between $(m,s)$ and $(m + 1,t)$: one that jumps to level $m + 1$ at $u^\star$ and another that jumps at the right endpoint $t$. 

\medskip \noindent \textbf{Part~\ref{3 geod}}: Similarly, start with fixed $(m,s)$ and define $t$ as in the previous case, but then set 
\[
s' = \sup\{v < s: B_{m}(v) - B_{m +1}(v) = B_{m}(u^\star) - B_{m + 1}(u^\star)  \}.
\]
Then, there are three geodesics between $(m,s')$ and $(m + 1,t)$: one that jumps at $s'$, another that jumps at $u^\star$, and another that jumps at $t$. 
\end{proof}

The following gives a crude bound on the maximum number of geodesics that grows as the vertical distance between the two points increases. By Lemma~\ref{lemma:mult_geod}\ref{3 geod}, the bound is sharp for $n = m + 1$, but we do not know if the bound is sharp for $n > m + 1$, or even whether there exist random points with an arbitrarily large number of geodesics between them.  For the present paper, we need only  the fact that, between any two points, there are only finitely many geodesics. 
\begin{lemma} \label{lemma:geodesic_bound}
There exists an event $\wt \Omega$ of full probability, on which the following hold.
\begin{enumerate} [label=\rm(\roman{*}), ref=\rm(\roman{*})]  \itemsep=3pt
    \item \label{itm:unique_geod_rational} Between any two points $(m,a)\le(n,b)$, both in  $\Z \times \Q$, there is a unique geodesic between the two points. That unique geodesic does not pass through $(k,a)$ for $k > m$ or $(r,b)$ for $r < n$.
    \item \label{itm:geod_bound} There exist no pairs $(m,s) \le (n,t) \in \Z \times \R$, with more than
\be \label{eqn:geodesics_bound}
1 + 2(n - m) + \f{(n - m - 1)(n - m)}{2}
\ee
geodesics between the two points.
\end{enumerate} 
\end{lemma}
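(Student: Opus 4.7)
For part (i), define $\wt\Omega_1$ to be the full-probability event on which, for every pair $(m,a) \le (n,b)$ with $a,b \in \Q$, the following three conditions hold: (a) the geodesic between $(m,a)$ and $(n,b)$ is unique (Lemma~\ref{lemma:uniqueness of LPP time}); (b) $L_{(m,a),(n,b)} > L_{(k,a),(n,b)}$ for every integer $k > m$; and (c) $L_{(m,a),(n,b)} > L_{(m,a),(r,b)}$ for every integer $r < n$. Conditions (b) and (c) each hold almost surely for any fixed rational-indexed pair because the corresponding difference is continuously distributed and strictly positive a.s., the strict positivity being a consequence of the fact that the optimal first jump time (resp.\ last jump time) lies in the open interval $(a,b)$ almost surely by standard Brownian sample-path regularity. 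A countable intersection over pairs in $\Z \times \Q$ gives $\wt \Omega_1$. On this event, the unique geodesic between any rational-indexed pair cannot pass through $(k,a)$ for $k > m$: such a path would have $s_m = \cdots = s_{k-1} = a$, which would force the equality $L_{(m,a),(n,b)} = L_{(k,a),(n,b)}$ ruled out by (b). A symmetric argument rules out passing through $(r,b)$ for $r < n$.

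For part (ii), I would work on a further full-probability event $\wt\Omega_2 \subseteq \wt\Omega_1$ on which every level-difference process $W_r := B_r - B_{r+1}$ (a variance-$2$ Brownian motion) has the property that no two distinct local maxima share a common value, and sample paths are nowhere monotone; both hold almost surely by standard Brownian arguments (cf.\ Theorem~\ref{thm:countable non unique maximizers}). On $\wt\Omega_2$, for every compact interval $[s,t]$ the set of maximizers of $W_r$ on $[s,t]$ has size at most three, since an interior maximizer must be a local maximum and two distinct local maxima would have equal height. This settles the base case $k := n-m = 1$ of an induction: geodesics from $(m,s)$ to $(m+1,t)$ correspond bijectively to maximizers of $W_m$ on $[s,t]$, giving the bound $3 = 1 + 2(1) + 0$.

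For the inductive step $k-1 \to k$, I would condition on the top-level jump time $s_{n-1}$ used by a geodesic. Let $J \subseteq [s,t]$ denote the set of top-level jump times arising in some geodesic from $(m,s)$ to $(n,t)$; then $u \in J$ iff $u$ maximizes $L_{(m,s),(n-1,u)} + B_n(u,t)$ over $[s,t]$. The map $u \mapsto L_{(m,s),(n-1,u)}$ is a continuous, piecewise-smooth function whose irregular points are controlled by the inductive hypothesis, so a refinement of the base-case Brownian argument bounds $|J|$ by a constant linear in $k$. For each $u \in J$, the restriction of a geodesic to levels $m,\ldots,n-1$ is a geodesic from $(m,s)$ to $(n-1,u)$, and the inductive hypothesis bounds the number of such restrictions by $\binom{k+1}{2}$. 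A careful accounting that identifies geodesics sharing a common initial sub-path (so as not to count them multiple times when summed across $u \in J$) yields the quadratic bound $\binom{k+2}{2} = 1 + 2k + \binom{k}{2}$.

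The \textbf{main obstacle} is precisely this final combinatorial accounting in the inductive step: converting the per-level ``at most three maximizers'' bound into a single quadratic bound across all $k$ levels requires a careful argument that rules out the naive multiplicative blow-up, tracking how adjacent geodesics diverge into ``bubbles'' and rejoin. Since the lemma is invoked downstream only to ensure that the geodesic count is \emph{finite}, any polynomial bound in $k$ would in fact suffice for the applications in this paper, which somewhat relaxes the precision required of the counting step.
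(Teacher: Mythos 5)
Your treatment of part~(i) is essentially sound and close to the paper's: both reduce the claim to the almost-sure fact that, for rational endpoints, the maximizer of the relevant level-difference Brownian motion lies in the open interval (the paper via Corollary~\ref{cor:max_in_interior}, you via the near-equivalent ``strict LPP inequality'' formulation). That part is fine.

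Part~(ii) is where there is a genuine gap, and you have correctly identified it yourself without closing it. Your inductive scheme requires two things that are not established: (1) a bound on $|J|$, the set of top-level jump times arising in geodesics; you gesture at ``a refinement of the base-case Brownian argument,'' but $u \mapsto L_{(m,s),(n-1,u)} + B_n(u,t)$ is not a Brownian motion (its first summand is itself a last-passage value, whose maximizer structure is precisely what one is trying to control), so the base-case argument does not transfer; and (2) the combinatorial accounting that prevents the naive recursion $\#\text{geod}(k) \le |J| \cdot \#\text{geod}(k-1)$ from blowing up multiplicatively. Even your fallback remark that ``any polynomial bound would suffice'' is not rescued by the recursion as written: if $|J|$ were bounded by a fixed constant $C \ge 2$ the recursion yields $C^k$, and if $|J|$ grows with $k$ it is worse. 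So as it stands the proposal does not even establish finiteness.

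The paper avoids induction entirely. It classifies each geodesic by the pair of Boolean conditions $\{s_m = s\}$ vs.\ $\{s_m > s\}$ and $\{s_{n-1} = t\}$ vs.\ $\{s_{n-1} < t\}$ (i.e.\ whether the geodesic begins with a vertical segment on the left ray and whether it ends with a vertical segment on the right ray). The ``interior-interior'' case is unique because such a geodesic is also the unique geodesic between nearby rational endpoints (part~(i)). In the other cases one records the level $k$ at which the path leaves the left vertical ray and/or the level $r$ at which it enters the right vertical ray, and for each admissible $(k,r)$ the interior portion is again unique by the same rational-endpoint argument. Counting pairs gives $1 + (n-m) + (n-m) + \binom{n-m}{2}$, exactly the stated bound, with no recursion and no control of $|J|$ needed. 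If you want to salvage an inductive route you would, at minimum, need to prove a uniform bound on the number of maximizers of the point-to-line profile $u \mapsto L_{(m,s),(n-1,u)} + B_n(u,t)$, which is a nontrivial statement in its own right; the paper's case decomposition sidesteps this entirely.
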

\begin{proof}
\textbf{Part~\ref{itm:unique_geod_rational}:} Lemma~\ref{lemma:uniqueness of LPP time} guarantees that on an event of probability one, there exists a unique geodesic between any two points $(m,a) \le (n,b)$, both in $\Z \times \Q$. Let $\wt \Omega$ be the intersection of this event with the event on which, for each rational pair $q_1 < q_2$ and $k \in \Z$, the maximum of $B_k(s) - B_{k + 1}(s)$ over $s \in [q_1,q_2]$ is uniquely achieved at a point in the interior of the interval. By Lemma~\ref{lemma:BM unique max}, $\Pp(\wt \Omega) = 1$.

We show that on $\wt \Omega$,  for $(m,a) \le (n,b) \in \Z \times \Q$, the unique geodesic does not pass through $(k,a)$ or $(r,b)$ for any $k > m$ or $r < n$.  If, by contradiction, the converse fails, then the geodesic makes an upward step from $(m,a)$ to $(k,a)$ or from $(r,b)$ to $(n,b)$, or both. We show that the first case cannot hold on $\wt \Omega$, and the second case follows analogously. Let $k > m$ be the maximal index such that $(k,a)$ lies on the geodesic. Then, the geodesic passes through $(k - 1,a),(k,a),$ and $(k,q)$ for some rational $q > a$. See Figure~\ref{fig:vert_geod}. The portion of the geodesic between $(k - 1,a)$ and $(k,q)$ is also a geodesic, and the last passage time between the two points is
\[
\sup_{s \in [a,q]} \{B_{k - 1}(s,a) + B_k(a,q)\} = B_k(q) - B_{k - 1}(a) + \sup_{s \in [a,q]}\{B_{k - 1}(s) - B_k(a)\}.
\]
 Since the geodesic passes through $(k,a)$, the maximum is achieved at $s = q$. This contradicts the definition of $\wt \Omega$.

\begin{figure}[t]
\begin{adjustbox}{max totalsize={5.5in}{5in},center}
\begin{tikzpicture}
\draw[gray,thin] (0.5,0) -- (15.5,0);
\draw[gray,thin] (0.5,0.5) --(15.5,0.5);
\draw[gray, thin] (0.5,1)--(15.5,1);
\draw[gray,thin] (0.5,1.5)--(15.5,1.5);
\draw[gray,thin] (0.5,2)--(15.5,2);
\draw[red,ultra thick] (1.5,0)--(1.5,1)--(4.5,1)--(10,1)--(10,1.5)--(12,1.5)--(15,1.5)--(15,2);
\filldraw[black] (1.5,0) circle (2pt) node[anchor = north] {\small $(m,a)$};
\filldraw[black] (1.5,1) circle (2pt) node[anchor = south] {\small $(k,a)$};
\filldraw[black] (4,1) circle (2pt) node[anchor = south] {\small $(k,q)$};
\filldraw[black] (15,2) circle (2pt) node[anchor = south] {\small $(n,b)$};
\end{tikzpicture}
\end{adjustbox}
\caption{\small Geodesic that passes through $(k,a)$ on the way to $(n,b)$}
\label{fig:vert_geod}
\end{figure}

\medskip \noindent \textbf{Part~\ref{itm:geod_bound}:} Let $s = s_{m - 1} \le \cdots \le s_{n - 1} \le s_n = t$ denote the jump times of an arbitrary geodesic between $(m,s)$ and $(n,t)$. We prove the following:
\begin{enumerate}[label=\rm(\alph{*}), ref=\rm(\alph{*})]  \itemsep=3pt
    \item 
    \label{itm:hh} There is at most one geodesic satisfying $s_m > s$ and $s_{n - 1} < t$. 
    \item \label{itm:vh} There are at most $n - m$ geodesics satisfying $s_m > s$ and $s_{n - 1} = t$.
    \item \label{itm:hv} There are at most $n - m$ geodesics satisfying $s_m = s$ and $s_{n-1} < t$.
    \item \label{itm:vv} There are at most $\f{(n - m)(n - m - 1)}{2}$ geodesics satisfying $s_m = s$ and $s_{n- 1} = t$. 
\end{enumerate}

\medskip \noindent \textbf{Part~\ref{itm:hh}:} If two geodesics $\Gamma_1$ and $\Gamma_2$ both satisfy $s_m > s$ and $s_{n - 1} < t$, then $\Gamma_1$ and $\Gamma_2$ are also geodesics between $(m,a)$ and $(n,b)$ for some rational $a,b \in \Q$, so $\Gamma_1 = \Gamma_2$. See Figure~\ref{fig:hhgeodesic}.

\medskip \noindent \textbf{Part~\ref{itm:vh}:} For a geodesic $\Gamma$ satisfying $s_m > s$ and $s_{n - 1} = t$, let $r$ be the smallest index such that $s_r = t$. Geometrically, $r$ is the level at which the geodesic enters the right boundary, and the geodesic passes through $(r,t)$. See Figure~\ref{fig:hvgeodesic}. For each such $r \in \{m,\ldots,n - 1\}$, there is at most one geodesic satisfying $s_m > s$ and $s_{r - 1} < s_r = t$, by the previous case, giving at most $n - m$ geodesics of this type. 

\medskip \noindent \textbf{Part~\ref{itm:hv}:} The proof is analogous to Part~\ref{itm:vh}.

\medskip \noindent \textbf{Part~\ref{itm:vv}:} For a geodesic satisfying $s_m = s$ and $s_{n - 1} = t$, we let $k$ be the smallest index such that $s_{k} > s$ and $r \ge k$ be the smallest index such that $s_r = t$. Then, the geodesic passes through both $(k,s)$ and $(r,t)$. Geometrically, $k$ is the level at which the geodesic exits the left boundary, and $r$ is the level at which the geodesic enters the right boundary. See Figure~\ref{fig:vvgeodesic}. By Part~\ref{itm:hh}, for each pair $(k,r)$ with  $m < k \le r \le n - 1$, there is at most one geodesic that exits the left boundary at level $k$ and enters the right boundary at level $r$. There are $\f{(n - m)(n - m - 1)}{2}$ of these pairs $(k,r)$. 
\end{proof}

\begin{figure}[t]
\begin{adjustbox}{max totalsize={5.5in}{5in},center}
\begin{tikzpicture}
\draw[gray,thin] (0.5,0) -- (15.5,0);
\draw[gray,thin] (0.5,0.5) --(15.5,0.5);
\draw[gray, thin] (0.5,1)--(15.5,1);
\draw[gray,thin] (0.5,1.5)--(15.5,1.5);
\draw[gray,thin] (0.5,2)--(15.5,2);
\draw[red,ultra thick] (1.5,0)--(4.5,0)--(4.5,0.5)--(7,0.5)--(7,1)--(9.5,1)--(9.5,1.5)--(13,1.5)--(13,2)--(15,2);
\filldraw[black] (1.5,0) circle (2pt) node[anchor = north] {\small $(m,s)$};
\filldraw[black] (3,0) circle (2pt) node[anchor = north] {\small $(m,a)$};
\filldraw[black] (14,2) circle (2pt) node[anchor = south] {\small $(n,b)$};
\filldraw[black] (15,2) circle (2pt) node[anchor = south] {\small $(n,t)$};
\node at (4.5,-0.5) {\small $s_m$};
\node at (7,-0.5) {\small $s_{m + 1}$};
\node at (9.5,-0.5) {\small $\cdots$};
\node at (13,-0.5) {\small $s_{n - 1}$};
\node at (0,0) {\small $m$};
\node at (0,2) {\small $n$};
\end{tikzpicture}
\end{adjustbox}
\caption{\small Example of geodesics between $(m,s)$ and $(n,t)$ with $s_m > s$ and $s_{n - 1} < t$.}
\label{fig:hhgeodesic}
\end{figure}

\begin{figure}[t]
\begin{adjustbox}{max totalsize={5.5in}{5in},center}
\begin{tikzpicture}
\draw[gray,thin] (0.5,0) -- (15.5,0);
\draw[gray,thin] (0.5,0.5) --(15.5,0.5);
\draw[gray, thin] (0.5,1)--(15.5,1);
\draw[gray,thin] (0.5,1.5)--(15.5,1.5);
\draw[gray,thin] (0.5,2)--(15.5,2);
\draw[red,ultra thick] (1.5,0)--(4.5,0)--(4.5,0.5)--(10,0.5)--(10,1)--(15,1)--(15,2);
\filldraw[black] (1.5,0) circle (2pt) node[anchor = north] {$(m,s)$};
\filldraw[black] (15,1) circle (2pt) node[anchor = north] {$(r,t)$};
\filldraw[black] (15,2) circle (2pt) node[anchor = south] {$(n,t)$};
\node at (4.5,-0.5) {\small $s_m$};
\node at (10,-0.5) {\small $s_{r - 1}$};
\node at (0,0) {\small $m$};
\node at (0,2) {\small $n$};
\node at (16,1) {\small $r$};
\end{tikzpicture}
\end{adjustbox}
\caption{\small Example of a geodesic between $(m,s)$ and $(n,t)$ with $s_m > s$ and $s_{n - 1} = t$. The level $r$ is denoted in the right. }
\label{fig:hvgeodesic}
\end{figure}

\begin{figure}[t]
\begin{adjustbox}{max totalsize={5.5in}{5in},center}
\begin{tikzpicture}
\draw[gray,thin] (0.5,0) -- (15.5,0);
\draw[gray,thin] (0.5,0.5) --(15.5,0.5);
\draw[gray, thin] (0.5,1)--(15.5,1);
\draw[gray,thin] (0.5,1.5)--(15.5,1.5);
\draw[gray,thin] (0.5,2)--(15.5,2);
\draw[red,ultra thick] (1.5,0)--(1.5,0.5)--(4.5,0.5)--(10,0.5)--(10,1)--(12,1)--(12,1.5)--(15,1.5)--(15,2);
\filldraw[black] (1.5,0) circle (2pt) node[anchor = north] {\small $(m,s)$};
\filldraw[black] (1.5,0.5) circle (2pt) node[anchor = south] {\small $(k,s)$};
\filldraw[black] (15,1.5) circle (2pt) node[anchor = north] {\small $(r,t)$};
\filldraw[black] (15,2) circle (2pt) node[anchor = south] {\small $(n,t)$};
\node at (10,-0.5) {\small $s_k$};
\node at (12,-0.5) {\small $s_{r - 1}$};
\node at (0,0) {\small $m$};
\node at (0,2) {\small $n$};
\node at (16,1.5) {\small $r$};
\node at (0,0.5) {\small $k$};
\end{tikzpicture}
\end{adjustbox}
\caption{\small Example of a geodesic between $(m,s)$ and $(n,t)$ with $s_m = s$ and $s_{n - 1} = t$. The level $k$ is denoted in the left and the level $r$ is denoted on the right. }
\label{fig:vvgeodesic}
\end{figure}

\section{Prior results on Busemann functions and semi-infinite geodesics} \label{sec:previous_paper}
In addition to the results stated in Section~\ref{section:Defs_results}, we use several other results about Busemann functions and semi-infinite geodesics that were proven in~\cite{Seppalainen-Sorensen-21a}. Recall the definition of the mappings $Q$ and $D$ from~\eqref{definition of Q} and~\eqref{definition of D}. Also, recall the discussion above Theorem~\ref{thm:summary of properties of Busemanns for all theta} regarding different full-probability events.

In the following theorem, recall the definitions of $\NU_0^{\theta \sig}$ and $\NU_1^{\theta \sig}$ from~\eqref{NU_0 theta def}--\eqref{NU_1 theta def}. Also, recall Remark~\ref{rmk:NU_Sets}, which states that when working on the event $\Omega^{(\theta)}$, there is no $\pm$ distinction, and we write $\NU_i^{\theta} = \NU_i^{\theta \pm}$ for $i = 0,1$.

    \begin{theorem}[\cite{Seppalainen-Sorensen-21a}, Theorem 3.1(iii) and 4.7] \label{thm:NU_paper1}
    Fix $\theta > 0$. Then, the following hold.
    \begin{enumerate} [label=\rm(\roman{*}), ref=\rm(\roman{*})]  \itemsep=3pt
    \item \label{itm:uniqueness of geodesic for fixed point and direction}  For each fixed $\mbf x \in \Z \times \R$, on the full probability event $\Omega_{\mbf x}^{(\theta)}$, we have $\mbf x \notin \NU_0^\theta$.
    \item \label{itm:count_and_decomp} On the event $\Omega^{(\theta)}$, the sets $\NU_0^\theta$ and $\NU_1^\theta$ are countably infinite and can be written as 
    \begin{align*}
    \NU_0^\theta &= \{(m,t) \in \Z \times \R: t= \tau_{(m,t),r}^{\theta,L} < \tau_{(m,t),r}^{\theta,R} \text{ for some }r \ge m\}, \text{ and} \\
    \NU_1^\theta &= \{(m,t) \in \Z \times \R: t = \tau_{(m,t),m}^{\theta,L} < \tau_{(m,t),m}^{\theta,R}\}.
    \end{align*}
    In other words, Busemann geodesics emanating from $(m,t)$ and in a fixed direction $\theta$ can separate only along the upward vertical ray from  $(m,t)$. 
    \item \label{non-discrete or dense} On the event $\Omega^{(\theta)}$, the set $\NU_1^\theta$ is neither discrete nor dense in $\Z \times \R$. More specifically, for each point $(m,t) \in \NU_1^\theta$ and every $\ve > 0$, there exists $s \in (t - \ve,t)$ such that $(m,s) \in \NU_1^\theta$. For each $(m,t) \in \NU_1^\theta$, there exists $\delta > 0$ such that $(m,s) \notin \NU_0^\theta$ for all $s \in (t,t+\delta)$. 
    \end{enumerate}
    \end{theorem}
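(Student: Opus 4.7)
The plan is to exploit the characterization in part (ii) of the theorem, which reduces both claims to facts about the function
\[
f_m(u) := B_m(u) - h_{m+1}^{\theta}(u), \qquad u \in \R.
\]
On $\Omega^{(\theta)}$ there is no $\pm$ distinction, and by Theorem~\ref{thm:summary of properties of Busemanns for all theta}\ref{independence structure of Busemann functions on levels}--\ref{Buse_marg_dist}, $f_m$ is a two-sided Brownian motion with variance $2$ and drift $-1/\sqrt\theta$, independent of $(B_k)_{k\le m-1}$. I would work on $\Omega^{(\theta)}$ intersected with the full-probability event on which the conclusions of Theorem~\ref{thm:countable non unique maximizers} hold for each $f_m$ (this is possible since the theorem is a statement about Brownian motion with negative drift). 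Recall that $t = \tau_{(m,t),m}^{\theta,L}$ and $u^\star := \tau_{(m,t),m}^{\theta,R}$ are respectively the leftmost and rightmost maximizers of $f_m$ on $[t,\infty)$, so by definition $(m,t)\in\NU_1^\theta$ iff $t < u^\star$.

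\textbf{Non-density to the right.}  Suppose $(m,t)\in\NU_1^\theta$ and put $M := f_m(t) = f_m(u^\star) = \sup_{u\ge t} f_m(u)$ with $u^\star>t$. By Theorem~\ref{thm:countable non unique maximizers}\ref{itm:2_3_max} applied to $f_m$, this supremum cannot be attained at three distinct points, so $f_m(u) < M$ for all $u\in(t,u^\star)$. For any $s\in(t,u^\star)$, $\sup_{u\ge s} f_m(u) = M$, attained on $[s,\infty)$ first at $u^\star > s$; hence $\tau_{(m,s),m}^{\theta,L} = u^\star > s$. Monotonicity of jump times in the level index gives $\tau_{(m,s),r}^{\theta,L} > s$ for every $r\ge m$, so by the characterization in part (ii) $(m,s) \notin \NU_0^\theta$. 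Taking $\delta = u^\star - t$ proves the claim.

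\textbf{Left-density of $\NU_1^\theta$.}  Fix $(m,t)\in\NU_1^\theta$ and $\ve>0$. The set $\NU_1^\theta\cap(\{m\}\times\R)$ is precisely the set of real numbers $s$ for which $f_m(s) = \sup_{u\ge s} f_m(u)$ and this supremum is attained at $s$ and at some $u>s$. This is exactly the set whose left-density in itself is the content of the relevant part of Theorem~\ref{thm:countable non unique maximizers} (the ``no isolated points from the left'' property, of which Lemma~\ref{lem:all_theta_max_comp}\ref{itm:approx_left} is the global-direction analogue). Applied to $f_m$, this directly produces $s\in(t-\ve,t)$ with $(m,s)\in\NU_1^\theta$.

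The only nontrivial ingredient is the Brownian-motion input from Theorem~\ref{thm:countable non unique maximizers}, in particular its ``no three maximizers'' part and its left-density part for the set of restricted leftmost maximizers with multiple maximizers. The first is a straightforward a.s.\ property of Brownian motion with drift; the second is the real content and would be proved by a strong-Markov / oscillation argument applied backwards from $t$: writing $\widetilde f(u) = f_m(t-u) - M$ for $u\ge 0$ gives a Brownian motion with positive drift starting at $0$, whose recurrence at $0$ for small times, combined with the fact that $f_m \le M$ on $[t,u^\star]$ and on $\{u>u^\star:\,f_m(u)=M\}$-structure from the right side, yields a sequence $s_n\nearrow t$ at which $f_m(s_n)$ equals the right-sided supremum and this supremum has (at least) a second maximizer. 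This last step---producing left-approach points that are themselves \emph{leftmost} max points with non-unique max---is the main obstacle and is the reason one needs the finer density statement of Theorem~\ref{thm:countable non unique maximizers} rather than only the no-isolated-points property.
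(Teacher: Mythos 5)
Your argument for part \ref{non-discrete or dense} is correct and uses exactly the ingredient one would expect from the way the paper packages the auxiliary Brownian result: the set $\{s \in \R : (m,s) \in \NU_1^\theta\}$ is identified with the set $M^N$ of non-unique leftmost maximizers of the negative-drift Brownian motion $f_m = B_m - h_{m+1}^\theta$, and then the no-three-maximizers property and the left-density property of $M^N$ from Theorem~\ref{thm:countable non unique maximizers} deliver, respectively, the rightward gap and the leftward accumulation. The reduction $\tau^{\theta,L}_{(m,s),m}>s\Rightarrow\tau^{\theta,L}_{(m,s),r}>s$ for all $r\ge m$ via monotonicity of jump times, combined with the characterization in part \ref{itm:count_and_decomp}, correctly rules out $(m,s)\in\NU_0^\theta$ (not merely $\NU_1^\theta$) on $(t,t+\delta)$ with $\delta=u^\star-t$, which is what the theorem claims.

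Two caveats. First, your proposal addresses only part \ref{non-discrete or dense}: parts \ref{itm:uniqueness of geodesic for fixed point and direction} and \ref{itm:count_and_decomp} (including countability and the ``splits only along the upward ray'' characterization) are taken as given rather than proved, so the proof as written is a proof of one clause of the theorem rather than the whole statement. That is an acceptable strategy but should be flagged as such. Second, the concluding sketch of how one would establish the left-density clause of Theorem~\ref{thm:countable non unique maximizers} from scratch has a genuine gap that you partially acknowledge: reflecting to $\widetilde f(u)=f_m(t-u)-M$ and invoking recurrence of the reflected process near $0$ produces times $s_n\nearrow t$ with $f_m(s_n)=M$, but it does \emph{not} by itself guarantee $f_m\le M$ on $(s_n,t)$, which is needed for $s_n$ to be a leftmost maximizer over $[s_n,\infty)$. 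One must instead track the running maximum from the right or take $s_n$ to be a running-maximum time of the reflected process, not merely a zero. This is the kind of care the paper encapsulates inside the cited Lemma B.4/B.5, so relying on that lemma (as you do in the main body) is the cleaner route; the terminal sketch should either be corrected or omitted.

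Overall: a correct proof of part \ref{non-discrete or dense} resting on the right lemma, consistent with the approach the cited source evidently takes, but incomplete as a proof of the full theorem and with a flawed end sketch of the underlying Brownian-motion fact.
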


\noindent The following lemmas provide useful characterizations of the Busemann geodesics. The first one utilizes point-to-line last-passage problems. 

    \begin{lemma}[\cite{Seppalainen-Sorensen-21a}, Lemma 7.3] \label{lemma:ptl_sig}
Let $\omega \in \Omega_2$, $(m,t) \in \Z \times \R$ and $\theta > 0$, $\sig \in \{-,+\}$. Then, the following hold.
\begin{enumerate} [label=\rm(\roman{*}), ref=\rm(\roman{*})]  \itemsep=3pt
    \item \label{itm:geo_maxes} Let $\{\tau_r\}_{r = m - 1}^\infty$ be any sequence in $\mbf T_{(m,t)}^{\theta\sig}$. Then, for each $n \ge m$, the jump times $t = \tau_{m - 1} \le \tau_m \le \cdots \le \tau_n$ are a maximizing sequence for 
\begin{equation} \label{ptl_BLPP}
 \sup\Biggl\{\sum_{r = m}^n B_r(s_{r - 1},s_r) - \h_{n +1}^{\theta\sig}(s_n): t = s_{m - 1} \le s_m \le \cdots \le s_n  < \infty \Biggr\}.
\end{equation}
\item \label{itm:maxes_geo}
 Conversely, for each $n \ge m$, whenever $t = t_{m - 1} \le t_m \le \cdots\le t_n$ is a maximizing sequence for~\eqref{ptl_BLPP}, there exists $\{\tau_r\}_{r = m - 1}^\infty \in \mbf T_{(m,t)}^{\theta \sig}$ such that $t_r = \tau_r$ for $m \le r \le n$.  
 \item \label{itm:LR_geo_max}
 For each $n \ge m$, the sequences $t = \tau_{(m,t),m - 1}^{\theta \sig,L} \le \cdots \le \tau_{(m,t),n}^{\theta \sig,L}$ and $t = \tau_{(m,t),m - 1}^{\theta \sig,R} \le \cdots \le \tau_{(m,t),n}^{\theta \sig,R}$ are, respectively, the leftmost and rightmost maximizing sequences for~\eqref{ptl_BLPP}.
 \end{enumerate}
\end{lemma}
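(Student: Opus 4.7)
The plan is to reduce all three parts of the lemma to a single telescoping identity driven by the queuing relation $v_{r+1}^{\theta\sig} = Q(h_{r+1}^{\theta\sig},B_r)$ from Theorem~\ref{thm:summary of properties of Busemanns for all theta}\ref{general queuing relations Busemanns}. For any non-decreasing sequence $t = s_{m-1} \le s_m \le \cdots \le s_n$ and each $m \le r \le n$, the definition of $Q$ gives
\[
B_r(s_{r-1}, s_r) - h_{r+1}^{\theta\sig}(s_{r-1}, s_r) \le v_{r+1}^{\theta\sig}(s_{r-1}),
\]
with equality if and only if $s_r$ maximizes $u \mapsto B_r(u) - h_{r+1}^{\theta\sig}(u)$ on $[s_{r-1},\infty)$, i.e., if and only if condition~\eqref{sijump} holds at level $r$ with $\tau_{r-1}=s_{r-1}$ and $\tau_r = s_r$. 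Rewriting the bound as $B_r(s_{r-1}, s_r) \le \B^{\theta\sig}((r, s_{r-1}), (r+1, s_r))$ (using additivity together with the definitions of $v_{r+1}^{\theta\sig}$ and $h_{r+1}^{\theta\sig}$) and summing over $r = m,\ldots,n$, the additivity of Theorem~\ref{thm:summary of properties of Busemanns for all theta}\ref{general additivity Busemanns} telescopes the right side through the intermediate points $(r+1, s_r)$ into $\B^{\theta\sig}((m,t),(n+1,s_n)) = \B^{\theta\sig}((m,t),(n+1,0)) + h_{n+1}^{\theta\sig}(s_n)$. This yields the key inequality
\[
\sum_{r=m}^n B_r(s_{r-1}, s_r) - h_{n+1}^{\theta\sig}(s_n) \le \B^{\theta\sig}((m,t),(n+1,0)),
\]
whose right side is independent of the sequence and whose equality case is exactly that~\eqref{sijump} holds at every level from $m$ through $n$.

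Part~\ref{itm:geo_maxes} is then immediate, since any $\{\tau_r\}\in\mbf T_{(m,t)}^{\theta\sig}$ satisfies~\eqref{sijump} at every level by definition, so the equality case applies to its truncation. For Part~\ref{itm:maxes_geo}, if $\{t_r\}_{r=m-1}^n$ achieves the supremum, the equality characterization forces~\eqref{sijump} at each level; one then extends $\tau_r := t_r$ for $m-1 \le r \le n$ to a full sequence in $\mbf T_{(m,t)}^{\theta\sig}$ by picking, for each $r > n$, any maximizer of $u \mapsto B_r(u)-h_{r+1}^{\theta\sig}(u)$ on $[\tau_{r-1},\infty)$. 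Existence of such maximizers on $\Omega_2$ is guaranteed by the continuity and $\pm\infty$ behavior of $B_r - h_{r+1}^{\theta\sig}$ from Theorem~\ref{thm:summary of properties of Busemanns for all theta}\ref{general continuity of Busemanns}--\ref{limits of B_m minus h m + 1}.

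For Part~\ref{itm:LR_geo_max}, the content of Parts~\ref{itm:geo_maxes}--\ref{itm:maxes_geo} is that the set of ptl maximizers coincides exactly with the set of truncations of sequences in $\mbf T_{(m,t)}^{\theta\sig}$, so the leftmost (resp.\ rightmost) ptl maximizer is produced by the level-wise leftmost (rightmost) admissible choice. A short induction on $r$ identifies this greedy procedure with $\{\tau_{(m,t),r}^{\theta\sig,L}\}$ (resp.\ $\{\tau_{(m,t),r}^{\theta\sig,R}\}$): once $t_{r-1} = \tau_{(m,t),r-1}^{\theta\sig,L}$ is fixed, the leftmost maximizer of $u\mapsto B_r(u)-h_{r+1}^{\theta\sig}(u)$ on $[t_{r-1},\infty)$ is by definition $\tau_{(m,t),r}^{\theta\sig,L}$, and any strictly smaller ptl maximizer at a later index would force some earlier index to be smaller, contradicting the level-wise minimality already established.

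The only place requiring genuine care is the identification of the equality case of the level-wise $Q$-bound with the iterated jump condition~\eqref{sijump}: the bound produces an equality of increments $B_r(s_{r-1}, s_r) - h_{r+1}^{\theta\sig}(s_{r-1}, s_r) = v_{r+1}^{\theta\sig}(s_{r-1})$, while~\eqref{sijump} is phrased as absolute values, and one must invoke the definition of $v_{r+1}^{\theta\sig}$ together with additivity to convert between the two. Once this identification is made, the lemma reduces cleanly to additivity and the queuing relations, both available uniformly on $\Omega_2$.
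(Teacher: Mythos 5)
Your proposal is correct. The lemma is cited in this paper from \cite{Seppalainen-Sorensen-21a}, so its proof is not reproduced here, but the route you take is the natural one and I believe it is essentially the same as the original: the queuing relation $v_{r+1}^{\theta\sig} = Q(h_{r+1}^{\theta\sig},B_r)$ gives the level-wise upper bound $B_r(s_{r-1},s_r) \le \B^{\theta\sig}((r,s_{r-1}),(r+1,s_r))$, the sum telescopes by additivity to the sequence-independent constant $\B^{\theta\sig}((m,t),(n+1,0))$, and equality at each level is precisely condition~\eqref{sijump}. Parts~\ref{itm:geo_maxes} and~\ref{itm:maxes_geo} follow at once, and for Part~\ref{itm:LR_geo_max} the monotonicity argument you gesture at does go through: a forward induction shows $\tau^{\theta\sig,L}_{(m,t),r} \le t_r$ for any ptl maximizer $\{t_r\}$ (if $\tau^L_{r-1}\le t_{r-1}$, then either the two suprema coincide and leftmostness gives $\tau^L_r\le t_r$, or the larger-interval supremum is strictly bigger and its leftmost maximizer lies in $[\tau^L_{r-1},t_{r-1})$), whose contrapositive is exactly your claim that a strictly smaller later index would force a strictly smaller earlier index. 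The only point I'd polish is to state this induction explicitly rather than as a one-clause appeal; otherwise the argument is complete.
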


\noindent
The next lemma indicates how the L/R distinction of geodesics can be characterized by the Busemann functions. 

\begin{lemma}[\cite{Seppalainen-Sorensen-21a}, Lemma 7.4] \label{lemma:equality of busemann to weights of BLPP}
Let $\omega \in \Omega_2$, $(m,t) \in \Z \times \R$, $ \theta > 0$, $\sigg \in \{-,+\}$, and $\{\tau_r\}_{r \ge m - 1} \in \mbf T_{(m,t)}^{\theta \sig}$. Then, for all $r \ge m$,
\[
\vv_{r + 1}^{\theta \sig}(\tau_r) = 0,\qquad\text{and}\qquad \h_r^{\theta \sig}(u,v) = B_r(u,v) \text{ for all }u,v \in [\tau_{r - 1},\tau_r].
\]
Furthermore, the following identities hold for $r \ge m$.
     \begin{align}
     &\tau_{(m,t),r}^{\theta \sig,L} = \inf\bigl\{u \ge \tau^{\theta \sig,L}_{(m,t),r - 1}: \vv_{r + 1}^{\theta \sig}(u) = 0\bigr\},\qquad \text{and} \label{eqn:inf_sig} \\
     &\tau^{\theta \sig,R}_{(m,t),r} = \sup\bigl\{u \ge \tau^{\theta \sig,R}_{(m,t),r - 1}: \h_r^{\theta \sig}(\tau^{\theta \sig,R}_{(m,t),r - 1},u) = B_r(\tau^{\theta \sig,R}_{(m,t),r - 1},u)\bigr\} \label{eqn:sup_sig}
     \end{align}
     More specifically, if $u \ge \tau^{\theta \sig,R}_{(m,t),r - 1}$, then $\h_r^{\theta \sig}(\tau^{\theta \sig,R}_{(m,t),r - 1},u) = B_r(\tau^{\theta \sig,R}_{(m,t),r - 1},u)$ if and only if $u \le \tau^{\theta \sig,R}_{(m,t),r}$.
\end{lemma}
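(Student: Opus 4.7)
The plan is to reduce every assertion to two ingredients: the queuing identities $\vv_{r+1}^{\theta\sig} = Q(\h_{r+1}^{\theta\sig},B_r)$ and $\h_r^{\theta\sig} = D(\h_{r+1}^{\theta\sig},B_r)$ from Theorem~\ref{thm:summary of properties of Busemanns for all theta}\ref{general queuing relations Busemanns}, and the defining maximizer property~\eqref{sijump} of elements of $\mbf T^{\theta\sig}_{(m,t)}$, which says that $\tau_r$ maximizes $s\mapsto B_r(s)-\h_{r+1}^{\theta\sig}(s)$ over $s\in[\tau_{r-1},\infty)$. Continuity (Theorem~\ref{thm:summary of properties of Busemanns for all theta}\ref{general continuity of Busemanns}) and decay to $-\infty$ (Theorem~\ref{thm:summary of properties of Busemanns for all theta}\ref{limits of B_m minus h m + 1}) guarantee that all suprema in sight are attained.

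For $\vv_{r+1}^{\theta\sig}(\tau_r)=0$, expand \eqref{definition of Q}:
\[
\vv_{r+1}^{\theta\sig}(\tau_r) = \sup_{s\ge\tau_r}\bigl\{B_r(\tau_r,s)-\h_{r+1}^{\theta\sig}(\tau_r,s)\bigr\}.
\]
Since $\tau_r$ maximizes over the larger interval $[\tau_{r-1},\infty)$, the right side is $\le 0$; by monotonicity of Theorem~\ref{thm:summary of properties of Busemanns for all theta}\ref{general monotonicity Busemanns} it is also $\ge 0$, hence equal to $0$. For the equality $\h_r^{\theta\sig}(u,v)=B_r(u,v)$ on $u,v\in[\tau_{r-1},\tau_r]$, take increments in~\eqref{definition of D}:
\[
\h_r^{\theta\sig}(u,v) = B_r(u,v) + \sup_{s\ge u}\bigl\{B_r(s)-\h_{r+1}^{\theta\sig}(s)\bigr\} - \sup_{s\ge v}\bigl\{B_r(s)-\h_{r+1}^{\theta\sig}(s)\bigr\}.
\]
For $\tau_{r-1}\le u\le v\le\tau_r$, the point $\tau_r$ lies in both $[u,\infty)$ and $[v,\infty)$ and is a maximizer over the yet larger interval $[\tau_{r-1},\infty)$, so both suprema equal $B_r(\tau_r)-\h_{r+1}^{\theta\sig}(\tau_r)$ and cancel.

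For \eqref{eqn:inf_sig}, apply the above to the leftmost sequence: $\vv_{r+1}^{\theta\sig}(\tau_r^L)=0$ gives the $\le$ direction of the infimum. Conversely, suppose $u\in[\tau_{r-1}^L,\tau_r^L]$ satisfies $\vv_{r+1}^{\theta\sig}(u)=0$; by \eqref{definition of Q} this says $u$ maximizes $B_r(s)-\h_{r+1}^{\theta\sig}(s)$ over $s\in[u,\infty)$. Since $\tau_r^L\in[u,\infty)$ is itself a maximizer over the larger interval $[\tau_{r-1}^L,\infty)$, comparing the two common optimal values forces $u$ to be a maximizer over $[\tau_{r-1}^L,\infty)$ as well. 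Lemma~\ref{lemma:ptl_sig}\ref{itm:maxes_geo} (with $n=r$ and prefixing $\tau_{(m,t),m-1}^{\theta\sig,L},\ldots,\tau_{(m,t),r-1}^{\theta\sig,L}$) then extends this single maximizer into a full sequence in $\mbf T^{\theta\sig}_{(m,t)}$ whose $r$-th jump is $u$, and the leftmost-ness of $\tau_r^L$ forces $u=\tau_r^L$.

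For \eqref{eqn:sup_sig} and the sharper ``if and only if'' statement, expand the $D$-formula to see that $\h_r^{\theta\sig}(\tau_{r-1}^R,u)=B_r(\tau_{r-1}^R,u)$ is equivalent to
\[
\sup_{s\ge\tau_{r-1}^R}\bigl\{B_r(s)-\h_{r+1}^{\theta\sig}(s)\bigr\}=\sup_{s\ge u}\bigl\{B_r(s)-\h_{r+1}^{\theta\sig}(s)\bigr\}.
\]
Since both suprema are attained, this is equivalent to the existence of some maximizer of $B_r(s)-\h_{r+1}^{\theta\sig}(s)$ over $[\tau_{r-1}^R,\infty)$ that is $\ge u$, equivalently $\tau_r^R\ge u$. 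If $u>\tau_r^R$ such a maximizer $s^*\ge u$ would, by Lemma~\ref{lemma:ptl_sig}\ref{itm:maxes_geo}, extend to an element of $\mbf T^{\theta\sig}_{(m,t)}$ with $r$-th jump $s^*>\tau_r^R$, contradicting rightmost-ness. The main obstacle is not any single step but making these two ``upgrade'' arguments precise: a point that maximizes the relevant functional only over a truncated interval must be promoted to a maximizer over the full interval $[\tau_{r-1}^{\theta\sig,S},\infty)$. This reduces to the elementary comparison with the known reference maximizer $\tau_r^{\theta\sig,S}$ and the Lemma~\ref{lemma:ptl_sig} correspondence between maximizing sequences and elements of $\mbf T^{\theta\sig}_{(m,t)}$.
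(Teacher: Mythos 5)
Your proof is correct. Since the paper cites this lemma from \cite{Seppalainen-Sorensen-21a} without reproving it, there is no in-text proof to compare against, but your argument uses exactly the ingredients one would expect: expanding the queuing maps $Q$ and $D$, exploiting the defining maximizer property \eqref{sijump} to force cancellation of the two suprema in the $D$-formula, and invoking Lemma~\ref{lemma:ptl_sig}\ref{itm:maxes_geo} to promote an interior maximizer to a full geodesic so that $L$/$R$-extremality can be applied. One small polish worth noting: in the argument that $\vv_{r+1}^{\theta\sig}(\tau_r)\ge 0$, you invoke monotonicity, but the inequality is already immediate because the supremum in \eqref{definition of Q} includes the term $s=\tau_r$, which contributes $0$.
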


\noindent
As the last of the results from \cite{Seppalainen-Sorensen-21a} we cite a Brownian calculation.  

\begin{theorem}[arXiv version of \cite{Seppalainen-Sorensen-21a}, Theorem B.2] \label{thm:dist of busemann increment}
Let $B$ be a standard Brownian motion, and for $t > 0$, let 
\begin{align*}
D(t) = \sup_{0 \le s < \infty}\bigl\{\sqrt 2 B(s) - \lambda s\bigr\} - \sup_{t \le s < \infty}\bigl\{\sqrt 2 B(s) - \lambda s\bigr\}.
\end{align*}
Then, for all $z \ge 0$,
\[
\Pp(D(t) \le z) = \Phi\Bigl(\frac{z - \lambda t}{\sqrt{2 t}}\Bigr) + e^{\lambda z}\Biggl( (1 + \lambda z + \lambda^2 t)\Phi\Bigl(-\frac{z + \lambda t}{\sqrt{2 t}}\Bigr) - \lambda \sqrt{\frac{t}{ \pi}}\,e^{-\frac{(z + \lambda t)^2}{4 t}}    \Biggr).
\]
\end{theorem}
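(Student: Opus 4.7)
The plan is to decompose $D(t)$ as a function of two independent pieces via the Markov property of $Y(s) := \sqrt 2 B(s) - \lambda s$, identify each piece's distribution, and evaluate a one-dimensional integral.

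Set $N = \sup_{0 \le s \le t} Y(s)$, $W = Y(t)$, and $M' = \sup_{s \ge t}[Y(s) - Y(t)]$. Since $\sup_{s \ge 0}Y(s) = N \vee (W + M')$ while $\sup_{s \ge t}Y(s) = W + M'$, we get
\[
D(t) = (N - W - M')^+ = (U - M')^+, \qquad U := N - W.
\]
The Markov property at time $t$ gives that $M'$ is independent of $\mathcal{F}_t$ and hence of $U$, and the standard formula for the all-time supremum of a Brownian motion with negative drift identifies $M' \sim \Exp(\lambda)$. A time reversal on $[0,t]$ combined with the symmetry $B \deq -B$ converts $U = \sup_{0 \le s \le t}[Y(s) - Y(t)]$ into the running maximum at time $t$ of a variance-$2$ Brownian motion with drift $+\lambda$; that is, $U \deq \sup_{0 \le u \le t}[\sqrt 2 \widetilde B(u) + \lambda u]$ for a standard Brownian motion $\widetilde B$. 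Reflection together with a Girsanov tilt then gives
\[
F_U(a) = \Phi\!\left(\frac{a - \lambda t}{\sqrt{2t}}\right) - e^{\lambda a}\Phi\!\left(\frac{-a - \lambda t}{\sqrt{2t}}\right), \qquad a \ge 0,
\]
and differentiating---after invoking the useful identity $e^{\lambda a}e^{-(a+\lambda t)^2/(4t)} = e^{-(a-\lambda t)^2/(4t)}$ to merge two Gaussian contributions---yields the density
\[
f_U(a) = \frac{1}{\sqrt{\pi t}}\,e^{-(a-\lambda t)^2/(4t)} - \lambda e^{\lambda a}\Phi\!\left(\frac{-a-\lambda t}{\sqrt{2t}}\right).
\]

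Conditioning on $U$ and using independence of $M' \sim \Exp(\lambda)$ produces, for $z \ge 0$,
\[
\Pp(D(t) \le z) = F_U(z) + e^{\lambda z}\int_z^\infty e^{-\lambda u} f_U(u)\,du,
\]
so the problem reduces to evaluating the integral. The Gaussian piece $(\pi t)^{-1/2}\int_z^\infty e^{-\lambda u}e^{-(u-\lambda t)^2/(4t)}\,du$ reduces, after completing the square into $-(u+\lambda t)^2/(4t)$ and substituting $v = (u+\lambda t)/\sqrt{2t}$, to $2\Phi(-(z+\lambda t)/\sqrt{2t})$. The remaining piece $\lambda \int_z^\infty \Phi(-(u+\lambda t)/\sqrt{2t})\,du$ will be evaluated by integration by parts against $\frac{d}{da}\Phi(-a) = -\phi(a)$, producing $\lambda\sqrt{t/\pi}\,e^{-(z+\lambda t)^2/(4t)} - \lambda(z+\lambda t)\Phi(-(z+\lambda t)/\sqrt{2t})$.

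Assembling the pieces, the coefficient of $e^{\lambda z}\Phi(-(z+\lambda t)/\sqrt{2t})$ becomes $-1 + 2 + \lambda(z+\lambda t) = 1 + \lambda z + \lambda^2 t$, where the $-1$ is inherited from $F_U(z)$, the $+2$ from the Gaussian piece, and $\lambda(z+\lambda t)$ from the integration by parts; the leftover Gaussian from the second piece is $-\lambda\sqrt{t/\pi}\,e^{-(z+\lambda t)^2/(4t)}$, while $F_U(z)$ contributes the leading summand $\Phi((z-\lambda t)/\sqrt{2t})$. The main technical obstacle will be this final bookkeeping: four separate quantities multiply the same Gaussian CDF and only collapse cleanly to $1 + \lambda z + \lambda^2 t$ after the Girsanov identity above is used to merge the two Gaussian contributions in $f_U$, without which one would chase cancellations through a considerably messier sum.
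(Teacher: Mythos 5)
Your argument is correct and your bookkeeping checks out: decomposing the supremum at time $t$ via the Markov property gives $D(t) = (U - M')^+$ with $U, M'$ independent; $M'\sim\Exp(\lambda)$ is the classical all-time supremum of a drift $-\lambda$, variance-$2$ Brownian motion (rate $2\lambda/\sigma^2 = \lambda$); time reversal on $[0,t]$ (the symmetry $B\deq -B$ is not actually needed since $u\mapsto B(t-u)-B(t)$ is itself a standard BM) gives $U$ as a running maximum; the reflection formula yields $F_U$; and the final integral against the exponential density evaluates as you describe, with the coefficient of $e^{\lambda z}\Phi(-(z+\lambda t)/\sqrt{2t})$ assembling to $-1 + 2 + \lambda(z+\lambda t) = 1 + \lambda z + \lambda^2 t$ and the Gaussian leftover $-\lambda\sqrt{t/\pi}\,e^{-(z+\lambda t)^2/(4t)}$ emerging from the integration by parts. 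Note, though, that the present paper does not contain a proof of this theorem at all: it is cited verbatim from the arXiv version of \cite{Seppalainen-Sorensen-21a}, Theorem B.2, so there is no in-paper argument to compare against. Your decomposition (Markov splitting at time $t$, exponential law for $M'$, time-reversal for $U$, then convolution) is the natural route and is the same structure used in the cited reference.
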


\section{Auxiliary technical inputs} \label{sec:aux_tec}
\begin{lemma} \label{lemma:convergence of maximizers from converging sets}
Let $S_n$ for $n \ge 0$ be subsets of some set $\wt S \subseteq \R^n$, on which the function $f:\wt S \rightarrow \R$ is continuous. Assume that each point $x \in S_0$ is the limit of a sequence $\{x_n\}$, where $x_n \in S_n$ for each $n$. Assume that $\{c_n\}$ is a sequence of maximizers of $f$ over $S_n$. Assume further that $c_n$ converges to some $c \in S_0$. Then, $c$ is a maximizer of $f$ over $S_0$. 
\end{lemma}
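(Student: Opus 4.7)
The plan is to give a direct continuity argument: pick an arbitrary competitor $x \in S_0$, approximate it by a sequence $x_n \in S_n$, use the maximizing property of $c_n$ on each $S_n$, and pass to the limit using continuity of $f$.

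Concretely, fix any $x \in S_0$. By hypothesis there exists a sequence $\{x_n\}$ with $x_n \in S_n$ and $x_n \to x$. Since $c_n$ is a maximizer of $f$ over $S_n$ and $x_n \in S_n$, we have $f(c_n) \ge f(x_n)$ for every $n \ge 1$. Now use the assumed convergence $c_n \to c$ together with $x_n \to x$ and the continuity of $f$ on $\wt S$ to pass to the limit in this inequality, obtaining $f(c) \ge f(x)$. Since $c \in S_0$ and $x \in S_0$ was arbitrary, this shows that $c$ is a maximizer of $f$ over $S_0$.

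There is essentially no serious obstacle here: the statement is a standard upper semicontinuity fact. The only mild subtlety is that we are comparing values at points lying in \emph{different} sets $S_n$ at each stage, but this is precisely handled by the approximation hypothesis that every $x \in S_0$ is reachable as a limit of points drawn from the $S_n$'s, which lets us produce the companion sequence $x_n \in S_n$ needed to test $c_n$'s maximality. The continuity of $f$ on the ambient set $\wt S$ (which contains all the $S_n$) is then sufficient to commute the limit with the evaluation of $f$ on both sides of $f(c_n) \ge f(x_n)$. No compactness of the $S_n$ or uniform continuity of $f$ is required, precisely because both convergences $c_n\to c$ and $x_n\to x$ are given as hypotheses.
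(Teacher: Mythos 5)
Your proof is correct and follows exactly the same route as the paper's: fix an arbitrary $x \in S_0$, approximate it by $x_n \in S_n$, use $f(c_n) \ge f(x_n)$, and pass to the limit via continuity. The paper's proof is just a terser statement of the same argument.
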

\begin{proof}
For each $x_0 \in S_0$, write $x_0 = \lim_{n\rightarrow \infty} x_n$, where $x_n \in S_n$ for each $n$. Then, $f(c_n) \ge f(x_n)$ for all $n \ge 1$, and the result follows by taking limits.
\end{proof}

\begin{theorem}[\cite{brownian_queues}, Theorem 4, this formulation found in~\cite{Seppalainen-Sorensen-21a}, Theorem C.2 and Lemma D.2] \label{O Connell Yor BM independence theorem queues} 
Let $Z$ be a two-sided Brownian motion with drift $\lambda > 0$, independent of the two-sided Brownian motion $B$ (with no drift). Then, $D(Z,B)$ is a two-sided Brownian motion with drift $\lambda$, independent of the two-sided Brownian motion $R(Z,B)$. Furthermore, for all $s \in \R$, $\{(D(Z,B)(s,t),R(Z,B)(s,t)): s \le t < \infty \}$ is independent of $\{Q(Z,B)(u): u \le t\}$. 
\end{theorem}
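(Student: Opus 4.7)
The claim is the Brownian analogue of Burke's theorem for stationary M/M/1 queues, and the plan is to prove it by imitating the classical reversibility argument. First I would rewrite the maps in the convenient form $D(t) = Z(t) + Q(0) - Q(t)$ and $R(t) = B(t) + Q(t) - Q(0)$, obtained from the definitions by noting $Q(t) = M(t) - (B(t) - Z(t))$ with $M(t) := \sup_{s \ge t}[B(s) - Z(s)]$ finite a.s.\ because $B - Z$ is a variance-$2$ Brownian motion with drift $-\lambda < 0$. In this form, the pair $(D,R)$ is completely determined by $(Z,B)$ together with the queue length $Q$, and the three assertions of the theorem --- that $D$ is BM with drift $\lambda$, that $R$ is a driftless BM independent of $D$, and that past queue lengths are independent of future output increments --- can be approached in three stages.

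In Stage~1 I would establish the stationary strong Markov structure of $(Q(t))_{t \in \R}$ and identify its one-dimensional marginal as $\Exp(\lambda)$. Stationarity is immediate from the translation invariance of the increments of $(Z,B)$, and the marginal follows from the classical running-maximum identity: $Q(0)$ is the all-time supremum of a variance-$2$ BM with drift $-\lambda$, which is exponentially distributed with rate $\lambda$. In Stage~2 the key assertion is the distributional identity $(D(Z,B),R(Z,B)) \deq (Z,B)$. I would prove it by first removing the drift via Girsanov, then recognizing $(Q, Z - B)$ as coordinates of a reflected BM whose Skorokhod decomposition exhibits $(D,R)$ as a measurable bijective rearrangement of $(Z,B)$; checking that $D$ inherits the drift $\lambda$ while $R$ is driftless and independent of $D$ is then a direct reflection/Girsanov calculation. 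Shift invariance of the construction upgrades this to $(D(s+\abullet) - D(s), R(s+\abullet) - R(s)) \deq (Z,B)$ for every $s \in \R$.

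In Stage~3 the independence of $\{Q(u):u \le s\}$ and $\{(D(s,t),R(s,t)):t \ge s\}$ follows from the Markov property of $Q$ at $s$. Conditionally on $Q(s)$, the past of $Q$ is independent of the future driving increments $(B(s+\abullet) - B(s), Z(s+\abullet) - Z(s))$, while the future output increments $(D(s,t), R(s,t))$ are a measurable function of $Q(s)$ and those future driving increments alone, so conditionally on $Q(s)$ they are independent of the past of $Q$. Stage~2 shows that the law of the future output increments does not depend on $Q(s)$; hence the future output is unconditionally independent of $Q(s)$, and combining with the conditional independence yields the required unconditional independence. Specializing to $s = 0$ together with the marginal characterizations of Stage~2 gives the stated conclusion.

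The main obstacle is the time-reversal identity $(D,R) \deq (Z,B)$ of Stage~2; this is the nontrivial content of Burke's theorem in the Brownian setting and is not purely formal. A careful proof requires either handling the reflecting boundary at $0$ for $Q$ on the half-line and then extending to two-sided time by stationarity, or performing an explicit joint-Laplace-transform calculation on a finite horizon $[-T,T]$ and passing to $T \to \infty$ using dominated convergence justified by the almost sure finiteness of $M(0)$. Once this identity is in hand, the marginal claims and the past/future independence follow routinely from the Markov property of $Q$ as outlined above.
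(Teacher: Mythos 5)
This theorem is cited from O'Connell--Yor (Theorem~4 of \cite{brownian_queues}) and the companion paper \cite{Seppalainen-Sorensen-21a}, and the present paper does not reprove it, so there is no internal argument to compare against. Your outline does follow the standard route for the Brownian Burke theorem (stationary reflected Brownian motion, reversibility, time reversal), and Stages~1 and~2 are reasonable in outline, modulo the technical work you already acknowledge.

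There is a genuine gap in Stage~3. You establish correctly that, conditionally on $Q(s)$, the past $\{Q(u): u \le s\}$ and the future output $\{(D(s,t), R(s,t)): t\ge s\}$ are independent, since the former is a measurable function of $Q(s)$ together with the past driving increments and the latter is a measurable function of $Q(s)$ together with the future driving increments. To conclude unconditional independence you then invoke the claim that ``Stage~2 shows that the law of the future output increments does not depend on $Q(s)$.'' This does not follow from Stage~2. Stage~2 pins down only the unconditional law of $(D,R)$; it says nothing about the conditional law of $(D(s,\cdot),R(s,\cdot))$ given $Q(s)$, equivalently nothing about the joint law of $(D(s,\cdot),R(s,\cdot),Q(s))$. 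Since $D(s,t) = Z(s,t) + Q(s) - Q(t)$ with both $Q(s)$ and $Q(t)$ determined by the driving increments on $[s,\infty)$, the quantity you would like to show is independent of $Q(s)$ is an explicit function of $Q(s)$ together with exactly those increments that determine $Q(s)$. That this law is nonetheless insensitive to $Q(s)$ is precisely the nontrivial core of Burke's theorem; it cannot be obtained as a ``routine'' corollary of the marginal identity $(D,R)\deq(Z,B)$ plus the Markov property. To close the gap, the time-reversal argument of Stage~2 must be carried out at the joint level: one needs that the stationary reflected process $Q$ is reversible and that the time reversal sends the triple $(Z,B,Q)$ to $(D,R,Q)$, so that the trivial independence of future driving increments from the present queue state on one side of the reversal becomes the required independence of the future output from the present queue state on the other. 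The unconditional distributional identity of Stage~2 alone is not enough.
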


\begin{lemma} \label{lemma:D preserves increment stationarity}
Let $Z,B \in \CRpin$ be such that 
\[
\limsup_{t \rightarrow \infty} Z(t) - B(t) = -\infty.
\]
For $s \in \R$, denote by $Z^s$ the shifted process
\[
(Z(s,t + s))_{t \in \R}.
\]
Then, we have
\begin{equation} \label{D respects translations}
D(Z^s,B^s) = D(Z,B)^s.
\end{equation}
\end{lemma}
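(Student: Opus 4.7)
The plan is to verify the identity by a direct computation, since the statement is purely a deterministic algebraic identity between functions, not a probabilistic statement. First I would unfold the definition of the shift: writing increments as $Z^s(u) = Z(u+s) - Z(s)$ and $B^s(u) = B(u+s) - B(s)$, I obtain
\[
B^s(u) - Z^s(u) = \bigl(B(u+s) - Z(u+s)\bigr) - \bigl(B(s) - Z(s)\bigr).
\]
The constant $B(s)-Z(s)$ then factors out of both suprema in the definition of $D(Z^s, B^s)$ and cancels between them.

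Next I would change variables $v = u+s$ inside each supremum, converting the ranges $u\in[0,\infty)$ and $u\in[t,\infty)$ into $v\in[s,\infty)$ and $v\in[t+s,\infty)$. This turns $D(Z^s,B^s)(t)$ into
\[
B(t+s) - B(s) + \sup_{s \le v < \infty}\{B(v) - Z(v)\} - \sup_{t+s \le v < \infty}\{B(v) - Z(v)\}.
\]
On the other side, I would use $D(Z,B)^s(t) = D(Z,B)(t+s) - D(Z,B)(s)$, which immediately cancels the common term $\sup_{0 \le u < \infty}\{B(u)-Z(u)\}$ appearing in both $D(Z,B)(t+s)$ and $D(Z,B)(s)$, leaving exactly the same expression as above.

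The two displays will match term by term, giving \eqref{D respects translations}. There is no real obstacle here — the only subtlety is bookkeeping the cancellation of the boundary suprema, and noting that the finiteness hypothesis (so that the suprema in the definitions of $D(Z,B)$ and $D(Z^s, B^s)$ are all finite and the manipulations are legitimate) is invariant under the shift since $B(u+s) - Z(u+s) \to -\infty$ as $u \to \infty$ iff $B(v) - Z(v) \to -\infty$ as $v \to \infty$. Thus the full proof will be a short half-page calculation with no technical hurdles beyond careful use of the definitions.
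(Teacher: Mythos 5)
Your proposal is correct and coincides with the paper's proof, which is simply a one-line "straightforward verification using the definition of $D$." The computation you outline — unfolding the shift, cancelling the constant $B(s)-Z(s)$, changing variables, and cancelling the common $\sup_{0\le u<\infty}$ term in $D(Z,B)^s$ — is exactly the verification the paper has in mind.
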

\begin{proof}
This is a straightforward verification, using the definition of $D$~\eqref{definition of D}.
\end{proof}

\begin{lemma}[\cite{morters_peres_2010}, Theorem 2.11] \label{lemma:BM unique max}
Let $B$ be a standard Brownian motion on $[0,1]$. With probability one, $B$ has a unique maximizer, and the maximizer lies in $(0,1)$.
\end{lemma}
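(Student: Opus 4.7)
The plan is to prove the two claims separately: first that the argmax is almost surely an interior point, then that it is almost surely unique. Both are classical, but let me outline the approach I would take.

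For the interior claim, I would use Blumenthal's $0$--$1$ law. The event $\{\exists t \in (0,\varepsilon) : B(t) > 0\}$ lies in the germ $\sigma$-algebra $\bigcap_{\varepsilon > 0} \sigma(B(s) : s \le \varepsilon)$ and has positive probability (e.g.\ by symmetry or by the law of the iterated logarithm), hence probability one. Since $B(0) = 0$, this forces $\sup_{[0,1]} B > B(0)$, so $0$ is not a maximizer. For the right endpoint, I would apply the same argument to the time-reversed Brownian motion $t \mapsto B(1) - B(1 - t)$, which is again a standard Brownian motion on $[0,1]$ starting at $0$; the corresponding conclusion says there exists $t < 1$ arbitrarily close to $1$ with $B(t) > B(1)$, so $1$ is not a maximizer. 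A countable intersection of the two full-probability events handles both endpoints simultaneously.

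For uniqueness, I would first show the following key claim: for each fixed triple of rationals $0 \le a < b < c \le 1$,
\[
\Pp\Bigl(\max_{t \in [a,b]} B(t) = \max_{t \in [b,c]} B(t)\Bigr) = 0.
\]
To see this, condition on $\mathcal{F}_b$, the Brownian filtration at time $b$. Then $M_1 := \max_{[a,b]} B$ is $\mathcal{F}_b$-measurable, while by the Markov property $M_2 := \max_{[b,c]} B - B(b)$ is independent of $\mathcal{F}_b$ and has the distribution of the running maximum of a Brownian motion over $[0, c-b]$, which by the reflection principle has a continuous density on $[0,\infty)$. Hence $\Pp(M_1 - B(b) = M_2 \mid \mathcal{F}_b) = 0$ almost surely, and integrating gives the claim.

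Finally, I would combine the two pieces. Taking the countable union over rationals $0 \le a < b < c \le 1$, there is a full-probability event on which no two such max\-ima coincide. On this event, if $s_1 < s_2 \in [0,1]$ were both maximizers of $B$, I could insert a rational $b \in (s_1, s_2)$ and then rationals $a \in [0, s_1)$ and $c \in (s_2, 1]$, obtaining $\max_{[a,b]} B \ge B(s_1) = B(s_2) \ge \max_{[b,c]} B$, and the reverse inequality holds trivially since each side is at most $B(s_1) = B(s_2)$; thus the two max\-ima are equal, a contradiction. Intersecting with the endpoint event from the previous paragraph gives the lemma. The only mild technical point is the density computation for $M_2$, which is standard, so I do not expect any real obstacle.
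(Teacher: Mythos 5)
Your proof is correct; the paper itself gives no argument, citing this as Theorem~2.11 of M\"orters--Peres, and your two-part structure (interior argmax via Blumenthal / time reversal, uniqueness via conditioning on $\mathcal{F}_b$ and the atomlessness of the running maximum) is the standard textbook route. One small imprecision: the event $\{\exists\, t\in(0,\varepsilon): B(t)>0\}$ for fixed $\varepsilon$ is merely $\mathcal{F}_\varepsilon$-measurable; the event to which Blumenthal's $0$--$1$ law applies is the $\varepsilon$-independent one, $\bigcap_{\varepsilon>0}\{\exists\, t\in(0,\varepsilon): B(t)>0\}$, which lies in the germ $\sigma$-algebra, has positive probability since each $\Pp(B(1/n)>0)=\tfrac12$, and hence has probability one.
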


We use Lemma~\ref{lemma:BM unique max} to derive the following corollary.
\begin{corollary} \label{cor:max_in_interior}
Let $B$ be a Brownian motion (could be one or two-sided). Then, there exists a full event of probability one, on which,
for all $s < t$, at most one maximizer of $B$ over $[s,t]$ lies in $(s,t)$.
\end{corollary}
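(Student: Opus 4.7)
The plan is to reduce this uncountable statement to a countable one using rational interval endpoints. The target full-probability event is
\[
\Omega^\star = \bigcap_{\substack{p,r \in \Q \\ p < r}} \{\omega : B(\cdot)(\omega) \text{ has a unique maximizer on }[p,r]\}.
\]
By Brownian scaling and the shift-invariance of Brownian increments, for any fixed $p < r \in \Q$ the process $u \mapsto \frac{1}{\sqrt{r-p}}(B(p + u(r-p)) - B(p))$ is a standard Brownian motion on $[0,1]$, so Lemma~\ref{lemma:BM unique max} gives that $B$ has a unique maximizer on $[p,r]$ with probability one. Since the intersection is over a countable index set, $\Pp(\Omega^\star) = 1$. (For a two-sided Brownian motion the same scaling works for any rational pair $p < r$, positive, negative, or straddling $0$.)

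Now I will argue that $\Omega^\star$ is the desired event. Fix $\omega \in \Omega^\star$ and suppose, for contradiction, that for some $s < t$ there exist two distinct points $u_1 < u_2$ both in $(s,t)$ with
\[
B(u_1) = B(u_2) = M := \sup_{u \in [s,t]} B(u).
\]
Choose rationals $p, r$ with $s < p < u_1 < u_2 < r < t$; this is possible by density of $\Q$ in $\R$. Then $[p,r] \subseteq [s,t]$, so $\sup_{[p,r]} B \le M$, while $u_1, u_2 \in [p,r]$ gives $\sup_{[p,r]} B \ge B(u_1) = M$. Hence $\sup_{[p,r]} B = M$, and both $u_1$ and $u_2$ are maximizers of $B$ on $[p,r]$. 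Since $p, r \in \Q$, this contradicts the definition of $\Omega^\star$.

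The key idea is entirely the reduction to rationals; the main (minor) obstacle is simply to verify that Lemma~\ref{lemma:BM unique max} transfers to an arbitrary rational interval via Brownian scaling/translation, and to ensure the rational interval $[p,r]$ can be chosen strictly inside $(s,t)$ while still containing both $u_1$ and $u_2$, which follows from the strict inequalities $s < u_1 < u_2 < t$ and density of $\Q$. No further estimates are required.
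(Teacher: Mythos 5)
Your proposal is correct and takes essentially the same approach as the paper's (one-line) proof: reduce to the countable family of rational-endpoint intervals, apply Lemma~\ref{lemma:BM unique max} via scaling/translation to each, and derive the general case by squeezing a rational interval strictly inside $(s,t)$ around two putative interior maximizers. The only difference is that you spell out the details the paper leaves implicit.
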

\begin{proof}
Take the full probability event on which $B$ has a unique maximizer over $[a,b]$ for all rational endpoints $a < b$.
\end{proof}

\begin{lemma}[\cite{taylor_1955}, page 270. See also~\cite{morters_peres_2010}, Theorem 4.24] \label{lemma:Hausdorff dimension facts for standard Brownian motion}
Let $B:[0,\infty)\rightarrow \R$ be a standard Brownian motion. Then, with probability one, the following sets have Hausdorff dimension $\f{1}{2}$. Furthermore, for each fixed $t \in [0,\infty)$, $t$ lies in either of the following sets with probability zero. 
\begin{enumerate} [label=\rm(\roman{*}), ref=\rm(\roman{*})]  \itemsep=3pt
\item $
\{t \in [0,\infty): B(t) = 0\}$ \label{itm: zero set BM}
\item 
$\{t \in [0,\infty): B(t) = \underset{0 \le s \le t}{\sup} B(s)\}$ \label{itm: right maximizer set BM}
\end{enumerate}
\end{lemma}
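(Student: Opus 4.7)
The plan is first to reduce statement (ii) to statement (i). Recall L\'evy's identity: if $M(t) = \sup_{0 \le s \le t} B(s)$, then the process $(M(t) - B(t))_{t \ge 0}$ has the same law as $(|W(t)|)_{t \ge 0}$ for a standard Brownian motion $W$ started at $0$. Hence the set $\{t \ge 0 : B(t) = M(t)\}$ is distributionally identical to the zero set $\{t \ge 0 : W(t) = 0\}$, so its Hausdorff dimension and the probability that a fixed $t > 0$ belongs to it match the corresponding quantities for the zero set of a standard Brownian motion. The fixed-$t$ assertions follow at once: for $t > 0$, $B(t) \sim \Nor(0,t)$ has a continuous density, so $\Pp(B(t) = 0) = 0$, and by the reduction $\Pp(B(t) = M(t)) = 0$ as well.

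For the upper bound $\dim_H \{t : B(t) = 0\} \le \f12$ I would use a first-moment covering argument. The reflection principle, after conditioning on $B(s)$ and integrating out $B(s) \sim \Nor(0,s)$, yields $\Pp(\exists u \in [s,t] : B(u) = 0) \le C\sqrt{(t-s)/s}$. Fix $T > 0$, partition $[1,T]$ into $N \asymp T/\delta$ intervals of length $\delta$, and note the expected number of intervals meeting the zero set is
\[
O\Bigl(\sum_{k=1}^N \sqrt{\delta / (k\delta)}\Bigr) = O(\sqrt{T/\delta}).
\]
Hence $\Ee[\mathcal H^\alpha_\delta(\{t \in [1,T] : B(t) = 0\})] \le C \delta^\alpha \cdot \sqrt{T/\delta} \to 0$ for every $\alpha > \f12$, and a countable intersection over $T \in \N$ (the trivial interval $[0,1]$ contributes nothing after rescaling by $T$-shifting) forces $\dim_H \le \f12$ almost surely.

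For the matching lower bound I would apply Frostman's energy method using the Brownian local time $L_t$ at $0$, which exists and is a.s.\ nontrivial on $[0,T]$ for $T$ large (by L\'evy's theorem $L \stackrel{d}{=} M$). The random measure $\mu := dL$ is supported on the zero set, and the Kac moment formula gives
\[
\Ee\int_0^T\!\int_0^T \f{dL_s\, dL_t}{|s-t|^\alpha} \;\le\; C\int_0^T\!\int_0^T \f{1}{|s-t|^\alpha \sqrt{s \wedge t}\,\sqrt{|s-t|}}\, ds\, dt,
\]
which is finite precisely when $\alpha < \f12$. Hence $\mu$ has finite $\alpha$-energy a.s.\ for every such $\alpha$, and Frostman's lemma yields $\dim_H \ge \alpha$; letting $\alpha \nearrow \f12$ completes the argument. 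The main technical obstacle is justifying the moment bounds for the local time (equivalently, the Kac-type calculation), but these are precisely the classical computations carried out in Taylor~\cite{taylor_1955} and M\"orters--Peres~\cite[Thm.\ 4.24]{morters_peres_2010}, which is why we cite them rather than reproducing the full derivation.
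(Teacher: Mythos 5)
The paper does not prove this lemma; it cites Taylor~\cite{taylor_1955} and M\"orters--Peres~\cite[Thm.\ 4.24]{morters_peres_2010}, and your sketch reproduces precisely the standard argument found in those references: L\'evy's identity reduces the argmax set to the zero set, the fixed-$t$ claims follow from the continuity of the marginals, the upper bound $\le \tf12$ comes from a first-moment covering using the reflection-principle estimate $\Pp(\exists u \in [s,t]:B(u)=0)\lesssim \sqrt{(t-s)/s}$, and the lower bound $\ge \tf12$ comes from Frostman's energy method applied to local time at zero. The computations you indicate are the right ones and the energy integral does converge exactly for $\alpha<\tf12$, so the approach goes through. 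Two small points worth tightening if you were writing this out in full: (a) the claim ``$t$ lies in either set with probability zero'' should be read for $t>0$ --- at $t=0$ one has $B(0)=0=M(0)$ deterministically, and this boundary point is immaterial to the paper's application in Corollary~\ref{cor:hausdorff dimension for left maxes standard two-sided BM}, where the lemma is only invoked after translation by rational shifts; (b) the handling of $[0,1]$ in the upper bound should be made explicit (e.g.\ via Brownian scaling or a countable union over $[1/n,n]$), since the estimate $\sqrt{(t-s)/s}$ degenerates near the origin; your parenthetical remark gestures at this but does not spell it out.
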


\begin{corollary} \label{cor:hausdorff dimension for left maxes standard two-sided BM}
Let $B:\R \rightarrow \R$ be a standard, two-sided Brownian motion. Then, the following sets are equal. These sets almost surely have Hausdorff dimension $\f{1}{2}$, and for any fixed $s \in \R$, the point $s$ lies in either set with probability zero.
\begin{enumerate} [label=\rm(\roman{*}), ref=\rm(\roman{*})]  \itemsep=3pt
    \item \label{itm:left_max} $\{s \in \R: B(s) = \underset{s \le u \le t}{\sup}B(u) \;\text{for some }t > s \}$ 
    \item \label{itm:right_max} $\{s \in \R: \text{ for some } t > s, B(s) > B(u)\; \text{ for all }u \in (s,t]\}$
\end{enumerate}
\end{corollary}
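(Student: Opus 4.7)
The proof splits into three independent claims: equality of the two sets almost surely, the fixed-$s$ probability zero statement, and the Hausdorff dimension. I will handle them in order of increasing difficulty.

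For the fixed-$s$ claim, a direct Markov argument suffices. For $s \in \R$ fixed, the post-$s$ process $X(v) := B(s+v) - B(s)$ is a standard Brownian motion starting at zero, independent of $\sigma\{B(u): u \le s\}$. The event $\{s \in (\text{i})\}$ is contained in $\bigcup_{n \in \Z_{>0}}\{X(v) \le 0 \text{ for all } v \in [0, 1/n]\}$, and each event in the union has probability zero because a Brownian motion started at zero takes strictly positive values in every right neighborhood of zero almost surely.

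For the Hausdorff dimension, I would decompose set (i) as the countable union $A = \bigcup_{q \in \Q_{>0}} C_q$ where $C_q := \{s < q : B(s) = \sup_{u \in [s,q]} B(u)\}$; any witness $t > s$ in the definition of (i) may be replaced by any rational $q \in (s, t]$, which is why the decomposition holds. For each rational $q$, define the time-reversed process $\wt B_q(r) := B(q - r) - B(q)$ for $r \ge 0$, which is a standard Brownian motion on $[0, \infty)$. A direct substitution $r = q - s$ shows that $s \in C_q$ if and only if $\wt B_q(r) = \sup_{0 \le u \le r} \wt B_q(u)$, so the affine map $s \mapsto q - s$ sends $C_q$ bijectively onto the running-maximum set of $\wt B_q$. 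By Lemma~\ref{lemma:Hausdorff dimension facts for standard Brownian motion}\ref{itm: right maximizer set BM} this running-maximum set has Hausdorff dimension $\tfrac12$ almost surely; since Hausdorff dimension is preserved under affine maps, $\dim C_q = \tfrac12$ a.s. Taking a countable intersection of full-probability events yields $\dim C_q = \tfrac12$ simultaneously for all $q \in \Q_{>0}$; then $A \supseteq C_q$ gives $\dim A \ge \tfrac12$, while countable stability of Hausdorff dimension gives $\dim A \le \tfrac12$.

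For the set equality, the inclusion (ii) $\subseteq$ (i) is trivial from the definitions. For the reverse, given $s \in$ (i) with rational witness $q$, let $u_1 := \inf\{u \in (s, q] : B(u) = B(s)\}$, with $u_1 := q + 1$ if the set is empty. By continuity, if $u_1 > s$ then $B < B(s)$ strictly on $(s, u_1)$, so $s \in$ (ii) with witness $t' := (s + u_1)/2$. To rule out $u_1 = s$ almost surely, I would translate to $\wt B_q$: via the bijection from Step~2, the bad case corresponds to records of $\wt B_q$ at $r = q - s$ where infinitely many earlier records accumulate at the same running-maximum value $v := \wt B_q(r)$. This forces the running maximum $M_q$ to be constant at value $v$ on an interval accumulating at $r$, that is, on a plateau of $M_q$. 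The key technical point is that each plateau of $M_q$ is a single excursion interval of the reflected process $R_q := M_q - \wt B_q$ almost surely, so the only records within a plateau are the two endpoints and accumulation is impossible.

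The main obstacle is the plateau-equals-single-excursion statement, which amounts to ruling out interior records within a plateau. At a hypothetical interior record $u^\star$ the process $\wt B_q$ equals $v$ with $\wt B_q \le v$ in a right neighborhood, contradicting Brownian oscillation; at a fixed time this has probability zero, and to promote this to a uniform statement I would use a countable decomposition over rational intervals $[a,b]$ combined with the almost sure uniqueness of the maximum of $\wt B_q$ on each such rational interval (Lemma~\ref{lemma:BM unique max}).
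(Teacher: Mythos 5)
Your Hausdorff-dimension decomposition is essentially the paper's argument, but the index set must be all of $\Q$, not $\Q_{>0}$: for $s<0$ with witness $t<0$, any rational $q\in(s,t]$ is negative, so $\bigcup_{q\in\Q_{>0}}C_q$ does not cover set~(i). With $q$ ranging over $\Q$, the time reversal, the appeal to Lemma~\ref{lemma:Hausdorff dimension facts for standard Brownian motion}\ref{itm: right maximizer set BM}, and countable stability of Hausdorff dimension are all correct and match the paper. Your Markov argument for the fixed-$s$ probability-zero claim is a clean alternative to the paper's (which just quotes the fixed-$t$ statement after time reversal).

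The set-equality argument is where there is a real gap. After reducing to ruling out $u_1=s$, you translate into running-maximum language and lean on the claim that ``each plateau of $M_q$ is a single excursion interval of $R_q$ almost surely.'' That claim is true (it is equivalent to the zero set of the reflected process having no isolated points), but you flag it as the main obstacle and only sketch a proof, and the sketch --- Brownian oscillation at a fixed time, promoted by rational intervals and Lemma~\ref{lemma:BM unique max} --- is, once carried out, just Corollary~\ref{cor:max_in_interior} in different coordinates with extra steps. The direct route is available without any time reversal or excursion structure: if $u_1=s$, there is a sequence $u_n\searrow s$ with $u_n\in(s,q)$ and $B(u_n)=B(s)=\sup_{[s,q]}B$; any two such $u_n$ are two maximizers of $B$ over $[s,q]$ in the open interval $(s,q)$, which Corollary~\ref{cor:max_in_interior} forbids on a single full-probability event. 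This is precisely what the paper does (locating the unique interior maximizer $\hat s$ of $B$ on $[s,t]$, if any, and taking $\hat t\in(s,\hat s)$). Your strategy would likely close if fully developed, but as written it leaves the central plateau claim unproven and routes through machinery the statement does not need.
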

\begin{proof}
First, we show that the sets are equal. The inclusion $\ref{itm:right_max} \subseteq \ref{itm:left_max}$ is immediate.  Now, assume that $s \in\ref{itm:left_max}$, and let $t$ be such that $B(s) = \sup_{s \le u \le t}B(u)$. By Corollary~\ref{cor:max_in_interior}, $B$ has at most one maximizer, $\hat s$, in the interior of $[s,t]$. If no such maximizer exists, set $\hat s = t$. Choose $\hat t \in (s,\hat s).$ Then, $B(s) > B(u)$ for all $u \in (s,\hat t]$, and $s \in\ref{itm:right_max}$. Hence,~\ref{itm:left_max}=\ref{itm:right_max}. Next, for a two-sided Brownian motion $B$ and any point $t \in \R$, the process
\[
\{\wt B_t(u) := B(t - u) - B(t): u \ge 0\}
\]
is a standard Brownian motion.  Observe that 
\begin{align*}
\ref{itm:left_max}
&= \bigcup_{q \in \Q} \{s < q: B(s) = \underset{s \le u \le q}{\sup}B(u)\} \\
&= \bigcup_{q \in \Q} \Big\{s < q: B(q - (q - s)) - B(q) = \underset{0 \le u \le q - s}{\sup}[B(q - u) - B(q)]\Big\}.
\end{align*}
Since Hausdorff dimension is preserved under countable unions, translations, and reflections, Lemma~\ref{lemma:Hausdorff dimension facts for standard Brownian motion} completes the proof. 
\end{proof}

\begin{theorem}[\cite{Seppalainen-Sorensen-21a}, Lemma B.4 (Lemma B.5 in the arXiv version)] \label{thm:countable non unique maximizers}
Let $X$ be a two-sided Brownian motion with strictly negative drift. Let 
\[
M = \{t \in \R: X(t) = \sup_{t \le s < \infty} X(s) \}.
\]
Furthermore, let  
\[
M^U = \big\{t \in M: X(t) > X(s) \text{ for all }s > t   \big\}
\]
 be the set of points $t\in M$ that are unique maximizers of $X(s)$ over $s \in [t,\infty)$.  Define
$
    M^N = M \setminus M^U
$
to be the set of $t\in M$ that are non-unique maximizers of $X(s)$ over $s \in [t,\infty)$.
Then, there exists an event of probability one, on which the following hold.
\begin{enumerate} [label=\rm(\roman{*}), ref=\rm(\roman{*})]  \itemsep=3pt 
    \item $M$ is a closed set. \label{Mclosed}
    \item \label{itm:2_3_max} There exists no points $t \in \R$ such that $X(s)$ has three maximizers over $s \in [t,\infty).$ If there exist two maximizers over $s \in [t,\infty)$, one of them is $s = t$.
    \item  \label{itm:not_mont} The function $s \mapsto X(s)$ is not monotone on any nonempty interval.
    \item $M^N$ is a countably infinite set. \label{countable}
    \item For all $\hat t \in M^U$ and $\ve > 0$, there exists $t \in M^N$ satisfying $\hat t < t < \hat t + \ve$. For all $t \in M^N$ and $\ve > 0$, there exists $\hat t \in M^U$ satisfying $t - \ve < \hat t < t$. \label{non_discrete}
    \item For all $t \in M^N$ and $\ve > 0$, there exists $t^\star \in M^N$ with $t - \ve < t^\star < t$. For each $t \in M^N$, there exists $\delta > 0$ such that $M \cap (t,t+ \delta) = \varnothing$.\label{non_discrete MN}
\end{enumerate}
\end{theorem}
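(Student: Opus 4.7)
My plan is to build a single full-probability event on which all six properties hold. Throughout, the negative drift ensures $X(s) \to -\infty$ almost surely, so $R(t) := \sup_{s \ge t} X(s)$ is finite, attained, and continuous in $t$. Part (i) then follows from this continuity: if $t_n \to t$ with $t_n \in M$, then for any $s > t$ eventually $s > t_n$ and $X(t_n) \ge X(s)$, so $X(t) \ge X(s)$ and $t \in M$. Part (iii) is the classical almost-sure nonmonotonicity of Brownian paths on every interval. For (ii), I would work on the full-probability event on which Lemma~\ref{lemma:BM unique max} yields a unique maximizer of $X$ on every compact interval $[q_1,q_2]$ with rational endpoints: if $t \le s_1 < s_2 < s_3$ were three maximizers of $X$ over $[t,\infty)$, then at least two of the $s_i$ lie strictly above $t$, and sandwiching them between rationals $q_1 < q_2$ produces two maximizers of $X$ on $[q_1,q_2]$, a contradiction.

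For the countability portion of (iv), for each $t \in M^N$ set $u(t) := \sup\{s \ge t : X(s) = X(t)\}$; continuity and $X \to -\infty$ make $u(t)$ finite with $X(u(t)) = X(t)$, and $u(t) > t$ by the definition of $M^N$. By (iii) applied on $[t, u(t)]$, $X$ is nonconstant there, and combined with $X \le X(t)$ on that interval this yields a rational $q(t) \in (t, u(t))$ with $X(q(t)) < X(t)$. The map $t \mapsto q(t)$ is injective: if $t_1 < t_2$ both mapped to the same $q$, then $t_2 \in (t_1, u(t_1))$, forcing $X(t_1) = X(t_2) = X(u(t_1))$; then either $u(t_1) > t_2$ gives three maximizers over $[t_1,\infty)$ (violating (ii)), or $u(t_1) = t_2$ contradicts $t_2 \in M^N$. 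Hence $M^N \hookrightarrow \Q$.

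The right-isolation statement in (vi), that $M \cap (t, t+\delta) = \varnothing$ for $t \in M^N$, is immediate with $\delta = u(t) - t$: any $s \in (t, u(t))$ has $R(s) = X(u(t)) = X(t)$, so $s \in M$ would require $X(s) = X(t)$, and (ii) forbids any such $s$ strictly between $t$ and $u(t)$. For the approximation statements in (v) and the left-accumulation of $M^N$ by $M^N$ in (vi), I would use the first-attainment times $\sigma(q) := \inf\{s \ge q : X(s) = R(q)\}$ indexed by rationals $q$. Each $\sigma(q)$ lies in $M$; for $\hat t \in M^U$ and $q_n \searrow \hat t$, the continuity $R(q_n)\to X(\hat t)$ together with unique attainment at $\hat t$ forces $\sigma(q_n) \to \hat t$, so $M$ accumulates at $\hat t$ from the right. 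To extract $M^N$-points from this sequence, I would rule out the scenario $M \cap (\hat t, \hat t + \ve) \subseteq M^U$: this would force $R$ to be strictly decreasing on $(\hat t, \hat t + \ve)$, since (as shown above) $R$ has flats precisely on the intervals $(t,u(t))$ with $t \in M^N$, and then the strong Markov property applied at the stopping times $\sigma(q)$, combined with the fact that Brownian motion with negative drift almost surely revisits its starting level before drifting to $-\infty$, produces a positive-probability event of flat creation, which after a Fubini-type argument over rational scales yields a contradiction on a full-measure event. Left-approximations are handled symmetrically, and infinitude of $M^N$ in (iv) follows at once by applying the right-approximation to the almost-surely unique global maximizer of $X$ on $\R$.

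The main obstacle will be the right-approximation in (v): producing an $M^N$ point in every right-neighborhood of an $M^U$ point. The geometric picture — that the process fluctuates many times just after $\hat t$, creating infinitely many horizontal flats of the right-running-sup $R$ — is clear, but converting it into a rigorous argument requires handling the random, \emph{non-stopping} time $\hat t$. The cleanest route combines the limiting construction via $\sigma(q)$ with either a soft zero-one argument (using invariance of the right-isolation event under rational shifts and ergodicity) or a direct small-ball estimate on post-$\sigma(q)$ fluctuations showing that with positive probability $X$ re-hits the value $R(q)$ after $\sigma(q)$.
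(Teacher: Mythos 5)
Parts (i), (ii), (iii), the countability claim in (iv), and the right-isolation in (vi) are handled correctly. However, there are two genuine problems.

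First, your argument for the \emph{infinitude} of $M^N$ in (iv) does not work as stated: you invoke ``the almost-surely unique global maximizer of $X$ on $\R$,'' but for a two-sided Brownian motion with strictly negative drift one has $X(t)\to +\infty$ as $t\to -\infty$, so $X$ has no global maximum on $\R$. One can instead argue, for example, that if $M^N$ were finite then past some time $-T$ the running right-sup $R(t):=\sup_{s\ge t}X(s)$ would have no positive-length interval of constancy, and then the observation below forces $X$ to be strictly monotone on $(-\infty,-T)$, contradicting~(iii).

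Second, and more importantly, you correctly identify the right-approximation in (v) as the crux, but the routes you propose (strong Markov at $\sigma(q)$, a zero-one/ergodicity argument, a small-ball estimate) are not needed and would be quite delicate to carry out, precisely because $\hat t$ is not a stopping time and $\sigma(q)$ depends on the whole future of $X$. The argument is in fact deterministic once (iii) is available. The key observation you are missing: \emph{if $R$ has no positive-length constancy interval intersecting an open interval $(a,b)$, then $X=R$ on $(a,b)$.} Indeed, for $t\in(a,b)$ and small $\delta>0$ one has $R(t)=\max\bigl(\sup_{[t,t+\delta]}X,\ R(t+\delta)\bigr)$, and the hypothesis gives $R(t)>R(t+\delta)$, so $R(t)=\sup_{[t,t+\delta]}X$; letting $\delta\searrow 0$ and using continuity yields $X(t)=R(t)$. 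Consequently $X$, being equal to the strictly decreasing $R$, is strictly monotone on $(a,b)$ — contradicting~(iii). Now apply this with $(a,b)=(\hat t,\hat t+\ve)$: since $\hat t\in M^U$, no flat of $R$ contains $\hat t$ or begins at $\hat t$ (a flat starting at $\hat t$ would give another maximizer of $X$ over $[\hat t,\infty)$), and you have already shown that left endpoints of maximal flats of $R$ lie in $M^N$. So the absence of any $M^N$-point in $(\hat t,\hat t+\ve)$ would mean $R$ has no flat there, giving the contradiction. This closes the main gap, and the left-approximation claims in (v) and (vi) then follow by the same flat-structure reasoning together with~(ii) (to rule out flats accumulating or straddling the relevant point). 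They are not literal mirror images of the right side — $M^N$ points are right-isolated in $M$ but not left-isolated — so ``handled symmetrically'' should be replaced by an explicit argument.
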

\begin{remark}
Parts~\ref{itm:2_3_max} and~\ref{itm:not_mont} are not stated as a part of the theorem in~\cite{Seppalainen-Sorensen-21a}. However, the proof of Part~\ref{itm:2_3_max} is fairly simple and is contained in~\cite{Seppalainen-Sorensen-21a}: If, for some, $t \in \R$ there are two maximizers of $X(s)$ over $s \in [t,\infty)$  which are strictly greater than $t$, then for some rational $q > t$, there are two maximizers of $X(s)$ over $s \in [q,\infty)$. The proof is complete by showing that there exists a full probability event on which, for every $q \in \Q$, there is a unique maximizer of $X(s)$ over $s \in [q,\infty)$. Part~\ref{itm:not_mont} is an immediate fact about Brownian motion with drift. 
\end{remark}

\end{appendix}

\bibliographystyle{alpha}
\bibliography{references_file}

\end{document}